\newtheorem{theorem}{Theorem}[section]
\newtheorem{corollary}[theorem]{Corollary}
\newtheorem{lemma}[theorem]{Lemma}
\newtheorem{proposition}[theorem]{Proposition}
\theoremstyle{definition}
\newtheorem{definition}[theorem]{Definition}
\newtheorem{example}[theorem]{Example}
\newtheorem*{definition*}{Definition}
\newtheorem*{lemma*}{Lemma}
\newtheorem*{proposition*}{Proposition}
\newtheorem*{theorem*}{Theorem}
\newtheorem*{corollary*}{Corollary}
\theoremstyle{remark}
\newtheorem{remark}[theorem]{Remark}
\newtheorem{warning}[theorem]{Warning}
\theoremstyle{definition}
\newtheorem{question}[theorem]{Question}
\DeclareMathOperator{\im}{im}
\title{Normed amenability and bounded cohomology \\ over non-Archimedean fields}
\author{Francesco Fournier-Facio}
\date{\today}
\begin{document}

\maketitle

\vspace{0.5cm}

\begin{abstract}
We study continuous bounded cohomology of totally disconnected locally compact groups with coefficients in a non-Archimedean valued field $\mathbb{K}$. To capture the features of classical amenability that induce the vanishing of bounded cohomology with real coefficients, we start by introducing the notion of normed $\mathbb{K}$-amenability, of which we prove an algebraic characterization. It implies that normed $\mathbb{K}$-amenable groups are locally elliptic, and it relates an invariant, the norm of a $\mathbb{K}$-amenable group, to the order of its discrete finite $p$-subquotients, where $p$ is the characteristic of the residue field of $\mathbb{K}$. Moreover, we prove a characterization of discrete normed $\mathbb{K}$-amenable groups in terms of vanishing of bounded cohomology with coefficients in $\mathbb{K}$.

The algebraic characterization shows that normed $\mathbb{K}$-amenability is a very restrictive condition, so the bounded cohomological one suggests that there should be plenty of groups with rich bounded cohomology with trivial $\mathbb{K}$ coefficients. We explore this intuition by studying the injectivity and surjectivity of the comparison map, for which surprisingly general statements are available. Among these, we show that if either $\mathbb{K}$ has positive characteristic or its residue field has characteristic $0$, then the comparison map is injective in all degrees. If $\mathbb{K}$ is a finite extension of $\mathbb{Q}_p$, we classify unbounded and non-trivial quasimorphisms of a group and relate them to its subgroup structure. For discrete groups, we show that suitable finiteness conditions imply that the comparison map is an isomorphism; this applies in particular to finitely presented groups in degree $2$.

A motivation as to why the comparison map is often an isomorphism, in stark contrast with the real case, is given by moving to topological spaces. We show that over a non-Archimedean field, bounded cohomology is a cohomology theory in the sense of Eilenberg--Steenrod, except for a weaker version of the additivity axiom which is however equivalent for finite disjoint unions. In particular there exists a Mayer--Vietoris sequence, the main missing piece for computing real bounded cohomology.

\end{abstract}

\pagebreak

\tableofcontents

\vspace{2 cm}

{\noindent \textit{Mathematics Subject Classification}. Primary: 22D05, 20J06, 55N35. Secondary: 22D12, 22D50.}

\vspace{1cm}

\noindent{\textsc{Department of Mathematics, ETH Z\"urich, Switzerland}}

\noindent{\textit{E-mail address:} \texttt{francesco.fournier@math.ethz.ch}}

\pagebreak

\section{Introduction}

Let $G$ be a locally compact Hausdorff group. A \textit{mean} on $G$ is a norm one linear map $m : C_b(G, \mathbb{R}) \to \mathbb{R}$ on the space $C_b(G, \mathbb{R})$ of continuous bounded real-valued functions on $G$, such that $m(\mathbbm{1}_G) = 1$ and $m(f) \geq 0$ whenever $f(x) \geq 0$ for every $x \in G$. A mean is \textit{left-invariant} if $m(g \cdot f) = m(f)$ for any $g \in G, f \in C_b(G, \mathbb{R})$, where $G$ acts on $C_b(G, \mathbb{R})$ by $(g \cdot f)(x) := f(g^{-1} x)$. If a left-invariant mean exists, $G$ is said to be \textit{amenable}.

The class of amenable groups includes compact and abelian groups, and is closed under the constructions of taking closed subgroups, extensions and directed unions. This notion was introduced by von Neumann \cite{vN} in the context of paradoxical decompositions, and since then amenable groups have played a pivotal role in other areas of mathematics, such as measure theory, representation theory, dynamics, and the theory of von Neumann algebras. The reason they are so omnipresent is that they admit a shocking amount of equivalent characterizations coming from different fields of mathematics: the reader is referred to \cite{Pier} for a detailed account of the classical ones. \\

The characterization we are interested in is in terms of bounded cohomology. Let $E$ be a real normed vector space on which $G$ acts by linear isometries: we call such an $E$ a \textit{normed $\mathbb{R}[G]$-module}. Then the \textit{continuous bounded cohomology of $G$ with coefficients in $E$}, denoted $H^\bullet_{cb}(G, E)$, is defined analogously to classical group cohomology, using the homogeneous cochain complex $C^\bullet_b(G, E) := C_b(G^{\bullet + 1}, E)$ of continuous bounded functions with values in $E$. When $G$ is discrete, and so all functions on $G$ are continuous, we simply call it the \textit{bounded cohomology of $G$ with coefficients in $E$}, and denote it by $H^\bullet_b(G, E)$.

Bounded cohomology was initially introduced by Johnson \cite{Johnson} and Trauber in the context of Banach algebras. After Gromov's seminal paper \cite{Grom} and Ivanov's functorial reformulation \cite{Ivanov}, bounded cohomology of discrete groups found applications in differential geometry, hyperbolic geometry, dynamics, and geometric group theory. The theory was then generalized to deal with locally compact groups by Burger and Monod in \cite{BurMon}, and since then it has become a fundamental tool in rigidity theory. We refer the reader to \cite{Frig} and \cite{Monod} for an account of the applications of bounded cohomology of discrete and locally compact groups, respectively. \\

The characterization that relates amenability and bounded cohomology, due to Trauber, is the following \cite[Theorem 2.5]{Johnson}: a group $G$ is amenable if and only if $H^n_{cb}(G, E) = 0$ for every dual normed $\mathbb{R}[G]$-module $E$, and all $n \geq 1$. Here a \textit{dual} normed $\mathbb{R}[G]$-module is the topological dual of a normed $\mathbb{R}[G]$-module equipped with the operator norm and the dual action of $G$. \\

In both of these definitions the underlying field is $\mathbb{R}$, so it becomes natural to ask whether changing this would lead to an interesting theory. Since both definitions are linked to real normed vector spaces, to study a substantially different notion we look at \textit{non-Archimedean} valued fields $\mathbb{K}$ and normed $\mathbb{K}$-vector spaces, which satisfy the ultrametric inequality. Indeed, it turns out that notions of amenability over such fields are much more restrictive. \\

We start by adapting the definition of mean. A $\mathbb{K}$-\textit{mean} on $G$ is a norm one linear map $m : C_b(G, \mathbb{K}) \to \mathbb{K}$ such that $m(\mathbbm{1}_G) = 1$. Here $C_b(G, \mathbb{K})$ denotes the space of $\mathbb{K}$-valued continuous bounded functions on $G$. The positivity requirement ($m(f) \geq 0$ whenever $f \geq 0$) is dropped because many valued fields do not admit an ordering compatible with the norm. Still, this is not more general than the original definition with $\mathbb{K} = \mathbb{R}$: it follows by a standard argument that the positivity requirement is superfluous \cite[Theorem 3.2]{Pier}. It seems then natural to define \textit{$\mathbb{K}$-amenability} of a group $G$ in terms of the existence of an invariant mean. \\

Since $\mathbb{K}$ is non-Archimedean, it is totally disconnected, and therefore every continuous map $G \to \mathbb{K}$ factors through the component group $\pi_0(G)$, which is totally disconnected and locally compact. Thus, the theory of $\mathbb{K}$-amenable groups reduces to that of totally disconnected locally compact (t.d.l.c.) ones. Therefore, if not mentioned otherwise, \textit{$G$ will always denote a t.d.l.c. group}.

The same reduction can be done with bounded cohomology. We analogously define a \textit{normed $\mathbb{K}[G]$-module}, the only difference being that normed vector spaces over $\mathbb{K}$ are required to satisfy the ultrametric inequality, as in \cite{NFA}. Then a normed $\mathbb{K}$-vector space $E$ is an ultrametric space, so totally disconnected, and so every $f \in C^n_b(G, E)$ factors through the component group $\pi_0(G)$. Once again, the theory of bounded cohomology over non-Archimedean valued fields reduces to the setting of t.d.l.c. groups. \\

The very first example shows that this notion of $\mathbb{K}$-amenability is too restrictive to admit a characterization in terms of bounded cohomology. Indeed, let $\mathbb{K} = (\mathbb{Q}_p, | \cdot |_p)$ be the field of $p$-adic numbers with the $p$-adic norm, and let $G$ be a finite group of order $n$. Then there exists a unique linear map $m : C_b(G, \mathbb{Q}_p) \to \mathbb{Q}_p$ that is invariant and satisfies $m(\mathbbm{1}_G) = 1$, but this map has norm $|n|_p^{-1}$ (Theorem \ref{finite}). In particular if $G$ admits elements of order $p$, then it is not $\mathbb{Q}_p$-amenable according to the previous definition. On the other hand, bounded cohomology with coefficients in a $\mathbb{Q}_p[G]$-module vanishes, independently of $p$ (every function on $G$ is automatically bounded).

As a more sophisticated example of how restrictive this notion is, it turns out that no topologically simple non-abelian group can be $\mathbb{Q}_2$-amenable according to this definition (Proposition \ref{2simple}). \\

In order to fix this, we relax the definition of amenability to obtain the following:

\begin{definition}
\label{def nka}

A \textit{normed $\mathbb{K}$-mean} on $G$ is a bounded linear map $m : C_b(G, \mathbb{K}) \to \mathbb{K}$ such that $m(\mathbbm{1}_G) = 1$. A group $G$ is \textit{normed $\mathbb{K}$-amenable} if it admits a normed left-invariant $\mathbb{K}$-mean. The infimum of the norms of such invariant means is denoted by $\| G \|_\mathbb{K}$, and we say that $G$ is \textit{$\mathbb{K}$-amenable of norm $\| G \|_\mathbb{K}$}. If $G$ is not normed $\mathbb{K}$-amenable, we write $\|G\|_\mathbb{K} = \infty$.
\end{definition}

\begin{remark}
\label{rem:attained}

The infimum $\| G \|_\mathbb{K}$ is clearly attained when $\mathbb{K}$ is \emph{discretely valued}, meaning that the norm takes values in a discrete subset of $\mathbb{R}$ (see Subsection \ref{preli field}). This is the case when $\mathbb{K} = \mathbb{Q}_p$. The general case is more subtle, but it is still true (Corollary \ref{norm attain}).
\end{remark}

The case of $\mathbb{K} = \mathbb{Q}_p$ will play a central role in this paper, so we rephrase the definitions in a more compact way:

\begin{definition}
\label{def npa}

A normed $\mathbb{Q}_p$-amenable group will be called \textit{normed $p$-adic amenable}, and we denote $\| G \|_p := \| G \|_{\mathbb{Q}_p}$.
\end{definition}

This is also an analogue of usual amenability over $\mathbb{R}$. Indeed, if a group $G$ admits a bounded linear map $0 \neq m : C_b(G, \mathbb{R}) \to \mathbb{R}$ that is $G$-invariant, then it is automatically amenable, even if this $m$ is not necessarily a mean. This fact is easily proven in the discrete case (see \cite[Lemma 3.2]{Frig} for a proof). In the continuous setting, it follows by taking advantage of the Riesz space structure on the dual space $C_b(G, \mathbb{R})^*$ \cite[20]{Riesz}. Indeed, given a left-invariant element $0 \neq m \in C_b(G, \mathbb{R})^*$, consider its absolute value $|m|$; then $|m|$ is non-zero, left-invariant, and positive. After normalizing it has norm 1, and then it follows by a standard argument that it must send $\mathbbm{1}_G$ to $1$ (see e.g. \cite[Theorem 3.2]{Pier}). Note how in this argument the ordering of $\mathbb{R}$ plays a crucial role.

On the other hand, in the case of $\mathbb{Q}_p$ (which cannot be ordered as a field) the introduction of a norm gives rise to a strictly weaker notion: indeed all finite groups are normed $p$-adic amenable, but as we mentioned before, a finite group with elements of order $p$ is not $\mathbb{Q}_p$-amenable in the sense of our previous definition. \\

This turns out to be the suitable analogue of canonical amenability to study bounded cohomology, at least for discrete groups. This is exemplified in the following result, which is the analogue of Trauber's characterization of amenability.

\begin{theorem}
\label{main2}

A discrete group $G$ is normed $\mathbb{K}$-amenable if and only if $H^n_b(G, E) = 0$ for every dual normed $\mathbb{K}[G]$-module $E$ and all $n \geq 1$.
\end{theorem}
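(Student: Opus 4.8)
The plan is to follow the proof of Trauber's classical theorem (the case $\mathbb{K} = \mathbb{R}$), verifying at each step that nothing uses the order structure of $\mathbb{R}$ nor the fact that a classical mean can be chosen of norm one. For the forward direction, suppose $G$ admits an invariant normed $\mathbb{K}$-mean $m$; note we only need $\lVert m \rVert < \infty$, not that the infimum $\lVert G \rVert_\mathbb{K}$ be attained. Fix a dual normed $\mathbb{K}[G]$-module $E = F^*$. I would first extend $m$ to a bounded $G$-equivariant map $\tilde m \colon C_b(G,E) \to E$ in the standard way: for $f \in C_b(G,E) = \ell^\infty(G,E)$ and $v \in F$, the function $g \mapsto \langle f(g), v \rangle$ lies in $C_b(G,\mathbb{K})$, so the rule $\langle \tilde m(f), v \rangle := m\bigl(g \mapsto \langle f(g), v \rangle\bigr)$ defines an element of $F^* = E$ with $\lVert \tilde m(f) \rVert \le \lVert m \rVert \, \lVert f \rVert$, and one checks that $\tilde m$ is linear, $G$-equivariant for the dual action, and restricts to the identity on constant functions. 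On the homogeneous cochain complex $C^\bullet_b(G,E)$ I would then set $k^n \colon C^n_b(G,E) \to C^{n-1}_b(G,E)$, $(k^n f)(g_0, \dots, g_{n-1}) := \tilde m_g\bigl[f(g, g_0, \dots, g_{n-1})\bigr]$, i.e. apply $\tilde m$ in a fresh first variable; since $\lVert k^n \rVert \le \lVert m \rVert$ this is well defined on bounded cochains, and left-invariance of $m$ together with $G$-equivariance of $\tilde m$ guarantees that $k^n$ sends $G$-invariant cochains to $G$-invariant cochains. A routine expansion of $\delta^{n-1}k^n + k^{n+1}\delta^n$, using only that $\tilde m$ is linear, $G$-equivariant and fixes constants, then shows this operator equals $\mathrm{id}$ on $C^n_b(G,E)$ for all $n \ge 1$, whence $H^n_b(G,E) = 0$ for $n \ge 1$.

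For the converse it suffices to use the hypothesis in degree $1$ for a single module. Put $A := C_b(G,\mathbb{K}) = \ell^\infty(G,\mathbb{K})$ with the left-translation action; then $\mathbb{K}\mathbbm{1}_G$ is a closed, $G$-fixed submodule (closed because $\mathbb{K}$ is complete), with quotient a normed $\mathbb{K}[G]$-module $Q := A/\mathbb{K}\mathbbm{1}_G$ and projection $\pi$. Evaluation at the identity, $f \mapsto f(e)$, is a bounded functional sending $\mathbbm{1}_G$ to $1$, so $s \colon Q \to A$, $s(\bar f) := f - f(e)\mathbbm{1}_G$, is a well-defined bounded (indeed, by the ultrametric inequality, isometric) $\mathbb{K}$-linear section of $\pi$. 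The defect of $G$-equivariance $c(g) := g \cdot s \cdot g^{-1} - s$ takes values in $\Hom_\mathbb{K}(Q, \ker \pi) = \Hom_\mathbb{K}(Q, \mathbb{K}\mathbbm{1}_G) \cong Q^*$, because $\pi$ is equivariant, and defines a bounded $1$-cocycle $c \colon G \to Q^*$ with $\lVert c(g) \rVert \le \lVert s \rVert$. Since $Q^*$ is a dual normed $\mathbb{K}[G]$-module, the hypothesis gives $H^1_b(G, Q^*) = 0$, so $c(g) = g \cdot b - b$ for some $b \in Q^*$; then $s' := s - b$, reading $b$ as a map $Q \to \mathbb{K}\mathbbm{1}_G \subseteq A$, is a bounded $G$-equivariant section of $\pi$. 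Consequently the map $m \colon A \to \mathbb{K}$ determined by $f - s'(\pi f) = m(f)\mathbbm{1}_G$ is bounded, $G$-invariant and satisfies $m(\mathbbm{1}_G) = 1$, i.e. it is an invariant normed $\mathbb{K}$-mean, so $G$ is normed $\mathbb{K}$-amenable.

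The argument is structurally the classical one, and the single genuinely non-Archimedean point is that over $\mathbb{K}$ one cannot normalize an invariant bounded functional with $\mathbbm{1}_G \mapsto 1$ into a genuine mean via a Riesz-space or absolute-value trick — but this is exactly why Definition \ref{def nka} asks for no more than such a functional, so the converse direction lands directly on the right object with no further work. The remaining steps are routine: closedness of $\mathbb{K}\mathbbm{1}_G$ and boundedness of the section $s$ (immediate from completeness of $\mathbb{K}$ and the ultrametric inequality); checking that the operators $k^n$ and the cocycle $c$ really land in the asserted spaces of bounded, $G$-invariant cochains; and the sign bookkeeping in the identity $\delta k + k \delta = \mathrm{id}$. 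I expect the mildest friction to be in this last computation and in the identification $\Hom_\mathbb{K}(Q, \mathbb{K}\mathbbm{1}_G) \cong Q^*$ of $G$-modules, neither of which is an obstacle.
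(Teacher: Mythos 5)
Your proposal is correct and follows essentially the same route as the paper: the forward direction is the standard averaging homotopy in the first homogeneous variable built from the mean (the paper's Theorem \ref{luc 0}, specialized to discrete groups where continuity and left uniform continuity coincide), and the converse only uses $H^1_b$ with coefficients in $(C_b(G,\mathbb{K})/\mathbb{K})^*$. Your defect-of-section cocycle $c(g) = g\cdot s\cdot g^{-1} - s$ is precisely the paper's $\mathbb{K}$-Johnson class in inhomogeneous form (one computes $c(g)(\bar f) = (f(e) - f(g))\mathbbm{1}_G$, i.e.\ $\delta_e - \delta_g$), and your recovery of the mean from the equivariant section $s'$ is the same as the paper's $m = e_1 - \hat\psi(1)$.
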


The proof of the ``only if'' part of Theorem \ref{main2} will follow from a more general vanishing theorem, namely the vanishing of \textit{left uniformly continuous bounded cohomology}, beyond the discrete case (Theorem \ref{luc 0}). In particular, the ``only if'' part of Theorem \ref{main2} also holds for compact groups. In the real setting, continuous bounded cohomology and left uniformly continuous bounded cohomology are isomorphic, but so far we cannot prove the same in the non-Archimedean one. We also use Theorem \ref{main2} to prove a vanishing result for profinite representations of discrete groups admitting only finitely many virtual $p$-quotients (Corollary \ref{prof rep}), where $p$ is the characteristic of the residue field of $\mathbb{K}$ (which is just $p$ for the case of $\mathbb{Q}_p$). \\

At the beginning of this project we had started investigating $p$-adic bounded cohomology for discrete groups. We had proven directly that normed $p$-adic amenable groups are periodic and amenable, and proven Theorem \ref{main2} in the $p$-adic case. Only afterwards was Schikhof's paper \cite{Schik} pointed out to us by Nicolas Monod. Here the author characterizes $\mathbb{K}$-amenability in the sense of our first definition, where a mean is required to have norm $1$. By Corollary \ref{norm attain} (see Remark \ref{rem:attained}), a group is $\mathbb{K}$-amenable in the sense of Schikhof if and only if it is $\mathbb{K}$-amenable of norm $1$, in the sense of Definition \ref{def nka}.

In order to state Schikhof's characterization, we need two definitions:

\begin{definition}
A (t.d.l.c.) group $G$ is called \textit{locally elliptic} if it satisfies one of the following equivalent conditions:
\begin{enumerate}
\item $G$ is a directed union of compact open subgroups.
\item Every compact subset of $G$ is contained in a compact subgroup.
\item Every finite subset of $G$ is contained in a compact subgroup.
\end{enumerate}
\end{definition}

\begin{remark}
Locally elliptic groups are also called \textit{torsional} \cite{Schik, NHA} or \textit{compactly ruled} or \textit{topologically locally finite} \cite{period}. See \cite[Proposition 1.3]{period} for a proof of the equivalence of these three conditions.
\end{remark}

\begin{definition}
Let $n$ be an integer. The group $G$ is \emph{$n$-free} if for every pair of open subgroups $V \leq U \leq G$, the index $[U:V]$, whenever finite, is \emph{not} divisible by $n$. A group is \emph{$n^\mathbb{N}$-free} if it is $n^k$ free for some $k \geq 1$, hence $n^l$-free for any $l \geq k$. Clearly every group is 0-free.
\end{definition}

Here is Schikhof's characterization of $\mathbb{K}$-amenability in the case $\mathbb{K} = \mathbb{Q}_p$:

\begin{theorem}[Schikhof]
A group $G$ is $\mathbb{Q}_p$-amenable (in the sense of Schikhof) if and only if it is locally elliptic and $p$-free.
\end{theorem}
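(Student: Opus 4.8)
\emph{Strategy.} There are two natural routes. The first is to deduce the statement from the algebraic characterization of normed $p$-adic amenability established elsewhere in the paper, together with Corollary \ref{norm attain}: the latter says that $G$ is $\mathbb{Q}_p$-amenable in Schikhof's sense precisely when it is normed $p$-adic amenable of norm $1$, and the former identifies the latter class with the locally elliptic groups all of whose finite discrete subquotients have order prime to $p$. This last condition is exactly $p$-freeness: if $p\mid[U:V]$ for open $V\le U\le G$ then $p$ divides $|U/\mathrm{core}_U(V)|$, a finite discrete subquotient; conversely, given such a subquotient witnessing $p\mid[U:V]$, local ellipticity lets one put the finitely many cosets of $U/V$ inside a compact open subgroup $W$, so that $WV=U$ and $[W:W\cap V]=[U:V]$ is divisible by $p$. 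I will instead sketch a more hands-on argument for each implication, which is also essentially what underlies the norm formula.

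\emph{The ``if'' direction.} Suppose $G$ is locally elliptic and $p$-free, and write $G=\bigcup_\alpha U_\alpha$ as a directed union of compact open subgroups. On each profinite group $U_\alpha$, $p$-freeness makes $[U_\alpha:N]$ a $p$-adic unit for every open normal $N\trianglelefteq U_\alpha$, so the averages $m_\alpha(f):=[U_\alpha:N]^{-1}\sum_{gN\in U_\alpha/N}f(gN)$ for $f$ factoring through $U_\alpha/N$ are consistent, two-sided invariant, and of norm $1$ with $m_\alpha(\mathbbm{1}_{U_\alpha})=1$ -- the $\mathbb{Q}_p$-valued Haar functional, which exists exactly because the obstruction of Theorem \ref{finite} is absent here. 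I then glue the $m_\alpha$ by an ultrafilter limit: fix a non-principal ultrafilter $\mathcal U$ on the index set containing every up-set $\{\beta:\beta\ge\alpha\}$ and set $m(f):=\lim_{\mathcal U}m_\alpha(f|_{U_\alpha})$ for $f\in C_b(G,\mathbb{Q}_p)$. The limit exists because the values lie in the ball $\{x\in\mathbb{Q}_p:|x|\le\|f\|\}$, which is compact since $\mathbb{Q}_p$ is locally compact. One checks that $m$ is $\mathbb{Q}_p$-linear with $\|m\|\le 1$, that $m(\mathbbm{1}_G)=1$ and hence $\|m\|=1$, and that $m$ is left-invariant, using that for fixed $g\in G$ one has $(g\cdot f)|_{U_\beta}=g\cdot(f|_{U_\beta})$, and therefore $m_\beta((g\cdot f)|_{U_\beta})=m_\beta(f|_{U_\beta})$, for all $\beta$ in a set belonging to $\mathcal U$.

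\emph{The ``only if'' direction, and the main obstacle.} Let $m$ be a norm-one invariant $\mathbb{Q}_p$-mean on $G$. That $G$ is locally elliptic is the general implication for normed $\mathbb{K}$-amenable groups, applied to the bounded invariant mean $m$. For $p$-freeness, suppose some compact open subgroup $W$ is not $p$-free (by the reduction above, this is equivalent to $G$ not being $p$-free); then $W$ has a finite quotient $Q=W/M$ with $p\mid|Q|$. Extending functions on $W$ by zero (legitimate since $W$ is clopen) yields a left-$W$-invariant functional $m_W$ on $C(W,\mathbb{Q}_p)$ of norm $\le 1$, and pulling back along $W\twoheadrightarrow Q$ yields a $Q$-invariant functional $\nu$ on $\mathbb{Q}_p^{Q}$ with $\nu(\mathbbm{1}_Q)=m(\mathbbm{1}_W)$ and $\|\nu\|\le 1$. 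Invariance forces $\nu(\delta_q)=m(\mathbbm{1}_W)/|Q|$ for all $q$, so $\|\nu\|=|Q|_p^{-1}\,|m(\mathbbm{1}_W)|\ge p\,|m(\mathbbm{1}_W)|$, and thus $|m(\mathbbm{1}_W)|\le 1/p$; this is the quantitative version of Theorem \ref{finite}. When $G$ is compact this already contradicts $\|m\|=1$, since $W\le G$ gives $1=|m(\mathbbm{1}_G)|=|[G:W]|_p\,|m(\mathbbm{1}_W)|\le 1/p$. The non-compact case is the genuine difficulty: then $\mathbbm{1}_G$ is not linked to $\mathbbm{1}_W$ by any finite-index relation, the contradiction disappears, and one really needs the non-Archimedean fact that the discrete groups witnessing a failure of $p$-freeness -- quotients of $G$ such as $\mathbb{Z}(p^\infty)=\mathbb{Z}[1/p]/\mathbb{Z}$ or $\bigoplus_{\mathbb N}\mathbb{Z}/p$, which carry no nontrivial finite quotients -- admit no norm-one invariant $\mathbb{Q}_p$-mean, i.e.\ that the relevant $\mathbb{Q}_p$-valued finitely additive invariant set functions are unbounded. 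This is precisely the point at which I would either reproduce Schikhof's functional-analytic argument or, more efficiently, invoke the algebraic characterization of $\|G\|_p$ as in the first route above.
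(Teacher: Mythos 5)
First, a point of order: the paper does not prove this statement. It is quoted from Schikhof's paper \cite{Schik} and is used as an \emph{input} elsewhere (its compact case enters the proof of Theorem \ref{comp}, hence the ``if'' direction of Theorem \ref{main1}). So there is no in-paper proof to compare against, and your ``first route'' — deducing the statement from Theorem \ref{main1} plus Corollary \ref{norm attain} — would be circular for the ``if'' direction; it is legitimate only for the ``only if'' direction. Your hands-on ``if'' argument, by contrast, is correct and self-contained: the $p$-adic Haar functional on each compact open $U_\alpha$ has norm $1$ exactly because $p$-freeness makes every index $[U_\alpha:N]$ a $p$-adic unit, it extends to $C(U_\alpha,\mathbb{Q}_p)$ by density of locally constant functions, and the ultrafilter limit over the directed system is the same compactness device as Proposition \ref{dirun} (which uses Banach--Alaoglu instead).

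The gap you flag in the ``only if'' direction is real, but you have located it in the wrong place, and the fix is elementary. Extension by zero is the wrong restriction of $m$ to $W$: it yields $m_W(\mathbbm{1}_W)=m(\mathbbm{1}_W)$, which need not equal $1$, and that is the only reason your contradiction evaporates for non-compact $G$. Restrict instead via a coset section as in Proposition \ref{opsub}: fix representatives $R$ of $W\backslash G$, set $\varphi(wr)=w$ and $\tilde m(f):=m(f\circ\varphi)$; then $\tilde m$ is left-$W$-invariant, $\|\tilde m\|_{op}\le\|m\|_{op}=1$, and $\tilde m(\mathbbm{1}_W)=m(\mathbbm{1}_W\circ\varphi)=m(\mathbbm{1}_G)=1$. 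Feeding $\tilde m$ into your own computation on $Q=W/M$ forces $\nu(\delta_q)=1/|Q|$, hence $\|\nu\|_{op}\ge|Q|_p^{-1}\ge p>1$, contradicting $\|\nu\|_{op}\le 1$. No appeal to $\mathbb{Z}(p^\infty)$ or to Schikhof's functional analysis is needed for this step (and, as an aside, $\bigoplus_{\mathbb{N}}\mathbb{Z}/p\mathbb{Z}$ has plenty of nontrivial finite quotients); this is exactly Proposition \ref{amen p^N free} via Theorem \ref{finite} and Propositions \ref{opsub} and \ref{quot}. The genuinely hard half of Schikhof's theorem is the one you dispatch in a single sentence, namely that the mean forces local ellipticity; if you invoke Theorem \ref{equiv} for it, note that its standing hypothesis $p\notin\mathbb{L}(G)$ must be verified first and follows from the $p$-freeness of compact open subgroups, so the two halves of your argument must be run in the opposite order: $p$-freeness of compact open subgroups first, then local ellipticity, then full $p$-freeness of $G$ by the reduction of Lemma \ref{free_co}.
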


While weaker than Schikhof's notion of $\mathbb{K}$-amenability, our notion of normed $\mathbb{K}$-amenability still admits a strong characterization, which has moreover the advantage of giving an algebraic interpretation to the numerical invariant $\| G \|_\mathbb{K}$. Once again, we state it in the case $\mathbb{K} = \mathbb{Q}_p$.

\begin{theorem}
\label{main1}

A group $G$ is normed $p$-adic amenable if and only if it is locally elliptic and $p^\mathbb{N}$-free. In this case, $\|G\|_p = \min \{ p^k \geq 1 \mid G \text{ is } p^{k+1} \text{-free} \} = \max \{ p^k \mid \text{there exist open subgroups } V \leq U \leq G \text{ such that } p^k \mid [U:V] \}$.
\end{theorem}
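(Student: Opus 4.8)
\emph{Plan.} The idea is to reduce to profinite groups, where Theorem~\ref{finite} is decisive, and then to understand how the norm behaves under the passage between a locally elliptic group and its compact open subgroups. Write $v_p$ for the $p$-adic valuation (so $|n|_p=p^{-v_p(n)}$) and set $M(G):=\sup\{p^k : \text{there exist open } V\le U\le G \text{ with } p^k\mid[U:V]\}\in\{1,p,p^2,\dots,\infty\}$; since $G$ is $p^{k+1}$-free exactly when no such index is divisible by $p^{k+1}$, one has $M(G)=\min\{p^k\ge1:G\text{ is }p^{k+1}\text{-free}\}$ whenever $M(G)<\infty$. So it suffices to prove that $G$ is normed $p$-adic amenable iff $G$ is locally elliptic and $M(G)<\infty$, and that then $\|G\|_p=M(G)$. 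First I would record two transfer principles. (a) If $H\le G$ is \emph{open}, decomposing $G=\bigsqcup_j Hg_j$ into right cosets --- each clopen and a copy of $H$ as a left $H$-space --- identifies $C_b(G,\mathbb{Q}_p)$ with the $\ell^\infty$-sum of copies of $C_b(H,\mathbb{Q}_p)$ with the diagonal left $H$-action, and the ``constant family'' inclusion $C_b(H,\mathbb{Q}_p)\hookrightarrow C_b(G,\mathbb{Q}_p)$ is then left-$H$-equivariant, isometric, and sends $\mathbbm{1}_H$ to $\mathbbm{1}_G$; restricting an invariant mean gives $\|H\|_p\le\|G\|_p$. The same recipe applied to the quotient map $G\twoheadrightarrow G/N$ (pulling functions back) gives $\|G/N\|_p\le\|G\|_p$ for closed $N\trianglelefteq G$. (b) If $G=\bigcup_i K_i$ is a directed union of compact open subgroups with $\sup_i\|K_i\|_p<\infty$, fix normed means $m_i$ on $K_i$ with $\|m_i\|\le\sup_j\|K_j\|_p$ (using Corollary~\ref{norm attain}) and an ultrafilter $\mathcal{U}$ on the index set containing every final segment; then $f\mapsto\lim_{\mathcal{U}}m_i(f|_{K_i})$, the limit existing since $\mathbb{Q}_p$ is locally compact and the values lie in a fixed ball, is a normed left-$G$-invariant mean of norm $\le\sup_i\|K_i\|_p$. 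Combined with (a) (whose relevant inequality $\|K_i\|_p\le\|G\|_p$ needs no finiteness), this gives $\|G\|_p=\sup_i\|K_i\|_p$ for locally elliptic $G$.

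Second, I would settle the profinite case. For $K$ profinite, $C(K,\mathbb{Q}_p)$ is the completion of $\bigcup_N C(K/N,\mathbb{Q}_p)$ over open normal $N\trianglelefteq K$. On each $C(K/N,\mathbb{Q}_p)$ the normalized counting functional $f\mapsto[K:N]^{-1}\sum_{x\in K/N}f(x)$ is, by Theorem~\ref{finite} applied to the finite group $K/N$, the unique invariant functional with $\mathbbm{1}\mapsto1$, of norm $p^{v_p([K:N])}$; these are compatible along the inclusions $C(K/N,\mathbb{Q}_p)\hookrightarrow C(K/N',\mathbb{Q}_p)$, hence glue to an invariant functional on $\bigcup_N C(K/N,\mathbb{Q}_p)$ of norm $\sup_N p^{v_p([K:N])}$, which extends by continuity to $C(K,\mathbb{Q}_p)$ with the same norm. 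Conversely, pulling a normed mean back along $K\twoheadrightarrow K/N$ and using the uniqueness in Theorem~\ref{finite} forces $\|K\|_p\ge p^{v_p([K:N])}$ for every $N$. Hence $\|K\|_p=\sup_N p^{v_p([K:N])}$; multiplicativity of the index gives $v_p([U:V])\le v_p([K:V])\le v_p(|K|)$ for all open $V\le U\le K$ while the $v_p([K:N])$ exhaust $v_p(|K|)$, so this common value equals $M(K)$, which is finite exactly when the pro-$p$ Sylow subgroup of $K$ is finite, i.e.\ when $K$ is $p^{\mathbb{N}}$-free. Using (a) and (b) for locally elliptic $G=\bigcup_i K_i$: $\sup_i M(K_i)=\sup_i\|K_i\|_p=\|G\|_p$, and $\sup_i M(K_i)\le M(G)$ because open subgroups of the $K_i$ are open in $G$; so once $M(G)\le\|G\|_p$ is known in general (next paragraph), all these coincide, proving the ``if'' direction together with the norm formula.

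Third, the ``only if'' direction; assume $\|G\|_p<\infty$. For $M(G)<\infty$: given open $V\le U\le G$, replace $V$ by its $U$-core $V_0=\bigcap_u uVu^{-1}$, a finite (hence open) normal subgroup of $U$ with $v_p([U:V_0])\ge v_p([U:V])$; pulling $C(U/V_0,\mathbb{Q}_p)$ back along $U\twoheadrightarrow U/V_0$ and then into $C_b(G,\mathbb{Q}_p)$ via (a) gives a left-$U$-equivariant isometry with $\mathbbm{1}\mapsto\mathbbm{1}$, so restricting the mean and applying Theorem~\ref{finite} to $U/V_0$ yields $\|G\|_p\ge p^{v_p([U:V_0])}\ge p^{v_p([U:V])}$; hence $\|G\|_p\ge M(G)$, so $M(G)<\infty$ and $G$ is $p^{\mathbb{N}}$-free. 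For local ellipticity: if $G$ were not locally elliptic, then --- using that for t.d.l.c.\ groups local ellipticity is equivalent to topological periodicity, see \cite{period} --- there is $g\in G$ with $\Gamma:=\overline{\langle g\rangle}$ non-compact; a short dichotomy (if $\langle g\rangle$ is discrete in $\Gamma$ it is closed, so $\Gamma=\langle g\rangle$; if not, $\langle g\rangle$ meets some compact open subgroup of $\Gamma$ in a finite-index subgroup $\langle g^d\rangle$, making $\Gamma$ a finite union of translates of the compact group $\overline{\langle g^d\rangle}$, hence compact --- a contradiction) shows $\Gamma=\langle g\rangle$ is an infinite cyclic, closed, discrete subgroup. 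The left translation action of $\Gamma$ on $G$ is free and proper, so $G$ is a principal $\mathbb{Z}$-bundle over the orbit space, which is totally disconnected, locally compact and paracompact; such a bundle is trivial (refine a trivializing clopen cover to a clopen partition and patch the local sections), giving a continuous $\Gamma$-equivariant surjection $\psi:G\to\mathbb{Z}$ (with $+1$ on $\mathbb{Z}$). Pulling functions back along $\psi$ is an isometric $\Gamma$-equivariant embedding $\ell^\infty(\mathbb{Z},\mathbb{Q}_p)\hookrightarrow C_b(G,\mathbb{Q}_p)$ with $\mathbbm{1}\mapsto\mathbbm{1}$, so a normed mean on $G$ restricts to a normed $\mathbb{Q}_p$-mean on $\mathbb{Z}$ of finite norm --- impossible, since pulling back along $\mathbb{Z}\twoheadrightarrow\mathbb{Z}/p^k$ and invoking Theorem~\ref{finite} forces every such mean to have norm $\ge p^k$ for all $k$. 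Thus $G$ is locally elliptic, and the analysis of the ``if'' direction then gives $\|G\|_p=M(G)$.

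I expect the main obstacle to be the local ellipticity step, and within it the transfer of the mean from $G$ down to the copy of $\mathbb{Z}$: the naive attempt --- restricting to the $\Gamma$-invariant subspace of functions supported on the clopen orbit $\Gamma\langle g\rangle$-translates of a compact open subgroup --- fails precisely because a $\mathbb{Q}_p$-mean carries no positivity and may vanish on the indicator of that region, so one genuinely needs a \emph{global} equivariant map $G\to\mathbb{Z}$, equivalently the triviality of the $\mathbb{Z}$-bundle $G\to\Gamma\backslash G$ --- which is exactly where total disconnectedness is used --- to obtain an embedding that preserves the constant function. Everything else is careful but routine tracking of norms through the transfer maps.
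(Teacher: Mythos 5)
Your ``if'' direction is essentially sound, and in fact a bit more self-contained than the paper's: you recompute the profinite case directly from Theorem~\ref{finite} via density of locally constant functions and compatibility of the normalized counting functionals (where the paper instead quotes Schikhof's norm-one result and then uses Lemma~\ref{comp U p} and Proposition~\ref{ext}), and your ultrafilter-limit construction for directed unions is a valid substitute for the Banach--Alaoglu argument of Proposition~\ref{dirun}. The inequality $\|G\|_p \geq M(G)$ is also fine and is just Proposition~\ref{amen p^N free} in disguise. The fatal problem is the local ellipticity step. You assert that for t.d.l.c.\ groups local ellipticity is equivalent to topological periodicity; this is false, and the gap between the two notions is precisely the hard content of the theorem. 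Periodicity says each \emph{single} element generates a relatively compact subgroup; local ellipticity says every \emph{finite} (equivalently compact) subset is contained in a compact subgroup. In the discrete case this is the difference between torsion and locally finite: Grigorchuk's group is a periodic $2$-group, hence periodic and $p$-free for every odd $p$, but it is finitely generated and infinite, so not locally elliptic (the paper points at exactly this example, and at the distinction, in Example~\ref{npa period}); Tarski monsters $T_q$ with $q \neq p$ give further periodic, $p$-free, non-locally-elliptic groups. Your dichotomy argument (Weil's Lemma plus the clopen-section trivialization of the $\mathbb{Z}$-bundle $G \to \Gamma\backslash G$, transferring the mean to $\mathbb{Z}$ and contradicting Theorem~\ref{finite} applied to $\mathbb{Z}/p^k\mathbb{Z}$) is a correct way to rule out closed infinite cyclic discrete subgroups, i.e.\ it proves that a normed $p$-adic amenable group is \emph{periodic} -- but that is all it proves, and no formal combination of periodicity with $p^\mathbb{N}$-freeness can yield local ellipticity, as the examples above show.

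To close this gap one has to use the invariant mean again in a genuinely global way, which is what the paper does: $p^\mathbb{N}$-freeness gives $p \notin \mathbb{L}(G)$, hence a $p$-adic Haar integral, and then the convolution machinery of Lemma~\ref{lemma reiter} converts the mean into compactly supported functions that are almost invariant (with error $< 1$) under any prescribed compact set $K$; the ultrametric inequality then shows that $H = \{ g \mid \|g \cdot f - f\|_\infty < 1 \}$ is a subgroup containing $K$ with compact closure (Theorem~\ref{equiv}, Theorem~\ref{Reiter}). Nothing in your proposal substitutes for this step, so as written the ``only if'' direction is not proved. (A minor slip elsewhere: the $U$-core $V_0$ of $V$ is a finite-\emph{index} open normal subgroup of $U$, not a finite subgroup.)
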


Note that given open subgroups $V \leq U \leq G$ such that $[U : V]$ is finite, up to taking a finite-index open subgroup of $V$ we may assume that $V$ is normal in $U$. Then the condition $p^k \mid [U : V]$ is equivalent to the existence of a subgroup of order $p^k$ in the finite discrete group $U/V$, which must be of the form $U'/V$, for an open subgroup $V \leq U' \leq V$. Therefore $\| G \|_p$ is the maximal order of a finite discrete $p$-quotient of an open subgroup of $G$.

Since compact groups are amenable, and directed unions of amenable groups are amenable, it follows from Theorem \ref{main1} that every normed $p$-adic amenable group is amenable (Corollary \ref{npa amenable}), something that was already remarked by Schikhof for $\mathbb{Q}_p$-amenability \cite[End of Section 3]{Schik}. \\

We will provide many examples of normed $p$-adic amenable groups, ranging from matrix groups over locally finite fields or local fields, to groups of tree automorphisms, many of which do not have norm $1$. In particular we will provide, for each $p > 2$, an infinite discrete simple group $G$ such that $\| G \|_p = 1$ (Examples \ref{PSL p > 3} and \ref{Suzuki}), and an infinite discrete simple group $G$ such that $\| G \|_2 = 4$ (Example \ref{PSL p 2}). These examples are optimal, in the sense that there exists no non-abelian topologically simple group with $\| G \|_2 < 4$ (Proposition \ref{2simple}). \\

In Schikhof's paper the main theorem is proven by reducing $\mathbb{Q}_p$-amenability to $\mathbb{F}_p$-amenability, where $\mathbb{F}_p$ is seen as a trivially valued field. This simplifies somewhat the proof of the hardest implication, namely that $\mathbb{Q}_p$-amenable groups are locally elliptic. Such a reduction is not possible in our case, but some of the other techniques do carry over. In particular we will encounter an analogue of Reiter's property (Theorem \ref{Reiter}). Surprisingly, this analogue only requires, for each compact subset $K$ of $G$, a compactly supported function that varies by less than $1$ under elements of $K$, and directly implies the existence of invariant functions. By contrast, in the real setting, the whole point of Reiter's property is that one can choose an arbitrarily small threshold for almost-invariance, and that invariant functions do not necessarily exist \cite[2.6.B]{Pier}. \\

A version of Theorem \ref{main1} holds for a general non-Archimedean valued field $\mathbb{K}$ (see Subsection \ref{preli field} for the relevant definitions). The property of being locally elliptic plays the same role as in Theorem \ref{main1} in the characterization over spherically complete fields (Theorem \ref{main sc}), while for non-spherically complete fields this condition is replaced by compactness (Theorem \ref{main nsc}). We prove these theorems by reducing normed $\mathbb{K}$-amenability to normed $\mathbb{K}_0$-amenability, where $\mathbb{K}_0$ is the closure of the prime field of $\mathbb{K}$. When $\mathbb{K}_0$ is isomorphic to either $\mathbb{F}_p$ or $\mathbb{Q}$ (both trivially valued) we can use Schikhof's Theorem, and prove that the introduction of a norm does not lead to a weaker notion of amenability. The only other possibility is that $\mathbb{K}_0$ is isomorphic to $\mathbb{Q}_p$, in which case we can use Theorem \ref{main1}. Therefore the weakening of the notion of amenability is of interest only in this case, and normed $p$-adic amenability already contains the necessary information.

For non-Archimedean ordered fields with a compatible norm, one may add the positivity requirement present in the usual definition of amenability. We prove that, just as in the real setting, this additional requirement is superfluous for any non-trivially valued field (Proposition \ref{ordered}). \\

While more flexible than $\mathbb{K}$-amenability in the sense of Schikhof, normed $\mathbb{K}$-amenability is still a very restrictive condition, as shown by Theorem \ref{main1}. Therefore Theorem \ref{main2} suggests that there should be plenty of groups with rich bounded cohomology with coefficients in normed $\mathbb{K}$-vector spaces. Motivated by this, we look at bounded cohomology with trivial $\mathbb{K}$-coefficients. \\

To put the following results in perspective, the reader should note that bounded cohomology with trivial $\mathbb{R}$-coefficients is infamously hard to compute. In fact, until recently there were only few non-amenable discrete groups for which $H^\bullet_b(G, \mathbb{R})$ was computed in every degree \cite{MM, bcd} (several new examples have emerged in the year following the first version of this paper \cite{newcomp1, newcomp2, newcomp3, newcomp4}). By contrast, in the non-Archimedean setting we obtain strong results about the bounded cohomology of general groups, which allow to understand it completely in the case of discrete groups satisfying certain finiteness conditions. \\

Given a group $G$ and a $\mathbb{K}[G]$-module $E$, the inclusion of the bounded cochain complex into the standard one induces a map $c^\bullet : H^\bullet_{cb}(G, E) \to H^\bullet_c(G, E)$ from continuous bounded cohomology to ordinary continuous cohomology, called the \emph{comparison map}. When $E = \mathbb{K} = \mathbb{R}$ and $G$ is discrete, a fruitful (at least in low degrees) method for studying $H^n_b(G, \mathbb{R})$ is to understand separately the kernel and the image of $c^n$. The kernel can be very large for groups exhibiting some negative curvature: for instance if $G$ is acylindrically hyperbolic, the kernels of $c^2$ and $c^3$ are uncountably dimensional \cite{FPS}. On the other hand, the surjectivity of $c^n$ (especially for $n = 2$) is typically a hyperbolic feature, and when taking into account more general coefficient modules it provides a characterization of hyperbolic groups \cite{min1, min2}. \\

Contrary to the real case, when $\mathbb{K}$ is a non-Archimedean valued field the comparison map is very often injective. In fact, for a very large class of fields it is \emph{always} injective:

\begin{theorem}[Theorem \ref{qz split}]
Let $G$ be a group, $\mathbb{K}$ a non-Archimedean valued field such that $\mathbb{K}$ has either positive characteristic or a residue field of characteristic $0$ (the latter class includes all ordered fields with a compatible norm). Then the comparison map $c^n : H^n_{cb}(G, \mathbb{K}) \to H^n_c(G, \mathbb{K})$ is injective for all $n \geq 1$.
\end{theorem}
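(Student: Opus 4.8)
The plan is to work directly with the homogeneous cochain complexes and produce, from every bounded cocycle that bounds in the unbounded complex, an explicit bounded primitive. Write $\mathcal{O} = \mathcal{O}_\mathbb{K} := \{x \in \mathbb{K} : |x| \le 1\}$ for the valuation ring, and recall that a class $\xi \in H^n_{cb}(G,\mathbb{K})$ lies in $\ker c^n$ precisely when it is represented by a bounded cocycle $\omega \in C^n_{cb}(G,\mathbb{K})$ with $\omega = \delta\beta$ for some a priori unbounded continuous $G$-invariant cochain $\beta \in C^{n-1}_c(G,\mathbb{K})$. To conclude $\xi = 0$ I must manufacture a \emph{bounded} continuous $G$-invariant $\beta'$ with $\delta\beta' = \omega$.

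The key input is a continuous additive retraction $r : \mathbb{K} \to \mathcal{O}$, i.e. a continuous homomorphism of abelian groups with $r|_{\mathcal{O}} = \mathrm{id}_{\mathcal{O}}$, and this is exactly where the hypothesis enters. If $\mathrm{char}\,\mathbb{K} = p > 0$, then $\mathbb{K}$ is an $\mathbb{F}_p$-vector space and $\mathcal{O}$ is an $\mathbb{F}_p$-subspace; if $\mathrm{char}\,\mathbb{K} = 0$ and the residue field has characteristic $0$, then every nonzero integer is a unit of $\mathcal{O}$, so $\mathbb{Q} \subseteq \mathcal{O}$, $\mathbb{K}$ is a $\mathbb{Q}$-vector space and $\mathcal{O}$ is a $\mathbb{Q}$-subspace. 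In either case, over the relevant field $F \in \{\mathbb{F}_p,\mathbb{Q}\}$ the quotient map $q : \mathbb{K} \to \mathbb{K}/\mathcal{O}$ is $F$-linear and hence admits an $F$-linear — in particular additive — section $s$; setting $r := \mathrm{id}_\mathbb{K} - s \circ q$ gives an additive retraction onto $\mathcal{O} = \ker q$. Continuity of $r$ is then automatic: $\mathcal{O}$ is a ball, hence open, so $\mathbb{K}/\mathcal{O}$ is discrete, $s$ is continuous, and $r = \mathrm{id}_\mathbb{K} - s\circ q$ is continuous.

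With $r$ in hand the rest is immediate. After rescaling $\omega$ and $\beta$ by a common scalar $c^{-1}$ — using that $\omega$ is bounded and that $|\mathbb{K}^\times|$ is cofinal in $\mathbb{R}_{>0}$, the trivially valued case being trivial since there $C_b = C$ — one may assume $\omega$ takes values in $\mathcal{O}$, so that $r \circ \omega = \omega$. Put $\beta' := r \circ \beta$. Then $\beta'$ is continuous and $G$-invariant, since $r$ is continuous and the coefficients carry the trivial action; it is bounded, being $\mathcal{O}$-valued; and, crucially, since the simplicial differential $\delta$ is built only from alternating sums and permutations of the arguments, it commutes with postcomposition by the additive map $r$, so $\delta\beta' = r \circ \delta\beta = r \circ \omega = \omega$. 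Hence $\omega$ is a bounded coboundary and $\xi = 0$, giving injectivity of $c^n$ for every $n \ge 1$.

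The only genuine obstacle is the construction of $r$, and it is worth noting that the hypothesis is exactly what is needed: for $\mathbb{K} = \mathbb{Q}_p$ the sequence $0 \to \mathbb{Z}_p \to \mathbb{Q}_p \to \mathbb{Q}_p/\mathbb{Z}_p \to 0$ does not split even as abelian groups ($\mathbb{Z}_p$ is not divisible while $\mathbb{Q}_p$ is), so no such retraction can exist — consistently with the fact, used elsewhere in the paper, that over $\mathbb{Q}_p$ the comparison map may fail to be injective. Once $r$ is available, the pushforward $\beta \mapsto r\circ\beta$ does all the work; the one point I would be careful to pin down is the bookkeeping for the homogeneous resolution (all functions versus the $G$-invariant ones), so that it is transparent that $r$ commutes both with $\delta$ and with passage to $G$-invariants.
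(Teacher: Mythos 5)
Your argument is correct and is essentially the paper's own proof in different packaging: the additive splitting of $0 \to \mathfrak{o} \to \mathbb{K} \to \mathbb{K}/\mathfrak{o} \to 0$ over $\mathbb{F}_p$ or $\mathbb{Q}$ when $\chi(\mathbb{K}) = \chi(\mathfrak{r})$ is exactly Proposition \ref{class def}, and your retraction $r = \mathrm{id}_\mathbb{K} - s \circ q$ applied to the unbounded primitive $\beta$ is the complementary form of the paper's step (Lemma \ref{qz key}) of replacing the quasicocycle $f$ by the true cocycle $\iota \pi f$ at distance at most $1$. The only differences are cosmetic (homogeneous invariant complex versus the bar resolution, retraction versus section), so no further comparison is needed.
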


The other non-Archimedean valued fields are those that have characteristic $0$ and a residue field of positive characteristic, such as $\mathbb{Q}_p$. For those the comparison map is not always injective (in fact, there is always a group for which $c^2$ is not injective), but we can still say a lot, especially in degree $2$. In this case, the kernel of the comparison map $c^2 : H^2_{cb}(G, \mathbb{K}) \to H^2_c(G, \mathbb{K})$ is described by \emph{quasimorphisms}, namely continuous maps $f : G \to \mathbb{K}$ such that the quantity $|f(gh) - f(g) - f(h)|_\mathbb{K}$ is uniformly bounded. More precisely, the kernel of $c^2$ is the space of quasimorphisms modulo \emph{trivial} ones: those that are at a bounded distance from a homomorphism. Our analysis of quasimorphisms of $\mathbb{Q}_p$ will lead to a classification result (Theorem \ref{class qm}), which generalizes to finite extensions of $\mathbb{Q}_p$ (Corollary \ref{qm ext Qp}). These results show that the existence of a non-trivial quasimorphism on $G$ (equivalently, the non-injectivity of $c^2$) imposes strong restrictions on the subgroup structure of $G$, by means of a ``virtual descending chain condition''. \\

When restricting to discrete groups, we can prove injectivity of the comparison map in higher degrees for fields not covered by the previous theorem, under a mild homological finiteness condition. The same condition, shifted by one degree, implies surjectivity as well:

\begin{theorem}[Proposition \ref{surj_comp}, Corollary \ref{inj Fn}]
\label{main_comp}

Let $\mathbb{K}$ be a non-Archimedean valued field of characteristic $0$. Let $G$ be a discrete group such that $H_n(G, \mathbb{Z})$ is a finitely generated abelian group. Then $c^n : H^n_b(G, \mathbb{K}) \to H^n(G, \mathbb{K})$ is surjective, and $c^{n+1} : H^{n+1}_b(G, \mathbb{K}) \to H^{n+1}(G, \mathbb{K})$ is injective.

The same holds for non-Archimedean valued fields of characteristic $p$, with $\mathbb{Z}$ replaced by $\mathbb{F}_p$.
\end{theorem}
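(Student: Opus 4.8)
The plan is to derive both statements from the universal coefficient theorem, exploiting the one feature of non-Archimedean fields that has no analogue over $\mathbb{R}$: a finitely generated subgroup $\Lambda=\mathbb{Z}\lambda_1+\dots+\mathbb{Z}\lambda_s$ of $(\mathbb{K},+)$ is automatically bounded, since the image of $\mathbb{Z}$ in $\mathbb{K}$ lies in the unit ball and hence, by the ultrametric inequality, $|\lambda|_\mathbb{K}\le\max_i|\lambda_i|_\mathbb{K}$ for every $\lambda\in\Lambda$. I will use the reformulation that $C^\bullet_b(G,\mathbb{K})$ is the union of the subcomplexes $C^\bullet(G,B)$ over the closed balls $B\ni 0$, each of which is an additive subgroup of $\mathbb{K}$. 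Since ordinary cohomology with trivial coefficients $A$ is the cohomology of $\Hom_\mathbb{Z}(C_\bullet,A)$ for a fixed complex $C_\bullet$ of free $\mathbb{Z}$-modules with $H_\bullet(C_\bullet)=H_\bullet(G,\mathbb{Z})$ (e.g.\ the bar complex), the universal coefficient theorem gives the natural exact sequence
\[
0\to\operatorname{Ext}^1_{\mathbb{Z}}(H_{k-1}(G,\mathbb{Z}),A)\to H^k(G,A)\to\Hom_{\mathbb{Z}}(H_k(G,\mathbb{Z}),A)\to 0 .
\]
As $\mathbb{K}$ has characteristic $0$ it is a $\mathbb{Q}$-vector space, hence divisible, hence $\mathbb{Z}$-injective, so $\operatorname{Ext}^1_\mathbb{Z}(-,\mathbb{K})=0$ and $H^k(G,\mathbb{K})\cong\Hom_\mathbb{Z}(H_k(G,\mathbb{Z}),\mathbb{K})$.

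For the surjectivity of $c^n$ I would start with $\alpha\in H^n(G,\mathbb{K})$, corresponding under the above isomorphism to $\varphi\colon H_n(G,\mathbb{Z})\to\mathbb{K}$. Since $H_n(G,\mathbb{Z})$ is finitely generated, so is its image $\Lambda:=\varphi(H_n(G,\mathbb{Z}))$, which is therefore bounded and lies in a closed ball $B$. Regarding $\varphi$ as an element of $\Hom_\mathbb{Z}(H_n(G,\mathbb{Z}),\Lambda)$ and invoking the surjection in the universal coefficient sequence with coefficients $\Lambda$, I can lift it to a class $\tilde\alpha\in H^n(G,\Lambda)$; any cocycle representing $\tilde\alpha$ has values in $\Lambda\subseteq B$, so, read in $\mathbb{K}$, it is a bounded cocycle defining $\beta\in H^n_b(G,\mathbb{K})$. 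By naturality of the universal coefficient isomorphism, $c^n(\beta)$ is the image of $\tilde\alpha$ in $H^n(G,\mathbb{K})$, which corresponds to $\varphi$; hence $c^n(\beta)=\alpha$.

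For the injectivity of $c^{n+1}$ I may assume $\mathbb{K}$ is non-trivially valued (otherwise every cochain is bounded). Given $\beta\in\ker c^{n+1}$, since $H^{n+1}_b(G,\mathbb{K})$ is a $\mathbb{K}$-vector space and $c^{n+1}$ is $\mathbb{K}$-linear, I can rescale by a suitable $a\in\mathbb{K}^\times$ so that $\beta$ is represented by a cocycle $b$ with values in the valuation ring $\mathbb{K}^\circ:=\{x:|x|_\mathbb{K}\le 1\}$; then $b$ defines $[b]^\circ\in H^{n+1}(G,\mathbb{K}^\circ)$ lying in the kernel of $H^{n+1}(G,\mathbb{K}^\circ)\to H^{n+1}(G,\mathbb{K})$. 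Comparing the universal coefficient sequences for $\mathbb{K}^\circ$ and $\mathbb{K}$ — using that $\Hom_\mathbb{Z}(H_{n+1}(G,\mathbb{Z}),\mathbb{K}^\circ)\to\Hom_\mathbb{Z}(H_{n+1}(G,\mathbb{Z}),\mathbb{K})$ is injective and $\operatorname{Ext}^1_\mathbb{Z}(H_n(G,\mathbb{Z}),\mathbb{K})=0$ — a short diagram chase identifies this kernel with $\operatorname{Ext}^1_\mathbb{Z}(H_n(G,\mathbb{Z}),\mathbb{K}^\circ)$. Because $H_n(G,\mathbb{Z})$ is finitely generated its torsion subgroup has finite exponent $m$, so this $\operatorname{Ext}$ group is annihilated by $m$; hence $mb=\delta v$ for some $v\in C^n(G,\mathbb{K}^\circ)$. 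Finally $m^{-1}v$ has values in $m^{-1}\mathbb{K}^\circ$, which is again a closed ball since $|m|_\mathbb{K}\le 1$, so it is a bounded cochain with $\delta(m^{-1}v)=b$; therefore $\beta=0$.

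The positive-characteristic case runs along the same lines, only more simply: over the field $\mathbb{F}_p$ there is no $\operatorname{Ext}$ term, so $H^k(G,A)\cong\Hom_{\mathbb{F}_p}(H_k(G,\mathbb{F}_p),A)$ for every $\mathbb{F}_p$-vector space $A$; a finite-dimensional $\mathbb{F}_p$-subspace of $\mathbb{K}$ is finite, hence bounded, which yields surjectivity of $c^n$ exactly as above, while injectivity of $c^{n+1}$ in every degree is immediate from $H^{n+1}(G,\mathbb{K}^\circ)=\Hom_{\mathbb{F}_p}(H_{n+1}(G,\mathbb{F}_p),\mathbb{K}^\circ)\hookrightarrow\Hom_{\mathbb{F}_p}(H_{n+1}(G,\mathbb{F}_p),\mathbb{K})=H^{n+1}(G,\mathbb{K})$ (and is in any case contained in Theorem \ref{qz split}). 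I do not expect a genuinely hard step: the one conceptual input is the boundedness of finitely generated subgroups of $\mathbb{K}$, after which everything is a universal-coefficient diagram chase; the only point requiring care is the rescaling into $\mathbb{K}^\circ$ in the injectivity argument and the verification that dividing by $m$ does not destroy boundedness, which it does not precisely because $|m|_\mathbb{K}\le 1$.
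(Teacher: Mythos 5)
Your argument is correct. The surjectivity half is essentially the paper's proof: both reduce, via the Universal Coefficient Theorem and the divisibility of $\mathbb{K}$ (respectively the field structure of $\mathbb{F}_p$), to the observation that a homomorphism out of a finitely generated $H_n(G,R)$ has bounded image because $R$ is bounded in $\mathbb{K}$; the paper extends the resulting bounded map on cycles to all chains by splitting off the (free) boundaries (Lemma \ref{UCTb Z}), whereas you lift through the UCT surjection with coefficients in the finitely generated subgroup $\Lambda$ — the same mechanism in different clothing. The injectivity half, however, is a genuinely different route. The paper passes to the defect group $D(\mathbb{K})=\mathbb{K}/\mathfrak{o}$: a quasicocycle of defect at most $1$ reduces modulo $\mathfrak{o}$ to a genuine $D(\mathbb{K})$-valued cocycle (Lemma \ref{qz key}), and injectivity of $c^{n+1}$ over $\mathbb{K}$ is deduced from surjectivity of $c^{n}$ over $D(\mathbb{K})$ (Theorem \ref{surj inj}), the latter supplied by Proposition \ref{surj_comp 2} since $D(\mathbb{K})$ is divisible. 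You instead work with the valuation ring $\mathfrak{o}=\mathbb{K}^\circ$ directly, identify the obstruction to bounding a primitive with (the image of) $\operatorname{Ext}^1_{\mathbb{Z}}(H_n(G,\mathbb{Z}),\mathfrak{o})$ inside $H^{n+1}(G,\mathfrak{o})$, and kill it by multiplying by the exponent $m$ of the torsion of $H_n(G,\mathbb{Z})$ and then dividing back, using $|m|_{\mathbb{K}}\le 1$ to keep the primitive bounded. Both are sound; what the paper's detour through $D(\mathbb{K})$ buys is the standalone criterion of Theorem \ref{surj inj}, valid for topological groups and arbitrary coefficients where no UCT is available, while your argument is more self-contained for discrete groups and makes the role of the torsion of $H_n(G,\mathbb{Z})$ completely explicit (only its finite exponent matters, not finite generation of the free part, for the injectivity half). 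The only points needing care — the rescaling into $\mathfrak{o}$ when $\mathbb{K}$ is non-trivially valued, and the fact that the kernel of $H^{n+1}(G,\mathfrak{o})\to H^{n+1}(G,\mathbb{K})$ sits inside the $\operatorname{Ext}$ term because $\operatorname{Hom}(H_{n+1}(G,\mathbb{Z}),\mathfrak{o})\to\operatorname{Hom}(H_{n+1}(G,\mathbb{Z}),\mathbb{K})$ is injective — are both handled correctly.
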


This theorem implies that if $G$ is a finitely presented group, then $c^2$ is an isomorphism. If $G$ is only finitely generated, then it still tells us that $c^2$ is injective. With a different approach, this can be generalized to compactly generated groups and general coefficient modules:

\begin{proposition}[Corollary \ref{inj comp}]
Let $\mathbb{K}$ be a non-Archimedean valued field and $G$ a compactly generated group. Then for every normed $\mathbb{K}[G]$-module $E$, the comparison map $c^2 : H^2_{cb}(G, E) \to H^2_c(G, E)$ is injective.
\end{proposition}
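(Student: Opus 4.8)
The plan is to unwind the comparison map in degree $2$ and show that the relevant, a priori unbounded, primitive of a bounded cocycle is automatically bounded, with the ultrametric inequality doing all the work. As recalled in the introduction, $c^2$ is induced by the inclusion $C^\bullet_{cb}(G,E)\hookrightarrow C^\bullet_c(G,E)$ of (say, inhomogeneous) cochain complexes, so a class in $\ker(c^2)$ is represented by a continuous bounded $2$-cocycle $\omega\colon G^2\to E$ which, viewed in the continuous complex, is a coboundary: $\omega=\delta f$ for some continuous but possibly unbounded $1$-cochain $f\colon G\to E$. Hence it suffices to prove: \emph{if $G$ is compactly generated and $f\colon G\to E$ is continuous with $\delta f$ bounded, then $f$ is bounded}. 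Granting this, $\omega$ is already a coboundary in the bounded complex, so $[\omega]=0$ in $H^2_{cb}(G,E)$ and $c^2$ is injective.

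To prove the displayed claim, recall that $(\delta f)(x,y)=x\cdot f(y)-f(xy)+f(x)$, so setting $\omega:=\delta f$ we get $f(xy)=x\cdot f(y)+f(x)-\omega(x,y)$. Fix a compact generating set $K\subseteq G$, which we may take symmetric and containing the identity, so that every $g\in G$ is a product $g=k_1\cdots k_n$ with $k_i\in K$. Put $M:=\sup_{k\in K}\|f(k)\|$, which is finite since $f$ is continuous and $K$ compact (so $f(K)$ is compact, hence bounded), and $C:=\sup_{x,y}\|\omega(x,y)\|<\infty$. Since $G$ acts on $E$ by linear isometries, for any word $k_1\cdots k_n$ the ultrametric inequality gives
\[
\|f(k_1\cdots k_n)\|\le \max\bigl\{ \|k_1\cdot f(k_2\cdots k_n)\|,\, \|f(k_1)\|,\, \|\omega(k_1,k_2\cdots k_n)\| \bigr\}\le \max\bigl\{ \|f(k_2\cdots k_n)\|,\, M,\, C \bigr\}.
\]
An induction on $n$ (with base case $f(e)=\omega(e,e)$, so $\|f(e)\|\le C$) now yields $\|f(g)\|\le\max\{M,C\}$ for every $g\in G$, which is exactly what we wanted.

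I do not expect a genuine obstacle: the argument is short once one observes that over $\mathbb{K}$ the same computation that would only give ``at most linear growth in the word length'' over $\mathbb{R}$ — the source of the unbounded quasimorphisms responsible for a large $\ker(c^2)$ in the Archimedean setting — instead forces outright boundedness. This is the coefficient version of the elementary remark that an ultrametric quasimorphism is bounded on every cyclic subgroup, and it is the one place where the non-Archimedean hypothesis is indispensable. The only points needing a word of care are bookkeeping ones: that $\ker(c^2)$ is described by continuous coboundaries of the bar complex, which is immediate from the description of the comparison map as a map of cochain complexes, and that $f$ restricted to the compact generating set has bounded image. Note also that compact generation is used only to write each element as a \emph{finite} product of generators, with no estimate on the number of factors required, precisely because the bound $\max\{M,C\}$ is length-independent.
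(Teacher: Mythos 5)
Your argument is correct and is essentially identical to the paper's: the paper proves the same key lemma (Proposition on $1$-quasicocycles of compactly generated groups), namely that a continuous $1$-cochain with bounded coboundary is itself bounded, via the same ultrametric estimate $\|f(g_1\cdots g_n)\|\le\max\{\|\delta^1 f\|_\infty,\|f(g_i)\|\}$ over a compact symmetric generating set, and then observes that $\ker(c^2)$ consists of quasicocycles modulo cocycles and bounded functions.
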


A consequence of a more topological flavour is the following: the comparison map is an isomorphism in all degrees for \emph{groups of type $F_\infty$}; that is, fundamental groups of aspherical CW-complexes with a finite number of cells in each dimension. This result can also be justified by looking at bounded cohomology of a topological space $X$ with coefficients in a non-Archimedean valued field $\mathbb{K}$, which we denote by $H^n_b(X; \mathbb{K})$. This is defined in terms of the complex of \emph{bounded singular cochains}, that is, $\mathbb{K}$-valued bounded maps on the set of singular simplices in $X$. We can similarly define bounded cohomology of pairs $(X, A)$, denoted by $H^n_b(X, A; \mathbb{K})$. \\

Bounded cohomology of topological spaces is very useful in the real setting, for instance for the computation of the simplicial volume of a manifold. However it is very hard to compute, given the absence of a Mayer--Vietoris sequence. Indeed, the proof of the excision axiom of singular cohomology breaks down when restricting to bounded cochains, because it goes through the process of barycentric subdivision, which can replace a given singular simplex by an arbitrarily large number thereof. This is however not a problem when the field we are working on is equipped with an ultrametric. We will prove:

\begin{theorem}[Theorem \ref{main_top}]
Bounded cohomology with coefficients in a non-Archimedean valued field $\mathbb{K}$ satisfies the Eilenberg--Steenrod axioms for (unreduced) cohomology, except additivity. This is replaced by an injective homomorphism
$$\prod\limits_{i \in I} H^n_b(j_i) : H^n_b \left( \bigsqcup\limits_{i \in I} X_i , \bigsqcup\limits_{i \in I} A_i; \mathbb{K} \right) \to \prod\limits_{i \in I} H^n_b(X_i, A_i; \mathbb{K})$$
whose image is the subspace of sequences admitting representatives of uniformly bounded norm.
\end{theorem}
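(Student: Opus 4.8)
The plan is to verify the Eilenberg--Steenrod axioms one at a time for the functor $H^\bullet_b(-;\mathbb{K})$, reusing the classical arguments for singular cohomology wherever they survive the passage to bounded cochains, and isolating the one place --- excision/Mayer--Vietoris --- where the ultrametric is decisive. The dimension, homotopy, and exactness (long exact sequence of a pair) axioms are the routine ones: the bounded singular cochain complex $C^\bullet_b(X;\mathbb{K})$ is a subcomplex of $C^\bullet(X;\mathbb{K})$, the chain homotopies witnessing homotopy invariance (the prism operator) are given by finite sums of face/degeneracy-type maps and hence preserve boundedness of cochains, and the short exact sequence of cochain complexes $0 \to C^\bullet_b(X,A;\mathbb{K}) \to C^\bullet_b(X;\mathbb{K}) \to C^\bullet_b(A;\mathbb{K}) \to 0$ is still exact because the restriction map on bounded cochains is surjective (a bounded cochain on $A$ extends by $0$). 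So the first block of work is to record these with one-line justifications.

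The heart of the matter is excision, equivalently Mayer--Vietoris. The obstruction in the real case is that the subdivision operator used to prove the small-simplices theorem is not norm-bounded: iterating barycentric subdivision blows up the number of simplices without bound, so a bounded cochain pulls back to an unbounded one. The key observation is that over an ultrametric field this blow-up is harmless: if $c$ is a cochain of norm $\le M$, then any $\mathbb{K}$-linear combination $\sum a_i c(\sigma_i)$ with all $|a_i|_\mathbb{K} \le 1$ still has norm $\le M$ by the strong triangle inequality, regardless of how many terms appear. The classical subdivision operator $\mathrm{Sd}$ and the chain homotopy $T$ with $\partial T + T \partial = \mathrm{id} - \mathrm{Sd}$ are built from face and degeneracy maps with integer (hence norm $\le 1$) coefficients, so the dual operators $\mathrm{Sd}^\bullet$ and $T^\bullet$ restrict to the bounded cochain complex. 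Therefore the small-simplices quasi-isomorphism $C^\bullet_b(X;\mathbb{K}) \simeq C^\bullet_{b,\mathcal{U}}(X;\mathbb{K})$ goes through verbatim for an open cover $\mathcal{U}$, and from there the usual algebraic argument (the short exact sequence of cochain complexes built from a cover by two open sets) yields excision and the Mayer--Vietoris sequence. I expect this to be the main obstacle only in the bookkeeping sense: one must check carefully that \emph{every} operator invoked --- subdivision, the prism homotopy, the maps in the small-simplices argument --- has the ``uniformly bounded coefficients'' property, since a single unbounded step would sink the argument.

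Finally, additivity. For a disjoint union $X = \bigsqcup_{i\in I} X_i$ (with pairs $A_i \subseteq X_i$), a singular simplex has connected image and so lands in exactly one $X_i$; hence $C^\bullet_b(X,A;\mathbb{K}) = \{ (c_i)_i \in \prod_i C^\bullet(X_i,A_i;\mathbb{K}) : \sup_i \|c_i\|_\infty < \infty \}$, the bounded product rather than the full product. The natural map to $\prod_i H^n_b(X_i,A_i;\mathbb{K})$ is then seen to be injective (a coboundary of uniformly bounded norm in each factor assembles to a uniformly bounded coboundary: the primitives can be chosen of controlled norm because over a discretely valued field the quotient norm is attained, and in general one absorbs an $\varepsilon$), and its image is exactly the classes representable by a sequence of cochains of uniformly bounded norm --- essentially by unwinding the definition of the bounded product complex. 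When $I$ is finite this bounded product is the honest product, so the map is an isomorphism and the full additivity axiom holds; this recovers the fact that for finite disjoint unions bounded cohomology behaves classically.
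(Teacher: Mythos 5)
Your proposal follows the paper's proof essentially verbatim: the same key observation (chain maps induced from integral chains dualize to operators of norm $\leq 1$ on bounded cochains, because $\mathbb{Z}$ lands in the unit ball of $\mathbb{K}$ and the ultrametric inequality is insensitive to the number of terms) is isolated and applied to the prism and barycentric-subdivision homotopies, exactness is handled by the same extend-by-zero short exact sequence, and additivity by identifying $C^\bullet_b$ of the disjoint union with the sup-bounded product complex. The only divergence is your parenthetical argument for injectivity of the additivity map via ``primitives of controlled norm'': the paper does not argue this way (it works entirely at the level of the cochain isomorphism with the bounded product complex), and your appeal to attainment of the quotient norm does not by itself yield the control, uniform over $i$, that such an argument would require --- but this side remark does not change the fact that the two proofs take the same route.
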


Note that this weaker form of additivity makes a difference only when considering infinite disjoint unions. Therefore, for CW-complexes \emph{of type $F_\infty$}, that is, having only finitely many cells in each dimension, the proof of the isomorphism between cellular and singular homology carries over. We obtain:

\begin{corollary}[Corollary \ref{cell}]
Let $\mathbb{K}$ be a non-Archimedean valued field and $X$ a CW-complex of type $F_\infty$. Then $H^n_b(X; \mathbb{K})$ is naturally isomorphic to the cellular cohomology of $X$ with coefficient group $\mathbb{K}$.
\end{corollary}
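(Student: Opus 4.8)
\emph{Strategy.} I will show that the comparison map $c^\bullet\colon H^\bullet_b(X;\mathbb{K})\to H^\bullet_{\mathrm{sing}}(X;\mathbb{K})$, induced by the inclusion of the complex of bounded singular cochains into all singular cochains, is an isomorphism; composing it with the classical identification $H^\bullet_{\mathrm{sing}}(X;\mathbb{K})\cong H^\bullet_{\mathrm{cell}}(X;\mathbb{K})$ (natural in cellular maps, as is $c^\bullet$) then yields the statement. Both $H^\bullet_{\mathrm{sing}}(-;\mathbb{K})$ and $H^\bullet_b(-;\mathbb{K})$ are cohomology theories on pairs of spaces in the sense of Eilenberg--Steenrod --- the latter by Theorem \ref{main_top}, with the single caveat that additivity holds only in the weak form stated there --- and $c^\bullet$ is a morphism between them that is an isomorphism on a point, since the bounded and unbounded cochain complexes of a point coincide. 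On a \emph{finite} CW pair the weak additivity of Theorem \ref{main_top} is as strong as ordinary additivity, because over a finite index set any sequence of classes is represented by cocycles of uniformly bounded norm (take the maximum of finitely many norms); hence on finite CW pairs $H^\bullet_b(-;\mathbb{K})$ satisfies the full Eilenberg--Steenrod axioms, and the standard uniqueness argument (induction over the cells, five lemma, exactness, excision, homotopy invariance and the dimension axiom) shows that $c^\bullet$ is an isomorphism there. Since $X$ is of type $F_\infty$, every skeleton $X^{(n)}$ and every quotient $X^{(N)}/X^{(n)}$ is a finite CW complex, so $c^\bullet$ is an isomorphism on all of these.

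\emph{Reduction to a vanishing statement.} Because $X$ has finitely many cells in each dimension, its cellular cochain complex over $\mathbb{K}$ is finite-dimensional in each degree and agrees, in degrees $\le m+1$, with that of $X^{(m+1)}$; hence the restriction $H^m_{\mathrm{sing}}(X;\mathbb{K})\to H^m_{\mathrm{sing}}(X^{(m+1)};\mathbb{K})$ is an isomorphism. By naturality of $c^\bullet$ and the previous paragraph, it suffices to prove that the restriction $H^m_b(X;\mathbb{K})\to H^m_b(X^{(m+1)};\mathbb{K})$ is an isomorphism. Using the long exact sequence of the CW pair $(X,X^{(m+1)})$ together with the identification $H^k_b(X,X^{(m+1)};\mathbb{K})\cong\widetilde H^k_b(X/X^{(m+1)};\mathbb{K})$, valid for good pairs by excision and homotopy invariance, this reduces to
$$\widetilde H^k_b\bigl(X/X^{(m+1)};\,\mathbb{K}\bigr)=0\qquad\text{for }0\le k\le m+1.$$

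\emph{The vanishing.} Write $Y:=X/X^{(m+1)}$; it is $(m+1)$-connected (its skeleta through dimension $m+1$ are a point), and it is the sequential colimit of the finite CW complexes $Y_N:=X^{(N)}/X^{(m+1)}$, $N\ge m+1$, which are likewise $(m+1)$-connected, so by the first paragraph and the classical cellular computation $\widetilde H^k_b(Y_N;\mathbb{K})=0$ for $0\le k\le m+1$ and all $N$. Since $Y$ is connected and simply connected, $\widetilde H^0_b(Y;\mathbb{K})=0$ and $\widetilde H^1_b(Y;\mathbb{K})=0$ (the latter using injectivity of the degree-one comparison map and $H^1(Y;\mathbb{K})=0$). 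For $2\le k\le m+1$ I argue via the mapping telescope: $T\simeq Y$ by homotopy invariance, and $T$ is covered by two open subsets, each homotopy equivalent to a \emph{disjoint} union of copies of the $Y_N$ and with intersection again such a disjoint union, so the Mayer--Vietoris sequence (from exactness and excision) expresses $\widetilde H^k_b(T;\mathbb{K})$ in terms of the reduced bounded cohomologies of these disjoint unions in degrees $k-1,k\in\{1,\dots,m+1\}$. In that range reduced bounded cohomology coincides with the unreduced one, and by the weak additivity of Theorem \ref{main_top} the bounded cohomology of a disjoint union injects into the product of those of the pieces, which vanishes; hence $\widetilde H^k_b(T;\mathbb{K})=0$, so $\widetilde H^k_b(Y;\mathbb{K})=0$. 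This proves the required vanishing; tracing back, $c^m$ is an isomorphism on $X$, and composing with $H^m_{\mathrm{sing}}(X;\mathbb{K})\cong H^m_{\mathrm{cell}}(X;\mathbb{K})$ finishes the proof.

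\emph{Main obstacle.} The only genuinely non-formal point is the last step: bounded cohomology no more commutes with the colimit $X=\bigcup_n X^{(n)}$ than ordinary cohomology does, which forces the mapping-telescope argument, and there the relevant open sets are \emph{infinite} disjoint unions, so a priori only the weak additivity of Theorem \ref{main_top} is available. It suffices because weak additivity is no weaker than additivity once every factor of the target product vanishes, which is exactly the situation here --- equivalently, the inverse system $\{H^m_b(X^{(n)};\mathbb{K})\}_{n}$ is eventually constant, so its $\varprojlim^1$-obstruction disappears. Everything else is a transcription of the classical proof that cellular and singular cohomology agree, which goes through verbatim precisely because, for a complex of type $F_\infty$, all the wedges and disjoint unions one must evaluate are finite.
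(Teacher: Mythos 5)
Your argument is correct, and its overall strategy --- run the Eilenberg--Steenrod uniqueness argument for $H^\bullet_b(-;\mathbb{K})$ against singular/cellular cohomology, using that the weak additivity of Theorem \ref{main_top} is full additivity over finite index sets --- is the same one the paper takes. The paper, however, disposes of the whole corollary in one paragraph by asserting that in the dualization of Bredon's proof ``additivity comes into play only when expressing the $n$-skeleton of $X$ as a quotient of the disjoint union of its $n$-cells,'' which are finite unions for a complex of type $F_\infty$. You correctly identify that for an \emph{infinite-dimensional} $X$ this is not the whole story: the cohomological version of the passage from the skeleta $X^{(n)}$ to $X$ itself is not formal (there is no compact-supports argument on the cochain side), and the standard fix --- the mapping telescope plus Mayer--Vietoris --- reintroduces \emph{infinite} disjoint unions, where only weak additivity is available. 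Your resolution, that injectivity into the product is as good as an isomorphism once every factor $\widetilde H^k_b(Y_N;\mathbb{K})$ vanishes (which it does in degrees $\le m+1$ for $Y_N = X^{(N)}/X^{(m+1)}$ by the finite case), is exactly what is needed and closes a step the paper's citation-based proof leaves implicit. The cost of your route is some extra bookkeeping (good pairs, reduced versus unreduced cohomology in low degrees, the separate treatment of $k=0,1$); what it buys is a proof that is actually complete for infinite-dimensional complexes of type $F_\infty$, rather than one that is airtight only in the finite-dimensional case.
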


In particular the comparison map is an isomorphism in all degrees for such spaces. Together with the aforementioned consequence of Theorem \ref{main_comp}, and the corresponding fact for ordinary cohomology, we deduce that if $X$ is an aspherical CW-complex of type $F_\infty$, then the bounded cohomology of $X$ is naturally isomorphic to the bounded cohomology of $\pi_1(X)$ with coefficients in $\mathbb{K}$. This is true in general, as the proof in the real case carries over (see \cite[Theorem 5.5]{Frig} for a proof, and Subsection \ref{ss_bcXG} for a more detailed discussion). However our approach in this case is novel and very specific to the non-Archimedean nature of the setting.

Note that this corollary implies that the asphericity hypothesis is necessary: an oriented closed simply connected $n$-manifold is a counterexample. Therefore an analogue of Gromov's Mapping Theorem (stating that bounded cohomology with real coefficients is a $\pi_1$-invariant \cite{Grom, Ivanov, multicomplexes}) cannot hold over non-Archimedean valued fields.  \\

Given such strong results on bounded cohomology of topological spaces, it is tempting to try and apply them to a non-Archimedean notion of simplicial volume. We will show that a natural notion that is compatible with bounded cohomology via duality - in contrast to the one defined in \cite{pSV} - is identically $1$ (Proposition \ref{NASV}). \\

\textbf{Outline.} We start in Section \ref{preli} with some preliminaries on non-Archimedean valued fields and vector spaces, t.d.l.c. groups, harmonic analysis over $\mathbb{Q}_p$ and divisible groups. In Section \ref{constructions} we start our study of normed $\mathbb{K}$-amenability, treating the case of compact groups and showing how the norm behaves under the constructions of taking subgroups, quotients, extensions and directed unions. In Section \ref{sequiv} we focus on $\mathbb{Q}_p$, proving Theorem \ref{main1} and some corollaries. Section \ref{examples} is devoted to giving examples of normed $p$-adic amenable groups. We then move to other fields in Section \ref{other fields}, and prove the analogues of Theorem \ref{main1} in full generality. In Section \ref{smain2} we prove Theorem \ref{main2}, with an application to profinite representations of discrete groups. In Section \ref{BC} we discuss further bounded cohomology with $\mathbb{K}$-coefficients, focusing on quasimorphisms in Section \ref{s_qm}. Bounded cohomology of topological spaces is treated in Section \ref{s_top}, which is mostly independent. \\

\textbf{Remark.} The results in this paper are part of the author's PhD project. \\

\textbf{Acknowledgements.} I will start by thanking the two people at the origin of this project: my advisor Alessandra Iozzi and my quasi-advisor Konstantin Golubev, who pushed me to study bounded cohomology over the reals, and explore it over the $p$-adics. Thank you to them and to Marc Burger for the many useful discussions throughout the development of this project, in Berkeley, in Z\"urich and over Zoom.

This work benefited from conversations with various people, from casual discussions after lunch to important suggestions that strongly determined the direction of the paper. For this I thank Ilaria Castellano, Roberto Frigerio, Yannick Krifka, Adrien Le Boudec, Clara L\"oh, Nicol\'as Matte Bon, Nicolas Monod, Marco Moraschini, Maxim Mornev, Francesco Russo and Alessandro Sisto.

T.d.l.c. groups were the subject of a graduate course taught by Waltraud Lederle at ETH, that I took during my Master's degree: I would not have gone further than discrete groups in this work were it not for her enthusiastic teaching. She was also extremely available and helpful by providing examples and helping me navigate the literature.

\pagebreak

\section{Preliminaries}
\label{preli}

\textbf{Notations and conventions.} In the sequel, unless stated otherwise, $G$ will always denote a t.d.l.c. group. For a $p$-adic number $x$ the $p$-adic norm of $x$ will be denoted by $|x|_p$. For an integer $x$, we will denote by $\nu_p(x)$ the $p$-adic valuation of $x$, so that $|x|_p = p^{- \nu_p(x)}$. For general fields $\mathbb{K}$, we will include them in the notation, writing $|\cdot|_\mathbb{K}$ for the norm. 

Given a group $G$ and a subset $Y \subset G$, the characteristic function of $Y$ is denoted by $\mathbbm{1}_Y$. If $Y$ is clopen, then $\mathbbm{1}_Y$ is continuous. 

Natural numbers are assumed to start from $1$, their set is denoted by $\mathbb{N}$. 

The identity element of a group $G$ will be denoted by $1$, unless the group is abelian, in which case it will be denoted by $0$.

\subsection{Non-Archimedean valued fields}
\label{preli field}

For proofs and more detail, see \cite[Section 1]{NFA} or \cite[Section 1]{NOT}. For a proof of Ostrowski's Theorem see \cite[Theorem 1.50]{Katok}. Let $\mathbb{K}$ be a field, and let $\chi(\mathbb{K})$ denote its characteristic.

\begin{definition}
A \emph{non-Archimedean norm} or \emph{value} on $\mathbb{K}$ is a map $| \cdot |_\mathbb{K} : \mathbb{K} \to \mathbb{R}_{\geq 0}$ such that:
\begin{enumerate}
\item $|x|_\mathbb{K} = 0$ if and only if $x = 0$;
\item $|x + y|_\mathbb{K} \leq \max \{ |x|_\mathbb{K}, |y|_\mathbb{K} \}$;
\item $|xy|_\mathbb{K} = |x|_\mathbb{K} \cdot |y|_\mathbb{K}$.
\end{enumerate}
A field equipped with a non-Archimedean norm is a \emph{non-Archimedean valued field}.

\begin{remark}
We insist on \emph{valued} to make the distinction with non-Archimedean \emph{ordered} fields. These will make a short appearance in Subsection \ref{ss:ordered}.
\end{remark}

The norm induces an ultrametric on $\mathbb{K}$, which in turn induces a totally disconnected topology. If this metric is complete, we say that $\mathbb{K}$ is a \emph{non-Archimedean complete valued field}. The map $|\cdot|_\mathbb{K} : \mathbb{K}^\times \to \mathbb{R}^\times$ is a group homomorphism, thus the image is a subgroup of $\mathbb{R}^\times$, which is either dense or discrete. In the latter case, we say that $\mathbb{K}$ is \emph{discretely valued}.
\end{definition}

Two norms $| \cdot |_{1, 2}$ on $\mathbb{K}$ are \emph{equivalent} if there exists some $\alpha \in \mathbb{R}_{>0}$ such that $|\cdot|_1^\alpha = |\cdot|_2$ (there are other equivalent definitions, see \cite[Proposition 1.10]{Katok}). Every field admits a \emph{trivial} non-Archimedean norm, namely the characteristic function of $\mathbb{K}^\times$, which is the only norm in its equivalence class.

Multiplicativity implies that $| 1 |_\mathbb{K} = 1$, that the norm commutes with taking inverses, and that the only norm on $\mathbb{F}_p$ is the trivial one. In characteristic 0, the field $\mathbb{Q}$ admits as norms the trivial one and the $p$-adic ones (see the next subsection). \emph{Ostrowski's Theorem} states that, up to equivalence, these are the only non-Archimedean norms on $\mathbb{Q}$. \\

Given a non-Archimedean valued field $\mathbb{K}$, let $\mathbb{K}_0$ be the \emph{closure of its prime field}, that is, the closure of the subfield of $\mathbb{K}$ generated by $1$. This is also a non-Archimedean valued field, and if $\mathbb{K}$ is complete, so is $\mathbb{K}_0$. In this case, if $\chi(\mathbb{K}) = p > 0$, then $\mathbb{K}_0 = \mathbb{F}_p$ with the trivial norm. If instead $\chi(\mathbb{K}) = 0$, then $\mathbb{K}_0$ is a non-Archimedean completion of $\mathbb{Q}$; therefore by Ostrowski's Theorem, up to equivalence, it is either $\mathbb{Q}$ with the trivial norm, or $\mathbb{Q}_p$ with the $p$-adic norm. We will see in the next subsection that if the norm on $\mathbb{K}_0$ is trivial, this does not imply that the norm on $\mathbb{K}$ is trivial, and if the norm on $\mathbb{K}_0$ is the $p$-adic norm, this does not imply that $\mathbb{K}$ is discretely valued.

There are some cases in which $\mathbb{K}$ is not complete, but the closure of the prime field is, in which case we use the same notation and have the same trichotomy. An example is the field of rational functions: see the next subsection.

\begin{definition}
\label{def sph}
The non-Archimedean valued field $\mathbb{K}$ is \emph{spherically complete} if every decreasing sequence of balls $B_1 \supseteq B_2 \supseteq \cdots$ in $\mathbb{K}$ has nonempty intersection. A spherically complete field is complete.
\end{definition}

Every complete discretely valued field is spherically complete \cite[Proposition 1.86]{NOT}; this is the case in particular for \emph{local fields}, which will be discussed in the next subsection. The completion $\mathbb{C}_p$ of the algebraic closure of $\mathbb{Q}_p$ is not spherically complete, even though it is complete \cite[Proposition 1.94]{NOT}.

\begin{definition}
The closed 1-ball in $\mathbb{K}$ is a ring, called the \emph{ring of integers} and denoted by $\mathfrak{o}$. Its units are $\mathfrak{o}^\times = \{ x \in \mathbb{K} \mid |x|_\mathbb{K} = 1 \}$. The open 1-ball, which coincides with the non-units, is the unique maximal ideal of $\mathfrak{o}$, denoted by $\mathfrak{m}$. It is a principal ideal if and only if $\mathbb{K}$ is discretely valued, and in this case a generator is called a \emph{uniformizer}, generally denoted by $\pi$. The \emph{residue field} of $\mathbb{K}$ is the field $\mathfrak{r} := \mathfrak{o}/\mathfrak{m}$. The \emph{residual characteristic} of $\mathbb{K}$ is the characteristic of $\mathfrak{r}$, denoted by $\chi(\mathfrak{r})$.
\end{definition}

\begin{remark}
Although the terminology reflects the one in the real case, $\mathfrak{o}$ and $\mathfrak{m}$ are both clopen.
\end{remark}

Note that $\chi(\mathfrak{r}) = 0$ if for all $n \in \mathbb{N}$ we have $| n \cdot 1 |_{\mathbb{K}} = 1$, and $\chi(\mathfrak{r}) = \min \{ p \in \mathbb{N} \mid |p \cdot 1|_\mathbb{K} < 1 \}$ otherwise. Up to equivalence of norms, if $\chi(\mathfrak{r}) = p > 0$ but $\chi(\mathbb{K}) = 0$, we may always assume that $|p|_\mathbb{K} = p^{-1}$. It then follows easily that the restriction of $|\cdot|_\mathbb{K}$ to $\mathbb{Q}$ is precisely the $p$-adic norm: see the proof of Ostrowski's Theorem \cite[Theorem 1.50]{Katok}. If $\mathbb{K}$ is complete, so $\mathbb{K}_0 \cong \mathbb{F}_p, \mathbb{Q}$ or $\mathbb{Q}_p$, this convention implies that the restriction of the norm on $\mathbb{K}$ to $\mathbb{K}_0$ is the standard norm (trivial or $p$-adic). This is automatic in the first two cases, since the trivial norm is the only one in its equivalence class. Note that when $\mathbb{K}$ is complete, $\chi(\mathfrak{r})$ only depends on $\mathbb{K}_0$, so $\chi(\mathfrak{r}) = 0$ if $\mathbb{K}_0 \cong \mathbb{Q}$, and $\chi(\mathfrak{r}) = p > 0$ if $\mathbb{K}_0 \cong \mathbb{F}_p$ or $\mathbb{K}_0 \cong \mathbb{Q}_p$.

\subsection{Examples}

We refer to \cite[1.2.2-1.3]{NOT} for proofs and more details. \\

We have already seen the examples of trivial norm on any field, which makes it discretely valued and spherically complete. \\

The \emph{$p$-adic norm} on $\mathbb{Q}$ is defined by $|a/b|_p = |p^k \cdot x/y|_p = p^{-k}$, where $p$ does not divide $x$ nor $y$. This is a non-Archimedean norm, and $\mathbb{Q}_p$ is the corresponding completion. It is a discretely valued field, since the norm takes values in $p^{\mathbb{Z}} \cup \{ 0 \}$. Being complete and discretely valued, it is spherically complete. The ring of integers is $\mathbb{Z}_p$, which is equivalently described as the closure of $\mathbb{Z} \subset \mathbb{Q} \subset \mathbb{Q}_p$, or abstractly as the inverse limit of the system of finite groups $\{ \mathbb{Z} / p^k \mathbb{Z} \mid k \geq 1 \}$ with the reduction maps. The maximal ideal is $p \mathbb{Z}_p$, a uniformizer is $p \in \mathbb{Z}_p$, and the residue field is $\mathbb{Z}_p / p \mathbb{Z}_p \cong \mathbb{F}_p$. \\

A norm on a non-Archimedean valued field extends uniquely to its algebraic closure. In particular the norm on $\mathbb{Q}_p$ extends uniquely to $\overline{\mathbb{Q}_p}$. This valued field is not complete, its completion is denoted by $\mathbb{C}_p$, which turns out to also be algebraically closed. Every intermediate field $\mathbb{K}$, such as any finite algebraic extension of $\mathbb{Q}_p$, satisfies $\mathbb{K}_0 \cong \mathbb{Q}_p$. Since the residue field of $\mathbb{Q}_p$ is $\mathbb{F}_p$, it follows that every such field has residual characteristic $p$. However it need not be equal to $\mathbb{F}_p$: the norm on $\mathbb{K}$ may take more values than just $p^\mathbb{Z}$, and this will be reflected in the residue class field. For instance, $\overline{\mathbb{Q}_p}$ is not discretely valued, and the residue field is the algebraic closure $\overline{\mathbb{F}_p}$ of $\mathbb{F}_p$. \\

Let $\mathbb{F}$ be an abstract field. Consider the field $\mathbb{F}(X)$ of rational functions, and define the norm $|P/Q| = e^{deg(P) - def(Q)}$ (any other basis larger than 1 will define an equivalent norm). The restriction of this norm to $\mathbb{F} \subset \mathbb{F}(X)$ is the trivial one, in particular it makes $\mathbb{F}$ into a complete field. So even if $\mathbb{F}(X)$ is not complete, it still makes sense to look at $\mathbb{F}(X)_0 = \mathbb{F}_0$, which is either $\mathbb{F}_p$ or $\mathbb{Q}$ with the trivial norm, so $\chi(\mathbb{F}_0) = \chi(\mathbb{F})$. This shows that the norm on the closure of the prime field can be trivial even when the norm on the field is non-trivial.

Now $\mathfrak{o} = \{ P / Q \mid deg(P) \leq deg(Q) \}$ and $\mathfrak{m} = \{ P / Q \mid deg(P) < deg(Q) \}$. Given $P/Q \in \mathfrak{o}$, we can perform Euclidean division to obtain $P = \lambda Q + R$ where $deg(R) < deg(Q)$ and $\lambda = 0$ if and only if $deg(P) < deg(Q)$. Then $P/Q \equiv \lambda \mod \mathfrak{m}$. This implies that the map $\mathfrak{r} = \mathfrak{o}/\mathfrak{m} \to \mathbb{F} : P/Q \mod \mathfrak{m} \mapsto \lambda$ is an isomorphism. In particular this shows that $\chi(\mathfrak{r}) = \chi(\mathbb{F})$, as predicted by the isomorphism type of $\mathbb{F}(X)_0$.

This field is discretely valued, but not complete, so in particular it is not spherically complete. Its completion is the field of Laurent series $\mathbb{F}((1/X))$, which is our next example (up to replacing $X$ by $1/X$, which ``reverses'' the norm). \\

Let $\mathbb{F}$ be a field, and let $\mathbb{F}((X))$ be the field of Laurent series on $\mathbb{F}$. We define a norm as $|\sum\limits a_i X^i| = e^{-n}$, where $n$ is the smallest integer such that $a_n \neq 0$. Then $\mathfrak{o} = \mathbb{F}[[X]]$, the ring of Laurent polynomials, and $\mathfrak{m} = X \cdot \mathbb{F}[[X]]$, so $\mathfrak{r} \cong \mathbb{F}$. This field is complete and discretely valued, therefore it is spherically complete.

This applies in particular to $\mathbb{F} = \mathbb{F}_q$, where $q$ is a prime power. By convention we let $q$ be the basis for the norm instead of $e$. In this case the topology is moreover locally compact. Similarly to the case of $\mathbb{Q}_p$, this norm extends uniquely to algebraic extensions. \\

A class of fields of special interest is the following.

\begin{definition}
A valued field is called a \emph{local field} if the induced topology is locally compact and non-discrete.
\end{definition}

This includes the Archimedean examples of $\mathbb{R}$ and $\mathbb{C}$. The non-Archimedean ones are precisely the complete discretely valued fields with finite residue field. These are classified, and split into two families that we have already described: they are finite extension of either $\mathbb{Q}_p$ or $\mathbb{F}_q((X))$ \cite[I.3-I.4]{Weil}. Since a local field is complete and discretely valued, it is spherically complete (see the discussion after Definition \ref{def sph}).

\subsection{T.d.l.c. groups}

For more general information on t.d.l.c. groups, see \cite[Section 2]{Walt}. For the subclass of profinite groups, which will be relevant to this paper, see \cite{prof}. \\

We will often use the following fundamental result about t.d.l.c. groups:

\begin{theorem}[Van Danzig]

Let $G$ be a t.d.l.c. group. Then the collection of compact open subgroups of $G$ is a neighbourhood basis of the identity.
\end{theorem}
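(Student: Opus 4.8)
The plan is to argue in two stages: first produce, from the topological hypotheses alone, a single compact open neighbourhood of the identity contained in a prescribed open set, and then ``group-ify'' it into a compact open \emph{subgroup} using only continuity of multiplication and compactness.

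\emph{Stage 1: a clopen neighbourhood of $1$.} Let $V$ be an arbitrary open neighbourhood of $1$. By local compactness I would choose a compact neighbourhood $C$ of $1$ with $C \subseteq V$. As a subspace of the totally disconnected Hausdorff group $G$, the compact Hausdorff space $C$ is totally disconnected, so the connected component of $1$ in $C$ is $\{1\}$; since $C$ is compact Hausdorff, this component equals the quasi-component, i.e. the intersection of all clopen-in-$C$ subsets containing $1$ is $\{1\}$. Now the boundary $\partial C$ (taken in $G$) is closed in $C$, hence compact, and avoids $1$; for each $y \in \partial C$ pick a clopen-in-$C$ set containing $1$ but not $y$, take complements to cover $\partial C$ by clopen-in-$C$ sets avoiding $1$, extract a finite subcover, and intersect the corresponding sets containing $1$ to get a clopen-in-$C$ set $U \ni 1$ disjoint from $\partial C$. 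Then $U \subseteq \mathrm{int}_G(C)$, so $U$ is open in $G$, while being closed in the compact set $C$ it is compact. Thus $U$ is a compact open neighbourhood of $1$ with $U \subseteq V$.

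\emph{Stage 2: group-ification.} Continuity of multiplication at each pair $(u,1)$ with $u \in U$, together with compactness of $U$, yields by a standard finite-subcover argument an open neighbourhood $W_0 \ni 1$ with $U W_0 \subseteq U$. Replacing $W_0$ by $W := W_0 \cap W_0^{-1} \cap U$ gives a symmetric open neighbourhood of $1$ with $W \subseteq U$ and $U W \subseteq U$, whence $W^n \subseteq U$ for every $n \geq 1$ by induction. Therefore $K := \bigcup_{n \geq 1} W^n = \langle W \rangle$ is a subgroup (symmetric, containing $1$), it is open (a union of open sets), and $K \subseteq U$. Since an open subgroup is closed (its complement is a union of open cosets), $K$ is a closed subset of the compact set $U$, hence compact. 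So $K$ is a compact open subgroup with $K \subseteq U \subseteq V$, as required.

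\emph{Main obstacle.} The substantive point is Stage 1 — the extraction of a clopen neighbourhood of $1$ from local compactness plus total disconnectedness, which rests on the coincidence of components and quasi-components in compact Hausdorff spaces. Stage 2 is the routine ``open subgroup generated by a small symmetric neighbourhood'' argument. One could alternatively invoke Stage 1 directly from the literature on locally compact totally disconnected spaces and reduce the proof to Stage 2.
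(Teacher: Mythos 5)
Your proof is correct: Stage 1 is the standard extraction of a compact open neighbourhood from local compactness plus total disconnectedness (via the coincidence of components and quasi-components in the compact Hausdorff set $C$), and Stage 2 is the usual generation of a compact open subgroup from a symmetric open $W$ with $UW \subseteq U$; together these give exactly the classical proof of van Dantzig's theorem. The paper itself states this result without proof, citing the standard literature on t.d.l.c.\ groups, so there is nothing to compare beyond noting that your argument is the textbook one and is complete.
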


One can say more in the case of profinite groups. A group is \emph{profinite} if it is an inverse limit of finite groups. The class of compact totally disconnected groups coincides with the class of profinite groups: see \cite[Theorem 2.1.3]{prof} for a proof in a more general context.

\begin{corollary}
\label{cor_vd}

Let $G$ be a profinite group. Then the collection of (compact) open \emph{normal} subgroups of $G$ is a neighbourhood basis of the identity.
\end{corollary}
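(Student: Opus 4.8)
The plan is to combine Van Danzig's theorem with the fact that a profinite group is compact, and then upgrade an arbitrary compact open subgroup to a normal one by an averaging-over-conjugates argument. First I would fix an open neighbourhood $W$ of the identity in $G$; by Van Danzig's theorem there is a compact open subgroup $U \leq G$ with $U \subseteq W$, so it suffices to find an open \emph{normal} subgroup $N$ of $G$ contained in $U$. Since $U$ is open, its normal core $N := \bigcap_{g \in G} gUg^{-1}$ is what we want, and the whole point is to show this core is open (it is automatically normal, and clearly contained in $U$).

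The key step is that the core is actually a \emph{finite} intersection of conjugates, hence open. To see this, consider the action of $G$ by left multiplication on the coset space $G/U$. Because $U$ is open and $G$ is compact, $G/U$ is discrete and compact, hence \emph{finite}; write $G/U = \{g_1 U, \dots, g_m U\}$. The stabiliser of the coset $g_i U$ under this action is exactly $g_i U g_i^{-1}$, and since the action is transitive every conjugate $gUg^{-1}$ equals $g_i U g_i^{-1}$ for some $i$. Therefore $N = \bigcap_{g \in G} gUg^{-1} = \bigcap_{i=1}^m g_i U g_i^{-1}$ is a finite intersection of open subgroups, hence open. It is normal by construction and contained in $U \subseteq W$, so the open normal subgroups of $G$ form a neighbourhood basis of the identity; they are compact because they are closed (being open subgroups) in a compact group.

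The main (and only) obstacle is making sure the index $[G:U]$ is finite, which is where compactness of $G$ is essential — this is exactly what fails for a general t.d.l.c.\ group and explains why Van Danzig's theorem gives only compact open subgroups in that generality. Once finiteness of the coset space is in hand, the argument is entirely formal. One could alternatively quote the identification of profinite groups with inverse limits of finite groups directly: writing $G = \varprojlim G/N_\alpha$ over a directed system of open normal subgroups, the preimages of the identity in the finite quotients form by definition a neighbourhood basis; but the conjugation argument above is self-contained and only uses the statement of Van Danzig's theorem together with compactness, so I would present that.
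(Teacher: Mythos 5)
Your proof is correct and follows essentially the same route as the paper: the paper reduces via compactness to the observation that an open subgroup has finite index and then invokes the standard fact that a finite-index open subgroup contains an open normal subgroup, which is exactly the normal-core argument you spell out in detail. No gaps; your version just makes the cited fact explicit.
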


\begin{proof}
An open subgroup of $G$ (which is automatically closed, so compact) has finite index because $G$ is compact. Every open finite-index subgroup of a topological group contains an open normal finite-index subgroup.
\end{proof}

We will need the following lemmas.

\begin{lemma}
\label{cinco}

Let $G$ be a t.d.l.c. group, $H$ a compact subgroup. Then $H$ is contained in some compact open subgroup.
\end{lemma}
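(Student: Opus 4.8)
The plan is to use Van Dantzig's theorem to produce a compact open subgroup $U \leq G$, and then to enlarge it (or rather, enlarge an $H$-related variant of it) into a subgroup that contains $H$. The first step is to invoke Van Dantzig: since $G$ is t.d.l.c., there is a compact open subgroup $U \leq G$. Now $H$ is compact and $U$ is open, so the set $HU$ is a compact subset of $G$; in fact $H \cdot U$ is covered by finitely many left translates $h_1 U, \dots, h_n U$ of $U$ by elements of $H$. The subgroup generated by $H$ together with these finitely many elements is a natural candidate, but one must be careful that it is compact and open — this is where the main work lies.

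The cleaner route I would take is the following. Consider the open subgroup $W := \bigcap_{h \in H} hUh^{-1}$. A priori this is an intersection over the whole of $H$, but because $U$ is open, the map $h \mapsto hUh^{-1}$ takes only finitely many values as $h$ ranges over the compact set $H$ — indeed the stabilizer of $U$ under the conjugation action contains an open subgroup, hence $H$ is covered by finitely many cosets on which conjugation by $h$ has a constant effect on $U$, so $W$ is a finite intersection of compact open subgroups, hence itself a compact open subgroup. By construction $W$ is normalized by $H$ (one checks $hWh^{-1} = W$ for $h \in H$ directly, since conjugation by $h$ permutes the finitely many conjugates $h'Uh'^{-1}$ appearing in the intersection). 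Therefore the product $HW$ is a subgroup of $G$: it is closed under multiplication and inversion precisely because $H$ normalizes $W$. Moreover $HW$ is open since it contains the open set $W$, and it is compact since it is the image of the compact set $H \times W$ under the multiplication map. Finally $H \subseteq HW$, which is exactly the assertion.

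The main obstacle, and the step to get right, is the claim that $W = \bigcap_{h \in H} hUh^{-1}$ is in fact a finite intersection and hence open: a naive infinite intersection of open subgroups need not be open. I would justify this by noting that $U$ is open, so there is an open neighbourhood $V \ni 1$ with $V \subseteq U$; the set $\{ h \in G : hUh^{-1} = U\}$ — or more robustly, the set of $h$ for which $hUh^{-1} \supseteq$ some fixed small open subgroup contained in all conjugates near the identity — is open, and by compactness $H$ is covered by finitely many cosets of it. Alternatively, and perhaps most transparently, one observes that $H$ acts by conjugation on the discrete coset space $G/U$ continuously, so the stabilizer of the point $U \in G/U$ is an open subgroup $S$ of $H$; then $H = \bigsqcup_{i=1}^n h_i S$, and $\bigcap_{h \in H} hUh^{-1} = \bigcap_{i=1}^n h_i U h_i^{-1}$, a finite intersection of compact open subgroups. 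Once this finiteness is in hand, everything else is routine verification.
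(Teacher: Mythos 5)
Your proof is correct and takes essentially the same route as the paper's: produce a compact open $U$ by Van Dantzig, pass to the intersection $W = \bigcap_{h \in H} hUh^{-1}$, observe that $H$ normalizes $W$, and conclude that $HW$ is a compact open subgroup containing $H$. The only difference is in how openness of $W$ is justified --- the paper uses a direct continuity-of-conjugation covering argument to show $W$ contains a finite intersection of identity neighbourhoods, while you note that the conjugates form a finite set because the normalizer $N_G(U) \supseteq U$ is open; both work, though you should phrase this via the normalizer rather than a ``conjugation action on $G/U$'', which is not well-defined.
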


\begin{proof}
By (a very weak version of) Van Danzig's Theorem, there exists a compact open subgroup $V \leq G$. Let $N := \bigcap\limits_{g \in H} gVg^{-1}$. This is a compact subgroup of $G$ which is normalized by $H$. Therefore $HN$ is a compact group containing $H$. We will show that $N$ is open, from which it will follow that $HN$ is open.

Let $g \in H$. Since $g^{-1} \cdot 1 \cdot g = 1 \in V$, and $V$ is open, there exist neighbourhoods $W_g$ of $g$ and $V_g$ of 1 such that $W_g^{-1} V_g W_g \subseteq V$. Now $(W_g)_{g \in H}$ is an open cover of $H$. By compactness there exist $g_1, \ldots, g_n \in H$ such that $H \subseteq \bigcup\limits_{i = 1}^n W_{g_i}$. Then for all $g \in W_{g_i}$, we have $g^{-1} V_{g_i} g \subseteq V$. Thus
$$N = \bigcap\limits_{g \in H} g V g^{-1} \supseteq \bigcap\limits_{i = 1}^n \bigcap\limits_{g \in W_{g_i}} g V g^{-1} \supseteq \bigcap\limits_{i = 1}^n V_{g_i}.$$
We showed that $N$ contains an open set, so it is open.
\end{proof}

\begin{lemma}
\label{lopinco}

Let $G$ be a profinite group, $H \leq G$ a closed subgroup. Then $H = \bigcap\limits_{H \leq O \text{ open}} O$.
\end{lemma}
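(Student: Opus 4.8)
The inclusion $H \subseteq \bigcap_{H \leq O \text{ open}} O$ is immediate, since $H$ itself is one of the subgroups in the intersection (it need not be open, but every open $O \supseteq H$ contains it). So the content is the reverse inclusion, which I would prove by showing that every $g \in G \setminus H$ is excluded from some open subgroup containing $H$.

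Fix $g \notin H$. Since $H$ is closed, $G \setminus H$ is an open neighbourhood of $g$, so $g^{-1}(G \setminus H)$ is an open neighbourhood of the identity. By Corollary \ref{cor_vd} the open normal subgroups of $G$ form a neighbourhood basis of the identity, so there exists an open normal subgroup $N \trianglelefteq G$ with $gN \subseteq G \setminus H$, equivalently $gN \cap H = \emptyset$.

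Now consider $O := HN$. Because $N$ is normal, $HN$ is a subgroup of $G$; it contains $H$; and it is open, being the union $\bigcup_{h \in H} hN$ of cosets of the open subgroup $N$. Finally $g \notin HN$: if we had $g = hn$ with $h \in H$, $n \in N$, then $h = gn^{-1} \in gN \cap H$, contradicting our choice of $N$. Thus $O$ is an open subgroup with $H \leq O$ and $g \notin O$, so $g \notin \bigcap_{H \leq O \text{ open}} O$. Since $g \in G \setminus H$ was arbitrary, this proves $\bigcap_{H \leq O \text{ open}} O \subseteq H$, and the lemma follows.

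The only mild subtlety — really the single point where profiniteness is used — is the passage to an open \emph{normal} subgroup $N$ so that $HN$ is again a subgroup; without normality one would only get that $HN$ is an open neighbourhood of $H$, not an open subgroup. This is exactly what Corollary \ref{cor_vd} provides, so there is no genuine obstacle here; the argument is short and elementary once that corollary is in hand.
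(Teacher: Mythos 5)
Your proof is correct and follows essentially the same route as the paper: both use Corollary \ref{cor_vd} to pick an open normal subgroup $N$ with $gN \cap H = \emptyset$ and then observe that $HN$ is an open subgroup containing $H$ but not $g$ (the paper phrases this as a contradiction, you phrase it directly, but the argument is identical). No issues.
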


\begin{proof}
The inclusion $\subseteq$ is clear. Now let $g$ be an element of the right-hand-side set, and suppose by contradiction that $g \notin H$. Since $H$ is closed, by Corollary \ref{cor_vd} there exists some compact open normal subgroup $U \leq G$ such that $gU \cap H = \emptyset$. Since $U$ is normal, $HU$ is an open subgroup of $G$ containing $H$, so it appears in the intersection, which implies that $g \in HU$. Let $h \in H$ be such that $g \in h U$. Then $h \in g U$, which contradicts $gU \cap H = \emptyset$.
\end{proof}

\begin{lemma}
\label{opinco}

Let $G$ be a t.d.l.c. group, $H$ a compact subgroup and $V$ an open subgroup of $H$. Then there exists a compact open subgroup $U$ of $G$ such that $V = H \cap U$.
\end{lemma}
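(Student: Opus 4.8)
The plan is to reduce everything to the profinite setting by first enlarging the ambient group. By Lemma \ref{cinco}, the compact subgroup $H$ is contained in some compact open subgroup $W \leq G$; being compact and totally disconnected, $W$ is profinite. It then suffices to produce a compact open subgroup $U \leq W$ with $U \cap H = V$, since such a $U$ is automatically open in $G$ (as $W$ is) and compact.

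Next I would exploit the openness of $V$ in $H$ together with Corollary \ref{cor_vd}. Writing $V = H \cap O$ for some open subset $O \subseteq W$, we have $1 \in V \subseteq O$, so by Van Danzig's theorem in its profinite form (Corollary \ref{cor_vd}) there is an open normal subgroup $M \trianglelefteq W$ with $M \subseteq O$; in particular $M \cap H \subseteq O \cap H = V$.

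I would then set $U := MV$. Since $M$ is normal in $W$ and $V \leq W$, the product $MV$ is a subgroup of $W$; it is open because it contains the open subgroup $M$, and hence closed, so compact as $W$ is compact. It remains to verify $U \cap H = V$. One inclusion is immediate: $V \subseteq MV$ and $V \subseteq H$. For the other, take $x \in MV \cap H$ and write $x = mv$ with $m \in M$, $v \in V$; then $m = xv^{-1} \in H$ because $x \in H$ and $v \in V \leq H$, so $m \in M \cap H \subseteq V$, and therefore $x = mv \in V$. This yields $U \cap H = V$, as required.

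The only delicate point — and the step I would be most careful about — is the passage to a profinite overgroup: forming the product $MV$ requires a compact open \emph{normal} subgroup, and Van Danzig's theorem on its own only provides compact open subgroups of $G$ that need not be normal. This is precisely why one first invokes Lemma \ref{cinco} to sit inside the profinite group $W$, where Corollary \ref{cor_vd} supplies normal subgroups. Everything else is a routine check.
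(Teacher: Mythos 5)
Your proof is correct, and it takes a genuinely different route from the one in the paper. Both arguments begin the same way, by invoking Lemma \ref{cinco} to place $H$ inside a compact open (hence profinite) subgroup $W$ of $G$. From there the paper writes the closed subgroup $V \leq W$ as the intersection of \emph{all} open subgroups of $W$ containing it (Lemma \ref{lopinco}) and then uses compactness of $H \setminus V$ to extract a finite subfamily $O_1, \ldots, O_n$ with $V = H \cap \bigcap_i O_i$; the desired $U$ is that finite intersection. You instead go through Corollary \ref{cor_vd}: you pick a single open \emph{normal} subgroup $M \trianglelefteq W$ small enough that $M \cap H \subseteq V$, and set $U := MV$, which is a subgroup precisely because $M$ is normal in $W$. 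Your computation $U \cap H = V$ (via $m = xv^{-1} \in M \cap H \subseteq V$) is correct, and you rightly flag that the normality needed to form $MV$ is exactly what forces the preliminary passage to the profinite overgroup $W$. The trade-off: the paper's argument needs Lemma \ref{lopinco} and a finite-subcover extraction but produces $U$ as an intersection of subgroups containing $V$; yours bypasses Lemma \ref{lopinco} entirely and gives a one-step explicit construction, at the cost of the (routine) verification that $MV$ is a subgroup. Both are valid; yours is arguably the more economical of the two.
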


\begin{proof}
Let $K$ be a compact open subgroup of $G$ containing $H$, which exists by Lemma \ref{cinco}. Since $K$ is profinite, and all open subgroups of $K$ are also open in $G$, by Lemma \ref{lopinco} we have
$$V = \bigcap\limits_{V \leq O \text{ open in } K} O = \bigcap\limits_{V \leq O \text{ compact open in } G} O.$$
Now $V$ is open in $H$, so $H \, \backslash \, V$ is compact. Each $O$ from the intersection above is open in $G$, so closed, so $H \, \backslash \, O$ is also open in $H$. Using the equality above, and extracting a subcover by compactness:
$$H \, \backslash \, V = \bigcup\limits_{V \leq O \text{ compact open in } G} \left( H \, \backslash \, O \right) = \bigcup\limits_{i = 1}^n \left( H \, \backslash \, O_i \right) = H \, \backslash \, \left( \bigcap\limits_{i = 1}^n O_i \right) .$$
Therefore $V = H \bigcap \left (\bigcap\limits_{i = 1}^n O_i \right)$, and $U := \bigcap\limits_{i = 1}^n O_i$ is a compact open subgroup of $G$ as we wanted.
\end{proof}

\subsection{The local prime content}

The local prime content is a local invariant introduced in \cite{lpc_def}, which is very useful in the study of the local structure of t.d.l.c. groups. The following definition is equivalent to the original one \cite[Lemma 2.3]{lpc_char}:

\begin{definition}
Let $G$ be a t.d.l.c. group, $p$ a prime. We say that $p$ belongs to the \emph{local prime content} of $G$ if there exists a nested sequence
$$ \cdots \leq U_{-n} \leq \cdots \leq U_{-1} \leq U_0$$
of compact open subgroups of $G$ such that, for all $k \geq 1$, there exists $n \geq 1$ such that $p^k \mid [U_0 : U_{-n}]$. We denote by $\mathbb{L}(G)$ the local prime content of $G$.
\end{definition}

\begin{remark}
The indexing may seem backwards, but it will look natural when we relate this definition to $p^\mathbb{N}$-freeness.
\end{remark}

The local prime content is a local invariant, meaning that if $U \leq G$ is open, then $\mathbb{L}(U) = \mathbb{L}(G)$. In particular, a discrete group always has empty local prime content. \\

Although this definition was introduced in 2006, an equivalent definition was present in the literature as early as 1967 \cite[Definition 2.1.6]{NHA}:

\begin{definition}
Let $G$ be a t.d.l.c. group, $p$ a prime. We say that $G$ is \emph{$p$-finite from above} if there exists a compact open subgroup $U \leq G$ which is $p$-free: that is, for every open subgroup $V \leq U$, the (finite) index $[U : V]$ is not divisible by $p$.
\end{definition}

\begin{lemma}
\label{lpc pfin}

The group $G$ is $p$-finite from above if and only if $p \notin \mathbb{L}(G)$.
\end{lemma}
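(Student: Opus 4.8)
The plan is to prove the two implications of the equivalence separately. One direction is a routine unwinding of definitions that produces a descending chain of compact open subgroups; the other requires a small trick to convert $p$-freeness of a \emph{single} compact open subgroup into a uniform bound on the $p$-part of the indices appearing in the definition of $\mathbb{L}(G)$. Throughout, the only external inputs are Van Danzig's theorem (to guarantee compact open subgroups exist) and multiplicativity of the index along a chain of finite-index subgroups; note that any open subgroup of a compact open subgroup of $G$ is itself compact open in $G$, so all indices in sight are finite.

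For the implication ``$p \notin \mathbb{L}(G) \Rightarrow G$ is $p$-finite from above'' I would argue by contraposition. Assuming that no compact open subgroup of $G$ is $p$-free, start from any compact open $U_0 \le G$; since it is not $p$-free there is an open subgroup $U_{-1} \le U_0$ with $p \mid [U_0 : U_{-1}]$, and $U_{-1}$ is again compact open. Iterating, one builds a nested sequence $\cdots \le U_{-n} \le \cdots \le U_{-1} \le U_0$ of compact open subgroups with $p \mid [U_{-n+1} : U_{-n}]$ for every $n \ge 1$. Multiplicativity of the index gives $p^n \mid [U_0 : U_{-n}]$, so for each $k$ the choice $n = k$ exhibits $p \in \mathbb{L}(G)$. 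This direction is essentially formal.

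For the converse, ``$G$ is $p$-finite from above $\Rightarrow p \notin \mathbb{L}(G)$'', suppose $U$ is a compact open $p$-free subgroup and, for contradiction, that $p \in \mathbb{L}(G)$ witnessed by a nested sequence $\cdots \le U_{-n} \le \cdots \le U_0$ of compact open subgroups. The key observation is that $p$-freeness passes to open subgroups: if $H \le U$ is open then $[U:V] = [U:H][H:V]$ for any open $V \le H$, so $p \nmid [H:V]$. Hence $U_0 \cap U$ is $p$-free, and therefore $p \nmid [U_0 \cap U : U_{-n} \cap U]$ for every $n$, since $U_{-n} \cap U$ is an open subgroup of $U_0 \cap U$. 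Now compute $[U_0 : U_{-n} \cap U]$ along the two chains $U_{-n}\cap U \le U_{-n} \le U_0$ and $U_{-n}\cap U \le U_0\cap U \le U_0$, obtaining
$$[U_0 : U_{-n}] \cdot [U_{-n} : U_{-n} \cap U] = [U_0 : U_0 \cap U] \cdot [U_0 \cap U : U_{-n} \cap U].$$
The right-hand side has $p$-part equal to the $p$-part of the fixed finite index $[U_0 : U_0 \cap U]$, say $p^a$, independently of $n$; hence the $p$-part of $[U_0 : U_{-n}]$ divides $p^a$ for all $n$, so $p^{a+1} \nmid [U_0 : U_{-n}]$ for every $n$, contradicting the defining property of $\mathbb{L}(G)$ applied with $k = a+1$. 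I expect this last direction to be the only delicate point, precisely because $[U_0 : U_{-n}]$ and $[U_0 \cap U : U_{-n}\cap U]$ are not comparable by divisibility in general; the two-way index computation above, which isolates the fixed correction factor $[U_0 : U_0\cap U]$, is what makes the argument work.
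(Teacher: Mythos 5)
Your proof is correct, but one half of it is organized rather differently from the paper's. For the implication ``$p$-finite from above $\Rightarrow p \notin \mathbb{L}(G)$'' the paper simply notes that a compact open $p$-free subgroup $U$ satisfies $p \notin \mathbb{L}(U)$ and then invokes the local invariance of the local prime content, $\mathbb{L}(U) = \mathbb{L}(G)$ for $U \leq G$ open, which it quotes from the literature; you instead reprove the relevant special case of that invariance by hand, via the two-chain computation
$$[U_0 : U_{-n}]\,[U_{-n} : U_{-n} \cap U] = [U_0 : U_0 \cap U]\,[U_0 \cap U : U_{-n} \cap U],$$
which bounds the $p$-part of $[U_0 : U_{-n}]$ by the fixed correction factor $[U_0 : U_0 \cap U]$, using that $p$-freeness of $U$ passes to its open subgroup $U_0 \cap U$. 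This costs a few extra lines but makes the lemma self-contained, with no appeal to the cited properties of $\mathbb{L}$. For the other implication the two arguments coincide in substance: the paper observes that along a nested sequence the quantities $|[U_0 : U_{-m}]|_p$ stabilize, so that some $U_{-n}$ is $p$-free, while you run the contrapositive and greedily build a sequence with $p^n \mid [U_0 : U_{-n}]$ from the assumption that no compact open subgroup is $p$-free; your formulation is arguably the more explicit one, since the paper's phrase ``it follows that $U_{-n}$ is $p$-free'' tacitly requires the sequence to be chosen (or refined) so that any failure of $p$-freeness can be fed back into it, which is exactly what your greedy construction does.
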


\begin{proof}
Suppose that $G$ is $p$-finite from above, and let $U$ be a compact open $p$-free subgroup of $G$. Then $p \notin \mathbb{L}(U) = \mathbb{L}(G)$.

Suppose that $p \notin \mathbb{L}(G)$. Then given a nested sequence as in the definition of $\mathbb{L}$, there exists $n\geq 1$ such that for all $m \geq n$ we have $|[U_0 : U_{-m}]|_p = |[U_0 : U_{-n}]|_p$. It follows that $U_{-n}$ is $p$-free, and so $G$ is $p$-finite from above.
\end{proof}

We will see in Theorem \ref{Haar} that this notion admits yet another equivalent characterization, in terms of the $p$-adic Haar integral. \\

Our next goal is to give a characterization of $p^\mathbb{N}$-freeness of locally elliptic groups, which relates it to the local prime content. It makes sense to restrict to locally elliptic groups in light of Theorem \ref{main1}: normed $p$-adic amenable groups will turn out to be locally elliptic (Theorem \ref{main1}).

\begin{lemma}
\label{free_co}
Let $G$ be a locally elliptic group. Suppose that $G$ is not $p^k$-free, that is, there exist open subgroups $V \leq U \leq G$ such that $p^k \mid [U:V]$. Then $U$ and $V$ may be chosen to be compact (and open).
\end{lemma}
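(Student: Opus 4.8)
The plan is to take the given open subgroups $V \leq U \leq G$ and intersect them with a well-chosen compact open subgroup of $G$, in such a way that the index $[U:V]$ is preserved. Note first that $[U:V]$ is finite, since divisibility by $p^k$ is only meaningful in that case; write $n := [U:V]$ and fix representatives $g_1, \dots, g_n \in U$ of the left cosets of $V$ in $U$, so that $U = \bigsqcup_{i=1}^n g_i V$.

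Since $G$ is locally elliptic, the finite set $\{g_1, \dots, g_n\}$ is contained in a compact subgroup $H \leq G$, and Lemma \ref{cinco} then provides a compact open subgroup $K \leq G$ with $H \leq K$. I would then set $U' := U \cap K$ and $V' := V \cap K$. These are subgroups; they are open in $G$, being intersections of open subgroups with the open subgroup $K$; and they are compact, being open (hence closed) subgroups of the compact group $K$. Clearly $V' \leq U'$, and each $g_i$ lies in $U'$ because $g_i \in U \cap H \subseteq U \cap K$.

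It remains to check that intersecting with $K$ did not shrink the index, i.e. that $U' = \bigsqcup_{i=1}^n g_i V'$, whence $[U':V'] = n = [U:V]$ and $p^k \mid [U':V']$. The cosets $g_i V'$ are pairwise disjoint because they sit inside the pairwise disjoint $g_i V$. For the covering: given $u \in U' \subseteq U$, write $u = g_i v$ with $v \in V$; then $v = g_i^{-1} u \in K$, since $g_i^{-1} \in H \subseteq K$ and $u \in K$, so $v \in V \cap K = V'$ and $u \in g_i V'$. This last computation is the only delicate point, and it is precisely why $K$ was chosen to contain the coset representatives rather than being an arbitrary compact open subgroup; with that choice in hand the verification goes through directly, so I do not expect a real obstacle here.
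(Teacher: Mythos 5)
Your proof is correct, and it takes a genuinely different and more elementary route than the paper's. The paper first passes to a finite-index subgroup so that $V$ is normal in $U$, applies the first Sylow theorem to extract a subgroup of order $p^k$ in $U/V$, lifts its elements and takes the closure $P$ of the group they generate (compact by local ellipticity), then invokes Lemma \ref{opinco} to produce a compact open $W$ with $P \cap V = P \cap W$ and runs the conjugation-intersection argument of Lemma \ref{cinco} to obtain a compact open $N$ normalized by $P$; the final pair is $(PN, N)$, for which $p^k \mid [PN : N]$. You instead fix a transversal $g_1, \dots, g_n$ of $V$ in $U$, place it inside a compact open subgroup $K$ (local ellipticity plus Lemma \ref{cinco}), and intersect: the standard computation you carry out shows that $[U \cap K : V \cap K] = [U : V]$ exactly, precisely because $K$ contains a full set of coset representatives together with their inverses, so the injection of cosets of $V \cap K$ in $U \cap K$ into cosets of $V$ in $U$ is surjective. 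Your argument avoids Sylow, the normality reduction, and Lemma \ref{opinco} entirely, and it even yields a slightly stronger conclusion, namely that the full index $[U:V]$ (not merely divisibility by $p^k$) is witnessed by a compact open pair; the paper's construction, by contrast, records the divisibility through a compact subgroup $P$ and delivers a pair in which the smaller group is normal in the larger, extra structure that is not needed for the statement or for its use in Proposition \ref{free lpc}. Both proofs only use the statement's hypotheses, so yours is a clean simplification.
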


\begin{proof}
Let $V \leq U \leq G$ be open subgroups of $G$ such that $p^k \mid [U : V] < \infty$. Up to passing to a finite-index subgroup, we may assume that $V$ is normal in $U$. The quotient $U/V$ is a finite discrete group, which by the first Sylow Theorem admits a subgroup of order $p^k$. Choose a representative for each element of this group, and denote by $P$ the closure of the subgroup of $G$ that they generate. Since $G$ is locally elliptic, $P$ is compact. Moreover, $p^k$ divides the order of $PV / V \cong P / P \cap V$. Now $P \cap V$ is an open subgroup of the compact group $P$. By Lemma \ref{opinco}, there exists a compact open subgroup $W \leq G$ such that $P \cap V = P \cap W$. Let $N := \cap_{g \in P} \, gWg^{-1} \leq W$. The same argument as in Lemma \ref{cinco} shows that $N$ is a compact open subgroup of $G$ which is normalized by $P$. Therefore $PN \leq G$ is compact and open, and $|PN/N| = |P / P \cap N|$ is a multiple of $|P / P \cap W| = |P / P \cap V|$, which in turn is a multiple of $p^k$. Since $PN$ and $N$ are both compact and open, we conclude.
\end{proof}

\begin{proposition}
\label{free lpc}

Let $G$ be a locally elliptic t.d.l.c. group. Then the following are equivalent:
\begin{enumerate}
\item $G$ is not $p^\mathbb{N}$-free.
\item There exists a nested sequence
$$ \cdots \leq U_{-n} \leq \cdots \leq U_{-1} \leq U_0 \leq U_1 \leq \cdots \leq U_n \leq \cdots$$
of compact open subgroups of $G$ such that, for all $k \geq 1$, there exists $n \geq 1$ such that $p^k \mid [U_n : U_{-n}]$.
\item Either $p \in \mathbb{L}(G)$, or there exists a nested sequence
$$ U_0 \leq U_1 \leq \cdots \leq U_n \leq \cdots$$
of compact open subgroups of $G$ such that, for all $k \geq 1$, there exists $n \geq 1$ such that $p^k \mid [U_n : U_0]$.
\end{enumerate}
\end{proposition}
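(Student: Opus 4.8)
The plan is to prove the three conditions equivalent cyclically, $(1)\Rightarrow(2)\Rightarrow(3)\Rightarrow(1)$, with essentially all of the work concentrated in the first implication. The implication $(3)\Rightarrow(1)$ is immediate from the definitions: a descending chain $\cdots\le U_{-n}\le\cdots\le U_0$ witnessing $p\in\mathbb{L}(G)$ provides, for every $k\ge 1$, open subgroups $U_{-n}\le U_0$ with $p^k\mid[U_0:U_{-n}]$, so $G$ is not $p^k$-free; an ascending chain as in the second alternative of $(3)$ does the same via the pairs $U_0\le U_n$. In either case $G$ is $p^k$-free for no $k$, that is, not $p^\mathbb{N}$-free.

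For $(2)\Rightarrow(3)$ I would split the bi-infinite chain into its descending half $\cdots\le U_{-n}\le\cdots\le U_0$ and its ascending half $U_0\le U_1\le\cdots$. If for every $k\ge 1$ some $n$ has $p^k\mid[U_0:U_{-n}]$, the descending half witnesses $p\in\mathbb{L}(G)$ and we are in the first case of $(3)$. Otherwise there is $k_0$ with $p^{k_0}\nmid[U_0:U_{-n}]$ for all $n$; since $\nu_p([U_0:U_{-n}])$ is non-decreasing in $n$ by multiplicativity of the index ($[U_0:U_{-(n+1)}]=[U_0:U_{-n}]\cdot[U_{-n}:U_{-(n+1)}]$), it is bounded, say by $M$. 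Then $p^k\mid[U_n:U_{-n}]=[U_n:U_0]\cdot[U_0:U_{-n}]$ forces $\nu_p([U_n:U_0])\ge k-M$, and letting $k\to\infty$ shows the ascending half satisfies the second case of $(3)$.

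The substantive step is $(1)\Rightarrow(2)$, which I would prove by constructing the chain by induction on $n\ge 0$: compact open subgroups $U_{-n}\le\cdots\le U_0\le\cdots\le U_n$ of $G$ with $p^n\mid[U_n:U_{-n}]$, so that taking $n=k$ verifies $(2)$. For $n=0$ take any compact open subgroup (Van Danzig's theorem). For the inductive step, since $G$ is not $p^{n+1}$-free, Lemma \ref{free_co} gives compact open subgroups $V\le U\le G$ with $p^{n+1}\mid[U:V]$; the set $U_n\cup U$ is compact, so by local ellipticity it lies in a compact subgroup, hence by Lemma \ref{cinco} in a compact open subgroup $U_{n+1}\supseteq U_n\cup U$, and I set $U_{-(n+1)}:=U_{-n}\cap V$, a compact open subgroup of $U_{-n}$. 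Enlarging the top and shrinking the bottom does not destroy the divisibility, as $[U_{n+1}:U_{-(n+1)}]=[U_{n+1}:U]\cdot[U:V]\cdot[V:U_{-n}\cap V]$ is divisible by $p^{n+1}$. The only real obstacle, and it is a mild one, is the observation underlying this step: the subgroup pairs witnessing failure of $p^k$-freeness for different $k$ need not be nested, so they cannot simply be concatenated; local ellipticity is exactly what allows amalgamating them by passing to a common compact open overgroup on top, while multiplicativity of the index ensures that neither this nor the bottom intersection spoils the divisibility already obtained.
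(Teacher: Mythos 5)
Your proposal is correct and follows essentially the same route as the paper: the cycle $(3)\Rightarrow(1)\Rightarrow(2)\Rightarrow(3)$ with the work in $(1)\Rightarrow(2)$, done by invoking Lemma \ref{free_co} for each $k$ and amalgamating the witnesses inductively — enlarging the top via local ellipticity and intersecting at the bottom — while $(2)\Rightarrow(3)$ is the paper's norm argument rewritten with $p$-adic valuations. The only cosmetic difference is that the paper takes $U_{k+1}$ to be the subgroup generated by $U_k$ and the new witness (compact by local ellipticity, open since it contains $U_k$), whereas you pass to a compact open overgroup of the union via Lemma \ref{cinco}; both work equally well.
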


\begin{proof}
Clearly $3.$ implies $1.$ Given a sequence as in $2.$, we have $|[U_n : U_{-n}]|_p \to 0$. Since $|[U_n : U_{-n}]|_p = |[U_n : U_0]|_p \cdot |[U_0 : U_{-n}]|_p$, and both these quantities are decreasing, one of them must converge to 0. This proves that $2.$ implies $3.$, using the definition of local prime content.

We are left to show that $1.$ implies $2.$ So suppose that $G$ is not $p^\mathbb{N}$-free. By Lemma \ref{free_co}, for all $k \geq 1$ there exist compact open subgroups $V_{-k} \leq V_k \leq G$ such that $p^k \mid [V_k : V_{-k}]$. We define inductively the sequence $(U_k)_{k \in \mathbb{Z}}$. First, we set $U_i = V_i$ for $i \in \{ \pm 1 \}$ (and, say, $U_0 = V_1$). Assume we have constructed $U_{-k} \leq \cdots \leq U_k$ so that $p^k \mid [U_k : U_{-k}]$. Set $U_{k+1}$ to be the subgroup generated by $U_k$ and $V_{k+1}$. Since $G$ is locally elliptic, $U_{k+1}$ is compact, and moreover it is open since it contains the open subgroup $U_k$. Set $U_{-(k+1)} := V_{-(k+1)} \cap U_{-k}$, which again is compact and open. Then $p^{k+1} \mid [V_{k+1} : V_{-(k+1)}] \mid [U_{k+1} : U_{-(k+1)}]$, and $U_{-(k+1)} \leq U_{-k} \leq U_k \leq U_{k+1}$.
\end{proof}

\subsection{Normed vector spaces over non-Archimedean valued fields}
\label{preli Banach}

For a detailed account, see \cite{NFA} or \cite{NOT}. Let $(\mathbb{K}, |\cdot|_\mathbb{K})$ be a non-Archimedean valued field.

\begin{definition}
\label{def_nvs}

Let $E$ be a $\mathbb{K}$-vector space. A \emph{norm} on $E$ is a map $\| \cdot \| : E \to \mathbb{R}_{\geq 0}$ such that:
\begin{enumerate}
\item $\|x\| = 0$ if and only if $x = 0$;
\item $\|x + y\| \leq \max\{\|x\|, \|y\|\}$;
\item $\|\alpha x\| = |\alpha|_\mathbb{K} \cdot \|x\|$ for all $\alpha \in \mathbb{K}$.
\end{enumerate}
\end{definition}

The most important difference with real normed vector spaces is that the triangle inequality is replaced by the ultrametric inequality. This has a useful implication:

\begin{lemma}
\label{ultrametric cor}

Let $x, y \in E$ be such that $\| x \| \neq \| y \|$. Then $\| x + y \| = \max \{ \|x\|, \|y\| \}$.
\end{lemma}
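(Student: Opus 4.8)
This is the standard ``ultrametric isosceles'' lemma, so the plan is short. Assume without loss of generality that $\|x\| > \|y\|$; I want to show $\|x+y\| = \|x\|$. From the ultrametric inequality applied directly, $\|x+y\| \leq \max\{\|x\|,\|y\|\} = \|x\|$, so one inequality is free. For the reverse, the trick is to write $x = (x+y) + (-y)$ and apply the ultrametric inequality again: $\|x\| = \|(x+y) + (-y)\| \leq \max\{\|x+y\|, \|-y\|\}$. Since $\|-y\| = |-1|_\mathbb{K}\cdot\|y\| = \|y\| < \|x\|$, the maximum on the right cannot be $\|-y\|$, so it must be $\|x+y\|$; hence $\|x\| \leq \|x+y\|$. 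Combining the two inequalities gives $\|x+y\| = \|x\| = \max\{\|x\|,\|y\|\}$, as claimed.

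There is essentially no obstacle here: the only point requiring a word of justification is that $\|-y\| = \|y\|$, which follows from axiom (3) in Definition \ref{def_nvs} together with $|-1|_\mathbb{K} = 1$ (itself a consequence of multiplicativity of the norm on $\mathbb{K}$, since $|-1|_\mathbb{K}^2 = |1|_\mathbb{K} = 1$ and $|-1|_\mathbb{K} \geq 0$). The symmetric case $\|y\| > \|x\|$ is handled identically (or by swapping the roles of $x$ and $y$), and the hypothesis $\|x\| \neq \|y\|$ guarantees we are always in one of these two cases, so the argument is complete.
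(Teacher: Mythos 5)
Your proof is correct and uses essentially the same argument as the paper: decompose the larger vector as the sum plus (minus) the smaller one and apply the ultrametric inequality, forcing the maximum to be $\|x+y\|$. The paper phrases it as a single chain of inequalities with the roles of $x$ and $y$ swapped, but the idea is identical.
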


\begin{proof}
Suppose without loss of generality that $\| x \| < \| y \|$. Then
$$\| y \| = \| (x + y) - x \| \leq \max \{ \|x + y\|, \|x\| \} \leq \|y\|.$$
It follows that $\|x + y\| = \|y\|$.
\end{proof}

\begin{warning}
\label{multiples}

Notice that we do not require that the norm takes values in $|\mathbb{K}|_\mathbb{K}$. It follows that vectors in $E$ do not necessarily have multiples of norm one.
\end{warning}

\begin{definition}
A \emph{normed $\mathbb{K}$-vector space} is a $\mathbb{K}$-vector space $E$ equipped with a norm and the corresponding metric. If this metric is complete, $E$ is called a \emph{$\mathbb{K}$-Banach space}.

Given a group $G$, a \emph{normed $\mathbb{K}[G]$-module} is a normed $\mathbb{K}$-vector space on which $G$ acts by linear isometries. Notice that in this definition there is no continuity requirement on the action.
\end{definition}

The metric on a normed $\mathbb{K}$-vector space is an ultrametric, so the induced topology is totally disconnected. \\

Linear maps behave as nicely as in the real case: a linear map $T : E \to F$ between two normed $\mathbb{K}$-vector spaces is continuous if and only if it is \emph{bounded}; that is, there exists $C \geq 0$ such that for all $x \in E$ we have $\|Tx\|_F \leq C \|x\|_E$.

Given a normed $\mathbb{K}$-vector space $E$, we can consider its \emph{topological dual} $E^* := \{ f : E \to \mathbb{K} \mid f \text{ linear and continuous} \}.$ There is a subtlety on how to define the dual norm. One has two natural possibilities:
$$\|T\|_{op} := \inf \{ C \geq 0 \mid |Tx|_\mathbb{K} \leq C\|x\|_E \} ; \,\,\,\,\,\,\, \|T\|_{op}' := \sup\limits_{\|x\|_E \leq 1} |Tx|_\mathbb{K}.$$
These two norms are a priori different. This is due to the fact that an element does not necessarily have a multiple of norm 1 (Warning \ref{multiples}). However they are equivalent. One special case where they coincide is with $E = C_b(G, \mathbb{K})$ and $\mathbb{K}$ is discretely valued; more generally, whenever $\| \cdot \|_E$ takes the same values as $|\cdot|_\mathbb{K}$ does. \\

Following the standard conventions \cite{NFA, NOT}, we will work with $\| \cdot \|_{op}$. This is important since when dealing with bounded cohomology over $\mathbb{K}$ we look at dual normed $\mathbb{K}[G]$-modules. These are equipped with a linear \emph{isometric} action of $G$, so it is not enough to work with norms up to equivalence. However, when checking whether a given map is continuous or bounded, we can use either norm. In any case, $E^*$ is always a Banach space whenever $\mathbb{K}$ is complete. \\

If $E$ is a normed $\mathbb{K}[G]$-module, then the dual $E^*$ naturally becomes a normed $\mathbb{K}[G]$-module when endowed with the dual action, given by $(g \cdot \lambda)(x) = \lambda(g^{-1} \cdot x)$.

\begin{definition}
A \emph{dual normed $\mathbb{K}[G]$-module} is a normed $\mathbb{K}[G]$-module $E$ which is isometrically $G$-isomorphic to the dual of a normed $\mathbb{K}[G]$-module.
\end{definition}

The following fundamental results hold analogously to the real case (see \cite[Theorem 2.1.14]{NFA} for Theorem \ref{norm fin dim} and \cite[Theorem 2.1.17]{NFA} for the Open Mapping Theorem):

\begin{theorem}
\label{norm fin dim}

Let $\mathbb{K}$ be a complete valued field. Every finite-dimensional normed vector space over $\mathbb{K}$ is Banach, and all norms are equivalent.
\end{theorem}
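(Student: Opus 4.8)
The plan is to argue by induction on $d = \dim_\mathbb{K} E$, establishing simultaneously that $E$ is a Banach space and that any two norms on it are equivalent. Fixing a basis $e_1, \dots, e_d$, it suffices to show that an arbitrary norm $\|\cdot\|$ on $E$ is equivalent to the maximum norm $\|\sum_i \alpha_i e_i\|_\infty := \max_i |\alpha_i|_\mathbb{K}$: equivalence of norms is transitive, and $(E, \|\cdot\|_\infty)$ is isometrically isomorphic to $\mathbb{K}^d$ with the product ultrametric, hence complete since $\mathbb{K}$ is.

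The base case $d = 1$ is immediate, since $E = \mathbb{K} e_1$ and $\|\alpha e_1\| = |\alpha|_\mathbb{K} \|e_1\|$, so $\|\cdot\| = \|e_1\| \cdot \|\cdot\|_\infty$.

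For the inductive step with $d \geq 2$, one inequality is free: by the ultrametric inequality, $\|\sum_i \alpha_i e_i\| \leq \max_i (|\alpha_i|_\mathbb{K} \|e_i\|) \leq (\max_i \|e_i\|) \cdot \|\sum_i \alpha_i e_i\|_\infty$. For the reverse inequality, for each $i$ consider the hyperplane $H_i := \operatorname{span}_\mathbb{K}(e_j : j \neq i)$. It has dimension $d - 1$, so by the inductive hypothesis $H_i$, equipped with the restriction of $\|\cdot\|$, is a Banach space; being complete, it is closed in $(E, \|\cdot\|)$. Since $e_i \notin H_i$, the distance $\delta_i := \operatorname{dist}_{\|\cdot\|}(e_i, H_i)$ is strictly positive. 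For $x = \sum_i \alpha_i e_i$ with $\alpha_i \neq 0$ we then get $\|x\| = |\alpha_i|_\mathbb{K} \cdot \|e_i + \sum_{j \neq i} (\alpha_j / \alpha_i) e_j\| \geq |\alpha_i|_\mathbb{K} \delta_i$, an estimate that also holds trivially when $\alpha_i = 0$. Taking the index $i$ that realizes $\|x\|_\infty$ yields $\|x\| \geq (\min_i \delta_i) \|x\|_\infty$ with $\min_i \delta_i > 0$. Hence $\|\cdot\|$ is equivalent to $\|\cdot\|_\infty$, which completes the induction and shows in particular that $(E, \|\cdot\|)$ is complete.

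The step that carries the weight of the argument is the positivity of the $\delta_i$, and this is exactly where completeness of $\mathbb{K}$ enters, via the inductive hypothesis applied to the hyperplanes $H_i$: over a non-complete field a finite-codimensional subspace need not be closed and norms on finite-dimensional spaces need not be equivalent. Everything else is a routine transcription of the classical real or complex argument, with the triangle inequality replaced throughout by the ultrametric inequality, which only sharpens the easy estimates.
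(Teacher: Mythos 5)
Your proof is correct and complete: the induction on dimension, with the reverse inequality coming from the positivity of the distance from $e_i$ to the hyperplane $H_i$ (closed because it is complete by the inductive hypothesis), is exactly the standard argument for this theorem over a complete valued field. The paper itself gives no proof but only cites \cite[Theorem 2.1.14]{NFA}, and the proof there is essentially the one you have written; you also correctly identify where completeness of $\mathbb{K}$ is indispensable. Nothing to fix.
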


\begin{theorem}[Open Mapping Theorem]
Let $E, F$ be $\mathbb{K}$-Banach spaces, $T : E \to F$ a bounded surjective linear map. Then $T$ is open. In particular, if $T$ is bijective, it is a homeomorphism.
\end{theorem}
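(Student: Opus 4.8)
The plan is the classical Baire-category argument, which becomes noticeably cleaner in the ultrametric setting. Write $B_E(0,\rho) = \{x \in E : \|x\|_E \le \rho\}$ and similarly for $F$. It suffices to prove that $T$ is open at the origin, i.e. that there are $r, s > 0$ with $B_F(0,r) \subseteq T(B_E(0,s))$; homogeneity then upgrades this to openness everywhere. First I would note $E = \bigcup_{n \ge 1} B_E(0,n)$, since every vector has finite norm, and use surjectivity of $T$ to get $F = \bigcup_{n \ge 1} \overline{T(B_E(0,n))}$. As $F$ is complete it is a Baire space, so some $\overline{T(B_E(0,n_0))}$ has nonempty interior and hence contains an open ball $B_F(f_0,\delta)$.

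Here the non-Archimedean structure gives two shortcuts. Because $\|x+y\|_E \le \max\{\|x\|_E,\|y\|_E\}$ and $|-1|_{\mathbb{K}} = 1$, the ball $B_E(0,n_0)$ is an additive subgroup of $E$; since $T$ is additive, $C := \overline{T(B_E(0,n_0))}$ is a subgroup of $F$. A subgroup containing $B_F(f_0,\delta)$ also contains $B_F(f_0,\delta) - f_0 = B_F(0,\delta)$, so we obtain $B_F(0,\delta) \subseteq C$ with no convexity or averaging step. The second and more delicate task is to remove the closure. Picking $\pi \in \mathbb{K}$ with $0 < |\pi|_{\mathbb{K}} < 1$ and rescaling, $B_F(0,|\pi|^k\delta) \subseteq \overline{T(B_E(0,|\pi|^k n_0))}$ for all $k \ge 0$. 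Given $f$ with $\|f\|_F \le \delta$, I would build by successive approximation elements $e_k \in E$ with $\|e_k\|_E \le |\pi|^{k-1} n_0$ and $\|f - T(e_1 + \cdots + e_k)\|_F \le |\pi|^k \delta$; since $\|e_k\|_E \to 0$ and $E$ is complete, the series $e := \sum_k e_k$ converges, with $\|e\|_E \le \max_k \|e_k\|_E \le n_0$ (the ultrametric again sparing us any geometric-series loss), and $Te = \lim_k T(e_1 + \cdots + e_k) = f$ by boundedness of $T$. Thus $B_F(0,\delta) \subseteq T(B_E(0,n_0))$.

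Finally, rescaling this inclusion by scalars $\pi^k$ of arbitrarily small positive norm (enough, as such radii are cofinal going to $0$), for any $x \in E$ and any neighbourhood $U \ni x$ one has $U \supseteq x + B_E(0,\rho)$ for some $\rho > 0$, whence $T(U) \supseteq Tx + T(B_E(0,\rho)) \supseteq Tx + B_F(0,\rho')$ for some $\rho' > 0$; so $T(U)$ is a neighbourhood of $Tx$ and $T$ is open. If $T$ is moreover bijective, openness of $T$ is precisely continuity of $T^{-1}$, so $T$ is a homeomorphism. I expect the only genuine obstacle to be the bookkeeping in the successive-approximation step — choosing the $e_k$ with the stated norm bounds and checking the series telescopes to $f$; by contrast, the Baire step and the passage from an interior point to a ball around $0$, which in the Archimedean proof require taking differences and using convexity, are here immediate. (If $\mathbb{K}$ is trivially valued no such $\pi$ exists; that degenerate case is treated separately, or subsumed as in the cited references, and does not affect the main line of argument.)
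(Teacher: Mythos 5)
Your argument is correct and is essentially the standard Baire-category proof of the non-Archimedean Open Mapping Theorem; the paper does not prove this statement itself but cites it from \cite[Theorem 2.1.17]{NFA}, where the same argument (Baire category, the closed ball as an additive subgroup, and ultrametric successive approximation to remove the closure) is carried out under the standing assumption that the valuation is non-trivial. Your caveat about trivially valued fields is consistent with that reference and with the paper's only use of the theorem (Lemma 9.17, where non-trivial valuation is assumed).
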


The following fundamental result requires an additional hypothesis \cite[Corollary 4.1.2]{NFA}:

\begin{theorem}[Hahn--Banach Theorem]
Suppose that $\mathbb{K}$ is spherically complete. Let $E$ be a normed $\mathbb{K}$-vector space, $V$ a (not necessarily closed) linear subspace, and $\lambda : V \to \mathbb{K}$ a bounded linear map. Then $\lambda$ may be extended to a linear map on all of $E$ of the same norm.
\end{theorem}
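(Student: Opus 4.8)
The plan is to run the classical Zorn's lemma argument, with the spherical completeness hypothesis entering precisely at the one-dimensional extension step. First I would dispose of the trivial case $\lambda = 0$ (extend by the zero map) and set $c := \|\lambda\|_{op} > 0$. Consider the poset $\mathcal{P}$ of pairs $(W, \mu)$ where $V \subseteq W \subseteq E$ is a linear subspace, $\mu : W \to \mathbb{K}$ is linear with $\|\mu\|_{op} = c$, and $\mu|_V = \lambda$, ordered by extension: $(W_1, \mu_1) \leq (W_2, \mu_2)$ if $W_1 \subseteq W_2$ and $\mu_2|_{W_1} = \mu_1$. The union of a chain in $\mathcal{P}$ is again in $\mathcal{P}$, so Zorn's lemma produces a maximal element $(W, \mu)$, and it suffices to prove $W = E$.

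Suppose $W \neq E$ and pick $x \in E \setminus W$. I want to find $a \in \mathbb{K}$ for which the linear extension $\mu'(w + \alpha x) := \mu(w) + \alpha a$ (for $w \in W$, $\alpha \in \mathbb{K}$) still satisfies $\|\mu'\|_{op} \leq c$; since $\mu'$ extends $\mu$ this forces $\|\mu'\|_{op} = c$, contradicting maximality. Dividing the desired inequality $|\mu(w) + \alpha a|_\mathbb{K} \leq c\|w + \alpha x\|$ by $|\alpha|_\mathbb{K}$ when $\alpha \neq 0$ (the case $\alpha = 0$ being automatic), the condition on $a$ becomes
$$|a - \mu(w)|_\mathbb{K} \leq c\,\|x - w\| \qquad \text{for all } w \in W,$$
that is, $a$ must lie in the intersection of the closed balls $B_w := \{ t \in \mathbb{K} \mid |t - \mu(w)|_\mathbb{K} \leq c\|x - w\| \}$. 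The key point is that these balls are pairwise nested: for $w_1, w_2 \in W$, writing $w_1 - w_2 = (x - w_2) - (x - w_1)$ and using linearity, the bound $\|\mu\|_{op} = c$, and the ultrametric inequality,
$$|\mu(w_1) - \mu(w_2)|_\mathbb{K} = |\mu(w_1 - w_2)|_\mathbb{K} \leq c\|w_1 - w_2\| \leq \max\{ c\|x - w_1\|,\, c\|x - w_2\| \};$$
since in an ultrametric field two closed balls whose centres are at distance at most the larger of their radii are comparable by inclusion, the family $(B_w)_{w \in W}$ is a chain. Spherical completeness then yields $\bigcap_{w \in W} B_w \neq \emptyset$, and any element $a$ of this intersection completes the construction.

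The only genuine subtlety — beyond routine bookkeeping (checking $\mathcal{P}$ is nonempty and closed under chain unions, verifying the ball comparison, assembling the contradiction) — is that Definition \ref{def sph} is phrased for decreasing \emph{sequences} of balls, whereas the index set $W$ above may be arbitrarily large. I would handle this by the standard reduction: letting $\rho := \inf_{w \in W} c\|x - w\|$, choose $w_n$ with $c\|x - w_n\| \to \rho$; the corresponding $B_{w_n}$, being pairwise nested, can be arranged into a decreasing sequence, whose nonempty intersection (by the sequential form of spherical completeness) already meets every $B_w$ by a short ultrametric estimate. Equivalently, one simply records that for valued fields the sequential and chain versions of spherical completeness coincide, which is the form used in \cite[Chapter 4]{NFA}, where this argument originates (\cite[Corollary 4.1.2]{NFA}).
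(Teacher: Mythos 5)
Your proof is correct. Note that the paper does not prove this statement at all: it is quoted from \cite[Corollary 4.1.2]{NFA}, and your argument is precisely the standard one behind that reference (Ingleton's theorem) — Zorn's lemma plus the one-dimensional extension step, where the constraint on the new value $a$ is membership in the balls $B_w$, the ultrametric inequality makes these balls pairwise nested, and spherical completeness supplies a point of the intersection. Your treatment of the only delicate point — that Definition \ref{def sph} is stated for decreasing \emph{sequences} while the family $(B_w)_{w\in W}$ may be uncountable — is also the standard and correct reduction: pick $w_n$ with radii tending to the infimum, intersect sequentially, and conclude via the estimate $|a-\mu(w)|_\mathbb{K} \leq \lim_n c\|x-w_n\| \leq c\|x-w\|$ in the case where no $B_{w_n}$ is contained in $B_w$.
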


The hypothesis of spherical completeness is strictly necessary. Indeed, if $\mathbb{K}$ is not spherically complete, then there exist non-zero Banach spaces whose dual is trivial \cite[Theorem 4.1.12]{NFA}. So this theorem even characterizes spherical completeness. \\

The following result is usually only proven in the real case. However the classical proof (see for example \cite[Theorem 1.1.28]{Zimmer}) only uses that closed balls in $\mathbb{R}$ or $\mathbb{C}$ are compact. This is true for all local fields, in particular for $\mathbb{Q}_p$:

\begin{theorem}[Banach--Alaoglu Theorem]
Suppose that $\mathbb{K}$ is a local field. Let $E$ be a normed $\mathbb{K}$-vector space, and $E^*$ its topological dual. Let $B$ be a closed ball in $E^*$. Then $B$ is compact with respect to the weak-$*$ topology on $E^*$.
\end{theorem}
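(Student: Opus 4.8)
The plan is to carry out the classical product-space proof of Banach--Alaoglu (as in \cite[Theorem 1.1.28]{Zimmer}), whose only non-formal ingredient is that every closed ball in a local field is compact. I would establish that input first: since $\mathbb{K}$ is locally compact and Hausdorff, $0$ admits a compact neighbourhood, and since closed balls form a neighbourhood basis at $0$, some closed ball $\overline{B}_\mathbb{K}(0,r)$ lies inside it, hence is compact (a closed subset of a compact set). Since a local field is discretely valued, there is $\alpha \in \mathbb{K}^\times$ with $0 < |\alpha|_\mathbb{K} < 1$; then for any $s \geq 0$ one picks $n$ with $|\alpha|_\mathbb{K}^n s \leq r$, so that $\overline{B}_\mathbb{K}(0,s) = \alpha^{-n} \cdot \overline{B}_\mathbb{K}(0, |\alpha|_\mathbb{K}^n s)$ is the image, under the homeomorphism ``multiplication by $\alpha^{-n}$'', of a closed subset of $\overline{B}_\mathbb{K}(0,r)$, hence compact. (Equivalently, one may invoke that the ring of integers $\mathfrak{o}$ is profinite, hence compact.)

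Having this, I would reduce to the case where $B$ is centred at the origin — translation by the centre is a weak-$*$ homeomorphism of $E^*$, since each evaluation $T \mapsto T(x)$ is continuous and so is addition by a constant in $\mathbb{K}$ — so that $B = \{T \in E^* \mid \|T\|_{op} \leq C\}$ for some $C \geq 0$. Note that $\|T\|_{op} \leq C$ forces $|Tx|_\mathbb{K} \leq C\|x\|_E$ for every $x$ (let $\varepsilon \to 0$ in the defining inequality of the operator norm), so the ambiguity between $\|\cdot\|_{op}$ and $\|\cdot\|_{op}'$ is immaterial here. Now consider the evaluation map $\iota : E^* \to \prod_{x \in E} \mathbb{K}$, $\iota(T) = (T(x))_{x \in E}$: by construction the weak-$*$ topology on $E^*$ is exactly the topology that $\iota$ pulls back from the product (pointwise-convergence) topology. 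By the first step, $\iota(B)$ lands inside $P := \prod_{x \in E} \overline{B}_\mathbb{K}(0, C\|x\|_E)$, which is compact by Tychonoff's Theorem.

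It then remains to check that $\iota(B)$ is closed in $P$. Inside $P$, the set $\iota(B)$ is exactly the set of families $(t_x)_{x \in E}$ satisfying $t_{x+y} = t_x + t_y$ and $t_{\alpha x} = \alpha t_x$ for all $x, y \in E$ and $\alpha \in \mathbb{K}$, the norm bounds being already encoded in $P$; each such condition is closed because the coordinate projections $P \to \mathbb{K}$ are continuous, addition and scalar multiplication on $\mathbb{K}$ are continuous, and $\mathbb{K}$ is Hausdorff. So $\iota(B)$ is an intersection of closed sets, hence closed in $P$; and any point of it is a linear functional $T$ with $|Tx|_\mathbb{K} \leq C\|x\|_E$, which is automatically bounded, so it genuinely lies in $E^*$, indeed in $B$. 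Thus $\iota(B)$ is a closed subspace of the compact space $P$, hence compact, and since $\iota$ is a homeomorphism onto its image for the weak-$*$ topology, $B$ is weak-$*$ compact. The only step with genuine content is the compactness of closed balls in $\mathbb{K}$; I do not anticipate a serious obstacle, since the remainder is the verbatim product-space argument, which is insensitive to whether $\mathbb{K}$ is Archimedean and uses nothing about the ultrametric beyond local compactness of the field.
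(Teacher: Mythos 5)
Your proposal is correct and is essentially the argument the paper has in mind: the paper does not write out a proof but explicitly points to the classical product-space argument (citing Zimmer) and notes that its only non-formal input is the compactness of closed balls in the field, which is exactly what you verify and then feed into the Tychonoff/closed-subset/homeomorphism-onto-image machinery. Your added care about the centre of the ball and the $\|\cdot\|_{op}$ versus $\|\cdot\|_{op}'$ ambiguity is a harmless elaboration of the same route, not a different one.
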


Here the \emph{weak-$*$ topology} is the coarsest topology such that all evaluation maps $\{ E^* \to \mathbb{K} : \lambda \mapsto \lambda(x) \mid x \in E \}$ are continuous. The hypothesis that the topology on the field is locally compact is strictly necessary. Indeed, $\mathbb{K}^*$ with the weak-$*$ topology is linearly homeomorphic to $\mathbb{K}$ with its norm topology, so in order for $E = \mathbb{K}$ to satisfy this theorem the closed balls in $\mathbb{K}$ need to be compact.

\subsection{The $p$-adic Haar integral}
\label{preli Haar}

Here we introduce the fundamental tools from non-Archimedean harmonic analysis that will be used to characterize normed $p$-adic amenability in Section \ref{sequiv}. We will not need them for other fields, so we only focus on $\mathbb{Q}_p$. For a detailed account, see \cite[Chapter 2]{NHA}. \\

Denote by $C_{00}(G, \mathbb{Q}_p)$ the space of compactly supported continuous $\mathbb{Q}_p$-valued functions on $G$. Since the image of a compact set under a continuous function is bounded, $C_{00}(G, \mathbb{Q}_p) \subseteq C_b(G, \mathbb{Q}_p)$.

\begin{definition}
A \emph{$p$-adic Haar integral} on $G$ is a non-zero bounded linear map $C_{00}(G, \mathbb{Q}_p) \to \mathbb{Q}_p : f \mapsto \int f(x) dx$ which is left-invariant, meaning
$$\int (g \cdot f)(x) dx = \int f(x) dx$$
for all $f \in C_{00}(G, \mathbb{Q}_p)$ and all $g \in G$.
\end{definition}

Notice that the main difference with the real case is that the $p$-adic Haar integral is bounded only in terms of the supremum, and not of the support, of the function it takes as an argument (see \cite[2.2.6]{NHA}). This is clear in the discrete case (in which the Haar integral is just a sum) by the ultrametric inequality. \\

Let $U \leq G$ be a compact open subgroup, and let $\mathbbm{1}_U$ denote the characteristic function of $U$, which is an element of $C_{00}(G, \mathbb{Q}_p)$. Since there is no relation with Borel measures, it is not immediately clear from the definitions that $\int \mathbbm{1}_U(x) dx \neq 0$. However, this is the case \cite[2.2.6]{NHA}:

\begin{lemma}
\label{measure U}

Let $f \mapsto \int f(x) dx$ be a $p$-adic Haar integral on $G$. Then for every compact open subgroup $U \leq G$ we have $\int \mathbbm{1}_U(x) dx \neq 0$.
\end{lemma}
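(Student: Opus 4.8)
The plan is to argue by contradiction. Suppose that $\int \mathbbm{1}_{U_0}(x)\,dx = 0$ for \emph{some} compact open subgroup $U_0 \leq G$ (one exists by Van Danzig's Theorem, so ruling this out suffices). I will deduce that $\int f(x)\,dx = 0$ for every $f \in C_{00}(G,\mathbb{Q}_p)$, contradicting the assumption that a $p$-adic Haar integral is non-zero.

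\emph{Step 1: characteristic functions of cosets.} For $h \in G$ and a compact open subgroup $V$ one has $\mathbbm{1}_{hV} = h \cdot \mathbbm{1}_V$, so left-invariance gives $\int \mathbbm{1}_{hV}\,dx = \int \mathbbm{1}_V\,dx$. Hence, if $W \leq V$ are compact open subgroups, decomposing $V$ into left cosets of $W$ yields $\int \mathbbm{1}_V\,dx = [V:W]\int \mathbbm{1}_W\,dx$; since $[V:W]$ is a non-zero element of $\mathbb{Q}_p$, vanishing of $\int \mathbbm{1}_V\,dx$ forces $\int \mathbbm{1}_W\,dx = 0$. Applying this first to $W \cap U_0 \leq U_0$ and then to $W \cap U_0 \leq W$, we conclude from the hypothesis that $\int \mathbbm{1}_W\,dx = 0$ for \emph{every} compact open subgroup $W$, and therefore $\int \mathbbm{1}_{hW}\,dx = 0$ for every coset $hW$.

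\emph{Step 2: approximation and conclusion.} I claim every $f \in C_{00}(G,\mathbb{Q}_p)$ is a uniform limit of finite $\mathbb{Q}_p$-linear combinations of characteristic functions of cosets of compact open subgroups. Fix $\varepsilon > 0$, let $K$ be the support of $f$, fix a compact open subgroup $V_0$, and let $L$ be the union of the finitely many left cosets $gV_0$ that meet $K$; then $L$ is compact, clopen, and contains $K$. By continuity, for each $x \in L$ there is a compact open subgroup $V_x \leq V_0$ with $|f(y)-f(x)|_p \leq \varepsilon$ for all $y \in xV_x$; extract a finite subcover $L = \bigcup_{j=1}^m x_j V_{x_j}$ and set $V = \bigcap_{j=1}^m V_{x_j} \leq V_0$. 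Since $L$ is a union of cosets of $V_0$, it is a disjoint union of finitely many cosets of $V$, say $L = \bigsqcup_{k=1}^M h_k V$, and a direct check shows each $h_k V$ lies inside some $x_j V_{x_j}$, so $f$ varies by at most $\varepsilon$ on it. Putting $g := \sum_{k=1}^M f(h_k)\,\mathbbm{1}_{h_k V}$, we get $\|f-g\|_\infty \leq \varepsilon$, since on each $h_k V$ this is $|f(y)-f(h_k)|_p \leq \varepsilon$ while off $L$ both $f$ and $g$ vanish. By Step 1, $\int g(x)\,dx = \bigl(\sum_k f(h_k)\bigr)\int \mathbbm{1}_V(x)\,dx = 0$, so boundedness of the integral (with norm bound $C$) gives $\bigl|\int f(x)\,dx\bigr|_p = \bigl|\int (f-g)(x)\,dx\bigr|_p \leq C\|f-g\|_\infty \leq C\varepsilon$. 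Letting $\varepsilon \to 0$ yields $\int f(x)\,dx = 0$ for all $f$, the desired contradiction.

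The main obstacle is the approximation in Step 2, and specifically the behaviour near $\partial K$: naively rounding $f$ to a locally constant function can fail to stay uniformly close to $f$ on cosets that straddle the boundary of the support. Enclosing $K$ in the clopen compact set $L$ — a union of subgroup-cosets — resolves this by forcing both $f$ and its approximant to vanish identically off $L$. Everything else is bookkeeping with cosets and the ultrametric inequality.
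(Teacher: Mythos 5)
Your proof is correct. Note that the paper does not actually prove this lemma: it is quoted from Schikhof's book on non-Archimedean harmonic analysis ([NHA, 2.2.6]), so you have supplied a self-contained argument where the paper relies on a citation. Your route is the natural one and, as far as I can tell, is essentially the standard argument behind the cited result: first, left-invariance plus the coset decomposition $\mathbbm{1}_V = \sum_i \mathbbm{1}_{h_i W}$ (valid since $[V:W]<\infty$ and $[V:W]\neq 0$ in $\mathbb{Q}_p$, as $\chi(\mathbb{Q}_p)=0$) propagates the vanishing of $\int \mathbbm{1}_{U_0}$ through $W\cap U_0$ to every compact open subgroup and every coset; second, the density of locally constant compactly supported functions, together with the fact that the $p$-adic Haar integral is bounded with respect to the supremum norm (exactly the point the paper emphasizes after the definition), forces the whole integral to vanish, contradicting that it is non-zero. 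The one step that genuinely needed care — approximating $f$ near the boundary of its support — you handle correctly by enclosing $\operatorname{supp}(f)$ in the clopen set $L$, a finite union of cosets of $V_0$, so that both $f$ and the locally constant approximant vanish off $L$ and the ultrametric estimate holds coset by coset. The checks that each $h_kV$ sits inside some $x_jV_{x_j}$ and that $g\in C_{00}(G,\mathbb{Q}_p)$ are also sound, so there is no gap.
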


The following is a combination of \cite[Theorem 2.2.3]{NHA} and Lemma \ref{lpc pfin}:

\begin{theorem}[Schikhof]
\label{Haar}

There exists a $p$-adic Haar integral on $G$ if and only if $p \notin \mathbb{L}(G)$. Moreover, a $p$-adic Haar integral is unique up to scalar.
\end{theorem}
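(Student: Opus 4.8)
The plan is to prove the two implications of the equivalence separately, then handle uniqueness, throughout using Lemma~\ref{lpc pfin} to replace the hypothesis $p \notin \mathbb{L}(G)$ by the statement that $G$ is $p$-finite from above, i.e.\ that there is a compact open subgroup $U_0 \leq G$ which is $p$-free. A preliminary fact I would establish first is that the compactly supported \emph{locally constant} functions form a dense subspace of $C_{00}(G, \mathbb{Q}_p)$ for the sup-norm: any $f \in C_{00}(G, \mathbb{Q}_p)$ is uniformly continuous (being continuous with compact support on a topological group), so for any $\varepsilon > 0$ it lies within $\varepsilon$ of a function constant on the left cosets of some compact open subgroup $V$, which we may shrink to lie in $U_0$; and such functions are $\mathbb{Q}_p$-linear combinations of indicators $\mathbbm{1}_{gV}$ with $V \leq U_0$ a compact open subgroup and $g \in G$.

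For the ``if'' direction I would define a candidate functional $I_0$ on this dense subspace by setting $I_0(\mathbbm{1}_{gV}) := [U_0 : V]^{-1}$ for $V \leq U_0$ a compact open subgroup, and extending linearly. Well-definedness is checked by comparing the value on a coset $gV$ with its value after partitioning $gV$ into cosets of a finer subgroup $V' \leq V$, where the factor $[V : V']$ cancels; for two incomparable subgroups one passes to their intersection. By construction $I_0$ is left-invariant (the scalar $[U_0:V]^{-1}$ does not depend on the coset representative) and $I_0(\mathbbm{1}_{U_0}) = 1 \neq 0$. Boundedness is the one point where $p$-freeness enters: writing a locally constant compactly supported $f$ as $\sum_j a_j \mathbbm{1}_{g_j V}$ with the $g_j V$ disjoint and $V \leq U_0$, each index $[U_0 : V]$ is coprime to $p$, hence a $p$-adic unit, so $|I_0(f)|_p = |[U_0 : V]|_p^{-1} \, |\sum_j a_j|_p \leq \max_j |a_j|_p = \|f\|_\infty$ by the ultrametric inequality. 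Thus $I_0$ is $1$-Lipschitz on a dense subspace, and since $\mathbb{Q}_p$ is complete it extends uniquely to a bounded left-invariant linear functional on all of $C_{00}(G, \mathbb{Q}_p)$, which is nonzero because $I_0(\mathbbm{1}_{U_0}) = 1$; this is a $p$-adic Haar integral.

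For the ``only if'' direction, suppose a $p$-adic Haar integral $I$ exists but $p \in \mathbb{L}(G)$. Choose a nested sequence $\cdots \leq U_{-n} \leq \cdots \leq U_0$ of compact open subgroups witnessing $p \in \mathbb{L}(G)$, so that $|[U_0 : U_{-n}]|_p \to 0$. Partitioning $U_0$ into cosets of $U_{-n}$ and using left-invariance gives $I(\mathbbm{1}_{U_0}) = [U_0 : U_{-n}] \cdot I(\mathbbm{1}_{U_{-n}})$; since $I$ is bounded and $\|\mathbbm{1}_{U_{-n}}\|_\infty = 1$, the values $|I(\mathbbm{1}_{U_{-n}})|_p$ are uniformly bounded by the operator norm $C$ of $I$, whence $|I(\mathbbm{1}_{U_0})|_p \leq C \, |[U_0 : U_{-n}]|_p \to 0$, so $I(\mathbbm{1}_{U_0}) = 0$, contradicting Lemma~\ref{measure U}. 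For uniqueness up to scalar: by partitioning into cosets one gets, for every compact open subgroup $V$, the relation $I(\mathbbm{1}_V) = [V : V \cap U_0]\,[U_0 : V \cap U_0]^{-1}\, I(\mathbbm{1}_{U_0})$, and left-invariance gives $I(\mathbbm{1}_{gV}) = I(\mathbbm{1}_V)$; hence $I$ is determined on the dense subspace above, so on all of $C_{00}(G, \mathbb{Q}_p)$, by the single value $I(\mathbbm{1}_{U_0})$, which is nonzero by Lemma~\ref{measure U}. Two Haar integrals therefore differ by the scalar equating their values at $\mathbbm{1}_{U_0}$.

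The main obstacle I anticipate is not the $p$-freeness input — which intervenes only at the one clean estimate bounding $I_0$ — but rather the bookkeeping around the candidate functional: verifying that $I_0$ is well-defined independently of the chosen compact open subgroup, and pinning down the density of locally constant compactly supported functions in exactly the form needed, are the places where the ultrametric geometry of $\mathbb{Q}_p$ and the compactness of supports must be used together carefully. (In the paper this theorem is obtained by citing \cite[Theorem 2.2.3]{NHA} together with Lemma~\ref{lpc pfin}; the sketch above is essentially an unwinding of that reference.)
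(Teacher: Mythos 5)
Your proposal is correct, and it is essentially the argument behind the result the paper simply cites: the paper offers no in-house proof, just the combination of \cite[Theorem 2.2.3]{NHA} with Lemma \ref{lpc pfin}, and your construction (define $I_0$ on the dense space of compactly supported locally constant functions by $\mathbbm{1}_{gV} \mapsto [U_0:V]^{-1}$ for $V \leq U_0$ compact open, use $p$-freeness of $U_0$ only to see that $[U_0:V]$ is a $p$-adic unit in the norm estimate, and extend by completeness of $\mathbb{Q}_p$) is the standard unwinding of that reference. Two small points you could tighten: the left-invariance of the extended functional should be noted to pass to the closure by continuity (translation is an isometry of $C_{00}(G,\mathbb{Q}_p)$); and your appeal to Lemma \ref{measure U} in the ``only if'' direction, while legitimate since that lemma is stated prior to the theorem and is conditional only on the existence of a Haar integral, can be avoided entirely --- if $I(\mathbbm{1}_{U_0}) = 0$ then the coset relations $I(\mathbbm{1}_{U_0}) = [U_0 : V]\, I(\mathbbm{1}_V)$ and $I(\mathbbm{1}_W) = [W : W \cap U_0]\, I(\mathbbm{1}_{W \cap U_0})$ force $I$ to vanish on all indicators of cosets of compact open subgroups, hence on a dense subspace, hence $I = 0$, contradicting that a Haar integral is nonzero; this makes the whole argument self-contained.
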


Now suppose that $p \notin \mathbb{L}(G)$, and let $f \mapsto \int f(x) dx$ denote a choice of a $p$-adic Haar integral. Fix $g \in G$. Then $C_{00}(G, \mathbb{Q}_p) \to \mathbb{Q}_p : f \mapsto \int f(x g^{-1}) dx$ is again a $p$-adic Haar integral, so by uniqueness there exists a scalar $\Delta(g) \in \mathbb{Q}_p$ such that $\int f(x g^{-1}) dx = \Delta(g) \int f(x) dx$ for all $f \in C_{00}(G, \mathbb{Q}_p)$. Notice that since the $p$-adic Haar integral is unique up to scalar, the definition of $\Delta$ is independent of the choice of $p$-adic Haar integral.

\begin{definition}
The map $\Delta : G \to \mathbb{Q}_p$ is called the \emph{$p$-adic modular function} on $G$.
\end{definition}

Analogously to the real case, $\Delta$ is a continuous homomorphism. In the non-Archimedean case, we can say even more \cite[Theorem 2.4.2]{NHA}:

\begin{theorem}
\label{mod fct}

The $p$-adic modular function $\Delta : G \to \mathbb{Q}_p^\times$ is a continuous homomorphism. Moreover, $\Delta$ takes values in $\mathbb{Z}_p^\times = \{ \alpha \in \mathbb{Q}_p \mid |\alpha|_p = 1 \}$.
\end{theorem}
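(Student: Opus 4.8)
The plan is to treat the three assertions in turn: that $\Delta$ is a homomorphism, that it is continuous (in fact locally constant), and that $|\Delta(g)|_p = 1$ for all $g$ — only the last being genuinely non-Archimedean. For the first two I would argue as in the real case. First, $\Delta(g) \in \mathbb{Q}_p^\times$: the functional $f \mapsto \int f(xg^{-1})\,dx$ is again a $p$-adic Haar integral (bounded, linear, and left-invariant, since pre- and post-composing with left and right translations commute), and it is non-zero because pre-composing it with the substitution $x \mapsto xg$ returns the original integral; hence by the uniqueness part of Theorem \ref{Haar} it equals $\Delta(g)\int$ for a non-zero scalar $\Delta(g)$. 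Multiplicativity then follows by applying the defining identity twice. For continuity, observe that if $U \leq G$ is \emph{any} compact open subgroup and $g \in U$, then $Ug = U$, so $\Delta(g)\int\mathbbm{1}_U(x)\,dx = \int\mathbbm{1}_U(xg^{-1})\,dx = \int\mathbbm{1}_U(x)\,dx$, and since $\int\mathbbm{1}_U(x)\,dx \neq 0$ by Lemma \ref{measure U} we get $\Delta|_U \equiv 1$. As compact open subgroups form a neighbourhood basis of the identity (Van Danzig), $\Delta$ is constant equal to $1$ near $1$, hence locally constant and continuous.

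The heart of the matter is $|\Delta(g)|_p = 1$. Since a $p$-adic Haar integral exists on $G$ we have $p \notin \mathbb{L}(G)$, so by Lemma \ref{lpc pfin} there is a $p$-\emph{free} compact open subgroup $U \leq G$: every finite index $[U:W]$ of an open subgroup $W \leq U$ is coprime to $p$. Fix $g \in G$ and set $V := U \cap gUg^{-1}$, a compact open subgroup with $V \leq U$ and $g^{-1}Vg = U \cap g^{-1}Ug \leq U$. I would first record the elementary formula $\int\mathbbm{1}_U(x)\,dx = [U:W]\int\mathbbm{1}_W(x)\,dx$ valid for any compact open subgroups $W \leq U$: writing $U$ as a disjoint union of right cosets $Wh_i$ with representatives $h_i \in U$, each contributes $\int\mathbbm{1}_{Wh_i}(x)\,dx = \Delta(h_i)\int\mathbbm{1}_W(x)\,dx = \int\mathbbm{1}_W(x)\,dx$ since $\Delta|_U \equiv 1$. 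Next, the defining identity applied to $\mathbbm{1}_V$ gives $\Delta(g)\int\mathbbm{1}_V(x)\,dx = \int\mathbbm{1}_{Vg}(x)\,dx$; since $Vg = g\,(g^{-1}Vg)$ is the left $g$-translate of the subgroup $g^{-1}Vg$, one has $\mathbbm{1}_{Vg} = g \cdot \mathbbm{1}_{g^{-1}Vg}$, so left-invariance of $\int$ yields $\int\mathbbm{1}_{Vg}(x)\,dx = \int\mathbbm{1}_{g^{-1}Vg}(x)\,dx$. Combining these three facts (and $\int\mathbbm{1}_U(x)\,dx \neq 0$) gives the closed form $\Delta(g) = [U:V]\,/\,[U:g^{-1}Vg]$. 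Finally, $p$-freeness of $U$ forces both indices to be coprime to $p$, whence $|\Delta(g)|_p = 1$, i.e.\ $\Delta(g) \in \mathbb{Z}_p^\times$.

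I do not expect a serious obstacle here; the only care needed is bookkeeping — checking that $g^{-1}Vg$ really is a subgroup contained in $U$, that $Vg$ is precisely the left $g$-translate of $g^{-1}Vg$ so that left-invariance applies, and that the coset representatives in the index formula can be chosen inside $U$ so that $\Delta$ vanishes on them. The one conceptual input is the realization that the standing hypothesis $p \notin \mathbb{L}(G)$ lets the entire computation be carried out inside a $p$-free compact open subgroup, which is exactly what annihilates the $p$-part of the rational "modulus ratio" $[U:V]/[U:g^{-1}Vg]$.
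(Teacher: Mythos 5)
Your proof is correct; note, however, that the paper does not prove this statement at all — it is quoted from Schikhof's monograph (\cite[Theorem 2.4.2]{NHA}) — so there is no in-paper argument to compare against, and your write-up stands as a self-contained proof. The individual steps check out: non-vanishing and multiplicativity are the standard uniqueness arguments; the observation that $\Delta \equiv 1$ on every compact open subgroup (via $Ug = U$ together with $\int \mathbbm{1}_U(x)\,dx \neq 0$ from Lemma \ref{measure U}) gives local constancy and hence continuity; and the closed formula $\Delta(g) = [U:V]/[U:g^{-1}Vg]$ with $V = U \cap gUg^{-1}$ is correctly derived, since the index formula $\int \mathbbm{1}_U(x)\,dx = [U:W]\int \mathbbm{1}_W(x)\,dx$ only needs finite additivity plus $\Delta|_U \equiv 1$ on the coset representatives, and the identity $\mathbbm{1}_{Vg} = g \cdot \mathbbm{1}_{g^{-1}Vg}$ legitimises the appeal to left-invariance. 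Choosing $U$ to be $p$-free, which is possible precisely because the existence of the Haar integral forces $p \notin \mathbb{L}(G)$ (Theorem \ref{Haar} and Lemma \ref{lpc pfin}), makes both indices prime to $p$, whence $|\Delta(g)|_p = 1$. For comparison, there is a shorter route to this last point which exploits the boundedness of the $p$-adic Haar integral: from $|\Delta(g)|_p \cdot \left| \int \mathbbm{1}_U(x)\,dx \right|_p = \left| \int \mathbbm{1}_{Ug}(x)\,dx \right|_p \leq C$ one gets a bound on $|\Delta(g)|_p$ uniform in $g \in G$, and a bounded subgroup of $\mathbb{R}_{>0}$ is trivial, so $|\Delta|_p \equiv 1$; this avoids the index computation entirely, while your argument has the merit of exhibiting $\Delta(g)$ explicitly as a ratio of indices coprime to $p$.
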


The following formula will be useful when moving from left to right actions of $G$ on $C_{00}(G, \mathbb{Q}_p)$ \cite[Theorem 2.4.3]{NHA}:

\begin{lemma}
\label{int f'}

For all $f \in C_{00}(G, \mathbb{Q}_p)$, we have
$$\int f(x) dx = \int f(x^{-1}) \Delta(x^{-1}) dx.$$
\end{lemma}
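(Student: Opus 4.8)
The plan is to run the standard ``modular function'' argument, adapted to the ultrametric setting. Set
$$I(f) := \int f(x^{-1}) \Delta(x^{-1})\, dx,$$
show that $I$ is a bounded, left-invariant, linear functional on $C_{00}(G, \mathbb{Q}_p)$, deduce from the uniqueness part of Theorem \ref{Haar} that $I = c \cdot \int$ for some scalar $c \in \mathbb{Q}_p$, and finally pin down $c = 1$ by evaluating on indicator functions of small compact open subgroups.

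First I would check that $I$ makes sense and is bounded: inversion is a self-homeomorphism of $G$, so $f(\cdot^{-1}) \in C_{00}(G, \mathbb{Q}_p)$ whenever $f$ is, and multiplying by the continuous function $\Delta(\cdot^{-1})$ keeps it in $C_{00}(G, \mathbb{Q}_p)$; since $|\Delta(x^{-1})|_p = 1$ for all $x$ by Theorem \ref{mod fct}, the function $F(x) := f(x^{-1})\Delta(x^{-1})$ has the same supremum norm as $f$, so $I$ inherits boundedness from the $p$-adic Haar integral. For left-invariance, I would compute: for $g \in G$ one has $(g \cdot f)(x^{-1}) = f(g^{-1}x^{-1}) = f((xg)^{-1})$, and since $\Delta$ is a homomorphism, $\Delta(x^{-1}) = \Delta(g)\Delta((xg)^{-1})$, whence $I(g \cdot f) = \Delta(g)\int F(xg)\, dx$. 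Applying the defining relation of $\Delta$ (which reads $\int h(xg)\,dx = \Delta(g^{-1})\int h(x)\,dx$ after substituting $g \mapsto g^{-1}$ in the formula defining $\Delta$) to $h = F$ cancels the factor $\Delta(g)$ and gives $I(g \cdot f) = \int F(x)\,dx = I(f)$. By Theorem \ref{Haar}, a nonzero bounded left-invariant functional is a scalar multiple of the Haar integral, so in all cases $I = c \int$ for some $c \in \mathbb{Q}_p$ (with $c = 0$ not yet excluded).

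To identify $c$, I would use continuity of $\Delta$ at the identity together with the fact that the $p$-adic Haar integral is controlled purely by the sup-norm. Fix a compact open subgroup $V_0 \leq G$ (Van Danzig) and, for $n \geq 1$, put $V_n := V_0 \cap \Delta^{-1}(1 + p^n\mathbb{Z}_p)$, an open, hence compact open, subgroup of $G$ with $\Delta(V_n) \subseteq 1 + p^n\mathbb{Z}_p$, so that $|\Delta(x^{-1}) - 1|_p \leq p^{-n}$ for all $x \in V_n$. Evaluating $I$ on $\mathbbm{1}_{V_n}$, using $\mathbbm{1}_{V_n}(x^{-1}) = \mathbbm{1}_{V_n}(x)$ and writing $\Delta(x^{-1}) = 1 + (\Delta(x^{-1}) - 1)$, I get $(c - 1)\int \mathbbm{1}_{V_n}(x)\,dx = \int \mathbbm{1}_{V_n}(x)\bigl(\Delta(x^{-1}) - 1\bigr)\,dx$, whose right-hand side has $p$-adic absolute value at most $C\,p^{-n}$, where $C$ is a boundedness constant for the Haar integral. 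Since $V_n \leq V_0$, left-invariance gives $\int \mathbbm{1}_{V_0} = [V_0 : V_n]\int \mathbbm{1}_{V_n}$, so $|\int \mathbbm{1}_{V_n}|_p \geq |\int \mathbbm{1}_{V_0}|_p > 0$ by Lemma \ref{measure U}; dividing yields $|c - 1|_p \leq C\,p^{-n}/|\int\mathbbm{1}_{V_0}|_p \to 0$ as $n \to \infty$, forcing $c = 1$.

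The main obstacle is precisely this last step. In the archimedean case one would simply invoke positivity of the Haar integral to trap $c$ between $1 - \varepsilon$ and $1 + \varepsilon$; here positivity is unavailable, and the $p$-adic Haar integral is bounded only by the supremum norm, with no control reflecting the (shrinking) support of $\mathbbm{1}_{V_n}$. The point that rescues the estimate is the index formula, which shows that $|\int \mathbbm{1}_{V_n}|_p$ does not collapse but is in fact bounded below (indeed nondecreasing) as $V_n$ shrinks, so the $p^{-n}$ produced by continuity of $\Delta$ genuinely kills $c - 1$.
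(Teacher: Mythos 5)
Your proof is correct. A point of comparison worth noting: the paper does not prove this lemma at all --- it imports it from Schikhof's monograph on non-Archimedean harmonic analysis (\cite[Theorem 2.4.3]{NHA}) --- so your argument is a self-contained substitute rather than a variant of an in-paper proof. Your structure is the natural one and every step checks out: $I(f)=\int f(x^{-1})\Delta(x^{-1})\,dx$ is bounded because $|\Delta|_p\equiv 1$ (Theorem \ref{mod fct}); left-invariance follows from the cancellation $\Delta(x^{-1})=\Delta(g)\Delta((xg)^{-1})$ combined with $\int F(xg)\,dx=\Delta(g^{-1})\int F(x)\,dx$, which is exactly the defining relation of $\Delta$ with $g$ replaced by $g^{-1}$; and uniqueness in Theorem \ref{Haar} then gives $I=c\int$, with the degenerate possibility $I=0$ harmlessly absorbed into $c=0$ and excluded a posteriori. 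The delicate step is, as you say, the normalization $c=1$, and your replacement for the unavailable positivity argument is exactly right: on $V_n=V_0\cap\Delta^{-1}(1+p^n\mathbb{Z}_p)$ (a compact open subgroup, by continuity of $\Delta$ and Van Dantzig) one has $|\Delta(x^{-1})-1|_p\le p^{-n}$ because $1+p^n\mathbb{Z}_p$ is a subgroup stable under inversion, so the error term is $O(p^{-n})$ in sup-norm, while the index formula $\int\mathbbm{1}_{V_0}=[V_0:V_n]\int\mathbbm{1}_{V_n}$ together with $|[V_0:V_n]|_p\le 1$ and Lemma \ref{measure U} gives the uniform lower bound $|\int\mathbbm{1}_{V_n}|_p\ge|\int\mathbbm{1}_{V_0}|_p>0$ that forces $|c-1|_p\to 0$. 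This is presumably close in spirit to the argument in the cited reference, so I would describe your proposal as the standard approach, carried out correctly and in full detail.
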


\subsection{Divisible groups}
\label{ss_div}

We recall some basic facts about divisible groups, which will be useful in Subsection \ref{ss_surj} and especially in Section \ref{s_qm}. Given a discrete abelian group $G$ and a set $I$, we denote by $G^I$ the direct sum of copies of $G$ indexed by $I$, as a discrete group.

\begin{definition}
A discrete abelian group $G$ is \emph{divisible} if for all $n \geq 1$ and all $g \in G$ there exists $h \in G$ such that $nh = g$.
\end{definition}

The first natural examples are torsion-free:

\begin{example}
The additive groups of $\mathbb{Q}$, $\mathbb{R}$ and $\mathbb{C}$ are divisible. More generally, the additive group of a $\mathbb{Q}$-vector space is divisible.
\end{example}

The other fundamental example will play an essential role in Section 9, so let us recall its definition and main properties:

\begin{definition}
Let $p$ be a prime. For each $k \geq 0$, the map $\mathbb{Z}/p^k\mathbb{Z} \to \mathbb{Z}/p^{k+1}\mathbb{Z} : x \mod p^k \mapsto px \mod p^{k+1}$ is an injective homomorphism. The \emph{Pr\"ufer $p$-group}, denoted by $\mathbb{Z}(p^\infty)$, is the directed union of the groups $(\mathbb{Z}/p^k\mathbb{Z})_{k \geq 1}$ along these embeddings.
\end{definition}

See \cite[Chapter 10]{Rotman} for the many interesting properties of $\mathbb{Z}(p^\infty)$, starting from \cite[Theorem 10.13]{Rotman}. In particular:

\begin{example}
$\mathbb{Z}(p^\infty)$ is divisible.
\end{example}

This group admits various different characterizations. For instance it is characterized, up to the prime $p$, by the fact that its subgroups are totally ordered by inclusion. It is isomorphic to the additive groups $\mathbb{Z}[1/p]/\mathbb{Z} \cong \mathbb{Q}_p / \mathbb{Z}_p$, as well as to the $p$-primary component of the circle group. \\

Going back to divisible groups, their class is easily seen to be closed under taking quotients. These groups are precisely the injective objects in the category of abelian groups \cite[Theorem 10.23]{Rotman}:

\begin{theorem}[Baer]
\label{div inj}
Let $A \leq B$ be discrete abelian groups, and let $D$ be a divisible group. Then every homomorphism $A \to D$ extends to a homomorphism $B \to D$.
\end{theorem}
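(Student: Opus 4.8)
The plan is to run the standard Zorn's lemma argument, using divisibility precisely at the one-step extension. Write $\phi : A \to D$ for the given homomorphism. Consider the poset $\mathcal{S}$ whose elements are pairs $(C, \psi)$ with $A \leq C \leq B$ a subgroup and $\psi : C \to D$ a homomorphism extending $\phi$, ordered by declaring $(C, \psi) \leq (C', \psi')$ when $C \leq C'$ and $\psi'|_C = \psi$. This poset is nonempty since it contains $(A, \phi)$, and every chain has an upper bound: the union of the domains appearing in the chain is again a subgroup of $B$, and the maps $\psi$ glue to a well-defined homomorphism on it. So by Zorn's lemma $\mathcal{S}$ has a maximal element $(C, \psi)$.

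It remains to show $C = B$. Suppose not, and pick $b \in B \setminus C$; I will extend $\psi$ to $C + \langle b \rangle$, contradicting maximality. Let $n \geq 0$ be the least nonnegative integer with $nb \in C$, with the convention $n = 0$ if $\langle b \rangle \cap C = 0$. If $n = 0$ set $d := 0$; otherwise use divisibility of $D$ to choose $d \in D$ with $nd = \psi(nb)$. Define $\psi' : C + \langle b \rangle \to D$ by $\psi'(c + kb) := \psi(c) + kd$ for $c \in C$ and $k \in \mathbb{Z}$.

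The only point requiring care — and the sole place divisibility enters — is that $\psi'$ is well-defined. If $c + kb = c' + k'b$ with $c, c' \in C$, then $(k - k')b = c' - c \in C$, so by minimality of $n$ we get $n \mid k - k'$, say $k - k' = mn$; hence $\psi(c' - c) = \psi(mnb) = m\,\psi(nb) = mnd = (k - k')d$, which rearranges to $\psi(c) + kd = \psi(c') + k'd$. (When $n = 0$ the relation forces $k = k'$ and $c = c'$, so there is nothing to check.) It is then immediate that $\psi'$ is a homomorphism, that it restricts to $\psi$ on $C$ and hence to $\phi$ on $A$, and that its domain properly contains $C$ — contradicting maximality of $(C, \psi)$. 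Therefore $C = B$, and $\psi : B \to D$ is the desired extension. I do not expect any genuine obstacle here: the argument is purely formal once one knows to feed $\psi(nb)$ into the divisibility hypothesis to produce $d$, and the rest is routine bookkeeping.
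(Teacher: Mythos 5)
Your proof is correct and is the standard Zorn's lemma one-step-extension argument for Baer's criterion; the paper does not prove this statement itself but cites \cite[Theorem 10.23]{Rotman}, where essentially this same argument appears, so there is nothing genuinely different to compare. One cosmetic point: $n$ should be described as the least \emph{positive} integer with $nb \in C$ (set to $0$ when no positive multiple of $b$ lies in $C$), since the least nonnegative such integer is always $0$; with that reading your well-definedness check, which is the only place divisibility is used, goes through exactly as you wrote it.
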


This implies a homological-algebraic result that will be useful when applying the Universal Coefficient Theorem in Subsection \ref{ss_surj}:

\begin{corollary}
\label{UCT div}

Let $A$ be a discrete abelian group, and $D$ a divisible group. Then $Ext^1_\mathbb{Z}(A, D) = 0$. Therefore, for every discrete group $G$, the Universal Coefficient Theorem in Cohomology \cite[Chapter 0]{Brown} gives an isomorphism $H^n(G, D) \cong Hom_\mathbb{Z}(H_n(G, \mathbb{Z}), D)$.
\end{corollary}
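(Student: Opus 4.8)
The plan is to deduce the vanishing of $Ext^1_\mathbb{Z}(A,D)$ directly from Baer's criterion (Theorem~\ref{div inj}), and then feed this into the Universal Coefficient Theorem. First I would fix a free resolution $0 \to R \to F \to A \to 0$ of $A$, where $F$ is a free abelian group and $R \leq F$ is the kernel (necessarily free, being a subgroup of a free abelian group); write $\iota : R \to F$ for the inclusion. Applying $Hom_\mathbb{Z}(-,D)$ yields $0 \to Hom_\mathbb{Z}(A,D) \to Hom_\mathbb{Z}(F,D) \xrightarrow{\iota^*} Hom_\mathbb{Z}(R,D) \to 0$, and by definition $Ext^1_\mathbb{Z}(A,D) = \mathrm{coker}(\iota^*)$. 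So it suffices to show $\iota^*$ is surjective: given any homomorphism $\varphi : R \to D$, since $R$ is a subgroup of $F$ and $D$ is divisible, Theorem~\ref{div inj} extends $\varphi$ to a homomorphism $\widetilde\varphi : F \to D$, and then $\iota^*(\widetilde\varphi) = \widetilde\varphi \circ \iota = \varphi$. Hence $Ext^1_\mathbb{Z}(A,D) = 0$. Equivalently, Theorem~\ref{div inj} says precisely that divisible groups are the injective objects in the category of abelian groups, and injectivity of $D$ is equivalent to the vanishing of $Ext^1_\mathbb{Z}(-,D)$.

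For the second assertion, I would apply the Universal Coefficient Theorem in cohomology to a chain complex $C_\bullet$ of free abelian groups computing $H_\bullet(G,\mathbb{Z})$ (for instance the bar complex, or $P_\bullet \otimes_{\mathbb{Z}[G]} \mathbb{Z}$ for a free resolution $P_\bullet$ of $\mathbb{Z}$ over $\mathbb{Z}[G]$). Since $D$ is a trivial $G$-module, $H^n(G,D)$ is the cohomology of $Hom_\mathbb{Z}(C_\bullet, D)$, and the UCT \cite[Chapter 0]{Brown} provides a natural short exact sequence
$$0 \to Ext^1_\mathbb{Z}\bigl(H_{n-1}(G,\mathbb{Z}),D\bigr) \to H^n(G,D) \to Hom_\mathbb{Z}\bigl(H_n(G,\mathbb{Z}),D\bigr) \to 0.$$
Applying the first part of the statement with $A = H_{n-1}(G,\mathbb{Z})$, the left-hand term vanishes, so the right-hand map is an isomorphism; this is the claimed identification, and naturality in $G$ is part of the UCT.

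There is essentially no serious obstacle here; the only point requiring minimal care is that the form of the UCT being invoked needs the chain complex to consist of free (or at least flat) abelian groups, which is satisfied by the standard complexes computing group homology, so that the only contribution beyond $Hom_\mathbb{Z}(H_n(G,\mathbb{Z}),D)$ is the $Ext^1$ term, which is killed by the divisibility of $D$.
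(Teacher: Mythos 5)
Your proof is correct and follows exactly the route the paper intends: Theorem~\ref{div inj} (divisibility = injectivity) kills $Ext^1_\mathbb{Z}(-,D)$, and the Universal Coefficient Theorem applied to a free complex computing $H_\bullet(G,\mathbb{Z})$ then collapses to the stated isomorphism. The only cosmetic slip is writing the sequence $0 \to Hom_\mathbb{Z}(A,D) \to Hom_\mathbb{Z}(F,D) \xrightarrow{\iota^*} Hom_\mathbb{Z}(R,D) \to 0$ with the terminal zero before you have proved surjectivity of $\iota^*$, but since you then prove surjectivity via Baer's criterion the argument is sound.
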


Theorem \ref{div inj} can be adapted to the continuous setting:

\begin{corollary}
\label{div ext}

Let $G$ be a topological group, $U$ an open subgroup, and $D$ a topological group whose underlying discrete group is divisible. Then every continuous homomorphism $U \to D$ extends to a continuous homomorphism $G \to D$.
\end{corollary}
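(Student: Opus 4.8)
The plan is to separate the problem into a purely algebraic extension, which is Baer's criterion, and a purely topological upgrade to continuity, which will come for free from the openness of $U$ in $G$.

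First I would forget the topologies entirely and view $\varphi\colon U\to D$ as a homomorphism of abstract groups. Since $D$ is divisible it is abelian, so Theorem \ref{div inj}, applied to the abstract inclusion $U\le G$, yields an abstract homomorphism $\psi\colon G\to D$ with $\psi|_U=\varphi$; this $\psi$ is the desired map, pending continuity. For the latter I would use that a homomorphism of topological groups is continuous once it is continuous at the identity: given a neighbourhood $V$ of $0$ in $D$, the set $\varphi^{-1}(V)=\psi^{-1}(V)\cap U$ is a neighbourhood of $1$ in $U$ because $\varphi$ is continuous, hence a neighbourhood of $1$ in $G$ because $U$ is open; since $\varphi^{-1}(V)\subseteq\psi^{-1}(V)$, the preimage $\psi^{-1}(V)$ is a neighbourhood of $1$ in $G$. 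Thus $\psi$ is continuous at $1$, hence continuous.

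The hard part is entirely the first step: divisibility of $D$ (equivalently, its injectivity as an abelian group) is precisely what guarantees that an extension exists at all, and this is exactly what Theorem \ref{div inj} packages. The continuity step is soft, and — in contrast with the situation over $\mathbb{R}$, where the point of such statements is the quantitative control one can retain — here nothing needs to be controlled, since an open subgroup already contains a neighbourhood of the identity. I therefore expect no genuine obstacle beyond invoking Theorem \ref{div inj} and remembering that continuity of a homomorphism need only be verified at the identity.
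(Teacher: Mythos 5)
Your proposal is correct and follows exactly the paper's own argument: extend algebraically via Theorem \ref{div inj} and then observe that continuity at the identity is inherited from $\varphi$ because $U$ is open, so the homomorphism is continuous everywhere. (Both you and the paper implicitly apply Theorem \ref{div inj}, stated for abelian groups, to the pair $U \leq G$; this is harmless in the paper's applications, where the relevant groups are abelian or the map factors through an abelian quotient, but it is the one point worth being aware of.)
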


\begin{proof}
Let $f : U \to D$ be continuous, and extend it to a homomorphism $f^\# : G \to D$ using Theorem \ref{div inj}. Since $f$ is continuous and $U$ is open in $G$, it follows that $f^\#$ is continuous at the identity element, and so being a homomorphism it is continuous everywhere.
\end{proof}

Divisible groups are classified \cite[Theorem 10.28]{Rotman}:

\begin{theorem}[Classification of divisible groups]
Let $G$ be a divisible group. Then there exist index sets $I_0$ and $I_p$ for all primes $p$ such that
$$G \cong \mathbb{Q}^{I_0} \bigoplus \left( \bigoplus\limits_p \mathbb{Z}(p^\infty)^{I_p} \right).$$
\end{theorem}

In particular a divisible $p$-group is a direct sum of copies of $\mathbb{Z}(p^\infty)$.

\pagebreak

\section{First examples and constructions}
\label{constructions}

In this section we characterize normed $\mathbb{K}$-amenability of compact groups, and prove that normed $\mathbb{K}$-amenability is preserved under some basic constructions.

Throughout the rest of this section $\mathbb{K}$ is a non-Archimedean valued field (not necessarily complete) with residue field $\mathfrak{r}$. Recall that if $\chi(\mathbb{K}) = 0$ and $\chi(\mathfrak{r}) = p > 0$, then up to equivalence we assume that $|\cdot|_\mathbb{K}$ coincides with the $p$-adic norm $| \cdot |_p$ when restricted to $\mathbb{Q}$ (see the end of Subsection \ref{preli field}). \\

We recall the definition of normed $\mathbb{K}$-amenability (Definition \ref{def nka}) for the reader's convenience:

\begin{definition}
Let $G$ be a t.d.l.c. group. A \emph{normed $\mathbb{K}$-mean} on $G$ is a bounded linear map $m : C_b(G, \mathbb{K}) \to \mathbb{K}$ such that $m(\mathbbm{1}_G) = 1$. We say that $G$ is \emph{normed $\mathbb{K}$-amenable} if it admits a left-invariant normed $\mathbb{K}$-mean. If this is the case, we denote by $\| G \|_\mathbb{K} \geq 1$ the infimum of the operator norms of such means, and say that $G$ is \emph{$\mathbb{K}$-amenable of norm} $\| G \|_\mathbb{K}$.

By convention, we will write $\| G \|_\mathbb{K} = \infty$ if $G$ is not normed $\mathbb{K}$-amenable. This way the function $\| \cdot \|_\mathbb{K}$ is well-defined on all t.d.l.c. groups, and takes finite values precisely on normed $\mathbb{K}$-amenable groups.
\end{definition}

Schikhof's notion of $\mathbb{K}$-amenability from \cite{Schik} requires the invariant mean to have norm $1$. It turns out that this coincides with normed $\mathbb{K}$-amenability of norm $1$, since the infimum $\| G \|_\mathbb{K}$ is always attained. While this is clear when $\mathbb{K}$ is discretely valued, we will have to wait until Corollary \ref{norm attain} for a proof of the general case. Therefore, for the moment, we will state Schikhof's Theorem \cite[Theorem 2.1, Theorem 3.6]{Schik} as a characterization of groups admitting a left-invariant $\mathbb{K}$-mean of norm $1$.

\begin{theorem}[Schikhof]
Let $\mathbb{K}$ be a non-Archimedean valued field, and let $\mathfrak{r}$ be its residue field. Then:
\begin{enumerate}
\item If $\mathbb{K}$ is not spherically complete, then $G$ admits a left-invariant $\mathbb{K}$-mean of norm $1$ if and only if $G$ is compact and $\chi(\mathfrak{r})$-free.
\item If $\mathbb{K}$ is spherically complete, then $G$ admits a left-invariant $\mathbb{K}$-mean of norm $1$ if and only if $G$ is locally elliptic and $\chi(\mathfrak{r})$-free.
\end{enumerate}
\end{theorem}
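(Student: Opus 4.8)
I would prove both equivalences simultaneously, treating compact groups as a base case that works over an arbitrary $\mathbb{K}$ and then bootstrapping. Write $p := \chi(\mathfrak{r})$, with the convention that ``$p$-free'' is vacuous when $p = 0$. The first thing to record is the elementary numerical translation of $\chi(\mathfrak{r})$-freeness: for a nonzero integer $n$ one always has $|n|_{\mathbb{K}} \leq 1$, with equality if and only if $p \nmid n$ --- this is precisely the statement $\mathfrak{m} \cap \mathbb{Z} = p\mathbb{Z}$. Hence $G$ is $p$-free exactly when $[U:V]$ lies in $\mathfrak{o}^\times$ for every pair of open subgroups $V \leq U \leq G$ of finite index. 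Note also that $m(\mathbbm{1}_G) = 1$ forces $\|m\| \geq 1$, so ``norm $1$'' means ``norm $\leq 1$''.

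\textbf{Sufficiency.} First suppose $G$ is profinite and $p$-free; then $[G:N] \in \mathfrak{o}^\times$ for every open normal $N \trianglelefteq G$. The functions factoring through a finite quotient $G/N$ are dense in $C_b(G,\mathbb{K}) = C(G,\mathbb{K})$, so it is enough to define $m$ on these and extend by uniform continuity: for $f$ through $G/N$ put $m(f) := [G:N]^{-1}\sum_{x \in G/N} f(x)$. Passing to common refinements makes this independent of $N$; it is left-invariant, sends $\mathbbm{1}_G$ to $1$, and $|m(f)|_{\mathbb{K}} \leq \|f\|_\infty$ by the ultrametric inequality together with $|[G:N]^{-1}|_{\mathbb{K}} = 1$. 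The only subtlety is that for incomplete $\mathbb{K}$ one must check the extended $m$ still takes values in $\mathbb{K}$, which holds because the approximating values stabilise level by level (and can anyway be sidestepped by the reduction to $\mathbb{K}_0$ below). Next suppose $\mathbb{K}$ is spherically complete and $G$ is locally elliptic and $p$-free; write $G = \bigcup_i U_i$ as a directed union of compact open subgroups, each of which is profinite and $p$-free and so carries a norm-$1$ invariant mean $m_i$ by the first step. Then $\lambda_i(f) := m_i(f|_{U_i})$ defines $\lambda_i \in C_b(G,\mathbb{K})^*$ of norm $\leq 1$ with $\lambda_i(\mathbbm{1}_G) = 1$ and $\lambda_i$ invariant under every $U_j$ with $j \leq i$; any weak-$*$ cluster point of $(\lambda_i)$ is then $G$-invariant of norm $\leq 1$ and sends $\mathbbm{1}_G$ to $1$. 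Such a cluster point exists by Banach--Alaoglu when $\mathbb{K}$ is local, and in general must be produced directly --- this is the step that genuinely uses spherical completeness. (Alternatively one may exploit that locally elliptic $p$-free groups satisfy $p \notin \mathbb{L}(G)$ and feed Theorem \ref{Haar} into this step.)

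\textbf{Necessity.} Let $m$ be a norm-$1$ left-invariant $\mathbb{K}$-mean on $G$. For $p$-freeness, take open $V \leq U \leq G$ with $[U:V] = n < \infty$; by Van Danzig (and, in the locally elliptic case, Lemmas \ref{opinco} and \ref{free_co}) we may take $V$, then $U$, compact open, and also fix a compact open $W \supseteq U$. Decomposing characteristic functions over left cosets and using invariance gives $m(\mathbbm{1}_W) = [W:V]\,m(\mathbbm{1}_V) = [W:U]\,n\,m(\mathbbm{1}_V)$; for compact $G$ a reference set of positive mass is at hand, namely $m(\mathbbm{1}_V) = [G:V]^{-1} \neq 0$, so $|[G:V]^{-1}|_{\mathbb{K}} \leq \|m\| = 1$ forces $|[G:V]|_{\mathbb{K}} = 1$ and hence $p \nmid n$; the non-compact case is subsumed in the reduction below. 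That $G$ must be locally elliptic --- and, when $\mathbb{K}$ is not spherically complete, compact --- is the hard direction. Here I would reduce to $\mathbb{K}_0$, the closure of the prime field, which is $\mathbb{F}_p$, $\mathbb{Q}$ (trivially valued), or $\mathbb{Q}_p$: for $\mathbb{Q}_p$ this is exactly Theorem \ref{main1}, whose ``norm $1$'' case reads ``locally elliptic and $p$-free''; for the trivially valued fields the norm constraint is automatic, and one shows, by passing to finitely generated subgroups and using explicit cocycles --- the model being $f(n) = n \bmod p$ on $\mathbb{Z}$, for which $g \cdot f = f - \mathbbm{1}_G$, so applying $m$ yields $0 = -1$ --- that an invariant functional hitting $\mathbbm{1}_G$ can only exist for locally elliptic $p$-free groups, following Schikhof. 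Finally, the upgrade from ``locally elliptic'' to ``compact'' over a non-spherically-complete $\mathbb{K}$ comes from the existence, over such fields, of Banach spaces with trivial continuous dual: for a non-compact locally elliptic $G$ an appropriate $G$-invariant subquotient of $C_b(G,\mathbb{K})$ is such a space, so the induced functional vanishes, contradicting $m(\mathbbm{1}_G) = 1$.

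\textbf{Main obstacle.} The two genuinely delicate points are the assembly step over a spherically complete field that is not local (extracting a $G$-invariant limit of the $m_i$ without Banach--Alaoglu) and, above all, the forced local ellipticity/compactness in the necessity direction, where the reduction to $\mathbb{K}_0$ and the combinatorics over trivially valued fields (or Theorem \ref{main1} over $\mathbb{Q}_p$) do the real work; in particular the extra rigidity over non-spherically-complete fields is invisible from the $\mathbb{Q}_p$-theory and has to be squeezed out of the pathology of trivial duals.
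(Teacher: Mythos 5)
First, note that the paper does not prove this statement: it is quoted from Schikhof's paper and used as a black box (only the compact case and the trivially valued cases are actually invoked later). So your proposal is being measured against an external proof, not an internal one. Your architecture --- compact case by averaging over finite quotients, assembly over a directed union when $\mathbb{K}$ is spherically complete, reduction to $\mathbb{K}_0$ for necessity, and degenerate duality forcing compactness otherwise --- is the right one, and your $p$-freeness computation (left-invariance forces $m(\mathbbm{1}_V) = [G:V]^{-1}$, whose norm must be at most $1$) is correct.

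However, three load-bearing steps are gestured at rather than proved. (i) Assembling a norm-$1$ invariant mean on a locally elliptic $G$ from means on its compact open subgroups: you rightly observe that Banach--Alaoglu is unavailable unless $\mathbb{K}$ is local, but ``must be produced directly'' is exactly the content to be supplied, and the parenthetical via Theorem \ref{Haar} does not repair it, because a Hahn--Banach extension of the Haar integral from $C_{00}$ to $C_b$ need not remain invariant. The fix the paper uses for the normed version is Proposition \ref{K-mean free} together with Corollary \ref{K-dirun}: Hahn--Banach converts existence of a norm-$1$ mean into the inequality $\| \mathbbm{1}_G - h \|_\infty \geq 1$ for all $h$ in the closed span of $\{ g \cdot f - f \}$, and that inequality is inherited from the compact open subgroups by restriction. (ii) The hardest direction --- that a norm-$1$ invariant mean forces local ellipticity over a trivially valued field --- is deferred wholesale to Schikhof. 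Your model cocycle $f(n) = n \bmod p$ only rules out $\mathbb{Z}$, i.e.\ shows the group is periodic; periodic is far weaker than locally elliptic (Grigorchuk's group is periodic but not locally finite), and passing to finitely generated subgroups does not bridge that gap. This is where the convolution/Reiter machinery of Theorem \ref{equiv}, or its analogue over trivially valued fields, does the real work. (iii) For compactness in the non-spherically complete case, ``a $G$-invariant subquotient of $C_b(G,\mathbb{K})$ with trivial dual'' is not substantiated and is not the actual mechanism: the proof of Theorem \ref{main nsc} instead uses van Rooij's structure theorem for functionals on $\mathbb{N}$-compact spaces plus a direct ultrametric computation comparing $m(\mathbbm{1}_Y)$ with $m(\mathbbm{1}_{gY})$ and $m(\mathbbm{1}_{G \setminus Y})$. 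Two smaller points: your appeal to Theorem \ref{main1} in the $\mathbb{Q}_p$ case is not circular provided you prove the compact case independently, since the local ellipticity direction of Theorem \ref{equiv} does not rely on Schikhof; and your claim that for incomplete $\mathbb{K}$ the approximating averages ``stabilise level by level'' is false in general --- they are merely Cauchy, so whether the limit lies in $\mathbb{K}$ is a genuine issue that your sketch does not resolve.
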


\begin{remark}
Recall that every group is $0$-free.
\end{remark}

Our proof of Theorem \ref{main1} (the characterization of normed $p$-adic amenability) will rely on Schikhof's Theorem for $\mathbb{K} = \mathbb{Q}_p$ only in the compact case. Our proof of the characterization of normed amenability over spherically complete fields (Theorem \ref{main sc}) will furthermore rely on Schikhof's Theorem for $\mathbb{K} = \mathbb{Q}$ or $\mathbb{F}_p$, both trivially valued. This is the case that is handled explicitly in Schikhof's proof \cite[Theorem 3.6]{Schik}. For the non-spherically complete case (Theorem \ref{main nsc}), the same proof as Schikhof's, relying on general results on harmonic analysis over non-spherically complete fields \cite{vR}, will work.

\subsection{Constructions}

Since $\| G \|_\mathbb{K} = \infty$ if $G$ is not normed $\mathbb{K}$-amenable, whenever we write $\| H \|_\mathbb{K} \leq \| G \|_\mathbb{K}$, we mean: if $G$ is normed $\mathbb{K}$-amenable, then $H$ is normed $\mathbb{K}$-amenable, and the minimal norm of a mean on $H$ is bounded by the minimal norm of a mean on $G$. \\

We start with a technical lemma that allows us to treat two constructions at once.

\begin{lemma}
\label{constrphi}

Let $G$ and $H$ be (t.d.l.c.) groups. Let $\varphi : G \to H$ be a continuous surjective map, and suppose that there exists a (not necessarily continuous) section $s : H \to G$ such that $h^{-1} \varphi(g) = \varphi(s(h)^{-1} g)$ for all $g \in G$ and all $h \in H$. Then $\|H\|_\mathbb{K} \leq \|G\|_\mathbb{K}$.
\end{lemma}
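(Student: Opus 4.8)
The idea is to pull back functions along $\varphi$. Since $\varphi$ is continuous, precomposition gives a linear map $\varphi^\ast : C_b(H, \mathbb{K}) \to C_b(G, \mathbb{K})$, $f \mapsto f \circ \varphi$: indeed $f \circ \varphi$ is continuous and bounded whenever $f$ is, and because $\varphi$ is surjective we have $\|f \circ \varphi\|_\infty = \sup_{g \in G} |f(\varphi(g))|_\mathbb{K} = \sup_{h \in H} |f(h)|_\mathbb{K} = \|f\|_\infty$, so $\varphi^\ast$ is an isometric embedding; in particular $\|\varphi^\ast\|_{op} = 1$ and $\varphi^\ast(\mathbbm{1}_H) = \mathbbm{1}_G$. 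First I would record these elementary facts.

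The key point is that $\varphi^\ast$ intertwines the $H$-action on $C_b(H, \mathbb{K})$ with the $G$-action on $C_b(G, \mathbb{K})$ through the section $s$, namely $\varphi^\ast(h \cdot f) = s(h) \cdot \varphi^\ast(f)$ for all $h \in H$ and $f \in C_b(H, \mathbb{K})$. This is precisely where the hypothesis on $s$ is used: evaluating at $g \in G$, we get $(h \cdot f)(\varphi(g)) = f(h^{-1}\varphi(g)) = f(\varphi(s(h)^{-1} g)) = (\varphi^\ast f)(s(h)^{-1} g) = (s(h) \cdot \varphi^\ast f)(g)$.

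Now, assuming $G$ is normed $\mathbb{K}$-amenable (otherwise $\|G\|_\mathbb{K} = \infty$ and there is nothing to prove), let $m$ be a left-invariant normed $\mathbb{K}$-mean on $G$ and set $\bar m := m \circ \varphi^\ast : C_b(H, \mathbb{K}) \to \mathbb{K}$. Then $\bar m$ is linear and bounded with $\|\bar m\|_{op} \leq \|m\|_{op} \cdot \|\varphi^\ast\|_{op} = \|m\|_{op}$; it satisfies $\bar m(\mathbbm{1}_H) = m(\mathbbm{1}_G) = 1$; and using the intertwining identity together with left-invariance of $m$ we get $\bar m(h \cdot f) = m(s(h) \cdot \varphi^\ast f) = m(\varphi^\ast f) = \bar m(f)$. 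Hence $\bar m$ is a left-invariant normed $\mathbb{K}$-mean on $H$ with $\|\bar m\|_{op} \leq \|m\|_{op}$, and taking the infimum over all such $m$ yields $\|H\|_\mathbb{K} \leq \|G\|_\mathbb{K}$.

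I do not expect a serious obstacle here; the only thing to watch is that, as fixed in Subsection \ref{preli Banach}, we work with the genuine operator norm $\|\cdot\|_{op}$ rather than up to equivalence, so one must make sure the bound $\|\varphi^\ast\|_{op} \leq 1$ (hence $\|\bar m\|_{op} \leq \|m\|_{op}$) holds on the nose — which it does, since $\varphi^\ast$ is norm-nonincreasing by the computation above.
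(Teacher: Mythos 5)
Your proof is correct and follows essentially the same route as the paper: pull functions back along $\varphi$, use the hypothesis on $s$ to show the pullback intertwines the $H$-action with the $G$-action, and push the mean forward, noting that the operator norm does not increase. The only cosmetic difference is that you observe $\varphi^\ast$ is actually isometric (via surjectivity), whereas the paper only needs $\|f\circ\varphi\|_\infty \leq \|f\|_\infty$.
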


\begin{proof}
Let $m$ be a left-invariant $\mathbb{K}$-mean on $G$. Define $\tilde{m} : C_b(H, \mathbb{K}) \to \mathbb{K}$ by $\tilde{m}(f) := m(f \circ \varphi)$, which is well-defined because $f \circ \varphi \in C_b(G, \mathbb{K})$. Now $\tilde{m}$ is clearly linear, and
$$\|\tilde{m}\|_{op} = \inf \{ C \geq 0 \mid |\tilde{m}(f)|_\mathbb{K} = |m(f \circ \varphi)|_\mathbb{K} \leq C \|f\|_\infty \text{ for all } f \} \leq \|m\|_{op},$$
since $\|f \circ \varphi\|_\infty \leq \|f\|_\infty$. Moreover $\tilde{m}(\mathbbm{1}_H) = m(\mathbbm{1}_H \circ \varphi) = m(\mathbbm{1}_G) = 1$.

We are left to show that $\tilde{m}$ is $H$-left-invariant. Let $f \in C_b(H, \mathbb{K})$ and $h \in H$. Then for all $g \in G$:
$$((h \cdot f) \circ \varphi)(g) = f(h^{-1} \varphi(g)) = (f \circ \varphi)(s(h)^{-1}g) = (s(h) \cdot (f \circ \varphi))(g).$$
Thus:
$$\tilde{m}(h \cdot f) = m((h \cdot f) \circ \varphi) = m(s(h) \cdot (f \circ \varphi)) = m(f \circ \varphi) = \tilde{m}(f),$$
where we used that $m$ is $G$-left-invariant.
\end{proof}

We immediately obtain quotients:

\begin{proposition}
\label{quot}

Let $\varphi : G \to Q$ be a continuous surjective group homomorphism. Then $\| Q \|_\mathbb{K} \leq \|G\|_\mathbb{K}$. This applies to $Q = G/H$, where $H$ is a closed normal subgroup of $G$.
\end{proposition}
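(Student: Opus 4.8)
The plan is to deduce this immediately from Lemma \ref{constrphi}. Given the continuous surjective homomorphism $\varphi : G \to Q$, I would first use the axiom of choice to pick \emph{any} set-theoretic section $s : Q \to G$, i.e.\ a (not necessarily continuous) map with $\varphi \circ s = \mathrm{id}_Q$; such a section exists precisely because $\varphi$ is surjective. The point is that the compatibility hypothesis of Lemma \ref{constrphi} is automatically satisfied here, for the simple reason that $\varphi$ is a \emph{group homomorphism}: for every $g \in G$ and $q \in Q$ one has
$$\varphi\bigl(s(q)^{-1} g\bigr) = \varphi(s(q))^{-1}\,\varphi(g) = q^{-1}\,\varphi(g).$$
Thus the quintuple $(G,Q,\varphi,s)$ fits the setup of Lemma \ref{constrphi}, and applying it gives $\|Q\|_\mathbb{K} \leq \|G\|_\mathbb{K}$ directly.

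For the final sentence, I would recall that if $H$ is a closed normal subgroup of the t.d.l.c.\ group $G$, then $G/H$ equipped with the quotient topology is again a t.d.l.c.\ group: it is Hausdorff because $H$ is closed, and it is locally compact and totally disconnected as a topological quotient of such a group. The canonical projection $G \to G/H$ is then a continuous surjective group homomorphism, so the first part applies with $Q = G/H$ and yields $\|G/H\|_\mathbb{K} \leq \|G\|_\mathbb{K}$.

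I do not expect any real obstacle in this argument; the entire content is the observation that multiplicativity of $\varphi$ renders the somewhat rigid-looking hypothesis of Lemma \ref{constrphi} trivially true, so that no compatible \emph{continuous} section is needed. The only point requiring a little care is exactly that: one must not demand continuity of $s$ (which Lemma \ref{constrphi} explicitly permits), since a continuous section of a quotient homomorphism need not exist in general.
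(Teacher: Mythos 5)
Your proof is correct and is essentially identical to the paper's: both observe that since $\varphi$ is a homomorphism, \emph{any} set-theoretic section $s$ satisfies $q^{-1}\varphi(g) = \varphi(s(q))^{-1}\varphi(g) = \varphi(s(q)^{-1}g)$, so Lemma \ref{constrphi} applies directly. The extra remarks about $G/H$ being t.d.l.c.\ and about not requiring continuity of $s$ are accurate but not needed beyond what the lemma already permits.
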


\begin{proof}
Since $\varphi$ is a homomorphism, if $s$ is \emph{any} section, for all $q \in Q$ and all $g \in G$ we have: $q^{-1} \varphi(g) = \varphi(s(q))^{-1} \varphi(g) = \varphi(s(q)^{-1} g)$. We conclude by Lemma \ref{constrphi}.
\end{proof}

In the real setting, the amenability of a closed subgroup is usually the hardest hereditary property to show, for almost all definitions of amenability one can choose as a starting point. We will prove that closed subgroups of normed $\mathbb{K}$-amenable groups are normed $\mathbb{K}$-amenable (Corollary \ref{clsub}); but for the moment we only treat open subgroups, which is all we need for the proof of Theorem \ref{main1}.

\begin{proposition}
\label{opsub}

Let $H$ be an open subgroup of $G$. Then $\|H\|_\mathbb{K} \leq \|G\|_\mathbb{K}$.
\end{proposition}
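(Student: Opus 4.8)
The plan is to deduce this from Lemma~\ref{constrphi}: I would produce a continuous surjection $\varphi : G \to H$ together with a (not necessarily continuous) section $s : H \to G$ satisfying $h^{-1}\varphi(g) = \varphi(s(h)^{-1}g)$ for all $g \in G$ and $h \in H$, and then invoke the lemma directly. The one structural input needed is that, since $H$ is \emph{open}, right translations being homeomorphisms make every right coset $Hg$ clopen, so $G$ is the disjoint union of clopen right cosets.

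Concretely, fix a set $(g_i)_{i \in I}$ of representatives for the right cosets, so that $G = \bigsqcup_{i \in I} Hg_i$, and define $\varphi : G \to H$ by $\varphi(hg_i) := h$ for $h \in H$. On the clopen piece $Hg_i$ this map is the restriction of the right translation $x \mapsto x g_i^{-1}$, hence continuous; since the pieces are clopen and cover $G$, the map $\varphi$ is continuous on $G$, and it is visibly surjective. Take $s : H \to G$ to be the inclusion. Then for $h \in H$ and $g = h' g_i \in G$ one has $h^{-1}\varphi(g) = h^{-1}h'$, while $\varphi(s(h)^{-1}g) = \varphi(h^{-1}h' g_i) = h^{-1}h'$, because $h^{-1}h' \in H$ and $g_i$ is the chosen representative of its coset. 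Thus the hypothesis of Lemma~\ref{constrphi} holds, and $\|H\|_\mathbb{K} \le \|G\|_\mathbb{K}$ follows; if $G$ is not normed $\mathbb{K}$-amenable the inequality is vacuous by the convention $\|G\|_\mathbb{K} = \infty$.

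There is essentially no obstacle here; the only point worth isolating is the use of openness of $H$, which is exactly what guarantees continuity of $\varphi$ — for a general closed subgroup the cosets need not be clopen and this $\varphi$ fails to be continuous, which is why the closed-subgroup statement needs the separate, harder argument of Corollary~\ref{clsub}. Note also that no hypothesis on $\mathbb{K}$ enters. Equivalently, one can bypass Lemma~\ref{constrphi} and argue directly: given a left-invariant $\mathbb{K}$-mean $m$ on $G$, send $f \in C_b(H,\mathbb{K})$ to $m(\tilde f)$, where $\tilde f(hg_i) := f(h)$; continuity of $\tilde f$, the equality $\widetilde{\mathbbm{1}_H} = \mathbbm{1}_G$, the norm bound $\|\tilde f\|_\infty = \|f\|_\infty$, and $H$-invariance (via $\widetilde{h_0 \cdot f} = h_0 \cdot \tilde f$ for $h_0 \in H$) are all immediate, and taking the infimum over $m$ again yields $\|H\|_\mathbb{K} \le \|G\|_\mathbb{K}$.
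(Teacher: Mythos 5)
Your proof is correct and is essentially the same as the paper's: the paper also decomposes $G$ into right cosets of $H$, defines $\varphi(hr) := h$ for a set of representatives $r$, takes $s$ to be the inclusion, and concludes via Lemma~\ref{constrphi}, differing only in that it verifies continuity by observing $\varphi^{-1}(U) = UR$ is open rather than by restricting to clopen cosets. Your closing remarks on the role of openness and the direct mean-transport alternative are accurate but not needed.
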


\begin{proof}
Fix a set $R$ of representatives of the coset space $H \, \backslash \, G$, so that every $g \in G$ may be uniquely written as $hr$ for $h \in H$ and $r \in R$. Define $\varphi : G \to H : g = hr \mapsto h$. This map is clearly surjective; moreover it is continuous: let $U$ be an open subset of $H$, which is also open in $G$ because $H$ is open, then $\varphi^{-1}(U) = UR$ is open in $G$. Define $s : H \to G$ to be the inclusion, which is a section of $\varphi$. Then for all $h \in H$ and all $g = h'r \in G$ we have: $h^{-1} \varphi(g) = h^{-1}h' = \varphi(h^{-1}h' r) = \varphi(s(h)^{-1} g)$. We conclude by Lemma \ref{constrphi}.
\end{proof}

We move to extensions. Also in this case we prove the proposition for open normal subgroups, which is enough for the proof of Theorem \ref{main1}, but we will later show that it is true for closed normal subgroups as well (Corollary \ref{extt}).

\begin{proposition}
\label{ext}

Let $H$ be an open normal subgroup of $G$. Then $\|G\|_\mathbb{K} \leq \|H\|_\mathbb{K} \cdot \|G/H\|_\mathbb{K}$.
\end{proposition}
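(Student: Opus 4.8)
The plan is to imitate the classical proof that amenability is stable under extensions, realising the desired mean as an iterated average: first integrate over the fibres of the projection $\varphi : G \to Q := G/H$ using a left-invariant $\mathbb{K}$-mean $m_H$ on $H$, and then integrate the resulting function on $Q$ using a left-invariant $\mathbb{K}$-mean $m_Q$ on $Q$. Concretely, fix such $m_H$ and $m_Q$ with $\|m_H\|_{op}$ and $\|m_Q\|_{op}$ arbitrarily close to $\|H\|_\mathbb{K}$ and $\|G/H\|_\mathbb{K}$ respectively; for $f \in C_b(G,\mathbb{K})$ and $g \in G$ let $\phi^f_g \in C_b(H,\mathbb{K})$ be the function $h \mapsto f(gh)$ (it is continuous and bounded, being the restriction to $H$ of the translate $g^{-1}\cdot f$), and set $\bar f(gH) := m_H(\phi^f_g)$.

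The first step is to check that $\bar f$ is a well-defined element of $C_b(G/H,\mathbb{K})$. For $h_0 \in H$ one has $\phi^f_{gh_0} = h_0^{-1}\cdot \phi^f_g$ for the $H$-action on $C_b(H,\mathbb{K})$, so left-invariance of $m_H$ gives $m_H(\phi^f_{gh_0}) = m_H(\phi^f_g)$; hence $\bar f$ depends only on the coset $gH$. Since $H$ is open, $G/H$ is discrete, so every function on it is continuous, and $|\bar f(gH)|_\mathbb{K} = |m_H(\phi^f_g)|_\mathbb{K} \le \|m_H\|_{op}\,\|\phi^f_g\|_\infty \le \|m_H\|_{op}\,\|f\|_\infty$, so $\bar f \in C_b(G/H,\mathbb{K})$ with $\|\bar f\|_\infty \le \|m_H\|_{op}\,\|f\|_\infty$. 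Now define $m_G(f) := m_Q(\bar f)$. Linearity of $m_G$ follows from linearity of $m_H$ (which gives $\overline{f_1+f_2} = \bar f_1 + \bar f_2$ and $\overline{\lambda f} = \lambda \bar f$) together with linearity of $m_Q$; the estimate $|m_G(f)|_\mathbb{K} \le \|m_Q\|_{op}\,\|\bar f\|_\infty \le \|m_Q\|_{op}\,\|m_H\|_{op}\,\|f\|_\infty$ shows $\|m_G\|_{op} \le \|m_H\|_{op}\,\|m_Q\|_{op}$; and since $\phi^{\mathbbm{1}_G}_g = \mathbbm{1}_H$ we get $\overline{\mathbbm{1}_G} = \mathbbm{1}_{G/H}$, so $m_G(\mathbbm{1}_G) = m_Q(\mathbbm{1}_{G/H}) = 1$.

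The last step is left-invariance. For $g_0, g \in G$ a direct computation gives $\phi^{g_0\cdot f}_g(h) = f(g_0^{-1}gh) = \phi^f_{g_0^{-1}g}(h)$, hence $\overline{g_0\cdot f}(gH) = m_H(\phi^f_{g_0^{-1}g}) = \bar f(g_0^{-1}gH) = (g_0\cdot \bar f)(gH)$, where $G$ acts on $C_b(G/H,\mathbb{K})$ through $\varphi$. Since $m_Q$ is $Q$-left-invariant it is in particular $G$-left-invariant for this pulled-back action, so $m_G(g_0\cdot f) = m_Q(\overline{g_0\cdot f}) = m_Q(g_0\cdot\bar f) = m_Q(\bar f) = m_G(f)$. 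Letting $\|m_H\|_{op} \to \|H\|_\mathbb{K}$ and $\|m_Q\|_{op} \to \|G/H\|_\mathbb{K}$ yields $\|G\|_\mathbb{K} \le \|H\|_\mathbb{K}\cdot\|G/H\|_\mathbb{K}$ (with the convention that both sides are $\infty$ as soon as one factor is).

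I do not expect a genuine obstacle: the construction is the exact non-Archimedean transcription of the classical one, and because normed $\mathbb{K}$-means are only required to be bounded and unital — no positivity, no norm-one normalisation — there is nothing to repair along the way. The only point that actually consumes a hypothesis is the verification that $\bar f$ is continuous on $G/H$, which is immediate here precisely because $H$ is open (so $G/H$ is discrete); the case of a merely closed normal subgroup, where this shortcut is unavailable, is the one deferred to Corollary \ref{extt}.
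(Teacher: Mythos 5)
Your proof is correct and is essentially identical to the paper's: the same iterated-averaging construction $f \mapsto m_Q(\bar f)$ with $\bar f(gH) = m_H(h \mapsto f(gh))$, the same well-definedness check via $\phi^f_{gh_0} = h_0^{-1}\cdot\phi^f_g$, and the same equivariance computation for left-invariance. The only cosmetic difference is that you pass to the limit over means of nearly optimal norm, whereas the paper states the bound $\|m\|_{op} \le \|m_H\|_{op}\,\|m_Q\|_{op}$ for arbitrary means and lets the infimum take care of itself.
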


\begin{proof}
Let $Q := G/H$ and let $m_H$, $m_Q$ be left-invariant $\mathbb{K}$-means on $H$ and $Q$, respectively. For $f \in C_b(G, \mathbb{K})$ and $g \in G$, define $f_g : H \to \mathbb{K} : h \mapsto f(gh)$. Then $\| f_g \|_\infty \leq \| f \|_\infty$, and $f_g$ is continuous because the left translation by $g$ and the function $f$ are both continuous. Next we define $f_Q : Q \to \mathbb{K}$ by $f_Q(q) := m_H(f_g)$, for some lift $g \in G$ of $q$. This definition does not depend on the choice of $g$. Indeed, if $g'$ is another lift of $q$, then there exists some $h \in H$ such that $g' = gh$, and $m_H(f_{g'}) = m_H(h^{-1} \cdot f_g) = m_H(f_g)$, by $H$-left-invariance of $m_H$. So $f_Q$ is well-defined. It is bounded by $\| m \|_{op} \cdot \| f \|_\infty$, and the continuity requirement is void since $Q$ is discrete. Moreover, for all $g, g' \in G$, if $\pi : G \to Q$ denotes the canonical projection:
$$(g \cdot f)_Q(\pi(g')) = m_H((g \cdot f)_{g'}) = m_H(f_{g^{-1}g'}) = f_Q(\pi(g)^{-1} \pi(g')) = (\pi(g) \cdot f_Q)(\pi(g')),$$
so $(g \cdot f)_Q = \pi(g) \cdot f_Q$.

This allows us to define $m : C_b(G, \mathbb{K}) \to \mathbb{K} : f \mapsto m_Q(f_Q)$. Then for all $g \in G$ and all $f \in C_b(G, \mathbb{K})$:
$$m(g \cdot f) = m_Q((g \cdot f)_Q) = m_Q(\pi(g) \cdot f_Q) = m_Q(f_Q) = m(f)$$
by $Q$-left-invariance of $m_Q$; so $m$ is $G$-left-invariant. It is easily seen to be a $\mathbb{K}$-mean of norm $\|m\|_{op} \leq \|m_H\|_{op} \cdot \|m_Q\|_{op}$.
\end{proof}

We will prove that actually $\| G \|_\mathbb{K} = \| H \|_\mathbb{K} \cdot \| G/H \|_\mathbb{K}$: this is Corollary \ref{extt comp} for the compact case, and Corollary \ref{extt} for the general case with $\mathbb{K} = \mathbb{Q}_p$. For other fields it follows from the $p$-adic case and Theorems \ref{main nsc} and \ref{main sc} (Remark \ref{examples general}). \\

Finally, we consider directed unions. Here closed subgroups do not pose a problem, however we need the Banach--Alaoglu Theorem for the proof. Therefore we will only prove this under the additional assumption that $\mathbb{K}$ is a local field; in particular, this will hold for $\mathbb{Q}_p$. We will deal differently with other spherically complete fields (see Corollary \ref{K-dirun}), and the result is false for non-sperically complete fields (see Theorem \ref{main nsc}).

\begin{proposition}
\label{dirun}

Suppose that $\mathbb{K}$ is a local field. Let $G$ be the directed union of the closed subgroups $(G_i)_{i \in I}$. Then $\|G\|_\mathbb{K} \leq \sup \{ \|G_i\|_\mathbb{K} \mid i \in I \}$.
\end{proposition}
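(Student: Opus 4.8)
The plan is to take, for each $i \in I$, a left-invariant $\mathbb{K}$-mean $m_i$ on $G_i$ of norm as close as desired to $\|G_i\|_\mathbb{K}$, extend it to a bounded linear functional on $C_b(G,\mathbb{K})$ of the same norm, and then pass to a weak-$*$ limit along a suitable ultrafilter (or subnet) on the directed set $I$. The weak-$*$ limit will be linear, will still send $\mathbbm{1}_G$ to $1$, will have operator norm bounded by $\sup_i \|G_i\|_\mathbb{K}$, and will be $G$-left-invariant because every $g \in G$ lies in some $G_i$, hence in all $G_j$ for $j \geq i$ in the directed order, so invariance under $g$ is ``eventually'' true along the net.

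Here are the steps in order. First I would fix $\varepsilon > 0$ (to be sent to $0$ at the end, or handled by a diagonal argument since $\mathbb{K}$ discretely valued would make the infimum attained, but in general I keep $\varepsilon$) and choose for each $i$ a left-invariant mean $m_i$ on $C_b(G_i,\mathbb{K})$ with $\|m_i\|_{op} \leq \|G_i\|_\mathbb{K} + \varepsilon =: C_i \leq C := \sup_i \|G_i\|_\mathbb{K} + \varepsilon$. Second, I restrict: the map $C_b(G,\mathbb{K}) \to C_b(G_i,\mathbb{K})$, $f \mapsto f|_{G_i}$, is linear of norm $\leq 1$, so $\widehat{m_i} := m_i \circ (\,\cdot\,|_{G_i})$ is a bounded linear functional on $C_b(G,\mathbb{K})$ with $\widehat{m_i}(\mathbbm{1}_G) = m_i(\mathbbm{1}_{G_i}) = 1$ and $\|\widehat{m_i}\|_{op} \leq C_i \leq C$. (Note no Hahn--Banach is needed for this restriction step; composing with the restriction map suffices.) Third, all the $\widehat{m_i}$ lie in the closed ball of radius $C$ in $C_b(G,\mathbb{K})^*$, which by the Banach--Alaoglu Theorem (applicable since $\mathbb{K}$ is a local field) is weak-$*$ compact. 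I fix an ultrafilter $\mathcal{U}$ on $I$ refining the order filter (the filter generated by the up-sets $\{j : j \geq i\}$; this is a filter precisely because $I$ is directed), and set $m(f) := \lim_{\mathcal{U}} \widehat{m_i}(f)$ for each $f \in C_b(G,\mathbb{K})$; the limit exists in the radius-$C$ ball of $\mathbb{K}$ by compactness of that ball (a local field has compact balls) together with the ultrafilter. Fourth, I check $m$ is linear (pointwise limits of linear functionals are linear), $m(\mathbbm{1}_G) = \lim_{\mathcal U} 1 = 1$, and $\|m\|_{op} \leq C$ since $|\widehat{m_i}(f)|_\mathbb{K} \leq C\|f\|_\infty$ for all $i$ is preserved under the limit. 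Fifth, for left-invariance: given $g \in G$ and $f \in C_b(G,\mathbb{K})$, there is $i_0 \in I$ with $g \in G_{i_0}$, and for every $j \geq i_0$ we have $g \in G_j$, so $(g\cdot f)|_{G_j} = g \cdot (f|_{G_j})$ as elements of $C_b(G_j,\mathbb{K})$ and hence $\widehat{m_j}(g \cdot f) = m_j(g \cdot (f|_{G_j})) = m_j(f|_{G_j}) = \widehat{m_j}(f)$ by left-invariance of $m_j$; since $\{j : j \geq i_0\} \in \mathcal{U}$, taking the limit gives $m(g \cdot f) = m(f)$. Finally, letting $\varepsilon \to 0$ (or observing the bound holds for every $\varepsilon$) yields $\|G\|_\mathbb{K} \leq \sup_i \|G_i\|_\mathbb{K}$.

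The main obstacle I anticipate is purely bookkeeping rather than conceptual: making sure the directed-set/ultrafilter machinery is set up so that ``eventually in $\mathcal{U}$'' captures left-invariance, i.e.\ that the order filter on the directed index set $I$ is genuinely a filter (which needs $I$ directed, given) and that $\mathcal{U}$ refines it; and making sure the Banach--Alaoglu application is legitimate, which requires recalling from the excerpt that for a local field closed balls in $\mathbb{K}$ are compact and $C_b(G,\mathbb{K})^*$ has its closed balls weak-$*$ compact. One should also double-check that the net of $\widehat{m_i}$ is indexed by the \emph{same} directed set $I$ used to build $\mathcal{U}$ (it is, by construction). A minor point worth a remark: if $\mathbb{K}$ is discretely valued each $\|G_i\|_\mathbb{K}$ is attained (Remark~\ref{rem:attained}), so one may take $\varepsilon = 0$ directly; in the general case the $\varepsilon$ is harmless since the resulting bound $\|G\|_\mathbb{K} \leq \sup_i\|G_i\|_\mathbb{K} + \varepsilon$ holds for all $\varepsilon > 0$.
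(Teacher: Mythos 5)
Your proof is correct and is essentially the paper's argument: both pull back the means $m_i$ to $C_b(G,\mathbb{K})^*$ via $f \mapsto m_i(f|_{G_i})$ and invoke the Banach--Alaoglu Theorem on a closed ball of $C_b(G,\mathbb{K})^*$, the only difference being that you extract the invariant mean as an ultrafilter (net) limit while the paper uses the finite intersection property of the nested closed sets $K_i$ of $G_i$-invariant functionals. Your explicit $\varepsilon$-bookkeeping for the infimum is a harmless refinement that the paper leaves implicit.
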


\begin{proof}
Let $m_i$ be a left-invariant $\mathbb{K}$-mean on $G_i$, which is an element of $C_b(G_i, \mathbb{K})^*$. Let $M := \sup \{ \|m_i\|_{op} \mid i \in I \}$, and suppose that this value is finite (otherwise there is nothing to prove). Define:
$$K_i := \{ \varphi \in C_b(G, \mathbb{K})^* : \varphi(\mathbbm{1}_G) = 1; \, \|\varphi\|_{op} \leq M; \, \varphi \text{ is } G_i \text{-left-invariant} \}.$$
Every $K_i$ is a closed subset of the closed ball of radius $M$ in $C_b(G, \mathbb{K})^*$, which is weak-$*$ compact by the Banach--Alaoglu Theorem. Each $K_i$ is non-empty, since $f \mapsto m_i(f|_{G_i})$ is an element of $K_i$. If $G_i \subseteq G_j$, then $K_j \subseteq K_i$. Since the union is directed, finite intersections of $K_i$ are non-empty. Therefore, by compactness, there exists $m \in \bigcap\limits_{i \in I} K_i$, which is a normed $G$-left-invariant $\mathbb{K}$-mean. So $G$ is normed $\mathbb{K}$-amenable. Since $m \in K_i$ for all $i$, we have $\|m\|_{op} \leq M$.
\end{proof}

By Proposition \ref{opsub}, equality holds whenever all of the $G_i$ are open. We will see that equality holds in general (Corollary \ref{dirunn}).

\subsection{$p^\mathbb{N}$-freeness}

Next, we characterize normed $\mathbb{K}$-amenability of finite groups. This simple result will be fundamental in the characterization of normed $\mathbb{K}$-amenability.

\begin{theorem}
\label{finite}

Let $G$ be a finite group of order $n$. Then:
\begin{enumerate}
\item If $\chi(\mathbb{K}) = p > 0$, then $G$ is normed $\mathbb{K}$-amenable if and only if $p$ does not divide $n$, in which case $\| G \|_\mathbb{K} = 1$.
\item If $\chi(\mathbb{K}) = \chi(\mathfrak{r}) = 0$, then $\| G \|_\mathbb{K} = 1$.
\item If $\chi(\mathbb{K}) = 0$ and $\chi(\mathfrak{r}) = p > 0$, then $\| G \|_\mathbb{K} = |n|_\mathbb{K}^{-1} = |n|_p^{-1}$.
\end{enumerate}
If a left-invariant normed $\mathbb{K}$-mean exists, then it is unique. In particular, $\| G \|_\mathbb{K}$ is attained.
\end{theorem}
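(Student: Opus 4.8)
The plan is to use that $G$ finite makes $C_b(G,\mathbb{K})$ equal to the space $\mathbb{K}^G$ of all $\mathbb{K}$-valued functions, which is finite-dimensional, so that every linear functional on it is automatically bounded (Theorem \ref{norm fin dim}) and a left-invariant normed $\mathbb{K}$-mean is determined by a single scalar.

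First I would set $c := m(\mathbbm{1}_{\{1\}})$ for a putative left-invariant normed $\mathbb{K}$-mean $m$. Since $G$ acts on point masses by $g \cdot \mathbbm{1}_{\{h\}} = \mathbbm{1}_{\{gh\}}$, left-invariance gives $m(\mathbbm{1}_{\{g\}}) = m(g\cdot \mathbbm{1}_{\{1\}}) = c$ for every $g \in G$. Writing $f \in \mathbb{K}^G$ as $\sum_{g\in G} f(g)\,\mathbbm{1}_{\{g\}}$ forces $m(f) = c\sum_{g\in G} f(g)$, and conversely this formula always defines a left-invariant linear functional. The normalization $m(\mathbbm{1}_G) = 1$ then reads $cn = 1$ in $\mathbb{K}$. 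This already yields uniqueness (if a left-invariant normed $\mathbb{K}$-mean exists it is the one with $c = n^{-1}$) and existence precisely when $n$ is invertible in $\mathbb{K}$: automatic when $\chi(\mathbb{K}) = 0$, and equivalent to $p \nmid n$ when $\chi(\mathbb{K}) = p > 0$. Next, by the ultrametric inequality $|m(f)|_\mathbb{K} = |c|_\mathbb{K}\,|\sum_g f(g)|_\mathbb{K} \leq |c|_\mathbb{K}\,\|f\|_\infty$, while evaluating on $\mathbbm{1}_{\{1\}}$ (of sup-norm $1$) gives $|m(\mathbbm{1}_{\{1\}})|_\mathbb{K} = |c|_\mathbb{K}$; hence $\|m\|_{op} = |c|_\mathbb{K} = |n|_\mathbb{K}^{-1}$. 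Since the mean is unique, the infimum $\|G\|_\mathbb{K}$ is attained and equals $|n|_\mathbb{K}^{-1}$ whenever finite.

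It then remains to identify $|n|_\mathbb{K}$ in the three cases using the preliminaries on valued fields. If $\chi(\mathbb{K}) = p > 0$ and $p \nmid n$, then $n$ is a unit of the prime field $\mathbb{F}_p \subseteq \mathbb{K}$, on which the norm is trivial, so $|n|_\mathbb{K} = 1$ and $\|G\|_\mathbb{K} = 1$. If $\chi(\mathbb{K}) = \chi(\mathfrak{r}) = 0$, then $|k\cdot 1|_\mathbb{K} = 1$ for all $k \in \mathbb{N}$ by the characterization of residual characteristic $0$, so again $|n|_\mathbb{K} = 1$ and $\|G\|_\mathbb{K} = 1$. If $\chi(\mathbb{K}) = 0$ and $\chi(\mathfrak{r}) = p > 0$, then by our normalization convention (end of Subsection \ref{preli field}) $|\cdot|_\mathbb{K}$ restricts to the $p$-adic norm on $\mathbb{Q}$, so $|n|_\mathbb{K} = |n|_p$ and $\|G\|_\mathbb{K} = |n|_p^{-1}$. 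There is no real obstacle: the only content is the observation that left-invariance collapses any candidate mean to summation against the constant $n^{-1}$, after which everything is a direct computation, the one point needing a little attention being that the operator norm is taken to be $\|\cdot\|_{op}$ and is read off from the test function $\mathbbm{1}_{\{1\}}$.
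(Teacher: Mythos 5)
Your proposal is correct and follows essentially the same route as the paper's proof: both identify the unique candidate mean $f \mapsto \frac{1}{n}\sum_{g} f(g)$ by combining left-invariance (forcing equal coefficients on the evaluation functionals) with the normalization $m(\mathbbm{1}_G)=1$, note that it exists precisely when $n$ is invertible in $\mathbb{K}$, and compute $\|m\|_{op}=|n|_\mathbb{K}^{-1}$ via the ultrametric inequality together with evaluation at a Dirac mass. Your final identification of $|n|_\mathbb{K}$ in the three cases is just a slightly more explicit spelling-out of the paper's last sentence.
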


\begin{proof}
Let $G := \{g_1, \ldots, g_n\}$. Every linear map $m : C_b(G, \mathbb{K}) \to \mathbb{K}$ must be a linear combination of evaluation maps, so there exist coefficients $\alpha_i \in \mathbb{K}$ such that $m(f) = \sum \alpha_i f(g_i)$ for all $f \in C_b(G, \mathbb{K})$. For $m$ to be left-invariant, all coefficients must coincide. For $m(\mathbbm{1}_G) = 1$ to hold, they must all be equal to $1/n$. Therefore the only possible candidate for a normed left-invariant $\mathbb{K}$-mean is
$$m : C_b(G, \mathbb{K}) \to \mathbb{K} : f \mapsto \frac{1}{n} \sum\limits_{i = 1}^n f(g_i).$$

If $\chi(\mathbb{K}) \mid n$, then $n$ is not divisible in $\mathbb{K}$, and so $m$ is not well-defined. On the other hand, if $\chi(\mathbb{K}) \nmid n$, then $m$ is well-defined and it is a left-invariant normed $\mathbb{K}$-mean. \\

By the previous discussion this is the unique left-invariant normed $\mathbb{K}$-mean, so we are left to calculate $\|m\|_{op}$, assuming that $\chi(\mathbb{K}) \nmid n$. By the ultrametric inequality $\|m\|_{op} \leq |n|_\mathbb{K}^{-1}$. Evaluating at a Dirac mass yields $\|m\|_{op} \geq |n|_\mathbb{K}^{-1}$. We conclude that $\|G\|_\mathbb{K} = |n|_\mathbb{K}^{-1}$. This norm is equal to $|n|_p^{-1}$ in the third case, and to $1$ in the first two cases.
\end{proof}

This, together with the previous results, implies that normed $p$-adic amenable groups are $p^\mathbb{N}$-free, which is part of Theorem \ref{main1}. More generally:

\begin{proposition}
\label{amen p^N free}

Suppose that $\chi(\mathfrak{r}) = p > 0$, and let $G$ be a normed $\mathbb{K}$-amenable group.
\begin{enumerate}
\item If $\chi(\mathbb{K}) = p$, then $G$ is $p$-free.
\item If $\chi(\mathbb{K}) = 0$, then $G$ is $p^\mathbb{N}$-free. More precisely, if $\| G \|_\mathbb{K} < p^k$, then $G$ is $p^k$-free.
\end{enumerate}
\end{proposition}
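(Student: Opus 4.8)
The plan is to argue by contraposition in both cases, reducing to a finite quotient of an open subgroup and then applying Theorem \ref{finite}. Since $G$ is normed $\mathbb{K}$-amenable we have $\|G\|_\mathbb{K} < \infty$. Suppose $G$ fails the relevant freeness condition: in case (1), that $G$ is not $p$-free; in the quantitative form of case (2), that $\|G\|_\mathbb{K} < p^k$ (for some fixed $k$) but $G$ is not $p^k$-free. By the definition of $n$-freeness this yields open subgroups $V \leq U \leq G$ with $[U:V]$ finite and divisible by $p$ in case (1), respectively by $p^k$ in case (2).

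First I would carry out the standard normal-core reduction. Replace $V$ by $V' := \bigcap_{u \in U} u V u^{-1}$, which is an intersection of finitely many open subgroups of $U$ (there are only $[U:N_U(V)] < \infty$ distinct conjugates, since $[U:V]$ is finite), hence an open normal subgroup of $U$ contained in $V$. Then $[U:V']$ is a positive multiple of $[U:V]$, so it remains divisible by $p$ (resp. $p^k$). Thus I may assume $V \trianglelefteq U$, and $Q := U/V$ is a finite discrete group whose order $n$ is divisible by $p$ (resp. $p^k$). By Proposition \ref{opsub}, $U$ is normed $\mathbb{K}$-amenable with $\|U\|_\mathbb{K} \leq \|G\|_\mathbb{K}$, and then by Proposition \ref{quot} applied to the projection $U \to Q$, the group $Q$ is normed $\mathbb{K}$-amenable with $\|Q\|_\mathbb{K} \leq \|U\|_\mathbb{K} \leq \|G\|_\mathbb{K}$.

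It then remains to invoke Theorem \ref{finite}. In case (1), $\chi(\mathbb{K}) = p$ and $p \mid n$, so Theorem \ref{finite}(1) says that $Q$ is \emph{not} normed $\mathbb{K}$-amenable, contradicting the previous step; hence $G$ is $p$-free. In case (2), $\chi(\mathbb{K}) = 0$ and $\chi(\mathfrak{r}) = p$, so Theorem \ref{finite}(3) gives $\|Q\|_\mathbb{K} = |n|_p^{-1}$, and since $p^k \mid n$ we have $|n|_p \leq p^{-k}$, whence $\|Q\|_\mathbb{K} \geq p^k > \|G\|_\mathbb{K}$, again a contradiction; hence $G$ is $p^k$-free whenever $\|G\|_\mathbb{K} < p^k$. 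Finally, choosing any $k \geq 1$ with $p^k > \|G\|_\mathbb{K}$ (possible as $\|G\|_\mathbb{K} < \infty$) shows that $G$ is $p^k$-free for this $k$, i.e.\ $p^\mathbb{N}$-free.

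I do not expect a genuine obstacle here: the argument is a direct synthesis of Theorem \ref{finite} with the behaviour of $\|\cdot\|_\mathbb{K}$ under passage to open subgroups and quotients (Propositions \ref{opsub} and \ref{quot}). The only two points demanding a little care are the normal-core reduction — making sure the index of the smaller group stays divisible by the correct power of $p$ — and matching the hypotheses on $\chi(\mathbb{K})$ and $\chi(\mathfrak{r})$ with the correct cases of Theorem \ref{finite}.
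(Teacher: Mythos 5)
Your proposal is correct and follows essentially the same route as the paper: reduce to a finite quotient $U/V$ of an open subgroup via a normal-core (finite-index) reduction, bound $\|U/V\|_\mathbb{K}$ by $\|G\|_\mathbb{K}$ using Propositions \ref{opsub} and \ref{quot}, and conclude with Theorem \ref{finite}. The paper phrases it directly rather than by contraposition, but the content is identical.
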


\begin{proof}
Let $V \leq U \leq G$ be open subgroups of $G$, and suppose that $[U : V] < \infty$. Up to finite index, let us assume that $V$ is normal in $U$. By Propositions \ref{opsub} and \ref{quot} we have: $\| U / V \|_\mathbb{K} \leq \| U \|_\mathbb{K} \leq \| G \|_\mathbb{K} < \infty$. Moreover, $U/V$ is finite and discrete. By Theorem \ref{finite}, we split into two cases.

If $\chi(\mathbb{K}) = p$, then $p$ cannot divide $|U/V|$, so $G$ is $p$-free. If instead $\chi(\mathbb{K}) = 0$, let $k \geq 1$ be large enough so that $\| G \|_\mathbb{K} < p^k$. Then $\|U/V\|_\mathbb{K} = |U/V|_p^{-1} < p^k$, so $p^k$ cannot divide $|U/V|$, and $G$ is $p^k$-free.
\end{proof}

In the second case of Theorem \ref{finite} there are no restrictions, and so we do not obtain restrictions on the possible values of $[U : V]$.

\begin{example}
$\mathbb{Z}$ is not normed $p$-adic amenable for any $p$. Indeed, $[\mathbb{Z} : \mathbb{Z}/p^k \mathbb{Z}] = p^k$ can be arbitrarily large.
\end{example}

\begin{example}
\label{npa period}

Combining Proposition \ref{opsub} with the previous example, we deduce that a discrete normed $p$-adic amenable group cannot contain $\mathbb{Z}$ as a subgroup. In other words, a discrete normed $p$-adic amenable group is \emph{periodic}: each element has finite order.

It is possible to prove directly that the analogous result holds for general t.d.l.c. groups as well. With a bit more work, one can adapt the proof of Proposition \ref{opsub} to show the corresponding result for discrete subgroups. A t.d.l.c. group $G$ is \emph{periodic} if for all $g \in G$ the closure of the group generated by $g$ is compact. Such groups are the subject of a rich theory \cite{period}. By Weil's Lemma \cite[Proposition 7.43]{compact}, the closure of a cyclic group in a locally compact group is either compact or a discrete copy of $\mathbb{Z}$. Therefore a normed $p$-adic amenable group is periodic.

The much stronger fact that normed $p$-adic amenable groups are locally elliptic is part of Theorem \ref{main1}, hence our choice to only quickly mention this without a complete proof.
\end{example}

In compact groups, $p^k$-freeness passes to closed subgroups.

\begin{lemma}
\label{sub p free}

Let $G$ be compact and $p^k$-free, and let $H \leq G$ be a closed subgroup. Then $H$ is $p^k$-free.
\end{lemma}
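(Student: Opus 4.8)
The plan is to argue by contradiction. Suppose $H$ is not $p^k$-free, so there exist open subgroups $V \leq U \leq H$ with $[U:V]$ finite — automatic, since $H$ is closed in the compact group $G$, hence compact — and with $p^k \mid [U:V]$. The goal is to manufacture from this a pair of open subgroups of $G$ whose index is still divisible by $p^k$, contradicting the $p^k$-freeness of $G$.

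The mechanism for transporting the pair $V \leq U$ from $H$ to $G$ is to fatten both groups by a sufficiently small open normal subgroup of $G$. Since $G$ is compact and totally disconnected, it is profinite, so by Corollary \ref{cor_vd} the open normal subgroups of $G$ form a neighbourhood basis of the identity. As $V$ is open in $H$, Van Danzig's theorem provides a compact open subgroup $M \leq G$ with $H \cap M \subseteq V$, and then an open normal subgroup $N \trianglelefteq G$ with $N \subseteq M$, so that $H \cap N \subseteq V$. Set $\tilde{U} := UN$ and $\tilde{V} := VN$. These are subgroups of $G$ because $N$ is normal, and they are open because they contain the open subgroup $N$; clearly $\tilde{V} \leq \tilde{U}$.

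The key point — and the step that really needs the smallness of $N$ — is that this operation preserves the index exactly. Indeed, since $U \leq H$ we have $U \cap N \leq H \cap N \subseteq V$, and trivially $U \cap N \leq N$, so $U \cap N \subseteq V \cap N$; the reverse inclusion holds because $V \leq U$, so $U \cap N = V \cap N =: K$, and $K \trianglelefteq U$ since $N \trianglelefteq G$. The third isomorphism theorem then gives $[\tilde{U} : \tilde{V}] = [UN/N : VN/N] = [U/K : V/K] = [U:V]$, which is divisible by $p^k$. Thus $\tilde{V} \leq \tilde{U}$ are open subgroups of $G$ of finite index divisible by $p^k$, contradicting the hypothesis that $G$ is $p^k$-free. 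Hence $H$ is $p^k$-free.

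The point I would flag is precisely why a more naive lift fails: if one uses Lemma \ref{opinco} to write $V = H \cap W$ for a compact open $W \leq G$ and also $U = H \cap A$ for a compact open $A \leq G$ with $W \leq A$, then letting $U = H \cap A$ act on the coset space $A/W$ shows only that the orbit of the trivial coset has size $[U:V]$, which yields $p^k \leq [A:W]$ rather than the divisibility $p^k \mid [A:W]$ that we need. Passing instead through an open \emph{normal} subgroup $N$ of $G$ small enough that $H \cap N \leq V$ is exactly what collapses $U \cap N$ and $V \cap N$ to the same group and upgrades the inequality to an equality of indices. Note that compactness of $G$ is used essentially here — to ensure $G$ is profinite so that Corollary \ref{cor_vd} applies, and to keep all the relevant indices finite — and that no input from $p$-adic harmonic analysis or from the amenability characterization is required.
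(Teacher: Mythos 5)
Your proof is correct. The index computation is sound: with $N\trianglelefteq G$ open and $H\cap N\subseteq V$ you get $U\cap N=V\cap N\subseteq V$, hence $VN\cap U=V$ and $[UN:VN]=[U:V]$, so the bad pair in $H$ lifts to a bad pair of open subgroups of $G$ with exactly the same (finite) index, contradicting $p^k$-freeness of $G$. (Pedantically, the middle step is the second isomorphism/correspondence theorem rather than the third, but the argument is fine.)

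The route differs a bit from the paper's. The paper first reduces to pairs of the form $V\leq H$ (legitimate, since $[U:V]\mid[H:V]$ when $H$ is compact), then invokes Lemma \ref{opinco} to realize $V=U\cap H$ \emph{exactly} for a compact open $U\leq G$, passes to the normal core of $U$, and uses $H/V\cong HU/U\leq G/U$ to conclude that $p^k\nmid[G:U]$ forces $p^k\nmid[H:V]$ — i.e.\ the transfer goes through the divisibility $[H:V]\mid[G:U]$. You instead keep the general pair $V\leq U\leq H$, only require the containment $H\cap N\subseteq V$ for a small open normal $N$ (Van Danzig plus Corollary \ref{cor_vd}), and get exact preservation of the index under fattening. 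What your version buys is that it avoids Lemma \ref{opinco} entirely and needs no preliminary reduction; what the paper's version buys is brevity, since the exact realization $V=U\cap H$ lets one quote a single embedding $H/V\hookrightarrow G/U$ and finish by Lagrange. Your closing remark about why the naive lift via Lemma \ref{opinco} applied to both $U$ and $V$ only gives an inequality is a fair observation, and it explains why either an exact realization (paper) or a normal fattening (your argument) is needed.
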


\begin{proof}
Since every open subgroup of $H$ has finite index, it suffices to show that $p^k$ does not divide $[H : V]$ for any open subgroup $V$ of $H$. We can write $V = U \cap H$, for some compact open subgroup $U$ of $G$, by Lemma \ref{opinco}. Up to finite index, we may assume that $U$ is normal in $G$. Now $H/V = H/ (U \cap H) \cong HU / U \leq G/U$. Since $G$ is $p^k$-free, $p^k$ does not divide $[G : U]$, and so it does not divide $[H : V]$ either.
\end{proof}

\subsection{Compact groups}

We will now use Schikhof's Theorem, Theorem \ref{finite}, and Proposition \ref{ext} to characterize normed $\mathbb{K}$-amenability of compact groups. Recall that the compact t.d.l.c. groups are precisely the \emph{profinite} groups: inverse limits of finite groups. Let us start by stating the compact case of Schikhof's Theorem \cite[Theorem 1.4]{Schik}:

\begin{proposition}[Schikhof]
A profinite group $G$ admits a left-invariant $\mathbb{K}$-mean of norm $1$ if and only if it is $\chi(\mathfrak{r})$-free.
\end{proposition}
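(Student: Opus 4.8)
The plan is to derive this proposition as an immediate specialization of the general form of Schikhof's Theorem recalled just above, using two elementary facts. First, a profinite group is by definition an inverse limit of finite groups, hence compact; equivalently, as noted above, the profinite groups are precisely the compact t.d.l.c.\ groups. Second, a compact group $G$ is automatically locally elliptic: it satisfies condition (1) of the definition of locally elliptic, since $G$ is itself a compact open subgroup of $G$, so the (trivially directed) one-element family $\{G\}$ realises $G$ as a directed union of compact open subgroups.

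With these observations in hand I would split according to whether $\mathbb{K}$ is spherically complete. If $\mathbb{K}$ is not spherically complete, then by the first part of Schikhof's Theorem above, $G$ admits a left-invariant $\mathbb{K}$-mean of norm $1$ if and only if $G$ is compact and $\chi(\mathfrak{r})$-free; since $G$ is profinite the compactness hypothesis holds automatically, so this is equivalent to $G$ being $\chi(\mathfrak{r})$-free. If instead $\mathbb{K}$ is spherically complete, then by the second part, the mean of norm $1$ exists if and only if $G$ is locally elliptic and $\chi(\mathfrak{r})$-free; by the second observation the local ellipticity hypothesis is automatic for the compact group $G$, so once more the condition reduces to $\chi(\mathfrak{r})$-freeness. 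In both cases we conclude that a profinite $G$ admits a left-invariant $\mathbb{K}$-mean of norm $1$ if and only if it is $\chi(\mathfrak{r})$-free. (When $\chi(\mathfrak{r})=0$ both sides hold trivially, since every group is $0$-free.)

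I do not expect any real obstacle here: the statement is a repackaging of Schikhof's Theorem restricted to the subclass of compact groups, and the only points to check are the standard facts that profinite groups are compact and that compact groups are locally elliptic. If one wished to avoid invoking the general theorem, a self-contained argument in the compact case is also available — for necessity, Proposition \ref{amen p^N free} (applied with norm $1 < \chi(\mathfrak{r})$) already forces $\chi(\mathfrak{r})$-freeness; for sufficiency, when $G$ is $\chi(\mathfrak{r})$-free one builds the mean by normalising the $p$-adic Haar integral of Subsection \ref{preli Haar}, whose existence is granted by Theorem \ref{Haar} and whose value on $\mathbbm{1}_G$ is nonzero by Lemma \ref{measure U} — but this is unnecessary given that the general result is at our disposal.
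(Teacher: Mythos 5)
Your main argument is correct: since the profinite groups are exactly the compact t.d.l.c.\ groups, and a compact group is trivially locally elliptic (it is a compact open subgroup of itself), both branches of the general Schikhof theorem collapse to the single condition of $\chi(\mathfrak{r})$-freeness, which is what the proposition asserts. The paper, however, does not derive the proposition this way: it cites it directly as \cite[Theorem 1.4]{Schik}, i.e.\ as a standalone result of Schikhof's about compact groups. The difference matters for logical bookkeeping rather than for correctness. In Schikhof's paper the compact case is proved first and is an ingredient in the proofs of the general statements (\cite[Theorem 2.1, Theorem 3.6]{Schik}), so your derivation inverts the original order of dependency; this is harmless here, since the present paper treats both as black boxes, but it explains why the author is careful to cite the compact case separately and to stress that the proof of Theorem \ref{main1} relies on Schikhof's Theorem ``only in the compact case.'' One small caution about your optional self-contained sketch: the Haar-integral machinery of Subsection \ref{preli Haar} is only set up in the paper for $\mathbb{K}=\mathbb{Q}_p$, and even there the existence of the Haar integral requires only $p\notin\mathbb{L}(G)$ (which, for compact $G$, is strictly weaker than $p$-freeness), so showing that the normalized integral is a mean of norm exactly $1$ is where $p$-freeness would genuinely have to enter; that step is not automatic from Theorem \ref{Haar} and Lemma \ref{measure U} alone. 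Since you present that sketch as unnecessary, this does not affect the validity of your proof.
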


Therefore if $\chi(\mathfrak{r}) = 0$, then every profinite group admits a left-invariant $\mathbb{K}$-mean of norm $1$, and if $\chi(\mathfrak{r}) = p > 0$, then a profinite group admits a left-invariant $\mathbb{K}$-mean of norm $1$ if and only if it is $p$-free. \\

We can now move to normed $\mathbb{K}$-amenability. We will reduce to the case of norm $1$ by means of the following lemma:

\begin{lemma}
\label{comp U p}

Let $G$ be a profinite group, and let $k \geq 0$ be such that $G$ is $p^{k+1}$-free but not $p^k$-free. Then there exists an open normal subgroup $U \leq G$ such that $U$ is $p$-free and $|[G : U]|_p = p^{-k}$.
\end{lemma}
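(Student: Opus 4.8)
The plan is to exhibit $U$ directly as a well-chosen open normal subgroup of $G$; the argument should use only Lagrange's theorem in finite quotients and the definition of $p^j$-freeness, so no Sylow theory or harmonic analysis will be needed.

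First I would find an open normal subgroup of $G$ whose index has $p$-part exactly $p^k$. Since $G$ is not $p^k$-free, there are open subgroups $V \leq U' \leq G$ with $[U':V]$ finite and divisible by $p^k$; as $G$ is profinite, Corollary \ref{cor_vd} provides an open normal subgroup $N \leq V$ of $G$. In the finite group $G/N$ one has $V/N \leq U'/N$ with $[U'/N : V/N] = [U':V]$, and this index divides $|G/N| = [G:N]$, so $p^k \mid [G:N]$. On the other hand $p^{k+1} \nmid [G:N]$, since otherwise the pair $N \leq G$ would witness that $G$ is not $p^{k+1}$-free. Hence $|[G:N]|_p = p^{-k}$.

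Next I would check that any such $N$ is automatically $p$-free, so that $U := N$ does the job. If $N$ were not $p$-free there would be an open subgroup $W \leq N$ with $p \mid [N:W]$ (finite, as $N$ is compact). Using Corollary \ref{cor_vd} again I would replace $W$ by a smaller subgroup that is open and normal in $G$; this only multiplies $[N:W]$ by an integer, so I may assume $W$ is open normal in $G$ and still $p \mid [N:W]$. Then $[G:W] = [G:N]\cdot[N:W]$ has $p$-part at least $p^k \cdot p = p^{k+1}$, so the pair $W \leq G$ would contradict $p^{k+1}$-freeness of $G$. Therefore $N$ is $p$-free, and setting $U := N$ gives an open normal subgroup of $G$ that is $p$-free with $|[G:U]|_p = p^{-k}$, as required.

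The argument is essentially routine; the only delicate bookkeeping is ensuring that the witnessing subgroup $W$ in the second step can be taken normal in $G$ (not merely in $N$), so that $(W,G)$ is a legitimate pair in the definition of $p^{k+1}$-freeness. I do not expect any genuine obstacle.
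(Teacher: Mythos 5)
Your proof is correct and follows essentially the same route as the paper's: pass to an open normal subgroup whose index in $G$ has $p$-part exactly $p^k$, then use multiplicativity of indices together with $p^{k+1}$-freeness to conclude that this subgroup is $p$-free. (The extra care about replacing $W$ by a subgroup normal in $G$ is unnecessary: any open subgroup of the open subgroup $N$ is already open in $G$, so the pair $W \leq G$ is legitimate in the definition of $p^{k+1}$-freeness as it stands.)
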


\begin{proof}
Since $G$ is not $p^k$-free, there exist open subgroups $V \leq U \leq G$ with $p^k \mid [U : V]$. Now $G$ is compact so $V$, being open, has finite index in $G$, and $p^k \mid [G : V]$ as well. Also $p^{k+1} \nmid [G : V]$, since $G$ is $p^{k+1}$-free. Up to passing to a finite-index subgroup, we may assume that $V$ is normal in $G$. Now if $W \leq W' \leq V$ are open subgroups of $V$, then they are also open subgroups of $G$. If $p \mid [W' : W]$, then $p^{k+1} = p \cdot p^k \mid [V : W] \cdot [G : V] = [G : W]$, which contradicts the assumption that $G$ is $p^{k+1}$-free. Therefore $V$ must be $p$-free.
\end{proof}

\begin{theorem}
\label{comp}
Let $G$ be a profinite group.
\begin{enumerate}
\item If $\chi(\mathbb{K}) = p > 0$, then $G$ is normed $\mathbb{K}$-amenable if and only if it is $p$-free, in which case it is $\mathbb{K}$-amenable of norm 1.
\item If $\chi(\mathbb{K}) = \chi(\mathfrak{r}) = 0$, then $G$ is $\mathbb{K}$-amenable of norm 1.
\item If $\chi(\mathbb{K}) = 0$ and $\chi(\mathfrak{r}) = p > 0$, then $G$ is normed $\mathbb{K}$-amenable if and only if it is $p^\mathbb{N}$-free, in which case $\| G \|_\mathbb{K} = \min \{ p^k \geq 1 \mid G \text{ is } p^{k+1}\text{-free}\}$.
\end{enumerate}
In all cases, $\| G \|_\mathbb{K}$ is attained.
\end{theorem}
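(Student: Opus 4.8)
The plan is to treat the three cases separately: Cases 1 and 2 are near-immediate consequences of Schikhof's compact-case Proposition together with Proposition \ref{amen p^N free}, while Case 3 — the only substantive one — is obtained by combining Lemma \ref{comp U p}, Theorem \ref{finite} and Proposition \ref{ext}. Throughout I use the elementary fact that $\|G\|_\mathbb{K} \geq 1$ for any normed $\mathbb{K}$-amenable $G$, since an invariant mean $m$ satisfies $1 = |m(\mathbbm{1}_G)|_\mathbb{K} \leq \|m\|_{op}$.

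For Case 1, $\chi(\mathbb{K}) = p > 0$ forces $\chi(\mathfrak{r}) = p$ (for $q < p$ the integer $q$ is a unit of $\mathbb{F}_p \subseteq \mathbb{K}$, so $|q|_\mathbb{K} = 1$, while $|p|_\mathbb{K} = 0$); Proposition \ref{amen p^N free}(1) shows a normed $\mathbb{K}$-amenable $G$ is $p$-free, while conversely Schikhof's Proposition supplies a norm-$1$ invariant mean when $G$ is $p$-free, so $\|G\|_\mathbb{K} = 1$ and it is attained. For Case 2, $G$ is automatically $0$-free, so Schikhof's Proposition again gives a norm-$1$ mean and $\|G\|_\mathbb{K} = 1$ is attained.

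For Case 3 ($\chi(\mathbb{K}) = 0$, $\chi(\mathfrak{r}) = p > 0$), let $j \geq 0$ be minimal with $G$ being $p^{j+1}$-free, so the asserted value is $N = p^j$. If $G$ is normed $\mathbb{K}$-amenable then Proposition \ref{amen p^N free}(2) shows $G$ is $p^\mathbb{N}$-free, so $j$ is well-defined; moreover if $j \geq 1$ then minimality makes $G$ not $p^j$-free, whence the contrapositive of the precise statement of Proposition \ref{amen p^N free}(2) yields $\|G\|_\mathbb{K} \geq p^j = N$ (for $j = 0$ this is just $\|G\|_\mathbb{K} \geq 1$). Conversely, suppose $G$ is $p^\mathbb{N}$-free: if $j = 0$ then $G$ is $p$-free and Schikhof gives a norm-$1$ mean, so $\|G\|_\mathbb{K} = 1 = N$; if $j \geq 1$, Lemma \ref{comp U p} provides an open normal subgroup $U \leq G$ which is $p$-free with $|[G:U]|_p = p^{-j}$, hence $U$ is profinite and $p$-free so $\|U\|_\mathbb{K} = 1$ by Schikhof, while $G/U$ is finite with $\|G/U\|_\mathbb{K} = |[G:U]|_p^{-1} = p^j$ by Theorem \ref{finite}(3), and Proposition \ref{ext} gives $\|G\|_\mathbb{K} \leq \|U\|_\mathbb{K}\cdot\|G/U\|_\mathbb{K} = p^j = N$. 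Thus $\|G\|_\mathbb{K} = N$. For attainment, feeding a norm-$1$ invariant mean on $U$ and the unique invariant mean on $G/U$ of norm $p^j$ (both of which exist, attainment being part of Theorem \ref{finite} and of Schikhof's statement) into the construction behind Proposition \ref{ext} produces an invariant mean on $G$ of operator norm $\leq p^j = \|G\|_\mathbb{K}$, so the infimum is realized; the cases $j = 0$ and Cases 1--2 are covered directly by the norm-$1$ mean. I anticipate no serious obstacle: the analytic substance is already in Schikhof's Proposition, Theorem \ref{finite} and the explicit construction behind Proposition \ref{ext}, so the remaining work is bookkeeping with $p$-adic valuations of indices — the one point needing care is that the subgroup $U$ from Lemma \ref{comp U p} is \emph{open normal}, exactly as required to invoke Proposition \ref{ext}.
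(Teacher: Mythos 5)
Your proof is correct and follows essentially the same route as the paper's: Cases 1 and 2 via Proposition \ref{amen p^N free} and Schikhof's compact-case proposition, and Case 3 by combining the lower bound from Proposition \ref{amen p^N free} with the upper bound obtained from Lemma \ref{comp U p}, Theorem \ref{finite} and Proposition \ref{ext}, with attainment coming from the constructive nature of the extension argument. The only cosmetic differences are that you treat $j=0$ separately (the paper runs Lemma \ref{comp U p} uniformly) and that you make explicit the observation that $\chi(\mathbb{K})=p>0$ forces $\chi(\mathfrak{r})=p$, which the paper leaves implicit.
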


\begin{proof}
$1.$ Suppose that $G$ is normed $\mathbb{K}$-amenable. By Proposition \ref{amen p^N free} we know that $G$ is $p$-free. So by Schikhof's Theorem it is $\mathbb{K}$-amenable of norm $1$, and $\| G \|_\mathbb{K}$ is attained. The converse follows again from Schikhof's Theorem. \\

$2.$ By Schikhof's Theorem, all profinite groups are $\mathbb{K}$-amenable of norm $1$, and $\| G \|_\mathbb{K}$ is attained. \\

$3.$ For the rest of the proof, let $p(G) := \min \{ p^k \mid G \text{ is } p^{k+1}\text{-free}\}$. In other words, $G$ is $p \cdot p(G)$-free but not $p(G)$-free. Proposition \ref{amen p^N free} states that $p(G) \leq \| G \|_\mathbb{K}$. 

Now let $G$ be $p^\mathbb{N}$-free, so let $k \geq 0$ be such that $p(G) = p^k$: $G$ is $p^{k+1}$-free but not $p^k$-free. By the previous lemma there exists an open normal subgroup $U \leq G$ that is $p$-free and such that $|[G : U]|_p = p^{-k}$. By Schikhof's Theorem, $U$ is $\mathbb{K}$-amenable of norm $1$; and $G/U$ is $\mathbb{K}$-amenable of norm $p^k$ by Theorem \ref{finite}. Thus $\| G \|_\mathbb{K} \leq \| U \|_\mathbb{K} \cdot \| G/U \|_\mathbb{K} = p^k = p(G)$ by Proposition \ref{ext}.

Finally, $\| U \|_\mathbb{K} = 1$ is attained by Schikhof's Theorem, $\| G/U \|_\mathbb{K}$ is attained by Theorem \ref{finite}, and the proof of Proposition \ref{ext} is constructive, so $\| G \|_\mathbb{K}$ is also attained.
\end{proof}

We end this section by proving the stronger version of Proposition \ref{ext} for compact groups:

\begin{corollary}
\label{extt comp}

Let $G$ be a profinite group, $H$ a closed normal subgroup of $G$. Then $\| G \|_\mathbb{K} = \| H \|_\mathbb{K} \cdot \| G/H \|_\mathbb{K}$.
\end{corollary}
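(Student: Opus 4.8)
The plan is to reduce everything, via Theorem \ref{comp}, to a single additivity statement about the quantity
$$ d_p(K) := \sup \{ \nu_p([K:V]) \mid V \leq K \text{ open} \} \in \{0,1,2,\dots\} \cup \{\infty\}, $$
defined for a profinite group $K$ (so that open subgroups have finite index). Since $\nu_p([U:V]) \leq \nu_p([K:V])$ whenever $V \leq U \leq K$ are open, one checks at once that this is the same as $\sup\{\nu_p([U:V]) \mid V \leq U \leq K \text{ open}\}$, and hence that $K$ is $p^{k+1}$-free exactly when $d_p(K) \leq k$. Reading off Theorem \ref{comp}: in the case $\chi(\mathbb{K}) = \chi(\mathfrak{r}) = 0$ every profinite group has norm $1$ and the claim is the identity $1 = 1 \cdot 1$; in the case $\chi(\mathbb{K}) = p > 0$ one has $\|K\|_\mathbb{K} = 1$ if $d_p(K) = 0$ and $\|K\|_\mathbb{K} = \infty$ otherwise; and in the case $\chi(\mathbb{K}) = 0$, $\chi(\mathfrak{r}) = p > 0$ one has $\|K\|_\mathbb{K} = p^{d_p(K)}$ with the convention $p^\infty = \infty$ (i.e.\ $\|K\|_\mathbb{K} = \infty$ precisely when $K$ is not $p^\mathbb{N}$-free). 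Thus in both non-trivial cases the corollary will follow immediately from the relation $d_p(G) = d_p(H) + d_p(G/H)$: in the third case it gives $\|G\|_\mathbb{K} = p^{d_p(G)} = p^{d_p(H)} p^{d_p(G/H)} = \|H\|_\mathbb{K}\|G/H\|_\mathbb{K}$ (the infinite cases being absorbed by $\infty \cdot c = \infty$ for $c \geq 1$), and in the second case it gives $d_p(G) = 0 \iff d_p(H) = d_p(G/H) = 0$, so that either all three norms equal $1$, or $\|G\|_\mathbb{K} = \infty$ and at least one of $\|H\|_\mathbb{K}, \|G/H\|_\mathbb{K}$ is $\infty$, whence both sides are $\infty$.

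For the inequality $d_p(G) \leq d_p(H) + d_p(G/H)$ I would take an arbitrary open subgroup $V \leq G$ and use that $HV$ is an open subgroup of $G$ (this is where normality of $H$ enters). The tower law gives $[G:V] = [G:HV]\cdot[HV:V]$, where $[HV:V] = [H : H\cap V]$ is the index of an open subgroup of $H$, so $\nu_p([HV:V]) \leq d_p(H)$, and $[G:HV] = [\, G/H : HV/H \,]$ is the index of an open subgroup of $G/H$, so $\nu_p([G:HV]) \leq d_p(G/H)$. Summing the valuations and taking the supremum over $V$ yields the inequality.

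For the reverse inequality $d_p(G) \geq d_p(H) + d_p(G/H)$, fix finite integers $a \leq d_p(H)$ and $b \leq d_p(G/H)$; it suffices to exhibit an open subgroup of $G$ whose index has $p$-valuation at least $a+b$, and then let $a,b$ grow. First choose an open normal subgroup $M \trianglelefteq G$ containing $H$ with $\nu_p([G:M]) \geq b$, namely the preimage in $G$ of a witnessing open (normal) subgroup of $G/H$. Next choose an open subgroup $W \leq H$ with $\nu_p([H:W]) \geq a$. Applying Lemma \ref{opinco} inside the t.d.l.c.\ group $M$ (with compact subgroup $H$ and open subgroup $W$) produces a compact open subgroup $U \leq M$ with $W = H \cap U$. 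Since $H \trianglelefteq M$, the product $HU$ is an open subgroup of $M$, and $[M:U] = [M:HU]\cdot[HU:U] = [M:HU]\cdot[H:W]$, so $\nu_p([M:U]) \geq a$. Hence $\nu_p([G:U]) = \nu_p([G:M]) + \nu_p([M:U]) \geq a+b$, and $U$ is open in $G$, as required.

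The only genuinely delicate point is this last step: a witness for $d_p(H)$ lives in the \emph{closed}, and in general non-open, subgroup $H$, so it cannot be used directly as an open subgroup of $G$. The remedy is Lemma \ref{opinco}, which thickens an open subgroup of $H$ to a compact open subgroup of $G$ (here, of $M$) meeting $H$ in it, together with the observation that normality of $H$ makes $HU$ an honest open subgroup lying above $U$, so that the $p$-part of $[H:W]$ reappears inside $[M:U]$. Everything else is the tower law for subgroup indices and the bookkeeping forced by the conventions $p^\infty = \infty$ and $\infty \cdot c = \infty$ for $c \geq 1$.
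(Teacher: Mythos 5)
Your proof is correct, and it takes a genuinely different route from the paper's. The paper also reduces to the algebraic characterization of Theorem \ref{comp}, but then argues at the level of the norm itself: for $\| G \|_p \leq \| H \|_p \cdot \| G/H \|_p$ it writes $\| G \|_p$ as a supremum of norms of finite quotients $G/U$ and uses multiplicativity for finite groups (Theorem \ref{finite}) together with Proposition \ref{quot}; for the reverse direction it invokes Lemma \ref{comp U p} to produce a $p$-free open normal $U$ realizing $\| G \|_p = \| G/U \|_p$, and then needs Lemma \ref{sub p free} and Propositions \ref{ext} and \ref{quot} to identify $\| HU/U \|_p = \| H \|_p$ and $\| G/HU \|_p = \| G/H \|_p$. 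You instead encode everything in the numerical invariant $d_p$ and prove the clean additivity $d_p(G) = d_p(H) + d_p(G/H)$ by bare index arithmetic: the upper bound is the tower law through $HV$ plus the product formula $[HV:V] = [H : H \cap V]$ (normality of $H$ making $HV$ a subgroup), and the lower bound thickens a witness $W \leq H$ to a compact open $U$ of the preimage $M$ via Lemma \ref{opinco}, exactly the delicate point you flag, so that the $p$-part of $[H:W]$ survives inside $[G:U]$. Your route bypasses Theorem \ref{finite}, Lemma \ref{comp U p}, Lemma \ref{sub p free} and the hereditary Propositions \ref{quot} and \ref{ext} (using Lemma \ref{opinco} directly, where the paper uses it only through Lemma \ref{sub p free}), at the mild cost of checking the bookkeeping with the conventions $p^\infty = \infty$ and $\infty \cdot c = \infty$, and of the (standard, but worth stating) observation that $H$ and $G/H$ are again profinite so that Theorem \ref{comp} applies to them. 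Both arguments ultimately rest on Theorem \ref{comp}; yours is the more elementary and self-contained for the hard inequality, while the paper's recycles its previously established structural results about the norm.
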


\begin{remark}
The finite case is clear by Theorem \ref{finite}.
\end{remark}

\begin{proof}
By Theorem \ref{comp}, it is enough to consider $\mathbb{K} = \mathbb{Q}_p$. Indeed, the first case coincides with $p$-adic amenability of norm $1$, in the second case there is nothing to prove, and the third case coincides with normed $p$-adic amenability. Note that we are letting $H$ be an arbitrary closed normal subgroup, therefore we are not allowed to appeal to Proposition \ref{ext} for one inequality. \\

We start by showing that $\| G \|_p \leq \| H \|_p \cdot \| G/H \|_p$. By Theorem \ref{comp}, we have $\| G \|_p = \sup\{ \| G/U \|_p \mid U \leq G \text{ open normal} \}$: since $G$ is compact, every open subgroup has finite index and it contains a finite-index open normal subgroup. Therefore we need to show that $\| G/U \|_p \leq \| H \|_p \cdot \| G/H \|_p$ for every such $U$.

So let $U \leq G$ be open and normal. Then $G/U$ is a finite group with normal subgroup $HU/U$ and quotient $G/HU$, so by Theorem \ref{finite} we have $\| G/U \|_p = \| HU/U \|_p \cdot \| G/HU \|_p$. On the one hand $HU/U \cong H/(H \cap U)$ is a quotient of $H$, and so by Proposition \ref{quot} we have $\| HU/U \|_p \leq \| H \|_p$. On the other hand $G/HU$ is a quotient of $G/H$ and so again by Proposition \ref{quot} we have $\| G/HU \|_p \leq \| G/H \|_p$. All in all $\| G/U \|_p \leq \| H \|_p \cdot \| G/H \|_p$. This being true for all $U$, we conclude. \\

Next, we show the equality $\| G \|_p = \| H \|_p \cdot \| G/H \|_p$. If $\| H \|_p$ or $\| G/H \|_p$ are infinite, then so is $\| G \|_p$: for $G/H$ this follows from Proposition \ref{quot}, and for $H$ this follows from Theorem \ref{comp} and Lemma \ref{sub p free}. Therefore we may assume that both $H$ and $G/H$ are normed $p$-adic amenable, and then by the previous paragraph $G$ is also normed $p$-adic amenable. By Proposition \ref{amen p^N free} it is $p^\mathbb{N}$-free. By Lemma \ref{comp U p} and Theorem \ref{comp}, there exists an open normal subgroup $U \leq G$ that is $p$-free and such that $\| G \|_p = \| G/U \|_p$. As in the previous paragraph, we have $\| G/U \|_p = \| HU/U \|_p \cdot \| G/HU \|_p$.

On the one hand $HU/U \cong H/(H \cap U)$. Now $H \cap U$ is a closed subgroup of $U$, which is $p$-free, so it is itself $p$-free by Lemma \ref{sub p free}. Therefore $\| H \cap U \|_p = 1$, and by Propositions \ref{ext} and \ref{quot}:
$$\| H \|_p \leq \| H \cap U \|_p \cdot \| H/(H \cap U) \|_p = \| H/(H \cap U)\|_p \leq \| H \|_p.$$
It follows that $\| HU/U\|_p = \| H \|_p$. On the other hand $G/HU \cong (G/H)/(HU/H)$. Now $HU/H \cong U/(H \cap U)$, and so $\| HU/H \|_p \leq \| U \|_p = 1$ by Proposition \ref{quot}. Therefore, by Propositions \ref{ext} and \ref{quot}:
$$\| G/H \|_p \leq \| HU/H \|_p \cdot \| G/HU \|_p = \| G/HU \|_p \leq \|G/H \|_p.$$
It follows that $\| G/HU \|_p = \| G/H \|_p$. So
$$\| G \|_p = \| G/U \|_p = \| HU/U \|_p \cdot \| G/HU \|_p = \| H \|_p \cdot \| G/H \|_p.$$
This concludes the proof.
\end{proof}

This result will be generalized to arbitrary t.d.l.c. groups in Corollary \ref{extt}.

\pagebreak

\section{Characterization of normed $p$-adic amenability}
\label{sequiv}

In this section we focus on $\mathbb{Q}_p$ and prove a more general version of Theorem \ref{main1}. We will prove that a group $G$ is normed $p$-adic amenable if and only if it is $p^\mathbb{N}$-free and locally elliptic, which is the statement of Theorem \ref{main1}. On the way we will use a $p$-adic analogue of Reiter's property, which has much stronger consequences than the one over the reals \cite[2.6.B]{Pier}. Indeed, almost-invariance with an error smaller than $1$ implies true invariance. This will imply that a normed $p$-adic amenable group is locally elliptic, the most surprising part of Theorem \ref{main1}. \\

We can now state the main theorem of this section:

\begin{theorem}
\label{equiv}

Let $G$ be such that $p \notin \mathbb{L}(G)$. Consider the following statements:
\begin{enumerate}
\item $G$ is normed $p$-adic amenable.
\item $G$ satisfies \emph{Reiter's property}: for every compact subset $K \subset G$ and every $\varepsilon > 0$, there exists $f \in C_{00}(G, \mathbb{Q}_p)$ such that $\| f \|_\infty = 1$ and $\| g \cdot f - f \|_\infty < \varepsilon$ for all $g \in K$.
\item $G$ satisfies Reiter's property for $\varepsilon = 1$.
\item $G$ is locally elliptic.
\end{enumerate}
Then $1. \Rightarrow 2. \Rightarrow 3. \Rightarrow 4.$
\end{theorem}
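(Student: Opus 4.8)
The three implications would be handled separately, with $2 \Rightarrow 3$ trivial, $1 \Rightarrow 2$ a technically delicate adaptation of a classical argument, and $3 \Rightarrow 4$ the conceptual novelty but short.

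\emph{$2 \Rightarrow 3$.} This is immediate: Reiter's property for all $\varepsilon > 0$ holds in particular for $\varepsilon = 1$.

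\emph{$1 \Rightarrow 2$.} The plan is to run the classical argument producing almost-invariant $L^1$-vectors from an invariant mean. Since $p \notin \mathbb{L}(G)$ there is a $p$-adic Haar integral $f \mapsto \int f(x)\,dx$ (Theorem \ref{Haar}), and pairing against it embeds the affine set $\mathcal{D} := \{\phi \in C_{00}(G,\mathbb{Q}_p) : \int \phi(x)\,dx = 1\}$ into $C_b(G,\mathbb{Q}_p)^*$. The first step is to show an invariant normed $\mathbb{Q}_p$-mean $m$ lies in the weak-$*$ closure of $\mathcal{D}$: if not, a Hahn--Banach separation --- legitimate because $\mathbb{Q}_p$, being a local field, is spherically complete --- yields $\lambda \in C_b(G,\mathbb{Q}_p)$ pairing trivially with every mean-zero compactly supported function but not pairing with $m$ as with $\mathcal{D}$; testing $\lambda$ against differences $\mathbbm{1}_{xU} - \mathbbm{1}_{yU}$ of left translates of indicators of small compact open subgroups $U$, and using that $p \notin \mathbb{L}(G)$ forces the Haar volumes $|\int \mathbbm{1}_U(x)\,dx|_p$ to remain bounded below (eventually constant on a $p$-free compact open subgroup), one concludes $\lambda$ is constant, hence $m(\lambda) = \int \lambda(x)\phi(x)\,dx$ for all $\phi \in \mathcal{D}$, a contradiction. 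Once $m$ is so approximated, its honest $G$-invariance transfers, for any finitely many $g \in K$ and any $\varepsilon$, to weak $\varepsilon$-invariance of some density; since the image of $\mathcal{D}$ under $\phi \mapsto (g\cdot\phi - \phi)_g$ is an affine subset of a finite Cartesian power of $C_{00}(G,\mathbb{Q}_p)$, and over a spherically complete field a norm-closed affine subspace is weakly closed (the non-Archimedean substitute for Mazur's theorem), this upgrades to norm $\varepsilon$-invariance. A bookkeeping step passes from finite to compact $K$ using the norm-one conditional expectation onto right-$U$-invariant functions for a $p$-free compact open subgroup $U$, which commutes with left translation; and since any nonzero $\phi \in C_{00}(G,\mathbb{Q}_p)$ has $\|\phi\|_\infty \in p^{\mathbb{Z}}$, rescaling gives sup-norm $1$. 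The compact case is anyway trivial, taking $\phi = \mathbbm{1}_G$.

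\emph{$3 \Rightarrow 4$.} Let $K \subseteq G$ be compact and $f \in C_{00}(G,\mathbb{Q}_p)$ with $\|f\|_\infty = 1$ and $\|g\cdot f - f\|_\infty < 1$ for all $g \in K$. Then $f(G) \subseteq \mathbb{Z}_p$ with $f(x_0) \in \mathbb{Z}_p^\times$ for some $x_0$, and reducing modulo $p$ gives $\bar f : G \to \mathbb{F}_p$, continuous (hence locally constant, as $\mathbb{F}_p$ is discrete) with compact support and $\bar f \not\equiv 0$. The bound $\|g\cdot f - f\|_\infty < 1$ says exactly $f(g^{-1}x) \equiv f(x) \pmod{p}$ for all $x$, i.e.\ $\bar f(g^{-1}x) = \bar f(x)$; so $\bar f$ is invariant under left translation by every $g \in K$. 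Put $c := \bar f(x_0) \in \mathbb{F}_p^\times$ and $S := \bar f^{-1}(c)$: a nonempty clopen subset of $G$ contained in $\mathrm{supp}(f)$, hence compact, with $gS = S$ for every $g \in K$, so $hS = S$ for every $h$ in $H := \langle K\rangle$. Fixing $s_0 \in S$ gives $Hs_0 \subseteq S$, i.e.\ $H \subseteq Ss_0^{-1}$; therefore $\overline{H}$ is a closed subgroup contained in the compact set $Ss_0^{-1}$, hence compact, and it contains $K$. As every compact (in particular every finite) subset of $G$ is thus contained in a compact subgroup, $G$ is locally elliptic.

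The main obstacle is the functional analysis in $1 \Rightarrow 2$: one must place the invariant mean inside the weak-$*$ closure of the normalized densities --- precisely where the hypothesis $p \notin \mathbb{L}(G)$ enters, via the lower bound on Haar volumes --- and then navigate the non-reflexivity of these function spaces to pass from weak to norm almost-invariance. By contrast $3 \Rightarrow 4$, although the genuinely new input (``error smaller than $1$ forces true invariance''), costs only a reduction modulo $p$ and a one-line compactness argument.
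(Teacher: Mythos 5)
Your proofs of $2 \Rightarrow 3$ and $3 \Rightarrow 4$ are correct. The latter is essentially the paper's argument in a mod-$p$ disguise: the paper works with the subgroup $H = \{ g \mid \| g \cdot f - f \|_\infty < 1 \}$ and the level set $S = \{ s \mid |f(s)|_p = 1 \}$ and shows $HS \subseteq S$ via Lemma \ref{ultrametric cor}; your $\bar f^{-1}(c)$ and $\langle K \rangle$ play the same roles, and the conclusion $\overline{\langle K \rangle} \subseteq Ss_0^{-1}$ is sound.

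The implication $1 \Rightarrow 2$, however, has a genuine gap at the weak-to-norm upgrade. The approximation of $m$ by densities that you obtain in the first step lives in the topology $\sigma(C_{00}(G,\mathbb{Q}_p), \Lambda)$, where $\Lambda \subseteq C_{00}(G,\mathbb{Q}_p)^*$ is the image of $C_b(G,\mathbb{Q}_p)$ under the Haar pairing $\lambda \mapsto \int \cdot\, \lambda\, dx$. Your ``non-Archimedean Mazur'' step --- a norm-closed affine subspace is weakly closed, which over a spherically complete field does follow from Ingleton's Hahn--Banach theorem --- concerns the full weak topology $\sigma(C_{00}, C_{00}^*)$ for the sup norm. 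For non-discrete $G$ these topologies genuinely differ: $\Lambda$ is a \emph{proper} subspace of $C_{00}(G,\mathbb{Q}_p)^*$. For instance, for $G = \mathbb{Z}_\ell$ with $\ell \neq p$ the Dirac functional $\delta_0$ is not of the form $\int \cdot\, \lambda\, dx$, since testing against $\mathbbm{1}_{\ell^k \mathbb{Z}_\ell}$ would force $\ell^{-k}\lambda(0) \to 1$ in $\mathbb{Q}_p$, which is impossible. Consequently, knowing that $0$ lies in the $\sigma(C_{00},\Lambda)$-closure of $T(\mathcal{D})$ does not place it in the weak closure, and the Hahn--Banach functional separating $0$ from the norm-closure of $T(\mathcal{D})$ need not lie in $\Lambda$ --- that is, it need not be something you can feed back to the mean. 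This is precisely the difficulty that the paper's Lemma \ref{lemma reiter} is engineered to circumvent: there the Hahn--Banach extension $\phi$ is turned into an honest element of $C_b(G, \mathbb{Q}_p)$ by the formula $\lambda_f(x) = \phi(x^{-1} \cdot f)$, whose uniform continuity is extracted from the compact support of $f$; only then is the mean applied and compared with the Haar integral via the uniqueness of the latter. (Over $\mathbb{R}$ this issue is invisible because the argument runs in $L^1(G)$, whose dual really is $L^\infty(G)$; there is no $\ell^1$-type norm in the non-Archimedean setting, and the sup-norm dual of $C_{00}$ is a space of measures, not $C_b$.)

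A second, smaller problem is the finite-to-compact reduction. Projecting onto \emph{right}-$U$-invariant functions does commute with left translation, but for $g = a_i u$ with $u \in U$ you must control $\| u \cdot f - f \|_\infty$, which requires \emph{left}-$U$-invariance $f(ux) = f(x)$; the projection onto left-$U$-invariant functions does not commute with left translation, so the reduction as stated does not close. The paper builds left-$U$-invariance in from the start by taking $f = \mathbbm{1}_U * \mu_1 * \cdots * \mu_n$: right convolution by finitely supported measures preserves left-$U$-invariance, so the cases $g \in U$ and $g = a_i u$ are handled simultaneously by the inductive choice of the $\mu_i$.
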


\subsection{Proof of Theorem \ref{equiv}}

Let $G$ be such that $p \notin \mathbb{L}(G)$. By Theorem \ref{Haar}, there exists a $p$-adic Haar integral $f \mapsto \int f(x)dx$ on $G$. Denote by $\Delta$ the corresponding modular function. We fix these objects throughout this subsection. \\

The \underline{proof of $1. \Rightarrow 2.$} will follow closely Schikhof's approach \cite[3.4 - 3.6]{Schik}. The main difference is that in this case we are not able to move directly from $\mathbb{Q}_p$ to $\mathbb{F}_p$ (and Theorem \ref{main sc} shows that this is not possible), and so instead of invariance, for Reiter's property we only get almost-invariance (invariance will hold by Theorem \ref{Reiter}). We need to introduce the tools of \emph{convolution}, which will allow us to find the almost-invariant functions in Reiter's property. \\

For $g \in G$ let $\delta_g : C_b(G, \mathbb{Q}_p) \to \mathbb{Q}_p : f \mapsto f(g)$ be the evaluation map. Then $\delta_g \in C_b(G, \mathbb{Q}_p)^*$. Let $E(G)$ be the span of $\{ \delta_g \}_{g \in G}$, which is a (not necessarily closed) linear subspace of $C_b(G, \mathbb{Q}_p)^*$. As usual, $G$ acts on $C_{00}(G, \mathbb{Q}_p)$ by $(g \cdot f)(x) = f(g^{-1}x)$.

Let $f \in C_{00}(G, \mathbb{Q}_p)$ and let $\mu = \sum \lambda_i \delta_{x_i} \in E(G)$. We define the involutions:
$$f'(x) = f(x^{-1}) \Delta(x^{-1}); \,\,\,\,\,\,\,\,\,\, \mu' = \sum \lambda_i \delta_{x_i^{-1}}.$$

\begin{lemma}
\label{f f' mu mu'}

For all $f \in C_{00}(G, \mathbb{Q}_p)$, we have $\| f \|_\infty = \| f' \|_\infty$ and $\int f(x) dx = \int f'(x) dx$.

For all $\mu \in E(G)$, we have $\mu(\mathbbm{1}_G) = \mu'(\mathbbm{1}_G)$.
\end{lemma}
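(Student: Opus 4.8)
The plan is to verify the three assertions separately, each by a short direct computation that draws on the properties of the $p$-adic modular function established above.

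First I would handle $\|f\|_\infty = \|f'\|_\infty$. The key input is Theorem \ref{mod fct}, which says that $\Delta$ takes values in $\mathbb{Z}_p^\times$, so $|\Delta(y)|_p = 1$ for every $y \in G$. Hence for each $x \in G$ we have $|f'(x)|_p = |f(x^{-1})|_p \cdot |\Delta(x^{-1})|_p = |f(x^{-1})|_p$, and taking the supremum over $x$ (equivalently over $x^{-1}$, since inversion is a bijection of $G$) yields $\|f'\|_\infty = \|f\|_\infty$. Along the way I would record that $f'$ is indeed an element of $C_{00}(G, \mathbb{Q}_p)$: it is continuous as a composition of inversion, $f$, $\Delta$, and multiplication in $\mathbb{Q}_p$, and since $\Delta$ never vanishes its support equals $S^{-1}$, where $S$ is the (compact) support of $f$. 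This last point is what makes $\int f'(x)\,dx$ meaningful.

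For the identity $\int f(x)\,dx = \int f'(x)\,dx$ there is essentially nothing to prove: since $f'(x) = f(x^{-1})\Delta(x^{-1})$ by definition, this is precisely the content of Lemma \ref{int f'}.

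Finally, for $\mu(\mathbbm{1}_G) = \mu'(\mathbbm{1}_G)$, writing $\mu = \sum_i \lambda_i \delta_{x_i}$ we get $\mu(\mathbbm{1}_G) = \sum_i \lambda_i \mathbbm{1}_G(x_i) = \sum_i \lambda_i$, and likewise $\mu'(\mathbbm{1}_G) = \sum_i \lambda_i \mathbbm{1}_G(x_i^{-1}) = \sum_i \lambda_i$, because $\mathbbm{1}_G$ is the constant function $1$. The only subtlety worth a remark is that $\mu'$ is well defined, i.e. does not depend on the chosen expression of $\mu$ as a linear combination of Dirac masses; this follows from the linear independence of $\{\delta_g\}_{g \in G}$ in $C_b(G,\mathbb{Q}_p)^*$, which one checks by separating finitely many points of $G$ using characteristic functions of cosets of a sufficiently small compact open subgroup (Van Danzig). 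None of the steps is a genuine obstacle; the computation of $\|f'\|_\infty$ together with the routine verification that $f' \in C_{00}(G,\mathbb{Q}_p)$ is the only place that needs more than one line.
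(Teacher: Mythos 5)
Your proof is correct and follows the same route as the paper: the norm identity from $|\Delta|_p \equiv 1$ (Theorem \ref{mod fct}), the integral identity as a direct citation of Lemma \ref{int f'}, and the third assertion by computing $\sum_i \lambda_i$ on both sides. The extra remarks on $f' \in C_{00}(G, \mathbb{Q}_p)$ and the well-definedness of $\mu'$ are sensible additions but not a different argument.
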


\begin{proof}
The first statement follows from Theorem \ref{mod fct}: the modular function satisfies $|\Delta(x)|_p = 1$ for all $x \in G$. The second statement is the content of Lemma \ref{int f'}. The third statement is a direct calculation: if $\mu = \sum \lambda_i \delta_{x_i}$, then $\mu(\mathbbm{1}_G) = \sum \lambda_i$.
\end{proof}

Next we define convolution between elements of $E(G)$ and compactly supported functions. For $\mu = \sum \lambda_i \delta_{x_i} \in E(G)$ and $f \in C_{00}(G, \mathbb{Q}_p)$, define:
$$(\mu * f)(x) = \sum \lambda_i f(x_i^{-1} x); \,\,\,\,\,\,\,\,\,\, (f * \mu)(x) = \sum \lambda_i f(x x_i^{-1})\Delta(x_i^{-1}).$$

\begin{lemma}
\label{conv f mu}

Convolution gives well-defined bilinear maps $E(G) \times C_{00}(G, \mathbb{Q}_p) \to C_{00}(G, \mathbb{Q}_p)$ and $C_{00}(G, \mathbb{Q}_p) \times E(G) \to C_{00}(G, \mathbb{Q}_p)$. Moreover, the following hold for all $\mu \in E(G)$ and all $f \in C_{00}(G, \mathbb{Q}_p)$:
$$(\mu * f)' = f' * \mu'; \,\,\,\,\,\,\,\,\,\, (g \cdot f) * \mu = g \cdot (f * \mu); \,\,\,\,\,\,\,\,\,\, \| \mu * f \|_\infty, \| f * \mu \|_\infty \leq |\mu(\mathbbm{1}_G)|_p \cdot \| f \|_\infty;$$
$$\int (\mu * f)(x) dx = \int (f * \mu)(x) dx = \mu(\mathbbm{1}_G) \int f(x)dx;$$
\end{lemma}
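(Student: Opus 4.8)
The statement bundles several independent assertions, and the plan is to verify them in the order listed: (i) $\mu * f$ and $f * \mu$ are well-defined elements of $C_{00}(G,\mathbb{Q}_p)$ and convolution is bilinear in each variable; (ii) the involution identity $(\mu * f)' = f' * \mu'$; (iii) the equivariance $(g\cdot f)*\mu = g\cdot(f*\mu)$; (iv) the two norm estimates; (v) the two integration formulas.

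For (i), write $\mu = \sum_i \lambda_i \delta_{x_i}$; then $\mu * f = \sum_i \lambda_i (x_i\cdot f)$ is a finite $\mathbb{Q}_p$-linear combination of left translates of $f$, and $f*\mu = \sum_i \lambda_i \Delta(x_i^{-1})\,(x\mapsto f(xx_i^{-1}))$ a finite combination of right translates. Since $C_{00}(G,\mathbb{Q}_p)$ is a $\mathbb{Q}_p$-vector space stable under left and right translation (translation is a homeomorphism, and a finite union of compacta is compact), both convolutions land in $C_{00}(G,\mathbb{Q}_p)$. One must also observe that the result does not depend on how $\mu$ is written as a combination of Dirac functionals: the family $\{\delta_g\}_{g\in G}$ is linearly independent in $C_b(G,\mathbb{Q}_p)^*$, since distinct points of the Hausdorff totally disconnected group $G$ are separated by characteristic functions of clopen sets; hence $\{\delta_g\}_{g \in G}$ is a basis of $E(G)$, and the two convolution maps are just the unique bilinear extensions of $(\delta_g,f)\mapsto g\cdot f$ and $(f,\delta_g)\mapsto \Delta(g^{-1})\,(x\mapsto f(xg^{-1}))$. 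Bilinearity is then automatic from the formulas.

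Parts (ii) and (iii) are direct computations. For (ii), the left-hand side evaluated at $x$ is $\big(\sum_i \lambda_i f(x_i^{-1}x^{-1})\big)\Delta(x^{-1})$, while expanding $f'*\mu'$ and using $f'(y)=f(y^{-1})\Delta(y^{-1})$ together with the homomorphism property of $\Delta$ (Theorem \ref{mod fct}) yields the same expression once the factors $\Delta(x_i^{-1})$ and $\Delta(x_i)$ cancel. For (iii), both sides evaluated at $x$ equal $\sum_i \lambda_i f(g^{-1}xx_i^{-1})\Delta(x_i^{-1})$. For (iv), the norm estimates come from the ultrametric inequality applied to these finite sums, using that $|\Delta(y)|_p = 1$ for all $y$ (Theorem \ref{mod fct}) so that the modular weights appearing in $f*\mu$ do not affect absolute values. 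For (v), linearity and left-invariance of the Haar integral give $\int(\mu*f)(x)\,dx = \sum_i \lambda_i\int(x_i\cdot f)(x)\,dx = \big(\sum_i\lambda_i\big)\int f(x)\,dx = \mu(\mathbbm{1}_G)\int f(x)\,dx$, while using instead the right-translation formula $\int f(xg^{-1})\,dx = \Delta(g)\int f(x)\,dx$ gives $\int(f*\mu)(x)\,dx = \sum_i\lambda_i\Delta(x_i^{-1})\Delta(x_i)\int f(x)\,dx = \mu(\mathbbm{1}_G)\int f(x)\,dx$.

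None of these steps is genuinely difficult; the points deserving care are the bookkeeping of the modular-function factors — in particular getting the reversed order in $(\mu*f)' = f'*\mu'$ right and checking the cancellation in the formula for $\int f*\mu$ — and, more conceptually, the verification that convolution is well-defined on all of $E(G)$, which rests on the linear independence of the evaluation functionals and hence on the total disconnectedness of $G$.
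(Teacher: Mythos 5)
Most of your proposal is correct and, where it overlaps with the paper, runs along the same lines: the paper's proof writes out only the identity $(\mu * f)' = f' * \mu'$ (precisely your cancellation of the modular factors, using that $\Delta$ is a homomorphism with abelian target) and then dismisses the remaining claims, observing that via this identity and Lemma \ref{f f' mu mu'} any statement proved for $\mu * f$ transfers to $f * \mu$. You instead verify each item for both convolutions directly, and you add a well-definedness argument (linear independence of the $\delta_g$, separated by characteristic functions of clopen sets) that the paper leaves implicit; your computations for the equivariance and for the two integral formulas are correct.

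There is, however, a genuine problem with your step (iv). The ultrametric inequality applied to $(\mu * f)(x) = \sum_i \lambda_i f(x_i^{-1}x)$ yields $\| \mu * f \|_\infty \leq \max_i |\lambda_i|_p \cdot \| f \|_\infty$, not $|\mu(\mathbbm{1}_G)|_p \cdot \| f \|_\infty$: since $\mu(\mathbbm{1}_G) = \sum_i \lambda_i$, one only has $|\mu(\mathbbm{1}_G)|_p \leq \max_i |\lambda_i|_p$, and this can be strict. So the bound as printed does not follow from your argument; in fact it fails in general: for $\mu = \delta_{x_1} - \delta_{x_2}$ and $f = \mathbbm{1}_U$ with $U$ a compact open subgroup and $x_1 U \neq x_2 U$, one has $\mu(\mathbbm{1}_G) = 0$ while $\mu * f = \mathbbm{1}_{x_1 U} - \mathbbm{1}_{x_2 U} \neq 0$. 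What your ultrametric argument does prove (for both $\mu * f$ and $f * \mu$, using $|\Delta|_p \equiv 1$) is the estimate with $\max_i |\lambda_i|_p$ — equivalently the operator norm of $\mu$ on $C_b(G, \mathbb{Q}_p)$ — in place of $|\mu(\mathbbm{1}_G)|_p$. The paper's proof is silent on this item (``the rest is easy to show''), but the inequality in the stated form, with $|\mu(\mathbbm{1}_G)|_p$, is what gets invoked later in the proof of Theorem \ref{equiv}, where the $\mu_j$ are only normalized by $\mu_j(\mathbbm{1}_G) = 1$ and carry no bound on their coefficients; so you should not assert that the printed inequality is a consequence of the ultrametric inequality — prove the corrected bound and flag the discrepancy explicitly rather than glossing over it.
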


\begin{proof}
For the first equality, let $\mu = \sum \lambda_i \delta_{x_i}$. Then, using that $\Delta$ is a homomorphism with abelian target, for all $x \in G$:
$$(f' * \mu')(x) = \sum \lambda_i f'(x x_i) \Delta(x_i) = \sum \lambda_i f(x_i^{-1} x^{-1}) \Delta(x_i^{-1} x^{-1}) \Delta(x_i) = $$
$$ = \sum \lambda_i f(x_i^{-1} x^{-1}) \Delta(x^{-1}) = (\mu * f)(x^{-1}) \Delta(x^{-1}) = (\mu * f)'(x).$$
The rest is easy to show. Notice that by using the first equality together with Lemma \ref{f f' mu mu'}, it suffices to prove a statement for $\mu * f$ to obtain the analogous one for $f * \mu$.
\end{proof}

Next we define convolution between two elements of $E(G)$. For $\mu = \sum \lambda_i \delta_{x_i}$ and $\nu = \sum \tau_i \delta_{y_j}$ define
$$(\mu * \nu) = \sum \lambda_i \tau_j \delta_{x_i y_j} : C_{00}(G, \mathbb{Q}_p) \to \mathbb{Q}_p : f \mapsto \sum \lambda_i \tau_j f(x_i y_j).$$

The following is straightforward:

\begin{lemma}
\label{conv mu nu}

Convolution gives a well-defined bilinear map $E(G) \times E(G) \to E(G)$. Moreover, the following hold for all $\mu, \nu, \xi \in E(G)$ and all $f \in C_{00}(G, \mathbb{Q}_p)$:
$$(\mu * \nu)(\mathbbm{1}_G) = \mu(\mathbbm{1}_G) \nu(\mathbbm{1}_G); \,\,\,\,\,\,\,\, \mu * (\nu * \xi) = (\mu * \nu) * \xi;$$
$$(\mu * \nu) * f = \mu * (\nu * f); \,\,\,\,\,\,\,\, f * (\mu * \nu) = (f * \mu) * \nu.$$
\end{lemma}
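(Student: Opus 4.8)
The plan is to identify $E(G)$ with the group algebra $\mathbb{Q}_p[G]$, under which convolution of elements of $E(G)$ becomes the algebra multiplication; then well-definedness, bilinearity, the fact that the product lands in $E(G)$, and associativity are all inherited for free, and the three displayed identities are one-line expansions.

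First I would check that the evaluation functionals $\{\delta_g\}_{g \in G}$ are linearly independent in $C_b(G, \mathbb{Q}_p)^*$. Given finitely many distinct points $x_1, \dots, x_n \in G$, van Danzig's Theorem provides pairwise disjoint clopen neighbourhoods of them, and the characteristic functions of these sets are continuous and bounded; evaluating a putative relation $\sum c_i \delta_{x_i} = 0$ on these forces all $c_i = 0$. Hence the linear map $\mathbb{Q}_p[G] \to E(G)$, $g \mapsto \delta_g$, is a vector space isomorphism. The defining formula $\left(\sum \lambda_i \delta_{x_i}\right) * \left(\sum \tau_j \delta_{y_j}\right) = \sum \lambda_i \tau_j \delta_{x_i y_j}$ matches the product $\left(\sum \lambda_i x_i\right)\left(\sum \tau_j y_j\right) = \sum \lambda_i \tau_j (x_i y_j)$ in $\mathbb{Q}_p[G]$ under this isomorphism. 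In particular $*$ is well defined (independent of the chosen representations as finite sums of Diracs), bilinear, takes values in $E(G)$, and is associative, $\mu * (\nu * \xi) = (\mu * \nu) * \xi$, because multiplication in $G$ is.

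For the remaining identities I would simply expand both sides. Since $\mathbbm{1}_G(x_i y_j) = 1$ for all $i,j$, one gets $(\mu * \nu)(\mathbbm{1}_G) = \sum_{i,j} \lambda_i \tau_j = \left(\sum_i \lambda_i\right)\left(\sum_j \tau_j\right) = \mu(\mathbbm{1}_G)\,\nu(\mathbbm{1}_G)$, using the computation of $\mu(\mathbbm{1}_G)$ recorded in Lemma \ref{f f' mu mu'}. For $(\mu * \nu) * f = \mu * (\nu * f)$, evaluating either side at $x$ yields $\sum_{i,j} \lambda_i \tau_j\, f(y_j^{-1} x_i^{-1} x)$, using associativity in $G$ to write $(x_i y_j)^{-1} x = y_j^{-1} x_i^{-1} x$. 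For $f * (\mu * \nu) = (f * \mu) * \nu$, both sides at $x$ equal $\sum_{i,j} \lambda_i \tau_j\, f(x y_j^{-1} x_i^{-1})\,\Delta(x_i^{-1})\,\Delta(y_j^{-1})$, where one additionally invokes that $\Delta$ is a homomorphism with abelian target, so $\Delta((x_i y_j)^{-1}) = \Delta(y_j^{-1})\Delta(x_i^{-1}) = \Delta(x_i^{-1})\Delta(y_j^{-1})$ (this is exactly the place where the right-translation factors $\Delta$ are made to cooperate).

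There is no genuine obstacle here; the only point deserving a moment's care is the well-definedness of $*$, i.e. that $\mu * \nu$ does not depend on how $\mu$ and $\nu$ are expressed as linear combinations of Diracs, and this is precisely what the linear independence of the $\delta_g$ delivers. One could bypass the group algebra entirely and argue directly that $(\mu * \nu)(f) = \sum_{i,j} \lambda_i \tau_j\, f(x_i y_j)$ is unchanged under collecting the terms with equal $x_i y_j$, but the group-algebra viewpoint makes associativity transparent, so that is the route I would take.
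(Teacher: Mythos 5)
Your proof is correct; the paper itself gives no argument here (it simply labels the lemma ``straightforward''), and your verification is exactly the routine one the author had in mind: the identification $E(G)\cong\mathbb{Q}_p[G]$ (justified by separating finitely many points with disjoint clopen cosets of a small compact open subgroup) handles well-definedness and associativity, and the remaining identities are the direct expansions you give, with the commutativity of $\mathbb{Q}_p^\times$ entering only in $f*(\mu*\nu)=(f*\mu)*\nu$ via $\Delta((x_iy_j)^{-1})=\Delta(x_i^{-1})\Delta(y_j^{-1})$.
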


We are now ready to prove the key technical lemma that will imply that normed $p$-adic amenable groups satisfy Reiter's property.

\begin{lemma}
\label{lemma reiter}

Let $G$ be normed $p$-adic amenable, and let $f_0 \in C_{00}(G, \mathbb{Q}_p)$ be such that $\int f_0(x) dx = 0$. Then $\inf\limits_{\mu(\mathbbm{1}_G) = 1} \| f_0 * \mu \|_\infty = 0$.
\end{lemma}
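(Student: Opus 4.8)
The strategy is to argue by contradiction, using the left-invariant mean to manufacture a $p$-adic Haar integral and then invoking its uniqueness.

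I would first reduce to left convolution. By Lemma~\ref{conv f mu} we have $f_0 * \mu = (\mu' * f_0')'$, which by Lemma~\ref{f f' mu mu'} has the same sup-norm as $\mu' * f_0'$; moreover $\mu \mapsto \mu'$ permutes $\{\mu \in E(G) : \mu(\mathbbm{1}_G) = 1\}$ and $\int f_0'(x)\,dx = \int f_0(x)\,dx = 0$. So it is enough to prove that $\inf_{\nu(\mathbbm{1}_G) = 1} \|\nu * g_0\|_\infty = 0$ for every $g_0 \in C_{00}(G, \mathbb{Q}_p)$ with $\int g_0(x)\,dx = 0$. Writing $\nu = \sum_i \lambda_i \delta_{x_i}$ we have $\nu * g_0 = \sum_i \lambda_i (x_i \cdot g_0)$, so $\{\nu * g_0 : \nu(\mathbbm{1}_G) = 1\}$ is precisely the affine subspace $g_0 + W$ with $W := \mathrm{span}\{x \cdot g_0 - g_0 : x \in G\} \subseteq C_{00}(G,\mathbb{Q}_p)$; hence the infimum equals $\mathrm{dist}(g_0, \overline{W})$, and the goal becomes $g_0 \in \overline{W}$.

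Assuming toward a contradiction that $g_0 \notin \overline{W}$: since $\mathbb{Q}_p$ is a local field, hence spherically complete, the Hahn--Banach Theorem furnishes a bounded linear $\phi : C_{00}(G,\mathbb{Q}_p) \to \mathbb{Q}_p$ with $\phi|_{\overline{W}} = 0$ and $\phi(g_0) = 1$; in particular $\phi(x \cdot g_0) = \phi(g_0) = 1$ for all $x \in G$. Let $m$ be a left-invariant normed $\mathbb{Q}_p$-mean on $G$ (it exists since $G$ is normed $p$-adic amenable) and set
$$\Psi : C_{00}(G,\mathbb{Q}_p) \longrightarrow \mathbb{Q}_p, \qquad \Psi(h) := m\big( x \mapsto \phi(x^{-1} \cdot h) \big).$$
For $h \in C_{00}(G,\mathbb{Q}_p)$ the map $x \mapsto x^{-1} \cdot h$ is continuous into $(C_b(G,\mathbb{Q}_p), \|\cdot\|_\infty)$ — a continuous compactly supported function being uniformly continuous — and has norm $\|h\|_\infty$, so $x \mapsto \phi(x^{-1}\cdot h)$ lies in $C_b(G,\mathbb{Q}_p)$ and $\Psi$ is a well-defined bounded linear functional with $\|\Psi\|_{op} \le \|m\|_{op}\,\|\phi\|_{op}$. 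It is left-invariant: since $x^{-1} \cdot (w \cdot h) = (w^{-1}x)^{-1} \cdot h$, the function $x \mapsto \phi\big(x^{-1}\cdot(w\cdot h)\big)$ is the left-translate by $w$ of $x \mapsto \phi(x^{-1}\cdot h)$, and $m$ is left-invariant. Finally $\Psi(g_0) = m\big(x \mapsto \phi(x^{-1}\cdot g_0)\big) = m(\mathbbm{1}_G) = 1 \ne 0$, so $\Psi$ is a nonzero bounded left-invariant functional on $C_{00}(G,\mathbb{Q}_p)$, i.e.\ a $p$-adic Haar integral on $G$ (recall $p \notin \mathbb{L}(G)$). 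By the uniqueness clause of Theorem~\ref{Haar}, $\Psi = \lambda \int$ for some $\lambda \in \mathbb{Q}_p \setminus \{0\}$, whence $1 = \Psi(g_0) = \lambda \int g_0(x)\,dx = 0$, a contradiction. Therefore $g_0 \in \overline{W}$, which is what we wanted.

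The point I expect to be the crux is the decision to pass to the left-convolution form before introducing the mean. If one ran the argument directly on $f_0 * \mu$, the left translation that must commute with the averaging over $G$ would end up inside $\phi$, which is not $G$-equivariant, and a left-invariant mean would not help; working with $\nu * g_0$ is exactly what lets $m$ absorb the translation. The subsidiary verifications — that $x \mapsto \phi(x^{-1}\cdot h)$ is continuous and bounded, and that spherical completeness of $\mathbb{Q}_p$ legitimizes Hahn--Banach — are routine.
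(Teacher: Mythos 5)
Your proof is correct and follows essentially the same route as the paper's: reduce to left convolution via the involutions, produce by Hahn--Banach a bounded functional $\phi$ equal to $1$ on the orbit $\{x \cdot g_0\}$, average $x \mapsto \phi(x^{-1}\cdot h)$ against the invariant mean to obtain a nonzero bounded left-invariant functional on $C_{00}(G,\mathbb{Q}_p)$, and conclude from the uniqueness of the $p$-adic Haar integral. The only differences are cosmetic: you phrase the Hahn--Banach step as separating $g_0$ from $\overline{W} = \overline{\operatorname{span}}\{x\cdot g_0 - g_0\}$ and argue by contradiction, where the paper proves the contrapositive and defines $\phi$ on $E(G)*f_0$ via $\mu * f_0 \mapsto \mu(\mathbbm{1}_G)$ using the assumed bound.
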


\begin{proof}
First, notice that if $\int f_0(x) = 0$ then $\int f'_0(x) = 0$ by Lemma \ref{f f' mu mu'}; and that $\| f_0 * \mu \|_\infty = \| (f_0 * \mu)' \|_\infty = \| \mu' * f'_0 \|_\infty$ by Lemma \ref{conv f mu}. So it suffices to prove that if $\int f_0(x) dx = 0$, then $\inf\limits_{\mu(\mathbbm{1}_G) = 1} \| \mu * f_0 \|_\infty = 0$. We prove the contrapositive: assume that $f_0$ is such that this infimum is greater than 0. Then there exists a finite constant $C > 0$ such that for all $\mu \in E(G)$ we have $|\mu(\mathbbm{1}_G)|_p \leq C \cdot \| \mu * f_0 \|_\infty$. We need to show that $\int f_0(x) dx \neq 0$. This will be done by constructing a functional $\psi \in C_{00}(G, \mathbb{Q}_p)^*$ such that $\psi(f_0) \neq 0$, and then showing that it is a multiple of the Haar integral using the uniqueness property (Theorem \ref{Haar}). \\

Consider the subspace $E(G) * f_0 \subset C_{00}(G, \mathbb{Q}_p)$. On it we define the linear form $\phi : \mu * f_0 \mapsto \mu(\mathbbm{1}_G)$. This is well-defined: if $\mu * f_0 = \nu * f_0$ then
$$|(\mu - \nu)(\mathbbm{1}_G)|_p \leq C \cdot \|(\mu - \nu) * f_0\|_\infty = 0$$
and so $\mu(\mathbbm{1}_G) = \nu(\mathbbm{1}_G)$. Moreover, for all $\mu \in E(G)$ we have
$$|\phi(\mu * f_0)|_p = |\mu(\mathbbm{1}_G)|_p \leq C \cdot  \| \mu * f_0 \|_\infty,$$
so $\| \phi \|_{op} \leq C < \infty$. By Hahn--Banach, $\phi$ extends to a linear map $\phi \in C_{00}(G, \mathbb{Q}_p)^*$ of the same norm. \\

For an arbitrary $f \in C_{00}(G, \mathbb{Q}_p)$ define $\lambda_f : G \to \mathbb{Q}_p : x \mapsto \phi(x^{-1} \cdot f)$. In particular
$$\lambda_{f_0}(x) = \phi(x^{-1} \cdot f_0) = \phi(\delta_{x^{-1}} * f_0) = \delta_{x^{-1}}(\mathbbm{1}_G) = 1,$$
so $\lambda_{f_0} = \mathbbm{1}_G$. Let us show that this is an element of $C_b(G, \mathbb{Q}_p)$. First, it is bounded:
$$|\lambda_f(x)|_p = |\phi(x^{-1} \cdot f)|_p \leq \| \phi \|_{op} \cdot \| x^{-1} \cdot f \|_\infty = \| \phi \|_{op} \cdot \| f \|_\infty.$$ Moreover it is continuous (actually, it is even \emph{left uniformly continuous}: this will be used in Proposition \ref{LUnpa}). Fix $\varepsilon > 0$. Since $f$ is compactly supported, it is left uniformly continuous: there exists a neighbourhood $U$ of the identity such that $\| f - u^{-1} \cdot f \|_\infty < \varepsilon / \| \phi \|_{op}$ for all $u \in U$. Then
$$|\lambda_f(x) - \lambda_f(ux)|_p = |\phi(x^{-1} \cdot f) - \phi((ux)^{-1} \cdot f)|_p \leq $$
$$\leq \| \phi \|_{op} \cdot \|x^{-1} \cdot f - x^{-1} u^{-1} \cdot f \|_\infty = \| \phi \|_{op} \cdot \|f - u^{-1} \cdot f\|_\infty < \varepsilon.$$
We conclude that $\lambda_f \in C_b(G, \mathbb{Q}_p)$ for all $f \in C_{00}(G, \mathbb{Q}_p)$. This allows to consider the map $\psi : C_{00}(G, \mathbb{Q}_p) \to \mathbb{Q}_p : f \mapsto m(\lambda_f)$, where $m$ is a normed left-invariant $p$-adic mean on $G$. \\

Note that
$$(g \cdot \lambda_f)(x) = \lambda_f(g^{-1} x) = \phi((x^{-1} g) \cdot f) = \phi(x^{-1} \cdot (g \cdot f)) = \lambda_{g \cdot f}(x).$$
Therefore
$$\psi(g \cdot f) = m(\lambda_{g \cdot f}) = m(g \cdot \lambda_f) = m(\lambda_f) = \psi(f),$$
so $\psi$ is left-invariant. Moreover, it is bounded:
$$|\psi(f)|_p = |m(\lambda_f)|_p \leq \|m\|_{op} \|\lambda_f\|_\infty \leq \| m \|_{op} \|\phi\|_{op} \|f\|_\infty.$$
By uniqueness of the Haar integral, there exists $c \in \mathbb{Q}_p$ such that $\psi(f) = c \int f(x)dx$ for all $f \in C_{00}(G, \mathbb{Q}_p)$. In particular
$$c \int f_0(x)dx = \psi(f_0) = m(\lambda_{f_0}) = m(\mathbbm{1}_G) = 1,$$
and so $\int f_0(x)dx \neq 0$ as we wanted.
\end{proof}

We are ready to prove that normed $p$-adic amenable groups satisfy Reiter's property.

\begin{proof}[Proof of $1. \Rightarrow 2$]
Let $G$ be normed $p$-adic amenable, and fix $\varepsilon > 0$ and $K \subset G$ compact. We start by finding $f \in C_{00}(G, \mathbb{Q}_p)$ such that $\int f(x) dx = 1$ and $\| g \cdot f - f \|_\infty < \varepsilon$ for all $g \in K$. Fix a compact open subgroup $U \leq G$; by Lemma \ref{measure U} we have $\int \mathbbm{1}_U(x) dx \neq 0$. Up to replacing $\mathbbm{1}_U$ by a scalar multiple of it in the rest of the proof, let us assume that $\int \mathbbm{1}_U(x) dx = 1$. \\

Since $K$ is compact, there exist $a_1, \ldots, a_n \in K$ such that $K \subseteq U \bigsqcup \left( \bigsqcup_i a_i U \right)$. Since these sets are disjoint, for all $i$ we have $\int (\mathbbm{1}_U - \mathbbm{1}_{a_i U})(x) dx = 0$, and so for every $\mu \in E(G)$ also
$$\int ((\mathbbm{1}_U - \mathbbm{1}_{a_i U}) * \mu)(x) dx = \mu(\mathbbm{1}_G) \int (\mathbbm{1}_U - \mathbbm{1}_{a_i U})(x) dx = 0$$
by Lemma \ref{conv f mu}. Using this fact, and Lemma \ref{lemma reiter} inductively, we can find $\mu_1, \ldots, \mu_n \in E(G)$ such that for all $1 \leq i \leq n$:
$$\| (\mathbbm{1}_U - \mathbbm{1}_{a_i U}) * \mu_1 * \cdots * \mu_i \|_\infty < \varepsilon$$
and $\mu_i(\mathbbm{1}_G) = 1$. \\

Define $f := \mathbbm{1}_U * \mu_1 * \cdots * \mu_n$. Then
$$\int f(x) dx = (\mu_1 * \cdots * \mu_n)(\mathbbm{1}_G) \int \mathbbm{1}_U(x) dx = \prod \mu_i(\mathbbm{1}_G) = 1,$$
by Lemmas \ref{conv f mu} and \ref{conv mu nu}. Finally, let $g \in K$. First assume that $g \notin U$, then there exists $1 \leq i \leq n$ and $u \in U$ such that $g = a_i u$. So:
$$\| g \cdot f - f \|_\infty = \|(g \cdot \mathbbm{1}_U - \mathbbm{1}_U) * \mu_1 * \cdots * \mu_n \|_\infty = $$
$$= \| (\mathbbm{1}_{a_i u U} - \mathbbm{1}_U) * \mu_1 * \cdots * \mu_n \|_\infty = \|((\mathbbm{1}_U - \mathbbm{1}_{a_i U}) * \mu_1 * \cdots * \mu_i) * \cdots * \mu_n \|_\infty \leq $$
$$\leq \| (\mathbbm{1}_U - \mathbbm{1}_{a_i U}) * \mu_1 * \cdots * \mu_i \|_\infty \cdot \left| \prod\limits_{i < j \leq n} \mu_j(\mathbbm{1}_G) \right|_p < \varepsilon,$$
where we used Lemmas \ref{conv f mu} and \ref{conv mu nu}. If instead $g \in U$, then $g \cdot \mathbbm{1}_U = \mathbbm{1}_U$, and so
$$(g \cdot f - f) = (g \cdot \mathbbm{1}_U - \mathbbm{1}_U) * \mu_1 * \cdots * \mu_n = 0.$$

We have thus found $f \in C_{00}(G, \mathbb{Q}_p)$ such that $\int f(x)dx = 1$ and $\| g \cdot f - f \|_\infty < \varepsilon$ for all $g \in K$. Now, let $C > 0$ be a constant bounding the norm of the Haar integral, so $1 = \left| \int f(x) dx \right|_p \leq C \| f \|_\infty$. We then get $\left\| \frac{f}{\| f \|_\infty}\right\|_\infty = 1$, and for all $g \in K$:
$$\left\| g \cdot \frac{f}{\|f\|_\infty} - \frac{f}{\|f\|_\infty} \right\| = \frac{1}{\| f \|_\infty} \| g \cdot f - f \|_\infty < \frac{\varepsilon}{\| f \|_\infty} \leq C \varepsilon.$$
Since $C$ is uniform and $\varepsilon$ is arbitrary, picking $f / \| f \|_\infty$ we conclude.
\end{proof}

The implication $2. \Rightarrow 3.$ is trivial. Finally, we \underline{prove $3. \Rightarrow 4.$}, namely that groups satisfying Reiter's property for $\varepsilon = 1$ are locally elliptic.

\begin{proof}[Proof of $3. \Rightarrow 4$]
Let $K \subset G$ be compact. We need to show that $K$ is contained in a compact subgroup of $G$. Let $f$ be as in Reiter's property for this $K$ and $\varepsilon = 1$; that is, $\| f \|_\infty = 1$ and for all $g \in K$ we have $\| g \cdot f - f \|_\infty < 1$. Let $H := \{ g \in G \mid \| g \cdot f - f \|_\infty < 1 \}$. By the ultrametric inequality this is a subgroup of $G$, and by construction $K \subseteq H$. So we are left to show that $H$ has compact closure. \\

Let $S := \{ s \in G \mid |f(s)|_p = 1 \}$. Since $f$ is compactly supported, this set is compact. Moreover $1 = \| f \|_\infty$, and $\| \cdot \|_\infty$ takes values in $p^\mathbb{Z} \cup \{ 0 \}$, which implies that $S$ is nonempty. If now $s \in S$ and $h \in H$, we have
$$1 > \| (h \cdot f) - f \|_\infty \geq |(h \cdot f - f)(s)|_p = |f(h^{-1} s) - f(s)|_p.$$
Since $|f(s)|_p = 1$, this is only possible if $|f(h^{-1} s)|_p = |f(s)|_p = 1$ as well, by Lemma \ref{ultrametric cor}. This shows that $HS \subseteq S$. So $H \subseteq SS^{-1}$ is contained in a compact set, and so it has compact closure.
\end{proof}

\subsection{Proof of Theorem \ref{main1}}

For the rest of this section, we denote by $p(G) := \min \{ p^k \geq 1 \mid G \text{ is } p^{k+1} \text{-free} \}$: the quantity from Theorem \ref{main1}. Let us recall the statement for the reader's convenience:

\begin{theorem}[Theorem \ref{main1}]
A group $G$ is normed $p$-adic amenable if and only if it is locally elliptic and $p^\mathbb{N}$-free. In this case, $\| G \|_p = p(G)$.
\end{theorem}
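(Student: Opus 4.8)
The plan is to combine Theorem \ref{equiv} with the reduction techniques for subgroups, quotients and extensions established in Section \ref{constructions}, together with the compact case Theorem \ref{comp} and the characterization of $p^\mathbb{N}$-freeness of locally elliptic groups in Proposition \ref{free lpc}.

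\textit{Step 1: Necessity.} Suppose $G$ is normed $p$-adic amenable. By Proposition \ref{amen p^N free}(2) applied with $\mathbb{K} = \mathbb{Q}_p$ (so $\chi(\mathfrak r) = p$, $\chi(\mathbb{K}) = 0$), the group $G$ is $p^\mathbb{N}$-free, and moreover $\|G\|_p \geq p(G)$, since $\|G\|_p < p^k$ forces $p^k$-freeness. In particular $p \notin \mathbb{L}(G)$: indeed $p \in \mathbb{L}(G)$ would mean, using the nested sequence in the definition of the local prime content, that $p^k \mid [U_0 : U_{-n}]$ for arbitrarily large $k$, contradicting $p^\mathbb{N}$-freeness (for the precise implication one invokes Lemma \ref{lpc pfin}: a $p^\mathbb{N}$-free group is in particular $p^k$-free for some $k$, hence $p$-finite from above after passing to a suitable open subgroup — one should be a little careful here and argue that $p^\mathbb{N}$-freeness of $G$ implies $p \notin \mathbb{L}(G)$, which follows because $\mathbb{L}(G)$ only records indices $[U_0:U_{-n}]$ between compact open subgroups, whose $p$-parts are bounded once $G$ is $p^k$-free). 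Hence Theorem \ref{equiv} applies, and the chain $1. \Rightarrow 4.$ gives that $G$ is locally elliptic.

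\textit{Step 2: Sufficiency and the upper bound on the norm.} Now assume $G$ is locally elliptic and $p^\mathbb{N}$-free. As above, $p^\mathbb{N}$-freeness gives $p \notin \mathbb{L}(G)$. The key geometric input is that a locally elliptic $p^\mathbb{N}$-free group is a directed union of compact $p^\mathbb{N}$-free open subgroups: each compact open subgroup $U$ (these form a directed system covering $G$ by local ellipticity) is $p^\mathbb{N}$-free because $G$ is, and by Theorem \ref{comp}(3) each such $U$ is normed $p$-adic amenable with $\|U\|_p = p(U) \leq p(G)$ — here one uses that $p(U) \leq p(G)$ since any pair of open subgroups of $U$ is a pair of open subgroups of $G$. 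Then $G = \bigcup_i U_i$ is a directed union of closed subgroups, so by Proposition \ref{dirun} (valid since $\mathbb{Q}_p$ is a local field), $G$ is normed $p$-adic amenable with $\|G\|_p \leq \sup_i \|U_i\|_p \leq p(G)$.

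\textit{Step 3: Conclusion.} Combining Steps 1 and 2: under the standing hypothesis that $G$ is locally elliptic and $p^\mathbb{N}$-free (which by Step 1 is also necessary for normed $p$-adic amenability), $G$ is normed $p$-adic amenable, and we have both $\|G\|_p \geq p(G)$ (Step 1) and $\|G\|_p \leq p(G)$ (Step 2), hence $\|G\|_p = p(G)$. Finally one checks the second description $p(G) = \max\{p^k \mid \exists\, V \leq U \leq G \text{ open with } p^k \mid [U:V]\}$: the set on the right is nonempty and bounded above exactly when $G$ is $p^\mathbb{N}$-free, its maximum is the largest $p^k$ dividing some index, and this equals $\min\{p^k \geq 1 \mid G \text{ is } p^{k+1}\text{-free}\}$ by definition of $p^k$-freeness.

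\textit{Main obstacle.} The delicate point is the interplay between $p^\mathbb{N}$-freeness and the condition $p \notin \mathbb{L}(G)$ needed to invoke Theorem \ref{equiv} (and the existence of the $p$-adic Haar integral underlying its proof). One must argue cleanly that $p^\mathbb{N}$-freeness of $G$ forces $p \notin \mathbb{L}(G)$ — this is where Lemma \ref{lpc pfin}, Lemma \ref{free_co} and Proposition \ref{free lpc} do the work, by translating the "bounded $p$-part of indices" condition between the local-prime-content formulation and the $p^k$-freeness formulation. Everything else is an assembly of the already-established constructions (subgroups, quotients, extensions, directed unions) with the compact case; the surprising analytic content — that Reiter's property with error $\varepsilon = 1$ already implies local ellipticity — has been isolated into Theorem \ref{equiv}.
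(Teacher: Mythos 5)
Your proposal is correct and follows essentially the same route as the paper: Proposition \ref{amen p^N free} for $p^{\mathbb{N}}$-freeness and the lower bound $p(G) \leq \|G\|_p$, the observation that $p^{\mathbb{N}}$-freeness forces $p \notin \mathbb{L}(G)$ so that Theorem \ref{equiv} yields local ellipticity, and then the directed union of compact open subgroups combined with Theorem \ref{comp} and Proposition \ref{dirun} for the converse and the upper bound. Your extra justification that $p^{\mathbb{N}}$-freeness implies $p \notin \mathbb{L}(G)$ is sound (and in fact only needs the direct observation that the indices $[U_0:U_{-n}]$ in the definition of $\mathbb{L}(G)$ are indices between open subgroups of $G$, so their $p$-parts are bounded once $G$ is $p^k$-free); the paper leaves this implicit.
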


\begin{proof}
Let $G$ be normed $p$-adic amenable. By Proposition \ref{amen p^N free} it is $p^\mathbb{N}$-free and $p(G) \leq \| G \|_p$. In particular $p \notin \mathbb{L}(G)$. So we can use Theorem \ref{equiv} and conclude that $G$ is locally elliptic.

Let $G$ be locally elliptic and $p^\mathbb{N}$-free, and let $p^k := p(G)$. Since $G$ is locally elliptic, it is a directed union of compact open subgroups $U_i$. Each $U_i$ is also $p^k$-free, and so by Theorem \ref{comp} $U_i$ is $p$-adic amenable of norm at most $p^k$. It follows from Proposition \ref{dirun} that $G$ is normed $p$-adic amenable and $\| G \|_p \leq \sup \{ \| U_i \|_p \} \leq p^k = p(G)$.
\end{proof}

Theorem \ref{equiv} implies that one can replace the property of being locally elliptic in the theorem by Reiter's property. Here is a more general statement:

\begin{theorem}
\label{Reiter}

Let $G$ be a t.d.l.c. group (we do not assume that $p \notin \mathbb{L}(G)$). The following are equivalent:
\begin{enumerate}
\item $G$ is locally elliptic.
\item $G$ satisfies Reiter's property for $\varepsilon = 0$.
\item $G$ satisfies Reiter's property for some $\varepsilon \in [0, 1]$.
\item $G$ statisfies Reiter's property for $\varepsilon = 1$.
\end{enumerate}
\end{theorem}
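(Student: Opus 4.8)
The plan is to run the cycle of implications $1. \Rightarrow 2. \Rightarrow 3. \Rightarrow 4. \Rightarrow 1.$; since the middle two arrows are formal, the only real work is confined to the first and last.

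First I would prove $1. \Rightarrow 2.$, which is where local ellipticity is used and which exhibits the strongest possible form of Reiter's property. Given a compact $K \subseteq G$, local ellipticity puts $K$ inside a compact subgroup, hence by Lemma \ref{cinco} inside a compact open subgroup $U \leq G$. Since $U$ is clopen, $\mathbbm{1}_U \in C_{00}(G, \mathbb{Q}_p)$, it has sup-norm $1$, and for every $g \in K \subseteq U$ we have $g \cdot \mathbbm{1}_U = \mathbbm{1}_{gU} = \mathbbm{1}_U$. Thus $f := \mathbbm{1}_U$ satisfies $\| f \|_\infty = 1$ and $\| g \cdot f - f \|_\infty = 0$ for all $g \in K$, which is Reiter's property for $\varepsilon = 0$ (where, the strict inequality being vacuous, this is read as exact $K$-invariance $g \cdot f = f$).

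The implications $2. \Rightarrow 3.$ and $3. \Rightarrow 4.$ are then immediate from monotonicity in the threshold: a function witnessing the property for some $\varepsilon \in [0,1]$ also witnesses it for the threshold $1$, since $\| g \cdot f - f \|_\infty < \varepsilon \leq 1$ (or $= 0$ when $\varepsilon = 0$) forces $\| g \cdot f - f \|_\infty < 1$. For the closing arrow $4. \Rightarrow 1.$, I would simply re-run the argument already given for $3. \Rightarrow 4.$ in Theorem \ref{equiv}: from $f$ with $\| f \|_\infty = 1$ and $\| g \cdot f - f \|_\infty < 1$ on $K$, form the subgroup $H := \{ g \in G : \| g \cdot f - f \|_\infty < 1 \} \supseteq K$ and the set $S := \{ s \in G : |f(s)|_p = 1 \}$, which is nonempty and compact because $f$ is compactly supported and $\| \cdot \|_\infty$ is discretely valued; Lemma \ref{ultrametric cor} gives $HS \subseteq S$, so $H \subseteq S S^{-1}$ is relatively compact and $\overline{H}$ is a compact subgroup containing $K$. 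The one point to flag is that this argument never invoked the standing hypothesis $p \notin \mathbb{L}(G)$ of Theorem \ref{equiv}, so it is valid for any t.d.l.c. group; I do not anticipate a genuine obstacle, the content of the theorem being exactly the observation in $1. \Rightarrow 2.$ that local ellipticity already furnishes honestly invariant compactly supported functions, which makes the a priori different thresholds $\varepsilon \in [0,1]$ collapse.
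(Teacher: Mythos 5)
Your proposal is correct and follows essentially the same route as the paper: $1.\Rightarrow 2.$ via Lemma \ref{cinco} and the characteristic function $\mathbbm{1}_U$ of a compact open subgroup containing $K$, the middle implications by monotonicity of the threshold, and $4.\Rightarrow 1.$ by re-running the $3.\Rightarrow 4.$ argument of Theorem \ref{equiv} after observing that it never uses the hypothesis $p\notin\mathbb{L}(G)$. Your explicit handling of the $\varepsilon=0$ case as exact invariance is a welcome clarification that the paper leaves implicit.
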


\begin{proof}
$1. \Rightarrow 2.$ If $G$ is locally elliptic, given $K \subset G$ compact, there exists a compact open subgroup $K \subseteq U \leq G$ by Lemma \ref{cinco}. Then $\mathbbm{1}_U$ works. The implications $2. \Rightarrow 3. \Rightarrow 4.$ are clear, and $4. \Rightarrow 1.$ is proven as the implication $3. \Rightarrow 4.$ of Theorem \ref{equiv}. In fact, this implication did not need the hypothesis $p \notin \mathbb{L}(G)$, as it did not use the $p$-adic Haar integral or the modular function.
\end{proof}

Let us end this subsection by mentioning a third equivalent characterization, with only a sketch of proof. Consider the space $LUC_b(G, \mathbb{Q}_p)$ of left uniformly continuous bounded $\mathbb{Q}_p$-valued functions on $G$ (see Definition \ref{luc def}). This is $G$-left-invariant (Lemma \ref{luc_inv}), and so one may ask about normed means on this space: bounded linear maps $m : LUC_b(G, \mathbb{Q}_p) \to \mathbb{Q}_p$ which are left-invariant and satisfy $m(\mathbbm{1}_G) = 1$. Let us say temporarily that if such an $m$ exists, then $G$ is \emph{$LU$ normed $p$-adic amenable}.

\begin{proposition}
\label{LUnpa}

A group is normed $p$-adic amenable if and only if it is $LU$ normed $p$-adic amenable. Moreover, the minimal norm of a left-invariant mean on $LUC_b(G, \mathbb{Q}_p)$ coincides with $\| G \|_p$.
\end{proposition}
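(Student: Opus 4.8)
The plan is to compare left-invariant normed means on $LUC_b(G, \mathbb{Q}_p)$ with those on $C_b(G, \mathbb{Q}_p)$ in both directions. One direction is immediate: if $m$ is a left-invariant normed $p$-adic mean on $C_b(G, \mathbb{Q}_p)$, then its restriction to the $G$-invariant subspace $LUC_b(G, \mathbb{Q}_p)$ (Lemma \ref{luc_inv}) is still linear and left-invariant, sends $\mathbbm{1}_G$ to $1$, and has operator norm at most $\|m\|_{op}$. Hence $G$ is $LU$ normed $p$-adic amenable, and the minimal norm of a left-invariant mean on $LUC_b(G, \mathbb{Q}_p)$ is at most $\|G\|_p$.

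For the converse, I would first observe that Propositions \ref{opsub} and \ref{quot} hold verbatim with $LU$ means in place of means. Both are proved through Lemma \ref{constrphi}, whose construction $\tilde m(f) = m(f \circ \varphi)$ only requires that $f \circ \varphi$ lie in the relevant function space whenever $f$ does; and one checks directly that if $f$ is left uniformly continuous on the target, then so is $f \circ \varphi$ on $G$ — here $\varphi$ is, respectively, a continuous homomorphism $G \to Q$ or the projection $G \to H$ along a transversal of an open subgroup $H \leq G$, and the $G$-action on $C_b$ is isometric for the supremum norm. Since $LUC_b(Q, \mathbb{Q}_p) = C_b(Q, \mathbb{Q}_p)$ for a finite discrete group $Q$, Theorem \ref{finite} gives that such a $Q$ has a unique left-invariant mean on $LUC_b(Q, \mathbb{Q}_p)$, of norm $|Q|_p^{-1}$. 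Running the argument of Proposition \ref{amen p^N free} with these ingredients shows: if $G$ carries a left-invariant normed mean of norm $r$ on $LUC_b(G, \mathbb{Q}_p)$, then $G$ is $p^k$-free for every $k$ with $p^k > r$. In particular $G$ is $p^\mathbb{N}$-free, hence $p \notin \mathbb{L}(G)$ — otherwise a nested sequence of compact open subgroups as in the definition of $\mathbb{L}(G)$ would exhibit finite indices divisible by arbitrarily large powers of $p$ — and moreover $p(G) \leq r$.

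It remains to upgrade $p^\mathbb{N}$-freeness to local ellipticity, which is where the harmonic analysis enters and the only step requiring care. Since $p \notin \mathbb{L}(G)$, the $p$-adic Haar integral and modular function are available, so the machinery of Subsection \ref{preli Haar} and the convolution lemmas apply. The key observation is that in the proof of Lemma \ref{lemma reiter} the mean $m$ is evaluated only on the functions $\lambda_f(x) = \phi(x^{-1}\cdot f)$, which were shown there to be left uniformly continuous; hence the statement of Lemma \ref{lemma reiter}, and therefore the implication $1. \Rightarrow 2.$ of Theorem \ref{equiv}, remains valid when ``normed $p$-adic amenable'' is weakened to ``$LU$ normed $p$-adic amenable''. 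Consequently $G$ satisfies Reiter's property, and Theorem \ref{equiv} yields that $G$ is locally elliptic.

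Being locally elliptic and $p^\mathbb{N}$-free, $G$ is normed $p$-adic amenable with $\|G\|_p = p(G)$ by Theorem \ref{main1}. Combining the three facts established above — the minimal $LU$-mean norm is at most $\|G\|_p = p(G)$ by restriction; every $LU$-mean of norm $r$ forces $p(G) \leq r$, so the minimal $LU$-mean norm is at least $p(G)$; and $\|G\|_p = p(G)$ — gives that the minimal norm of a left-invariant mean on $LUC_b(G, \mathbb{Q}_p)$ equals $p(G) = \|G\|_p$, as claimed. The main obstacle is the bookkeeping in the previous paragraph: one must confirm that the entire Reiter-property argument (Lemma \ref{lemma reiter} together with the implication $1. \Rightarrow 2.$ of Theorem \ref{equiv}) touches the mean only through the left uniformly continuous functions $\lambda_f$; everything else is a routine re-reading of the earlier proofs with $LUC_b$ replacing $C_b$.
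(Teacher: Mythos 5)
Your proposal is correct and follows essentially the same route as the paper's own (sketched) proof: restriction of a mean handles one direction, the adaptations of Propositions \ref{opsub}, \ref{quot}, \ref{amen p^N free} (with Theorem \ref{finite} automatic for discrete groups) give $p^\mathbb{N}$-freeness and the lower bound $p(G)$ on the norm, and the observation that Lemma \ref{lemma reiter} evaluates the mean only on the left uniformly continuous functions $\lambda_f$ lets Theorem \ref{equiv} deliver local ellipticity, after which Theorem \ref{main1} concludes. Your write-up is in fact slightly more explicit than the paper's sketch (the LU-continuity of $f\circ\varphi$ and the norm bookkeeping), but the underlying argument is identical.
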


\begin{proof}[Sketch of proof]
Clearly a normed $p$-adic amenable group is $LU$ normed $p$-adic amenable. Now let $m : LUC_b(G, \mathbb{Q}_p) \to \mathbb{Q}_p$ be as in the definition of $LU$ normed $p$-adic amenability. One can adapt the proofs of Propositions \ref{opsub}, \ref{quot} and \ref{amen p^N free} to show that $G$ is $p^\mathbb{N}$-free; note that Theorem \ref{finite} is automatically adapted since on a discrete group all functions are left uniformly continuous. In the proof of Lemma \ref{lemma reiter} we apply the mean to a left uniformly continuous function (this was pointed out in the proof), and so it still applies to this case. That being the only point in the proof of Theorem \ref{equiv} involving the mean, we conclude that an $LU$ normed $p$-adic amenable group is locally elliptic and $p^\mathbb{N}$-free. So by Theorem \ref{main1} it is normed $p$-adic amenable.
\end{proof}

This notion has the advantage that Proposition \ref{ext} can now be adapted to closed subgroups with only a little more work, analogously to \cite[Proposition 13.4]{Pier}. But these stronger results can also be proven using Theorem \ref{main1}, as we will shortly do, so we will not be concerned with this notion any further.

\subsection{Corollaries}

The following is a direct consequence of the definition of a locally elliptic group:

\begin{corollary}
\label{compgen}

A compactly generated normed $p$-adic amenable group is compact.
\end{corollary}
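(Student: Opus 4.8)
The plan is to read this off directly from Theorem \ref{main1}, which identifies normed $p$-adic amenable groups with locally elliptic $p^\mathbb{N}$-free groups. The only input we need from that theorem is local ellipticity; the $p^\mathbb{N}$-freeness plays no role here.

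First I would recall that compact generation of a t.d.l.c.\ group $G$ means that there is a compact subset $K \subseteq G$ generating $G$ as a group, so that $G = \bigcup_{n \geq 1}(K \cup K^{-1} \cup \{1\})^n$. Next, assuming $G$ is normed $p$-adic amenable, Theorem \ref{main1} tells us $G$ is locally elliptic, and I would invoke characterization (2) of local ellipticity: every compact subset of $G$ is contained in a compact subgroup. Applying this to the compact generating set $K$ yields a compact subgroup $H \leq G$ with $K \subseteq H$.

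Finally, since $H$ is a subgroup containing $K$, it contains every finite product of elements of $K$ and their inverses, hence $G = \langle K \rangle \leq H$; therefore $G = H$ is compact. There is no real obstacle: the content is entirely in Theorem \ref{main1}, and this corollary is just the observation that a compact generating set of a locally elliptic group is swallowed by one of the compact open subgroups in the directed union. (One could equivalently phrase the last step using characterization (1): $K$, being compact, is covered by finitely many of the compact open subgroups $U_i$, and by directedness lies in a single $U_i$, whence $G = \langle K \rangle \leq U_i$.)
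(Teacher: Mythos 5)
Your proof is correct and is exactly the argument the paper has in mind: it states the corollary as a direct consequence of Theorem \ref{main1} together with the definition of local ellipticity, namely that a compact generating set is contained in a compact subgroup, which must then be all of $G$. (The only cosmetic remark: if one takes ``compactly generated'' to mean topologically generated, the same argument works since the compact subgroup $H$ is closed and hence contains the closure of $\langle K \rangle$.)
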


We generalize Proposition \ref{opsub} to all closed subgroups:

\begin{corollary}
\label{clsub}

Let $H$ be a closed subgroup of $G$. Then $\|H\|_p \leq \|G\|_p$.
\end{corollary}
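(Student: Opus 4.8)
The plan is to deduce everything from the algebraic characterization of Theorem~\ref{main1}, together with the compact case (Theorem~\ref{comp}) and the behaviour of the norm under directed unions (Proposition~\ref{dirun}), in the same spirit as the proof of Theorem~\ref{main1}. If $G$ is not normed $p$-adic amenable there is nothing to prove, so I will assume it is; then by Theorem~\ref{main1} $G$ is locally elliptic and $p^{\mathbb{N}}$-free, and I set $p^k := p(G) = \|G\|_p$, so that $G$ is $p^{k+1}$-free.

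The first step is that $H$ is again locally elliptic. Given a finite subset $F \subseteq H$, local ellipticity of $G$ puts $F$ inside a compact subgroup of $G$, so the closure $L$ in $G$ of the subgroup generated by $F$ is compact; since $H$ is closed and contains $\langle F \rangle$, we get $L \subseteq H$, so that $L$ is a compact subgroup of $H$ containing $F$. Hence $H$ is a directed union of compact open subgroups $(H_i)_{i \in I}$.

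The key step is that each $H_i$ is $p^{k+1}$-free. Each $H_i$ is a compact subgroup of $G$, so by Lemma~\ref{cinco} it is contained in a compact open subgroup $K_i \leq G$; being open in $G$, the profinite group $K_i$ is $p^{k+1}$-free, since any pair of open subgroups of $K_i$ is a pair of open subgroups of $G$. Now $H_i$ is a closed subgroup of the compact $p^{k+1}$-free group $K_i$, so Lemma~\ref{sub p free} applies and shows that $H_i$ is $p^{k+1}$-free. By the third case of Theorem~\ref{comp}, $H_i$ is therefore normed $p$-adic amenable with $\|H_i\|_p = \min\{ p^j \geq 1 \mid H_i \text{ is } p^{j+1}\text{-free} \} \leq p^k$.

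Finally, since $\mathbb{Q}_p$ is a local field and $H$ is the directed union of the closed subgroups $H_i$, Proposition~\ref{dirun} yields that $H$ is normed $p$-adic amenable and $\|H\|_p \leq \sup_{i \in I} \|H_i\|_p \leq p^k = \|G\|_p$, as desired. The one delicate point is the passage to $p^{k+1}$-freeness of the $H_i$: $p^{\mathbb{N}}$-freeness is not visibly inherited by arbitrary closed subgroups, and the argument crucially exploits local ellipticity to reduce to compact subgroups, where Lemma~\ref{sub p free} is available.
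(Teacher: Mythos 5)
Your proof is correct and follows essentially the same route as the paper's: reduce to the locally elliptic/directed-union picture, embed each compact piece of $H$ into a compact open subgroup of $G$ via Lemma \ref{cinco}, transfer $p^{k+1}$-freeness to it via Lemma \ref{sub p free} and Theorem \ref{comp}, and conclude with Proposition \ref{dirun}. The only cosmetic difference is that you track the exponent $p^{k+1}$ explicitly rather than passing through the inequality $\|H_i\|_p \leq \|U_i\|_p \leq \|G\|_p$, but the content is identical.
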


\begin{proof}
Since $G$ is locally elliptic and $H$ is closed, $H$ is easily seen to be locally elliptic. Therefore $H$ is the directed union of compact groups $H_i$. By Lemma \ref{cinco}, there exist compact open subgroups $U_i$ of $G$ such that $H_i \leq U_i$. So if $\| H_i \|_p \leq \| U_i \|_p$, then we are done by Propositions \ref{opsub} and \ref{dirun}. This follows by Theorem \ref{comp} and Lemma \ref{sub p free}.
\end{proof}

Next, we use the previous corollary to sharpen Proposition \ref{dirun}:

\begin{corollary}
\label{dirunn}

Let $G$ be the directed union of the closed subgroups $(G_i)_{i \in I}$. Then $\|G\|_p = \sup \{\|G_i\|_p \mid i \in I\}$.
\end{corollary}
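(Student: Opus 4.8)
The plan is to establish the two inequalities separately, both of which are essentially already packaged in earlier results. For $\|G\|_p \leq \sup\{\|G_i\|_p \mid i \in I\}$, I would simply invoke Proposition \ref{dirun}: since $\mathbb{Q}_p$ is a local field (Subsection \ref{preli field}), that proposition applies verbatim and yields exactly this bound. Recall its proof went through the Banach--Alaoglu Theorem applied to closed balls in $C_b(G, \mathbb{Q}_p)^*$, which is legitimate precisely because the topology on $\mathbb{Q}_p$ is locally compact.

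For the reverse inequality $\sup\{\|G_i\|_p \mid i \in I\} \leq \|G\|_p$, I would fix an arbitrary index $i \in I$ and observe that $G_i$ is a closed subgroup of $G$; then Corollary \ref{clsub} gives $\|G_i\|_p \leq \|G\|_p$. Taking the supremum over all $i \in I$ finishes the argument. Note that this direction is where the content really lies compared with the remark after Proposition \ref{dirun}, which only treated open subgroups directly via Proposition \ref{opsub}: here the point is that closed (not necessarily open) subgroups of a normed $p$-adic amenable group remain normed $p$-adic amenable with no larger norm, which is exactly the statement of Corollary \ref{clsub}.

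I do not expect a genuine obstacle, since both halves reduce to cited results; the only things to double-check are that the hypotheses of those results are in force at this point — namely that $\mathbb{Q}_p$ qualifies as a local field for Proposition \ref{dirun}, and that Corollary \ref{clsub} (which itself rests on Theorem \ref{main1}, Theorem \ref{comp}, and Lemma \ref{sub p free}) has already been proven. All of these are available, so the corollary follows in a couple of lines. If one wanted the statement to include the claim that the supremum is attained when finite, that would follow from Theorem \ref{main1} together with the fact that the norm of a locally elliptic $p^\mathbb{N}$-free group equals $p(G)$, but strictly speaking the statement as written only asserts the equality of the two quantities, so this is not needed.
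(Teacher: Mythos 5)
Your proof is correct and coincides with the paper's: one inequality is Proposition \ref{dirun} (applicable since $\mathbb{Q}_p$ is a local field), and the other follows from Corollary \ref{clsub} applied to each closed subgroup $G_i$. The extra remarks about attainment are unnecessary but harmless.
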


\begin{proof}
One inequality is Proposition \ref{dirun}, the other follows from Corollary \ref{clsub}.
\end{proof}

Finally, we use the previous corollary, and the compact case (Corollary \ref{extt comp}), to show that we can deal with extensions with closed subgroups, and that the norm is not only submultiplicative but multiplicative:

\begin{corollary}
\label{extt}

Let $H$ be a closed normal subgroup of $G$. Then $\| G \|_p = \| H \|_p \cdot \| G/H \|_p$.
\end{corollary}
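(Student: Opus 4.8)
The plan is to establish the two inequalities $\| G \|_p \leq \| H \|_p \cdot \| G/H \|_p$ and $\| G \|_p \geq \| H \|_p \cdot \| G/H \|_p$ separately, reducing in each case to the compact version already proven in Corollary \ref{extt comp}, by exploiting that every t.d.l.c. group is an increasing union of its compact open subgroups once we are in the locally elliptic situation. First, note that by Theorem \ref{main1} all three groups $G$, $H$, $G/H$ being normed $p$-adic amenable is equivalent to being locally elliptic and $p^\mathbb{N}$-free, so we may as well assume throughout that $G$ is normed $p$-adic amenable (if not, then by Proposition \ref{quot} neither is $G/H$ unless $H$ fails, and by Corollary \ref{clsub} the failure of $H$ forces the failure of $G$ — either way both sides are $\infty$; here one should spell out that if $G/H$ is not normed $p$-adic amenable then $G$ is not either, which is Proposition \ref{quot}, and if $H$ is not then $G$ is not either, which is Corollary \ref{clsub}).

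So suppose $G$ is normed $p$-adic amenable, hence locally elliptic, so $G = \bigcup_{i} U_i$ is a directed union of compact open subgroups $U_i$. For the upper bound, Proposition \ref{ext} gives $\| G \|_p \leq \| H \|_p \cdot \| G/H \|_p$ only for \emph{open} normal $H$, so instead we argue: each $U_i \cap H$ is a compact (open in $H$) normal subgroup of $U_i$, with $U_i / (U_i \cap H) \cong U_i H / H \leq G/H$. By Corollary \ref{extt comp}, $\| U_i \|_p = \| U_i \cap H \|_p \cdot \| U_i/(U_i \cap H) \|_p$. Now $U_i \cap H$ is a closed subgroup of $H$, so $\| U_i \cap H \|_p \leq \| H \|_p$ by Corollary \ref{clsub}; and $U_i/(U_i \cap H)$ is (isomorphic to) an open subgroup of $G/H$, so $\| U_i/(U_i \cap H) \|_p \leq \| G/H \|_p$ by Corollary \ref{clsub} again (or Proposition \ref{opsub}). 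Hence $\| U_i \|_p \leq \| H \|_p \cdot \| G/H \|_p$ for every $i$, and taking the supremum over $i$ and invoking Corollary \ref{dirunn} gives $\| G \|_p = \sup_i \| U_i \|_p \leq \| H \|_p \cdot \| G/H \|_p$.

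For the lower bound, one direction is easy: $\| G/H \|_p \leq \| G \|_p$ by Proposition \ref{quot} and $\| H \|_p \leq \| G \|_p$ by Corollary \ref{clsub}, but that only gives $\max$, not the product. The real content is the reverse of the computation above. Since $G$ is locally elliptic and $p^\mathbb{N}$-free, pick $k$ with $p^k = p(G)$; by Theorem \ref{main1}, $\| G \|_p = p^k$. I would argue that there exists a compact open subgroup $U$ of $G$ that is not $p^k$-free (otherwise $G$ would be $p^k$-free since every finite index situation in $G$ occurs inside some $U_i$), and then Corollary \ref{extt comp} applied to $U \cap H \trianglelefteq U$ gives $\| U \|_p = \| U \cap H \|_p \cdot \| U/(U \cap H) \|_p$; one then needs $\| U \cap H \|_p \leq \| H \|_p$ and $\| U/(U \cap H) \|_p \leq \| G/H \|_p$ as before, yielding $p^k = \| U \|_p \leq \| H \|_p \cdot \| G/H \|_p$ once $U$ realizes the supremum. \textbf{The main obstacle} is precisely this last point: ensuring that the supremum $\sup_i \| U_i \|_p = \| G \|_p = p^k$ is actually \emph{attained} by some compact open $U$, and moreover that this same $U$ can be chosen so that the factors $\| U \cap H \|_p$ and $\| U / (U\cap H) \|_p$ simultaneously approach $\| H \|_p$ and $\| G/H \|_p$. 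For the first part, since $\| G \|_p = p^k$ is a specific power of $p$ and each $\| U_i \|_p$ is a power of $p$ bounded by $p^k$, the supremum is attained as soon as some $U_i$ is not $p^k$-free, which follows from the definition of $p(G)$ via open subgroups of $G$ lying inside some $U_i$. For the second part, one uses that $\| H \|_p = \sup_j \| H_j \|_p$ over compact open $H_j \leq H$ (Corollary \ref{dirunn}), enlarges $U$ to contain a well-chosen compact open subgroup of $H$ witnessing a large factor, and combines with Corollary \ref{extt comp}; this finite bookkeeping is routine once the framework is set up. Assembling both inequalities yields the claimed equality $\| G \|_p = \| H \|_p \cdot \| G/H \|_p$.
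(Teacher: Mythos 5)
Your upper-bound computation inside a compact open $U_i$ is exactly the paper's, but the opening reduction has a genuine gap that makes the whole argument circular. You dismiss the case where $G$ is not normed $p$-adic amenable by claiming "both sides are $\infty$", citing Proposition \ref{quot} and Corollary \ref{clsub}; those results go the wrong way. They say that amenability of $G$ passes down to $G/H$ and to $H$, i.e.\ non-amenability of $H$ or of $G/H$ forces non-amenability of $G$ --- they do \emph{not} say that non-amenability of $G$ forces non-amenability of $H$ or $G/H$. The latter statement is equivalent to closure of normed $p$-adic amenability under extensions by closed normal subgroups, which is precisely part of what the corollary asserts and cannot be assumed. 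Since your proof of $\| G \|_p \leq \| H \|_p \cdot \| G/H \|_p$ starts from "$G$ is normed $p$-adic amenable, hence locally elliptic", it only proves the inequality under that extra hypothesis and leaves open the possibility $\| H \|_p, \| G/H \|_p < \infty = \| G \|_p$. The paper closes this by assuming only that $H$ and $G/H$ are amenable, hence locally elliptic, and invoking the fact that an extension of locally elliptic groups is locally elliptic (\cite[Proposition 1.6]{period}); that is the missing ingredient your argument needs before Corollary \ref{dirunn} can be applied to $G$.

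The lower bound is also only half there. The displayed computation you call the "real content" ($p^k = \| U \|_p \leq \| H \|_p \cdot \| G/H \|_p$) again reproduces the upper bound; the actual task, which you correctly identify but defer as "routine bookkeeping", is to produce a single compact subgroup of $G$ whose two factors simultaneously realize $\| H \|_p$ and $\| G/H \|_p$, and your sketch only explains the $H$-side (enlarging $U$ to contain a compact subgroup $K \leq H$ with $\| K \|_p = \| H \|_p$). You give no mechanism for the quotient factor. The paper's device is to write $G/H$ as the directed union of closures of finitely generated subgroups, pick one realizing $\| G/H \|_p$ (attainment is fine since the norms are powers of $p$ bounded by $\| G/H \|_p$), lift its finitely many generators to $G$, use local ellipticity of $G$ to get a compact $Q \leq G$ with $\| QH/H \|_p = \| G/H \|_p$, and then apply Corollary \ref{extt comp} and Corollary \ref{clsub} to $G_0 := \overline{\langle K, Q \rangle}$, which is compact again by local ellipticity. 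Without this lifting step (or a substitute), the inequality $\| G \|_p \geq \| H \|_p \cdot \| G/H \|_p$ is not established.
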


\begin{proof}
For the inequality $\| G \|_p \leq \| H \|_p \cdot \| G/H \|_p$, we assume that $H$ and $G/H$ are normed $p$-adic amenable. By Theorem \ref{main1}, they are locally elliptic, and so $G$ is also locally elliptic (see e.g. \cite[Proposition 1.6]{period}). We can therefore write $G$ as a directed union of compact open subgroups $(U_i)_{i \in I}$, and by Corollary \ref{dirunn} we have $\| G \|_p = \sup \{ \|U_i\|_p \mid i \in I \}$.

Now $U_i$ has a normal subgroup $H \cap U_i$ with quotient $U_i / (H \cap U_i)$. By Corollary \ref{extt comp}, we have $\| U_i \|_p = \| H \cap U_i \|_p \cdot \| U_i / (H \cap U_i) \|_p$. On the one hand $\| H \cap U_i \|_p \leq \| H \|_p$ by Proposition \ref{opsub}; and on the other hand $U_i / (H \cap U_i) \cong U_i H / H \leq G/H$, so $\| U_i / (H \cap U_i) \|_p \leq \| G/H \|_p$, again by Proposition \ref{opsub}. Thus $\| U_i \|_p \leq \| H \|_p \cdot \| G/H \|_p$. It follows that $\| G \|_p \leq \| H \|_p \cdot \| G/H \|_p$. \\

For the converse inequality, we assume that $G$ is normed $p$-adic amenable, and it follows from Proposition \ref{quot} and Corollary \ref{clsub} that also $G/H$ and $H$ are. Start by writing $G/H$ as the directed union of the closures of its finitely generated subgroups. Using Corollary \ref{dirunn} one of these groups has the same norm as $G/H$; since $G$ is locally elliptic we can lift the finitely many generators and obtain a compact group $Q \leq G$ such that $\| QH/H \|_p = \| G/H \|_p$. Again by Corollary \ref{dirunn}, let $K \leq H$ be a compact subgroup such that $\| K \|_p = \| H \|_p$. Let $G_0 := \overline{\langle K, Q \rangle} \leq G$. Since $G$ is locally elliptic, $G_0$ is compact. It has a normal subgroup $G_0 \cap H \geq K$ with quotient $G_0 / (G_0 \cap H) \cong G_0 H / H \geq QH/H$. By Corollaries \ref{extt comp} and \ref{clsub}:
$$\| G \|_p \geq \| G_0 \|_p = \| G_0 \cap H \|_p \cdot \| G_0 / (G_0 \cap H) \|_p \geq \| K \|_p \cdot \| QH/H \|_p = \| H \|_p \cdot \| G/H \|_p.$$
\end{proof}

We end on the following curious fact, which was already remark by Schikhof for the norm $1$ case \cite[End of Section 3]{Schik}:

\begin{corollary}
\label{npa amenable}

Normed $p$-adic amenable groups are amenable.
\end{corollary}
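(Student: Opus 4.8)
The plan is to deduce this immediately from Theorem \ref{main1} together with the classical permanence properties of amenability recalled in the introduction. First I would invoke Theorem \ref{main1}: a normed $p$-adic amenable group $G$ is in particular locally elliptic, hence by definition a directed union of compact open subgroups $(U_i)_{i \in I}$. Next I would recall that compact groups are amenable, and that the class of (locally compact, Hausdorff) amenable groups is closed under directed unions — both facts being stated in the discussion of amenability at the start of the paper. Combining these, $G$ is a directed union of amenable groups, hence amenable.

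There is essentially no obstacle here: the entire content is Theorem \ref{main1}, and everything else is a citation. The only point worth a sentence is that one should not attempt to produce an $\mathbb{R}$-mean directly from a normed $p$-adic mean — there is no such implication at the level of means, as the failure of positivity shows — so the argument genuinely has to route through the algebraic characterization. I would also remark, as the author does, that this was already observed by Schikhof in the norm $1$ case, so the statement is the natural extension of that observation to arbitrary norm. A one-paragraph proof will suffice.
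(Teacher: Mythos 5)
Your proposal is correct and matches the paper's argument: the paper's proof is the one-line observation that locally elliptic groups are amenable (via Theorem \ref{main1}), which is exactly the directed-union-of-compact-subgroups argument you spell out, as also anticipated in the introduction. No gaps.
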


\begin{proof}
Locally elliptic groups are amenable.
\end{proof}

This reflects a pattern that properties of topological groups linked to non-Archimedean valued fields are stronger than their analogues over the reals. Another example of this phenomenon is the following. Suppose that $p \notin \mathbb{L}(G)$, so that $G$ admits a Haar integral and so a modular function $\Delta$ (see Subsection \ref{preli Haar}). Suppose that $G$ is $\mathbb{Q}_p$-unimodular, meaning that $\Delta \equiv 1$. Then $G$ is unimodular in the usual sense (over $\mathbb{R}$) \cite[Theorem 2.3.6]{NHA}.

\pagebreak

\section{Examples}
\label{examples}

In this section we go over some examples of normed $p$-adic amenable groups. We will start by discussing discrete groups, for which the $p^\mathbb{N}$-freeness condition reduces to a bound on the order of finite $p$-subgroups. These examples will include many simple groups. We will then move to profinite groups, which behave somewhat dually: here the $p^\mathbb{N}$-freeness condition reduces to a bound on the order of $p$-subgroups of finite discrete quotients. After that we will look at matrix groups over local fields, then at groups of tree automorphisms. These will include more discrete and profinite examples as well. In the last subsection, we will prove that $2$-adic amenability of small norm is an obstruction to simplicity.

\subsection{Discrete groups}

In the discrete setting, the notion of a locally elliptic group corresponds to that of a \emph{locally finite} group: a group in which every finitely generated subgroup is finite. We refer to \cite{loc_fin} for the general theory of locally finite groups. Also the notion of $p$-freeness can be reduced to an easier-to-check property.

\begin{proposition}
\label{disc}

Let $G$ be a discrete group. Then $G$ is normed $p$-adic amenable if and only if it is locally finite and the order of its finite $p$-subgroups is bounded, in which case $\| G \|_p$ is the maximal order of a finite $p$-subgroup.
\end{proposition}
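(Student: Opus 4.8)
The plan is to deduce this from Theorem \ref{main1} by translating both of its hypotheses into the language of discrete groups. In a discrete group every subgroup is open and the compact subgroups are exactly the finite ones, so local ellipticity coincides with local finiteness (being a directed union of finite subgroups, equivalently having all finitely generated subgroups finite). Thus the substance is to show that, for a locally finite discrete group $G$, being $p^\mathbb{N}$-free is equivalent to having a uniform bound on the orders of its finite $p$-subgroups, and then to match the numerical invariant of Theorem \ref{main1} with the maximal such order.

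I would isolate the following claim: for $G$ locally finite and $k \geq 0$, the group $G$ is $p^{k+1}$-free if and only if every finite $p$-subgroup of $G$ has order at most $p^k$. The forward direction is trivial --- a finite $p$-subgroup $P$ of order $p^m$ gives the pair $\{1\} \leq P$ of index $p^m$, forcing $m \leq k$. For the converse I argue contrapositively: if $G$ is not $p^{k+1}$-free, then by Lemma \ref{free_co} (applied to the locally elliptic group $G$, in which ``compact'' means ``finite'') there are finite subgroups $V \leq U \leq G$ with $p^{k+1} \mid [U:V]$; hence $p^{k+1}$ divides the order of the finite group $U$, so by the first Sylow Theorem $U$, and therefore $G$, contains a $p$-subgroup of order $p^{k+1} > p^k$. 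One can of course also inline the relevant case of the proof of Lemma \ref{free_co}: pass to the normal core to assume $V \trianglelefteq U$, use Sylow in the finite group $U/V$ to find $P/V$ of order $p^{k+1}$, lift a transversal of $V$ in $P$ to a finitely generated --- hence finite --- subgroup $P_0$ with $P_0 V = P$, so that $p^{k+1} \mid |P_0|$, and apply Sylow to $P_0$.

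Granting the claim, the proof concludes formally. Combining it over all $k$ with Theorem \ref{main1}: $G$ is normed $p$-adic amenable iff it is locally elliptic and $p^\mathbb{N}$-free iff it is locally finite and $p^{k+1}$-free for some $k$ iff it is locally finite with the orders of its finite $p$-subgroups bounded. When these hold, let $p^{k_0}$ denote the maximal order of a finite $p$-subgroup (a power of $p$, equal to $1$ if $G$ has no nontrivial finite $p$-subgroup). The claim with $k = k_0$ gives that $G$ is $p^{k_0+1}$-free; and the easy direction of the claim shows that $G$ being $p^{j+1}$-free forces $p^{k_0} \leq p^j$. Hence $p(G) = \min\{ p^j \geq 1 \mid G \text{ is } p^{j+1}\text{-free} \} = p^{k_0}$, and Theorem \ref{main1} yields $\| G \|_p = p(G) = p^{k_0}$, the maximal order of a finite $p$-subgroup.

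The only genuinely non-formal input is the converse direction of the claim, and there the real work is already packaged inside Lemma \ref{free_co}: a subgroup of index $p^{k+1}$ inside $U$ over $V$ need not be a finite $p$-group, since $V$ may be infinite, so one cannot directly extract a Sylow subgroup --- local finiteness is exactly what allows replacing everything in sight by finite subgroups before invoking Sylow theory. Every other step is a routine unwinding of definitions or a direct appeal to Theorem \ref{main1}.
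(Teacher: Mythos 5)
Your proof is correct and follows essentially the same route as the paper: both reduce the statement to Theorem \ref{main1} and then translate $p^\mathbb{N}$-freeness of a locally finite group into a uniform bound on the orders of its finite $p$-subgroups, identifying $\| G \|_p$ with the maximal such order. The only cosmetic difference is that you invoke Lemma \ref{free_co} together with an explicit Sylow argument, whereas the paper cites Proposition \ref{free lpc} (itself proved via Lemma \ref{free_co}) and leaves that Sylow step implicit.
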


\begin{proof}
Theorem \ref{main1} states that $G$ is normed $p$-adic amenable if and only if it is locally finite and $p^\mathbb{N}$-free. By Proposition \ref{free lpc}, the $p^\mathbb{N}$-freeness of $G$ is determined by ascending sequences of finite subgroups, since descending ones are necessarily eventually constant.
\end{proof}

Any group admits Sylow $p$-subgroups, that is, $p$-subgroups that are maximal with respect to inclusion. The previous characterization implies that the $p$-Sylow structure of a normed $p$-adic amenable group is especially nice:

\begin{corollary}
Let $G$ be a normed $p$-adic amenable discrete group. Then all of its $p$-Sylow subgroups are finite.
\end{corollary}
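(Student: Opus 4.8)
The plan is to deduce this directly from Proposition \ref{disc} together with an elementary fact about groups in which finite subgroups have bounded order. First I would recall that, by Proposition \ref{disc}, since $G$ is normed $p$-adic amenable it is locally finite and there is a uniform bound $C := \| G \|_p < \infty$ on the orders of its finite $p$-subgroups. Let $P \leq G$ be a $p$-Sylow subgroup, that is, a $p$-subgroup maximal with respect to inclusion. Being a subgroup of the locally finite group $G$, the group $P$ is itself locally finite, so every finitely generated subgroup of $P$ is a finite $p$-group, hence of order at most $C$.

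Next I would show that any group all of whose finitely generated subgroups have order bounded by a fixed constant is itself finite. Suppose toward a contradiction that $P$ is infinite. Since the orders of its finite subgroups are bounded by $C$, we may choose a finite subgroup $F \leq P$ of maximal order. As $P$ is infinite there exists $g \in P \setminus F$; then $\langle F, g \rangle$ is a finitely generated subgroup of the locally finite group $P$, hence finite, and it properly contains $F$, so $|\langle F, g \rangle| > |F|$, contradicting the maximality of $|F|$. Therefore $P$ is finite.

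The argument is essentially immediate once Proposition \ref{disc} is available, so I do not expect a genuine obstacle; the only subtlety is that a $p$-Sylow subgroup need not be normal or finitely generated a priori, which is why one invokes local finiteness of $G$ to bound the finitely generated subgroups of $P$ rather than trying to bound $P$ itself at the outset. One could also phrase the conclusion more precisely: this maximal finite $p$-subgroup $F$ realizes the bound, so in fact $|P| \leq \| G \|_p$, and combined with Proposition \ref{disc} every $p$-Sylow subgroup of $G$ has order exactly $\| G \|_p$.
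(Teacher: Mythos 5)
Your proof is correct and follows essentially the same route as the paper: both rest on Proposition \ref{disc} (local finiteness plus the uniform bound $\|G\|_p$ on orders of finite $p$-subgroups) and derive a contradiction from an infinite Sylow $p$-subgroup by producing a finite $p$-subgroup that is too large, your variant via a maximal-order finite subgroup $F$ and $\langle F, g\rangle$, the paper's by generating subgroups from arbitrarily many distinct elements. One caveat on your closing aside: the claim that every $p$-Sylow subgroup has order exactly $\|G\|_p$ is true but does not follow immediately from maximality under inclusion plus Proposition \ref{disc}; it is precisely part 1 of the paper's subsequent Sylow Theorems proposition, whose proof needs the finite Sylow theorem applied to the finite subgroup generated by $P$ and a maximal-order finite $p$-subgroup.
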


\begin{proof}
Let $P$ be a Sylow $p$-subgroup of the locally finite group $G$. Then $P$ is locally finite, and all of its subgroups are also $p$-groups. Suppose that $P$ is infinite. For any $n$, choose $n$ distinct elements in $P$ and consider the subgroup generated by them. This will be a finite $p$-group of order at least $n$. By the previous proposition, $G$ is not $p$-adic amenable.
\end{proof}

This allows to prove analogues of the Sylow Theorems:

\begin{proposition}[Sylow Theorems]
Let $G$ be a normed $p$-adic amenable discrete group. Then:
\begin{enumerate}
\item The $p$-Sylow subgroups of $G$ are precisely the finite $p$-subgroups of $G$ of maximal order.
\item All $p$-Sylow subgroups of $G$ are conjugate.
\item If $G$ has finitely many $p$-Sylow subgroups, then the number of such subgroups is congruent to $1 \mod p$.
\end{enumerate}
\end{proposition}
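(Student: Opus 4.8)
The plan is to bootstrap the classical finite Sylow theory using local finiteness. Write $p^k := \|G\|_p$, which by Proposition~\ref{disc} is the maximal order of a finite $p$-subgroup of $G$. I would first record two elementary observations. First, \emph{every} $p$-subgroup $P \leq G$ is finite of order at most $p^k$: being a subgroup of the locally finite group $G$, $P$ is locally finite, so if it were infinite it would contain finite $p$-subgroups of arbitrarily large order, contradicting Proposition~\ref{disc}; and $|P| \leq p^k$ by definition of $p^k$. Second (the \emph{companion remark}): if $H \leq G$ is a finite subgroup containing some $p$-subgroup of order $p^k$, then the Sylow $p$-subgroups of $H$ have order exactly $p^k$, since $p^k \mid |H|$ while $H$, sitting inside $G$, cannot contain a $p$-subgroup of order $p^{k+1}$.

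For part (1): if $Q$ is a $p$-subgroup with $|Q| = p^k$, then any $p$-subgroup containing $Q$ has order at most $p^k$, hence equals $Q$, so $Q$ is a maximal $p$-subgroup, i.e.\ a Sylow $p$-subgroup. Conversely, let $P$ be a Sylow $p$-subgroup; it is finite, say $|P| = p^j \leq p^k$. Choosing any $Q$ with $|Q| = p^k$ and setting $H := \langle P, Q \rangle$, which is finite by local finiteness, I take a Sylow $p$-subgroup $S$ of $H$ with $P \leq S$; by the companion remark $|S| = p^k$, but $S$ is a $p$-subgroup of $G$ containing $P$, so $S = P$ by maximality, forcing $j = k$. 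This proves (1) and, as a by-product, that all Sylow $p$-subgroups have order $p^k$. Part (2) follows in the same vein: given Sylow $p$-subgroups $P, Q$ (both of order $p^k$), the group $H := \langle P, Q \rangle$ is finite, and by the companion remark $P$ and $Q$ are Sylow $p$-subgroups of $H$; the classical Sylow conjugacy theorem in $H$ produces $g \in H \leq G$ with $gPg^{-1} = Q$.

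For part (3), suppose $G$ has exactly $m$ Sylow $p$-subgroups $P_1, \dots, P_m$, fix $P := P_1$, and let it act on $\{P_1, \dots, P_m\}$ by conjugation; this is well-defined since a conjugate of a Sylow $p$-subgroup is a $p$-subgroup of order $p^k$, hence Sylow by (1). Each orbit has size a power of $p$, and I claim the only orbit of size $1$ is $\{P\}$: if $P$ fixes $P_i$, then $P \leq N_G(P_i)$, so $H := \langle P, P_i \rangle$ is a finite subgroup in which $P_i$ is normal; by the companion remark $P_i$ is the unique Sylow $p$-subgroup of $H$, so $P$, being contained in some Sylow $p$-subgroup of $H$, satisfies $P \leq P_i$, whence $P = P_i$ by maximality of $P$. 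Therefore $m$ equals $1$ plus a sum of orbit sizes divisible by $p$, so $m \equiv 1 \pmod p$.

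The only mildly delicate point is that one cannot quote finite Sylow theory directly inside $N_G(P_i)$ or $N_G(P)$, since these normalizers need not be finite; the fix, applied uniformly above, is to first descend to a suitable finitely generated (hence, by local finiteness, finite) subgroup and only then invoke the classical statements. I expect this to be the one step requiring genuine care, with the rest being a routine transcription of the finite-group proofs together with Proposition~\ref{disc}.
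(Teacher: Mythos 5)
Your proof is correct. Parts (1) and (2) are essentially the paper's argument: reduce to the finite subgroup generated by the relevant Sylow subgroups and invoke the classical first and second Sylow theorems there; your ``companion remark'' makes explicit the order bookkeeping that the paper leaves implicit. Part (3), however, takes a genuinely different route. The paper uses the hypothesis that there are finitely many Sylow $p$-subgroups $P_1, \ldots, P_n$ to form the single finite group $H := \langle P_1, \ldots, P_n \rangle$, checks that the $P_i$ are exactly the Sylow $p$-subgroups of $H$ (any Sylow $p$-subgroup of $H$ has order $p^k$, hence is Sylow in $G$ by part (1), hence is one of the $P_i$), and then quotes the third Sylow theorem for $H$. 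You instead re-run the standard orbit-counting proof of the congruence inside the locally finite group: let $P$ act by conjugation on the set of Sylow $p$-subgroups, note orbits have $p$-power size, and identify $P$ as the unique fixed point by descending to the finite group $\langle P, P_i \rangle$ when $P$ normalizes $P_i$. Both arguments are valid; the paper's is shorter since it outsources the counting entirely to the finite case, while yours is more self-contained and makes the only genuinely delicate point --- that normalizers need not be finite, so one must pass to a finitely generated subgroup before invoking finite Sylow theory --- fully explicit.
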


\begin{proof}
1. By the previous corollary, all $p$-Sylow subgroups are finite. Therefore a finite $p$-subgroup of $G$ of maximal order is a $p$-Sylow subgroup. Conversely, let $P$ be $p$-Sylow subgroup of $G$, and let $P'$ be a finite $p$-subgroup of $G$ of maximal order. Let $H$ be the subgroup generated by $P$ and $P'$, which is finite because $G$ is locally finite. By maximality of $P$, and maximality of the order of $P'$, it follows that $P$ and $P'$ are $p$-Sylow subgroups of $H$. So $|P| = |P'|$ by the first Sylow Theorem and thus $P$ is also of maximal order. \\

2. Let $P, P'$ be two $p$-Sylow subgroups of $G$, and let $H$ be the finite subgroup generated by them. Then $P, P'$ are $p$-Sylow subgroups of $H$, so by the Second Sylow Theorem $P$ and $P'$ are conjugate in $H$, hence in $G$. \\

3. Suppose that $P_1, \ldots, P_n$ are the $p$-Sylow subgroups of $G$, and let $H$ be the finite group generated by them. Then $P_1, \ldots, P_n$ is the collection of $p$-Sylow subgroups of $H$, so by the Third Sylow Theorem $n \equiv 1 \mod p$.
\end{proof}

The following lemma allows to produce new examples:

\begin{lemma}
\begin{enumerate}
\item Direct sums of $p$-adic amenable discrete groups of norm 1 are $p$-adic amenable of norm 1.
\item Let $G$ be a finite group which is $p$-adic amenable of norm 1, and let $I$ be an arbitrary index set. Then $\prod\limits_{I} G$ is $p$-adic amenable of norm 1, as a discrete group.
\end{enumerate}
\end{lemma}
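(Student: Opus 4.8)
The plan is to read everything off Proposition \ref{disc}, according to which a discrete group is normed $p$-adic amenable of norm $1$ exactly when it is locally finite and contains no subgroup of order $p$. This last condition is equivalent to having no element of order $p$: such an element generates a subgroup of order $p$, and conversely any nontrivial finite $p$-subgroup contains an element of order $p$ by Cauchy's theorem. So for both parts it suffices to verify these two properties for the group in question.

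For part $(1)$, let $G_j$, $j\in J$, be $p$-adic amenable of norm $1$ and put $G:=\bigoplus_{j\in J}G_j$. By Proposition \ref{disc} each $G_j$ is locally finite with no element of order $p$. Then $G$ is locally finite: a finitely generated subgroup $H\le G$ is generated by finitely many elements, each of finite support, so $H\le\bigoplus_{j\in F}G_j$ for some finite $F\subseteq J$; for each $j\in F$ the projection $\pi_j(H)$ is a finitely generated subgroup of the locally finite group $G_j$, hence finite, and $H$ embeds in the finite group $\prod_{j\in F}\pi_j(H)$, so $H$ is finite. Moreover $G$ has no element of order $p$, since the order of a nonzero element of $G$ is the lcm of the finitely many orders of its nonzero coordinates, and an element of order $p$ would force some $G_j$ to contain one. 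Proposition \ref{disc} gives $\|G\|_p=1$. (Alternatively: write $G$ as the directed union of the $\bigoplus_{j\in F}G_j$ over finite $F$, each of norm $1$ by iterating Corollary \ref{extt}, and conclude with Corollary \ref{dirunn}.)

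For part $(2)$, let $G$ be finite and $p$-adic amenable of norm $1$. By Theorem \ref{finite} this means $1=\|G\|_p=|\,|G|\,|_p^{-1}$, i.e. $p\nmid|G|$, so $G$ has no element of order $p$. Set $\Gamma:=\prod_{i\in I}G$, viewed as a discrete group. Exactly as in part $(1)$, $\Gamma$ has no element of order $p$. The only substantial point is that $\Gamma$ is locally finite; this cannot be argued merely from $\Gamma$ having finite exponent, but uses the finiteness of $G$. Given $h_1,\dots,h_n\in\Gamma$, write $h_k=(h_k^{(i)})_{i\in I}$, and for each $i$ let $\phi_i\colon F_n\to G$ be the homomorphism from the free group on $x_1,\dots,x_n$ sending $x_k\mapsto h_k^{(i)}$. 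Since $G$ is finite, the set $\Psi:=\{\phi_i:i\in I\}\subseteq\Hom(F_n,G)$ is finite. The homomorphism $F_n\to\Gamma$, $x_k\mapsto h_k$, factors as
\[ F_n\xrightarrow{\ w\mapsto(\psi(w))_{\psi\in\Psi}\ }\prod_{\psi\in\Psi}G\xrightarrow{\ (x_\psi)_\psi\mapsto(x_{\phi_i})_i\ }\Gamma, \]
and the second map is injective because every $\psi\in\Psi$ occurs as some $\phi_i$; hence $\langle h_1,\dots,h_n\rangle$ embeds in the finite group $\prod_{\psi\in\Psi}G$ and is finite. By Proposition \ref{disc}, $\|\Gamma\|_p=1$.

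The only genuine obstacle is the local finiteness of the full product $\prod_{i\in I}G$ in part $(2)$: one must avoid the (false) slogan "finite exponent implies locally finite" and instead exploit that $\Hom(F_n,G)$ is finite, as above; every other step is a direct application of Proposition \ref{disc} together with Theorem \ref{finite}.
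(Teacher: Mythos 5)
Your proof is correct. It differs from the paper's argument mainly in the final step: the paper notes that in both cases every finitely generated subgroup embeds in a finite direct product of the given norm-one groups, deduces that such subgroups have norm one from the hereditary properties (Propositions \ref{opsub} and \ref{ext}), and concludes by the directed-union result (Proposition \ref{dirun}); you instead verify the algebraic criterion of Proposition \ref{disc} (locally finite and no element of order $p$), using Theorem \ref{finite} to translate norm one for a finite group into $p\nmid|G|$. The substantive content is shared: both routes hinge on the fact that finitely generated subgroups sit inside finite direct products, and for part (2) this is exactly the finiteness of $\Hom(F_n,G)$, which the paper leaves implicit and you spell out — a worthwhile addition, since (as you rightly warn) finite exponent alone does not give local finiteness. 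What the paper's route buys is independence from the full characterization: it only needs the closure properties, and the same one-line argument handles both items uniformly. What your route buys is a completely explicit verification, plus the clean reduction of "norm one" to "no elements of order $p$"; your parenthetical alternative for part (1) via Corollary \ref{extt} and Corollary \ref{dirunn} is essentially the paper's mechanism. Both arguments are sound.
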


\begin{proof}
In both cases, every finitely generated subgroup embeds in a finite direct product. By Proposition \ref{opsub} and Proposition \ref{ext}, every subgroup of a finite direct product of $p$-adic amenable groups of norm 1 is also $p$-adic amenable of norm 1. We conclude by Proposition \ref{dirun}.
\end{proof}

\begin{example}
\label{ex disc ap}

Let $(a_p)_p$ be a sequence indexed on the primes, with $a_p \in p^\mathbb{N} \cup \{ \infty \}$. Then there exists a discrete group $G$ such that $\|G\|_p = a_p$ for all primes $p$. An example is $\bigoplus_p G_p$, where $G_p = \left( \mathbb{Z}/p \mathbb{Z} \right)^{\mathbb{N}}$ if $a_p = \infty$, and $G_p = \mathbb{Z} / a_p \mathbb{Z}$ otherwise.

Notice that if a group is $p$-adic amenable of norm 1 for every $p$, then it is automatically trivial. Otherwise, given $g \neq 1$, it has finite order because $G$ is periodic, and then $G$ cannot be $p$-adic amenable of norm $1$ for the primes $p$ dividing the order of $g$.
\end{example}

We move on to some interesting locally finite or amenable periodic groups, and check if they are normed $p$-adic amenable.

\begin{example}
Let $S_\infty$ be group of bijections of $\mathbb{N}$ with finite support. Equivalently, $S_\infty$ is the directed union of the symmetric groups $S_n$, where $S_n$ embeds into $S_{n+1}$ as the stabilizer of the last letter. This is a locally finite group, but it is not $p^\mathbb{N}$-free for any $p$, so it is not normed $p$-adic amenable.

One can similarly define the group $A_\infty$ of even permutations, the directed union of the alternating groups $A_n$, which is also not $p^\mathbb{N}$-free for any $p$.
\end{example}

\begin{example}
Let $q$ be a prime power, and $\mathbb{F}_q$ the field with $q$ elements. For every $n \geq 1$ there is a natural embedding of $\operatorname{GL}_n(\mathbb{F}_q)$ into $\operatorname{GL}_{n+1}(\mathbb{F}_q)$ by acting on the first $n$ standard basic vectors. The \emph{stable linear group} $\operatorname{GL}^0(\mathbb{F}_q)$ is defined as the directed union of these groups along these embeddings. It is a locally finite group, but it is not normed $p$-adic amenable for any $p$, since $S_n$ embeds into $\operatorname{GL}_n(\mathbb{F}_q)$ as the subgroup of permutation matrices, and so $S_\infty$ embeds into $\operatorname{GL}^0(\mathbb{F}_q)$ as well. Therefore this group is not $p^\mathbb{N}$-free for any $p$.

One can similarly define the \emph{stable special linear group} $\operatorname{SL}^0(\mathbb{F}_q)$ which is not $p^\mathbb{N}$-free for any $p$ since it contains $A_\infty$.
\end{example}

\begin{example}
Grigorchuck's group is an amenable periodic $2$-group. However it is finitely generated infinite, so not locally finite, hence it cannot be normed $p$-adic amenable for any $p$.
\end{example}

\begin{example}
\label{ex pruf}

Consider the Pr\"{u}fer $p$-group $\mathbb{Z}(p^\infty)$ (see Subsection \ref{ss_div}). The subgroups of $\mathbb{Z}(p^\infty)$ are precisely $\mathbb{Z}/p \mathbb{Z} \subset \mathbb{Z}/p^2 \mathbb{Z} \subset \cdots$. Therefore $\mathbb{Z}(p^\infty)$ is not normed $p$-adic amenable, but it is $\ell$-adic amenable of norm $1$ for every prime $\ell \neq p$.
\end{example}

\begin{remark}
$\mathbb{Z}(p^\infty)$ also inherits a group topology from the circle group $\mathbb{R}/\mathbb{Z}$, but this one is not locally compact, so it does not make sense to talk about $p$-adic amenability in this case.
\end{remark}

\subsubsection{Matrix groups over locally finite fields}

A rich source of examples is given by matrix groups over locally finite fields.

\begin{definition}
A field $\mathbb{F}$ is \emph{locally finite} if every finitely generated subfield is finite.
\end{definition}

Equivalently, $\mathbb{F}$ is locally finite if it is a subfield of $\overline{\mathbb{F}_p}$, the algebraic closure of $\mathbb{F}_p$, for some prime $p$. From either description it follows that one can write $\mathbb{F}$ as a chain of finite fields of characteristic $p$; that is, $\mathbb{F} = \bigcup_{i \geq 1} \mathbb{F}_{p^{k_i}}$, where $\mathbb{F}_{p^{k_i}} \subset \mathbb{F}_{p^{k_{i+1}}}$, or equivalently $k_i \mid k_{i+1}$.

A matrix group over a locally finite field is locally finite: each element of $\operatorname{GL}_n(\mathbb{F})$ is contained in $\operatorname{GL}_n(\mathbb{F}_{p^{k_i}})$ for $i$ large enough, and so the same is true of a finite set of elements. \\

The largest possible locally finite fields do not yield normed $p$-adic amenable examples:

\begin{example}
For primes $p, \ell$, the groups $\operatorname{GL}_n(\overline{\mathbb{F}_p})$ and $\operatorname{SL}_n(\overline{\mathbb{F}_p})$ for $n \geq 2$ are not normed $\ell$-adic amenable. By Proposition \ref{opsub} it suffices to show this for $\operatorname{SL}_2(\overline{\mathbb{F}_p})$. For $\ell = p$, we notice that this group contains $\operatorname{SL}_2(\mathbb{F}_{p^k})$ for any $k \geq 1$, which in turn contains the subgroup of upper-triangular matrices with $1$'s on the diagonal. The latter is isomorphic to the additive group of $\mathbb{F}_{p^k}$, which has order $p^k$. So $\operatorname{SL}_2(\overline{\mathbb{F}_p})$ is not $p^\mathbb{N}$-free.

For $\ell \neq p$, since $\overline{\mathbb{F}_p}^\times$ embeds into $\operatorname{SL}_2(\overline{\mathbb{F}_p})$ via the map $x \mapsto diag(x, x^{-1})$, it suffices to show that $\overline{\mathbb{F}_p}^\times$ is not normed $\ell$-adic amenable. To see this, notice that $\overline{\mathbb{F}_p}^\times$ contains $\mathbb{F}_{p^k}^\times$ for any $k \geq 1$, which has order $p^k - 1$. Now for every integer $i \geq 1$, the integer $p$ is a unit modulo $\ell^i$. Therefore there exists some $k$ such that $p^k \equiv 1 \mod \ell^i$, and so $\ell^i \mid (p^k - 1)$. So $\overline{\mathbb{F}_p}^\times$ is not $\ell^\mathbb{N}$-free.

The same proof shows that $\operatorname{GL}_1(\overline{\mathbb{F}_p}) = \overline{\mathbb{F}_p}^\times$ is $p$-adic amenable of norm 1 (as $p$ never divides $p^k - 1$) but not normed $\ell$-adic amenable for any $\ell \neq p$.
\end{example}

This suggests that, in order to find normed $p$-adic amenable matrix groups over locally finite fields, we should look at smaller fields. In the next examples we shall construct the fields in such a way that the corresponding general linear group does not admit elements of certain prescribed orders.

\begin{lemma}
\label{gl2f p lemma}

Let $\ell, p$ be primes such that $\ell$ is a primitive root modulo $p$. Let $a > 1$ be an integer, and define $\mathbb{F}$ to be the directed union of the fields $\mathbb{F}_q$, where $(q = \ell^{a^k})_{k \geq 1}$. Suppose that $\operatorname{GL}_2(\mathbb{F})$ has an element of order $p$. Then $(p - 1)$ divides $2 a^k$ for some $k \geq 1$.
\end{lemma}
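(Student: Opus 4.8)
The plan is to convert the hypothesis into a divisibility statement about $q^2-1$ and then invoke the primitive-root assumption. First I would record that $p \ne \ell$: since $\ell$ is a primitive root modulo $p$ it is in particular nonzero modulo $p$, so $\ell \ne p$. Next I would use that the exponents satisfy $a^k \mid a^{k+1}$, so the fields $\mathbb{F}_{\ell^{a^k}}$ form an increasing chain whose union is $\mathbb{F}$; hence a given element $g \in \operatorname{GL}_2(\mathbb{F})$ of order $p$, having only finitely many entries, already lies in $\operatorname{GL}_2(\mathbb{F}_q)$ for $q := \ell^{a^k}$ with $k$ large enough. I fix such a $q$ for the rest of the argument.

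The heart of the proof is to locate a primitive $p$-th root of unity in a quadratic extension of $\mathbb{F}_q$. I would argue that the minimal polynomial $m(x) \in \mathbb{F}_q[x]$ of $g$ divides $x^p - 1$, which is separable because $\operatorname{char}\mathbb{F}_q = \ell \ne p$; and $m(x) \ne x-1$ since $g$ is not the identity. Therefore $m$ has a root $\zeta \in \overline{\mathbb{F}_q}$ which is a $p$-th root of unity distinct from $1$, hence primitive as $p$ is prime. Since $\deg m \le 2$, this $\zeta$ lies in $\mathbb{F}_{q^2}$, and as $\mathbb{F}_{q^2}^\times$ is cyclic of order $q^2-1 = \ell^{2a^k}-1$, I conclude $p \mid \ell^{2a^k}-1$, i.e.\ $\ell^{2a^k} \equiv 1 \pmod p$.

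Finally I would use that $\ell$ being a primitive root modulo $p$ means its multiplicative order in $(\mathbb{Z}/p\mathbb{Z})^\times$ is exactly $p-1$; hence $\ell^{2a^k} \equiv 1 \pmod p$ forces $(p-1) \mid 2a^k$, which is the claim. The only step requiring a little care is the extraction of the primitive $p$-th root of unity — specifically, ruling out that $m(x)$ is a power of $x-1$ — but this is immediate from the separability of $x^p-1$ together with $g \ne \mathrm{id}$; everything else is routine bookkeeping about finite fields.
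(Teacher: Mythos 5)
Your proof is correct, but it takes a different route from the paper. The paper argues via order counting: an element of order $p$ forces $p \mid |\operatorname{GL}_2(\mathbb{F}_q)| = (q^2-1)(q^2-q)$ by Lagrange, and since $p \neq \ell$ and $(q-1) \mid (q^2-1)$, primality of $p$ gives $p \mid q^2-1$; the primitive-root hypothesis then finishes exactly as you do. You instead extract the divisibility $p \mid q^2-1$ by linear algebra: the minimal polynomial of the order-$p$ element divides the separable polynomial $x^p-1$, is not $x-1$, and has degree at most $2$, so a primitive $p$-th root of unity lies in $\mathbb{F}_{q^2}^\times$, whence $p \mid q^2-1$. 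Both arguments are sound and of comparable length; your eigenvalue argument lands on $p \mid q^2-1$ directly without the small detour through $(q^2-1)(q-1)$, and it also adapts cleanly to the paper's subsequent $\operatorname{GL}_n$ generalization (the minimal polynomial has degree at most $n$, so $p \mid q^i-1$ for some $i \le n$), while the paper's counting argument generalizes equally well via the order formula for $\operatorname{GL}_n(\mathbb{F}_q)$. Your preliminary steps (that $p \neq \ell$, and that any single matrix already lies in some $\operatorname{GL}_2(\mathbb{F}_q)$ because the $\mathbb{F}_{\ell^{a^k}}$ form a chain) are the same bookkeeping the paper performs implicitly, and the point you flag — ruling out that the minimal polynomial is a power of $x-1$ — is indeed settled by squarefreeness of $x^p-1$ together with $g \neq \mathrm{id}$.
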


\begin{proof}
Every element of $\operatorname{GL}_2(\mathbb{F})$ is contained in $\operatorname{GL}_2(\mathbb{F}_q)$ for some $q$. This group has order $(q^2 - 1)(q^2 - q)$. Since by assumption $\operatorname{GL}_2(\mathbb{F})$ contains an element of order $p$, there exists some $q$ such that $p$ divides this order. Now $p \neq \ell$, so $p$ cannot divide $q$; therefore $p$ must divide $(q^2 - 1)(q - 1)$. Moreover $p$ is prime and $(q - 1)$ divides $(q^2 - 1)$, so $p$ divides $(q^2 - 1)$. In other words $\ell^{2 a^k} = q^2 \equiv 1 \mod p$. Since $\ell$ is a primitive root modulo $p$, the order of $\ell$ in $\mathbb{F}_p^\times$ is $(p - 1)$, and so $(p - 1)$ divides $2 a^k$.
\end{proof}

\begin{remark}
By Dirichlet's Theorem \cite[Chapter VI]{Serre}, for every prime $p$ there exist infinitely many primes $\ell$ such that $\ell$ is a primitive root modulo $p$.
\end{remark}

\begin{example}
Let $p$ be a prime which is not Fermat; that is, $(p - 1)$ is not a power of $2$. Let $\ell$ be a prime which is a primitive root modulo $p$, and let $a = 2$. Then the previous construction gives a field $\mathbb{F}$ such that $\operatorname{GL}_2(\mathbb{F})$ contains no element of order $p$. In particular $\| \operatorname{GL}_2(\mathbb{F}) \|_p = 1$.
\end{example}

\begin{example}
\label{GL p > 3}

Let $p > 3$. Let $\ell$ be a prime which is a primitive root modulo $p$ and $a$ be an integer which is coprime to $(p - 1)$. Then the previous construction gives a field $\mathbb{F}$ such that $\operatorname{GL}_2(\mathbb{F})$ contains no element of order $p$. In particular $\| \operatorname{GL}_2(\mathbb{F}) \|_p = 1$.
\end{example}

This example can be adapted to all $n \geq 2$, and the proof is exactly the same:

\begin{lemma}
Let $\ell, p$ be primes such that $\ell$ is a primitive root modulo $p$. Let $n \geq 2$ be an integer. Let $a > 1$, and define $\mathbb{F}$ to be the directed union of the fields $\mathbb{F}_q$, where $(q = \ell^{a^k})_{k \geq 1}$. Suppose that $\operatorname{GL}_n(\mathbb{F})$ has an element of order $p$. Then $(p - 1)$ divides $i a^k$ for some $2 \leq i \leq n$ and some $k \geq 1$.
\end{lemma}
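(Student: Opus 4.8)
The plan is to imitate the proof of Lemma \ref{gl2f p lemma} verbatim, carrying the exponent $a^k$ through unchanged and only keeping track of which power of $q$ shows up.

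First I would record that $p \neq \ell$: since $\ell$ is a primitive root modulo $p$ it is in particular a unit modulo $p$, so $p \nmid \ell$. Now assume $g \in \operatorname{GL}_n(\mathbb{F})$ has order $p$. As $\mathbb{F} = \bigcup_{k \geq 1} \mathbb{F}_{\ell^{a^k}}$ is a directed union and $g$ has only finitely many matrix entries, there is some $k \geq 1$ with $g \in \operatorname{GL}_n(\mathbb{F}_q)$ for $q = \ell^{a^k}$. By Lagrange's theorem $p$ divides
$$|\operatorname{GL}_n(\mathbb{F}_q)| = \prod_{j=0}^{n-1}(q^n - q^j) = q^{\binom{n}{2}} \prod_{i=1}^{n}(q^i - 1).$$
Since $p \nmid q$ (because $p \neq \ell$), the prime $p$ cannot divide $q^{\binom{n}{2}}$, so it divides $\prod_{i=1}^{n}(q^i - 1)$, and being prime it divides $q^i - 1$ for some $1 \leq i \leq n$.

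Then I would unwind this divisibility: $q^i \equiv 1 \pmod p$ says $\ell^{i a^k} \equiv 1 \pmod p$, and since $\ell$ is a primitive root modulo $p$, its multiplicative order in $\mathbb{F}_p^\times$ is exactly $p-1$, whence $(p-1) \mid i a^k$. If $i \geq 2$ this is already the desired conclusion. If $i = 1$, then $(p-1) \mid a^k$, hence also $(p-1) \mid 2 a^k$; and since $n \geq 2$ the index $i = 2$ lies in the allowed range $2 \leq i \leq n$, so the statement holds in this case as well. The argument presents no real obstacle — it is essentially the same computation as for $n = 2$ — the only point needing a moment's care is the degenerate case $i = 1$, which is absorbed into $i = 2$ precisely because $n \geq 2$ (this is also why one wants $n \geq 2$ rather than $n \geq 1$).
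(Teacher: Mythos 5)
Your proof is correct and is essentially the argument the paper intends: the paper proves the $n=2$ case explicitly (Lemma \ref{gl2f p lemma}) and simply asserts that "the proof is exactly the same" for general $n$, which is precisely what you carry out via the factorization $|\operatorname{GL}_n(\mathbb{F}_q)| = q^{\binom{n}{2}}\prod_{i=1}^n(q^i-1)$. Your absorption of the case $i=1$ into $i=2$ is the same device the paper uses for $n=2$ when it passes from $p \mid (q^2-1)(q-1)$ to $p \mid (q^2-1)$ via $(q-1)\mid(q^2-1)$.
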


\begin{example}
Let $p, \ell, a$ be as in the previous examples, so that $\operatorname{GL}_2(\mathbb{F})$ has no element of order $p$. Suppose moreover that $(p - 1)$ is relatively prime with $3, 4, \ldots, n$. Then $\operatorname{GL}_n(\mathbb{F})$ has no element of order $p$ either.
\end{example}

In the previous examples we saw that for every $p > 3$ there exist fields $\mathbb{F}$ such that $\| \operatorname{GL}_2(\mathbb{F})\|_p = 1$. The primes $2$ and $3$ have to be treated slightly differently:

\begin{remark}
Let $q$ be a prime power. Then the reader can check that the order $|\operatorname{GL}_2(\mathbb{F}_q)| = (q^2-1)(q^2 - q) = (q - 1)^2 q (q + 1)$ is always a multiple of $3$. Moreover, if $q \notin \{ 2, 4, 8 \}$, then this order is also a multiple of $16$. It follows that if $\mathbb{F}$ is an infinite, locally finite field, then $\| \operatorname{GL}_2(\mathbb{F}) \|_3 \geq 3$, and $\| \operatorname{GL}_2(\mathbb{F}) \|_2 \geq 16$.
\end{remark}

However, we can do the next best thing.

\begin{lemma}
Let $\ell$ be a prime such that $\ell \equiv 2 \mod 9$. Let $a > 1$ be an integer and define $\mathbb{F}$ to be the directed union of the fields $\mathbb{F}_q$, where $(q = \ell^{a^k})_{k \geq 1}$. Suppose that $\operatorname{GL}_2(\mathbb{F})$ has a subgroup of order $9$. Then $a$ is a multiple of $3$.
\end{lemma}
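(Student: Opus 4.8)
The plan is to pass to a single finite general linear group and then read off a congruence for $q$ modulo $9$ that forces $3 \mid a$. Since any subgroup $P \leq \operatorname{GL}_2(\mathbb{F})$ of order $9$ is finite and $\mathbb{F} = \bigcup_{k \geq 1} \mathbb{F}_q$ (with $q = \ell^{a^k}$) is an increasing union of finite fields, $P$ is contained in $\operatorname{GL}_2(\mathbb{F}_q)$ for some $k \geq 1$; I fix this $q = \ell^{a^k}$. Because $\ell \equiv 2 \pmod 9$ forces $\ell \neq 3$, the order $q$ is coprime to $3$, so every element of the $3$-group $P$ is semisimple, and $|\operatorname{GL}_2(\mathbb{F}_q)| = q(q-1)^2(q+1)$ has $3$-adic valuation $2\,v_3(q-1) + v_3(q+1)$.

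The heart of the argument, and the step I expect to be the main obstacle, is to deduce from the existence of $P$ that $9 \mid q^2 - 1$, equivalently $q \equiv \pm 1 \pmod 9$. A subgroup of order $9$ is a group of commuting semisimple elements and hence lies in a maximal torus of $\operatorname{GL}_2(\mathbb{F}_q)$: either the non-split torus $\cong \mathbb{F}_{q^2}^\times$, which is cyclic of order $q^2 - 1$ and immediately yields $9 \mid q^2 - 1$; or the split torus $\cong \mathbb{F}_q^\times \times \mathbb{F}_q^\times$. The delicate case is the split one, since a diagonal copy of $\mathbb{Z}/3 \times \mathbb{Z}/3$ would already appear as soon as $3 \mid q - 1$; ruling this possibility out is exactly where the congruence $\ell \equiv 2 \pmod 9$ must be used decisively, forcing the relevant $3$-Sylow subgroup to be cyclic and hence $9 \mid q^2 - 1$ rather than merely $3 \mid q - 1$. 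I therefore expect the bulk of the work to lie in this torus analysis, not in the number theory that follows.

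Once $q \equiv \pm 1 \pmod 9$ is in hand, the conclusion is pure number theory. The residue class of $\ell \equiv 2 \pmod 9$ has multiplicative order $6$ modulo $9$, with $2^3 \equiv -1$ and $2^6 \equiv 1 \pmod 9$, so $q = \ell^{a^k} \equiv 2^{a^k} \pmod 9$ and
$$ 2^{a^k} \equiv 1 \pmod 9 \iff 6 \mid a^k, \qquad 2^{a^k} \equiv -1 \pmod 9 \iff a^k \equiv 3 \pmod 6. $$
In either case the condition $q \equiv \pm 1 \pmod 9$ forces $3 \mid a^k$, and since a prime divides $a^k$ precisely when it divides $a$, we have $3 \mid a^k \iff 3 \mid a$. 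Hence $a$ is a multiple of $3$, as claimed.
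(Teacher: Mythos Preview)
Your plan is sound up to the step you yourself flag as delicate, but that step cannot be completed: the lemma as stated is actually false. Take any prime $\ell \equiv 2 \pmod 9$ and $a = 2$. For $k = 1$ one has $q = \ell^{2} \equiv 4 \pmod 9$, so $3 \mid q - 1$ while $9 \nmid q^2 - 1$. The split torus $\mathbb{F}_q^\times \times \mathbb{F}_q^\times$ then contains a copy of $\mathbb{Z}/3 \times \mathbb{Z}/3$, giving a subgroup of order $9$ in $\operatorname{GL}_2(\mathbb{F}_q) \subset \operatorname{GL}_2(\mathbb{F})$ even though $a = 2$ is not a multiple of $3$. The congruence $\ell \equiv 2 \pmod 9$ does nothing to prevent this, so the ``torus analysis'' you anticipate cannot succeed.

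The paper's own proof has precisely this gap: it asserts that ``by the same argument'' as in the earlier lemma on elements of prime order, the existence of an order-$9$ subgroup forces $9 \mid q^2 - 1$; but that argument uses the primality of $p$ essentially, and for $9 = 3^2$ only $9 \mid (q-1)^2(q+1)$ follows from Lagrange. Your suspicion that something nontrivial was hiding here was well founded. The statement (and its use in the subsequent example) can be repaired by adding the hypothesis that $a$ is odd: then every $a^k$ is odd, hence $q \equiv 2 \pmod 3$; any order-$3$ element has its two eigenvalues in $\mathbb{F}_{q^2} \setminus \mathbb{F}_q$ and therefore cyclic centralizer (the non-split torus), so the non-cyclic case is genuinely excluded and only $\mathbb{Z}/9$ remains, which does force $9 \mid q^2 - 1$ as you wanted.
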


\begin{proof}
By the same argument as in Lemma \ref{gl2f p lemma}, the hypothesis implies that $9$ must divide $(q^2 - 1)$, and so $\ell^{2 a^k} \equiv 4^{a^k} \equiv 1 \mod 9$. We conclude by noticing that the order of $4$ in $(\mathbb{Z}/9\mathbb{Z})^\times$ is $3$.
\end{proof}

\begin{example}
\label{GL p 3}

Dirichlet's Theorem gives infinitely many possible choices of $\ell$. Choosing $a$ to be an integer not divisible by $3$, the previous construction gives a field $\mathbb{F}$ such that $\| \operatorname{GL}_2(\mathbb{F}) \|_3 = 3$.
\end{example}

\begin{lemma}
Let $\ell$ be a prime such that $\ell \equiv 3 \mod 8$. Let $a > 1$ be an integer and define $\mathbb{F}$ to be the directed union of the fields $\mathbb{F}_q$, where $(q = \ell^{a^k})_{k \geq 1}$. Suppose that $\operatorname{GL}_2(\mathbb{F})$ has a subgroup of order $32$. Then $a$ is even.
\end{lemma}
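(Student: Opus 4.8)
The plan is to reuse the strategy of Lemma \ref{gl2f p lemma} and of the preceding order-$9$ lemma. A subgroup of $\operatorname{GL}_2(\mathbb{F})$ of order $32$ is finite, hence (as $\mathbb{F}$ is a directed union of finite fields and a finite set of matrices has only finitely many entries) it is contained in $\operatorname{GL}_2(\mathbb{F}_q)$ for some term $q = \ell^{a^k}$ of the defining chain. By Lagrange, $2^5$ then divides
$$|\operatorname{GL}_2(\mathbb{F}_q)| = (q^2 - 1)(q^2 - q) = q\,(q-1)^2\,(q+1).$$
Since $\ell \equiv 3 \pmod 8$ is odd, so is $q$, whence $2 \nmid q$ and $\nu_2(|\operatorname{GL}_2(\mathbb{F}_q)|) = 2\,\nu_2(q-1) + \nu_2(q+1)$. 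I would then carry out the elementary $2$-adic bookkeeping: among the consecutive even integers $q-1$ and $q+1$ exactly one is $\equiv 2 \pmod 4$, and a short case analysis on $q \bmod 8$ shows that $\nu_2(|\operatorname{GL}_2(\mathbb{F}_q)|) \geq 5$ if and only if $q \not\equiv 3 \pmod 8$. Indeed, when $q \equiv 3 \pmod 8$ one has $\nu_2(q-1) = 1$ and $\nu_2(q+1) = 2$, so the valuation is exactly $4$; in every other odd residue class modulo $8$ the valuation is at least $5$.

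Granting this, it remains to observe that $3$ has order $2$ in $(\mathbb{Z}/8\mathbb{Z})^\times$, so $\ell^n \equiv 3 \pmod 8$ whenever $n$ is odd. If $a$ were odd, then $a^k$ is odd for every $k$, hence every term $q = \ell^{a^k}$ of the chain satisfies $q \equiv 3 \pmod 8$; by the previous paragraph no $\operatorname{GL}_2(\mathbb{F}_q)$ would then have order divisible by $32$, contradicting the hypothesis. Therefore $a$ must be even.

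The argument has no genuine obstacle; the only point requiring a little care — and the one real difference from the odd-prime lemmas — is that here \emph{both} factors $q-1$ and $q+1$ contribute to the relevant ($2$-adic) valuation, so the case split has to track $q$ modulo $8$ rather than modulo the ambient prime. This is also what underlies, and makes precise, the lower bound $\|\operatorname{GL}_2(\mathbb{F})\|_2 \geq 16$ recorded above for infinite locally finite $\mathbb{F}$.
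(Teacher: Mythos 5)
Your proof is correct and follows essentially the same route as the paper: reduce the order-$32$ subgroup to some $\operatorname{GL}_2(\mathbb{F}_q)$ in the chain, use that $\ell \equiv 3 \pmod 8$ and $a^k$ odd force $q \equiv 3 \pmod 8$, and then compute that $\nu_2\bigl((q-1)^2 q (q+1)\bigr) = 4 < 5$, contradicting Lagrange. The extra "if and only if" bookkeeping over the other odd residue classes mod $8$ is harmless but not needed.
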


\begin{proof}
Suppose that $a$ is odd. By hypothesis $\ell \equiv 3 \mod 8$, so $q = \ell^{a^k} \equiv 3^{a^k} \mod 8$. Since each element has order $2$ in $(\mathbb{Z}/8\mathbb{Z})^\times$, we have $3^{a^k} \equiv 3 \mod 8$. Then $(q - 1) \equiv 2 \mod 8$; $(q + 1) \equiv 4 \mod 8$, and so $16$ is the highest power of $2$ dividing $(q - 1)^2 q (q + 1) =  |\operatorname{GL}_2(\mathbb{F}_q)|$.
\end{proof}

\begin{example}
\label{GL p 2}

Once again, Dirichlet's Theorem gives infinitely many possible choices of $\ell$. Choosing $a$ to be an odd integer, the previous construction gives a field $\mathbb{F}$ such that $\| \operatorname{GL}_2(\mathbb{F}) \|_2 = 16$.
\end{example}

Next, we look at more examples of normed $2$-adic amenable groups.

\begin{example}
\label{F quad cl}

Let $\mathbb{F}$ be a locally finite field which is not quadratically closed, meaning that not all elements have square roots in $\mathbb{F}$. We claim that $\operatorname{GL}_n(\mathbb{F})$ is normed $2$-adic amenable for all $n \geq 1$. This fact is stated in \cite{loc_fin} (see the discussion after Theorem 4.20), but we were not able to find a proof in the literature, so we provide one here. We will need the following lemma \cite{LTE}:

\begin{lemma}[Lifting the Exponent Lemma]
Let $u$ be an odd integer, $e$ an integer. If $\nu_2(u - 1) = 1$ and $e$ is even, then $\nu_2(u^e - 1) = \nu_2(u + 1) + \nu_2(e)$. In all other cases, $\nu_2(u^e - 1) = \nu_2(u - 1) + \nu_2(e)$.
\end{lemma}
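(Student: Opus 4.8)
The plan is to prove the two stated formulas by recovering the classical $p = 2$ case of the Lifting the Exponent Lemma, namely that for odd $u$ and an integer $e \geq 1$ one has $\nu_2(u^e - 1) = \nu_2(u - 1)$ when $e$ is odd, and $\nu_2(u^e - 1) = \nu_2(u-1) + \nu_2(u+1) + \nu_2(e) - 1$ when $e$ is even; the two cases in the statement then follow by splitting on whether $\nu_2(u-1) = 1$. Throughout I would use the elementary observation that for odd $u$ the integers $u - 1$ and $u + 1$ are consecutive even numbers, so $\min\{\nu_2(u-1), \nu_2(u+1)\} = 1$ and exactly one of these two valuations equals $1$.

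Two elementary building blocks carry all the weight. First, for $e$ odd I would factor
$$u^e - 1 = (u - 1)(u^{e-1} + u^{e-2} + \cdots + 1),$$
and note that the second factor is a sum of $e$ odd terms, hence odd; therefore $\nu_2(u^e - 1) = \nu_2(u-1)$, which is $\nu_2(u-1) + \nu_2(e)$ since $\nu_2(e) = 0$. The same factorization with alternating signs gives $\nu_2(u^e + 1) = \nu_2(u+1)$ for $e$ odd. Second, I would isolate a \emph{doubling step}: if $w$ is odd with $w \equiv 1 \pmod 4$, then from $w^2 - 1 = (w-1)(w+1)$ and $\nu_2(w+1) = 1$ one obtains $\nu_2(w^2 - 1) = \nu_2(w - 1) + 1$.

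To assemble these, write $e = 2^s t$ with $t$ odd and $s = \nu_2(e)$, and set $x := u^t$. By the odd-exponent block, $\nu_2(x-1) = \nu_2(u-1)$ and $\nu_2(x+1) = \nu_2(u+1)$, so it suffices to compute $\nu_2(x^{2^s} - 1)$. If $s = 0$ we are already in the odd case. If $s \geq 1$, then $x^2 \equiv 1 \pmod 8$ as a square of an odd number, whence $\nu_2(x^2 - 1) = \nu_2(x-1) + \nu_2(x+1) \geq 3$, directly from $x^2 - 1 = (x-1)(x+1)$. Applying the doubling step $s - 1$ times to $x^2, x^4, \dots$, each of which remains $\equiv 1 \pmod 4$, gives
$$\nu_2(x^{2^s} - 1) = \nu_2(x^2 - 1) + (s - 1) = \nu_2(u-1) + \nu_2(u+1) + \nu_2(e) - 1.$$

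Finally I would split into the two cases of the statement. If $\nu_2(u - 1) = 1$ and $e$ is even, the displayed formula reads $\nu_2(u^e - 1) = 1 + \nu_2(u+1) + \nu_2(e) - 1 = \nu_2(u+1) + \nu_2(e)$, the first asserted identity. In every other case either $e$ is odd, handled by the first block, or $e$ is even with $\nu_2(u-1) \geq 2$, which forces $\nu_2(u+1) = 1$ as the two are consecutive evens, so the displayed formula collapses to $\nu_2(u-1) + \nu_2(e)$, the second asserted identity. The main point requiring care is the $p = 2$ asymmetry: unlike odd primes, the valuation picks up the extra $\nu_2(u+1)$ contribution and the $-1$ correction, so the base case of the doubling iteration must be taken at $x^2$ rather than $x$, precisely because only then is the relevant factor guaranteed to have valuation exactly $1$. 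Pinning down this base case and the accompanying case bookkeeping is the only genuine subtlety.
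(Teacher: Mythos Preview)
Your proof is correct. Note, however, that the paper does not actually prove this lemma: it is stated with a citation to an external reference \cite{LTE} and then applied as a black box inside the proof of the claim about $\operatorname{GL}_n(\mathbb{F})$. So there is nothing in the paper to compare your argument against; you have simply supplied a self-contained proof of a result the paper takes for granted.

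For what it is worth, your argument is the standard one: reduce to odd exponent via the factorization $u^e - 1 = (u-1)(u^{e-1} + \cdots + 1)$, then handle powers of $2$ by the doubling step, taking care to start the iteration at $x^2$ rather than $x$ since only squares of odd integers are guaranteed to be $\equiv 1 \pmod 4$. The final case split recovering the two formulas in the statement from the single formula $\nu_2(u^e - 1) = \nu_2(u-1) + \nu_2(u+1) + \nu_2(e) - 1$ (for even $e$) is clean and correct.
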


\begin{proof}[Proof of claim]
First, notice that every finite field of characteristic $2$ is quadratically closed, since the squaring homomorphism on its multiplicative group has trivial kernel. Therefore, a locally finite field of characteristic $2$ is quadratically closed. Now let $\mathbb{F}$ be a locally finite field of odd characteristic $p$, and write it as $\mathbb{F} = \bigcup_{i \geq 1} \mathbb{F}_{p^{k_i}}$ where $k_i \mid k_{i+1}$. Every finite subgroup of $\operatorname{GL}_n(\mathbb{F})$ is contained in $\operatorname{GL}_n(\mathbb{F}_{p^{k_i}})$ for some $i \geq 1$. We want to bound the order of finite $2$-subgroups of $\operatorname{GL}_n(\mathbb{F})$, for which we only need to bound the order of Sylow $2$-subgroups of $\operatorname{GL}_n(\mathbb{F}_q)$, for $q = p^{k_i}$, under the additional assumption that $\mathbb{F}$ is not quadratically closed. This is equivalent to finding a bound to $\nu_2(|\operatorname{GL}_n(\mathbb{F}_q)|)$. Since $p$ is odd:
$$\nu_2(|\operatorname{GL}_n(\mathbb{F}_q)|) = \nu_2(q^n - 1) + \nu_2(q^n - q) + \cdots + \nu_2(q^n - q^{n-1}) = $$
$$ = \nu_2(q^n - 1) + \nu_2(q^{n-1} - 1) + \cdots + \nu_2(q - 1).$$
By the Lifting The Exponent Lemma, choosing $u = p$, which is odd, and $e = t k_i$ for $1 \leq t \leq n$, we get:
$$\nu_2(q^t - 1) = \nu_2(p^{t k_i} - 1) = \nu_2(p \pm 1) + \nu_2(t k_i) = \nu_2(p \pm 1) + \nu_2(t) + \nu_2(k_i).$$
The first two terms of the last sum are uniformly bounded in terms of $p$ and $n$. So we are left to bound $\nu_2(k_i)$.

Let $x \in \mathbb{F}$ be an element that does not admit a square root in $\mathbb{F}$, which exists by hypothesis. Let $i \geq 1$ be such that $x \in \mathbb{F}_{k_i}$. If $2$ divides $k_j/k_i$ for some $j > i$, then this would imply that $\mathbb{F}_{p^{k_i}} \subset \mathbb{F}_{p^{2k_i}} \subseteq \mathbb{F}_{p^{k_j}} \subset \mathbb{F}$. But $\mathbb{F}_{p^{2k_i}}$ contains all square roots of elements of $\mathbb{F}_{p^{k_i}}$ being its only quadratic extension, so this contradicts the choice of $x$. We conclude that $k_j/k_i$ is odd for all $j > i$. Therefore $\nu_2(k_j) = \nu_2(k_i)$ for all $j > i$, and we are done.
\end{proof}
\end{example}

\begin{example}
The same proof shows that if $\mathbb{F}$ is infinite and quadratically closed, then $\operatorname{GL}_n(\mathbb{F})$ is \emph{not} normed $2$-adic amenable for any $n \geq 2$.

First, if $\mathbb{F}$ is of characteristic $2$, then $|\operatorname{GL}_n(\mathbb{F}_q)|$ is divisible by $q$, which is a power of $2$; and so as $q \to \infty$ we get arbitrarily large Sylow $2$-subgroups.

Secondly, if the characteristic $p$ of $\mathbb{F}$ is odd and $\mathbb{F}$ is quadratically closed, write again $\mathbb{F} = \bigcup_{i \geq 1} \mathbb{F}_{p^{k_i}}$. Each element of $\mathbb{F}_{p^{k_i}}$ needs to have a square root in $\mathbb{F}$, but a finite field of odd characteristic cannot be quadratically closed (the squaring homomorphism has a non-trivial kernel), so $\mathbb{F}$ needs to contain $\mathbb{F}_{p^{2k_i}}$. Therefore, for $j > i$ large enough, we have $\mathbb{F}_{p^{k_i}} \subset \mathbb{F}_{p^{2k_i}} \subseteq \mathbb{F}_{p^{k_j}}$, and so $2$ divides $k_j/k_i$. This implies that $\nu_2(k_i) \to \infty$, and so the same proof as in the previous example shows that $\nu_2(|\operatorname{GL}_n(\mathbb{F}_{p^{k_i}})|) \to \infty$ as well.
\end{example}

\subsubsection{Simple examples}
\label{sss_simple}

We are able to give many examples of simple groups because of the following (\cite[Corollary 8.14, Theorem 8.23]{Rotman} - locally finite fields of characteristic $2$ are perfect):

\begin{theorem}[Jordan--Moore--Dickson]
\label{PSL}

If $\mathbb{F}$ is an infinite locally finite field, then $\operatorname{PSL}_n(\mathbb{F})$ is simple for all $n \geq 2$.
\end{theorem}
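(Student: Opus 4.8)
The plan is to deduce this from the simplicity of the finite groups $\operatorname{PSL}_n(\mathbb{F}_q)$ together with a soft directed-union argument, which fits the spirit of the surrounding section.

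First I would record the elementary fact that a directed union of simple groups is simple: if $G = \bigcup_{i \in I} G_i$ with each $G_i$ a nontrivial simple subgroup and the family $\{G_i\}_{i \in I}$ directed under inclusion, and $N$ is a nontrivial normal subgroup of $G$, then picking $1 \neq x \in N$ and $i_0 \in I$ with $x \in G_{i_0}$, the group $N \cap G_{i_0}$ is a nontrivial normal subgroup of $G_{i_0}$, hence equals $G_{i_0}$; then for any $j \in I$, choosing $k \in I$ with $G_{i_0}, G_j \subseteq G_k$ we get that $N \cap G_k \trianglelefteq G_k$ is nontrivial, so $N \cap G_k = G_k$, and in particular $G_j \subseteq N$. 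Thus $N = G$.

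Next I would apply this with $\mathbb{F}$ written as an increasing union $\mathbb{F} = \bigcup_{i \geq 1} \mathbb{F}_{p^{k_i}}$ of finite subfields (so $k_i \mid k_{i+1}$), which exists precisely because $\mathbb{F}$ is locally finite; since $\mathbb{F}$ is infinite the orders $p^{k_i}$ are unbounded, so passing to a cofinal tail I may assume $p^{k_i} \geq 4$ for all $i$. The field inclusions induce inclusions $\operatorname{SL}_n(\mathbb{F}_{p^{k_i}}) \hookrightarrow \operatorname{SL}_n(\mathbb{F}_{p^{k_{i+1}}})$ compatible with the centres (the scalar matrices of determinant one with entry in the respective subfield), hence inclusions $\operatorname{PSL}_n(\mathbb{F}_{p^{k_i}}) \hookrightarrow \operatorname{PSL}_n(\mathbb{F}_{p^{k_{i+1}}})$, and $\operatorname{PSL}_n(\mathbb{F}) = \bigcup_i \operatorname{PSL}_n(\mathbb{F}_{p^{k_i}})$ is the corresponding directed union. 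By the classical simplicity of $\operatorname{PSL}_n(\mathbb{F}_q)$ for $q \geq 4$ (and for every $q$ when $n \geq 3$), each term of this union is simple, so the first step yields the conclusion. For completeness I would recall that the finite case is itself Iwasawa's criterion applied to the faithful $2$-transitive action of $\operatorname{PSL}_n(\mathbb{F}_q)$ on $\mathbb{P}^{n-1}(\mathbb{F}_q)$: $\operatorname{SL}_n$ is perfect and generated by transvections, and the transvections with a fixed centre form an abelian subgroup which is normal in the stabiliser of the corresponding line, so its conjugates generate the whole group.

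Alternatively, one can bypass the reduction and run Iwasawa's criterion directly over $\mathbb{F}$, with $\mathbb{P}^{n-1}(\mathbb{F})$ in place of $\mathbb{P}^{n-1}(\mathbb{F}_q)$; since $|\mathbb{F}| = \infty > 3$ this avoids the $n=2$, $q \in \{2,3\}$ exceptions entirely. I do not anticipate any genuine obstacle: the only computations needed are the verification that the centre-$L$ transvection subgroup is normal in the line stabiliser and that transvections generate $\operatorname{SL}_n$ — both routine linear algebra — together with the perfectness of $\operatorname{SL}_2$ over a field of more than three elements, which is automatic here since $\mathbb{F}$ is infinite.
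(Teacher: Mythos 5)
Your argument is correct. Note that the paper does not actually prove this statement: it is quoted from the literature (Rotman, where $\operatorname{PSL}_n(K)$ is shown to be simple for an \emph{arbitrary} field $K$ whenever $n \geq 3$, or $n = 2$ and $|K| > 3$, via Iwasawa's criterion applied to the action on $\mathbb{P}^{n-1}(K)$; the parenthetical remark about perfectness of locally finite fields of characteristic $2$ is only there to verify a hypothesis of the cited corollary). Your second route is therefore essentially the cited proof. Your first route is genuinely different and arguably better suited to the surrounding section: it exhibits $\operatorname{PSL}_n(\mathbb{F})$ as a directed union of the finite simple groups $\operatorname{PSL}_n(\mathbb{F}_{p^{k_i}})$ and invokes only the elementary fact that a directed union of simple groups is simple, which is exactly the mechanism the paper uses elsewhere (e.g.\ for the Suzuki groups in Example \ref{Suzuki}) and which interacts directly with the locally finite structure exploited throughout Section \ref{examples}. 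The one point worth spelling out, which you handle correctly, is that the maps $\operatorname{PSL}_n(\mathbb{F}_{p^{k_i}}) \to \operatorname{PSL}_n(\mathbb{F}_{p^{k_{i+1}}})$ are injective with union all of $\operatorname{PSL}_n(\mathbb{F})$: this follows since a scalar matrix of $\operatorname{SL}_n(\mathbb{F}_{p^{k_{i+1}}})$ lying in $\operatorname{SL}_n(\mathbb{F}_{p^{k_i}})$ already has its scalar in $\mathbb{F}_{p^{k_i}}$, so the centres are compatible. With the cofinal restriction to $p^{k_i} \geq 4$ disposing of the two finite exceptions, the argument is complete.
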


This gives examples of infinite simple groups which are $p$-adic amenable of norm $1$, for all $p > 3$:

\begin{example}
\label{PSL p > 3}

Let $p > 3$. By Example \ref{GL p > 3} there exist fields $\mathbb{F}$ such that $\| \operatorname{GL}_2(\mathbb{F}) \|_p = 1$. Then the simple group $\operatorname{PSL}_2(\mathbb{F})$ is $p$-adic amenable of norm $1$.
\end{example}

For the prime $p = 3$, we can do norm $3$:

\begin{example}
\label{PSL p 3}

By Example \ref{GL p 3} there are fields $\mathbb{F}$ such that $\| \operatorname{GL}_2(\mathbb{F}) \|_3 = 3$. Then the simple group $\operatorname{PSL}_2(\mathbb{F})$ is $3$-adic amenable of norm $3$: the reader can review the construction to check that the factor of $3$ remains after restricting to determinant $1$ and factoring out the center.
\end{example}

For the prime $3$ we can even get to norm $1$, but the construction is more involved. The reason why it is harder is that almost all finite simple groups have order a multiple of $3$. In fact, it follows from the Classification of Finite Simple Groups that the only family of finite simple groups whose order is not divisible by $3$ is the one of \emph{Suzuki groups} \cite[4.10]{Wilson}. Let us summarize the relevant facts about this family in the following theorem:

\begin{theorem}[Suzuki]
For any odd integer $k \geq 1$ and $q = 2^k$, there exists a subgroup of $\operatorname{Sp}_4(\mathbb{F}_q)$, denoted $\operatorname{Sz}(q)$, such that:
\begin{enumerate}
\item $\operatorname{Sz}(q)$ is simple.
\item $|\operatorname{Sz}(q)| = (q^2 + 1)q^2(q - 1)$.
\item $\operatorname{Sz}(q) \leq \operatorname{Sz}(q^r)$ for any odd integer $r \geq 1$.
\end{enumerate}
\end{theorem}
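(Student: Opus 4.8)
The plan is to recall one of the standard explicit constructions of the Suzuki groups and read the three statements off from it, quoting the classical theory of finite simple groups for the parts that do not interact with the field extensions. I would follow the matrix description (as in Suzuki's original paper, Huppert--Blackburn's \emph{Finite Groups III}, or \cite{Wilson}): fix $q = 2^k$ with $k$ odd, and let $\theta$ be the field automorphism of $\mathbb{F}_q$ characterized by $x^{\theta^2} = x^2$ for all $x \in \mathbb{F}_q$. Such a $\theta$ exists precisely because $k$ is odd: the Frobenius $\varphi\colon x \mapsto x^2$ generates $\operatorname{Gal}(\mathbb{F}_q/\mathbb{F}_2) \cong \mathbb{Z}/k\mathbb{Z}$, and since $2$ is invertible modulo $k$ one may set $\theta := \varphi^{(k+1)/2}$, so that $\theta^2 = \varphi^{k+1} = \varphi$. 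Then $\operatorname{Sz}(q)$ is the subgroup of $\operatorname{Sp}_4(\mathbb{F}_q)$ generated by the unipotent matrices $S(a,b)$ for $a,b \in \mathbb{F}_q$ — whose entries are fixed polynomials in $a, b, a^\theta, b^\theta$ with coefficients in the prime field — together with the torus $T = \{ \operatorname{diag}(\kappa^{1+\theta}, \kappa, \kappa^{-1}, \kappa^{-1-\theta}) : \kappa \in \mathbb{F}_q^\times\}$ normalizing the Sylow $2$-subgroup $F = \langle S(a,b)\rangle$ of order $q^2$, and a single ``Weyl'' involution $\omega$ with entries in the prime field.

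For (2), I would recall that $\operatorname{Sz}(q)$ acts faithfully and $2$-transitively on the $q^2+1$ points of the Suzuki ovoid in $\operatorname{PG}(3,q)$, with $FT$ (order $q^2(q-1)$) a point stabilizer and a conjugate of $T$ (order $q-1$) a two-point stabilizer; applying the orbit--stabilizer relation twice gives $|\operatorname{Sz}(q)| = (q^2+1)\, q^2 \,(q-1)$. Statement (1), simplicity for $q \geq 8$, I would simply cite from \cite{Wilson}; the standard proof is Iwasawa's criterion applied to this $2$-transitive action, using that $\operatorname{Sz}(q)$ is perfect for $q>2$ and that $F$ is abelian modulo its centre with the right normality properties, so any nontrivial normal subgroup is forced to be the whole group. (For $q = 2$ the group has order $20$ and is not simple, the usual convention being to restrict to $q \geq 8$; I would note this in passing.)

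Statement (3) is the one actually used in Example \ref{Suzuki}, and it is where the genuine work lies. Let $r$ be odd, so $q^r = 2^{kr}$ with $kr$ odd, and $\operatorname{Sz}(q^r)$ is built from the automorphism $\theta_r = \psi^{(kr+1)/2}$ of $\mathbb{F}_{q^r}$, where $\psi$ is its absolute Frobenius. The crux is that $\theta_r$ restricts to $\theta$ on the subfield $\mathbb{F}_q \subseteq \mathbb{F}_{q^r}$: since $\psi|_{\mathbb{F}_q} = \varphi$ and $\varphi^k = \operatorname{id}$ on $\mathbb{F}_q$, it suffices that $(kr+1)/2 \equiv (k+1)/2 \pmod k$, i.e.\ that $k(r-1)/2$ is a multiple of $k$, which holds exactly because $r$ is odd. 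Consequently, for $a,b \in \mathbb{F}_q$ and $\kappa \in \mathbb{F}_q^\times$, the generators $S(a,b)$, the torus elements, and the field-independent involution $\omega$, computed inside $\operatorname{Sz}(q^r)$, lie in $\operatorname{Sp}_4(\mathbb{F}_q)$ and coincide with the corresponding generators of $\operatorname{Sz}(q)$; since these generate $\operatorname{Sz}(q)$, the field inclusion $\mathbb{F}_q \hookrightarrow \mathbb{F}_{q^r}$ induces an embedding $\operatorname{Sz}(q) \hookrightarrow \operatorname{Sz}(q^r)$. The main obstacle is bookkeeping rather than conceptual: one must pin down a single coherent choice of the twisting automorphism across all the fields $\mathbb{F}_{q^r}$ so that the restriction identity above is an honest equality of automorphisms of $\mathbb{F}_q$ rather than an equality only up to an element of $\operatorname{Gal}(\mathbb{F}_q/\mathbb{F}_2)$; once that normalization is fixed, (3) falls out of the explicit formulas, and (1)--(2) are quoted from the classical theory. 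I would keep the write-up short, referring to \cite{Wilson} for (1) and (2) and spelling out only the congruence computation behind (3).
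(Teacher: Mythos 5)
The paper gives no proof of this theorem: it is quoted as a classical result of Suzuki, with \cite[4.10]{Wilson} as the reference, so there is no argument of the paper to compare yours against step by step. Your sketch is the standard one and is correct: (1) and (2) are exactly what one cites from the classical theory (Iwasawa's criterion for the $2$-transitive action on the ovoid, and orbit--stabilizer for the order), while your congruence $(kr+1)/2 \equiv (k+1)/2 \pmod{k}$, which holds precisely because $r$ is odd, is the right verification that the twisting automorphisms are compatible, so that the generators $S(a,b)$, the torus elements and $\omega$ with entries from $\mathbb{F}_q$ are literally the same matrices inside $\operatorname{Sz}(q^r)$, yielding (3). Your caveat that $\operatorname{Sz}(2)$ has order $20$ and is not simple is a fair remark about the statement as written with $k \geq 1$, and it is harmless for the application in Example \ref{Suzuki}, where the fields that occur always satisfy $q \geq 8$.
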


It is easy to check that $3$ cannot divide the order of $\operatorname{Sz}(q)$.

\begin{example}
\label{Suzuki}

Let $\mathbb{F}$ be the directed union of the fields $\mathbb{F}_q$, where $q = 2^{a^k}$, $k \geq 1$ and $a$ is odd. Define $\operatorname{Sz}(\mathbb{F})$ to be the directed union of the finite groups $\operatorname{Sz}(\mathbb{F}_q)$, which are totally ordered by inclusion. Then $\operatorname{Sz}(\mathbb{F})$ is simple, being a directed union of finite simple groups. Since each $\operatorname{Sz}(q)$ has no element of order $3$, it follows that $\| \operatorname{Sz}(\mathbb{F}) \|_3 = 1$.
\end{example}

As for $p = 2$, we can do norm $4$:

\begin{example}
\label{PSL p 2}

By Example \ref{GL p 2} there are fields $\mathbb{F}$ such that $\| \operatorname{GL}_2(\mathbb{F}) \|_2 = 16$. More precisely, they are directed unions of $\operatorname{GL}_2(\mathbb{F}_q)$, where $q \equiv 3 \mod 8$ for all $q$. Then the largest power of $2$ dividing $|\operatorname{SL}_2(\mathbb{F}_q)| = q(q^2 - 1)$ is $8$. Finally, dividing by the order of the center, we get $\| \operatorname{PSL}_2(\mathbb{F}) \|_2 = 4$.
\end{example}

We will see in Proposition \ref{2simple} that there exist no examples of norm $2$. Example \ref{F quad cl} gives many more examples of normed $2$-adic amenable groups of the form $\operatorname{GL}_n(\mathbb{F})$, and it follows that $\operatorname{PSL}_n(\mathbb{F})$ is normed $2$-adic amenable as well.

\subsection{Profinite groups}

As in the discrete case, we can give an easier-to-check characterization:

\begin{proposition}
\label{prof}

Let $(G_i)_{i \in I}$ be an inverse system of finite groups, and let $G$ be the inverse limit of the $G_i$. Then $G$ is normed $p$-adic amenable if and only if $\| G_i \|_p$ is uniformly bounded, and $\| G \|_p = \max \{ \| G_i \|_p \mid i \in I \}$.
\end{proposition}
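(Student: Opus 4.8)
The plan is to sandwich $\|G\|_p$ between $\sup_i\|G_i\|_p$ and itself, using the two inequalities available for quotients of profinite groups. As is standard I take the bonding maps of the inverse system to be surjective (this is genuinely needed: without it a trivial inverse limit of a non-trivial system would already be a counterexample), so that each projection $\pi_i : G \to G_i$ is a continuous surjective homomorphism with open kernel $N_i := \ker\pi_i \trianglelefteq G$, and the subgroups $N_i$ form a neighbourhood basis of $1$; in particular every open normal subgroup of $G$ contains some $N_i$.

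The easy inequality is $\sup_i\|G_i\|_p \le \|G\|_p$: since $G_i \cong G/N_i$ is a continuous quotient of $G$, Proposition \ref{quot} gives $\|G_i\|_p \le \|G\|_p$ for every $i$. For the reverse inequality I would invoke the identity $\|G\|_p = \sup\{\|G/N\|_p \mid N \trianglelefteq G \text{ open}\}$, valid for every profinite group (it is recorded in the proof of Corollary \ref{extt comp}, and follows from Theorem \ref{comp} together with Theorem \ref{finite}). Given an open normal $N$, choose $i$ with $N_i \le N$; then $G/N$ is a continuous quotient of $G/N_i \cong G_i$, so $\|G/N\|_p \le \|G_i\|_p \le \sup_j\|G_j\|_p$ by Proposition \ref{quot}. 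Taking the supremum over $N$ yields $\|G\|_p \le \sup_i\|G_i\|_p$, hence equality.

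It then remains to upgrade the supremum to a maximum and to read off the amenability criterion. By Theorem \ref{finite} each $\|G_i\|_p$ equals $p^{\nu_p(|G_i|)}$, so the set of values $\{\|G_i\|_p \mid i \in I\}$ lies in the discrete set $\{1, p, p^2, \dots\}$; if it is bounded it is therefore finite, its supremum is attained, and $\|G\|_p = \max_i\|G_i\|_p$; if it is unbounded then $\|G\|_p = \infty$, i.e.\ $G$ is not normed $p$-adic amenable. Either way, $G$ is normed $p$-adic amenable precisely when the $\|G_i\|_p$ are uniformly bounded, which is the assertion.

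The only point requiring care is the identity $\|G\|_p = \sup_N\|G/N\|_p$ in the case where $G$ is \emph{not} normed $p$-adic amenable: one must check that unboundedness of the norms of the finite quotients genuinely forces non-amenability rather than being an artefact of the $\infty$-convention. This is where Theorem \ref{comp}(3) (a profinite group is normed $p$-adic amenable iff it is $p^\mathbb{N}$-free) does the work: if $G$ fails to be $p^k$-free for some $k$, pick open $V \le U \le G$ with $p^k \mid [U:V]$, replace $V$ by its $U$-core so that $V \trianglelefteq U$, and push down to $G/N_i$ for $N_i \le V$; then $p^k \mid |U/V| \mid |G_i|$, so $\|G_i\|_p \ge p^k$. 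Hence unboundedness of $\{\|G_i\|_p\}$ is equivalent to $G$ not being $p^\mathbb{N}$-free, closing the argument. Everything else is a routine combination of Proposition \ref{quot}, Theorems \ref{finite} and \ref{comp}, and the standard fact that the kernels of the projections of an inverse limit form a neighbourhood basis of the identity.
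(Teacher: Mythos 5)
Your proof is correct, and it reaches the same underlying reduction as the paper -- namely that normed $p$-adic amenability and the norm of a profinite group are read off from its finite discrete quotients -- but it gets there by a different combination of ingredients. The paper argues through Theorem \ref{comp} together with Proposition \ref{free lpc}: $p^\mathbb{N}$-freeness of a profinite group is governed by descending chains of compact open (normal) subgroups, hence by the orders of the finite quotients, which are the $G_i$. You instead run a sandwich argument: $\sup_i \|G_i\|_p \le \|G\|_p$ from Proposition \ref{quot}, and the reverse from the identity $\|G\|_p = \sup\{\|G/N\|_p \mid N \trianglelefteq G \text{ open}\}$ (recorded in the proof of Corollary \ref{extt comp}) combined with the fact that every open normal $N$ contains some $N_i$, so $G/N$ is a quotient of $G_i$. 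Your route has two genuine merits over the paper's terse proof: you make explicit the convention that the bonding maps are surjective (or equivalently that the $G_i$ are the actual finite quotients of $G$), without which the statement as literally phrased fails -- e.g.\ a system with trivial bonding maps has trivial inverse limit but unbounded $\|G_i\|_p$ -- and you verify the identity $\|G\|_p = \sup_N \|G/N\|_p$ also in the non-amenable case, via the core-and-divisibility argument showing that failure of $p^k$-freeness forces $p^k \mid |G_i|$ for suitable $i$. The upgrade from supremum to maximum via discreteness of the value set $\{1, p, p^2, \dots\}$ (Theorem \ref{finite}) is also handled correctly. In short: same skeleton, a more explicit and slightly more careful execution that avoids Proposition \ref{free lpc} altogether.
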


\begin{proof}
Theorem \ref{main1} (or Theorem \ref{comp}) states that $G$ is normed $p$-adic amenable if and only if it is $p^\mathbb{N}$-free. By Proposition \ref{free lpc}, the $p^\mathbb{N}$-freeness of $G$ is determined by descending sequences of compact open subgroups, since ascending ones are necessarily eventually constant. Up to finite index subgroups, it is determined by descending sequences of compact open \emph{normal} subgroups, and so by the order of finite discrete quotients. These are precisely the $(G_i)_{i \in I}$ in the directed system defining $G$.
\end{proof}

So in a way discrete and profinite normed $p$-adic amenable groups behave dually to each other: to check $p^\mathbb{N}$-freness, we only need to look at finite subgroups in the first case, and at finite discrete quotients in the second.

\begin{example}
\label{ex Qq}

Let $p \neq \ell$ be prime numbers. Then $\mathbb{Z}_p$ is not normed $p$-adic amenable. It follows from Proposition \ref{opsub} that $\mathbb{Q}_p$ is not $p$-adic amenable either, since it contains $\mathbb{Z}_p$ as an open subgroup.

On the other hand, $\mathbb{Z}_\ell$ is $p$-adic amenable of norm 1. Also $\mathbb{Q}_\ell$ is $p$-adic amenable of norm $1$, being the directed union of $\ell^{-k} \mathbb{Z}_\ell$, for $k \geq 1$. Another way to see this is that $\mathbb{Q}_\ell / \mathbb{Z}_\ell \cong \mathbb{Z}(\ell^\infty)$ with the discrete topology, which we already saw is $p$-adic amenable of norm 1. Since $\mathbb{Z}_\ell \leq \mathbb{Q}_\ell$ is open, $\mathbb{Q}_\ell$ is $p$-adic amenable of norm 1 by Proposition \ref{ext}.
\end{example}

\begin{example}
\label{ex loc field}

We can generalize the previous example to additive groups of local fields. Let $\mathbb{K}$ be a local field, $\mathfrak{o}$ its ring of integers, which is a compact open subgroup of $\mathbb{K}$. The residue field is finite of characteristic $p > 0$ and order $q$. There exists a uniformizer $\pi \in \mathfrak{o}$, so that $\mathbb{K} = \bigcup_{k \geq 1} \pi^k \mathfrak{o}$.

The additive group $\mathfrak{o}$ is a pro-$p$ group, so it is not $p$-adic amenable, while it is $\ell$-adic amenable of norm $1$ for all $\ell \neq p$. The additive group of $\mathbb{K}$ has the same property, being a directed union of $\pi^{-k} \mathfrak{o}$, each of which is topologically isomorphic to $\mathfrak{o}$.
\end{example}

Proposition \ref{prof} allows us to look also at infinite direct products of profinite groups. Note that an infinite direct product of non-compact spaces is never locally compact, so it makes sense to restrict to profinite groups.

\begin{proposition}
Let $(G_i)_{i \in I}$ be profinite groups that are $p$-adic amenable of norm 1. Then $\prod\limits_{i \in I} G_i$ is $p$-adic amenable of norm 1.
\end{proposition}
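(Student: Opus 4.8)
The plan is to reduce the statement to a purely algebraic assertion by means of Theorem \ref{comp}: a profinite group $G$ has $\|G\|_p = 1$ exactly when it is $p$-free. Accordingly, I would first note that $G := \prod_{i \in I} G_i$ is itself profinite, being a product of compact, Hausdorff, totally disconnected groups; so it suffices to prove that $G$ is $p$-free, i.e. that $p \nmid [G : V]$ for every open subgroup $V \le G$. (For compact $G$ every open subgroup has finite index, and this condition immediately gives $p \nmid [U:V]$ for all open $V \le U \le G$, which is the definition of $p$-freeness.)

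The one structural fact I would isolate is that every open subgroup $V$ of $G$ contains a ``cofinite tail'' $K_F := \prod_{i \notin F} G_i \times \prod_{i \in F}\{1\}$ for some finite subset $F \subseteq I$. Indeed, $V$ contains a basic open neighbourhood of the identity of the product topology, which is of the form $\prod_{i \notin F} G_i \times \prod_{i \in F} W_i$ for some finite $F$ and open $W_i \ni 1$; this neighbourhood already contains the subgroup $K_F$, hence so does $V$. Since $K_F$ is a closed normal subgroup of $G$ with $G/K_F \cong \prod_{i \in F} G_i$, the index $[G:V]$ equals $[\,\prod_{i \in F} G_i : V/K_F\,]$, and $V/K_F$ is an open subgroup of $\prod_{i \in F} G_i$.

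Finally I would invoke that a finite product of $p$-free profinite groups is $p$-free: since each $\|G_i\|_p = 1$, iterating Corollary \ref{extt comp} gives $\|\prod_{i\in F} G_i\|_p = \prod_{i \in F}\|G_i\|_p = 1$, so $\prod_{i \in F} G_i$ is a $p$-free profinite group and hence $p$ divides the index of none of its open subgroups; in particular $p \nmid [G:V]$. As $V$ was arbitrary, $G$ is $p$-free, and Theorem \ref{comp} then yields $\|G\|_p = 1$. (Alternatively, one can write $\prod_{i\in I} G_i$ as an inverse limit of the finite groups $\prod_{i\in F} G_{i,j}$ obtained from finite quotients of finitely many factors, each of which has norm $1$ by Proposition \ref{quot}, and conclude via Proposition \ref{prof}.)

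I do not anticipate a real obstacle; the only point requiring a little care is the ``cofinite tail'' claim, which is exactly the profinite counterpart of the fact used for direct sums of discrete groups — where one instead exploits that finitely generated subgroups land in a finite sub-product. Everything else reduces to the behaviour of the norm under finite products (Corollary \ref{extt comp}) together with the characterisation of $\|\cdot\|_p = 1$ for profinite groups (Theorem \ref{comp}).
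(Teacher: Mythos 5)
Your proof is correct and follows essentially the same route as the paper: the key point in both is that every open subgroup of the product contains a cofinite tail $\prod_{i \notin F} G_i$, reducing everything to finite sub-products, which are handled by multiplicativity of the norm (Corollary \ref{extt comp}, resp.\ Corollary \ref{extt}). The only cosmetic difference is that you phrase the conclusion via $p$-freeness and Theorem \ref{comp}, while the paper phrases it via finite discrete quotients and Proposition \ref{prof} --- the alternative you yourself note in parentheses.
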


\begin{proof}
Since the $G_i$ are profinite, so is their direct product $G$. If $U \leq G$ is an open subgroup, then
$$U \geq \prod\limits_{j \notin J} G_j \times \prod_{j \in J} \{ 1 \},$$
for some finite set $J \subset I$. Therefore $G/U$ is also a quotient of $\prod\limits_{j \in J} G_j$, which is $p$-adic amenable of norm 1 by Corollary \ref{extt}, so its order cannot be divisible by $p$. So $G$ has no finite discrete quotient of order a multiple of $p$, which implies that it is $p$-adic amenable of norm 1 by Proposition \ref{prof}.
\end{proof}

\begin{corollary}
Let $(G_i)_{i \in I}$ be a family of normed $p$-adic amenable groups, all but finitely many of which are profinite and of norm 1. Then $\prod\limits_{i \in I} G_i$ is $p$-adic amenable of norm $\prod\limits_{i \in I} \|G_i\|_p$.
\end{corollary}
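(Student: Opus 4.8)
The plan is to reduce the statement to two results already available: the multiplicativity of the norm under extensions, Corollary~\ref{extt}, which handles \emph{finite} direct products, and the immediately preceding proposition, which handles arbitrary direct products of profinite groups of norm~$1$. First I would fix a finite subset $J \subseteq I$ such that $G_i$ is profinite with $\|G_i\|_p = 1$ for every $i \in I \setminus J$; such a $J$ exists by hypothesis. Write $G := \prod_{i \in I} G_i$ and decompose it as $G = G_J \times G_{J^c}$ with $G_J := \prod_{i \in J} G_i$ and $G_{J^c} := \prod_{i \in I \setminus J} G_i$. Before invoking the earlier results I would check that $G$ is genuinely t.d.l.c.: $G_{J^c}$ is a product of profinite groups, hence profinite; $G_J$ is a finite product of t.d.l.c. groups, hence t.d.l.c.; and a product of a profinite group and a t.d.l.c. group is t.d.l.c.

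Next I would identify $G_{J^c}$ with the subgroup $\prod_{i \in J}\{1\} \times \prod_{i \in I \setminus J} G_i$ of $G$. This is closed (a product of closed subsets, each $\{1\}$ being closed by Hausdorffness) and normal, with quotient isomorphic to $G_J$. Corollary~\ref{extt} then yields $\|G\|_p = \|G_J\|_p \cdot \|G_{J^c}\|_p$. The preceding proposition gives $\|G_{J^c}\|_p = 1$. For the remaining factor, I would apply Corollary~\ref{extt} finitely many times to the finite direct product $G_J$, at each step peeling off a single direct factor $G_i$ as a closed normal subgroup and leaving the product of the others as quotient, to conclude $\|G_J\|_p = \prod_{i \in J} \|G_i\|_p$. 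Since $\|G_i\|_p = 1$ for all $i \in I \setminus J$, this equals $\prod_{i \in I} \|G_i\|_p$ (a well-defined finite product, as all but finitely many factors are~$1$), and combining the two displays gives the claim.

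I do not expect a genuine obstacle here: the content is entirely contained in Corollary~\ref{extt} and the preceding proposition. The only points needing a little care are bookkeeping --- verifying that the infinite product is a t.d.l.c. group so that Corollary~\ref{extt} applies, that the relevant subgroup is closed and normal, and that $\prod_{i \in I}\|G_i\|_p$ is a well-defined (finite) quantity.
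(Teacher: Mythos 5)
Your argument is correct and is essentially the proof the paper intends: the corollary is stated without proof as an immediate consequence of the preceding proposition (infinite products of norm-$1$ profinite groups) combined with the multiplicativity of the norm under extensions, Corollary~\ref{extt}, applied to the splitting into the finitely many exceptional factors and the remaining profinite product. Your bookkeeping points (local compactness of the product, closedness and normality of $G_{J^c}$, finiteness of the product of norms) are exactly the right ones and are handled correctly.
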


The following example is an analogue of what we established in the discrete case (Example \ref{ex disc ap}):

\begin{example}
Let $(a_p)_p$ be a sequence indexed on the primes, with $a_p \in p^\mathbb{N} \cup \{ \infty \}$. Then there exists a profinite group $G$ such that $\| G \|_p = a_p$ for all primes $p$. An example is $\prod_p G_p$, where $G_p = \mathbb{Z}_p$ if $a_p = \infty$, and $G_p = \mathbb{Z} / a_p \mathbb{Z}$ otherwise.

Notice that if a profinite group is $p$-adic amenable of norm 1 for every $p$, it is automatically trivial. Indeed, a profinite group $G$ is either finite discrete (and we already looked at that case), or admits a non-trivial finite discrete quotient $F$. Then $G$ cannot be $p$-adic amenable for any prime $p$ dividing $|F|$.
\end{example}

An interesting source of examples comes from Galois theory.

\begin{example}
Let $L/K$ be an infinite Galois extension. Then $Gal(L/K)$ is the inverse limit of the finite groups $Gal(M/K)$ for all intermediate finite Galois extension $M/K$. Since $M/K$ is Galois, there exists a primitive element $x_M \in M$ and we have $|Gal(M/K)| = [M:K] = deg(x_M)$. Therefore the Galois group $Gal(L/K)$ is normed $p$-adic amenable if and only if there exists some $k \geq 1$ such that $p^k$ does not divide the degree of any element of $L$.

An extremal example of this is the maximal $p$-extension $K(p)$ of $K$. This is the composite of all Galois extensions of $K$ of degree a power of $p$. Then $G_K(p) := Gal(K(p)/K)$ is a pro-$p$ group, and any finite extension $L/K(p)$ has degree coprime to $p$. So on the one hand we obtain that $Gal(K^{sep}/K(p))$ is $p$-adic amenable of norm $1$, and on the other that $G_K(p)$ is normed $p$-adic amenable if and only if it is finite, and $\ell$-adic amenable of norm 1 for all primes $\ell \neq p$. In fact, $G_K(p)$ is infinite in whenever $K$ is a local field or a number field. The rich theory of Galois $p$-extensions is the subject of the book \cite{Koch}.
\end{example}

\begin{example}
Thanks to the explicit determination of the absolute Galois group of a non-Archimedean local field, we can say much more about normed $p$-adic amenability in this case. Namely, we can completely determine the primes $\ell$, and the corresponding norms, for which the following groups are normed $\ell$-adic amenable: the absolute Galois group, the inertia subgroup, the ramification subgroup, and all intermediate quotients. See \cite[VII.5]{Neuk} for the relevant definitions and the proofs of the facts used in this example.

Let $K$ be a non-Archimedean local field. Its residue field $\mathfrak{r}$ is finite, let $p$ denote its characteristic. Let $K^{sep}|K$ be the separable closure of $K$, and consider the following subextensions: $\tilde{K}|K$, the maximal \emph{unramified} extension, and $K_{tr}|K$, the maximal \emph{tamely ramified} extension. Denote by $G_K := Gal(K^{sep}|K)$ the \emph{absolute Galois group}, which contains the normal subgroups $V_K := Gal(K^{sep}|K_{tr})$, the \emph{ramification group}, and $T_K := Gal(K^{sep}|\tilde{K})$, the \emph{inertia subgroup}; we have $V_K \leq T_K$. 

First, $G_K/T_K \cong Gal(\tilde{K}|K)$ is canonically isomorphic to the absolute Galois group of the finite residue field $\mathfrak{r}$, which in turn is isomorphic to the profinite completion of $\mathbb{Z}$; that is, $\hat{\mathbb{Z}} \cong \prod \mathbb{Z}_{\ell}$. In particular this group is not normed $\ell$-adic amenable for any $\ell$. Secondly, $V_K$ is a free pro-$p$ group, in particular it is $\ell$-adic amenable of norm 1 if $\ell \neq p$, and not normed $p$-adic amenable. It remains to determine the amenability status of the quotient $T_K/V_K \cong Gal(K_{tr}|\tilde{K})$. This is isomorphic to $\hat{\mathbb{Z}}^{(p')} \cong \prod_{\ell \neq p} \mathbb{Z}_{\ell}$. Therefore it is $p$-adic amenable of norm 1, but not normed $\ell$-adic amenable for any $\ell \neq p$.

All in all, we have shown that $G_K, G_K/T_K$ and $T_K$ are not normed $\ell$-adic amenable for any $\ell$; that $V_K$ is $\ell$-adic amenable of norm 1 for $\ell \neq p$ and not normed $p$-adic amenable; and that $T_K/V_K$ is $p$-adic amenable of norm 1 and not normed $\ell$-adic amenable for $\ell \neq p$.
\end{example}

Further interesting examples of profinite groups, namely integral matrices over local fields, and the vertex stabilizer in $Aut(\mathcal{T}_d)$, will be treated in the next two subsections.

\subsection{Matrix groups over non-Archimedean local fields}

Let $\mathbb{K}$ be a non-Archimedean local field, $\mathfrak{o}$ its ring of integers, $\mathfrak{m}$ the maximal ideal, and $\mathfrak{r}$ the residue field. The norm $| \cdot |_\mathbb{K}$ takes values in $r^\mathbb{Z}$, for some $0 < r < 1$, and there exists a uniformizer $\pi \in \mathfrak{o}$, which satisfies $|\pi|_\mathbb{L} = r$. The residue field is finite, of characteristic $p > 0$, let $q$ be its order, so $\mathfrak{r} \cong \mathbb{F}_q$. Note that since the topology on $\mathbb{K}$ is t.d.l.c., this guarantees that matrix groups over $\mathbb{K}$ are t.d.l.c. as well. \\

We start by looking at the multiplicative group of $\mathbb{K}$. The additive one was treated together with profinite groups, in Example \ref{ex loc field}. We start with the following general result:

\begin{lemma}
\label{det npa}

Let $G \leq \operatorname{GL}_n(\mathbb{K})$ be a closed subgroup which does not surject continuously onto $\mathbb{Z}$. Then $\det(G) \subseteq \mathfrak{o}^\times$.
\end{lemma}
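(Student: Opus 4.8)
The plan is to reduce the statement to the valuation map on $\mathbb{K}$. Since $\mathbb{K}$ is a non-Archimedean local field it is discretely valued; fixing a uniformizer $\pi$, there is a surjective homomorphism $v : \mathbb{K}^\times \to \mathbb{Z}$ with $v(\pi) = 1$ such that $|x|_\mathbb{K} = r^{v(x)}$ for the appropriate $0 < r < 1$, and with $\ker v = \mathfrak{o}^\times = \{\, x \in \mathbb{K} : |x|_\mathbb{K} = 1 \,\}$. This map is continuous for the discrete topology on $\mathbb{Z}$, since each fibre $v^{-1}(n) = \{\, x \in \mathbb{K}^\times : |x|_\mathbb{K} = r^n \,\}$ is clopen in $\mathbb{K}^\times$ (the norm is locally constant, $\mathbb{K}$ being totally disconnected and discretely valued).

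Now $\det : \operatorname{GL}_n(\mathbb{K}) \to \mathbb{K}^\times$ is a continuous group homomorphism, hence so is the composition $\varphi := (v \circ \det)|_G : G \to \mathbb{Z}$. Its image is a subgroup of $\mathbb{Z}$, so it is either trivial or of the form $k\mathbb{Z}$ for some integer $k \geq 1$; in the latter case the map $\mathbb{Z} \to k\mathbb{Z}$, $n \mapsto kn$, is an isomorphism of topological groups (both discrete), so composing $\varphi$ with its inverse yields a continuous surjective homomorphism $G \to \mathbb{Z}$, contradicting the hypothesis that $G$ does not surject continuously onto $\mathbb{Z}$. Therefore $\varphi \equiv 0$, that is $v(\det g) = 0$ for all $g \in G$, which means $|\det g|_\mathbb{K} = 1$, i.e. $\det g \in \mathfrak{o}^\times$. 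Hence $\det(G) \subseteq \mathfrak{o}^\times$, as claimed.

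The argument is short and I do not anticipate any real obstacle: the only things to verify are the continuity of $v$ and the elementary fact that every nontrivial subgroup of $\mathbb{Z}$ is infinite cyclic. One may remark that the closedness of $G$ is not actually used in this proof, although it is presumably relevant for the applications that follow.
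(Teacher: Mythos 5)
Your proof is correct and is essentially the paper's own argument: the paper composes $\det$ with $|\cdot|_{\mathbb{K}}$ and $\log_r$ to get a continuous homomorphism $G \to \mathbb{Z}$, which must have trivial image by hypothesis, and this is exactly your valuation map $v \circ \det$. The only difference is that you spell out the (correct) intermediate step that a nontrivial subgroup of $\mathbb{Z}$ is itself isomorphic to $\mathbb{Z}$, which the paper leaves implicit.
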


\begin{remark}
A locally elliptic group cannot surject continuously onto $\mathbb{Z}$.
\end{remark}

\begin{proof}
Consider the continuous homomorphisms:
$$G \xrightarrow{\det} \mathbb{K}^\times \xrightarrow{|\cdot|_\mathbb{K}} r^\mathbb{Z} \xrightarrow{\log_r} \mathbb{Z}.$$
By hypothesis, the image of this homomorphism is trivial. In other words, $\det(G) \subseteq \{ x \in \mathbb{K}^\times \mid |x|_\mathbb{K} = 1 \} = \mathfrak{o}^\times$.
\end{proof}

\begin{example}
The multiplicative group $\mathbb{K}^\times$ is not normed $\ell$-adic amenable for any prime $\ell$. Indeed, this coincides with $\operatorname{GL}_1(\mathbb{K})$, and the determinant takes values of arbitrary norm.
\end{example}

Using this example, we can easily establish the following:

\begin{example}
$\operatorname{GL}_n(\mathbb{K})$ and $\operatorname{SL}_n(\mathbb{K})$ for $n \geq 2$ are not normed $\ell$-adic amenable for any prime $\ell$. By the previous example, it suffices to identify a closed subgroup isomorphic to $\mathbb{K}^\times$. This can be achieved by the map $x \mapsto diag(x, x^{-1}, 1, \ldots, 1)$ for $n \geq 2$.
\end{example}

The situation with integral matrices is more interesting. Fix $n \geq 1$. We start by identifying explicitly a basis of compact open subgroups of $\operatorname{GL}_n(\mathfrak{o})$. A basis of compact open neighbourhoods of 1 in the space $\mathfrak{o}$ is $\{ 1 + \pi^k \mathfrak{o} \mid k \geq 1 \}$. It follows that a basis of compact open neighbourhoods of $I_n$ in the space $\operatorname{M}_{n \times n}(\mathfrak{o})$ is $\{ E_k := I_n + \pi^k \operatorname{M}_{n \times n}(\mathfrak{o}) \mid k \geq 1 \}$. Given any matrix $M \in \operatorname{M}_{n \times n}(\mathfrak{o})$, the expression $(I_n + \pi^k M + \pi^{2k} M + \cdots)$ converges absolutely, and is the inverse to $(I_n - \pi^k M)$, so every matrix of $E_k$ is invertible and its inverse is in $E_k$. Moreover, $(I_n + \pi^k M)(I_n + \pi^k N) = I_n + \pi^k(M + N) + \pi^{2k}MN$. So the $E_k$ are subgroups. We conclude that $\{ E_k \mid k \geq 1 \}$ is a basis of compact open subgroups of $\operatorname{GL}_n(\mathfrak{o})$. \\

Next we show that $E_1$ is a pro-$p$ group. Since $E_1$ is compact and $\{ E_k \mid k \geq 2 \}$ is a neighbourhood basis of the identity, it follows that $E_1$ is the inverse limit of $E_1 / E_k$. Therefore we only need to show that $E_1 / E_k$ is a $p$-group, and by induction this is equivalent to $E_k / E_{k+1}$ being a $p$-group. To see this, consider the map $f : E_k \to \operatorname{M}_{n \times n}(\mathfrak{r}) : I_n + \pi^k M \mapsto M \mod \mathfrak{m}$. By the calculation in the previous paragraph, $f$ is a homomorphism, where $\operatorname{M}_{n \times n}(\mathfrak{r})$ is seen as an additive group. It is clearly surjective with kernel $E_{k + 1}$. Therefore $E_k / E_{k + 1} \cong M_{n \times n}(\mathfrak{r}) \cong \mathbb{F}_q^{n^2}$. \\

Finally, the quotient $\operatorname{GL}_n(\mathfrak{o}) / E_1$ is isomorphic to $\operatorname{GL}_n(\mathfrak{r})$. Indeed, the map $f : \operatorname{GL}_n(\mathfrak{o}) \to \operatorname{GL}_n(\mathfrak{r}) : A \mapsto A \mod \mathfrak{m}$ is a continuous surjective homomorphism, and the kernel is precisely $E_1$.

\begin{example}
\label{ex:glno}

$\operatorname{GL}_n(\mathfrak{o})$ for $n \geq 1$ is not normed $p$-adic amenable. For all primes $\ell \neq p$, it is normed $\ell$-adic amenable of norm $|(q^n - 1) (q^{n-1} - 1) \cdots (q^2 - 1) (q - 1)|_{\ell}^{-1}$. In particular, it is $\ell$-adic amenable of norm 1 for all large enough $\ell$.

First, $\operatorname{GL}_n(\mathfrak{o})$ contains the infinite pro-$p$ group $E_1$ as an open subgroup, so it cannot be normed $p$-adic amenable by Proposition \ref{opsub}. Now fix $\ell \neq p$. By Proposition \ref{prof} the pro-$p$ group $E_1$ is $\ell$-adic amenable of norm 1, and by Theorem \ref{finite} the finite group $\operatorname{GL}_n(\mathfrak{r}) \cong \operatorname{GL}_n(\mathbb{F}_q)$ is $\ell$-adic amenable of norm $|(q^n - 1)(q^n - q) \cdots (q^n - q^{n-1})|_{\ell}^{-1} = |(q^n - 1)(q^{n-1} - 1) \cdots (q-1)|_{\ell}^{-1}$. We conclude by Corollary \ref{extt} that $\operatorname{GL}_n(\mathfrak{o})$ is $\ell$-adic amenable of the same norm.
\end{example}

\begin{example}
$\operatorname{SL}_n(\mathfrak{o})$ for $n \geq 2$ is not normed $p$-adic amenable. For all $\ell \neq p$, it is normed $\ell$-adic amenable of norm $|(q^n - 1) (q^{n-1} - 1) \cdots (q^2 - 1)|_{\ell}^{-1}$. In particular, it is $\ell$-adic amenable of norm 1 for all $\ell$ large enough.

The only difference with the above is that now $f : \operatorname{SL}_n(\mathfrak{o}) \to \operatorname{SL}_n(\mathfrak{r}) : A \mapsto A \mod \mathfrak{m}$ has a smaller image. We have $|\operatorname{SL}_n(\mathbb{F}_q)| = |\operatorname{GL}_n(\mathbb{F}_q)| / |\mathbb{F}_q^\times|$.
\end{example}

We have seen non-examples in the non-locally elliptic groups $\operatorname{GL}_n(\mathbb{K}), \operatorname{SL}_n(\mathbb{K})$, and examples in the compact groups $\operatorname{GL}_n(\mathfrak{o}), \operatorname{SL}_n(\mathfrak{o})$. Next, we look at an example which is locally elliptic but not compact.

\begin{example}
\label{upp un tr}

Let $U \leq \operatorname{SL}_n(\mathbb{K})$ denote the group of upper unitriangular matrices; that is, upper triangular matrices with $1$'s on the diagonal. Let $N_i$ denote the set of upper triangular matrices whose first $i$ superdiagonals are zero, so $U = I_n + N_1$. Clearly $N_{i+1} \subset N_i$ and $N_n = \{ 0_n \}$. One checks that $N_1^i \subset N_i$. It follows that $I_n + N_i$ is a closed subgroup of $U$ for all $i \geq 1$. Moreover $(I_n + N_i) / (I_n + N_{i+1}) \cong \mathbb{K}^{n-i}$. We conclude that $U$ is an iterated extension of the additive group of $\mathbb{K}$, which is not normed $p$-adic amenable, but it is $\ell$-adic amenable of norm $1$ for all primes $\ell \neq p$ by Example \ref{ex loc field}. By Corollary \ref{extt}, the same is true of $U$.
\end{example}

Next, we look at examples of discrete subgroups of $\operatorname{GL}_n(\mathbb{K})$. Recall that for such a group to be normed $\ell$-adic amenable for some $\ell$ it needs to be locally finite. In what follows, we will rely on the strong results about locally finite linear groups from \cite[Chapter 9, Chapter 12]{lin}. For starters, it is enough for such a group to be periodic \cite[Corollary 4.9]{lin}:

\begin{theorem}[Schur]
A periodic linear group is locally finite.
\end{theorem}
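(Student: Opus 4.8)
It suffices to prove that every \emph{finitely generated} periodic linear group is finite: local finiteness means precisely that all finitely generated subgroups are finite, and a finitely generated subgroup of a linear group is again linear (and periodic). So fix $H \le \operatorname{GL}_n(k)$ finitely generated and periodic. First I would shrink the field: the entries of a finite generating set, together with the inverses of their determinants, generate a subfield $k_1 \subseteq k$ finitely generated over the prime field, and $H \le \operatorname{GL}_n(k_1)$; so we may assume $k$ itself is finitely generated over its prime field. The structural fact I would exploit is that the relative algebraic closure $k_0$ of the prime field inside $k$ is then a \emph{finite} extension of the prime field — a number field if $\operatorname{char} k = 0$, a finite field if $\operatorname{char} k = p > 0$.

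The heart of the argument is to show that $H$ has \emph{finite exponent}, with a bound depending only on $n$ and $k$. Given $g \in H$ of finite order, pass (over a finite extension of $k$, which changes no orders) to its multiplicative Jordan decomposition $g = g_s g_u = g_u g_s$. The unipotent factor satisfies $(g_u - I)^n = 0$, so $g_u^{p^c} = I$ as soon as $p^c \ge n$, using $(I+N)^{p^c} = I + N^{p^c}$ in characteristic $p$; thus $\operatorname{ord}(g_u)$ divides a fixed power of $p$ depending only on $n$ (and $g_u = I$ when $\operatorname{char} k = 0$). The semisimple factor $g_s$ has the same eigenvalues as $g$; these are roots of unity and are roots of the degree-$n$ characteristic polynomial of $g$ over $k$, so each such $\zeta$ has $[k(\zeta):k] \le n$. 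Since $\zeta$ is algebraic over the prime field and $k \cap \overline{(\text{prime field})} = k_0$, the cyclotomic (hence Galois) extension $k_0(\zeta)/k_0$ has degree $[k(\zeta):k] \le n$, so $\zeta$ ranges over a fixed finite set of roots of unity (bounded via $\varphi(\operatorname{ord}\zeta) \le n[k_0:\mathbb{Q}]$ in characteristic $0$, or $\operatorname{ord}(\zeta) \mid |k_0|^{\,n}-1$ in characteristic $p$). Hence there is a fixed $N$ with $g_s^N = I$ for all $g \in H$, and combining with the unipotent bound (the two factors commute) we get $g^e = 1$ for a fixed $e$. So $H$ has finite exponent.

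Finally I would invoke Burnside's theorem that a finitely generated linear group of finite exponent is finite, which closes the argument. The step I expect to be the main obstacle is the characteristic-$p$ bookkeeping: the multiplicative Jordan decomposition need not be defined over a non-perfect $k$ (so one must pass to a finite extension, or instead split $\operatorname{ord}(g)$ directly into its $p$-part and $p'$-part), and one must check that the eigenvalue–degree estimate survives base change to $k_0$, which is exactly where the finiteness of $k_0$ over the prime field is used. An alternative route avoiding Burnside's theorem is to observe $H \le \operatorname{GL}_n(R)$ for a finitely generated domain $R \subset k$, use that every maximal ideal of $R$ has finite residue field (Nullstellensatz for finitely generated $\mathbb{Z}$- and $\mathbb{F}_p$-algebras) to embed $H$ residually into the finite groups $\operatorname{GL}_n(R/\mathfrak{m})$, apply Jordan's theorem (general-characteristic form) to each finite quotient, and conclude from the fact that a finitely generated group has only finitely many subgroups of each bounded index that $H$ has a finite-index normal subgroup which is abelian — hence finite, being finitely generated, periodic and abelian.
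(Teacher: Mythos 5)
The paper does not prove this statement at all: it is quoted as \cite[Corollary 4.9]{lin} and used as a black box, so there is no in-paper argument to compare against. Your proposal reconstructs the standard Burnside--Schur proof, and the core of it is sound: the reduction to a finitely generated field, the finiteness of the relative algebraic closure $k_0$ of the prime field, the $p$-part/$p'$-part (or Jordan) splitting of a torsion element, and the linear-disjointness computation $[k_0(\zeta):k_0]=[k(\zeta):k]\le n$ (valid because $k_0(\zeta)/k_0$ is Galois and $k_0(\zeta)\cap k=k_0$) are all correct and together give the bounded-exponent conclusion. One small slip: in characteristic $p$ you can only conclude $\operatorname{ord}(\zeta)\mid |k_0|^d-1$ for some $d\le n$, not $\operatorname{ord}(\zeta)\mid |k_0|^n-1$; replace the bound by $\operatorname{lcm}\{|k_0|^d-1 : d\le n\}$ (or $|k_0|^{n!}-1$).

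Two caveats on the concluding steps. First, the final appeal to ``Burnside's theorem'' is literally Burnside only in characteristic $0$, where bounded exponent alone forces finiteness; in characteristic $p$ the statement you need (finitely generated $+$ linear $+$ finite exponent $\Rightarrow$ finite) is true but is not Burnside's 1905 theorem, and you should check the reference you invoke does not itself deduce it from Schur's theorem. A non-circular proof exists and is short: pass to $\bar k$, take a composition series of $k^n$, apply the trace/spanning argument to each (absolutely) irreducible quotient to see that the semisimplification of $H$ is finite, and observe that the unipotent kernel is then of finite index, hence finitely generated, nilpotent and torsion, hence finite. Spelling this out would make your argument self-contained. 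Second, your alternative route genuinely breaks in characteristic $p$: Jordan's theorem fails for finite subgroups of $\operatorname{GL}_n$ over fields of characteristic $p$ (the groups $\operatorname{PSL}_2(\mathbb{F}_{p^k})$ have no abelian normal subgroup of uniformly bounded index), and the correct analogues (Brauer--Feit, Larsen--Pink) do not return an abelian normal subgroup of index bounded independently of the group, so the ``residually abelian'' conclusion is unavailable there. That route is fine as stated only in characteristic $0$.
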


In the case of local fields of characteristic $0$ we can say even more. Recall that a non-Archimedean local field $\mathbb{K}$ has characteristic $0$ if and only if it is a finite extension of $\mathbb{Q}_p$. Then rephrasing \cite[Theorem 9.33 (i) and (iv)]{lin} we obtain:

\begin{theorem}[Wehrfritz]
\label{lin char 0}

If $\chi(\mathbb{K}) = 0$, then any periodic linear group over $\mathbb{K}$ is finite.
\end{theorem}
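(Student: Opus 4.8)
The plan is to derive finiteness from Schur's theorem together with a uniform bound on the orders of the finite subgroups of $\operatorname{GL}_n(\mathbb{K})$ — a bound that, in contrast to the positive characteristic or the algebraically closed case, is available precisely because $\mathbb{K}$ is a non-Archimedean local field of characteristic $0$ (and in particular contains only finitely many roots of unity). The starting observation is purely group-theoretic: if $H$ is a locally finite group all of whose finite subgroups have order at most $N$, then $|H| \le N$, since any $N+1$ distinct elements of $H$ would generate a finite subgroup of order exceeding $N$. By Schur's theorem a periodic linear group $G \le \operatorname{GL}_n(\mathbb{K})$ is locally finite, so it suffices to produce $N = N(n,\mathbb{K})$ with $|F| \le N$ for every finite subgroup $F \le \operatorname{GL}_n(\mathbb{K})$.

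To bound $|F|$ I would first conjugate $F$ into $\operatorname{GL}_n(\mathfrak{o})$: starting from any $\mathfrak{o}$-lattice $L_0 \subseteq \mathbb{K}^n$, the submodule $L := \sum_{g \in F} gL_0$ is finitely generated (as $\mathfrak{o}$ is a DVR), torsion-free, and of full rank, hence free of rank $n$ over $\mathfrak{o}$ and $F$-stable; an $\mathfrak{o}$-basis of $L$ conjugates $F$ into $\operatorname{GL}_n(\mathfrak{o})$. Now use the congruence filtration $E_k = I_n + \pi^k \operatorname{M}_{n \times n}(\mathfrak{o})$ of $\operatorname{GL}_n(\mathfrak{o})$ introduced above in this subsection: each $E_k$ is a normal pro-$p$ subgroup and $\operatorname{GL}_n(\mathfrak{o})/E_k$ is finite of order $|\operatorname{GL}_n(\mathbb{F}_q)|\cdot q^{\,n^2(k-1)}$. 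To this I would add the standard estimate that $E_k$ is torsion-free once $k > e/(p-1)$, where $e$ is the ramification index of $\mathbb{K}/\mathbb{Q}_p$: writing a nontrivial element of $E_k \setminus E_{k+1}$ as $I_n + \pi^k M$ with $M \not\equiv 0 \bmod \mathfrak{m}$ and expanding $(I_n + \pi^k M)^p$ by the binomial theorem, the term $p\pi^k M$ strictly dominates in valuation (using $v(p)=e$ and $v\big(\binom{p}{j}\big)\ge e$ for $1\le j\le p-1$), so the $p$-th power lies in $E_{k+e}\setminus E_{k+e+1}$, hence is nontrivial; iterating shows $E_k$ has no $p$-torsion, and being pro-$p$ it is torsion-free. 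Fixing $k_0 := \lfloor e/(p-1)\rfloor + 1$, the finite subgroup $F \cap E_{k_0}$ of the torsion-free group $E_{k_0}$ is trivial, so $F$ embeds into $\operatorname{GL}_n(\mathfrak{o})/E_{k_0}$ and $|F| \le |\operatorname{GL}_n(\mathbb{F}_q)|\cdot q^{\,n^2(k_0-1)} =: N$, which depends only on $n$ and on the invariants $q,p,e$ of $\mathbb{K}$.

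Assembling the pieces: $G$ is locally finite by Schur, every finite subgroup of $G$ has order at most $N$ by the preceding paragraph, so $|G| \le N$ and $G$ is finite. I expect the main obstacle to be the torsion analysis of the congruence filtration: one must keep the threshold $k_0$ and the resulting bound uniform (independent of $F$), and in particular handle $p=2$ with care, since then $e/(p-1)=e$ and $E_1$ itself may fail to be torsion-free, so one genuinely has to pass to $E_{k_0}$. Of course one could instead simply invoke the quoted theorem of Wehrfritz, but the argument above is self-contained given the structure theory of $\operatorname{GL}_n(\mathfrak{o})$ already developed.
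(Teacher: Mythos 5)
Your proof is correct, and it differs from the paper in that the paper does not prove this statement at all: it is imported as a black box from Wehrfritz \cite[Theorem 9.33]{lin}. Your argument is a self-contained replacement tailored to the local-field case, and it reuses exactly the structure theory of $\operatorname{GL}_n(\mathfrak{o})$ that the paper develops two paragraphs earlier (the congruence filtration $E_k = I_n + \pi^k \operatorname{M}_{n\times n}(\mathfrak{o})$, the quotients $E_k/E_{k+1} \cong \mathbb{F}_q^{n^2}$ and $\operatorname{GL}_n(\mathfrak{o})/E_1 \cong \operatorname{GL}_n(\mathbb{F}_q)$, and the fact that $E_1$ is pro-$p$). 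All the steps check out: the elementary observation that a locally finite group whose finite subgroups have order at most $N$ itself has order at most $N$; the conjugation of a finite subgroup into $\operatorname{GL}_n(\mathfrak{o})$ via the $F$-stable lattice $\sum_{g\in F} gL_0$ (finitely generated and torsion-free over the DVR $\mathfrak{o}$, hence free of rank $n$); and the torsion-freeness of $E_{k_0}$ for $k_0 > e/(p-1)$, where the binomial expansion of $(I_n+\pi^k M)^p$ is legitimate because $I_n$ is central, the term $p\pi^k M$ has an entry of valuation exactly $e+k$ when $M\not\equiv 0 \bmod \mathfrak{m}$, the middle terms have valuation at least $e+2k$, and the last term has valuation $pk > e+k$ precisely under the stated threshold; combined with the fact that a finite subgroup of a pro-$p$ group is a $p$-group, this kills $F\cap E_{k_0}$ and yields the uniform bound $N = |\operatorname{GL}_n(\mathbb{F}_q)|\cdot q^{n^2(k_0-1)}$. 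What your route buys is an explicit bound on $|G|$ depending only on $n$, $q$, $p$ and the ramification index $e$, and independence from the external reference; what the citation buys is brevity and the greater generality of Wehrfritz's formulation. Note only that your argument genuinely uses that $\mathbb{K}$ is a finite extension of $\mathbb{Q}_p$ (finite residue field, finite $e$), which is the standing assumption of this subsection; the statement would be false over, say, $\mathbb{C}$ or $\mathbb{C}_p$, so this restriction is not a defect but a necessity.
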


It follows that the only candidates for infinite examples of discrete subgroups of $\operatorname{GL}_n(\mathbb{K})$ arise when $\mathbb{K}$ has characteristic $p$; that is, when $\mathbb{K}$ is a finite extension of $\mathbb{F}_q((1/X))$ for some prime power $q$. In this case it is very natural to look at $\mathbb{F}_q[X]$, which is a discrete subring of $\mathbb{F}_q((1/X))$, so $\operatorname{GL}_n(\mathbb{F}_q[X])$ is a discrete subgroup of $\operatorname{GL}_n(\mathbb{K})$. Notice that $(\mathbb{F}_q[X])^\times = \mathbb{F}_q^\times$.

\begin{example}
$\operatorname{GL}_n(\mathbb{F}_q[X])$ and $\operatorname{SL}_n(\mathbb{F}_q[X])$ for $n \geq 2$ are not normed $\ell$-adic amenable for any prime $\ell$. For this it suffices to show that $\operatorname{SL}_2(\mathbb{F}_q[X])$ is not periodic: by Proposition \ref{disc} (or even Example \ref{npa period}) it follows that it is not normed $\ell$-adic amenable for any $\ell$, and by Proposition \ref{opsub} we conclude. This follows from the fact that this group contains a copy of $(\mathbb{Z}/p\mathbb{Z} * \mathbb{Z}/p\mathbb{Z})$ \cite[Theorem 4]{Nagao}.
\end{example}

\begin{example}
The group of upper triangular matrices in $\operatorname{GL}_n(\mathbb{F}_q[X])$ is normed $\ell$-adic amenable of norm $|(q - 1)|_\ell^{-n}$ for all primes $\ell \neq p$. Indeed, the diagonal entries of an upper triangular matrix in $\operatorname{GL}_n(\mathbb{F}_q[X])$ must be in $(\mathbb{F}_q[X])^\times = \mathbb{F}_q^\times$. Therefore the subgroup of upper unitriangular matrices has index $|\mathbb{F}_q^\times|^n = (q-1)^n$. This is a closed subgroup of the group of upper unitriangular matrices from Example \ref{upp un tr}, so it is $\ell$-adic amenable of norm $1$ for all $\ell \neq p$ by Corollary \ref{clsub}.
\end{example}

The previous example generalizes:

\begin{lemma}
\label{disc lin}

Let $\mathbb{K}$ be a local field of characteristic $p$ and let $G \leq \operatorname{GL}_n(\mathbb{K})$ be a discrete subgroup which is virtually solvable and does not surject onto $\mathbb{Z}$. Then $G$ is normed $\ell$-adic amenable for all $\ell \neq p$, and normed $p$-adic amenable if and only if it is finite.
\end{lemma}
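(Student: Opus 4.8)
The plan is to reduce, via Mal'cev's theorem, to a finite-index triangularizable subgroup over a finite extension of $\mathbb{K}$, use that in characteristic $p$ the unipotent part is locally finite, and then invoke Proposition~\ref{disc}. All three assertions are stable under passing to a finite-index subgroup: Proposition~\ref{opsub} gives one inequality for the norms, Proposition~\ref{ext} combined with Theorem~\ref{finite} (which makes the finite quotient normed $\mathbb{K}$-amenable) gives the other, and finiteness is trivially finite-index-stable. Since $G$ is virtually solvable, Mal'cev's theorem provides a finite-index subgroup that becomes upper-triangular after conjugation inside $\operatorname{GL}_n(\overline{\mathbb{K}})$; the finitely many entries involved generate a finite extension $\mathbb{L}/\mathbb{K}$, again a local field of characteristic $p$. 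Replacing by the normal core and absorbing the conjugation, I obtain a finite-index $G_0 \trianglelefteq G$ contained in the group $B = T \ltimes U$ of upper-triangular matrices of $\operatorname{GL}_n(\mathbb{L})$, with $T \cong (\mathbb{L}^\times)^n$ the diagonal torus and $U$ the unipotent radical; $G_0$ is still discrete, as $\operatorname{GL}_n(\mathbb{K})$ is closed in $\operatorname{GL}_n(\mathbb{L})$ and conjugation is a homeomorphism. Finally, by Lemma~\ref{det npa} and the hypothesis on $G$ we have $\det(G) \subseteq \mathfrak{o}_\mathbb{K}^\times$, so every $g \in G_0$ satisfies $v(\det g) = 0$.

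\textbf{Local finiteness.} In characteristic $p$, $(I+N)^{p^k} = I + N^{p^k}$, which is $I$ as soon as $p^k \ge n$; hence $U$ is a torsion group, so $N_0 := G_0 \cap U$ is a torsion linear group, locally finite by Schur's theorem (quoted just before Theorem~\ref{lin char 0}) and in fact a locally finite $p$-group. The quotient $A := G_0/N_0$ embeds into $T = (\mathbb{L}^\times)^n$, and composing with valuations gives a homomorphism $A \to \mathbb{Z}^n$ whose image lies in $\{\,(m_i) : \sum_i m_i = 0\,\}$ by the determinant computation above. If this image were nonzero, $G_0$ would surject onto $\mathbb{Z}$; assume for now this is ruled out, so that $A \subseteq (\mathfrak{o}_\mathbb{L}^\times)^n$. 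Now if $A$ were not torsion, we could --- after multiplying by the order of the finite group $\mathbb{F}_{q'}^\times$, $q' := |\mathfrak{r}_\mathbb{L}|$ --- find $g \in G_0$ of infinite order whose eigenvalues all lie in $1 + \mathfrak{m}_\mathbb{L}$; then $g^{p^k}$ equals its semisimple part for large $k$, the $g^{p^k}$ are simultaneously diagonalizable with eigenvalues $\lambda_i^{p^k} = 1 + x_i^{p^k} \to 1$, so $g^{p^k} \to I$ while $g^{p^k} \ne I$, contradicting discreteness of $G_0$. Since $\mathfrak{o}_\mathbb{L}^\times = \mathbb{F}_{q'}^\times \times (1 + \mathfrak{m}_\mathbb{L})$ with $(1 + \mathfrak{m}_\mathbb{L})$ torsion-free, the torsion group $A$ lies in $(\mathbb{F}_{q'}^\times)^n$, so $|A|$ divides $(q'-1)^n$; in particular $A$ is finite. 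Thus $G_0$ is an extension of the locally finite group $N_0$ by the finite group $A$, hence locally finite, and therefore $G$ is locally elliptic.

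\textbf{Conclusion and the main obstacle.} By Proposition~\ref{disc}, $G$ is normed $\ell$-adic amenable exactly when the orders of its finite $\ell$-subgroups are bounded. For $\ell \ne p$, a finite $\ell$-subgroup $Q \le G_0$ meets the $p$-group $N_0$ trivially and injects into $A$, so $|Q| \le (q'-1)^n$, whence $|Q| \le (q'-1)^n [G:G_0]$ for any finite $\ell$-subgroup of $G$ --- a uniform bound, so $G$ is normed $\ell$-adic amenable. For $\ell = p$: a finite $G$ is normed $p$-adic amenable by Theorem~\ref{finite}; conversely, if $G$ is infinite then $N_0$ is infinite (since $A$ and $[G:G_0]$ are finite), so it is an infinite locally finite $p$-group and contains finite $p$-subgroups of unbounded order, hence $G$ is not $p^\mathbb{N}$-free and not normed $p$-adic amenable. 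The one delicate point --- and the step I expect to be the real obstacle --- is justifying that ``the image of $A \to \mathbb{Z}^n$ is zero'': a finite-index subgroup of a group with no $\mathbb{Z}$-quotient can still have one (infinite dihedral groups), so one must use the hypothesis in the stronger form that \emph{no} finite-index subgroup of $G$ surjects onto $\mathbb{Z}$. This is automatic for the groups the lemma generalizes --- they are locally finite --- and I would build it into the argument from the start; the rest is the characteristic-$p$ structure theory (Mal'cev triangularization plus local finiteness of unipotents) combined with discreteness.
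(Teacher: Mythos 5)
Your argument is correct in substance and, in its second half, genuinely different from the paper's. The paper also begins with Mal'cev's triangularization, but then finishes by norm calculus: it splits the triangularized group along the diagonal map, uses the argument of Lemma~\ref{det npa} to place the diagonal entries in $\mathfrak{o}^\times$, and combines the precomputed norms of $\mathfrak{o}^\times$ (Example~\ref{ex:glno}) and of the unitriangular group (Example~\ref{upp un tr}) with Corollaries~\ref{clsub} and~\ref{extt}. You instead show directly that $G_0$ is (locally finite $p$-group)-by-finite and invoke Proposition~\ref{disc}. Your route buys two things the paper's write-up glosses over. First, you track the finite extension $\mathbb{L}$ that Mal'cev's theorem actually requires. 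Second, and more substantially: the diagonal image of the triangularized group is a \emph{discrete} group that merely injects abstractly into the compact group $(\mathfrak{o}^\times)^n$, so the statement ``closed subgroups of $\mathfrak{o}^\times$ are normed $\ell$-adic amenable'' does not by itself control it --- an abstract copy of $\mathbb{Z}$ sitting inside $\mathfrak{o}^\times$ is not normed $\ell$-adic amenable for any $\ell$. Your discreteness argument (eigenvalues in $1+\mathfrak{m}_\mathbb{L}$ force $g^{p^k}\to I$ via the Jordan decomposition, so $A$ is torsion, hence lies in $(\mathbb{F}_{q'}^\times)^n$ and is finite) supplies exactly the missing justification and yields the explicit bound on finite $\ell$-subgroups. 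The only step to write out more carefully is that the Jordan decomposition of $g$ is defined over $\mathbb{L}$ (the characteristic polynomial of a triangular matrix splits over $\mathbb{L}$), so that $g^{p^k}=s^{p^k}$ for $p^k\geq n$.

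The obstacle you flag at the end is real, and it is not an artifact of your approach: the hypothesis ``does not surject onto $\mathbb{Z}$'' does not pass to the finite-index subgroup produced by Mal'cev, and the paper's proof silently uses it for that subgroup as well. In fact the lemma as literally stated fails: in $\operatorname{GL}_2(\mathbb{F}_q((X)))$ the matrices $\operatorname{diag}(X,X^{-1})$ and the coordinate swap generate a discrete copy of the infinite dihedral group, which is solvable with abelianization $(\mathbb{Z}/2\mathbb{Z})^2$, hence admits no surjection onto $\mathbb{Z}$, yet contains $\mathbb{Z}$ and so is neither locally finite nor normed $\ell$-adic amenable for any $\ell$. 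Your proposed repair --- requiring that no finite-index subgroup surjects onto $\mathbb{Z}$, which is automatic in the paper's application since there $G$ is periodic --- is the right one, and with that strengthened hypothesis your proof goes through.
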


\begin{remark}
A periodic group does not surject onto $\mathbb{Z}$.
\end{remark}

\begin{proof}
By a Theorem of Mal'cev \cite[Theorem 3.6]{lin}, $G$ is virtually triangularizable, so we may assume that $G$ is contained in the upper triangular subgroup of $\operatorname{GL}_n(\mathbb{K})$. Since $G$ is discrete, it is closed. Isolating the diagonal gives a homomorphism to $(\mathbb{K}^\times)^n$, and so $n$ homomorphisms to $\mathbb{K}^\times$. By the same argument as in Lemma \ref{det npa}, each of these has image in a closed subgroup of $\mathfrak{o}^\times$. Now by Example \ref{ex:glno} and Corollary \ref{clsub}, each closed subgroup of $\mathfrak{o}^\times$ is normed $\ell$-adic amenable for all $\ell \neq p$, and normed $p$-adic amenable if and only if it is finite. Thus, we may reduce to the case in which $G$ is a subgroup of the upper unitriangular group. Once again, by Example \ref{upp un tr} and Corollary \ref{clsub}, it follows that $G$ is $\ell$-adic amenable of norm $1$ for all $\ell \neq p$, and it is normed $p$-adic amenable if and only if it is finite.
\end{proof}

All examples we have seen are not normed $p$-adic amenable. This is not a coincidence:

\begin{lemma}
A closed subgroup of $\operatorname{GL}_n(\mathbb{K})$ is normed $p$-adic amenable if and only if it is finite.
\end{lemma}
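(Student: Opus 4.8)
The plan is to deduce the statement from Theorems \ref{finite} and \ref{main1} together with the congruence-subgroup structure of $\operatorname{GL}_n(\mathfrak{o})$ described above. One direction is immediate: a finite subgroup of $\operatorname{GL}_n(\mathbb{K})$ is closed, and since $\chi(\mathbb{Q}_p) = 0$, Theorem \ref{finite} shows that every finite group is normed $p$-adic amenable.

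For the converse, let $G \leq \operatorname{GL}_n(\mathbb{K})$ be closed and normed $p$-adic amenable. By Theorem \ref{main1}, $G$ is locally elliptic and $p^\mathbb{N}$-free; fix $k \geq 1$ with $G$ being $p^k$-free. I will use the standard fact that every compact subgroup $C$ of $\operatorname{GL}_n(\mathbb{K})$ is conjugate into $\operatorname{GL}_n(\mathfrak{o})$: the $\mathfrak{o}$-submodule of $\mathbb{K}^n$ generated by $C \cdot \mathfrak{o}^n$ is bounded, because $C \cdot \mathfrak{o}^n$ is compact, it contains $\mathfrak{o}^n$, and it is $C$-invariant, so it is a $C$-invariant lattice since $\mathfrak{o}$ is a discrete valuation ring; conjugating by a change-of-basis matrix puts $C$ inside $\operatorname{GL}_n(\mathfrak{o})$.

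First I would prove that $G$ is discrete. Being locally elliptic, $G$ contains a compact open subgroup $U$; choose $g$ with $g^{-1}Ug \leq \operatorname{GL}_n(\mathfrak{o})$. Then $P := U \cap gE_1g^{-1}$ is an intersection of two compact open subgroups of $\operatorname{GL}_n(\mathbb{K})$, hence a compact open subgroup of $G$, and it is pro-$p$ because it is conjugate to the closed subgroup $g^{-1}Ug \cap E_1$ of the pro-$p$ group $E_1 = I_n + \pi \operatorname{M}_{n \times n}(\mathfrak{o})$. As a closed subgroup of the normed $p$-adic amenable group $G$, the group $P$ is normed $p$-adic amenable by Corollary \ref{clsub}; but an infinite pro-$p$ group is not normed $p$-adic amenable, since by Proposition \ref{prof} its norm would be the supremum of $\| P/O \|_p$ over open normal subgroups $O$, which is unbounded. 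Hence $P$ is finite; being finite, open in $G$, and contained in a Hausdorff group, it forces $\{1\}$ to be open in $G$, so $G$ is discrete.

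It remains to bound the orders of the finite subgroups of $G$. By Proposition \ref{disc}, $G$ is locally finite. Let $H \leq G$ be finite; since $H$ is compact there is $g$ with $H' := g^{-1}Hg \leq \operatorname{GL}_n(\mathfrak{o})$. Then $H' \cap E_1$ is a finite subgroup of the pro-$p$ group $E_1$, hence a $p$-group, and it is isomorphic to the $p$-subgroup $H \cap gE_1g^{-1}$ of $G$; as $G$ is discrete and $p^k$-free, this $p$-group has order strictly less than $p^k$. Combining this with the isomorphism $\operatorname{GL}_n(\mathfrak{o})/E_1 \cong \operatorname{GL}_n(\mathfrak{r}) \cong \operatorname{GL}_n(\mathbb{F}_q)$ yields
$$|H| = |H'| = |H' \cap E_1| \cdot |H' E_1 / E_1| < p^k \cdot |\operatorname{GL}_n(\mathbb{F}_q)|.$$
Thus every finite, hence every finitely generated, subgroup of $G$ has order below the fixed constant $p^k \cdot |\operatorname{GL}_n(\mathbb{F}_q)|$; local finiteness then forces $G$ itself to be finite. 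The main point is the discreteness step: once it is in place, $p^\mathbb{N}$-freeness of the congruence $p$-subgroups collapses everything to the finite residue field, so — in contrast with the surrounding discussion — the cases $\chi(\mathbb{K}) = 0$ and $\chi(\mathbb{K}) = p$ need not be separated and no structure theory of locally finite linear groups is required.
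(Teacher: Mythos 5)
Your proof is correct, and the second half takes a genuinely different route from the paper's. The discreteness step is essentially the paper's argument (the paper intersects $G$ directly with $E_1$, which is already open in $\operatorname{GL}_n(\mathbb{K})$, so your conjugation of $U$ into $\operatorname{GL}_n(\mathfrak{o})$ is not needed there -- and note that $U$ is a compact open subgroup of $G$, not of $\operatorname{GL}_n(\mathbb{K})$, though this does not affect the conclusion that $P$ is compact open in $G$). Where you diverge is in passing from ``discrete and normed $p$-adic amenable'' to ``finite'': the paper splits into cases according to $\chi(\mathbb{K})$, invoking Wehrfritz's theorem (Theorem \ref{lin char 0}) that periodic linear groups over characteristic-$0$ local fields are finite, and in positive characteristic the structure theory of locally finite linear groups from \cite{lin} together with Lemma \ref{disc lin}. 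You instead give a uniform, elementary counting argument: conjugate each finite subgroup into $\operatorname{GL}_n(\mathfrak{o})$ via an invariant lattice, bound its order by $p^k \cdot |\operatorname{GL}_n(\mathbb{F}_q)|$ using $p^k$-freeness on the pro-$p$ part $E_1$ and the finite quotient $\operatorname{GL}_n(\mathfrak{r})$, and conclude by local finiteness (a locally finite group with a uniform bound on the orders of its finite subgroups is finite, by taking a finite subgroup of maximal order). Your approach buys self-containedness -- no case split and no external structure theory of locally finite linear groups -- at the cost of the standard invariant-lattice fact that compact subgroups of $\operatorname{GL}_n(\mathbb{K})$ are conjugate into $\operatorname{GL}_n(\mathfrak{o})$, which you justify correctly; it also yields an explicit bound $p^k \cdot |\operatorname{GL}_n(\mathbb{F}_q)|$ on $|G|$ in terms of $\|G\|_p = p^k$ that the paper's argument does not produce.
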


\begin{proof}
Let $G \leq \operatorname{GL}_n(\mathbb{K})$ be closed and normed $p$-adic amenable. We have seen that $\{ E_k \mid k \geq 1 \}$ is a neighbourhood basis of the identity of $\operatorname{GL}_n(\mathbb{K})$, so $\{ G \cap E_k \mid k \geq 1 \}$ is a neighbourhood basis of the identity of $G$. By Proposition \ref{opsub}, the pro-$p$ group $G \cap E_1$ is normed $p$-adic amenable, so it must be finite. This implies that $G \cap E_k = \{ I_n \}$ for $k \geq 1$ large enough. Therefore $G$ is discrete.

If $\chi(\mathbb{K}) = 0$, it follows from Theorem \ref{lin char 0} that $G$ is finite and we are done. Instead suppose that $\chi(\mathbb{K}) = p$. Since $G$ is normed $p$-adic amenable, it has a finite Sylow $p$-subgroup, so by \cite[Corollary 9.7]{lin} it is virtually abelian. By Lemma \ref{disc lin}, it is virtually a locally finite $p$-group, so it must be finite.
\end{proof}

\subsection{Groups of tree automorphisms}
\label{ss_aut(T)}

Let $d \geq 3$ and consider the group $Aut(\mathcal{T}_d)$ of automorphisms of the $d$-valent tree. It admits as a basis of compact open subgroups the groups $Fix(T) := \{ g \in Aut(\mathcal{T}_d) \mid g(v) = v \text{ for all } v \in T \}$, where $T$ is a finite subtree of $\mathcal{T}_d$. Of particular relevance is the case in which $T$ is a single vertex.

\begin{example}
\label{fixv0}
Let $v_0 \in V(\mathcal{T}_d)$. Then the group $Fix(v_0)$ is isomorphic to the inverse limit of $(Aut(B_n(v_0)))_{n \geq 1}$. Now $|Aut(B_1(v_0))| = d!$, since $Aut(B_1(v_0)) \cong Sym(\partial B_1(v_0)) \cong S_d$. The prime divisors of the other orders can be calculated inductively. Indeed, $Aut(B_{n+1}(v_0))$ also acts on $B_n(v_0)$, since tree automorphisms preserve distances. The map $Aut(B_{n+1}(v_0)) \to Aut(B_n(v_0))$ is surjective. The automorphisms in the kernel are precisely those that permute the $(d-1)$ neighbours of vertices at distance $n$ from $v_0$. There are $(d-1)!^{|\partial B_n(v_0)|}$ such automorphisms. Thus $|Aut(B_{n+1}(v_0))| = |Aut(B_n(v_0))| \cdot (d-1)!^{|\partial B_n(v_0)|}$. Inductively, this proves that $Fix(v_0)$ is $p$-adic amenable of norm 1 if $p > d$, of norm $p$ if $p = d$, and not normed $p$-adic amenable if $p < d$.
\end{example}

\begin{example}
\label{fixT}
In an analogous way we can prove that given a finite subtree $T$ with at least two vertices, the compact open subgroup $Fix(T)$ is $p$-adic amenable of norm 1 if $p \geq d$ and not normed $p$-adic amenable if $p < d$.
\end{example}

On the other hand:

\begin{example}
$Aut(\mathcal{T}_d)$ is not normed $p$-adic amenable for any $p$ by Corollary \ref{compgen}, because it is non-compact and compactly generated. 
\end{example}

Although $Aut(\mathcal{T}_d)$ is not locally elliptic, it admits interesting non-compact and locally elliptic subgroups. Here is an example. \\

Let $r = (v_0, v_1, v_2, \ldots)$ be an infinite ray in $\mathcal{T}_d$. For all $i \geq 0$, define $S_i = \{ w \in V(\mathcal{T}_d) \mid d(v_i, w) = i = d(v_{i+1}, w) - 1 \}$. So $S_i$ is the set of elements of the sphere of radius $i$ around $v_i$, except those for which the path from $v_i$ to $w$ passes through $v_{i+1}$. Notice that $S_i \subset S_{i+1}$ for all $i$. The \emph{horosphere around $r$} is $\mathcal{H}(r) := \bigcup\limits_{i \geq 0} S_i$. This allows to give another description of the sets $S_i$, namely: $S_i = B_i(v_i) \cap \mathcal{H}(r)$. In Figure 1, part of the horosphere around $r$ is drawn. More precisely, the vertices drawn are exactly those in $S_3$, in a 3-valent tree.

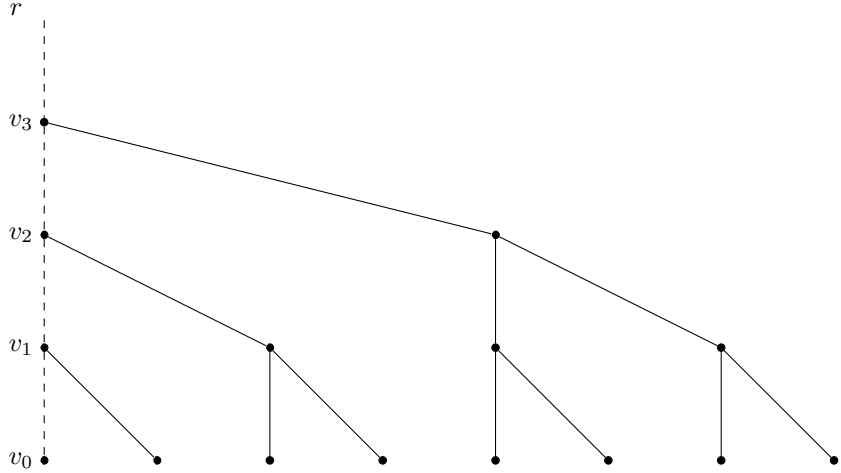
\begin{figure}
\centering
\captionsetup{justification=centering}

\begin{center}
\begin{tikzpicture}
[font=\footnotesize]
\node[label=left:{$r$}]{}
[every node/.style={circle,draw,inner sep=1,fill=black}]
child{ node[label=left:{$v_3$}]{} edge from parent[dashed]
	child[missing] child[missing] child[missing] child[missing]
    child { node[label=left:{$v_2$}]{}
    	child[missing] child[missing]
    	child { node[label=left:{$v_1$}]{}
    		child[missing]
    		child { node[label=left:{$v_0$}]{} }
    		child { node{} edge from parent[solid]}
    	}
    	child[missing]
    	child { node{} edge from parent[solid]
    		child[missing]
    		child { node{} }
    		child { node{} }
    	}
    }
    child[missing] child[missing] child[missing]
    child { node{} edge from parent[solid]
    	child[missing] child[missing]
    	child { node{}
    		child[missing]
    		child { node{} }
    		child { node{} }
    	}
    	child[missing]
    	child { node{}
    		child[missing]
    		child { node{} }
    		child { node{} }
    	}
    }
};
\end{tikzpicture}
\end{center}

\caption{The vertices in the bottom row form part of the horosphere around $r$. \\ In this example, $d = 3$.
\label{fig_horo}}

\end{figure}

\begin{example}
Let $G := Aut(\mathcal{T}_d)_{\mathcal{H}(r)}$ be the stabilizer of this horosphere. We remark two key facts. First, that any element of $G$ must fix some $v_i$; and secondly that any element of $G$ fixing $v_i$ must also fix $v_{i+1}$.

\begin{proof}
Let $\varphi \in G$, and let $w_0 := \varphi(v_0) \in \mathcal{H}(r)$. Let $i \geq 1$ be such that $w_0 \in S_i$. Since $w_0 \in S_i$ we have $d(w_0, v_i) = i$, so we can denote $(w_0, w_1, \ldots, w_i = v_i)$ the path from $w_0$ to $v_i$. We will prove by induction that $\varphi(v_j) = w_j$, and it will follow that $\varphi(v_i) = w_i = v_i$. \\

For $j = 0$, this is by definition.

Let us look at $j = 1$. Among the neighbours of $w_0$, $w_1$ is the only one which has $(d-1) \geq 2$ neighbours in $\mathcal{H}(r)$, while all others only have 1 (which is $w_0$). Now $v_1$ has $(d-1)$ neighbours in $\mathcal{H}(r)$, and so $\varphi(v_1)$ has $(d-1)$ neighbours in $\varphi(\mathcal{H}(r)) = \mathcal{H}(r)$. Moreover $\varphi(v_1)$ is a neighbour of $\varphi(v_0) = w_0$, and so $\varphi(v_1) = w_1$.

Now let $j \geq 2$, and suppose that $\varphi(v_{j-1}) = w_{j-1}$. Among the neighbours of $w_{j-1}$, $w_j$ is the only one at distance $j$ from $\mathcal{H}(r)$, while all others are at distance $j-2 \geq 0$. Moreover $\varphi(v_j)$ is a neighbour of $\varphi(v_{j-1}) = w_{j-1}$ satisfying $d(\varphi(v_j), \mathcal{H}(r)) = d(\varphi(v_j), \varphi(\mathcal{H}(r))) = d(v_j, \mathcal{H}(r)) = j$. Therefore $\varphi(v_j) = w_j$. \\

This proves that any element of $G$ must fix some $v_i$. Now suppose that $G$ fixes $v_i$. Then it must also preserve $S_i$, and so $w_0 = \varphi(v_0) \in S_i \subset S_{i+1}$. By the argument above, $\varphi$ fixes $v_{i+1}$ too.
\end{proof}

This allows us to write $G = \bigcup\limits_{i \geq 1} G_i$, where $G_i := G \cap Fix(v_i) \leq G_{i+1}$. This inclusion implies that if $e$ denotes the edge from $v_i$ to $v_{i+1}$, then $G_i$ is a subgroup of $Fix(e)$. Recall from Example \ref{fixT} that $Fix(e)$ is $p$-adic amenable of norm 1 for all $p \geq d$, so we deduce that $G$ has the same property by Proposition \ref{dirun}. On the other hand, any permutation of $S_i$ fixing the rest of $\mathcal{H}(r)$ is in $G_i$, which implies that $\|G_i\|_p \to \infty$ for $p < d$, by the same argument as in Example \ref{fixv0}; so $G$ is not normed $p$-adic amenable for $p < d$.
\end{example}

The last examples of this subection are groups acting on trees with an (almost) prescribed local action. These come from a construction of Burger and Mozes \cite{BM} which was later generalized by Le Boudec \cite{LeB}. We refer the reader to \cite[Section 3]{LeB} for details.
 
Let $\Omega$ be a set of cardinality $d \geq 3$. Given a vertex $v \in V(\mathcal{T}_d)$, denote by $E(v)$ the set of edges adjacent to $v$. Fix a proper coloring $c$ of the set of edges $E(\mathcal{T}_d)$; namely, a map $c : E(\mathcal{T}_d) \to \Omega$ such that for all $v \in V(\mathcal{T}_d)$ the restriction $c_v : E(v) \to \Omega$ is a bijection. Given an automorphism $g \in Aut(\mathcal{T}_d)$, the bijection $g_v : E(v) \to E(gv)$ induces a permutation $\sigma(g, v) := c_{gv} \circ g_v \circ c_v^{-1} \in Sym(\Omega)$. For a group $F \leq Sym(\Omega)$, the \emph{Burger-.Mozes universal group} $U(F)$ is the subgroup of $Aut(\mathcal{T}_d)$ whose local action is prescribed by $F$; namely $U(F) := \{ g \in Aut(\mathcal{T}_d) \mid \sigma(g, v) \in F \text{ for all } v \in V(\mathcal{T}_d) \}$. This is a closed subgroup of $Aut(\mathcal{T}_d)$, which is vertex-transitive, and discrete if and only if $F$ acts freely on $\Omega$. We start by looking at the vertex stabilizer $U(F)_{v_0} \leq Fix(v_0)$:

\begin{example}
Being a closed subgroup of $Fix(v_0)$, the group $U(F)_{v_0}$ is $p$-adic amenable of norm $1$ if $p > d$, and of norm at most $p$ if $p = d$, by Example \ref{fixv0}. We can calculate the norm for the primes $p \leq d$ as we did for $Fix(v_0)$. Indeed, $U(F)_{v_0}$ is the inverse limit of the finite groups $(Aut(B_n(v_0))^*)_{n \geq 1}$, where $Aut(B_n(v_0))^*$ is the image of the action of $U(F)_{v_0}$ on $B_n(v_0)$. First, by definition $Aut(B_1(v_0))^* \cong F$. Then $Aut(B_{n+1}(v_0))^*$ acts on $B_n(v_0)$ preserving the action of $U(F)_{v_0}$, and the corresponding homomorphism $Aut(B_{n+1}(v_0))^* \to Aut(B_n(v_0))^*$ is surjective. The automorphisms in the kernel are precisely those that permute the $(d - 1)$ neighbours of vertices at distance $n$ from $v_0$, according to the local action prescribed by $F$. Therefore this group is isomorphic to a product of $|\partial B_n(v_0)|$ copies of point stabilizers of $F$. A point stabilizer in $F$ has index at most $d$, with equality precisely when $F$ is transitive. Note that when $d = p$ is a prime, $F$ is transitive if and only if it has order divisible by $p$. Therefore in this case, $U(F)_{v_0}$ is $p$-adic amenable of norm $p$, if $F$ is transitive, and of norm $1$ otherwise. For the primes $p < d$ it depends on the possible orders of point stabilizers of $F$; but at any rate $U(F)_{v_0}$ is $p$-adic amenable of norm $1$ for any prime $p$ which does not divide the order of $F$, and of norm $p$ for any prime $p$ which divides the order of $F$ but not the order of any point stabilizer of $F$.
\end{example}

This construction can be relaxed to consider groups in which the local action is prescribed \emph{almost everywhere}. Define $G(F) := \{ g \in Aut(\mathcal{T}_d) \mid \sigma(g, v) \in F \text{ for all but finitely many } v \in V(\mathcal{T}_d) \}$. For $g \in G(F)$, the set of \emph{singularities} of $g$ is the finite set $S(g) := \{ v \in V(\mathcal{T}_d) \mid \sigma(g, v) \notin F \}$. Since $S(gh) \subseteq S(g) \cup S(h)$, it follows that $G(F)$ is a subgroup of $Aut(\mathcal{T}_d)$, which contains $U(F)$. It turns out \cite[Lemma 3.3]{LeB} that given $g \in G(F)$, the permutation $\sigma(g, v)$ preserves the orbits of $F$, even when $v \in S(g)$. We denote by $\hat{F}$ the subgroup of $Sym(\Omega)$ that preserves the orbits of $F$, so we have $U(F) \leq G(F) \leq U(\hat{F})$. There exists a group topology on $G(F)$ such that the inclusion $U(F) \to G(F)$ is continuous and open; however this is \emph{not} the subspace topology induced by $Aut(\mathcal{T}_d)$: indeed $G(F)$ is closed in $Aut(\mathcal{T}_d)$ if and only if $F = \hat{F}$ \cite[Corollary 3.6]{LeB}. We now consider the vertex stabilizer $G(F)_{v_0} \leq Fix(v_0)$:

\begin{example}
Let $K_n(v_0) := \{ g \in G(F)_{v_0} \mid S(g) \subseteq B_n(v_0) \}$ for all $n \geq 0$. We have $U(F)_{v_0} \leq K_n(v_0)$ for all $n$, and since $B_n(v_0)$ is finite, the index is also finite. Therefore $K_n(v_0)$ is a compact open subgroup of $G(F)_{v_0}$. By definition $G(F)_{v_0}$ is the directed union of the groups $K_n(v_0)$ as $n \geq 0$ grows. Again by \cite[Corollary 3.6]{LeB}, the inclusion $K_n(v_0) \subset K_{n+1}(v_0)$ is strict unless $F = \hat{F}$. Therefore unless $F = \hat{F}$, the group $G(F)_{v_0}$ is non-compact and locally elliptic. This holds in particular when $F$ is transitive and not the whole of $Sym(\Omega)$. Since, as before, $K_n(v_0) \leq Fix(v_0)$ as an open subgroup, by Example \ref{fixv0} and Proposition \ref{dirun} it follows that $G(F)_{v_0}$ is $p$-adic amenable of norm $1$ if $p > d$, and of norm at most $p$ if $p = d$.

Now to calculate the norms of $K_n(v_0)$ for $n \geq 1$, it suffices to notice that the action of $K_n(v_0)$ on $B_n(v_0)$ coincides with that of $U(\hat{F})$. Therefore $\| G(F)_{v_0} \|_p = \sup \| K_n(v_0) \|_p = \| U(\hat{F}) \|_p$, by the previous example. In particular when $d = p$ is prime, $G(F)$ is $p$-adic amenable of norm $p$ if $\hat{F}$ is transitive (which is equivalent to $F$ being transitive), and of norm $1$ otherwise.
\end{example}

%

\subsection{Non-simplicity of $2$-adic amenable groups}

In Examples \ref{PSL p > 3} and \ref{Suzuki} we have provided, for each odd prime $p$, an infinite discrete simple $p$-adic amenable group of norm $1$. Moreover, in Example \ref{PSL p 2} we have provided an infinite discrete simple $2$-adic amenable group of norm $4$. Our goal in this subsection is to show that $2$-adic amenability of smaller norm is an obstruction to simplicity. More precisely, we will show:

\begin{proposition}
\label{2simple}

Let $G$ be a topologically simple non-abelian t.d.l.c. group. Then $\| G \|_2 \geq 4$ and there exist odd primes $p \neq \ell$ such that $\| G \|_p, \| G \|_\ell > 1$.
\end{proposition}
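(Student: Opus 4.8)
The plan is to reduce everything to finite group theory via two successive dichotomies. First, if $G$ is \emph{not} locally elliptic, then by Theorem \ref{main1} the group $G$ fails to be normed $q$-adic amenable for every prime $q$, so $\|G\|_q = \infty$ for all $q$; in particular $\|G\|_2 = \infty \geq 4$ and $\|G\|_p = \|G\|_\ell = \infty > 1$ for any two odd primes, and we are done. So we may assume $G$ is locally elliptic, hence a directed union of compact open profinite subgroups, and therefore an elementary t.d.l.c. group. Since $G$ is topologically simple, the structure theory of elementary groups forces $G$ to be either profinite or discrete, and a profinite topologically simple group is finite (its open normal subgroups form a neighbourhood basis of the identity). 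Thus $G$ is either a finite simple non-abelian group, or a discrete, locally finite, simple non-abelian group (for discrete groups ``locally elliptic'' means ``locally finite'').

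In both remaining cases I would extract from $G$ a finite subgroup whose order is divisible by $4$ and by two distinct odd primes, and then conclude via Proposition \ref{disc}, which identifies $\|G\|_p$ with the supremal order of a finite $p$-subgroup. The finite-group input is classical: every finite simple non-abelian group $S$ has order divisible by $4$ --- if its Sylow $2$-subgroup had order at most $2$ it would be central in its normalizer, so Burnside's normal $p$-complement theorem would produce a normal $2$-complement, contradicting simplicity --- and by Burnside's $p^aq^b$-theorem $|S|$ has at least three prime divisors, hence at least two odd ones, $p \neq \ell$. If $G$ itself is finite simple non-abelian, take $S = G$: it contains subgroups of orders $4$, $p$, $\ell$, so Proposition \ref{disc} gives $\|G\|_2 \geq 4$ and $\|G\|_p, \|G\|_\ell > 1$. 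If $G$ is infinite, locally finite and simple non-abelian, fix a finite non-abelian subgroup $F \leq G$ and apply Kegel's lemma on locally finite simple groups: there is a finite $H$ with $F \leq H \leq G$ and a normal subgroup $M$ of $H$ with $F \cap M = 1$ and $H/M$ simple. Then $F$ embeds in $H/M$, so $H/M = S$ is finite simple non-abelian, and since $|S|$ divides $|H|$ the finite subgroup $H \leq G$ contains subgroups of orders $4$, $p$, $\ell$; Proposition \ref{disc} again gives $\|G\|_2 \geq 4$ and $\|G\|_p, \|G\|_\ell > 1$, finishing the proof.

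The main obstacle is the step ``$G$ locally elliptic and topologically simple $\Rightarrow$ $G$ profinite or discrete'', which is the one place where one must leave the self-contained machinery of the paper and appeal to the structure theory of amenable/elementary t.d.l.c. groups; alternatively one can argue through the Caprace--Reid--Willis analysis of the structure lattice, noting that an infinite compact open subgroup of a locally elliptic group would be a non-discrete locally normal subgroup, which is incompatible with topological simplicity. A minor but essential point in the second half is that Kegel's lemma must be fed a non-abelian finite subgroup $F$, since it is the embedding of $F$ that forces the simple quotient $H/M$ to be non-abelian --- hence of order divisible by $4$ and by two odd primes --- after which everything is a direct application of Proposition \ref{disc}.
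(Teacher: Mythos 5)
Your reduction to the finite and the discrete cases does not go through: the claim that a locally elliptic (hence elementary) topologically simple group must be profinite or discrete is false, and this is the load-bearing step of your argument. The paper itself exhibits the obstruction at the end of Section \ref{examples}: the constructions of Willis and Akin--Glasner--Weiss produce \emph{non-discrete} locally elliptic groups containing an increasing union of finite groups $G_n$ as a dense subgroup, and when the $G_n$ are simple or close to simple (symmetric groups, or matrix groups of growing degree over a fixed finite field) the resulting group is topologically simple; being infinite it is not profinite, and it is not discrete. So the dichotomy you invoke fails, and your proof says nothing about a hypothetical non-discrete, non-profinite, locally elliptic, topologically simple group with $\|G\|_2 \le 2$ (the examples just mentioned happen to have $\|G\|_q = \infty$ for every $q$, so they are consistent with the proposition, but your argument cannot exclude others). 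Your fallback via Caprace--Reid--Willis is also unavailable: that theory concerns \emph{compactly generated} simple groups, whereas a compactly generated locally elliptic group is already compact; and in any topologically simple t.d.l.c. group every compact open subgroup is a non-discrete locally normal subgroup, so there is no incompatibility of the kind you assert.

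The paper closes exactly this gap by a different route: if $\|G\|_2 \le 2$ (or $\|G\|_p = 1$ for all but one odd prime), then every finite discrete subquotient of every compact open subgroup of $G$ has order $m$ or $2m$ with $m$ odd (resp.\ order $2^a\ell^b$), hence is solvable by Feit--Thompson and Burnside; therefore $G$ is \emph{inductively prosolvable}, and Protasov's theorem (or the paper's elementary argument in the algebraically simple case) rules out topological simplicity. The two cases you do reach are handled essentially correctly --- the finite case by Burnside transfer plus the $p^aq^b$-theorem, the infinite locally finite case by Kegel's lemma together with Proposition \ref{disc} --- but note that for $4 \mid |S|$ your normal $2$-complement argument only excludes a Sylow $2$-subgroup of order exactly $2$; the odd-order case gives the complement $S$ itself and no contradiction, so it genuinely requires Feit--Thompson.
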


\begin{remark}
The Classification of Finite Simple Groups implies that the statement holds with $p = 3$ and $\ell = 5$: see Example \ref{12 20}.
\end{remark}

For conciseness, let us say that a t.d.l.c. group $G$ satisfies $(*)$ if $\| G \|_2 \leq 2$, or if $\| G \|_p = 1$ for all but one odd prime. Therefore Proposition \ref{2simple} can be rephrased as: if $G$ is non-abelian and satisfies property $(*)$, then $G$ is not topologically simple. The heavy lifting is done by the finite case, which is a rephrasing of the Odd Order Theorem and Burnside's Theorem:

\begin{lemma}[Feit-Thompson, Burnside]
\label{2simple fin}

Let $G$ be a finite group satisfying property $(*)$. Then $G$ is solvable.
\end{lemma}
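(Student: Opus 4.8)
The statement is purely about finite groups, so the first move is to translate property $(*)$ into divisibility conditions on $n := |G|$ via Theorem \ref{finite}. For an odd prime $p$ we have $\|G\|_p = |n|_p^{-1} = p^{\nu_p(n)}$ (this is case 3 of Theorem \ref{finite} with $\mathbb{K} = \mathbb{Q}_p$), and likewise $\|G\|_2 = 2^{\nu_2(n)}$. Hence $\|G\|_2 \leq 2$ is equivalent to $\nu_2(n) \leq 1$, i.e.\ a Sylow $2$-subgroup of $G$ has order $1$ or $2$; and ``$\|G\|_p = 1$ for all but one odd prime'' is equivalent to $n = 2^a \ell^b$ for some odd prime $\ell$ and $a, b \geq 0$. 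So property $(*)$ splits into two cases: (a) $\nu_2(n) \leq 1$, or (b) $n$ has at most two distinct prime divisors, one of which is $2$.

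\textbf{Case (b)} is immediate from Burnside's $p^a q^b$ theorem: any finite group whose order has at most two prime divisors is solvable. \textbf{Case (a)} splits further. If $|G|$ is odd, then $G$ is solvable by the Odd Order Theorem of Feit and Thompson. If a Sylow $2$-subgroup $P$ of $G$ has order exactly $2$, then $P$ is cyclic, so $\mathrm{Aut}(P)$ is trivial and $N_G(P) = C_G(P)$; by Burnside's normal $p$-complement theorem $G$ has a normal $2$-complement $N$, a subgroup of odd order with $[G:N] = 2$. Then $N$ is solvable by Feit--Thompson, $G/N \cong \mathbb{Z}/2\mathbb{Z}$ is abelian, and solvability is closed under extensions, so $G$ is solvable.

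\textbf{Main obstacle.} There is no genuine difficulty here beyond invoking the two deep inputs (the Odd Order Theorem and Burnside's two classical theorems); the only point requiring a little care is the book-keeping in the order-$2$ Sylow subcase, where one must produce the normal $2$-complement rather than argue directly, since a group of order $2m$ with $m$ odd need not itself have odd order. Everything else is a routine unwinding of Theorem \ref{finite}.
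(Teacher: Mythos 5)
Your proof is correct, and its overall architecture matches the paper's: you split property $(*)$ into the same two disjuncts, dispose of the order $2^a\ell^b$ case by Burnside's $p^aq^b$ theorem, the odd-order case by Feit--Thompson, and reduce the order-$2m$ case ($m$ odd) to the odd-order case by producing a normal subgroup of index $2$. The one place you diverge is in how that index-$2$ normal subgroup is produced. You invoke Burnside's normal $p$-complement theorem: the Sylow $2$-subgroup $P$ has order $2$, so $\mathrm{Aut}(P)$ is trivial, $N_G(P)=C_G(P)$, and transfer gives a normal $2$-complement of odd order. The paper instead uses the elementary Cayley/parity trick: embed $G$ into $S_{2m}$ via the regular representation, observe that an element of order $2$ is a product of $m$ transpositions and hence (as $m$ is odd) an odd permutation, so $G\cap A_{2m}$ is a proper normal subgroup of index $2$ and odd order. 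Both arguments are standard and correct; the paper's avoids transfer theory entirely and is self-contained modulo the sign homomorphism, while yours leans on a slightly heavier (but equally classical) tool and generalizes more readily to cyclic Sylow $2$-subgroups of larger order, which is not needed here. Your unwinding of property $(*)$ via Theorem \ref{finite} is also accurate.
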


\begin{proof}
The condition $\| G \|_2 \leq 2$ is equivalent to $G$ being of order $m$ or $2m$, for some odd number $m$. In the first case, by the Odd Order Theorem, $G$ is solvable. The second case is a well-known argument in finite group theory, which we recall for completeness. Let $G$ act on itself by left translation, so we see $G$ as a subgroup of $S_{2m}$. Let $g \in G$ have order $2$. Then $g$ decomposes as a product of $m$ transpositions. Since $m$ is odd, $g$ is an odd permutation. It follows that $G \cap A_{2m} \neq G$, and so $G$ admits a normal subgroup of index $2$ which has odd order, and so is solvable by the previous case; so $G$ itself is solvable.

If $\| G \|_p = 1$ for all but one odd prime $\ell$, then $G$ has order $2^a \ell^b$. Such a group is solvable by Burnside's Theorem.
\end{proof}

This implies that groups satisfying property $(*)$ have a local structure that imitates that of solvable groups.

\begin{definition}
Let the profinite group $G$ be the inverse limit of the finite groups $(G_i)_{i \in I}$. If each $G_i$ is solvable, we say that $G$ is \emph{prosolvable}. A group is \emph{inductively prosolvable} if it is a directed union of open prosolvable groups.
\end{definition}

\begin{lemma}
\label{ind pros}

Let $G$ be a group satisfying property $(*)$. Then $G$ is inductively prosolvable.
\end{lemma}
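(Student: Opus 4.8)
The plan is to reduce everything to the finite case handled by Lemma \ref{2simple fin}. First I would observe that a group $G$ satisfying $(*)$ is automatically locally elliptic: in the first alternative $\|G\|_2 \leq 2 < \infty$, so $G$ is normed $2$-adic amenable and hence locally elliptic by Theorem \ref{main1}; in the second alternative only one odd prime is excluded, so $\|G\|_p = 1 < \infty$ for some odd prime $p$, and again $G$ is locally elliptic by Theorem \ref{main1}. Thus, by the definition of local ellipticity, $G$ is a directed union $G = \bigcup_i U_i$ of compact open subgroups, each of which is profinite.

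Next I would check that property $(*)$ is inherited by open subgroups and by their finite discrete quotients. If $U \leq G$ is open, then Proposition \ref{opsub} gives $\|U\|_\mathbb{K} \leq \|G\|_\mathbb{K}$ for every non-Archimedean valued field $\mathbb{K}$, so $U$ satisfies $(*)$; in particular each $U_i$ does. Moreover, a finite discrete quotient $Q$ of such a $U$ is a quotient of $U$ by an open normal subgroup (these form a neighbourhood basis of the identity by Corollary \ref{cor_vd}), so Proposition \ref{quot} yields $\|Q\|_\mathbb{K} \leq \|U\|_\mathbb{K}$, whence $Q$ also satisfies $(*)$. By Lemma \ref{2simple fin}, every such $Q$ is solvable.

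Finally, each $U_i$ is profinite, hence by Corollary \ref{cor_vd} it is the inverse limit of its quotients $U_i/N$ taken over all open normal subgroups $N$ of $U_i$; by the previous step every such $U_i/N$ is a finite solvable group, so $U_i$ is prosolvable. Since the $U_i$ are open and form a directed family with union $G$, it follows that $G$ is a directed union of open prosolvable subgroups, i.e. inductively prosolvable, which is exactly the claim.

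The argument is essentially bookkeeping once Lemma \ref{2simple fin} is available; there is no genuine obstacle. The only points that require a small amount of care are the downward inheritance of $(*)$, which rests on the monotonicity of the norm under open subgroups (Proposition \ref{opsub}) and continuous quotients (Proposition \ref{quot}), and the initial reduction, which uses that $(*)$ forces local ellipticity so that a directed union of compact open subgroups is available to begin with.
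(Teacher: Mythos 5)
Your proof is correct and follows essentially the same route as the paper: reduce to a directed union of compact open subgroups via local ellipticity (Theorem \ref{main1}), push property $(*)$ down to the finite discrete quotients via Propositions \ref{opsub} and \ref{quot}, and invoke Lemma \ref{2simple fin}. The paper's version is terser (it cites Proposition \ref{prof} to pass to the finite quotients where you use Corollary \ref{cor_vd} and Proposition \ref{quot} directly), but the content is the same, and your explicit verification that $(*)$ forces $\|G\|_p<\infty$ for some prime $p$ is a detail the paper leaves implicit.
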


\begin{proof}
Theorem \ref{main1} states that $G$ is a directed union of compact open subgroups $(U_i)_{i \in I}$, each of which satisfies property $(*)$ by Proposition \ref{opsub}. Each $U_i$ is profinite, and by Proposition \ref{prof} it is an inverse limit of finite groups $F_j$ satisfying property $(*)$. By Lemma \ref{2simple fin}, each $F_j$ is solvable, and so by definition each $U_i$ is prosolvable.
\end{proof}

Inductively prosolvable groups have a particularly nice Sylow structure, which imitates the structure of Hall subgroups of finite solvable groups \cite[Theorem 2.63]{period}. Proposition \ref{2simple} follows directly from Lemma \ref{ind pros} and the following theorem \cite{Protasov}:

\begin{theorem}[Protasov]
An inductively prosolvable topologically simple group is cyclic of prime order.
\end{theorem}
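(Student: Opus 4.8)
The statement asserts two things at once: that the only possibilities are the simple abelian groups $\mathbb{Z}/p\mathbb{Z}$, and that no non-abelian inductively prosolvable group is topologically simple. The plan is to treat these separately. Throughout, note that a prosolvable group is profinite, hence compact, so an inductively prosolvable group is in particular locally elliptic; I write $G=\bigcup_i U_i$ as a directed union of compact open prosolvable subgroups $U_i$, and assume $G\neq\{1\}$. The main tool beyond the excerpt will be the Hall--Sylow theory for inductively prosolvable groups, namely the existence and conjugacy of Hall $\pi$-subgroups \cite[Theorem 2.63]{period}.

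First I would dispose of the ``easy'' regimes. If $G$ is \emph{abelian}, every subgroup is normal, so a compact open subgroup $U_{i_0}$ is closed normal, and topological simplicity forces either $U_{i_0}=\{1\}$ (so $G$ is discrete) or $U_{i_0}=G$ (so $G$ is profinite); in the profinite case Corollary \ref{cor_vd} supplies a proper open normal subgroup unless $G$ is finite, and a finite or discrete simple abelian group is $\mathbb{Z}/p\mathbb{Z}$. The same observations reduce the general problem: a profinite topologically simple group is finite by Corollary \ref{cor_vd}, and a finite inductively prosolvable group is solvable, so if $G$ is compact it is finite simple solvable, hence $\mathbb{Z}/p\mathbb{Z}$. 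Since a compact generating set of a locally elliptic group is contained in some $U_i$, any compactly generated locally elliptic group is compact; I may therefore assume $G$ is non-abelian, non-compact and not compactly generated, and aim for a contradiction by exhibiting a proper nontrivial closed normal subgroup.

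For the core, fix a prime $p$ dividing the order of some element (such primes exist because $U_{i_0}\neq\{1\}$ is prosolvable and hence has a nontrivial cyclic quotient of prime order). By \cite[Theorem 2.63]{period} the Sylow $p$-subgroups of $G$ are closed and pairwise conjugate, so the closed subgroup they topologically generate is normal and nontrivial, hence equals $G$; thus $G$ is topologically generated by its $p$-elements for \emph{every} such $p$. The plan is to contradict this via local prosolvability, first reducing to the discrete case: since $U:=U_{i_0}$ is commensurated and the core $\bigcap_{g\in G}gUg^{-1}$ is a proper closed normal subgroup ($G$ being non-compact), simplicity makes $G$ act faithfully on the discrete coset space $G/U$, and conjugacy of Hall subgroups lets one transport the prosolvable normal structure of the stabilizer $U$ across $G$. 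In the resulting discrete picture $G$ is an infinite locally finite group all of whose finite subgroups are solvable, and I would invoke the structure theory of infinite locally finite simple groups (Hall, Kegel), which forces a non-abelian finite simple section and thereby contradicts local solvability. \textbf{The main obstacle} is exactly this gluing step: the derived and Sylow-normal structure is controlled inside each $U_i$, where $\overline{[U_i,U_i]}$ is proper and $U_i$ has open normal subgroups of prime index by Corollary \ref{cor_vd}, but these local normal subgroups are not normal in $G$ and the abstract subgroup they generate is merely dense; converting the conjugacy of Hall subgroups into an honest proper \emph{closed} normal subgroup of $G$ — equivalently, carrying out the reduction to the locally finite simple case — is the delicate heart of the argument.
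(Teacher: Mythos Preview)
The paper does not actually prove Protasov's Theorem; it is cited from \cite{Protasov} and described as following from a general transfer principle for normal series. What the paper \emph{does} prove is the weaker statement in which ``topologically simple'' is replaced by ``algebraically simple'', and that proof follows a completely different line from yours. The paper's argument is a short commutator trick: pick $a,b$ with $c:=[a,b]\neq 1$; by algebraic simplicity $\langle c^G\rangle=G$, so there is a finite set $A$ with $a,b\in\langle c^A\rangle$; choose an open prosolvable $U$ containing $c$ and $A$; then $c=[a,b]\in\overline{\langle c^U\rangle}'$, forcing $\overline{\langle c^U\rangle}$ to be perfect. But closed subgroups of prosolvable groups are prosolvable (Lemma \ref{cl pros}), and a nontrivial prosolvable group has a nontrivial abelian quotient --- contradiction. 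No Hall theory, no Sylow conjugacy, no reduction to the discrete case.

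Your proposal, by contrast, attempts the full topologically simple version, but it is genuinely incomplete. You yourself flag the ``main obstacle'' --- passing from local normal structure in each $U_i$ to a global proper closed normal subgroup --- and do not resolve it. There is also a more basic problem with your discrete reduction: the faithful action of $G$ on $G/U$ gives a continuous injection into $\operatorname{Sym}(G/U)$, but this is \emph{not} a homeomorphism onto its image when $U$ is infinite compact, and $G$ is certainly not locally finite as an abstract group (a profinite group like $\mathbb{Z}_p$ contains infinite finitely generated abstract subgroups). So the sentence ``in the resulting discrete picture $G$ is an infinite locally finite group'' does not hold, and the appeal to Hall--Kegel structure theory for locally finite simple groups is not justified. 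If you want an elementary argument along the paper's lines, the commutator trick above handles algebraic simplicity cleanly; upgrading it to topological simplicity is exactly what requires Protasov's transfer machinery, which neither the paper nor your sketch reproduces.
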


This theorem follows from a more general study of how properties of normal series of local subgroups pass to normal series of the ambient group. It is a continuous version of similar results by Mal'cev for discrete groups \cite{Malcev}, which imply:

\begin{theorem}[Mal'cev]
A locally solvable discrete simple group is cyclic of prime order.
\end{theorem}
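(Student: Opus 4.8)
The plan is to imitate the elementary proof that a finite simple solvable group is cyclic of prime order, supplying the one non-elementary ingredient — an abelian normal subgroup — from Mal'cev's structure theory of locally solvable groups \cite{Malcev}.

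First I would dispose of the abelian case: a simple abelian group has no proper nontrivial subgroup, so it is cyclic of prime order. Hence I may assume that $G$ is simple, locally solvable and \emph{non-abelian}, and derive a contradiction. Since $[G,G]$ and $Z(G)$ are normal, simplicity forces $[G,G]=G$ and $Z(G)=1$, so $G$ is perfect and centreless. In particular $G$ is not finitely generated: a finitely generated locally solvable group is solvable, and a solvable perfect group is trivial. Thus $G$ is the directed union of its proper finitely generated solvable subgroups.

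The core of the argument is to exhibit a nontrivial proper normal subgroup of $G$, contradicting simplicity. In the finite solvable case one takes the last nonzero term of the derived series, an abelian characteristic — hence normal — subgroup, which in the simple non-abelian case is a proper nontrivial normal subgroup. The locally solvable analogue of this is exactly the input I would draw from \cite{Malcev}: such a $G$ carries a (possibly transfinite) ascending normal series with abelian factors, assembled from the abelian pieces of a local system of solvable subgroups; the first nontrivial term of such a series is a nontrivial abelian subgroup $A\trianglelefteq G$, whence $A=G$ by simplicity and $G$ is abelian — the desired contradiction. The locally nilpotent subcase admits a shortcut worth recording: a locally nilpotent group satisfies the normalizer condition, so every maximal subgroup is normal and therefore trivial, forcing $G\cong\mathbb{Z}/p\mathbb{Z}$ directly.

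The main obstacle — and the only substantive step — is precisely this structural input: that local solvability alone suffices to produce, through the local system of solvable subgroups, a nontrivial abelian normal subgroup of a simple group (equivalently, an ascending normal series with abelian factors). This is the content of Mal'cev's cited work; everything else (the abelian case, perfectness, the reduction away from finite generation, the normalizer-condition argument) is routine, and the continuous statement quoted above — Protasov's theorem on inductively prosolvable topologically simple groups — is established along the same lines.
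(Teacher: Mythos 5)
The step you isolate as ``the only substantive step'' is where the proof fails, and it fails in a way that makes the argument circular. You want to import from \cite{Malcev} the statement that a locally solvable group admits an ascending \emph{normal} series with abelian factors, ``assembled from the abelian pieces of a local system of solvable subgroups.'' No such assembly is available: the last nontrivial term of the derived series of a finitely generated solvable subgroup $H \leq G$ is normal in $H$ only, these subgroups do not cohere as $H$ ranges over a local system, and the class of locally solvable groups is not contained in the class of hyperabelian groups (in the Kurosh--Chernikov hierarchy the two classes are incomparable; this is precisely why the simple case of locally solvable groups requires a separate argument from the case of groups with abelian normal systems). Worse, for the simple group $G$ at hand the statement ``$G$ has a nontrivial abelian normal subgroup'' is, by simplicity, literally the statement ``$G$ is abelian,'' which together with your (correct) abelian case \emph{is} the theorem. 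So the input you defer to the citation is at least as strong as the conclusion, and Mal'cev does not prove the theorem that way. The argument that actually works --- Robinson's, which this paper adapts verbatim to the inductively prosolvable setting --- avoids normal series of $G$ entirely: if $G$ is simple, non-abelian and locally solvable, pick $c = [a,b] \neq 1$; simplicity gives $G = \langle c^G \rangle$, so $a, b \in \langle c^A \rangle$ for a finite set $A \subset G$; then $H := \langle c, A \rangle$ is finitely generated, hence solvable, while $c = [a,b] \in \langle c^H \rangle'$ forces $\langle c^H \rangle' = \langle c^H \rangle$, i.e.\ a nontrivial perfect subgroup of the solvable group $H$ --- a contradiction.

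Two smaller points. Your reductions (perfectness, triviality of the centre, non-finite-generation) are correct but end up unused, since they do not produce the normal subgroup you need. And the locally nilpotent ``shortcut'' is also not sound as stated: locally nilpotent groups need not satisfy the normalizer condition (the implication runs the other way), and even granting it, concluding $G \cong \mathbb{Z}/p\mathbb{Z}$ from ``every maximal subgroup is normal, hence trivial'' presupposes that $G$ has a maximal subgroup at all.
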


An elementary proof of this theorem was later given by Robinson \cite[Theorem 5.27]{loc_sol}. We adapt his argument to give an elementary proof that an inductively prosolvable \emph{algebraically} simple group is cyclic of prime order, which is a weaker version of Protasov's Theorem and a stronger version of Mal'cev's Theorem, in the locally elliptic setting. We will need the following fact:

\begin{lemma}
\label{cl pros}

A closed subgroup of a prosolvable group is prosolvable.
\end{lemma}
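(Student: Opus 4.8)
The plan is to reduce the statement to a condition on finite discrete quotients. First I would record the standard equivalence: a profinite group $P$ is prosolvable (in the sense of the definition above) if and only if every finite discrete quotient of $P$ is solvable. The ``only if'' direction follows from the definition together with the fact that, for an inverse system of finite groups with surjective transition maps, the kernels of the projections $P \to P_i$ form a neighbourhood basis of the identity (so every open normal subgroup of $P$ contains one of them, making the corresponding finite quotient a quotient of a solvable $P_i$); the ``if'' direction holds because $P$ is canonically the inverse limit of its finite discrete quotients. I would also use without comment that a closed subgroup of a profinite group is profinite, which is immediate since it is compact and totally disconnected (see the discussion after Van Danzig's Theorem).

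Next, let $G$ be prosolvable and $H \leq G$ a closed subgroup; then $H$ is profinite. Let $V \trianglelefteq H$ be an arbitrary open normal subgroup. Since the finite discrete quotients of $H$ are precisely the quotients by open normal subgroups, by the first paragraph it suffices to show that $H/V$ is solvable. By Lemma \ref{opinco} there is a compact open subgroup $W \leq G$ with $V = H \cap W$. Because $G$ is compact, $W$ has finite index, hence contains an open normal subgroup $U \trianglelefteq G$. Then $H \cap U \leq H \cap W = V$, so $H/V$ is a quotient of $H/(H \cap U) \cong HU/U$, which is a subgroup of $G/U$. Since $G$ is prosolvable, the finite group $G/U$ is solvable, hence so is its subgroup $HU/U$ and therefore so is its quotient $H/V$. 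As $V$ was arbitrary, $H$ is prosolvable.

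The only real subtlety is the equivalence in the first paragraph: one has to be mildly careful because ``prosolvable'' is phrased via the \emph{existence} of a presenting inverse system of finite solvable groups, not as a condition on all finite quotients, but for profinite groups these two formulations coincide by the argument sketched above. Everything else is a routine application of Lemma \ref{opinco} and the elementary facts that solvability is inherited by subgroups and by quotients, so I do not expect any genuine obstacle here.
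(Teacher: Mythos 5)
Your proof is correct and follows essentially the same route as the paper's: pull back an open normal subgroup $V \leq H$ to a compact open subgroup of $G$ via Lemma \ref{opinco}, pass to a normal finite-index subgroup $U \trianglelefteq G$, and use $H/(H\cap U) \cong HU/U \leq G/U$ together with the closure of solvability under subgroups and quotients. Your explicit justification of the equivalence ``prosolvable $\iff$ all finite discrete quotients are solvable,'' and your observation that $H/V$ is a quotient of $H/(H\cap U)$ after shrinking to the normal core, are points the paper leaves implicit, but the argument is the same.
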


\begin{proof}
Let $G$ be prosolvable, and let $H \leq G$ be a closed subgroup. We need to show that for any open normal subgroup $V \leq H$ the quotient $H/V$ is solvable. By Lemma \ref{opinco}, there exists an open subgroup $U \leq G$ such that $V = H \cap U$. Up to finite-index, $U$ is normal in $G$. Then $H/V = H/(H \cap U) \cong HU/U \leq G/U$ is solvable.
\end{proof}

\begin{proposition}
An inductively prosolvable algebraically simple group is cyclic of prime order.
\end{proposition}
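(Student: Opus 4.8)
The plan is to argue as in the proof of Mal'cev's theorem. Write $G=\bigcup_{i\in I}U_{i}$ as a directed union of open — hence compact — prosolvable subgroups. If $G$ is compact, then some $U_{i}$ equals $G$ (a finite subcover plus directedness), so $G$ is profinite; an infinite profinite group has a proper non-trivial open normal subgroup by Corollary \ref{cor_vd}, contradicting simplicity, so $G$ is finite, and a finite prosolvable — i.e.\ solvable — simple group is cyclic of prime order. So it suffices to rule out the case that $G$ is non-compact; assume it is, so that no $U_{i}$ is all of $G$, and in particular $G$ is infinite. Fix $1\neq a\in G$ and, by Zorn's lemma — the union of a chain of subgroups avoiding $a$ again avoids $a$ — choose a subgroup $M$ maximal with respect to $a\notin M$. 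Then $M$ is proper, every subgroup strictly containing $M$ contains $a$, and $\operatorname{core}_{G}(M)=\bigcap_{g\in G}gMg^{-1}\subseteq M$ is a proper normal subgroup of $G$, hence trivial by simplicity.

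The key point is to show that $M$ is normal in $G$, hence equal to $\operatorname{core}_{G}(M)=1$, and for this I would transport Robinson's elementary proof of Mal'cev's theorem (\cite[Theorem 5.27]{loc_sol}) to the present setting. In Robinson's argument the crucial input is that a subgroup of a solvable group is solvable; its substitute here is Lemma \ref{cl pros}: for every $i$ the closure in $U_{i}$ of $M\cap U_{i}$ is a closed subgroup of the prosolvable group $U_{i}$, hence prosolvable, and a non-trivial prosolvable group always has a proper open normal subgroup of prime index (pull back a maximal normal subgroup of a finite solvable quotient, using Corollary \ref{cor_vd}). One then runs Robinson's analysis of the normalizers of $M$ and its conjugates, with ``finitely generated subgroup'' replaced by ``compact open subgroup $U_{i}$'' and ``solvable'' by ``prosolvable'': the prime-index normal subgroups furnished locally are used to enlarge $N_{G}(M)$ until it absorbs every $U_{i}$, so that $N_{G}(M)=G$ and $M=1$. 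Once $M=1$, maximality says that every non-trivial subgroup of $G$ contains $a$; hence $\langle a\rangle$ lies in every non-trivial subgroup, so $a$ has finite order (otherwise $\langle a^{2}\rangle$ avoids $a$), $\langle a\rangle$ has prime order, and each conjugate $g\langle a\rangle g^{-1}$, being a non-trivial subgroup, contains — hence equals — $\langle a\rangle$; thus $\langle a\rangle\trianglelefteq G$ is a proper non-trivial normal subgroup (as $G$ is infinite), contradicting simplicity. This eliminates the non-compact case, so $G$ is compact, and therefore cyclic of prime order.

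I expect the main obstacle to be precisely this middle step: moving from the prosolvable structure available inside each compact open $U_{i}$ to a global conclusion about $M$. Because $G$ is neither compactly generated nor endowed with a terminating topological derived series, the induction on derived length that settles the question inside a single prosolvable group is unavailable, and one is forced to work with the maximal ``$a$-avoiding'' subgroup $M$ together with a delicate bookkeeping of how normality of the prime-index subgroups of the $U_{i}$ is preserved under conjugation in $G$ — for instance, one must pass to closures and control the possible enlargement of $M\cap U_{i}$ when invoking Lemma \ref{cl pros}. That lemma is exactly what makes the local-to-global transfer go through, and the bulk of the work lies there.
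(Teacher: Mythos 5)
Your compactness reduction and the endgame after ``$M=1$'' are fine, but the heart of your argument --- that the subgroup $M$, chosen maximal with respect to $a \notin M$, is normal in $G$ --- is never actually proved. You only describe a hoped-for mechanism (``the prime-index normal subgroups furnished locally are used to enlarge $N_G(M)$ until it absorbs every $U_i$'') and yourself flag this middle step as the main obstacle. This is a genuine gap, and moreover the mechanism you propose is the wrong one for the solvable world: normalizer growth is a feature of (locally) nilpotent or pronilpotent groups, where the normalizer condition holds, and it fails for solvable and prosolvable groups --- already in $S_3$, which is trivially prosolvable, a subgroup of order $2$ is self-normalizing. So there is no reason the prime-index open normal subgroups you extract from the closures of the $M \cap U_i$ via Lemma \ref{cl pros} should force $N_G(M)$ to swallow each $U_i$; as written the proposal is a plan whose central claim is missing, not a proof.

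The paper's actual argument avoids maximal subgroups entirely and is a short commutator trick. Assuming $G$ simple, non-abelian and inductively prosolvable, pick $c=[a,b]\neq 1$; simplicity gives a finite set $A$ with $a,b \in \langle c^A \rangle$; by directedness choose a prosolvable open subgroup $U$ containing $c$ and $A$. Then $a,b \in \langle c^U\rangle$, so $c \in \overline{\langle c^U \rangle}'$, and since $\overline{\langle c^U\rangle}$ is $U$-invariant its (closed) derived subgroup contains $c^U$, whence $\overline{\langle c^U\rangle}$ has no non-trivial abelian Hausdorff quotient. But it is a non-trivial closed subgroup of the prosolvable group $U$, hence prosolvable by Lemma \ref{cl pros}, hence has a non-trivial finite solvable (so a non-trivial abelian) quotient --- contradiction. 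If you want to salvage your route, you would need an independent proof that $M$ is normal, which is essentially as hard as the theorem itself; I recommend instead running the derived-subgroup argument above, which is exactly where Lemma \ref{cl pros} earns its keep.
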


\begin{proof}
Let $G$ be an algebraically simple non-abelian topological group, and suppose by contradiction that $G$ is inductively prosolvable. Since $G$ is non-abelian, there exist elements $a, b \in G$ with $c := [a, b] \neq 1$. Now $G$ is algebraically simple, so $\langle c^G \rangle = G$, where $c^G$ is the conjugacy class of $c$. Thus there exists a finite set $A \subset G$ such that $a, b \in \langle c^A \rangle$: the group generated by the $A$-conjugates of $c$. Let $U$ be a prosolvable open subgroup containing $c$ and $A$, which exists because $G$ is inductively prosolvable. We have $c = [a, b] \in \overline{ \langle c^U \rangle } '$, the commutator subgroup of $\overline{ \langle c^U \rangle }$. Therefore $\overline{\langle c^U \rangle}' = \overline{ \langle c^U \rangle}$. In particular $\overline{ \langle c^U \rangle }$ is a non-trivial closed subgroup of $U$ without non-trivial abelian quotients. But $U$ is prosolvable and so $\overline{\langle c^U \rangle}$ is as well, by Lemma \ref{cl pros}. A finite solvable group admits a non-trivial abelian quotient, and a prosolvable group admits a non-trivial finite solvable quotient, so we reach a contradiction.
\end{proof}

The property in Proposition \ref{2simple} can be replaced by any other property on the order of a finite non-abelian group that obstructs its simplicity.

\begin{example}
\label{12 20}
Instead of using the Burnside Theorem, we could use a more precise fact that follows from the Classification of Finite Simple Groups \cite{Wilson}: a finite non-abelian simple group is either of order a multiple of $3$, or it is a Suzuki group, in which case its order is a multiple of $5$. Therefore in the statement of Proposition \ref{2simple} we may choose $p = 3$ and $\ell = 5$.
\end{example}

We were able to give many examples of infinite \emph{discrete} simple normed $p$-adic amenable groups, which are optimal in terms of the norm for each prime $p$ by Proposition \ref{2simple}. However, we were not able to find non-discrete examples.

A natural candidate for examples of locally elliptic topologically simple groups is the construction presented independently by Willis \cite[Section 3]{lpc_char} and Akin-Glasner-Weiss \cite[Section 4]{Willis2}. This takes as input an increasing family of finite groups $G_n$ containing subgroups $H_n$ such that $H_{n+1}$ contains $H_n$ as a direct factor, in such a way that $G_n$ commutes with its complement; and outputs a non-discrete locally elliptic group $G$ which contains the directed union of the $G_n$ as a dense subgroup. In order for $G$ to be topologically simple, the $G_n$ should be simple, or close to being so. This is easily achieved with groups such as the symmetric groups, or matrix groups of growing degree over a fixed finite field, where in both cases the $H_n$ are the stabilizers of partitions with a growing number of blocks. But these groups have order eventually divisible by arbitrarily large powers of any given prime, and so the resulting $G$ will not be normed $p$-adic amenable for any $p$. On the other hand, choosing as $G_n$ matrix groups of a fixed degree over increasingly larger fields, which is what we did in the discrete case to avoid this divisibility issue, it does not seem possible to choose subgroups $H_n$ with the right property.

We also tried to adapt the constructions in \cite[Section 6]{Osimple}, which failed for similar reasons. Therefore we ask:

\begin{question}
Does there exist, for some prime $p$, a non-discrete normed $p$-adic amenable topologically simple group?
\end{question}

\pagebreak

\section{Characterization over other fields}
\label{other fields}

We move to general non-Archimedean valued fields. In the non-spherically complete case, the notion of (normed) $\mathbb{K}$-amenability is less interesting, just as in the case of norm 1:

\begin{theorem}
\label{main nsc}

Let $G$ be a t.d.l.c. group, $\mathbb{K}$ a non-Archimedean non-spherically complete valued field with residue field $\mathfrak{r}$.
\begin{enumerate}
\item If $\chi(\mathbb{K}) = p > 0$, then $G$ is normed $\mathbb{K}$-amenable if and only if it is compact and $p$-free, in which case it is $\mathbb{K}$-amenable of norm 1.
\item If $\chi(\mathbb{K}) = \chi(\mathfrak{r}) = 0$, then $G$ is normed $\mathbb{K}$-amenable if and only if it is compact, in which case it is $\mathbb{K}$-amenable of norm 1.
\item If $\chi(\mathbb{K}) = 0$ and $\chi(\mathfrak{r}) = p > 0$, then $G$ is normed $\mathbb{K}$-amenable if and only if it is compact and $p^\mathbb{N}$-free, in which case $\| G \|_\mathbb{K} = \min \{ p^k \mid G \text{ is } p^{k+1}\text{-free}\}$.
\end{enumerate}
\end{theorem}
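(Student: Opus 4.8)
The statement is equivalent to the conjunction of Theorem \ref{comp} (the compact, i.e.\ profinite, case) and the single implication: \emph{if $\mathbb{K}$ is non-spherically complete and $G$ is normed $\mathbb{K}$-amenable, then $G$ is compact.} Indeed, restricted to compact groups the three cases above are verbatim the three cases of Theorem \ref{comp}, which therefore already supplies the ``if'' direction and the precise value of $\|G\|_\mathbb{K}$ in each case; and in the ``only if'' direction the freeness conditions follow from Proposition \ref{amen p^N free} (or directly from Theorem \ref{comp}) once compactness is in hand. So the plan is: prove the compactness implication, then cite Theorem \ref{comp} for the rest.

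For the compactness implication I would follow Schikhof's treatment of the norm-$1$ case \cite{Schik}, whose engine is the harmonic analysis over non-spherically complete fields of \cite{vR}. Suppose $G$ is non-compact and carries a left-invariant normed $\mathbb{K}$-mean $m$ of finite norm. Fix a compact open subgroup $U\le G$ by Van Danzig's Theorem; since $G$ is non-compact the $G$-set $X:=G/U$ is infinite. The subspace of $C_b(G,\mathbb{K})$ of functions that are constant on left $U$-cosets is closed and $G$-invariant, and isometrically $G$-isomorphic to $\ell^\infty(X,\mathbb{K})$ with $G$ acting through its transitive action on $X$; restricting $m$ produces a bounded $G$-invariant functional $\overline m\colon \ell^\infty(X,\mathbb{K})\to\mathbb{K}$ with $\overline m(\mathbbm{1}_X)=1$. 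The plan is to rule this out using the non-Archimedean integration theory of \cite{vR}: over a non-spherically complete field the relevant function spaces on an infinite discrete $G$-set carry no finitely-normed invariant mean, the underlying point being that such spaces (e.g.\ $\ell^\infty/c_0$) have ``too small'' a continuous dual — the very phenomenon, existence of $\mathbb{K}$-Banach spaces with trivial dual, that separates the non-spherically complete case from the spherically complete one. When $\chi(\mathfrak{r})=p>0$ one combines this with the $p^\mathbb{N}$-freeness already supplied by Proposition \ref{amen p^N free} to pin down the local structure of $G$; in all cases this yields the contradiction. Note that the argument uses only almost-invariance, not a Haar integral, which is exactly why — in contrast with Section \ref{sequiv} — no hypothesis on $\mathbb{L}(G)$ is needed; and the passage from Schikhof's norm-$1$ conclusion to arbitrary finite norm is then the same kind of bookkeeping as in the $\mathbb{Q}_p$ case, via Theorem \ref{finite} and the constructions of Section \ref{constructions}.

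The main obstacle is precisely transporting the machinery that drove Theorem \ref{main1} — convolution, the $p$-adic Haar integral of Subsection \ref{preli Haar}, the Reiter-type argument of Theorem \ref{equiv}, and Hahn--Banach — to a non-spherically complete $\mathbb{K}$, where Hahn--Banach is unavailable and, as remarked after Proposition \ref{dirun}, directed unions of $\mathbb{K}$-amenable groups need no longer be $\mathbb{K}$-amenable (which is what forces the jump from ``locally elliptic'' to ``compact''). I would isolate the one genuinely new input as a lemma: over a non-spherically complete field, $\ell^\infty$ of an infinite set, equipped with a transitive group action, admits no bounded invariant linear functional sending $\mathbbm{1}$ to $1$; this I would deduce from \cite{vR}. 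Once that lemma is available, everything else reduces to Theorem \ref{comp} and routine manipulations with the subgroup/quotient/extension behaviour of $\|\cdot\|_\mathbb{K}$.
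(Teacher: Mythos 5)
Your proposal is correct and follows essentially the same route as the paper: the paper likewise reduces everything to Theorem \ref{comp} plus the single implication that normed $\mathbb{K}$-amenability forces compactness, proved exactly via Schikhof's argument using Van Rooij's harmonic analysis over non-spherically complete fields. The only detail you leave implicit that the paper makes explicit is the preliminary reduction (via Proposition \ref{opsub}) to an open non-compact $\sigma$-compact subgroup, which is what makes $G$ (equivalently, the infinite coset space $G/U$ you work with) $\mathbb{N}$-compact in Van Rooij's sense so that his tightness theorems apply — every bounded functional on $C_b(G,\mathbb{K})$ is, up to any prescribed $\varepsilon$, controlled by restriction to a compact open subset $Y$ — after which one picks $g$ with $gY\cap Y=\emptyset$ and derives the contradiction $1=|m(\mathbbm{1}_Y)|_\mathbb{K}\le 1/2$ from left-invariance and the ultrametric inequality.
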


In the spherically complete case, we will show that normed amenability over $\mathbb{K}$ is equivalent to normed amenability over the closure $\mathbb{K}_0$ of its prime field. This allows to reduce the question of normed $\mathbb{K}$-amenability to the cases of $\mathbb{Q}_p$, or to the cases of the trivially valued fields $\mathbb{F}_p$ and $\mathbb{Q}$, which are covered by Schikhof's Theorem \cite{Schik}:

\begin{theorem}
\label{main sc}

Let $G$ be a t.d.l.c. group, $\mathbb{K}$ a non-Archimedean spherically complete valued field, $\mathbb{K}_0$ the closure of its prime field. Then $G$ is normed $\mathbb{K}$-amenable if and only if $G$ is normed $\mathbb{K}_0$-amenable, and the norms coincide. More precisely:
\begin{enumerate}
\item If $\mathbb{K}_0 \cong \mathbb{F}_p$, then $G$ is normed $\mathbb{K}$-amenable, if and only if it is $\mathbb{K}$-amenable of norm 1, if and only if it is locally elliptic and $p$-free.
\item If $\mathbb{K}_0 \cong \mathbb{Q}$, then $G$ is normed $\mathbb{K}$-amenable, if and only if it is $\mathbb{K}$-amenable of norm 1, if and only if it is locally elliptic.
\item If $\mathbb{K}_0 \cong \mathbb{Q}_p$, then $G$ is normed $\mathbb{K}$-amenable if and only if it is locally elliptic and $p^\mathbb{N}$-free, in which case $\| G \|_\mathbb{K} = \min \{ p^k \geq 1 \mid G \text{ is } p^{k+1} \text{-free}\}$.
\end{enumerate}
\end{theorem}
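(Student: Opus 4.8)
The plan is to establish, for an arbitrary non-Archimedean valued field $\mathbb{K}$, the two inequalities $\|G\|_{\mathbb{K}_0}\le\|G\|_\mathbb{K}$ (unconditionally) and $\|G\|_\mathbb{K}\le\|G\|_{\mathbb{K}_0}$ (when $\mathbb{K}$ is spherically complete), and then to read off the three itemized cases from Schikhof's Theorem (for $\mathbb{K}_0\cong\mathbb{F}_p$ or $\mathbb{Q}$) and from Theorem~\ref{main1} (for $\mathbb{K}_0\cong\mathbb{Q}_p$). The workhorse is the Hahn--Banach Theorem, used twice. First observe that $\mathbb{K}_0$ is \emph{always} spherically complete: it is $\mathbb{F}_p$ or $\mathbb{Q}$ with the trivial norm, or $\mathbb{Q}_p$, hence either trivially valued or complete discretely valued. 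Viewing $\mathbb{K}$ as a normed vector space over $\mathbb{K}_0$ — legitimate since $|\cdot|_\mathbb{K}$ restricts to $|\cdot|_{\mathbb{K}_0}$ on $\mathbb{K}_0$ — the closed subspace $\mathbb{K}_0\subseteq\mathbb{K}$ and the norm-one identity $\mathrm{id}\colon\mathbb{K}_0\to\mathbb{K}_0$ yield, by Hahn--Banach over the spherically complete field $\mathbb{K}_0$, a $\mathbb{K}_0$-linear retraction $\pi\colon\mathbb{K}\to\mathbb{K}_0$ of norm $1$ with $\pi|_{\mathbb{K}_0}=\mathrm{id}$. Post-composition then gives a $\mathbb{K}_0$-linear, $G$-equivariant, norm-$\le 1$ map $\pi_*\colon C_b(G,\mathbb{K})\to C_b(G,\mathbb{K}_0)$, $h\mapsto\pi\circ h$, with $\pi_*(\mathbbm{1}_G)=\mathbbm{1}_G$.

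For $\|G\|_{\mathbb{K}_0}\le\|G\|_\mathbb{K}$: given a left-invariant normed $\mathbb{K}$-mean $m$ and the isometric $G$-equivariant inclusion $\iota\colon C_b(G,\mathbb{K}_0)\hookrightarrow C_b(G,\mathbb{K})$, the map $m_0:=\pi\circ m\circ\iota$ is $\mathbb{K}_0$-linear, left-invariant, satisfies $m_0(\mathbbm{1}_G)=\pi(m(\mathbbm{1}_G))=\pi(1)=1$, and has $\|m_0\|_{op}\le\|m\|_{op}$. This uses only spherical completeness of $\mathbb{K}_0$, so it holds for every $\mathbb{K}$, consistently with the corresponding direction of Theorem~\ref{main nsc}.

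For $\|G\|_\mathbb{K}\le\|G\|_{\mathbb{K}_0}$, now with $\mathbb{K}$ spherically complete: fix $M\ge\|G\|_{\mathbb{K}_0}$ and a left-invariant $\mathbb{K}_0$-mean $m_0$ with $\|m_0\|_{op}\le M$. Let $W\subseteq C_b(G,\mathbb{K})$ and $W_0\subseteq C_b(G,\mathbb{K}_0)$ be the $\mathbb{K}$- and $\mathbb{K}_0$-spans of $\{g\cdot f-f\}$. Since the $G$-action is $\mathbb{K}$-linear, $\alpha(g\cdot f-f)=g\cdot(\alpha f)-\alpha f$, so every element of $W$ has the form $\sum_j(g_j\cdot h_j-h_j)$ with $h_j\in C_b(G,\mathbb{K})$; as $\pi_*$ is $\mathbb{K}_0$-linear and $G$-equivariant, $\pi_*(W)\subseteq W_0$. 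Hence for all $w\in W$,
$$1=|m_0(\mathbbm{1}_G)|_{\mathbb{K}_0}=|m_0(\mathbbm{1}_G+\pi_*(w))|_{\mathbb{K}_0}\le M\,\|\mathbbm{1}_G+\pi_*(w)\|_\infty\le M\,\|\mathbbm{1}_G+w\|_\infty ,$$
so $\|\mathbbm{1}_G+w\|_\infty\ge 1/M$. In particular $\mathbbm{1}_G\notin W$, so the functional $\lambda_0$ on $\mathbb{K}\mathbbm{1}_G\oplus W$ defined by $c\,\mathbbm{1}_G+w\mapsto c$ is well defined, kills $W$ (hence is $G$-invariant), and satisfies $|\lambda_0(c\,\mathbbm{1}_G+w)|=|c|\le M\,\|c\,\mathbbm{1}_G+w\|_\infty$. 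Hahn--Banach over the spherically complete field $\mathbb{K}$ extends $\lambda_0$ to a left-invariant normed $\mathbb{K}$-mean of norm $\le M$; letting $M\downarrow\|G\|_{\mathbb{K}_0}$ gives the inequality. Running the same argument with $G$ replaced by a directed union of its compact open subgroups (which are $\mathbb{K}$-amenable of uniformly bounded norm by Theorem~\ref{comp}) also yields the directed-union statement for spherically complete $\mathbb{K}$.

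The three cases then follow by inserting the known value of $\|G\|_{\mathbb{K}_0}$: when $\mathbb{K}_0\cong\mathbb{F}_p$ or $\mathbb{Q}$ the field is trivially valued, so every nonzero mean has operator norm exactly $1$, and Schikhof's Theorem identifies $\mathbb{K}_0$-amenability with local ellipticity (together with $p$-freeness in the first case); when $\mathbb{K}_0\cong\mathbb{Q}_p$, Theorem~\ref{main1} gives $\|G\|_{\mathbb{Q}_p}=\min\{p^k\ge 1\mid G\text{ is }p^{k+1}\text{-free}\}$ under local ellipticity and $p^\mathbb{N}$-freeness, and $\infty$ otherwise. The one genuinely delicate point is the asymmetry: the retraction $\pi$ always exists, but extending $\lambda_0$ off the subspace $\mathbb{K}\mathbbm{1}_G\oplus W$ needs $\mathbb{K}$ itself to be spherically complete — which is precisely why non-spherically complete fields force the stronger conclusion (compactness in place of local ellipticity) recorded in Theorem~\ref{main nsc}.
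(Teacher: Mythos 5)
Your proof is correct, but the route it takes for the main implication is genuinely different from the paper's. For $\|G\|_{\mathbb{K}_0}\le\|G\|_\mathbb{K}$ the paper compares the distances from $\mathbbm{1}_G$ to the subspaces $\mathcal{H}_0\subset\mathcal{H}$ and invokes Proposition \ref{K-mean free}; your composition $m_0=\pi\circ m\circ\iota$ with a norm-one Hahn--Banach retraction $\pi\colon\mathbb{K}\to\mathbb{K}_0$ is an equivalent use of Hahn--Banach over the (always spherically complete) field $\mathbb{K}_0$. The real divergence is in the converse: the paper never proves $\|G\|_\mathbb{K}\le\|G\|_{\mathbb{K}_0}$ abstractly, but goes through structure theory — Schikhof's Theorem (resp.\ Theorem \ref{main1}) gives local ellipticity plus the relevant freeness, $G$ is written as a directed union of compact open subgroups, each compact piece is handled by Theorem \ref{comp}, and the union by Corollary \ref{K-dirun}. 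You instead prove the inequality directly: the retraction gives $\|\mathbbm{1}_G+w\|_\infty\ge\|\pi_*(\mathbbm{1}_G+w)\|_\infty\ge 1/M$ for every $w$ in the span of the elements $g\cdot f-f$, and Hahn--Banach over the spherically complete $\mathbb{K}$ extends the functional $c\,\mathbbm{1}_G+w\mapsto c$ to an invariant mean of norm at most $M$; in effect you supply the reverse comparison $d(\mathbbm{1}_G,\mathcal{H})\ge d(\mathbbm{1}_G,\mathcal{H}_0)$ and then inline the Hahn--Banach half of Proposition \ref{K-mean free} for $\mathbb{K}$. What your route buys: the equality $\|G\|_\mathbb{K}=\|G\|_{\mathbb{K}_0}$ for arbitrary t.d.l.c.\ $G$ with no appeal to Theorem \ref{comp} or Corollary \ref{K-dirun} in this direction, and, since $\|G\|_{\mathbb{K}_0}$ is attained ($\mathbb{K}_0$ being trivially or discretely valued), attainment of $\|G\|_\mathbb{K}$ comes out for free. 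What the paper's route buys: it reuses machinery already established and proves Corollary \ref{K-dirun} along the way, which is needed elsewhere (Corollary \ref{norm attain}); your closing aside that the directed-union statement follows by ``running the same argument'' is inessential to the theorem and would need its own short justification if you wanted it. The reduction of the three itemized cases to Schikhof's Theorem and Theorem \ref{main1}, including the observation that over a trivially valued field every nonzero mean has operator norm exactly one, matches the paper.
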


These results show that the only case in which the norm plays an additional role is one which reduces to the $p$-adic case. This is why we only discuss the general case after having thoroughly worked out the $p$-adic one. \\

\begin{remark}
\label{examples general}

Another way to state these theorems is: if $\mathbb{K}$ is spherically complete, then in case $1.$ $G$ is normed $\mathbb{K}$-adic amenable if and only if $\| G \|_p = 1$; in case $2.$ if and only if it is locally elliptic; and in case $3.$ if and only if it is normed $p$-adic amenable, and the norms coincide. Similarly for non-spherically complete fields, where moreover $G$ needs to be compact. Therefore we will not spend any time on examples, since they are already contained in the previous section. Also we will not prove the analogues of Corollaries \ref{clsub}, \ref{dirunn} and \ref{extt}, since they immediately follow from the $p$-adic case using this observation.
\end{remark}

Another implication of these results is a strong restriction on the possible values that $\| G \|_\mathbb{K}$ can take. We will show:

\begin{corollary}
\label{norm attain}
In all cases, $\| G \|_\mathbb{K}$ is attained.
\end{corollary}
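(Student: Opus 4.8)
The plan is to read the statement off the explicit norm formulas in Theorems \ref{main nsc} and \ref{main sc}. If $G$ is not normed $\mathbb{K}$-amenable there is nothing to prove, so assume it is; then every normed $\mathbb{K}$-mean $m$ on $G$ satisfies $\|m\|_{op} \geq |m(\mathbbm{1}_G)|_\mathbb{K} = 1$, whence $\|G\|_\mathbb{K} \geq 1$, and by those theorems $\|G\|_\mathbb{K}$ is either $1$ or, precisely in the case $\mathbb{K}_0 \cong \mathbb{Q}_p$, the integer $p(G) := \min\{ p^k \geq 1 \mid G \text{ is } p^{k+1}\text{-free}\}$. In the first case the hypothesis yielding $\|G\|_\mathbb{K} = 1$ — namely that $G$ is locally elliptic, respectively compact when $\mathbb{K}$ is not spherically complete, and $\chi(\mathfrak{r})$-free — is exactly Schikhof's criterion for the existence of a left-invariant $\mathbb{K}$-mean of norm $1$; such a mean attains $\|G\|_\mathbb{K}$, and this case is finished.

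It then remains to treat $\mathbb{K}_0 \cong \mathbb{Q}_p$ with $\|G\|_\mathbb{K} = p(G) \geq 1$. If $\mathbb{K}$ is not spherically complete, Theorem \ref{main nsc} forces $G$ to be compact, and the claim is the last sentence of Theorem \ref{comp}. If $\mathbb{K}$ is spherically complete, then $G$ is locally elliptic and $p^\mathbb{N}$-free, so I would write $G = \bigcup_{i \in I} U_i$ as a directed union of compact open subgroups. Each $U_i$ is open in $G$, hence $p^{k+1}$-free for $p^k = p(G)$, so $\|U_i\|_\mathbb{K} = p(U_i) \leq p(G)$ by Theorem \ref{comp}, and that theorem moreover produces a left-invariant $\mathbb{K}$-mean $m_i$ on $U_i$ with $\|m_i\|_{op} = \|U_i\|_\mathbb{K}$. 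Assembling the $m_i$ into a left-invariant $\mathbb{K}$-mean $m$ on $G$ with $\|m\|_{op} \leq \sup_i \|m_i\|_{op} \leq p(G)$ — via Proposition \ref{dirun} when $\mathbb{K}$ is a local field, in particular when $\mathbb{K} = \mathbb{Q}_p$, and via Corollary \ref{K-dirun} for a general spherically complete $\mathbb{K}$ — and combining with $\|m\|_{op} \geq \|G\|_\mathbb{K} = p(G)$ from the definition of the infimum, we conclude that $m$ attains $\|G\|_\mathbb{K}$.

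The only substantive step is this last assembly over a spherically complete field that is not a local field, where the Banach--Alaoglu argument underlying Proposition \ref{dirun} is not available; this is precisely what Corollary \ref{K-dirun} supplies, using spherical completeness in place of weak-$*$ compactness of balls in $\mathbb{K}^*$. Everything else is bookkeeping with the exact norms already computed; in particular the two families of cases above are exhaustive because Theorems \ref{main nsc} and \ref{main sc} give $\|G\|_\mathbb{K} \in \{1\} \cup \{ p^k : k \geq 1\}$, with values $> 1$ occurring only when $\mathbb{K}_0 \cong \mathbb{Q}_p$.
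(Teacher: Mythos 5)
Your proof is correct and follows essentially the same route as the paper: reduce the non-spherically complete case to compact groups via Theorem \ref{main nsc} and invoke the attainment statement in Theorem \ref{comp}, and handle the spherically complete case through local ellipticity (Theorem \ref{main sc}) together with Corollary \ref{K-dirun}, whose statement already includes attainment. Your extra case split by the value of $\|G\|_\mathbb{K}$ and the direct appeal to Schikhof's theorem in the norm-$1$ cases are harmless elaborations of the same argument.
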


This has already been proved in Theorem \ref{comp} in the non-spherically complete case, since if $\| G \|_\mathbb{K} < \infty$ then $G$ is compact by Theorem \ref{main nsc}. So it is really a result about spherically complete fields: it follows directly from Theorem \ref{main sc} and Corollary \ref{K-dirun}.

If $\chi(\mathbb{K}) = 0$ and $\chi(\mathfrak{r}) = p$, then $\| \cdot \|_\mathbb{K}$ takes values in $p^\mathbb{Z}$, even though $| \cdot |_\mathbb{K}$ does not necessarily. By Corollary \ref{norm attain}, this implies that if there exists a normed left-invariant $\mathbb{K}$-mean of norm $\alpha$, then there must also exist a normed left-invariant $\mathbb{K}$-mean of norm at most $p^k \leq \alpha$. Similarly, if $\chi(\mathbb{K}) = \chi(\mathfrak{r})$, then $\| \cdot \|_\mathbb{K}$ only takes the value $1$, even though $\mathbb{K}$ is not necessarily trivially valued. So again by Corollary \ref{norm attain}, if there exists a normed left-invariant $\mathbb{K}$-mean, then there must also exist a left-invariant $\mathbb{K}$-mean of norm $1$.

\subsection{The non-spherically complete case}

In light of Theorem \ref{comp}, in order to prove Theorem \ref{main nsc} we only need to show that if $G$ is normed $\mathbb{K}$-amenable, then $G$ is compact. This will be done just as in \cite[Theorem 2.1]{Schik}, by applying general results on harmonic analysis over non-spherically complete fields taken from \cite{vR}. For the rest of this subsection, $\mathbb{K}$ is a non-Archimedean non-spherically complete valued field. \\

Let $G$ be a \emph{$\sigma$-compact} t.d.l.c. group; that is, a countable increasing union of compact sets. The $G$ is \emph{$\mathbb{N}$-compact} in the sense of Van Rooij: it is homeomorphic to a direct product of countable discrete spaces \cite[Corollary 2.8]{vR}. By \cite[Theorem 7.24, Theorem 7.22]{vR}, every element $\Phi$ of the dual space $C_b(G, \mathbb{K})^*$ satisfies the following property: for every $\varepsilon > 0$ there exists a compact subset $Y \subseteq G$ such that
$$|\Phi(f)|_\mathbb{K} \leq \max \{ \| \Phi \|_{op} \cdot \| f|_Y \|_\infty, \, \varepsilon \cdot \| f \|_\infty \}$$
for all $f \in C_b(G, \mathbb{K})$. We may assume that $Y$ is compact and open, up to replacing it by a compact open subset of $G$ that contains it: this may be achieved by taking a finite cover consisting of cosets of a compact open subgroup. Now given $g \in G$, we have $g Y \cap Y \neq \emptyset$ if and only if $g \in YY^{-1}$, which is a compact subset of $G$. Therefore if $G$ is not compact, there exists $g \in G$ such that $gY \cap Y = \emptyset$.

\begin{proof}[Proof of Theorem \ref{main nsc}]
Let $G$ be normed $\mathbb{K}$-amenable, and suppose by contradiction that $G$ is not compact. Any non-compact t.d.l.c. group contains an open non-compact $\sigma$-compact subgroup (see e.g. \cite[Lemma 2.2]{Schik}), so by Proposition \ref{opsub} we may assume that $G$ is $\sigma$-compact.

Let $m$ be a normed $\mathbb{K}$-mean. By the discussion above, there exists a compact open subset $Y \subseteq G$ such that $|m(f)|_\mathbb{K} \leq \max \{ \| m \|_{op} \cdot \| f|_Y \|_\infty, \, 1/2 \cdot \| f \|_\infty \}$. Since $G$ is not compact, there exists $g \in G$ such that $gY \cap Y = \emptyset$. Let $U \subseteq G$ be any clopen subset of $G$ that is disjoint from $Y$, and consider $\mathbbm{1}_U \in C_b(G, \mathbb{K})$. Then $|m(\mathbbm{1}_U)|_\mathbb{K} \leq \max \{ \| m \|_{op} \cdot 0, \, 1/2 \cdot 1 \} = 1/2.$

This implies on the one hand, using left-invariance, that
$$|m(\mathbbm{1}_Y)|_\mathbb{K} = |m(g \cdot \mathbbm{1}_Y)|_\mathbb{K} = |m(\mathbbm{1}_{gY})|_\mathbb{K} \leq 1/2.$$
On the other hand, using Lemma \ref{ultrametric cor}, that
$$|m(\mathbbm{1}_Y)|_\mathbb{K} = |m(\mathbbm{1}_G) - m(\mathbbm{1}_{G \, \backslash \, Y})|_\mathbb{K} = |1 - m(\mathbbm{1}_{G \, \backslash \, Y})|_\mathbb{K} = 1,$$
which is a contradiction.
\end{proof}

\subsection{The spherically complete case}

Let now $\mathbb{K}$ be spherically complete, so the Hahn--Banach Theorem holds. In this case, Schikhof's Theorem \cite[Theorem 3.6]{Schik} states that $G$ admits a left-invariant $\mathbb{K}$-mean of norm 1 if and only if it is locally elliptic and $\chi(\mathfrak{r})$-free. \\

We want to show that normed $\mathbb{K}$-amenability is equivalent to normed $\mathbb{K}_0$-amenability. However, with our definition it is not clear how one should adapt a normed $\mathbb{K}$-mean to obtain a normed $\mathbb{K}_0$-mean. So we will give an equivalent characterization for normed $\mathbb{K}$-amenability in which the choice of a mean is implicit, thanks to the Hahn--Banach Theorem. The following result is a normed analogue of \cite[Theorem 3.1]{Schik}:

\begin{proposition}
\label{K-mean free}

Let $\mathcal{H}$ denote the closure of the linear span of $\{ (g \cdot f - f) \mid f \in C_b(G, \mathbb{K}), \, g \in G \} \subseteq C_b(G, \mathbb{K})$. Let $\alpha := \inf \{ \| \mathbbm{1}_G - h \|_\infty \mid h \in \mathcal{H} \} \leq 1$. Then $G$ is normed $\mathbb{K}$-amenable if and only if $\alpha > 0$, in which case $\| G \|_\mathbb{K} = \alpha^{-1}$.
\end{proposition}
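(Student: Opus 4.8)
The statement is a normed analogue of the classical duality between invariant means and almost-invariant vectors, so the plan is to exploit the Hahn--Banach Theorem (available since $\mathbb{K}$ is spherically complete) to pass between a left-invariant mean and a separation statement about the subspace $\mathcal{H}$. First I would observe that $\alpha \leq 1$ because $0 \in \mathcal{H}$. The two implications are then treated separately.

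\emph{From a mean to $\alpha > 0$.} Suppose $m : C_b(G,\mathbb{K}) \to \mathbb{K}$ is a left-invariant normed $\mathbb{K}$-mean. For every generator $g \cdot f - f$ we have $m(g \cdot f - f) = 0$ by left-invariance, so $m$ vanishes on the linear span of these elements; since $m$ is bounded (hence continuous) it vanishes on the closure $\mathcal{H}$. Therefore for any $h \in \mathcal{H}$,
\[
1 = |m(\mathbbm{1}_G)|_\mathbb{K} = |m(\mathbbm{1}_G - h)|_\mathbb{K} \leq \|m\|_{op} \cdot \|\mathbbm{1}_G - h\|_\infty,
\]
so $\|\mathbbm{1}_G - h\|_\infty \geq \|m\|_{op}^{-1}$. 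Taking the infimum over $h$ gives $\alpha \geq \|m\|_{op}^{-1} > 0$, and hence $\alpha^{-1} \leq \|m\|_{op}$; optimizing over all invariant means yields $\alpha^{-1} \leq \|G\|_\mathbb{K}$.

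\emph{From $\alpha > 0$ to a mean.} Conversely, suppose $\alpha > 0$. Consider the linear subspace $V := \mathbb{K} \cdot \mathbbm{1}_G \oplus \mathcal{H}$ — here I should check that the sum is direct, which holds precisely because $\mathbbm{1}_G \notin \mathcal{H}$ (indeed $\operatorname{dist}(\mathbbm{1}_G, \mathcal{H}) = \alpha > 0$). Define $\lambda_0 : V \to \mathbb{K}$ by $\lambda_0(c \cdot \mathbbm{1}_G + h) := c$ for $c \in \mathbb{K}$, $h \in \mathcal{H}$. This is well-defined by directness, linear, and I claim $\|\lambda_0\|_{op} = \alpha^{-1}$: for $c \neq 0$ we have $\|c \cdot \mathbbm{1}_G + h\|_\infty = |c|_\mathbb{K} \cdot \|\mathbbm{1}_G + c^{-1}h\|_\infty \geq |c|_\mathbb{K} \cdot \alpha$ (using $-c^{-1}h \in \mathcal{H}$), so $|\lambda_0(c\cdot\mathbbm{1}_G + h)|_\mathbb{K} = |c|_\mathbb{K} \leq \alpha^{-1}\|c\cdot\mathbbm{1}_G + h\|_\infty$; and the infimum defining $\alpha$ shows this bound is sharp. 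By the Hahn--Banach Theorem, $\lambda_0$ extends to $m \in C_b(G,\mathbb{K})^*$ with $\|m\|_{op} = \alpha^{-1}$. Then $m(\mathbbm{1}_G) = 1$ and $m$ vanishes on $\mathcal{H} \supseteq \{g \cdot f - f\}$, i.e. $m(g \cdot f) = m(f)$ for all $g, f$, so $m$ is a left-invariant normed $\mathbb{K}$-mean of norm $\alpha^{-1}$. Hence $G$ is normed $\mathbb{K}$-amenable and $\|G\|_\mathbb{K} \leq \alpha^{-1}$; combined with the reverse inequality from the first part, $\|G\|_\mathbb{K} = \alpha^{-1}$.

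\emph{Main obstacle.} The routine points are the norm computation for $\lambda_0$ and the continuity argument that $m$ kills all of $\mathcal{H}$ (not just the span); the genuinely essential ingredient is the Hahn--Banach Theorem, which is exactly where spherical completeness of $\mathbb{K}$ enters — without it one cannot guarantee that the norm-$\alpha^{-1}$ functional $\lambda_0$ on $V$ extends to the whole space without increasing the norm (or extends at all), and the proposition would fail. A minor care point is checking the directness of the sum $\mathbb{K}\cdot\mathbbm{1}_G \oplus \mathcal{H}$, but this is immediate from $\alpha > 0$.
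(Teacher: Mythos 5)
Your proof is correct and follows essentially the same route as the paper: one direction bounds $\|\mathbbm{1}_G - h\|_\infty$ from below using $m(\mathcal{H}) = 0$ and $|m(\mathbbm{1}_G)|_\mathbb{K} = 1$, and the other defines the coefficient functional on $\mathbb{K}\cdot\mathbbm{1}_G \oplus \mathcal{H}$, checks its norm is at most $\alpha^{-1}$, and extends by Hahn--Banach (valid since $\mathbb{K}$ is spherically complete in this subsection). The only cosmetic difference is that you also verify sharpness of $\|\lambda_0\|_{op} = \alpha^{-1}$, which is not needed for the conclusion.
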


\begin{proof}
It is enough to show that there exists a left-invariant normed $\mathbb{K}$-mean of norm at most $M$ if and only if $\| \mathbbm{1}_G - h \|_\infty \geq M^{-1}$ for all $h \in \mathcal{H}$. Notice that a normed mean is left-invariant if and only if it vanishes identically on $\mathcal{H}$. \\

Let $m$ be a normed left-invariant mean with $\| m \|_{op} \leq M$, and let $h \in \mathcal{H}$. Then
$$\| \mathbbm{1}_G - h \|_\infty \geq M^{-1} |m(\mathbbm{1}_G - h) |_\mathbb{K} = M^{-1} |1 - 0|_\mathbb{K} = M^{-1}.$$

Now suppose that for all $h \in \mathcal{H}$ it holds $\| \mathbbm{1}_G - h \|_\infty \geq \alpha > 0$. In particular $(\mathbb{K} \cdot \mathbbm{1}_G) \cap \mathcal{H} = \{ 0 \}$. Define $m : (\mathbb{K} \cdot \mathbbm{1}_G) \bigoplus \mathcal{H} \to \mathbb{K} : (\lambda \cdot \mathbbm{1}_G + h) \mapsto \lambda$. Now
$$\alpha^{-1} \| \lambda \mathbbm{1}_G + h \|_\infty = \alpha^{-1} \cdot \| \mathbbm{1}_G + h/ \lambda \|_\infty \cdot |\lambda|_\mathbb{K} \geq |\lambda|_\mathbb{K} = |m(\lambda \mathbbm{1}_G + h)|_\mathbb{K}.$$
By Hahn--Banach, we can extend $m$ to a linear map $m : C_b(G, \mathbb{K}) \to \mathbb{K}$ of norm at most $\alpha^{-1}$. Moreover, by construction, $m(\mathbbm{1}_G) = 1$, and $m$ vanishes on $\mathcal{H}$, so it is left-invariant.
\end{proof}

Recall that Proposition \ref{dirun} was only stated for local fields, since it makes use of the Banach--Alaoglu Theorem. This proposition gives us a new approach to prove the generalization of Proposition \ref{dirun}, at least in the locally elliptic case. This is an analogue of \cite[Lemma 3.3]{Schik}:

\begin{corollary}
\label{K-dirun}

If $G$ is locally elliptic, then $\| G \|_\mathbb{K} = \sup \{ \| U \|_\mathbb{K} \mid U \leq G \text{ is compact and open} \}$. Moreover, $\| G \|_\mathbb{K}$ is attained.
\end{corollary}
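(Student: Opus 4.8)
The plan is to deduce the corollary from the Hahn--Banach characterisation of normed $\mathbb{K}$-amenability in Proposition \ref{K-mean free}, transporting information between $G$ and its compact open subgroups via the restriction maps $C_b(G,\mathbb{K}) \to C_b(U,\mathbb{K})$. Throughout I would write $M := \sup\{\|U\|_\mathbb{K} \mid U \leq G \text{ compact and open}\}$, the supremum being over a non-empty set since $G$ is locally elliptic (a fortiori by Van Dantzig). The easy half is $M \leq \|G\|_\mathbb{K}$: every compact open $U$ is in particular an open subgroup, so Proposition \ref{opsub} gives $\|U\|_\mathbb{K} \leq \|G\|_\mathbb{K}$. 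If $M=\infty$ this already forces $\|G\|_\mathbb{K}=\infty$, so I would assume $M<\infty$ from now on.

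For the reverse inequality I would argue with the objects $\mathcal{H}$ and $\alpha := \inf\{\|\mathbbm{1}_G - h\|_\infty \mid h\in\mathcal{H}\}$ from Proposition \ref{K-mean free}, and show $\alpha \geq M^{-1}$; by that proposition this \emph{simultaneously} shows that $G$ is normed $\mathbb{K}$-amenable and that $\|G\|_\mathbb{K} = \alpha^{-1} \leq M$. Since $h\mapsto\|\mathbbm{1}_G - h\|_\infty$ is $1$-Lipschitz for $\|\cdot\|_\infty$ and $\mathcal{H}$ is the closure of the span of coboundaries, it suffices to bound $\|\mathbbm{1}_G - h\|_\infty$ for a finite sum $h = \sum_{j=1}^n (g_j\cdot f_j - f_j)$ with $f_j \in C_b(G,\mathbb{K})$ and $g_j\in G$. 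Using local ellipticity (condition (1), or condition (3) together with Lemma \ref{cinco}) I would pick a compact open subgroup $U\leq G$ containing $g_1,\dots,g_n$. The key observation is that for $x\in U$ one has $g_j^{-1}x\in U$, so $(g_j\cdot f_j)(x) = f_j(g_j^{-1}x) = (g_j\cdot(f_j|_U))(x)$; hence $h|_U = \sum_{j=1}^n\big(g_j\cdot(f_j|_U) - f_j|_U\big)$ lies in the analogous subspace $\mathcal{H}_U\subseteq C_b(U,\mathbb{K})$. Restriction to $U$ does not increase sup norms and $\mathbbm{1}_G|_U = \mathbbm{1}_U$, so $\|\mathbbm{1}_G - h\|_\infty \geq \|\mathbbm{1}_U - h|_U\|_\infty \geq \inf\{\|\mathbbm{1}_U - h'\|_\infty \mid h'\in\mathcal{H}_U\}$, and this last quantity equals $\|U\|_\mathbb{K}^{-1} \geq M^{-1}$ by Proposition \ref{K-mean free} applied to $U$ — which is legitimate because $\|U\|_\mathbb{K}\leq M<\infty$ makes $U$ normed $\mathbb{K}$-amenable. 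This gives $\alpha\geq M^{-1}$ and hence $\|G\|_\mathbb{K} = M = \sup\{\|U\|_\mathbb{K}\}$.

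For the attainment statement I would simply unwind the construction already carried out inside the proof of Proposition \ref{K-mean free}: once we know $\alpha = \|G\|_\mathbb{K}^{-1} > 0$, the map $\lambda\mathbbm{1}_G + h \mapsto \lambda$ on $(\mathbb{K}\mathbbm{1}_G)\oplus\mathcal{H}$ has operator norm at most $\alpha^{-1}$, and the Hahn--Banach Theorem (available since $\mathbb{K}$ is spherically complete) extends it to a left-invariant normed $\mathbb{K}$-mean $m$ on all of $C_b(G,\mathbb{K})$ with $\|m\|_{op}\leq\alpha^{-1} = \|G\|_\mathbb{K}$; by minimality of $\|G\|_\mathbb{K}$ this forces $\|m\|_{op} = \|G\|_\mathbb{K}$.

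I do not expect a serious obstacle. The two points needing a little care are the identity $(g_j\cdot f_j)|_U = g_j\cdot(f_j|_U)$ for $g_j\in U$ (which is what makes restriction land in $\mathcal{H}_U$) and the reduction from $\mathcal{H}$ to its dense subspace of coboundaries via the Lipschitz estimate — both routine once stated precisely. The conceptual content is that spherical completeness, through Hahn--Banach and Proposition \ref{K-mean free}, lets us bypass the Banach--Alaoglu compactness argument of Proposition \ref{dirun}, which was only available for local fields; this is precisely why directed unions of \emph{closed} subgroups are not needed here, only compact open ones, together with local ellipticity.
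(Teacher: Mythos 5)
Your proof is correct and follows essentially the same route as the paper: the easy inequality comes from Proposition \ref{opsub}, and the reverse one by taking a typical element $h=\sum_j(g_j\cdot f_j-f_j)$ of (the dense span inside) $\mathcal{H}$, choosing via local ellipticity a compact open $U$ containing the $g_j$, observing $h|_U\in\mathcal{H}_U$ with $\|\mathbbm{1}_U-h|_U\|_\infty\leq\|\mathbbm{1}_G-h\|_\infty$, and translating back through Proposition \ref{K-mean free}. The only (harmless) deviation is the attainment step, where you invoke the Hahn--Banach construction inside Proposition \ref{K-mean free} directly at $\alpha=\|G\|_\mathbb{K}^{-1}$, while the paper first records via Theorem \ref{comp} that the norms $\|U\|_\mathbb{K}$ of compact subgroups are attained and lie in a discrete set before appealing to the same constructive step; both arguments are valid.
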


\begin{proof}
By Proposition \ref{opsub}, for any $U \leq G$ compact and open we have $\| U \|_\mathbb{K} \leq \| G \|_\mathbb{K}$. Therefore we only need to show that $\| G \|_\mathbb{K} \leq \sup \{ \| U \|_\mathbb{K} \mid U \leq G \text{ is compact and open} \}$ in order to obtain the equality. \\

Let $\mathcal{H} \subseteq C_b(G, \mathbb{K})$ be as in Proposition \ref{K-mean free}. For a compact open subgroup $U \leq G$, let $\mathcal{H}_U \subseteq C_b(U, \mathbb{K})$ be the analogous space for the group $U$. By Proposition \ref{K-mean free}, it suffices to show that
$$\inf \{ \| \mathbbm{1}_G - h \|_\infty \mid h \in \mathcal{H} \} \geq \inf \{ \| \mathbbm{1}_U - h \|_\infty \mid h \in \mathcal{H}_U, \, U \leq G \text{ is compact and open} \}.$$

So let $g_1, \ldots, g_n \in G$ and $f_1, \ldots, f_n \in C_b(G, \mathbb{K})$, so that $h := \sum (g_i \cdot f_i - f_i)$ is a typical element of $\mathcal{H}$. Since $G$ is locally elliptic, there exists a compact open subgroup $U \leq G$ containing all of the $g_i$. Let $h_U := \sum (g_i \cdot f_i|_U - f_i|_U) \in \mathcal{H}_U$. Then $\| \mathbbm{1}_U - h_U \|_\infty \leq \| \mathbbm{1}_G - h \|_\infty$. Passing to the limit, this implies the desired inequality. \\

By Theorem \ref{comp}, since $U$ is compact, $\| U \|_\mathbb{K}$ is attained and takes values in a discrete set. Therefore
$$\inf \{ \| \mathbbm{1}_G - h \|_\infty \mid h \in \mathcal{H} \} = \inf \{ \| \mathbbm{1}_U - h \|_\infty \mid h \in \mathcal{H}_U, \, U \leq G \text{ is compact and open} \}$$
is attained. Since the proof of Proposition \ref{K-mean free} is constructive, it follows that $\| G \|_\mathbb{K}$ is also attained.
\end{proof}

We are ready to prove Theorem \ref{main sc}:

\begin{proof}[Proof of Theorem \ref{main sc}]
Let $G$ be normed $\mathbb{K}$-amenable. Let $\mathcal{H}$ be as in Proposition \ref{K-mean free}, and let $\mathcal{H}_0$ be the corresponding space for $\mathbb{K}_0$. Then $\mathcal{H}_0 \subset \mathcal{H}$, so Proposition \ref{K-mean free} implies that $G$ is $\mathbb{K}_0$-amenable and $\| G \|_{\mathbb{K}_0} \leq \| G \|_\mathbb{K}$. \\

Now let $G$ be normed $\mathbb{K}_0$-amenable. We split three cases.

Suppose that $\mathbb{K}_0 \cong \mathbb{F}_p$, equivalently $\chi(\mathbb{K}) = p > 0$. Since $\mathbb{F}_p$ is trivially valued, if $G$ is normed $\mathbb{F}_p$-amenable, then it is $\mathbb{F}_p$-amenable of norm $1$ and $\| G \|_{\mathbb{F}_p}$ is attained. By Schikhof's Theorem, $G$ is locally elliptic and $p$-free. Therefore $G$ is a directed union of compact open subgroups $U$, each of which is $p$-free. By Theorem \ref{comp} each $U$ is $\mathbb{K}$-amenable of norm 1, and so $G$ is also $\mathbb{K}$-amenable of norm $1$ by Corollary \ref{K-dirun}.

Suppose that $\mathbb{K}_0 \cong \mathbb{Q}$, equivalently $\chi(\mathbb{K}) = \chi(\mathfrak{r}) = 0$. One proves analogously that if $G$ is $\mathbb{K}_0$-amenable, then it is $\mathbb{K}$-amenable and $\| G \|_\mathbb{K} = \| G \|_{\mathbb{K}_0} = 1$.

Finally, suppose that $\mathbb{K} \cong \mathbb{Q}_p$, equivalently $\chi(\mathbb{K}) = 0$ and $\chi(\mathfrak{r}) = p > 0$. By Theorem \ref{main1}, if $G$ is normed $p$-adic amenable, then $G$ is locally elliptic and $p^\mathbb{N}$-free. Let $\| G \|_p =: p^k$. Then $G$ is a directed union of compact open subgroups $U$, each of which is $p^{k+1}$-free. By Theorem \ref{comp} each $U$ is $\mathbb{K}$-amenable of norm at most $p^k$, and so $G$ is also $\mathbb{K}$-amenable of norm at most $p^k$ by Corollary \ref{K-dirun}.
\end{proof}

%
%

\subsection{Ordered fields}
\label{ss:ordered}

An \emph{ordered field} is a field $\mathbb{K}$ equipped with a total ordering $\leq$ which is translation invariant and such that the product of positive elements is positive. It follows from these axioms that $1$ is positive. If $\mathbb{K}$ admits an ordering $\leq$ and a norm $| \cdot |_\mathbb{K}$, the ordering and the norm are said to be \emph{compatible} if $|x|_\mathbb{K} \leq |y|_\mathbb{K}$ whenever $0 \leq x \leq y$. We then say that $\mathbb{K}$ is an \emph{ordered valued field}.

\begin{example}
Let $\mathbb{F}$ be an ordered field, such as $\mathbb{Q}$ or $\mathbb{R}$, and let $\mathbb{F}(X)$ be the field of rational functions over $\mathbb{F}$ with the the norm $|P/Q| = e^{deg(P) - deg(Q)}$. If $P(X) = a_nX^n + \cdots$ and $Q(X) = b_mX^m + \cdots$, we declare $P/Q > 0$ if and only if $a_n/b_m > 0$. This makes $\mathbb{F}(X)$ into an ordered field.

We check that the order and the norm are compatible. We first check on $\mathbb{F}[X]$, so let $P_i(X) = a^i_{n_i}X^{n_i} + \cdots$ for $i = 1, 2$, and assume that $|P_1| < |P_2|$. This means that $deg(P_2) > deg(P_1)$ and so $P_2 - P_1 = a_2^{n_2}X^{n_2} + \cdots$. If now $P_1, P_2 > 0$, this implies that $P_2 - P_1 > 0$ and so $P_2 > P_1$. For the general case let $|P_1/Q_1| < |P_2/Q_2|$ and suppose that $P_1/Q_1, P_2/Q_2 > 0$. Up to taking negatives we may assume that $P_i, Q_i > 0$. Then $|P_1Q_2| < |P_2 Q_1|$ and so the previous case implies that $P_1Q_2 < P_2Q_1$ and so $P_1/Q_1 < P_2/Q_2$.
\end{example}

Let $\mathbb{K}$ be a non-Archimedean ordered valued field. In this case it is natural to impose a further restriction on a mean, namely that if $f \in C_b(G, \mathbb{K})$ is positive everywhere, then $m(f) \geq 0$, just as is done in the real case. We prove that this additional requirement does not add anything, at least when the norm is non-trivial:

\begin{proposition}
\label{ordered}

Let $(\mathbb{K}, |\cdot|_\mathbb{K}, \leq)$ be a non-Archimedean ordered non-trivially valued field. Suppose that $G$ is normed $\mathbb{K}$-amenable. Then a mean $m$ may be chosen such that moreover $\| m \|_{op} = 1$ and $m(f) \geq 0$ whenever $f$ is positive everywhere.
\end{proposition}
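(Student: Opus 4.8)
The plan is to reduce to the case $\chi(\mathbb{K})=\chi(\mathfrak{r})=0$, split according to whether $G$ is compact or merely locally elliptic, and in each case exhibit a mean that is \emph{visibly} positive rather than trying to positivize a given one. First I would record the reductions. Since $1>0$ in any ordered field, $\chi(\mathbb{K})=0$; and for $n\in\mathbb{N}$ we have $0<1\le n$, so compatibility of the order with the norm gives $1=|1|_\mathbb{K}\le|n|_\mathbb{K}\le 1$, whence $\chi(\mathfrak{r})=0$. Therefore Theorems \ref{main nsc} and \ref{main sc} (case $2.$ in each) apply: $G$ is normed $\mathbb{K}$-amenable if and only if it is compact (when $\mathbb{K}$ is not spherically complete) or locally elliptic (when it is), and in either case $\|G\|_\mathbb{K}=1$. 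I would also record two elementary facts used throughout: the positive cone $\{x\in\mathbb{K}\mid x\ge 0\}$ is closed in the norm topology (if $x_n\to x$ with $x_n\ge 0$ but $x=-a$, $a>0$, then $x_n-x\ge a$, so $|x_n-x|_\mathbb{K}\ge|a|_\mathbb{K}>0$, a contradiction); and $G$ acts on $C_b(G,\mathbb{K})$ by isometries preserving the pointwise order.

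For the compact case $G$ is profinite. A $\mathbb{K}$-mean, being bounded hence continuous, is determined by its restriction to the dense subspace $\bigcup_U C(G/U,\mathbb{K})$ of locally constant functions ($U$ ranging over compact open normal subgroups), and on each $C(G/U,\mathbb{K})$ left-invariance forces it to be the averaging functional $m_U(f)=\tfrac{1}{[G:U]}\sum_{gU}f(g)$ by Theorem \ref{finite} (legitimate since $[G:U]$ is invertible in $\mathbb{K}$). In particular the mean is unique, has norm $\le 1$ by the ultrametric inequality, hence norm exactly $1$. For positivity: given continuous $f\ge 0$ and $U$, a choice of coset representatives yields $f_U\in C(G/U,\mathbb{K})$ taking only values of $f$, so $f_U\ge 0$ and $m_U(f_U)\ge 0$, being a positive rational multiple of a sum of non-negative elements; since $\|f-f_U\|_\infty\to 0$ as $U$ shrinks (uniform continuity on $G$) and the cone is norm-closed, $m(f)=\lim_U m_U(f_U)\ge 0$.

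For the locally elliptic case, write $G=\bigcup_i U_i$ as a directed union of compact open subgroups and let $\mu_i$ be the positive norm-$1$ mean on $U_i$ from the previous paragraph; then $\widetilde\mu_i(f):=\mu_i(f|_{U_i})$ is a positive norm-$1$ functional on $C_b(G,\mathbb{K})$ with $\widetilde\mu_i(\mathbbm{1}_G)=1$, invariant under every $g\in U_i$. If $\mathbb{K}$ is a local field, I would run the proof of Proposition \ref{dirun} verbatim but replace each $K_i$ by its intersection with the set of positive functionals, which is weak-$*$ closed because the cone of $\mathbb{K}$ is; these sets are non-empty (they contain $\widetilde\mu_j$ for $U_j\supseteq U_i$), nested, and weak-$*$ compact, so a common point is a positive, norm-$1$, $G$-invariant mean. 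If $\mathbb{K}$ is spherically complete but not local I would instead prove an ordered refinement of Proposition \ref{K-mean free}: on $V:=\mathbb{K}\mathbbm{1}_G\oplus\mathcal{H}$ the functional $m_0(\lambda\mathbbm{1}_G+h)=\lambda$ has norm $\alpha^{-1}=1$ (with $\alpha=1$ by Corollary \ref{K-dirun}), and it is positive on $V$: if $\lambda\mathbbm{1}_G+h\ge 0$, approximating $h$ by finite sums $\sum(g_k\cdot f_k-f_k)$ whose group elements all lie in a single compact open $U\le G$ (local ellipticity) and applying $\widetilde\mu_U$, which annihilates each such sum, gives $\lambda+\varepsilon_U$ with $|\varepsilon_U|_\mathbb{K}\to 0$, so $\lambda\ge 0$ by closedness of the cone. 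It then remains to extend $m_0$ to a positive norm-$1$ functional on $C_b(G,\mathbb{K})$ still vanishing on $\mathcal{H}$, i.e. a positive norm-$1$ $G$-invariant mean.

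The only genuinely delicate point is this last extension: a Hahn--Banach theorem over $\mathbb{K}$ preserving \emph{both} the norm bound and positivity. I would argue by Zorn's lemma with a one-variable step: passing from a subspace $W$ to $W+\mathbb{K}f_0$, the admissible values $c=m(f_0)$ satisfying the norm bound form a non-empty intersection of balls (Ingleton's argument, using spherical completeness), while those satisfying positivity form an order-interval cut out by the inequalities $c\ge -m(w)/\lambda$ (resp. $\le$) coming from $w+\lambda f_0\ge 0$ with $\lambda>0$ (resp. $\lambda<0$); positivity of $m$ on $W$ forces every lower bound to be $\le$ every upper bound, and the remaining issue is joint satisfiability of the two families of constraints in $\mathbb{K}$, for which one leans on spherical completeness controlling the ball part together with order--norm compatibility. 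Granting this ordered Hahn--Banach step — which I expect to be the main obstacle — the proposition follows.
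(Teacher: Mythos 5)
Your reductions ($\chi(\mathbb{K})=0$, and $\chi(\mathfrak{r})=0$ since $|n|_\mathbb{K}\le 1$ always while compatibility gives $|n|_\mathbb{K}\ge|1|_\mathbb{K}$) match the paper, and your compact case is essentially fine: any invariant mean restricts on the dense subspace of locally constant functions to the averaging functionals of Theorem \ref{finite}, these are nonnegative on nonnegative functions, and norm-closedness of the positive cone finishes it --- though you should apply this to the mean given by the amenability hypothesis rather than build one as a limit, since $\mathbb{K}$ is not assumed complete. The problem is the case that actually remains after the reduction, namely $\mathbb{K}$ spherically complete and $G$ locally elliptic but non-compact. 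Your local-field branch is vacuous there (a local field has finite residue field, so $\chi(\mathfrak{r})=p>0$, which your own reduction excludes), so Banach--Alaoglu is never available, and everything hinges on the step you yourself flag: extending $m_0$ from $\mathbb{K}\mathbbm{1}_G\oplus\mathcal{H}$ to $C_b(G,\mathbb{K})$ while preserving both $\|m\|_{op}\le 1$ and positivity. That is a genuine missing ingredient, not a routine variant of the non-Archimedean Hahn--Banach theorem: in the one-variable step the admissible values form the intersection of a family of balls (handled by spherical completeness, as in Ingleton's argument) with an order interval, and you give no argument that this intersection is nonempty; order and norm are only loosely coupled in such fields (elements of norm $\le 1$ can be arbitrarily large in the order), so the order constraints are not absorbed by the norm constraints. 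As written, the proposal therefore does not prove the proposition in its main case.

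For comparison, the paper avoids constructing a positive mean altogether: by Theorems \ref{main nsc} and \ref{main sc} together with Corollary \ref{norm attain} there is an invariant mean of norm exactly $1$, and positivity is then deduced directly from $\|m\|_{op}=1$ and $m(\mathbbm{1}_G)=1$ by the standard two-step argument --- first for $0\le f\le 1$ using $g=\mathbbm{1}_G-f$, then for arbitrary $f\ge 0$ by rescaling with a positive $\alpha$ satisfying $|\alpha|_\mathbb{K}>\|f\|_\infty$, which exists because the valuation is non-trivial. So no positive analogue of Hahn--Banach is invoked at all. If you want to pursue your constructive route instead, the unproved extension step is where all the work lies (for instance via a Riesz-type positive extension exploiting that the multiples of $\mathbbm{1}_G$ are order-cofinal in $C_b(G,\mathbb{K})$, together with a separate argument controlling the operator norm); none of this is carried out in the proposal, so it remains a sketch with a hole precisely at the decisive point.
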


\begin{proof}
Let $G$ be a normed $\mathbb{K}$-amenable group. By the axioms we have $0 < 1 < 1+1 < \cdots$, and so $1 = |1|_\mathbb{K} \leq |n|_\mathbb{K}$ for all positive integers $n$. This implies that $\chi(\mathbb{K}) = \chi(\mathfrak{r}) = 0$, and so by Theorems \ref{main nsc} and \ref{main sc} and Corollary \ref{norm attain}, there exists a mean $m$ of norm 1. \\

Then the last condition follows by a standard argument (see \cite[Theorem 3.2]{Pier} for the real case). Indeed, let us start by assuming that $0 \leq f(x) \leq 1$ for all $x \in G$. Suppose by contradiction that $m(f) < 0$. Consider $g = \mathbbm{1}_G - f$. Then $1 = m(\mathbbm{1}_G) = m(f) + m(g) < m(g)$. Since $0 \leq g(x) \leq 1$ for all $x \in G$, it holds $\| g \|_\infty \leq 1$, which implies that $|m(g)|_\mathbb{K} \leq 1$ as $\| m \|_{op} = 1$. But then $1 < m(g) \leq 1$, a contradiction. \\

Now let $f \geq 0$ be an arbitrary continuous bounded function. Since the norm is assumed to be non-trivial, there exists a positive element $\alpha \in \mathbb{K}$ such that $|\alpha|_\mathbb{K} > \| f \|_\infty$, from which it follows that $\alpha > f(x)$ for all $x \in G$ and so $0 \leq \alpha^{-1} f(x) < 1$ for all $x \in G$. By the previous step $m(\alpha^{-1} f) \geq 0$, and so $m(f) = \alpha m(\alpha^{-1} f) \geq 0$ as well.
\end{proof}

The same proof shows that if $f \geq 0$ has an upper bound, then $m(f) \geq 0$. On a non-trivially valued field any norm-bounded function admits an upper bound, but in the trivially valued case every function is bounded, and so we do not obtain information on the ordering from the norm.

\begin{question}
Does the previous proposition hold in the trivially valued case?
\end{question}

\pagebreak

\section{Bounded cohomology of normed $\mathbb{K}$-amenable groups}
\label{smain2}

After going through the formal definitions, we will prove a general vanishing theorem that will imply the ``only if'' part of Theorem \ref{main2} in the cases of compact and discrete groups. The ``if'' part will only be proven for discrete groups, and we will explain why this approach fails in the continuous setting. The hope is that a functorial characterization of continuous bounded cohomology over non-Archimedean valued fields, analogous to \cite[Chapter 7]{Monod}, could be developed. This would allow to compute continuous bounded cohomology by means of other resolutions, for instance using left uniformly continuous or measurable functions. We will explain how this would imply Theorem \ref{main2} in full generality.

\subsection{Continuous bounded cohomology}

We start with the formal definition of continuous bounded cohomology, which was hinted at in the introduction. \\

Let $E$ be a normed $\mathbb{K}[G]$-module. Define
$$C^n_b(G, E) := \{ f : G^{n+1} \to E \mid f \text{ is continuous and } \|f\|_\infty < \infty \}.$$
The diagonal action of $G$ on $G^{n+1}$ induces a linear isometric action of $G$ on $C^n_b(G, E)$:
$$(g \cdot f)(g_0, \ldots, g_n) := g \cdot f(g^{-1}g_0, \ldots, g^{-1}g_n).$$
Define further the coboundary map $\delta^n : C^n_b(G, E) \to C^{n+1}_b(G, E)$ by
$$\delta^n f (g_0, \ldots, g_{n+1}) := \sum\limits_{i = 0}^{n+1} (-1)^i f(g_0, \ldots, \hat{g_i}, \ldots, g_{n+1}).$$
Then $(C^\bullet_b(G, E), \delta^\bullet)$ is a cochain complex of normed $\mathbb{K}[G]$-modules. We consider the subcomplex of invariants $(C^\bullet_b(G, E)^G, \delta^\bullet)$, denote by $Z^\bullet_b(G, E)$ its cocycles, by $B^\bullet_b(G, E)$ its coboundaries, and by $H^\bullet_{cb}(G, E) := Z^\bullet_b(G, E)/B^\bullet_b(G, E)$ its homology.

\begin{definition}
\label{Hcb def}

We call $H^\bullet_{cb}(G, E)$ the \emph{continuous bounded cohomology of $G$ with coefficients in $E$}.  When $G$ is discrete, and so all functions on $G$ are continuous, we simply call it the \emph{bounded cohomology of $G$ with coefficients in $E$}, and denote it by $H^\bullet_b(G, E)$.
\end{definition}

\subsection{Left uniformly continuous bounded cohomology}

Our goal is to prove the vanishing result in Theorem \ref{main2} by the same method as the classical proof in the discrete case \cite[Theorem 3.6]{Frig}. This proof adapts to the continuous setting, but continuity is not enough to make it work: we need a stronger regularity condition.

\begin{definition}
\label{luc def}

Let $f : G \to X$ be a function from the topological group $G$ to the metric space $X$. We say that $f$ is \emph{left uniformly continuous} if for all $\varepsilon > 0$ there exists a neighbourhood $U$ of the identity in $G$ such that $d(f(g), f(ug)) < \varepsilon$ for every $g \in G$ and every $u \in U$.

A function $f : G^n \to X$ is left uniformly continuous if it is when we see $G^n$ as a topological group with the product topology. This corresponds to the product uniform structure induced by the one on $G$.
\end{definition}

For a normed $\mathbb{K}[G]$-module $E$, we denote:
$$LUC_b^n(G, E) := \{ f : G^{n+1} \to E \mid f \text{ is left uniformly continuous and } \| f \|_\infty < \infty \} \subset C^n_b(G, E).$$
This defines the subcomplex $(LUC^\bullet_b(G, E), \delta^\bullet) \subseteq (C^\bullet_b(G, E), \delta^\bullet)$, since sums of left uniformly continuous functions are left uniformly continuous. Moreover:

\begin{lemma}
\label{luc_inv}

Let $E$ be a normed $\mathbb{K}[G]$-module. Then $LUC^n_b(G, E)$ is $G$-invariant.
\end{lemma}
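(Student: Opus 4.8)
The plan is to verify the two defining conditions of $LUC^n_b(G,E)$ — boundedness and left uniform continuity — are preserved under the $G$-action $(g\cdot f)(g_0,\dots,g_n) = g\cdot f(g^{-1}g_0,\dots,g^{-1}g_n)$. Boundedness is immediate: since $G$ acts on $E$ by linear isometries, for every tuple $(g_0,\dots,g_n)$ we have $\|(g\cdot f)(g_0,\dots,g_n)\| = \|f(g^{-1}g_0,\dots,g^{-1}g_n)\|$, hence $\|g\cdot f\|_\infty = \|f\|_\infty < \infty$. So the only real content is left uniform continuity.

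For that, fix $g\in G$ and $\varepsilon>0$. Since $f$ is left uniformly continuous on $G^{n+1}$ (with the product uniform structure), there is a neighbourhood $V$ of the identity in $G$ such that $\|f(\mathbf{x}) - f(\mathbf{u}\mathbf{x})\| < \varepsilon$ for all $\mathbf{x}\in G^{n+1}$ and all $\mathbf{u}\in V^{n+1}$. I would then compute, for $\mathbf{x} = (g_0,\dots,g_n)$ and $\mathbf{w}=(w_0,\dots,w_n)$, the difference $(g\cdot f)(\mathbf{x}) - (g\cdot f)(\mathbf{w}\mathbf{x})$: writing $\mathbf{y} := g^{-1}\mathbf{x}$ and using $g^{-1}\mathbf{w}\mathbf{x} = (g^{-1}\mathbf{w}g)\,\mathbf{y}$ componentwise, isometry of the action gives
$$\|(g\cdot f)(\mathbf{x}) - (g\cdot f)(\mathbf{w}\mathbf{x})\| = \|f(\mathbf{y}) - f\big((g^{-1}\mathbf{w}g)\,\mathbf{y}\big)\|.$$
As $\mathbf{x}$ ranges over $G^{n+1}$ so does $\mathbf{y}$, so it suffices to guarantee $g^{-1}\mathbf{w}g\in V^{n+1}$, i.e. $w_i\in gVg^{-1}$ for each $i$. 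Since conjugation by $g$ is a homeomorphism of $G$, the set $W := gVg^{-1}$ is a neighbourhood of the identity, and for $\mathbf{w}\in W^{n+1}$ the right-hand side is $<\varepsilon$. This exhibits the required neighbourhood and shows $g\cdot f\in LUC^n_b(G,E)$.

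There is no serious obstacle here; the one point to be careful about is bookkeeping with the product uniform structure on $G^{n+1}$ (the relevant neighbourhoods of the identity are exactly the $V^{n+1}$ for $V$ a neighbourhood of $1$ in $G$) and the fact that we only need \emph{left} uniform continuity, which is compatible with the \emph{left} translations appearing in the diagonal $G$-action. I would also remark that the same computation shows $LUC^\bullet_b(G,E)$ is a $G$-subcomplex of $C^\bullet_b(G,E)$, since it is already closed under $\delta^\bullet$ and under sums.
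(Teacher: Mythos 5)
Your proof is correct and follows essentially the same route as the paper: conjugate a suitable identity neighbourhood by $g$ (componentwise), then use that the action is by isometries to reduce left uniform continuity of $g\cdot f$ to that of $f$; the only cosmetic difference is that you work with basic box neighbourhoods $V^{n+1}$ while the paper works with general neighbourhoods of the identity in $G^{n+1}$, which amounts to the same thing.
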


\begin{proof}
Let $f \in LUC^n_b(G, E)$ and $g \in G$. We need to show that $g \cdot f \in LUC^n_b(G, E)$. Denote by $\overline{g} = (g, g, \ldots, g) \in G^{n+1}$. Let $\varepsilon > 0$, and let $U$ be a neighbourhood of the identity in $G^{n+1}$ such that for any $x \in G^{n+1}$ and any $u \in U$ we have $\|f(x) - f(ux)\|_E < \varepsilon$. Let $V$ be a neighbourhood of the identity in $G^{n+1}$ such that $\overline{g}^{-1}V\overline{g} \subseteq U$. Then for all $x \in G^{n+1}$ and all $v \in V$, denoting $u = \overline{g}^{-1}v\overline{g} \in U$:
$$\|(g \cdot f)(x) - (g \cdot f)(vx)\|_E = \|g \cdot (f(\overline{g}^{-1}x) - f(\overline{g}^{-1}vx))\|_E = $$
$$ = \|f(\overline{g}^{-1}x) - f((\overline{g}^{-1}v\overline{g})\overline{g}^{-1}x)\|_E = \|f(\overline{g}^{-1}x) - f(u(\overline{g}^{-1}x))\|_E < \varepsilon.$$
\end{proof}

This allows to define the cohomology of the complex $LUC^\bullet_b(G, E)^G$, denoted $H^\bullet_{lucb}(G, E)$. \\

For compact and discrete groups every continuous map is uniformly continuous, so the two cohomology theories coincide. In the real case, this is true for all locally compact groups, although the proof is more involved \cite[Chapter 7]{Monod}. For non-Archimedean coefficients, a functorial characterization is not (yet) available, so we will have to treat these as two distinct cohomology theories.

\begin{theorem}
\label{luc 0}

Let $G$ be a normed $\mathbb{K}$-amenable group, and let $E$ be a dual normed $\mathbb{K}[G]$-module. Then $H^n_{lucb}(G, E) = 0$ for any $n \geq 1$.
\end{theorem}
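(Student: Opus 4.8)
The plan is to adapt the classical averaging argument for amenable groups (see \cite[Theorem 3.6]{Frig}) to the non-Archimedean, left-uniformly-continuous setting, by building an explicit contracting homotopy on the invariant complex $LUC^\bullet_b(G,E)^G$ in degrees $\geq 1$. Write $E \cong F^*$ isometrically and $G$-equivariantly for a normed $\mathbb{K}[G]$-module $F$, and fix a left-invariant normed $\mathbb{K}$-mean $m : C_b(G,\mathbb{K}) \to \mathbb{K}$ with $m(\mathbbm{1}_G) = 1$. For $n \geq 1$, define $\kappa^n(f) : G^n \to E$ on each $f \in LUC^n_b(G,E)$ by
$$\kappa^n(f)(g_0,\ldots,g_{n-1})(v) := m\big(\, g \mapsto f(g,g_0,\ldots,g_{n-1})(v)\, \big), \qquad v \in F.$$
First I would check well-definedness: for fixed arguments the function $g \mapsto f(g,g_0,\ldots,g_{n-1})(v)$ is continuous (restriction of the continuous $f$, post-composed with the evaluation $\mathrm{ev}_v : E \to \mathbb{K}$, which is linear with $|\lambda(v)| \leq \|\lambda\|\,\|v\|_F$) and bounded by $\|f\|_\infty\|v\|_F$, hence lies in the domain of $m$; linearity of $m$ makes $\kappa^n(f)(g_0,\ldots,g_{n-1})$ a linear functional on $F$, and $\|m\|_{op} < \infty$ bounds it, so it lies in $E = F^*$ with $\|\kappa^n(f)\|_\infty \leq \|m\|_{op}\|f\|_\infty$.

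Second I would establish the three structural properties. (i) \emph{$\kappa^n$ lands in $LUC^{n-1}_b(G,E)$:} given $\varepsilon > 0$, choose a neighbourhood $U_0$ of the identity in $G$ with $\|f(x) - f(ux)\|_E < \varepsilon$ for all $x \in G^{n+1}$ and all $u \in U_0^{n+1}$; then for $(w_0,\ldots,w_{n-1}) \in U_0^n$, inserting $u = (e,w_0,\ldots,w_{n-1}) \in U_0^{n+1}$ inside the defining supremum yields $\|\kappa^n(f)(g_0,\ldots,g_{n-1}) - \kappa^n(f)(w_0g_0,\ldots,w_{n-1}g_{n-1})\|_{op} \leq \|m\|_{op}\varepsilon$. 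This is the one step where left uniform continuity of $f$ is genuinely used, and I expect it to be the main obstacle: for a merely continuous cochain the averaged cochain need not even be continuous, which is precisely why the theorem is stated for $LUC$-cohomology rather than for $H^\bullet_{cb}$. (ii) \emph{$\kappa^n$ preserves $G$-invariance:} a direct computation using the dual action $(h\cdot\lambda)(v) = \lambda(h^{-1}v)$, the $G$-equivariance of $f$, and the left-invariance of $m$ gives $\kappa^n(f)(hg_0,\ldots,hg_{n-1}) = h\cdot\kappa^n(f)(g_0,\ldots,g_{n-1})$. (iii) \emph{Homotopy identity:} for $f \in LUC^n_b(G,E)^G$ with $n \geq 1$,
$$\delta^{n-1}\kappa^n(f) + \kappa^{n+1}\delta^n(f) = f,$$
obtained by expanding $\delta^n f(g,g_0,\ldots,g_n) = f(g_0,\ldots,g_n) - \sum_{i=0}^n (-1)^i f(g,g_0,\ldots,\widehat{g_i},\ldots,g_n)$, applying $\kappa^{n+1}$, and using $m(\mathbbm{1}_G) = 1$ on the constant-in-$g$ term $f(g_0,\ldots,g_n)(v)$ so that the remaining alternating sum cancels the terms of $\delta^{n-1}\kappa^n(f)$.

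Finally, the homotopy identity gives the vanishing at once: if $f \in Z^n_b(G,E) \cap LUC^n_b(G,E)^G$ with $n \geq 1$, then $f = \delta^{n-1}(\kappa^n f) + \kappa^{n+1}(\delta^n f) = \delta^{n-1}(\kappa^n f) \in B^n_b(G,E)$, hence $H^n_{lucb}(G,E) = 0$. Beyond step (i), the argument is a formal transcription of the real-coefficient proof, and it is worth remarking that it uses neither positivity, nor the Hahn--Banach theorem, nor spherical completeness of $\mathbb{K}$ — only boundedness and left-invariance of $m$ together with $m(\mathbbm{1}_G)=1$.
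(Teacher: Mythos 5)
Your proposal is correct and follows essentially the same route as the paper's proof: the paper also writes $E=F^*$, averages the first variable of a cochain against the invariant mean via evaluation functionals on $F$, verifies boundedness, left uniform continuity, and $G$-equivariance of the resulting operator (your $\kappa^\bullet$ is the paper's $j^\bullet$ up to an index shift), and concludes with the identical homotopy identity $\delta^{n-1}\kappa^n+\kappa^{n+1}\delta^n=\mathrm{id}$ using $m(\mathbbm{1}_G)=1$. Your step (i) is exactly the point the paper flags as well, including the caveat about passing between the operator norm and the equivalent supremum-over-the-unit-ball norm.
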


\begin{proof}
Let $F$ be a normed $\mathbb{K}[G]$-module such that (up to isometric $G$-isomorphism) $E = F^*$ and the action on $E$ is the dual of the action on $F$. Let $m$ be a left-invariant normed $\mathbb{K}$-mean. We will use $m$ to define, for $n \geq 0$, a collection of maps $j^{n+1} : LUC^{n+1}_b(G, E) \to LUC^n_b(G, E)$, which provides a $G$-equivariant homotopy between the identity and the zero map of $LUC^n_b(G, E)^G$, for $n \geq 1$. \\

Let $f \in LUC^{n+1}_b(G, E), g \in G, \overline{g} \in G^{n+1}, x \in F$. Then $f(g, \overline{g}) \in E = F^*$. So we can define
$$f^{\overline{g}}_x : G \to \mathbb{K} : g \mapsto f(g, \overline{g})(x).$$
Since evaluation at $x$ is continuous as a map from $E$ to $\mathbb{K}$, and $f$ is bounded and continuous, so is $f^{\overline{g}}_x$; that is, $f^{\overline{g}}_x \in C_b(G, \mathbb{K})$. This allows us to define
$$j^{n+1}(f) \in C^n_b(G, E), \, \, \, \, \, \, (j^{n+1}(f)(\overline{g}))(x) = m(f^{\overline{g}}_x) \in \mathbb{K}.$$
Since all objects involved are bounded, $j^{n+1}(f)$ is also bounded. We show that it is left uniformly continuous. We need to show that for any $\varepsilon > 0$, there exists a neighbourhood $U$ of the identity in $G^{n+1}$ such that for all $\overline{g} \in G^{n + 1}$ and all $u \in U$:
$$\varepsilon > \|j^{n+1}(f)(\overline{g}) - j^{n+1}(f)(u\overline{g})\|_E \sim \sup\limits_{\|x\|_F \leq 1} |(j^{n+1}(f)(\overline{g}))(x) - (j^{n+1}(f)(u\overline{g}))(x)|_\mathbb{K} = $$
$$ = \sup\limits_{\|x\|_F \leq 1} |m(f^{\overline{g}}_x) - m(f^{u\overline{g}}_x)|_\mathbb{K} = \sup\limits_{\|x\|_F \leq 1} |m(f^{\overline{g}}_x - f^{u\overline{g}}_x)|_\mathbb{K}.$$
The $\sim$ in the first line indicates that we switched form the operator norm to an equivalent one: see the discussion in Subsection \ref{preli Banach}. Now $m$ is bounded, so up to scaling $\varepsilon$ it is enough to show that:
$$\varepsilon > \sup\limits_{\|x\|_F \leq 1} \|f^{\overline{g}}_x - f^{u\overline{g}}_x\|_\infty = \sup\limits_{\|x\|_F \leq 1} \sup\limits_{g \in G} |f^{\overline{g}}_x(g) - f^{u\overline{g}}_x(g)|_\mathbb{K} = $$
$$ = \sup\limits_{g \in G} \sup\limits_{\|x\|_F \leq 1} |f^{\overline{g}}_x(g) - f^{u\overline{g}}_x(g)|_\mathbb{K} \sim \sup\limits_{g \in G} \|f(g, \overline{g}) - f(g, u\overline{g})\|_E.$$

Since $f$ is left uniformly continuous, there exists a neighbourhood $U'$ of the identity in $G^{n+2}$ such that for all $(g, \overline{g}) \in G^{n+2}$ and all $u' \in U'$ we have $\|f(g, \overline{g}) - f(u'(g, \overline{g}))\|_E < \varepsilon$. Given such a $U'$, there exists a neighbourhood $U$ of the identity in $G^{n+1}$ such that $\{ 1 \} \times U \subset U'$. Choosing such a $U$, we are done. \\

Thus, the map $j^{n+1} : LUC^{n+1}_b(G, E) \to LUC^n_b(G, E)$ is well-defined. That it is linear is clear. It is also bounded:
$$|j^{n+1}(f)(\overline{g})(x)|_\mathbb{K} = |m(f^{\overline{g}}_x)|_\mathbb{K} \leq \|m\|_{op} \|f^{\overline{g}}_x\|_\infty \leq \|m\|_{op} \|f\|_\infty \|x\|_F,$$
so $\|j^{n+1}\|_{op} \leq \|m\|_{op}$. \\

We show that $j^{n+1}$ is $G$-equivariant. First,
$$(g \cdot f)^{\overline{g}}_x(h) = (g \cdot f)(h, \overline{g})(x) = f(g^{-1} h, g^{-1} \overline{g})(x) = (g \cdot f^{g^{-1} \overline{g}}_x)(h).$$
So, using that $m$ is left-invariant:
$$(j^{n+1}(g \cdot f)(\overline{g}))(x) = m((g \cdot f)^{\overline{g}}_x) = m(g \cdot f^{g^{-1} \overline{g}}_x) = m(f^{g^{-1} \overline{g}}_x) = ((g \cdot j^{n + 1}(f))(\overline{g}))(x).$$

Finally, we show that $j^\bullet$ is a homotopy between the identity and the zero map of $LUC^\bullet_b(G, E)^G$. Let $f \in LUC^n_b(G, E)^G$. If we denote by $\overline{g}^i := (g_0, \ldots, \hat{g}_i, \ldots, g_n)$, then:
$$(\delta^n f)^{\overline{g}}_x(g) = (\delta^n f)(g, \overline{g})(x) = f(\overline{g})(x) - \sum\limits_{i = 0}^n (-1)^i f^{\overline{g}^i}_x(g).$$
Therefore, using that $m$ is linear and $m(\mathbbm{1}_G) = 1$:
$$(\delta^{n-1} j^n + j^{n+1} \delta^n)(f)(\overline{g})(x) = \left( \sum\limits_{i = 0}^n (-1)^i m(f^{\overline{g}^i}_x) \right) + $$
$$ + \left( m(f(\overline{g})(x)) - \sum\limits_{i = 0}^n (-1)^i m(f^{\overline{g}^i}_x) \right) = m(f(\overline{g})(x)) = f(\overline{g})(x).$$
This shows that, for all $n \geq 1$, we have $\delta^{n-1} j^n + j^{n + 1} \delta^n = id_n$. So $j^\bullet$ is the desired homotopy. \\

We have found a bounded $G$-equivariant (partial) chain homotopy between the identity and the zero map of $LUC^n_b(G, E)$, for $n \geq 1$. We conclude that $H^n_{lucb}(G, E) = 0$ for all $n \geq 1$.
\end{proof}

If $G$ is compact or discrete, then all continuous functions are automatically left uniformly continuous. Therefore:

\begin{corollary}
Let $G$ be compact or discrete. Then $H^n_{cb}(G, E) = 0$ for any dual normed $\mathbb{K}[G]$-module $E$ and all $n \geq 1$.
\end{corollary}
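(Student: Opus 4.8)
The plan is to reduce the statement directly to Theorem \ref{luc 0} by observing that, for $G$ compact or discrete, the complex of continuous bounded cochains $C^\bullet_b(G,E)$ coincides with the complex of left uniformly continuous bounded cochains $LUC^\bullet_b(G,E)$, so that $H^\bullet_{cb}(G,E)$ and $H^\bullet_{lucb}(G,E)$ are the same. (As in the surrounding discussion, $G$ is taken to be normed $\mathbb{K}$-amenable, so that Theorem \ref{luc 0} indeed applies; compactness alone does not suffice, by Theorem \ref{comp}.) Since by definition $LUC^n_b(G,E) \subseteq C^n_b(G,E)$, the only thing to check is the reverse inclusion: every bounded continuous function $G^{n+1} \to E$ is left uniformly continuous.

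First I would dispose of the discrete case: if $G$ is discrete then so is $G^{n+1}$, and for any $\varepsilon>0$ the neighbourhood $U=\{1\}$ of the identity witnesses left uniform continuity of every function $G^{n+1}\to E$; hence $LUC^n_b(G,E)=C^n_b(G,E)$ for all $n$. Next, the compact case, where I would run a standard Heine--Cantor type argument: given $f\in C^n_b(G,E)$ and $\varepsilon>0$, the map $G^{n+1}\times G^{n+1}\to\mathbb{R}$ sending $(x,y)\mapsto\|f(x)-f(y)\|_E$ is continuous, so the set on which this quantity is $<\varepsilon$ is open and contains the diagonal; by compactness of $G^{n+1}$ there is a neighbourhood $U$ of the identity in $G^{n+1}$ with $\|f(x)-f(ux)\|_E<\varepsilon$ for all $x\in G^{n+1}$ and $u\in U$. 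Thus $f$ is left uniformly continuous (it is automatically bounded, being the continuous image of a compact set), and again $LUC^n_b(G,E)=C^n_b(G,E)$.

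In both cases the two subcomplexes of $C^\bullet_b(G^{\bullet+1},E)$ are literally equal, carry the same isometric $G$-action and the same coboundary maps, so $H^n_{cb}(G,E)=H^n_{lucb}(G,E)$ for every $n$. Applying Theorem \ref{luc 0}, which is valid since $G$ is normed $\mathbb{K}$-amenable and $E$ is a dual normed $\mathbb{K}[G]$-module, yields $H^n_{lucb}(G,E)=0$ for all $n\geq 1$, and hence the claim. I do not expect any genuine obstacle here: the single non-formal ingredient is the compactness step above, which is entirely routine.
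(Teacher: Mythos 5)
Your proposal is correct and follows exactly the paper's route: the paper proves this corollary simply by remarking that for compact or discrete $G$ every continuous function is automatically left uniformly continuous, so $H^\bullet_{cb}(G,E)=H^\bullet_{lucb}(G,E)$ and Theorem \ref{luc 0} applies; your discrete and Heine--Cantor arguments just fill in that routine remark. You were also right to read in the implicit hypothesis that $G$ is normed $\mathbb{K}$-amenable, which the corollary inherits from its role as a consequence of Theorem \ref{luc 0}.
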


In the general case a continuous function need not be left uniformly continuous. It could still be possible, however, that the two cohomology theories are isomorphic, as in the real case.

\begin{question}
\label{q main2}
Can this vanishing result be extended to $H^\bullet_{cb}(G, E)$? Are the two cohomology theories isomorphic?
\end{question}

\subsection{Characterization for discrete groups}

We will show that the vanishing of a specific bounded cohomology class implies normed $\mathbb{K}$-amenability. Let $G$ be a discrete group, and consider the space $C_b(G, \mathbb{K}) / \mathbb{K}$, where $\mathbb{K}$ is identified with the subspace of constant functions. Since this is a closed invariant subspace, $C_b(G, \mathbb{K}) / \mathbb{K}$ is a well-defined normed $\mathbb{K}$-vector space with a linear isometric action of $G$. So we can consider the dual space $(C_b(G, \mathbb{K}) / \mathbb{K} )^*$, which is naturally identified with the subspace of elements of $C_b(G, \mathbb{K})^*$ that vanish on constant functions.

For any $g \in G$, let $\delta_g \in C_b(G, \mathbb{K})^*$ be the evaluation map at $g$. Define $J_\mathbb{K}(g_0, g_1) := (\delta_{g_1} - \delta_{g_0}) \in (C_b(G, \mathbb{K}) / \mathbb{K} )^*$ for any $g_0, g_1 \in G$. It is easy to check that $J_\mathbb{K}$ is $G$-equivariant, that it is bounded, and that $\delta^1 J_\mathbb{K} = 0$. Therefore $J_\mathbb{K}$ defines a bounded cohomology class $[J_\mathbb{K}] \in H^1_b(G, (C_b(G, \mathbb{K}) / \mathbb{K} )^*)$. We call it the \emph{$\mathbb{K}$-Johnson class} of $G$ (compare with \cite[Section 3.4]{Frig}).

\begin{theorem}
\label{johnson}

Let $G$ be a discrete group, and $[J_\mathbb{K}]$ its $\mathbb{K}$-Johnson class. If $[J_\mathbb{K}]$ vanishes, then $G$ is normed $\mathbb{K}$-amenable.
\end{theorem}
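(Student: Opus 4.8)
The plan is to unwind the hypothesis $[J_\mathbb{K}]=0$ into an explicit cochain-level identity and then read off a left-invariant normed $\mathbb{K}$-mean from it, mirroring the classical argument in the real case \cite[Section 3.4]{Frig}.

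Write $E := (C_b(G,\mathbb{K})/\mathbb{K})^*$, regarded as the subspace of $C_b(G,\mathbb{K})^*$ of functionals vanishing on constants. By definition, $[J_\mathbb{K}]=0$ in $H^1_b(G,E)$ means $J_\mathbb{K}=\delta^0 b$ for some $b\in C^0_b(G,E)^G$. First I would identify $C^0_b(G,E)^G$ with $E$: invariance forces $b(gg_0)=g\cdot b(g_0)$ for all $g,g_0\in G$, so $b$ is determined by $\Psi:=b(1)\in E$ via $b(g)=g\cdot\Psi$, with boundedness automatic since the action is isometric. Evaluating the identity $\delta^0 b=J_\mathbb{K}$ at pairs $(1,g)$ then gives, for every $g\in G$,
\[ g\cdot\Psi-\Psi=\delta_g-\delta_1,\qquad\text{equivalently}\qquad \delta_g-g\cdot\Psi=\delta_1-\Psi. \]
(These relations for all $g$ conversely recover the full identity $\delta^0 b=J_\mathbb{K}$ by subtraction, so nothing is lost by restricting to $(1,g)$.)

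Next, define
\[ m:=\delta_1-\Psi\in C_b(G,\mathbb{K})^*. \]
This $m$ is bounded, being a difference of bounded functionals, and $m(\mathbbm{1}_G)=\delta_1(\mathbbm{1}_G)-\Psi(\mathbbm{1}_G)=1-0=1$ since $\Psi$ vanishes on constants; hence $m$ is a normed $\mathbb{K}$-mean. The remaining step is left-invariance. Using the dual action one checks that $h\cdot\delta_g=\delta_{hg}$ for all $h,g\in G$. Applying $h\cdot(-)$ to the identity $\delta_g-g\cdot\Psi=m$ yields $\delta_{hg}-(hg)\cdot\Psi=h\cdot m$; but by the same identity with $hg$ in place of $g$ the left-hand side equals $m$, so $h\cdot m=m$ for every $h\in G$, i.e. $m$ is left-invariant. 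Thus $G$ admits a left-invariant normed $\mathbb{K}$-mean and is normed $\mathbb{K}$-amenable.

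I expect the only real point requiring care to be bookkeeping: keeping straight the embedding $E\hookrightarrow C_b(G,\mathbb{K})^*$, the identification of invariant $0$-cochains with $E$, and the dual $G$-action on evaluation functionals (in particular the conventions so that $h\cdot\delta_g=\delta_{hg}$ and that $G$-invariance of $m$ under the dual action is exactly left-invariance of the mean). Once these are pinned down the computation is short, and no use of spherical completeness or Hahn–Banach is needed for this direction.
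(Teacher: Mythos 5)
Your proposal is correct and follows essentially the same route as the paper: unwind $[J_\mathbb{K}]=0$ into the cochain identity $\delta_g-g\cdot\Psi=\delta_1-\Psi$ via the invariant primitive, set $m:=\delta_1-\Psi$, and check $m(\mathbbm{1}_G)=1$ and left-invariance using $h\cdot\delta_g=\delta_{hg}$. The paper's proof is exactly this (with $\hat{\psi}(1)$ in place of your $\Psi$), so no further comparison is needed.
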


\begin{proof}
Let $\psi \in C^0_b(G, (C_b(G, \mathbb{K}) / \mathbb{K} )^*)^G$ be such that $\delta^0 \psi = J_\mathbb{K}$. Let $\hat{\psi} \in C^0_b(G, C_b(G, \mathbb{K})^*)^G$ be its pullback. Define $m := e_1 - \hat{\psi}(1) \in C_b(G, \mathbb{K})^*$. Since $\hat{\psi}(1)$ vanishes on constant functions, $m(\mathbbm{1}_G) = 1$, so $m$ is a normed $\mathbb{K}$-mean on $G$. Since $\delta^0 \psi = J_\mathbb{K}$, we also have $m = e_g - \hat{\psi}(g)$ for any $g \in G$. This easily implies that $m$ is left-invariant.
\end{proof}

\begin{corollary}[Theorem \ref{main2}]
Let $G$ be a discrete group. Then the following are equivalent:
\begin{enumerate}
\item $G$ is normed $\mathbb{K}$-amenable.
\item For any dual normed $\mathbb{K}[G]$-module $E$, and all $n \geq 1$, it holds $H^n_b(G, E) = 0$.
\item The $\mathbb{K}$-Johnson class $[J_\mathbb{K}] \in H^1_b(G, (C_b(G, \mathbb{K})/\mathbb{K})^*)$ of $G$ vanishes.
\end{enumerate}
\end{corollary}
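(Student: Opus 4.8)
The plan is to establish the cycle of implications $1 \Rightarrow 2 \Rightarrow 3 \Rightarrow 1$, each step being essentially a citation of results already in place.

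First, for $1 \Rightarrow 2$: since $G$ is discrete, every function on $G^{n+1}$ is continuous (and automatically left uniformly continuous), so the complexes $C^\bullet_b(G, E)$ and $LUC^\bullet_b(G, E)$ coincide, and $H^n_b(G, E) = H^n_{cb}(G,E) = H^n_{lucb}(G, E)$ for all $n$. If $G$ is normed $\mathbb{K}$-amenable, Theorem \ref{luc 0} then gives $H^n_{lucb}(G, E) = 0$ for every dual normed $\mathbb{K}[G]$-module $E$ and all $n \geq 1$, which is precisely statement $2$. (This is exactly the Corollary to Theorem \ref{luc 0}.)

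Next, $2 \Rightarrow 3$ is immediate once one observes that the coefficient module of the Johnson class is itself of the required type: $C_b(G, \mathbb{K}) / \mathbb{K}$ is a normed $\mathbb{K}[G]$-module (as noted before the statement of Theorem \ref{johnson}, since $\mathbb{K}$ sits inside $C_b(G,\mathbb{K})$ as a closed $G$-invariant subspace of constant functions), hence its topological dual $(C_b(G, \mathbb{K}) / \mathbb{K})^*$ is a dual normed $\mathbb{K}[G]$-module. Applying statement $2$ with $E = (C_b(G,\mathbb{K})/\mathbb{K})^*$ and $n = 1$ yields $H^1_b(G, (C_b(G,\mathbb{K})/\mathbb{K})^*) = 0$, so in particular $[J_\mathbb{K}] = 0$.

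Finally, $3 \Rightarrow 1$ is the content of Theorem \ref{johnson}: if $[J_\mathbb{K}]$ vanishes, one picks a primitive $\psi$, pulls it back to $\hat{\psi} \in C^0_b(G, C_b(G,\mathbb{K})^*)^G$, and checks that $m := e_1 - \hat\psi(1)$ is a left-invariant normed $\mathbb{K}$-mean, whence $G$ is normed $\mathbb{K}$-amenable. Combining the three implications closes the cycle and proves all three statements equivalent. There is no genuine obstacle remaining here: the entire difficulty has been front-loaded into Theorem \ref{luc 0} (the vanishing argument via the mean-averaging homotopy $j^\bullet$) and Theorem \ref{johnson} (the reconstruction of an invariant mean from a primitive of the Johnson cocycle); the present statement is simply the assembly of these pieces, with $2 \Rightarrow 3$ supplying the only new — and entirely formal — step.
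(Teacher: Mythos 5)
Your proposal is correct and matches the paper's (implicit) argument exactly: the corollary is assembled from the discrete case of Theorem \ref{luc 0} for $1 \Rightarrow 2$, the observation that $(C_b(G,\mathbb{K})/\mathbb{K})^*$ is a dual normed $\mathbb{K}[G]$-module for $2 \Rightarrow 3$, and Theorem \ref{johnson} for $3 \Rightarrow 1$. Nothing further is needed.
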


Note that the function $J_{\mathbb{K}} : G^2 \to (C_b(G, \mathbb{K})/\mathbb{K})^*$ can also be defined on a topological group. However, it cannot define a continuous bounded cohomology class, because of the following:

\begin{lemma}
\label{fail J}

Suppose that $G$ is non-discrete. Then $J_\mathbb{K}$ is discontinuous everywhere.
\end{lemma}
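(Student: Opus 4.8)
The plan is to show that for a non-discrete group $G$, the map $J_\mathbb{K} : G^2 \to (C_b(G,\mathbb{K})/\mathbb{K})^*$ is discontinuous at every point $(g_0, g_1) \in G^2$. Since $J_\mathbb{K}(g_0, g_1) = \delta_{g_1} - \delta_{g_0}$ depends separately on the two coordinates, and the map $g \mapsto \delta_g$ is equivariant under translation, it suffices to show that the map $G \to C_b(G,\mathbb{K})^* / (\text{constants})$ sending $g \mapsto \delta_g$ is discontinuous at every point; in fact, by equivariance (left translation acts isometrically on everything in sight), it is enough to check discontinuity at the identity. So the core claim is: there exists $\varepsilon > 0$ such that for every neighbourhood $U$ of $1$ in $G$, there is some $g \in U$ with $\| \delta_g - \delta_1 \|' \geq \varepsilon$ in the quotient norm on $(C_b(G,\mathbb{K})/\mathbb{K})^*$.

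First I would fix the value $\varepsilon = 1$ (or $1/2$, to be safe with the two choices of operator norm). Given any neighbourhood $U$ of the identity, since $G$ is non-discrete and totally disconnected, by Van Danzig's Theorem there is a compact open subgroup $V \leq G$ with $V \subseteq U$, and since $G$ is non-discrete $V$ is infinite, so $V$ properly contains a further compact open subgroup $W$ (again by Van Danzig, or because an infinite profinite group has arbitrarily small open subgroups). Pick $g \in V \setminus W$. Now I need a test function $f \in C_b(G,\mathbb{K})$ with $\|f\|_\infty \leq 1$ that separates $\delta_g$ from $\delta_1$ after quotienting by constants: take $f = \mathbbm{1}_W$, which is continuous since $W$ is clopen, and bounded by $1$. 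Then $(\delta_g - \delta_1)(f) = \mathbbm{1}_W(g) - \mathbbm{1}_W(1) = 0 - 1 = -1$, so $|(\delta_g - \delta_1)(f)|_\mathbb{K} = 1$. Since $J_\mathbb{K}$ takes values in the functionals vanishing on constants, the quotient norm of $\delta_g - \delta_1$ is at least $|(\delta_g - \delta_1)(f)|_\mathbb{K} / \|f\|_\infty = 1$. Hence $\|J_\mathbb{K}(g_0, g) - J_\mathbb{K}(g_0, g_1)\|$ does not go to zero as $g \to g_1$, for any fixed $g_0$; translating back, discontinuity holds at every point of $G^2$.

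The main (and really only) technical point is making sure the estimate survives the passage between the two dual norms $\|\cdot\|_{op}$ and $\|\cdot\|_{op}'$ discussed in Subsection~\ref{preli Banach}: one must note that the lower bound $\|\lambda\| \geq |\lambda(f)|_\mathbb{K} / \|f\|_\infty$ holds verbatim for both norms when $\|f\|_\infty \neq 0$, so no subtlety arises — the functional $\mathbbm{1}_W$ gives a genuine lower bound of $1$ regardless of which convention is used. The other thing to be careful about is that $\mathbbm{1}_W$ really does define an element of the quotient $C_b(G,\mathbb{K})/\mathbb{K}$ with norm $\leq 1$: since the quotient norm of $[\mathbbm{1}_W]$ is $\inf_{c \in \mathbb{K}} \|\mathbbm{1}_W - c\|_\infty \leq \|\mathbbm{1}_W\|_\infty = 1$, and a functional in $(C_b(G,\mathbb{K})/\mathbb{K})^*$ evaluated on $[\mathbbm{1}_W]$ agrees with the corresponding functional in $C_b(G,\mathbb{K})^*$ evaluated on $\mathbbm{1}_W$. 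I expect the write-up to be short; the only genuine input is Van Danzig's Theorem together with the observation that in a non-discrete t.d.l.c. group every compact open subgroup properly contains a smaller one.
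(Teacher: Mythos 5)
Your argument is correct and rests on the same mechanism as the paper's proof: for distinct points of a t.d.l.c.\ group one takes the characteristic function of a separating clopen set as a norm-one test function, and the ultrametric forces the resulting evaluation to have norm $1$, giving a uniform lower bound on $\|\delta_g - \delta_{g_1}\|$ that rules out continuity. The only cosmetic difference is that you reduce to the identity via equivariance and produce the clopen set from Van Danzig's Theorem, whereas the paper directly partitions $G$ into clopen sets separating the four relevant points and thereby gets a uniform separation of all values of $J_\mathbb{K}$ at tuples with distinct coordinates; your handling of the two dual-norm conventions is likewise consistent with the paper's.
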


\begin{proof}
We will show that there exists a $c > 0$ such that whenever $g_0, g_1, h_0, h_1$ are all distinct, we have $\|J_\mathbb{K}(g_0, g_1) - J_\mathbb{K}(h_0, h_1)\|_{op} \geq c$, where we are identifying $(C_b(G, \mathbb{K})/\mathbb{K})^*$ with a subspace of $C_b(G, \mathbb{K})^*$. We choose this $c$ to be such that for all $T \in C_b(G, \mathbb{K})^*$
$$\|T\|_{op} \geq c \sup\limits_{\|f\|_\infty \leq 1} |T(f)|_\mathbb{K},$$
see the discussion in Subection \ref{preli Banach}. Then:
$$\|J_\mathbb{K}(g_0, g_1) - J_\mathbb{K}(h_0, h_1)\|_{op} \geq c \sup\limits_{\|f\|_\infty \leq 1} |(f(g_0) - f(g_1)) - (f(h_0) - f(h_1))|_\mathbb{K}.$$
By the ultrametric inequality, this supremum is at most 1. Since $G$ is t.d.l.c., we can find disjoint clopen sets $U_i, V_i$ containing the $g_i, h_i$ whose union is $G$. Define $f : G \to \mathbb{K}$ to be the characteristic function of $U_0$. Then $f$ is continuous, bounded by 1, and $(f(g_0) - f(g_1)) - (f(h_0) - f(h_1)) = 1$.
\end{proof}

Despite Lemma \ref{fail J}, the map $J_\mathbb{K}$ is still measurable. So if the resolution $L^\infty(G^{\bullet + 1}, E)$ computed $H^\bullet_{cb}(G, E)$, as in the real case \cite[Section 7]{Monod}, this would prove that if $H^\bullet_{cb}(G, E) = 0$ in positive degrees for all dual normed $\mathbb{K}[G]$-modules $E$, then there exists an invariant element of $L^\infty(G, \mathbb{K})^*$ sending $\mathbbm{1}_G$ to $1$, which restricts to a normed invariant $\mathbb{K}$-mean.

\begin{question}
If $H^n_{cb}(G, E) = 0$ for every dual normed $\mathbb{K}[G]$-module $E$ and all $n \geq 1$, is $G$ necessarily normed $\mathbb{K}$-amenable? Does the resolution $L^\infty(G^{\bullet + 1}, E)$ compute $H^\bullet_{cb}(G, E)$? 
\end{question}

We remark that if the resolution $L^\infty(G^{\bullet + 1}, E)$ computes $H^\bullet_{cb}(G, E)$, this would be fruitful to solve question \ref{q main2} as well. Indeed, the proof of Theorem \ref{luc 0} adapts to this complex, if one is equipped with a mean on $L^\infty(G, \mathbb{K})$. The existence of such a mean for a normed $\mathbb{K}$-amenable group could be proven, at least for spherically complete fields, similarly to \cite[Proposition 4.17]{Pier}: normed $\mathbb{K}$-amenable groups are $p^\mathbb{N}$-free for $p = \chi(\mathfrak{r})$, which makes tools from non-Archimedean harmonic analysis available \cite{NHA}.

\subsection{Profinite representations}

Let $\Gamma$ be a discrete group. In the study of the geometry of $\Gamma$ via its finite quotients, it is very useful to consider the \emph{profinite topology}. This is the group topology on $\Gamma$ defined by declaring the collection of finite-index subgroups of $\Gamma$ to be a neighbourhood basis of the identity. The subgroup $K \leq \Gamma$ defined as the intersection of all finite-index subgroups of $\Gamma$ is equal to the closure of the identity element. Thus, the profinite topology on $\Gamma$ is Hausdorff if and only if $K = \{ 1 \}$, that is, if and only if $\Gamma$ is \emph{residually finite}. If $\Gamma$ is countably infinite and residually finite, then it cannot be locally compact in this topology by a Baire category argument.

Although this topology does not necessarily enjoy the good topological properties that we have assumed so far, we will still be able to apply our previous result to prove a vanishing result for the left uniformly continuous bounded cohomology of $\Gamma$ with dual coefficients. This is done by considering the \emph{profinite completion} $\hat{\Gamma}$, which is the inverse limit of the finite quotients of $\Gamma$. In particular $\hat{\Gamma}$ is profinite, so compact and totally disconnected. There is a natural continuous homomorphism $\Gamma \to \hat{\Gamma}$ with kernel $K$ and dense image.

\begin{definition}
A \emph{profinite representation} of $\Gamma$ is a linear isometric action $\Gamma \times E \to E$ on a normed $\mathbb{K}$-vector space $E$, such that for all $x \in E$ the orbit map $\Gamma \to E : \gamma \mapsto \gamma \cdot x$ is continuous with respect to the profinite topology on $\Gamma$.
\end{definition}

Notice that if we are interested in trivial coefficients, the continuity requirement is automatically satisfied.

\begin{lemma}
Let $\Gamma \times E \to E$ be a linear isometric action, and let $x \in E$. Consider the profinite topology on $\Gamma$. Then the following are equivalent:
\begin{enumerate}
\item The orbit map $\Gamma \to E : \gamma \mapsto \gamma \cdot x$ is continuous.
\item The orbit map $\Gamma \to E : \gamma \mapsto \gamma \cdot x$ is right uniformly continuous (defined in the same way as left uniform continuity).
\end{enumerate}
Thus, any profinite representation $\Gamma \times E \to E$ extends to a representation $\hat{\Gamma} \times E \to E$.
\end{lemma}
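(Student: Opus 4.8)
The plan is to reduce both conditions (1) and (2) to continuity of the orbit map at the identity, exploiting that the action is by isometries, and then to obtain the extension to $\hat\Gamma$ via the universal property of $\hat\Gamma$ as a completion. Throughout, fix $x \in E$ and write $o_x \colon \Gamma \to E$, $o_x(\gamma) := \gamma \cdot x$, for the orbit map; recall that the finite-index subgroups of $\Gamma$ form a neighbourhood basis of $1$ for the profinite topology, and that, since every finite-index subgroup contains a finite-index \emph{normal} subgroup, we may take this basis to consist of finite-index normal subgroups $N \trianglelefteq \Gamma$.

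The implication $(2) \Rightarrow (1)$ is immediate. For $(1) \Rightarrow (2)$, the key point is the identity
$$\| o_x(\gamma u) - o_x(\gamma) \|_E = \| \gamma \cdot (u \cdot x - x) \|_E = \| u \cdot x - x \|_E ,$$
valid for all $\gamma, u \in \Gamma$ because the action is isometric. Hence the quantity that must be made small for right uniform continuity does not depend on the point $\gamma$, so right uniform continuity of $o_x$ is \emph{equivalent} to: for every $\varepsilon > 0$ there is a finite-index normal subgroup $N \trianglelefteq \Gamma$ with $\| u \cdot x - x \|_E < \varepsilon$ for all $u \in N$ --- which is precisely continuity of $o_x$ at $1$, and in particular is implied by (1). (This also shows that (1), (2) and continuity of $o_x$ at the identity are mutually equivalent; note that the analogue for \emph{left} uniform continuity would fail, since there the left-hand side above becomes $\|u \cdot (\gamma \cdot x) - \gamma \cdot x\|$, which genuinely depends on $\gamma$ --- this is why the lemma singles out right uniform continuity.)

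For the final assertion, observe first that the equivalence above forces $u \cdot x = x$ for every $u \in K := \bigcap_{[\Gamma:N]<\infty} N = \overline{\{1\}}$, so the orbit map factors through the dense subgroup $\iota(\Gamma) \cong \Gamma / K$ of the compact group $\hat\Gamma$, and is uniformly continuous for the uniformity on $\iota(\Gamma)$ induced from $\hat\Gamma$ (the profinite uniformity, which has a basis of normal subgroups and so is two-sided). Assuming $E$ is complete --- which holds in the case of interest, where $E$ is a dual normed $\mathbb{K}[G]$-module, hence a $\mathbb{K}$-Banach space (see Subsection~\ref{preli Banach}) --- the orbit map extends uniquely to a continuous map $\hat o_x \colon \hat\Gamma \to E$; concretely $\hat o_x(g) = \lim_i \gamma_i \cdot x$ for any net $(\gamma_i)$ in $\Gamma$ with $\iota(\gamma_i) \to g$, the net being Cauchy in $E$ by the displayed identity together with continuity of $o_x$ at $1$. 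Setting $g \cdot x := \hat o_x(g)$ defines a map $\hat\Gamma \times E \to E$: linearity in $x$ and preservation of the norm pass to the limit, the restriction along $\iota$ is the original action, the orbit maps $g \mapsto g \cdot x$ are continuous by construction, and the action axioms $1 \cdot x = x$ and $(gh)\cdot x = g\cdot(h\cdot x)$ follow by approximating $g$ and $h$ by elements of $\iota(\Gamma)$ and passing to the limit.

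I expect the main obstacle to be the verification of $(gh)\cdot x = g\cdot(h\cdot x)$ for $g,h \in \hat\Gamma$: this is a double-limit argument, and for the outer limit to interact correctly with the inner one one needs the convergence $\gamma_i \cdot y \to g \cdot y$ to be uniform in $y$ over balls of $E$, which is exactly where the isometric hypothesis is invoked a second time. A secondary, more pedestrian point is to ensure that the extended orbit map lands in $E$ and not merely in its completion; this is what forces completeness of $E$ into the argument, and it is automatic for the dual coefficient modules to which the lemma is applied.
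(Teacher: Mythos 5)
Your proof is correct and follows essentially the same route as the paper: the isometry identity reduces right uniform continuity to continuity at the identity (the paper phrases this by observing that $\{u \in \Gamma \mid \|u \cdot x - x\|_E < \varepsilon\}$ is a subgroup by the ultrametric inequality, open by continuity, hence of finite index), and the extension to $\hat\Gamma$ is then obtained from density of $\Gamma/K$ and uniform continuity. The only real difference is that you make explicit the completeness of $E$ needed to take the limit in the extension step, which the paper's proof leaves implicit; this is harmless for the intended application, where the coefficients are dual normed modules and hence Banach.
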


\begin{proof}
Suppose that the orbit map is continuous and let $\varepsilon > 0$. We need to show that there exists a finite-index subgroup $U \leq \Gamma$ such that for all $\gamma \in \Gamma$ we have $\| \gamma \cdot x - \gamma u \cdot x \|_E < \varepsilon$. We define $U := \{ u \in \Gamma \mid \| u \cdot x - x \|_E \} < \varepsilon$. This is a subgroup of $\Gamma$ by the ultrametric inequality, and being open by continuity it has finite index in $\Gamma$. Then for all $\gamma \in \Gamma$, using that the action is linear and isometric, we have $\| \gamma \cdot x - \gamma u \cdot x \|_E = \| x - u \cdot x \| < \varepsilon$, which concludes the proves the equivalence.

Now since $E$ is Hausdorff and $K$ is the closure of the identity element in $\Gamma$, the orbit map factors through $\Gamma / K$, which is the dense image of $\Gamma$ in the profinite completion $\hat{\Gamma}$. By density and uniform continuity, the orbit map extends uniquely to all of $\hat{\Gamma}$, giving a well-defined action $\hat{\Gamma} \times E \to E$.
\end{proof}

Any left uniformly continuous map $\Gamma^n \to E$ factors through $(\Gamma / K)^n$, and the corresponding map $(\Gamma /K)^n \to E$ extends uniquely to a continuous map $\hat{\Gamma}^n \to E$. Conversely, if $\hat{\Gamma}^n \to E$ is continuous, then the restriction to $(\Gamma/K)^n \leq \hat{\Gamma}^n$ gives a left uniformly continuous map $\Gamma^n \to E$, because $\hat{\Gamma}$ is compact. It follows that $LUC^\bullet_b(\Gamma, E) \cong C^\bullet_b(\hat{\Gamma}, E)$. Moreover, since the action of $\hat{\Gamma}$ on $E$ is a continuous extension of the action of $\Gamma$ on $E$, this isomorphism preserves invariant elements, and so $H^\bullet_{lucb}(\Gamma, E) \cong H^\bullet_{cb}(\hat{\Gamma}, E)$, where the left uniformly continuous bounded cohomology is defined as usual, even if $\Gamma$ is not locally compact, or even not Hausdorff with respect to the profinite topology.

Recall from Theorem \ref{comp} that the normed $\mathbb{K}$-amenability of a profinite group defined by an inverse system of finite groups is:
\begin{enumerate}
\item Equivalent to $p$ not dividing the order of these finite groups, if $\chi(\mathbb{K}) = p$.
\item Automatic if $\chi(\mathbb{K}) = \chi(\mathfrak{r}) = 0$.
\item Equivalent to a bound on the $p$-adic valuation of the orders of the finite groups in this inverse system, if $\chi(\mathbb{K}) = 0$ and $\chi(\mathfrak{r}) = p$.
\end{enumerate}
By definition of $\hat{\Gamma}$, the corresponding finite groups are precisely the finite quotients of $\Gamma$. Now the norm of a finite quotient of $\Gamma$ equals the norm of a Sylow subgroup thereof, which is a quotient of a finite-index subgroup of $\Gamma$, that is, a \emph{virtual quotient} of $\Gamma$. From Theorem \ref{main2}, we conclude:

\begin{corollary}
\label{prof rep}

Let $\Gamma$ be a group equipped with the profinite topology. Suppose that:
\begin{enumerate}
\item $\Gamma$ admits no virtual $p$-quotients, if $\chi(\mathbb{K}) = p$.
\item Make no further assumptions if $\chi(\mathbb{K}) = \chi(\mathfrak{r}) = 0$.
\item $\Gamma$ admits only finitely many virtual $p$-quotients, if $\chi(\mathbb{K}) = 0$ and $\chi(\mathfrak{r}) = p$.
\end{enumerate}
Then $H^n_{lucb}(\Gamma, E) = 0$ for any dual profinite representation $E$ over $\mathbb{K}$, and all $n \geq 1$.
\end{corollary}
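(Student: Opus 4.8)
The plan is to reduce everything to the compact group $\hat{\Gamma}$ and then invoke Theorem~\ref{luc 0}. By the discussion preceding the statement one has a natural isomorphism $H^\bullet_{lucb}(\Gamma, E) \cong H^\bullet_{cb}(\hat{\Gamma}, E)$, and since $E$ is a dual profinite representation, the $\Gamma$-action on it is the restriction of the continuous $\hat{\Gamma}$-action furnished by the preceding lemma; in particular $E$ is a dual normed $\mathbb{K}[\hat{\Gamma}]$-module, and $\hat{\Gamma}$ is profinite, hence a compact t.d.l.c.\ group. On a compact group left uniform continuity of bounded functions is automatic, so $H^\bullet_{cb}(\hat{\Gamma}, E) = H^\bullet_{lucb}(\hat{\Gamma}, E)$; thus Theorem~\ref{luc 0} will give $H^n_{cb}(\hat{\Gamma}, E) = 0$ for all $n \geq 1$ as soon as $\hat{\Gamma}$ is known to be normed $\mathbb{K}$-amenable. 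So it suffices to establish normed $\mathbb{K}$-amenability of $\hat{\Gamma}$ under each of the three hypotheses.

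Next I would unwind this via the profinite case of Theorem~\ref{comp}: writing $\hat{\Gamma}$ as the inverse limit of the finite quotients $(\Gamma/N)_N$ of $\Gamma$, the group $\hat{\Gamma}$ is normed $\mathbb{K}$-amenable if and only if $\sup_N \|\Gamma/N\|_\mathbb{K} < \infty$. By Theorem~\ref{finite}: if $\chi(\mathbb{K}) = \chi(\mathfrak{r}) = 0$ then $\|\Gamma/N\|_\mathbb{K} = 1$ for every $N$, and case~2 is immediate; if $\chi(\mathbb{K}) = p$ then $\|\Gamma/N\|_\mathbb{K} = 1$ when $p \nmid |\Gamma/N|$ and $\|\Gamma/N\|_\mathbb{K} = \infty$ otherwise; and if $\chi(\mathbb{K}) = 0$, $\chi(\mathfrak{r}) = p$, then $\|\Gamma/N\|_\mathbb{K} = |[\Gamma:N]|_p^{-1}$, which is the order of a Sylow $p$-subgroup of $\Gamma/N$.

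Finally I would match finite quotients of $\Gamma$ with virtual $p$-quotients. In one direction, a Sylow $p$-subgroup of $\Gamma/N$ has the form $U/N$ with $N \leq U \leq \Gamma$ of finite index, hence is a finite $p$-quotient of the finite-index subgroup $U$, i.e.\ a virtual $p$-quotient of $\Gamma$. Conversely, if a finite-index subgroup $U \leq \Gamma$ surjects onto a finite $p$-group $Q$ with kernel $L$, and $N$ is the normal core of $L$ in $\Gamma$, then $N$ is a finite-index normal subgroup of $\Gamma$, $U/N$ surjects onto $U/L \cong Q$, so $|Q|$ divides $|\Gamma/N|$ and a Sylow $p$-subgroup of $\Gamma/N$ has order at least $|Q|$. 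Hence $\sup_N \|\Gamma/N\|_\mathbb{K}$ is finite exactly when the orders of virtual $p$-quotients of $\Gamma$ are bounded---equivalently, since there are only finitely many $p$-groups of any given order, exactly when $\Gamma$ has only finitely many virtual $p$-quotients---and it equals $1$ exactly when $\Gamma$ has no non-trivial virtual $p$-quotient. Combining this with the three cases above shows that $\hat{\Gamma}$ is normed $\mathbb{K}$-amenable in each case, and the corollary follows. I do not expect a genuine obstacle here: all the analytic input is already contained in Theorem~\ref{luc 0} and in the isomorphism $H^\bullet_{lucb}(\Gamma,-) \cong H^\bullet_{cb}(\hat{\Gamma},-)$, and the only point needing care is the bookkeeping of the normal-core argument relating finite quotients of $\Gamma$ to its virtual $p$-quotients.
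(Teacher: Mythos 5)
Your proposal is correct and follows essentially the same route as the paper: identify $H^\bullet_{lucb}(\Gamma,E)\cong H^\bullet_{cb}(\hat\Gamma,E)$, deduce normed $\mathbb{K}$-amenability of $\hat\Gamma$ from Theorem \ref{comp}/Theorem \ref{finite} by relating Sylow $p$-subgroups of the finite quotients of $\Gamma$ to its virtual $p$-quotients, and conclude by the vanishing Theorem \ref{luc 0} (valid for the compact group $\hat\Gamma$). The normal-core converse you include is not needed for the stated implication, but it is harmless.
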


\begin{example}
This vanishing result applies to a periodic group in which no element has order $p$. For instance, it applies to Grigorchuk's group, for all $p > 2$. Notice that Grigorchuk's group is residually finite.
\end{example}

\pagebreak

\section{More results on bounded cohomology}
\label{BC}

By now the reader should be convinced that normed $\mathbb{K}$-amenability is a very restrictive condition. In light of the results proven in Section \ref{smain2}, this suggests that there should be plenty of groups with interesting bounded cohomology with coefficients in a normed $\mathbb{K}$-vector space. In this section and the next we try to verify this intuition, focusing on trivial $\mathbb{K}$ coefficients, except in Subsection \ref{ss_QZ} where we can say something about non-trivial coefficients as well. In degree $2$, the \emph{exact} bounded cohomology, which is described by \emph{quasimorphisms}, can be dealt with in much more detail, which will be done in the next section. \\

Our analysis of bounded cohomology will be done mainly in terms of the \emph{comparison map}, from bounded to ordinary continuous cohomology, which will be defined shortly. An injectivity criterion for this map in degree $2$ will be proven in Subsection \ref{ss_QZ}. The main result of this section is Proposition \ref{surj_comp}, giving a criterion for surjectivity in the discrete case for any degree.

\subsection{Bar resolution, comparison map and duality}
\label{preli BC}

Let $E$ be a normed $\mathbb{K}[G]$-module. In our definition of bounded cohomology, we considered invariant functions $f \in C^n_b(G, E)^G$. Any such function is determined by the values it takes on $(n+1)$-tuples with first coordinate 1, and any such element can be uniquely written as $(1, g_1, g_1 g_2, \ldots, g_1 \cdots g_n)$. Therefore defining $\overline{C}^0_b(G, E) := E$ and $\overline{C}^n_b(G, E) := C_b(G^n, E)$ for $n \geq 1$, we have an isometric isomorphism
$$C^n_b(G, E)^G \to \overline{C}^n_b(G, E) : f \mapsto \left[ (g_1, \ldots, g_n) \mapsto f(1, g_1, g_1 g_2, \ldots, g_1 \cdots g_n) \right].$$
Under this isomorphism, the coboundary map becomes: $\delta^n : \overline{C}^n_b(G, E) \to \overline{C}^{n+1}_b(G, E)$ defined by $\delta^0 v(g) = g \cdot v - v$, and for $n \geq 1$:
$$\delta^n f (g_1, \ldots, g_{n+1}) = g_1 \cdot f(g_2, \ldots, g_{n+1}) + $$
$$+ \left( \sum\limits_{i = 1}^n (-1)^i f(g_1, \ldots, g_i g_{i+1}, \ldots, g_{n+1}) \right) + (-1)^{n+1} f(g_1, \ldots, g_n).$$
For instance, in degree two, $\delta^2 f (g, h) = g \cdot f(h) - f(gh) + f(g)$.

\begin{definition}
The cochain complex $(\overline{C}^\bullet_b(G, E), \delta^\bullet)$, where $\delta^\bullet$ is as above, is called the \emph{bar resolution} of $G$.
\end{definition}

We denote by $\overline{Z}^\bullet_b(G, E)$ the cocycles of this cochain complex and by $\overline{B}^\bullet_b(G, E)$ its coboudaries. By the discussion above, the cohomology $\overline{Z}^\bullet_b(G, E)/ \overline{B}^\bullet_b(G, E)$ is canonically isomorphic to $H^\bullet_{cb}(G, E)$. The same can be done in ordinary cohomology, defining $\overline{C}^\bullet(G, E) = \{ f : G^n \to E \mid f \text{ is continuous}\}$ with the same boundary map, which computes the ordinary \emph{continuous cohomology} $H_c^\bullet(G, E)$. \\

The inclusion $\overline{C}^n_b(G, E) \to \overline{C}^n(G, E)$ is a chain map, thus it induces a natural map map $c^\bullet : H^\bullet_{cb}(G, E) \to H^\bullet_c(G, E)$ from continuous bounded cohomology to ordinary continuous cohomology.

\begin{definition}
The map $c^\bullet$ is called the \emph{comparison map}. Its kernel is the \emph{exact continuous bounded cohomology} and is denoted by $EH^\bullet_{cb}(G, E)$.
\end{definition}

Representatives of exact classes in degree $n$ are bounded cocycles that can be expressed as the coboundary of a (not necessarily bounded) cochain in degree $(n - 1)$. We call $f \in \overline{C}^{n-1}(G, E)$ a \emph{quasicocycle} if $\delta^{n-1} f$ is bounded, in which case $\delta^{n-1} f$ represents a class in $EH^n_{cb}(G, E)$. The space of quasicocycles is denoted by $QZ^{n-1}(G, E)$. The coboundary of a quasicocycle represents the trivial class in bounded cohomology if and only if the quasicocycle is \emph{trivial}, meaning it can be expressed as a sum of a cocycle and a bounded cochain. Therefore $\delta^{n-1}$ induces a vector space isomorphism
$$QZ^{n-1}(G, E)/ \left( \overline{Z}^{n-1}(G, E) + \overline{C}^{n-1}_b(G, E) \right) \cong EH^n_{cb}(G, E).$$
When the coefficient module is $\mathbb{K}$ equipped with the trivial $G$-action, we call 1-quasicocycles \emph{quasimorphisms}, and denote their space simply by $Q(G, \mathbb{K})$. The quantity $\|\delta^1 f \|_\infty < \infty$ is called the \emph{defect} of the quasimorphism $f$, and is denoted by $D(f)$. The isomorphism above becomes:
$$Q(G, \mathbb{K}) / \left( Hom(G, \mathbb{K}) + \overline{C}^1_b(G, \mathbb{K}) \right) \cong EH^2_{cb}(G, \mathbb{K}).$$

Here is another interpretation of bounded cohomology from the point of view of duality. We will use this point of view only in Subsection \ref{ss_surj}, so let us restrict to the case of a \emph{discrete} group $G$ and trivial $\mathbb{K}$ coefficients. Denote by $\overline{C}_n(G, \mathbb{K})$ the free $\mathbb{K}$-vector space with basis $G^n$, and define $\partial_n$ by
$$\partial_n : \overline{C}_n(G, \mathbb{K}) \to \overline{C}_{n-1}(G, \mathbb{K}) : (g_1, \ldots, g_n) \mapsto (g_2, \ldots, g_n) +$$
$$+ \left( \sum\limits_{i = 1}^{n-1} (-1)^i (g_1, \ldots, g_i g_{i+1}, \ldots, g_n) \right) + (-1)^n (g_1, \ldots, g_{n-1}).$$
This defines a chain complex $(\overline{C}_\bullet(G, \mathbb{K}), \partial_\bullet)$, whose algebraic dual is precisely $(\overline{C}^\bullet(G, \mathbb{K}), \delta^\bullet)$. We denote the cycles by $Z_n(G, \mathbb{K})$ and the boundaries by $B_n(G, \mathbb{K})$.

Moreover, let us consider the natural $\ell^\infty$ norm on $\overline{C}_n(G, \mathbb{K})$ (compare to the real case \cite[Chapter 6]{Frig}, in which the natural norm is the $\ell^1$-norm). This way it becomes a normed $\mathbb{K}$-vector space, and $\partial_n$ is bounded with respect to this norm. Then $\overline{C}^n_b(G, \mathbb{K})$ is precisely the topological dual of $\overline{C}_n(G, \mathbb{K})$ with respect to this norm.

\subsection{Degrees 0 and 1}

Let $G$ be a topological group (not necessarily t.d.l.c., or even locally compact). By definition of the bar resolution, $H^0_{cb}(G, E) = \{ x \in E \mid g \cdot x = x \text{ for all } g \in G \} = E^G$ for any group $G$ and any normed $p$-adic vector space $E$. \\

In degree 1 we see the first striking difference with the real setting. Let us restrict to trivial $\mathbb{K}$ coefficients, so $H^1_{cb}(G, \mathbb{K}) \cong \overline{Z}^1_b(G, \mathbb{K}) = \{ f : G \to E \text{ bounded} \mid \delta^1 f = 0 \}$ is the space of $\mathbb{K}$-valued bounded homomorphisms of $G$. In the real case, it is easy to see that no homomorphism onto a normed vector space can be bounded, so $H^1_{cb}(G, \mathbb{R}) = 0$ for any group $G$. However, in the non-Archimedean setting, there can be plenty of bounded homomorphisms. \\

First, we make some reductions. Any homomorphism $f : G \to \mathbb{K}$ factors through the abelianization $Ab(G) = G/[G, G]$. If $f$ is continuous, then $f(\overline{[G, G]}) \subseteq \overline{f([G, G])} = \{ 0 \}$, and so $f$ factors through $G/\overline{[G, G]}$, the largest abelian Hausdorff quotient of $G$. Now let $G$ be abelian. If moreover $\chi(\mathbb{K}) = 0$, then the additive group of $\mathbb{K}$ is torsion-free, and a homomorphism $f : G \to \mathbb{K}$ factors through $G/Tor(G)$, where $Tor(G)$ is the torsion subgroup of $G$, and by the same argument as before it also factors through $G/\overline{Tor(G)}$, the largest torsion-free Hausdorff quotient of $G$. Finally, up to scalar, any bounded homomorphism $G \to \mathbb{K}$ takes values in the ring of integers $\mathfrak{o}$. All together, the study of $H^1_{cb}(G, \mathbb{K})$ for arbitrary groups $G$ reduces to the study of continuous homomorphisms $G \to \mathfrak{o}$ where $G$ is an abelian group; if $\chi(\mathbb{K}) = 0$ we may even reduce to $G$ being torsion-free. The property of being t.d.l.c. is preserved under this reduction. \\

If $G$ is discrete and $\mathbb{K} = \mathbb{Q}_p$, so $\mathfrak{o} = \mathbb{Z}_p$, the space of homomorphisms $G \to \mathbb{Z}_p$ is called the space of \emph{$p$-adic functionals on $G$}, and is studied extensively in \cite{p-funct}. For a general locally compact abelian group, some theory on continuous homomorphisms $G \to \mathbb{Z}_p$ is developed in \cite{char}. \\

Here are some simple cases: 

\begin{example}
Let $G = \mathbb{Z}^n$, then $Hom(\mathbb{Z}^n, \mathfrak{o}) \cong Hom(\mathbb{Z}, \mathfrak{o})^n \cong \mathfrak{o}^n$. In light of the previous discussion, if $\chi(\mathbb{K}) = 0$ this simple observation gives a complete description of $H^1_{cb}(G, \mathbb{K})$ for any discrete group $G$ with finitely generated abelianization; namely if $Ab(G)$ has rank $n$, then $H^1_b(G, \mathbb{K}) \cong \mathbb{K}^n$.
\end{example}

\begin{example}
If $\chi(\mathbb{K}) = p > 0$, then we should also take into account the space of $\mathfrak{o}$-valued homomorphism of a finite abelian group. Notice that each element in the additive group $\mathfrak{o}$ has order $p$, because $\mathfrak{o}$ is an $\mathbb{F}_p$-vector space. So if $G$ is a finite $\ell$-group, where $\ell \neq p$, then $Hom(G, \mathfrak{o}) = \{ 0 \}$. Therefore it suffices to consider $\mathfrak{o}$-valued homomorphisms of a finite abelian $p$-group. Any such group may be decomposed as a sum of cyclic $p$-groups, and if $P$ is a cyclic $p$-group, then $Hom(P, \mathfrak{o}) \cong \mathfrak{o}$: any homomorphism is determined by the image of the generator, and any choice of an element of $\mathfrak{o}$ gives a well-defined homomorphism.

This discussion now gives a complete description of $H^1_b(G, \mathbb{K})$ for any discrete group $G$ with finitely generated abelianization. Namely, if $Ab(G)$ has rank $n$, and the $p$-Sylow subgroup of its torsion subgroup is a sum of $k$ cyclic $p$-groups, then $H^1_b(G, \mathbb{K}) \cong \mathbb{K}^{n + k}$.
\end{example}


\begin{example}
Let $\mathbb{K}$ be such that $\chi(\mathfrak{r}) = p > 0$, so $|p|_\mathbb{K} < 1$. Let $G$ be a \emph{$p$-divisible group}, that is an abelian discrete group $G$ such that for any $g \in G$ there exists $h \in G$ such that $pg = h$. Then $H^1_b(G, \mathbb{K}) = 0$. Indeed, repeatedly dividing by $p$ the image of a non-trivial homomorphism gives elements of larger and larger norm, so any non-trivial quasimorphism is unbounded.

Any divisible group is $p$-divisible (see Subsection \ref{ss_div}), so for instance the Pr\"ufer groups and the additive groups of $\mathbb{Q}$-vector spaces are $p$-divisible. Other examples include, for a prime $\ell \neq p$, the groups $\mathbb{Z}_\ell$, and the additive groups of vector spaces over fields of characteristic $\ell$.
\end{example}

\subsection{1-quasicocycles of compactly generated groups}
\label{ss_QZ}

In the real setting, the quasimorphisms of a finitely generated group contain plenty of information on the geometry of the group, the most celebrated example being Bavard Duality, which relates it to stable commutator length \cite{scl}. Large classes of finitely generated groups admit plenty of non-trivial real-valued quasimorphisms, for instance a non-abelian free group \cite{Brooks} \cite{Rolli}. More generally, an acylindrically hyperbolic group $G$ has uncountably-dimensional second exact bounded cohomology with coefficients in any $\mathbb{R}[G]$-module \cite{Osin}. When the coefficient is non-Archimedean, however, the situation is very different. Once again $G$ is a topological group, not necessarily t.d.l.c., or even locally compact.

\begin{proposition}
\label{QZ comp gen}

Let $G$ be compactly generated, $E$ a normed $\mathbb{K}[G]$-module. Then any 1-quasicocycle $f : G \to E$ is bounded.
\end{proposition}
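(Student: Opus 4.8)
The plan is to exploit the ultrametric inequality to kill the accumulation of error along a word, which is precisely what fails over $\mathbb{R}$ and explains why nontrivial real quasimorphisms exist. First I would fix a compact generating set of $G$ and, after replacing it by its union with its inverse and with $\{1\}$, assume we have a symmetric compact set $K \ni 1$ with $G = \bigcup_{n \geq 1} K^n$. Since $f$ is continuous (being an element of $\overline{C}^1(G,E)$) and $K$ is compact, the image $f(K)$ is a bounded subset of the metric space $E$, so $C := \sup_{k \in K} \|f(k)\|_E < \infty$. Write also $D := \|\delta^1 f\|_\infty < \infty$, the defect of the quasicocycle.

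Next I would unwind the quasicocycle identity $\delta^1 f(g,h) = g \cdot f(h) - f(gh) + f(g)$, which rearranges to
\[
f(gh) = g \cdot f(h) + f(g) - \delta^1 f(g,h) \qquad \text{for all } g,h \in G.
\]
Given $g \in G$, write $g = k_1 k_2 \cdots k_n$ with all $k_i \in K$, and set $h_i := k_i k_{i+1} \cdots k_n$, so that $h_1 = g$, $h_n = k_n$ and $h_i = k_i h_{i+1}$. Applying the identity with $g = k_i$ and $h = h_{i+1}$ gives $f(h_i) = k_i \cdot f(h_{i+1}) + f(k_i) - \delta^1 f(k_i, h_{i+1})$. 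Since $G$ acts on $E$ by isometries, $\|k_i \cdot f(h_{i+1})\|_E = \|f(h_{i+1})\|_E$, so the ultrametric inequality yields
\[
\|f(h_i)\|_E \leq \max\bigl\{\, \|f(h_{i+1})\|_E,\ \|f(k_i)\|_E,\ D \,\bigr\} \leq \max\bigl\{\, \|f(h_{i+1})\|_E,\ C,\ D \,\bigr\}.
\]
Descending induction on $i$ then gives $\|f(g)\|_E = \|f(h_1)\|_E \leq \max\{\|f(k_n)\|_E, C, D\} \leq \max\{C,D\}$. As $g$ was arbitrary, $\|f\|_\infty \leq \max\{C,D\} < \infty$, which is the claim.

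There is no real obstacle here: the only point requiring care is that the bound $\max\{C,D\}$ is genuinely independent of the word length $n$, which is exactly where the ultrametric inequality — in $E$ and in the isometric $G$-action — is essential, since over $\mathbb{R}$ the same telescoping would only produce a bound growing linearly in $n$. One should also note the degenerate case $g = 1$ is covered (e.g.\ directly $\|f(1)\|_E \leq D$ from $\delta^1 f(1,1) = f(1)$), and that the conclusion is insensitive to the choice of compact generating set.
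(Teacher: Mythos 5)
Your proof is correct and is essentially the paper's argument: both rest on the rearranged cocycle identity, the isometry of the $G$-action, and the ultrametric inequality to obtain the word-length-independent bound $\max\{\|\delta^1 f\|_\infty, \|f|_K\|_\infty\}$, the only cosmetic difference being that the paper states the inequality $\|f(g_1\cdots g_n)\|_E \leq \max\{\|\delta^1 f\|_\infty, \|f(g_i)\|_E\}$ for a general product and inducts on it, while you peel off one generator at a time. No gaps.
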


\begin{proof}
Let $f$ be a 1-quasicocycle, so that $\delta^1 f : G^2 \to E$ is bounded. We start with the inequality:
$$\|f(gh)\|_E \leq \max \{ \|g \cdot f(h) - f(gh) + f(g)\|_E, \|g \cdot f(h)\|_E, \|f(g)\|_E \} \leq $$
$$ \leq \max \{ \|\delta^1 f\|_\infty, \|f(g)\|_E, \|f(h)\|_E \}.$$
By induction it follows that
$$\|f(g_1 \cdots g_n)\|_E \leq \max \{ \|\delta^1 f\|_\infty, \|f(g_i)\|_E \mid 1 \leq i \leq n \}.$$
Let $K$ be a compact symmetric generating set for $G$. By continuity $f|_K$ is bounded. Therefore for all $g \in G$, if $g = g_1 \cdots g_n$ for $g_i \in K$, by the previous inequality we conclude
$$\|f(g)\|_E \leq \max\{ \|\delta^1 f\|_\infty, \|f|_K\|_\infty \}.$$
Since the bound is uniform, $f$ is bounded.
\end{proof}

\begin{remark}
The key insight in this proof is that the quasicocycle inequality holds for arbitrary sums with a uniform bound, thanks to the ultrametric inequality. In the real case this is false, and in fact it is what gives \emph{homogeneous} quasimorphisms such an important role, and allows the connection with stable commutator length \cite{scl}.
\end{remark}

In particular, any finitely generated group (seen as a discrete group) admits only bounded 1-quasicocycles.

\begin{corollary}
\label{inj comp}

Let $G$ be a compactly generated group. Then for any normed $\mathbb{K}[G]$-module $E$, the comparison map $c^2: H^2_{cb}(G, E) \to H^2_c(G, E)$ is injective.
\end{corollary}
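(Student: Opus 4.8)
The plan is to reduce the statement to the vanishing of the kernel of $c^2$, which by definition is the exact continuous bounded cohomology $EH^2_{cb}(G,E)$. Recall from Subsection \ref{preli BC} that $\delta^1$ induces an isomorphism
$$QZ^1(G,E)\big/\left(\overline{Z}^1(G,E) + \overline{C}^1_b(G,E)\right) \;\cong\; EH^2_{cb}(G,E),$$
where $QZ^1(G,E)$ is the space of $1$-quasicocycles, i.e.\ continuous maps $f : G \to E$ such that $\delta^1 f$ is bounded, and the subspace $\overline{Z}^1(G,E) + \overline{C}^1_b(G,E)$ consists of the \emph{trivial} ones (sums of a cocycle and a bounded cochain). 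So it is enough to show that every $1$-quasicocycle of $G$ with values in $E$ is trivial.

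The key input is Proposition \ref{QZ comp gen}: since $G$ is compactly generated, every $1$-quasicocycle $f : G \to E$ is in fact bounded, so $f \in \overline{C}^1_b(G,E)$. Writing $f = 0 + f$ with the zero map viewed in $\overline{Z}^1(G,E)$ and $f$ itself in $\overline{C}^1_b(G,E)$ exhibits $f$ as trivial. Hence $QZ^1(G,E) \subseteq \overline{C}^1_b(G,E) \subseteq \overline{Z}^1(G,E) + \overline{C}^1_b(G,E)$, the quotient above vanishes, and therefore $EH^2_{cb}(G,E) = 0$. Since $EH^2_{cb}(G,E) = \ker\!\left(c^2 : H^2_{cb}(G,E) \to H^2_c(G,E)\right)$, this gives the claimed injectivity.

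There is no real obstacle here beyond Proposition \ref{QZ comp gen} itself, which is the place where compact generation and the ultrametric are used: the quasicocycle inequality iterates over arbitrarily long products with a single uniform bound $\|\delta^1 f\|_\infty$, and $f$ is bounded on a compact generating set by continuity, so these two bounds combine to a global bound on $f$. I would simply cite that proposition and record the two-line deduction above.
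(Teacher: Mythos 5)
Your proof is correct and follows exactly the route the paper takes: identify $\ker(c^2)$ with $EH^2_{cb}(G,E) \cong QZ^1(G,E)/(\overline{Z}^1(G,E)+\overline{C}^1_b(G,E))$ and invoke Proposition \ref{QZ comp gen} to see that every $1$-quasicocycle is bounded, hence trivial. The paper's proof is a one-line version of the same deduction.
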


\begin{proof}
The kernel of the comparison map is the space of 1-quasicocycles modulo true cocycles and bounded functions.
\end{proof}

We will look more closely at quasimorphisms in the next section, where we will also prove triviality results for non-compactly generated groups.

\subsection{Surjectivity of the comparison map}
\label{ss_surj}

Let $G$ be a discrete group, $\mathbb{K}$ a non-Archimedean valued field. In this subsection we investigate the surjectivity of the comparison map $c^n : H^n_b(G, \mathbb{K}) \to H^n(G, \mathbb{K})$. We will prove that a finiteness condition on $G$ (depending on the field $\mathbb{K}$) implies that the map is surjective. This applies to a wide class of examples. For instance, it implies that $c^2$ is an isomorphism for any finitely presented group. For comparison, in the real case, the surjectivity of the comparison map in degree 2 with any normed module characterizes hyperbolic groups among finitely presented groups \cite{min1} \cite{min2}. This result will also apply to more general trivial coefficients: this will be useful when investigating the injectivity of the comparison map in the next section. We refer to \cite{Brown} for the basics of group cohomology that will be used. \\

We start by assuming that $\chi(\mathbb{K}) = 0$, in which case $\mathbb{Z}$ is contained in $\mathfrak{o}$. Since $\mathbb{K}$ has characteristic $0$ its additive group is divisible, so by Corollary \ref{UCT div} the Universal Coefficient Theorem in Cohomology gives an isomorphism
$$H^n(G, \mathbb{K}) \cong Hom_{\mathbb{Z}}(H_n(G, \mathbb{Z}), \mathbb{K}).$$
Explicitly, the map $H^n(G, \mathbb{K}) \to Hom_{\mathbb{Z}}(H_n(G, \mathbb{Z}), \mathbb{K})$ is defined in terms of the duality point of view introduced at the end of Subsection \ref{preli BC}. Given $f \in \overline{C}^n(G, \mathbb{K})$, we can see it as a linear map $f : \overline{C}_n(G, \mathbb{K}) \to \mathbb{K}$, which restricts to a homomorphism $f : \overline{Z}_n(G, \mathbb{Z}) \to \mathbb{K}$. If we started with $f \in \overline{Z}^n(G, \mathbb{K})$, then $f(\overline{B}_n(G, \mathbb{Z})) \subseteq f(\overline{B}_n(G, \mathbb{K})) = 0$, which implies that $f$ descends to a well-defined map $H_n(G, \mathbb{Z}) \to \mathbb{K}$. Finally, two cocycles that differ by a coboundary define the same map.

\begin{lemma}
\label{UCTb Z}
Let $\mathbb{K}$ be a non-Archimedean valued field of characteristic $0$. Then the isomorphism $H^n(G, \mathbb{K}) \cong Hom_{\mathbb{Z}}(H_n(G, \mathbb{Z}), \mathbb{K})$ restricts to an isomorphism
$$c^n(H^n_b(G, \mathbb{K})) \cong \{ f \in Hom_{\mathbb{Z}}(H_n(G, \mathbb{Z}), \mathbb{K}) \mid \| f \|_\infty < \infty \}.$$
\end{lemma}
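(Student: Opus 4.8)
The isomorphism $H^n(G,\mathbb{K})\cong \Hom_\mathbb{Z}(H_n(G,\mathbb{Z}),\mathbb{K})$ is already described explicitly just above the statement: a cocycle $f\in\overline{Z}^n(G,\mathbb{K})$, viewed as a linear functional on $\overline{C}_n(G,\mathbb{K})$, is restricted to $\overline{Z}_n(G,\mathbb{Z})$ and then descends to $H_n(G,\mathbb{Z})$. So the plan is simply to track what ``bounded'' means on both sides of this correspondence, using the $\ell^\infty$-norm on $\overline{C}_n(G,\mathbb{K})$ and the dual (operator) norm on $\overline{C}^n_b(G,\mathbb{K})=\overline{C}_n(G,\mathbb{K})^*$ recalled at the end of Subsection \ref{preli BC}. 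Concretely, I would prove the two inclusions separately.

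\textbf{First inclusion.} Suppose a class in $H^n(G,\mathbb{K})$ lies in the image of $c^n$, i.e. it is represented by a bounded cocycle $f\in\overline{C}^n_b(G,\mathbb{K})$ with $\|f\|_\infty=:C<\infty$ (here $\|f\|_\infty$ is the operator norm of $f$ as a functional, equivalently $\sup_{G^n}|f|_\mathbb{K}$ since we may use either norm up to equivalence). The associated homomorphism $\varphi:H_n(G,\mathbb{Z})\to\mathbb{K}$ sends the class of a cycle $z=\sum a_i(g_1^i,\ldots,g_n^i)$ with $a_i\in\mathbb{Z}$ to $f(z)=\sum a_i f(g_1^i,\ldots,g_n^i)$. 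By the ultrametric inequality $|f(z)|_\mathbb{K}\le \max_i|a_i f(g_1^i,\ldots,g_n^i)|_\mathbb{K}\le C$, because $|a_i|_\mathbb{K}\le 1$ for every integer $a_i$ (as $\mathbb{Z}\subseteq\mathfrak{o}$). Hence $\|\varphi\|_\infty\le C<\infty$, so the image of $c^n$ lands inside the claimed subspace of bounded homomorphisms. One subtlety to address: $\varphi$ depends only on the class of $z$, and different cycles representing the same homology class have representatives of arbitrarily large norm, but the bound just obtained is on the value $|\varphi([z])|_\mathbb{K}$, which is well-defined, so there is no issue.

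\textbf{Second inclusion (the main point).} Conversely, given a \emph{bounded} homomorphism $\varphi:H_n(G,\mathbb{Z})\to\mathbb{K}$, i.e. with $\sup\{|\varphi([z])|_\mathbb{K}\}<\infty$ over homology classes $[z]$ — equivalently $\varphi$ extends to a bounded functional on $Z_n(G,\mathbb{Z})$ with the $\ell^\infty$-norm — I must produce a \emph{bounded} cocycle $f\in\overline{Z}^n_b(G,\mathbb{K})$ whose associated homomorphism is $\varphi$. The natural route is: (i) extend $\varphi$ from $H_n(G,\mathbb{Z})$ to a bounded functional on the $\mathbb{Z}$-subcomplex, or rather on $\overline{Z}_n(G,\mathbb{K})$; (ii) extend further to a bounded functional on all of $\overline{C}_n(G,\mathbb{K})$, which is exactly a cocycle condition dualized — a bounded $\mathbb{K}$-linear functional on $\overline{C}_n(G,\mathbb{K})$ vanishing on $\overline{B}_n(G,\mathbb{K})$ is precisely a bounded cocycle; (iii) check the resulting class maps to the class of $\varphi$ under the UCT isomorphism. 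Step (ii) requires extending a bounded functional from a subspace to the whole space preserving boundedness: over a spherically complete field this is Hahn–Banach, but the statement is for arbitrary $\mathbb{K}$. The way around this is structural rather than analytic: one uses that $\overline{C}_n(G,\mathbb{K})$ splits as $\overline{Z}_n(G,\mathbb{K})\oplus W$ where $\partial_n$ maps $W$ isomorphically onto $\overline{B}_{n-1}(G,\mathbb{K})$, and moreover $\overline{Z}_n(G,\mathbb{K})=\overline{B}_n(G,\mathbb{K})\oplus(\text{a lift of }H_n(G,\mathbb{K}))$ — but crucially the splittings can be chosen compatibly with the $\mathbb{Z}$-structure and the free basis $G^n$ so that the norms are controlled, using that everything in sight is a \emph{free} $\mathbb{K}$-module with a distinguished basis consisting of tuples of norm $\le 1$. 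So the hard part will be setting up these splittings with uniform norm bounds — equivalently, showing that the short exact sequences of chain groups split with bounded splitting maps. Once that is in place, extend $\varphi$ by zero on $\overline{B}_n(G,\mathbb{K})$ and on $W$; the result is a bounded cocycle. Finally, step (iii) is a routine diagram-chase through the explicit description of the UCT map, and a verification that the subspace of bounded homomorphisms of $H_n(G,\mathbb{Z})$ is the same whether one uses the $\ell^\infty$-norm on integral cycles or on $\mathbb{K}$-cycles, which follows because integral cycles span $\overline{Z}_n(G,\mathbb{K})$ over $\mathbb{K}$ and norms are submultiplicative under scalars. I expect the splitting/norm-control step to be the only real obstacle; the rest is bookkeeping with the bar resolution.
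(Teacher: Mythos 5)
Your first inclusion coincides with the paper's argument and is fine: bounded cocycle, integral coefficients lie in $\mathfrak{o}$, ultrametric inequality. The problem is the converse direction, where your plan has a genuine gap: you reduce everything to producing norm-controlled splittings of the $\mathbb{K}$-chain complex (a bounded complement of $\overline{Z}_n(G,\mathbb{K})$ in $\overline{C}_n(G,\mathbb{K})$, plus a bounded lift of homology), you correctly note that Hahn--Banach is unavailable for non-spherically-complete $\mathbb{K}$, and then you declare this ``the only real obstacle'' and leave it unproven. As written, the key step of the lemma is therefore missing, and it is not a routine one: bounded complements of subspaces of non-Archimedean normed spaces are not automatic, so ``set up the splittings with uniform norm bounds'' is exactly the kind of statement that needs an actual construction. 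A secondary soft spot: your claim that bounded image on $H_n(G,\mathbb{Z})$ already controls the operator norm of a $\mathbb{K}$-linear extension ``because integral cycles span $\overline{Z}_n(G,\mathbb{K})$ and norms are submultiplicative under scalars'' is too quick; spanning over $\mathbb{K}$ gives no control on the coefficients appearing in a decomposition of a norm-one cycle (one would need, e.g., that the unit ball of $\overline{Z}_n(G,\mathbb{K})$ is the $\mathfrak{o}$-span of integral cycles, which requires an argument such as flatness of $\mathfrak{o}$ over $\mathbb{Z}$).

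The paper avoids all of this by never leaving the integral level, which is where your passing remark about choosing splittings ``compatibly with the $\mathbb{Z}$-structure'' should have led you. Lift the bounded-image homomorphism $\varphi$ to a group homomorphism $f \colon \overline{Z}_n(G,\mathbb{Z}) \to \mathbb{K}$ vanishing on $\overline{B}_n(G,\mathbb{Z})$. Since $\overline{B}_{n-1}(G,\mathbb{Z})$ is a subgroup of a free abelian group, it is free, so the sequence $0 \to \overline{Z}_n(G,\mathbb{Z}) \to \overline{C}_n(G,\mathbb{Z}) \to \overline{B}_{n-1}(G,\mathbb{Z}) \to 0$ splits as abelian groups, with no norm considerations whatsoever; extend $f$ by zero on a complement $Y_n$. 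The extension has the same image as $f$ on cycles, hence bounded image, and in particular its values on the basis $G^n$ are bounded; this already exhibits a bounded cochain, which is a cocycle because $\partial_{n+1}$ of a basis element is an integral boundary, on which $f$ vanishes, and which maps to $\varphi$ under the explicit description of the UCT map. So no bounded $\mathbb{K}$-linear splitting is needed at all: boundedness comes for free from the fact that integral chains have norm at most $1$. To repair your write-up you should either carry out this integral splitting argument or genuinely construct the $\mathbb{K}$-splittings you invoke; in the latter case the cleanest construction is anyway obtained by tensoring the integral splitting up to $\mathbb{K}$, i.e.\ it collapses to the paper's proof.
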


\begin{proof}
If $f \in \overline{Z}^n_b(G, \mathbb{K})$, then the corresponding homomorphism $\overline{Z}_n(G, \mathbb{Z}) \to \mathbb{K}$ has bounded image by the ultrametric inequality and the boundedness of $\mathbb{Z}$ in $\mathbb{K}$. Thus the induced homomorphism $H_n(G, \mathbb{Z}) \to \mathbb{K}$ also has bounded image.

Conversely, consider a homomorphism $H_n(G, \mathbb{Z}) \to \mathbb{K}$ with bounded image. By lifting to $\overline{Z}_n(G, \mathbb{Z})$, we obtain a map $f : \overline{Z}_n(G, \mathbb{Z}) \to \mathbb{K}$ with bounded image that vanishes on $\overline{B}_n(G, \mathbb{Z})$. Now $\overline{B}_{n-1}(G, \mathbb{Z})$ is a subgroup of the free abelian group $\overline{C}_{n-1}(G, \mathbb{Z})$, so it is itself free. Therefore the exact sequence
$$0 \to \overline{Z}_n(G, \mathbb{Z}) \hookrightarrow \overline{C}_n(G, \mathbb{Z}) \xrightarrow{\partial_n} \overline{B}_{n-1}(G, \mathbb{Z}) \to 0$$
splits, and we obtain a decomposition $\overline{C}_n(G, \mathbb{Z}) = \overline{Z}_n(G, \mathbb{Z}) \bigoplus Y_n$, for some subgroup $Y_n \leq \overline{C}_n(G, \mathbb{Z})$ intersecting $\overline{Z}_n(G, \mathbb{Z})$ trivially. This allows to extend $f$ to a homomorphism $\overline{C}_n(G, \mathbb{Z}) \to \mathbb{K}$ by declaring $f(Y_n) = 0$. Then $f(\overline{C}_n(G, \mathbb{Z})) = f(\overline{Z}_n(G, \mathbb{Z}))$ is still bounded. In particular $f$ is bounded on $G^n$, so it defines a class in the image of the comparison map.
\end{proof}

If $\chi(\mathbb{K}) = \chi(\mathfrak{r}) = 0$, then we may replace $\mathbb{Z}$ by $\mathbb{Q}$, which is bounded in $\mathbb{K}$. Moreover $\mathbb{Q}$ is a field, hence all $\mathbb{Q}$-modules are free, and so the Universal Coefficient Theorem still gives an isomorphism. Therefore:

\begin{lemma}
\label{UCTb Q}
Let $\mathbb{K}$ be a non-Archimedean valued field such that $\chi(\mathbb{K}) = \chi(\mathfrak{r}) = 0$. Then the isomorphism $H^n(G, \mathbb{K}) \cong Hom_{\mathbb{Q}}(H_n(G, \mathbb{Q}), \mathbb{K})$ restricts to an isomorphism
$$c^n(H^n_b(G, \mathbb{K})) \cong \{ f \in Hom_{\mathbb{Q}}(H_n(G, \mathbb{Q}), \mathbb{K}) \mid \| f \|_\infty < \infty \}.$$
\end{lemma}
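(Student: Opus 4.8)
The plan is to run the proof of Lemma \ref{UCTb Z} verbatim with $\mathbb{Z}$ replaced by $\mathbb{Q}$, isolating the two places where the hypotheses enter. Recall the explicit form of the isomorphism $H^n(G, \mathbb{K}) \cong Hom_{\mathbb{Q}}(H_n(G, \mathbb{Q}), \mathbb{K})$: a cocycle $f \in \overline{C}^n(G, \mathbb{K})$, viewed as a $\mathbb{K}$-linear functional $f : \overline{C}_n(G, \mathbb{K}) \to \mathbb{K}$, restricts to a $\mathbb{Q}$-linear map $\overline{C}_n(G, \mathbb{Q}) \to \mathbb{K}$; this kills $\overline{B}_n(G, \mathbb{Q}) \subseteq \overline{B}_n(G, \mathbb{K})$ and so descends to a homomorphism $H_n(G, \mathbb{Q}) \to \mathbb{K}$ depending only on $[f]$. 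Two facts make the argument go through. First, $\chi(\mathfrak{r}) = 0$ forces the restriction of $|\cdot|_\mathbb{K}$ to $\mathbb{Q}$ to be the trivial norm, so $\mathbb{Q} \subseteq \mathfrak{o}$ and $|q|_\mathbb{K} \leq 1$ for every $q \in \mathbb{Q}$. Second, $\mathbb{Q}$ is a field, so all $\mathbb{Q}$-modules are free; this is what makes the Universal Coefficient Theorem apply (as already noted in the text preceding the statement) and, in the second half below, makes the relevant short exact sequence of chains split without any extra work.

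For the inclusion $c^n(H^n_b(G, \mathbb{K})) \subseteq \{ f \in Hom_{\mathbb{Q}}(H_n(G, \mathbb{Q}), \mathbb{K}) : \|f\|_\infty < \infty \}$, I would take $f \in \overline{Z}^n_b(G, \mathbb{K})$; for $v = \sum_i q_i g_i \in \overline{C}_n(G, \mathbb{Q})$ with $q_i \in \mathbb{Q}$ and $g_i \in G^n$, the ultrametric inequality gives $|f(v)|_\mathbb{K} \leq \max_i |q_i|_\mathbb{K}\,|f(g_i)|_\mathbb{K} \leq \|f\|_\infty$, so the induced homomorphism $H_n(G, \mathbb{Q}) \to \mathbb{K}$ is bounded. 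Conversely, given a bounded $\varphi \in Hom_{\mathbb{Q}}(H_n(G, \mathbb{Q}), \mathbb{K})$, lift it to a bounded map $f : \overline{Z}_n(G, \mathbb{Q}) \to \mathbb{K}$ vanishing on $\overline{B}_n(G, \mathbb{Q})$; since the sequence
$$0 \to \overline{Z}_n(G, \mathbb{Q}) \hookrightarrow \overline{C}_n(G, \mathbb{Q}) \xrightarrow{\partial_n} \overline{B}_{n-1}(G, \mathbb{Q}) \to 0$$
consists of $\mathbb{Q}$-vector spaces it splits, so one writes $\overline{C}_n(G, \mathbb{Q}) = \overline{Z}_n(G, \mathbb{Q}) \oplus Y_n$, extends $f$ by zero on $Y_n$, and keeps $f(\overline{C}_n(G, \mathbb{Q})) = f(\overline{Z}_n(G, \mathbb{Q}))$ bounded. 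Restricting $f$ to the basis $G^n$ yields a bounded function $G^n \to \mathbb{K}$; since $\delta^n$ is dual to $\partial_{n+1}$, whose image lies in $\overline{B}_n(G, \mathbb{Q})$ where $f$ vanishes, the coboundary of this function is zero, so it is a cocycle in $\overline{Z}^n_b(G, \mathbb{K})$ whose class maps under $c^n$ to one realizing $\varphi$.

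I do not expect a genuine obstacle here: the statement is a routine adaptation of Lemma \ref{UCTb Z}, and the only real care needed is to see that the two roles previously played by $\mathbb{Z}$ — being bounded in $\mathbb{K}$, and having $\overline{B}_{n-1}$ free so that the splitting exists — are here both supplied by $\mathbb{Q}$, boundedness coming from $\chi(\mathfrak{r}) = 0$ and the (now automatic) freeness from $\mathbb{Q}$ being a field. If one wished, Lemmas \ref{UCTb Z} and \ref{UCTb Q} could be merged into a single statement about any subring $R \subseteq \mathbb{K}$ that is bounded in $\mathbb{K}$ and over which the bar complex is free and $\mathbb{K}$ flat; $R = \mathbb{Q}$ is the natural choice precisely in the regime $\chi(\mathbb{K}) = \chi(\mathfrak{r}) = 0$.
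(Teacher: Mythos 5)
Your proposal is correct and follows essentially the same route as the paper: Lemma \ref{UCTb Q} is obtained there precisely by rerunning the proof of Lemma \ref{UCTb Z} with $\mathbb{Z}$ replaced by $\mathbb{Q}$, the two inputs being that $\chi(\mathfrak{r}) = 0$ makes $\mathbb{Q}$ bounded in $\mathbb{K}$ (trivial norm on $\mathbb{Q}$) and that $\mathbb{Q}$ is a field, so all $\mathbb{Q}$-modules are free and both the Universal Coefficient Theorem and the splitting of $0 \to \overline{Z}_n \to \overline{C}_n \to \overline{B}_{n-1} \to 0$ come for free --- exactly the two points you isolate. No gap.
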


If $\chi(\mathbb{K}) = p$, then $p$ is not divisible in $\mathbb{K}$, and so $\mathbb{K}$ is not an injective $\mathbb{Z}$-module. But we can replace $\mathbb{Z}$ by $\mathbb{F}_p$, since $\mathbb{K}$ is an $\mathbb{F}_p$-vector space. Again, this is bounded in $\mathbb{K}$ and all $\mathbb{F}_p$-modules are free, so we obtain:

\begin{lemma}
\label{UCTb Fp}
Let $\mathbb{K}$ be a non-Archimedean valued field of characteristic $p > 0$. Then the isomorphism $H^n(G, \mathbb{K}) \cong Hom_{\mathbb{F}_p}(H_n(G, \mathbb{F}_p), \mathbb{K})$ restricts to an isomorphism
$$c^n(H^n_b(G, \mathbb{K})) \cong \{ f \in Hom_{\mathbb{F}_p}(H_n(G, \mathbb{F}_p), \mathbb{K}) \mid \| f \|_\infty < \infty \}.$$
\end{lemma}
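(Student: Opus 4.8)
The plan is to mimic the proof of Lemma~\ref{UCTb Z} verbatim, with $\mathbb{Z}$ replaced by $\mathbb{F}_p$ throughout, checking that every step becomes if anything easier because $\mathbb{F}_p$ is a field. First I would record the two structural facts that make this work. Since $\chi(\mathbb{K}) = p$, the field $\mathbb{K}$ is an $\mathbb{F}_p$-vector space; in particular $\mathbb{K}$ is an injective $\mathbb{F}_p$-module, every short exact sequence of $\mathbb{F}_p$-modules splits, and the contravariant functor $\Hom_{\mathbb{F}_p}(-, \mathbb{K})$ is exact. Applying it to the chain complex $(\overline{C}_\bullet(G, \mathbb{F}_p), \partial_\bullet)$ of free $\mathbb{F}_p$-vector spaces with basis $G^\bullet$, whose homology is by definition $H_\bullet(G, \mathbb{F}_p)$, and using that $\Hom_{\mathbb{F}_p}(\overline{C}_n(G, \mathbb{F}_p), \mathbb{K})$ is canonically the space $\overline{C}^n(G, \mathbb{K})$ of all $\mathbb{K}$-valued functions on $G^n$, with matching coboundary, exactness yields the Universal Coefficient isomorphism $H^n(G, \mathbb{K}) \cong \Hom_{\mathbb{F}_p}(H_n(G, \mathbb{F}_p), \mathbb{K})$. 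Second, the restriction of $|\cdot|_\mathbb{K}$ to the prime subfield $\mathbb{F}_p \subseteq \mathbb{K}$ is the trivial norm (the only norm on $\mathbb{F}_p$), so the image of $\mathbb{F}_p$ lies in the ring of integers $\mathfrak{o}$; this is exactly what makes the ultrametric boundedness estimates of Lemma~\ref{UCTb Z} go through.

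With these in hand the two inclusions proceed as before. For the forward one, a bounded cocycle $f \in \overline{Z}^n_b(G, \mathbb{K})$, viewed $\mathbb{F}_p$-linearly on $\overline{C}_n(G, \mathbb{F}_p)$, takes on any $\mathbb{F}_p$-combination of basis elements a value that is, by the ultrametric inequality together with $|\mathbb{F}_p|_\mathbb{K} \leq 1$, bounded by $\|f\|_\infty$; restricting to $\overline{Z}_n(G, \mathbb{F}_p)$, observing that it vanishes on $\overline{B}_n(G, \mathbb{F}_p)$, and passing to the quotient $H_n(G, \mathbb{F}_p)$ produces a homomorphism of norm at most $\|f\|_\infty$, which is precisely the image of $c^n[f]$ under the Universal Coefficient isomorphism. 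For the reverse inclusion, given $\phi \in \Hom_{\mathbb{F}_p}(H_n(G, \mathbb{F}_p), \mathbb{K})$ with $\|\phi\|_\infty < \infty$, lift it along $\overline{Z}_n(G, \mathbb{F}_p) \twoheadrightarrow H_n(G, \mathbb{F}_p)$ to a map of the same bounded image vanishing on $\overline{B}_n(G, \mathbb{F}_p)$; since $\overline{Z}_n(G, \mathbb{F}_p)$ is a subspace of the $\mathbb{F}_p$-vector space $\overline{C}_n(G, \mathbb{F}_p)$ it is a direct summand, so extending by zero on a complement gives a bounded $\mathbb{F}_p$-linear map $\overline{C}_n(G, \mathbb{F}_p) \to \mathbb{K}$ whose image still equals $\phi(H_n(G, \mathbb{F}_p))$, hence an element $f \in \overline{C}^n(G, \mathbb{K})$ with $\|f\|_\infty \leq \|\phi\|_\infty$; as $f$ vanishes on $\overline{B}_n(G, \mathbb{F}_p) = \partial_{n+1}(\overline{C}_{n+1}(G, \mathbb{F}_p))$ it is a cocycle, and $c^n[f]$ corresponds to $\phi$.

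I do not expect any real obstacle beyond the two bookkeeping facts isolated in the first paragraph; the only place deserving care is making the identification $\overline{C}^n(G, \mathbb{K}) = \Hom_{\mathbb{F}_p}(\overline{C}_n(G, \mathbb{F}_p), \mathbb{K})$ compatible with both the comparison map and the norms — i.e. that ``bounded on $G^n$'' on the cochain side matches ``bounded image on $H_n(G, \mathbb{F}_p)$'' on the $\Hom$ side — which is exactly the content of the two inclusions above. Unlike the $\mathbb{Z}$-case, no appeal to freeness of subgroups of free abelian groups is needed, since subspaces of $\mathbb{F}_p$-vector spaces are automatically direct summands; and unlike the characteristic-zero case, no divisibility argument is required, since $\mathbb{K}$ is injective over $\mathbb{F}_p$ simply by virtue of $\mathbb{F}_p$ being a field.
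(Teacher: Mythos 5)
Your proposal is correct and follows exactly the route the paper takes: the paper proves Lemma~\ref{UCTb Fp} by observing that the argument of Lemma~\ref{UCTb Z} carries over once $\mathbb{Z}$ is replaced by $\mathbb{F}_p$, using precisely the two facts you isolate — that $\mathbb{F}_p$ is bounded in $\mathbb{K}$ (so the ultrametric estimates persist) and that all $\mathbb{F}_p$-modules are free (so the Universal Coefficient isomorphism holds and the splitting of $\overline{Z}_n(G,\mathbb{F}_p)$ inside $\overline{C}_n(G,\mathbb{F}_p)$ is automatic, replacing the freeness-of-subgroups-of-free-abelian-groups step). Your write-up simply spells out the details the paper leaves implicit, with no gaps.
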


We can now prove that finiteness conditions on $G$ imply surjectivity of the comparison map:

\begin{proposition}
\label{surj_comp}

Let $\mathbb{K}$ be a non-Archimedean valued field, $G$ a discrete group. Denote:
\begin{enumerate}
\item $R = \mathbb{F}_p$ if $\chi(\mathbb{K}) = p > 0$;
\item $R = \mathbb{Q}$ if $\chi(\mathbb{K}) = \chi(\mathfrak{r}) = 0$;
\item $R = \mathbb{Z}$ if $\chi(\mathbb{K}) = 0$ and $\chi(\mathfrak{r}) = p > 0$.
\end{enumerate}
If $H_n(G, R)$ is a finitely generated $R$-module, then the comparison map $c^n : H^n_b(G, \mathbb{K}) \to H^n(G, \mathbb{K})$ is surjective.
\end{proposition}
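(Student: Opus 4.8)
The plan is to deduce surjectivity directly from the three Universal Coefficient lemmas (Lemmas \ref{UCTb Z}, \ref{UCTb Q} and \ref{UCTb Fp}), using the elementary observation that any $R$-linear map from a \emph{finitely generated} $R$-module to $\mathbb{K}$ automatically has bounded image. The case distinction in the statement is precisely what makes this work: in each of the three regimes the coefficient ring $R$ is bounded in $\mathbb{K}$, so the ultrametric inequality converts "finitely generated" into "bounded".

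First I would record that $R \subseteq \mathfrak{o}$, i.e.\ $|r|_\mathbb{K} \leq 1$ for all $r \in R$, in each case. For $R = \mathbb{F}_p$ this holds because the only norm on $\mathbb{F}_p$ is trivial. For $R = \mathbb{Q}$ in the case $\chi(\mathfrak{r}) = 0$, every nonzero integer has norm $1$, hence so does every nonzero rational. For $R = \mathbb{Z}$ in the case $\chi(\mathbb{K}) = 0$, the ultrametric inequality together with $|1|_\mathbb{K} = 1$ gives $|n|_\mathbb{K} \leq 1$ for every $n \in \mathbb{Z}$.

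Then, given an arbitrary class in $H^n(G, \mathbb{K})$, I would use the isomorphism $H^n(G, \mathbb{K}) \cong \Hom_R(H_n(G, R), \mathbb{K})$ furnished by the appropriate Universal Coefficient lemma (relying on Corollary \ref{UCT div} when $R = \mathbb{Z}$, and on the freeness of modules over the field $R$ when $R = \mathbb{Q}$ or $R = \mathbb{F}_p$) to represent it by a homomorphism $f \colon H_n(G, R) \to \mathbb{K}$. Choosing generators $x_1, \dots, x_k$ of the finitely generated $R$-module $H_n(G, R)$, every element of $H_n(G,R)$ has the form $\sum_{i=1}^k r_i x_i$ with $r_i \in R$, so by the ultrametric inequality and the previous paragraph
$$\Bigl| f\bigl( \sum_{i=1}^k r_i x_i \bigr) \Bigr|_\mathbb{K} = \Bigl| \sum_{i=1}^k r_i f(x_i) \Bigr|_\mathbb{K} \leq \max_{1 \leq i \leq k} |r_i|_\mathbb{K} \, |f(x_i)|_\mathbb{K} \leq \max_{1 \leq i \leq k} |f(x_i)|_\mathbb{K}.$$
This bound is independent of the element, so $f$ has bounded image; by Lemma \ref{UCTb Z} (resp.\ Lemma \ref{UCTb Q}, Lemma \ref{UCTb Fp}) the corresponding cohomology class therefore lies in $c^n(H^n_b(G,\mathbb{K}))$. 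Since the class of $H^n(G,\mathbb{K})$ was arbitrary, $c^n$ is surjective.

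I do not expect a genuine obstacle: the substance is entirely contained in the Universal Coefficient lemmas already established, and the only point requiring care is the bookkeeping of the three cases to verify that $R$ is bounded in $\mathbb{K}$ — after which the finite generation hypothesis does the rest. The mild subtlety worth double-checking is that the UCT isomorphism $H^n(G,\mathbb{K}) \cong \Hom_R(H_n(G,R),\mathbb{K})$ holds on the nose in each regime (no $\mathrm{Ext}$ term), which is exactly why $R$ was chosen so that $\mathbb{K}$ is an injective, or flat-enough, $R$-module.
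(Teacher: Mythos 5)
Your argument is correct and is essentially identical to the paper's own proof: both reduce surjectivity to the boundedness of $R$-linear maps $H_n(G,R)\to\mathbb{K}$ via Lemmas \ref{UCTb Z}, \ref{UCTb Q} and \ref{UCTb Fp}, and then invoke the ultrametric inequality together with the boundedness of $R$ in $\mathbb{K}$ and the finite generation of $H_n(G,R)$. You merely spell out the boundedness of $R$ and the estimate on generators more explicitly than the paper does.
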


\begin{proof}
By the previous lemmas, the comparison map is surjective if all $R$-linear maps $H_n(G, R) \to \mathbb{K}$ have bounded image. This holds by the ultrametric inequality if $H_n(G, R)$ is finitely generated, since $R$ is bounded in $\mathbb{K}$.
\end{proof}

\begin{remark}
If $\chi(\mathbb{K}) = \chi(\mathfrak{r}) = 0$, then one could also choose $R = \mathbb{Z}$, since Lemma \ref{UCTb Z} also covers this case. However the statement of the proposition is stronger: by the Universal Coefficient Theorem in Homology $H_n(G, \mathbb{Q})$ is a finite-dimensional $\mathbb{Q}$-vector space whenever $H_n(G, \mathbb{Z})$ is a finitely generated abelian group.
\end{remark}

As a sanity check, let us consider the case $n = 1$ and $\mathbb{K} = \mathbb{Q}_p$. If $Ab(G) = H_1(G, \mathbb{Z})$ is a finitely generated abelian group, then Proposition \ref{surj_comp} tells us that the comparison map $c^1 : H^1_b(G, \mathbb{Q}_p) \to H^1(G, \mathbb{Q}_p)$ is surjective. That is, any $\mathbb{Q}_p$-valued homomorphism from $G$ is bounded. This can be seen directly: any homomorphism $G \to \mathbb{Q}_p$ must factor through $Ab(G)$, and any homomorphism from a finitely generated group to $\mathbb{Q}_p$ is necessarily bounded by the ultrametric inequality (this is also a special case of Proposition \ref{QZ comp gen}). \\

Here is a large class of examples to which we can apply Proposition \ref{surj_comp}. A group is of \emph{type $F_n$} if it admits a classifying space with compact $n$-skeleton. In particular $G$ is of type $F_1$ if and only if it is finitely generated, and it is of type $F_2$ if and only if it is finitely presented. We say that $G$ is of \emph{type $F_\infty$} if it is of type $F_n$ for all $n$.

\begin{corollary}
\label{Fn surj}

Let $\mathbb{K}$ be a non-Archimedean valued field, $G$ a discrete group of type $F_n$. Then $c^n : H^n_b(G, \mathbb{K}) \to H^n(G, \mathbb{K})$ is surjective.
\end{corollary}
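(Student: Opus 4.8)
The plan is to deduce this immediately from Proposition \ref{surj_comp}. That proposition reduces the surjectivity of $c^n$ to the single statement that $H_n(G, R)$ is a finitely generated $R$-module, where $R = \mathbb{F}_p$, $R = \mathbb{Q}$, or $R = \mathbb{Z}$ according to the three cases ($\chi(\mathbb{K}) = p > 0$; $\chi(\mathbb{K}) = \chi(\mathfrak{r}) = 0$; $\chi(\mathbb{K}) = 0$ and $\chi(\mathfrak{r}) = p > 0$). So the only thing left to verify is that being of type $F_n$ forces $H_n(G, R)$ to be finitely generated over each of these rings.

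First I would recall that if $G$ is of type $F_n$, it admits a classifying space $X = K(G,1)$ whose $n$-skeleton is compact. The cellular chain complex $C_\bullet = C_\bullet^{\mathrm{cell}}(X; R)$ then computes $H_\bullet(G, R)$, and each $C_k$ with $k \le n$ is a finitely generated free $R$-module (one generator per $k$-cell). Consequently $H_n(G, R) = \ker(\partial_n)/\operatorname{im}(\partial_{n+1})$ is a quotient of the submodule $\ker(\partial_n) \subseteq C_n$, where $\partial_n \colon C_n \to C_{n-1}$. For $R = \mathbb{F}_p$ or $R = \mathbb{Q}$ the subspace $\ker(\partial_n)$ of the finite-dimensional $R$-vector space $C_n$ is finite-dimensional, hence so is $H_n(G, R)$. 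For $R = \mathbb{Z}$ the subgroup $\ker(\partial_n)$ of the finitely generated free abelian group $C_n$ is itself finitely generated, and therefore so is its quotient $H_n(G, \mathbb{Z})$.

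With this in hand, Proposition \ref{surj_comp} applies directly and yields surjectivity of $c^n \colon H^n_b(G, \mathbb{K}) \to H^n(G, \mathbb{K})$. There is no genuine obstacle to this argument: the whole weight of the statement sits in Proposition \ref{surj_comp} — which itself rests on the non-Archimedean Universal Coefficient computations of Lemmas \ref{UCTb Z}, \ref{UCTb Q} and \ref{UCTb Fp} — and the only care needed here is to match the coefficient ring $R$ to the characteristic case at hand. In particular, as in the remark following Proposition \ref{surj_comp}, in the mixed case $\chi(\mathbb{K}) = 0$, $\chi(\mathfrak{r}) = p > 0$ one may equally well run the argument with $R = \mathbb{Z}$ and invoke the $\mathbb{Z}$-version of the finiteness statement, since type $F_n$ also makes $H_n(G, \mathbb{Z})$ finitely generated.
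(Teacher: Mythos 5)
Your proof is correct and follows the same route as the paper: both reduce the statement to Proposition \ref{surj_comp} via the fact that type $F_n$ forces $H_n(G,R)$ to be a finitely generated $R$-module (the paper simply asserts this fact, while you justify it via the cellular chain complex of a $K(G,1)$ with compact $n$-skeleton, correctly using that $\ker(\partial_n)$ is a submodule of a finitely generated module over the Noetherian ring $R$).
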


\begin{proof}
If $G$ is of type $F_n$, then $H_n(G, R)$ is a finitely generated $R$-module for any ring $R$. The result then follows from Proposition \ref{surj_comp}.
\end{proof}

In degree $2$ we obtain the following:

\begin{corollary}
Let $\mathbb{K}$ be a non-Archimedean valued field, $G$ a finitely presented group. Then $c^2 : H^2_b(G, \mathbb{K}) \to H^2(G, \mathbb{K})$ is a vector space isomorphism.
\end{corollary}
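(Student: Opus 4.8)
The statement combines the two halves of the machinery developed just before it, so the plan is essentially to assemble them. First I would recall that a finitely presented group $G$ is, by definition, of type $F_2$: it admits a classifying space with compact $2$-skeleton. Hence Corollary \ref{Fn surj}, applied with $n = 2$ to the non-Archimedean valued field $\mathbb{K}$, immediately gives that the comparison map $c^2 : H^2_b(G, \mathbb{K}) \to H^2(G, \mathbb{K})$ is surjective. (Unwinding, this uses Proposition \ref{surj_comp}: type $F_2$ forces $H_2(G, R)$ to be a finitely generated $R$-module for the relevant ring $R \in \{\mathbb{Z}, \mathbb{Q}, \mathbb{F}_p\}$, and $R$ is bounded in $\mathbb{K}$, so every $R$-linear map $H_2(G, R) \to \mathbb{K}$ has bounded image; one then invokes Lemma \ref{UCTb Z}, \ref{UCTb Q} or \ref{UCTb Fp} accordingly.)

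Next I would observe that a finitely presented group is in particular finitely generated, hence compactly generated when regarded as a discrete group. Applying Corollary \ref{inj comp} with the trivial normed $\mathbb{K}[G]$-module $E = \mathbb{K}$ shows that $c^2 : H^2_b(G, \mathbb{K}) \to H^2(G, \mathbb{K})$ is injective; concretely this is because Proposition \ref{QZ comp gen} forces every quasimorphism of a compactly generated group to be bounded, so the kernel of $c^2$ — which by the discussion in Subsection \ref{preli BC} is the space of quasimorphisms modulo homomorphisms and bounded cochains — is trivial.

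Combining the previous two paragraphs, $c^2$ is a bijection. Since the comparison map is by construction induced by the inclusion of cochain complexes $\overline{C}^\bullet_b(G, \mathbb{K}) \hookrightarrow \overline{C}^\bullet(G, \mathbb{K})$, it is $\mathbb{K}$-linear, and a bijective linear map of $\mathbb{K}$-vector spaces is a vector space isomorphism. This completes the argument.

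I do not anticipate a genuine obstacle here, since both inputs are already in hand; the only points warranting a sentence of care are (i) spelling out that ``finitely presented'' is precisely ``type $F_2$'', so that Corollary \ref{Fn surj} applies, and (ii) noting that a discrete finitely generated group is compactly generated, so that Corollary \ref{inj comp} applies with $E = \mathbb{K}$.
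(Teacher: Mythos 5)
Your proposal is correct and follows exactly the paper's argument: surjectivity from Corollary \ref{Fn surj} (a finitely presented group is of type $F_2$) and injectivity from Corollary \ref{inj comp} (a finitely generated discrete group is compactly generated). No gaps.
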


\begin{proof}
Injectivity holds for any finitely generated group by Corollary \ref{inj comp}, and surjectivity holds for any finitely presented group by Proposition \ref{Fn surj}.
\end{proof}

These results also apply to other trivial coefficients. Let $\mathbb{K}$ be a non-Archimedean valued field, and let $R$ be as in Proposition \ref{surj_comp}. Let $M$ be an injective $R$-module; that is, $M$ is a divisible abelian group if $R = \mathbb{Z}$, and otherwise there are no constraints. Suppose that $M$ is endowed with a non-Archimedean norm which is compatible with the norm on $R$ coming from $\mathbb{K}$. Explicitly, suppose that there is a map $\| \cdot \| : M \to \mathbb{R}_{\geq 0}$ such that

\begin{enumerate}
\item $\| m \| = 0$ if and only if $m = 0$;
\item $\| m + n \| \leq \max \{ \| m \|, \| n \| \}$;
\item $\| \alpha m \| = |\alpha|_\mathbb{K} \| m \|$ for all $\alpha \in \mathbb{K}$.
\end{enumerate}

This corresponds to Definition \ref{def_nvs} when $R = \mathbb{Q}$ or $\mathbb{F}_p$. We can then define analogously the bounded cohomology with coefficients in $M$ and the corresponding comparison map. The proofs of Lemmas \ref{UCTb Z}, \ref{UCTb Q} and \ref{UCTb Fp} carry over, as $M$ is an injective $R$-module, and so the Universal Coefficient Theorem in Cohomology gives the desired isomorphisms (see Corollary \ref{UCT div} for $R = \mathbb{Z}$, the other two cases are dealt with just as before, since $M$ is a vector space). We conclude:

\begin{proposition}
\label{surj_comp 2}

Let $\mathbb{K}$ be a non-Archimedean valued field, $G$ a discrete group, $R$ as in Proposition \ref{surj_comp}. Let $M$ be an injective $R$-module endowed with a compatible non-Archimedean norm. If $H_n(G, R)$ is a finitely generated $R$-module, then the comparison map $c^n : H^n_b(G, M) \to H^n(G, M)$ is surjective.
\end{proposition}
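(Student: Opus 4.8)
The plan is to transcribe the proof of Proposition \ref{surj_comp} with the coefficient field $\mathbb{K}$ replaced everywhere by the module $M$, after checking that every property of $\mathbb{K}$ used in Lemmas \ref{UCTb Z}, \ref{UCTb Q} and \ref{UCTb Fp} is already available for $M$. These properties are exactly three: $M$ is an injective $R$-module, so the Universal Coefficient Theorem applies with the relevant $\mathrm{Ext}^1_R$ vanishing; $M$ carries a non-Archimedean norm; and the coefficient ring $R$, which is bounded in $\mathbb{K}$, scales the norm on $M$ via axiom (3). The only quantitative input I would extract from this is: for any finite subset $\{m_1,\dots,m_k\}\subseteq M$, the $R$-submodule $\sum_i R\,m_i$ is norm-bounded, which follows from axioms (2)--(3) together with the boundedness of $R$ in $\mathbb{K}$.

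First I would record the Universal Coefficient isomorphism. Since $M$ is injective over $R$ --- for $R=\mathbb{Z}$ this means $M$ is divisible, by Baer's criterion (Theorem \ref{div inj}, cf.\ Corollary \ref{UCT div}); for $R=\mathbb{Q}$ or $R=\mathbb{F}_p$ it is automatic --- one has $\mathrm{Ext}^1_R(A,M)=0$ for every $R$-module $A$, and applying the Universal Coefficient Theorem in Cohomology to the complex $\overline{C}_\bullet(G,R)$ of free $R$-modules yields a natural isomorphism $H^n(G,M)\cong \mathrm{Hom}_R(H_n(G,R),M)$, described exactly as at the end of Subsection \ref{preli BC}. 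Next I would prove the bounded refinement, the exact analogue of the three lemmas: under this isomorphism $c^n(H^n_b(G,M))$ corresponds to the subspace of $R$-linear maps $H_n(G,R)\to M$ with bounded image. One inclusion is immediate from the ultrametric inequality: a bounded cocycle $f$ induces a map $\overline{Z}_n(G,R)\to M$ whose image lies in the $R$-submodule generated by the bounded set $f(G^n)$, hence is norm-bounded, and so is the map induced on $H_n(G,R)$. For the converse I would lift a bounded map $H_n(G,R)\to M$ to $\overline{Z}_n(G,R)$, use that the short exact sequence $0\to\overline{Z}_n(G,R)\to\overline{C}_n(G,R)\to\overline{B}_{n-1}(G,R)\to 0$ splits (because $\overline{B}_{n-1}(G,R)$ is a submodule of a free module over a PID, hence free), extend the map by zero on a complement $Y_n$, and observe that the resulting cochain is bounded on $G^n$.

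The conclusion is then formal, just as in Proposition \ref{surj_comp}: if $H_n(G,R)$ is finitely generated over $R$, any $R$-linear map $H_n(G,R)\to M$ has image contained in the $R$-span of the images of finitely many generators, which is norm-bounded by the first paragraph; hence every class of $H^n(G,M)$ lies in the image of $c^n$, i.e.\ $c^n$ is surjective. I do not anticipate a genuine obstacle: the argument for Proposition \ref{surj_comp} never invoked more about $\mathbb{K}$ than the three listed properties, and $M$ was set up precisely to retain them. The one point deserving care --- and the closest thing to an obstacle --- is the splitting used in the converse inclusion, which rests on the fact that submodules of free $R$-modules are free; this holds because $\mathbb{Z}$, $\mathbb{Q}$ and $\mathbb{F}_p$ are all principal ideal domains, so the argument of Lemma \ref{UCTb Z} goes through verbatim in each of the three cases.
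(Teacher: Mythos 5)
Your proposal is correct and follows the paper's own route: the paper likewise proves this by observing that the Universal Coefficient isomorphism $H^n(G,M)\cong \Hom_R(H_n(G,R),M)$ holds because $M$ is injective over $R$, that the bounded-image refinement of Lemmas \ref{UCTb Z}, \ref{UCTb Q} and \ref{UCTb Fp} carries over verbatim (the ultrametric inequality, boundedness of $R$ in $\mathbb{K}$, and the splitting over the PID $R$ being the only inputs), and then concludes exactly as in Proposition \ref{surj_comp} from finite generation of $H_n(G,R)$. Your write-up merely makes explicit the checks the paper leaves implicit; no gap.
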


This proposition will be useful when we look at the injectivity of the comparison map in the next subsection (see Corollary \ref{inj Fn}). \\

The techniques in this subsection do not apply to non-trivial coefficients. Therefore we ask:

\begin{question}
Let $E$ be a normed $\mathbb{K}[G]$-module. When is the comparison map $c^n : H^n_b(G, E) \to H^n(G, E)$ surjective?
\end{question}

\pagebreak

\section{Quasimorphisms}
\label{s_qm}

In Proposition \ref{QZ comp gen}, we saw that $\mathbb{K}$-valued continuous quasimorphisms of compactly generated groups are bounded. We now look at quasimorphisms of groups which are not compactly generated. Quasimorphisms $\mathbb{Q}_p \to \mathbb{Q}_p$ and $\mathbb{Q} \to (\mathbb{Q}, |\cdot|_p)$ have been treated in \cite{Kionke}, where they were used to provide a construction of $\mathbb{Q}_p$. With little more work, one can use the results in \cite{Kionke} to prove that $EH^2_{cb}(\mathbb{Q}_p, \mathbb{Q}_p) = 0$ and $EH^2_b(\mathbb{Q}_p, \mathbb{Q}_p), EH^2_b(\mathbb{Q}, \mathbb{Q}_p) = 0$, where $\mathbb{Q}_p$ and $\mathbb{Q}$ are seen as discrete groups. We will obtain these results by a different method. \\

Once again we see the trichotomy that was present in Theorems \ref{main nsc} and \ref{main sc}. If $\chi(\mathbb{K}) = \chi(\mathfrak{r})$, then all $\mathbb{K}$-valued continuous quasimorphisms, from \emph{any} group, are trivial. This is not true in the case $\chi(\mathbb{K}) = 0, \chi(\mathfrak{r}) = p$; so also in this setting, the first interesting case is $\mathbb{K} = \mathbb{Q}_p$. We are able to classify $\mathbb{Q}_p$-valued quasimorphisms in a suitable sense, and it turns out that the existence of a non-trivial one imposes very strong conditions on the subgroup structure of the group. This generalizes to finite extensions of $\mathbb{Q}_p$. All of this is in stark constrast to the real case, in which quasimorphism spaces can be very complex. 

In the last subsection we investigate higher-dimensional quasicocycles, proving triviality results analogous to those in degree $2$ for fields satisfying $\chi(\mathbb{K}) = \chi(\mathfrak{r})$, and weaker ones for general fields. The latter will have stronger consequences for discrete groups, when combined with the surjectivity results from Subsection \ref{ss_surj}. \\

The following notation will simplify some of our statements:

\begin{definition}
\label{def dH}

Let $f : G \to \mathbb{K}$ be a continuous quasimorphism. We denote by
$$d_H(f) := \inf \{ \| f - h \|_\infty \mid h : G \to \mathbb{K} \text{ is a continuous homomorphism} \}.$$
\end{definition}

In particular $d_H(f) < \infty$ if and only if $f$ is a trivial quasimorphism. Notice that the subspace of homomorphisms is closed with respect to the norm $\| \cdot \|_\infty$ in the space of quasimorphisms. This is because the defect $D$ is continuous, and $f$ is a homomorphism if and only if $D(f) = 0$. This implies that the infimum in the definition of $d_H$ is always attained. In particular $d_H(f) = 0$ if and only if $f$ is a homomorphism. \\

If the norm on $\mathbb{K}$ is the trivial one, then any $\mathbb{K}$-valued map is clearly bounded, and this is true in particular of quasimorphisms. Therefore in this section \emph{all non-Archimedean valued fields are assumed to be non-trivially valued}. Groups will always be topological unless otherwise stated, and will not be assumed to be t.d.l.c., although any continuous map $G \to \mathbb{K}$ factors through the component group.

\subsection{The defect group}

We start with our first example of a non-trivial quasimorphism:

\begin{example}
\label{pruf}

Consider the Pr\"ufer $p$-group $\mathbb{Z}(p^\infty) = \mathbb{Q}_p / \mathbb{Z}_p$, equipped with the discrete topology. We have already seen that $\mathbb{Z}(p^\infty)$ is locally finite and not normed $p$-adic amenable. In particular it is periodic; it follows that any homomorphism $\mathbb{Z}(p^\infty) \to \mathbb{Q}_p$ is trivial, and so all trivial quasimorphisms are bounded. Therefore, to show that a given quasimorphism is non-trivial, it suffices to show that it is unbounded.

We choose a section of the canonical projection $\mathbb{Q}_p \to \mathbb{Z}(p^\infty)$:
$$\iota : \mathbb{Z}(p^\infty) \to \mathbb{Q}_p : \sum\limits_{i = -1}^{-n} a_i p^i \mod 1 \mapsto \sum\limits_{i = -1}^{-n} a_i p^i.$$
When calculating $\iota(x + y)$ and $\iota(x) + \iota(y)$, the only difference is that in $\iota(x) + \iota(y)$ there may be some terms that carry over to the non-negative powers of $p$. In other words, $\iota(x + y) - \iota(x) - \iota(y) \in \mathbb{Z}_p$. So $\iota$ is a quasimorphism of defect 1. Being unbounded, it is non-trivial. We will call this the \emph{standard section} $\iota : \mathbb{Z}(p^\infty) \to \mathbb{Q}_p$.
\end{example}

This example will turn out to be fundamental in the description of $\mathbb{Q}_p$-valued quasimorphisms. It can be generalized to any non-Archimedean valued field:

\begin{example}
Let $\mathbb{K}$ be a non-Archimedean valued field, and $\mathfrak{o}$ its ring of integers. The group $\mathbb{K}/\mathfrak{o}$ is a discrete additive group, and so any choice of a section $\iota : \mathbb{K}/\mathfrak{o} \to \mathbb{K}$ is continuous. Since the norm on $\mathbb{K}$ is supposed to be non-trivial, $\iota$ is moreover unbounded. The difference between $\iota(x + y)$ and $\iota(x) + \iota(y)$ is in $\mathfrak{o}$, so this is a continuous unbounded quasimorphism of defect at most $1$. It is not necessarily non-trivial, as we shall shortly see.
\end{example}

This motivates the following definition:

\begin{definition}
Let $\mathbb{K}$ be a non-Archimedean valued field, $\mathfrak{o}$ its ring of integers. The discrete additive group $\mathbb{K}/\mathfrak{o}$ is called the \emph{defect group} of $\mathbb{K}$, and is denoted by $D(\mathbb{K})$. This group is equipped with the quotient seminorm induced by $\mathbb{K}$, which is actually a norm: it follows from Lemma \ref{ultrametric cor} that $|x \mod \mathfrak{o} |_{D(\mathbb{K})} = |x|_\mathbb{K}$ for any $x \notin \mathfrak{o}$.
\end{definition}

\begin{remark}
The name \emph{defect group} has an established meaning in modular representation theory. Since that context is disjoint from ours, this overlap in terminology should not cause any confusion.
\end{remark}

We have seen that $D(\mathbb{Q}_p) \cong \mathbb{Z}(p^\infty)$. Let us see another example.

\begin{example}
\label{def grp Fqt}
Let $\mathbb{F}$ be a field and consider the field $\mathbb{F}((X))$ of Laurent series with the norm $|\sum a_i X^i| = e^{-n}$, where $n$ is the smallest integer such that $a_n \neq 0$. Then $\mathfrak{o} = \mathbb{F}[[X]]$. It follows that $D(\mathbb{F}((X))) \cong \bigoplus\limits_{n \geq 1} \mathbb{F}$. An element of $D(\mathbb{F}((X)))$ may be written uniquely as $\sum_{i \leq -1} a_i X^i \mod \mathbb{F}[[X]]$, which gives a canonical section $\iota : D(\mathbb{F}((X))) \to \mathbb{F}((X))$. Note that this section is actually a homomorphism.
\end{example}

The examples of $\mathbb{Q}_p$ and $\mathbb{F}((X))$ behave very differently, in that $\iota$ is a non-trivial quasimorphism in the first case, and it is a homomorphism in the second case. This is part of a general pattern:

\begin{proposition}
\label{class def}

Let $\mathbb{K}$ be a non-Archimedean valued field, and let $\mathfrak{r}$ be its residue field.
\begin{enumerate}
\item If $\chi(\mathbb{K}) = p > 0$, then $D(\mathbb{K})$ is isomorphic to the additive group of an $\mathbb{F}_p$-vector space, and there exists a homomorphic section $\iota : D(\mathbb{K}) \to \mathbb{K}$.
\item If $\chi(\mathbb{K}) = \chi(\mathfrak{r}) = 0$, then $D(\mathbb{K})$ is isomorphic to the additive group of a $\mathbb{Q}$-vector space, and there exists a homomorphic section $\iota : D(\mathbb{K}) \to \mathbb{K}$.
\item If $\chi(\mathbb{K}) = 0$ and $\chi(\mathfrak{r}) = p > 0$, then $D(\mathbb{K})$ is isomorphic to a direct sum of copies of $\mathbb{Z}(p^\infty)$, and any section $\iota : D(\mathbb{K}) \to \mathbb{K}$ is a non-trivial quasimorphism.
\end{enumerate}
\end{proposition}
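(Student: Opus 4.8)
The plan is to analyze $D(\mathbb{K}) = \mathbb{K}/\mathfrak{o}$ as a discrete abelian group together with the (multiplicative) structure coming from $\mathbb{K}$, splitting into the three cases according to the characteristics. The key observation throughout is that $\mathfrak{m} \subseteq \mathfrak{o}$ and that the seminorm on $D(\mathbb{K})$ is actually a norm with $|x \bmod \mathfrak{o}|_{D(\mathbb{K})} = |x|_\mathbb{K}$ for $x \notin \mathfrak{o}$, so $D(\mathbb{K})$ is a nontrivial normed vector space over $\mathbb{K}_0$ (or at least over its prime field), hence in particular a vector space over the prime field.

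\textbf{Cases 1 and 2.} Here $\chi(\mathbb{K}) = \chi(\mathfrak{r})$: either both are $p$, or both are $0$. When $\chi(\mathbb{K}) = p$, the additive group of $\mathbb{K}$ is an $\mathbb{F}_p$-vector space, and $\mathfrak{o}$ is an $\mathbb{F}_p$-subspace, so $D(\mathbb{K})$ is an $\mathbb{F}_p$-vector space; choosing a complementary subspace to $\mathfrak{o}$ inside $\mathbb{K}$ (which exists since we are over a field) gives a linear, hence homomorphic, section $\iota : D(\mathbb{K}) \to \mathbb{K}$. When $\chi(\mathbb{K}) = \chi(\mathfrak{r}) = 0$, the relevant point is that $\mathfrak{o}$ is \emph{divisible}: for $n \in \mathbb{N}$ and $x \in \mathfrak{o}$ we have $|n|_\mathbb{K} = 1$ (this is exactly the characterization $\chi(\mathfrak{r}) = 0$), so $n$ is a unit in $\mathfrak{o}$ and $n^{-1}x \in \mathfrak{o}$. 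Thus $\mathfrak{o}$ is a $\mathbb{Q}$-subspace of the $\mathbb{Q}$-vector space $\mathbb{K}$, so $D(\mathbb{K})$ is a $\mathbb{Q}$-vector space and again a $\mathbb{Q}$-linear complement of $\mathfrak{o}$ yields a homomorphic section. In both cases the existence of a homomorphic section of course makes $\iota$ a (trivial) quasimorphism, but the stronger and cleaner statement is that there is a genuine homomorphic section; I would state it that way.

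\textbf{Case 3.} Here $\chi(\mathbb{K}) = 0$, $\chi(\mathfrak{r}) = p > 0$, and by convention $|\cdot|_\mathbb{K}$ restricts to the $p$-adic norm on $\mathbb{Q}$. The group $D(\mathbb{K})$ is then a torsion abelian group: given $x \in \mathbb{K}$, since $|p^k x|_\mathbb{K} = p^{-k}|x|_\mathbb{K} \to 0$, for $k$ large enough $p^k x \in \mathfrak{o}$, so $x \bmod \mathfrak{o}$ has order a power of $p$, i.e. $D(\mathbb{K})$ is a $p$-group. Moreover $D(\mathbb{K})$ is divisible, being a quotient of the divisible group $\mathbb{K}$ (divisible because $\mathbb{K}$ is a $\mathbb{Q}$-vector space); by the Classification of Divisible Groups (cited in the excerpt), a divisible $p$-group is a direct sum of copies of $\mathbb{Z}(p^\infty)$. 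For the final assertion: since $D(\mathbb{K})$ is a direct sum of Prüfer groups it is periodic, so any homomorphism $D(\mathbb{K}) \to \mathbb{K}$ is trivial (the additive group of $\mathbb{K}$ is torsion-free), hence a quasimorphism is trivial if and only if it is bounded. Any section $\iota : D(\mathbb{K}) \to \mathbb{K}$ is a quasimorphism of defect at most $1$: if $x, y \in \mathbb{K}$ then $\iota(\bar x + \bar y) - \iota(\bar x) - \iota(\bar y)$ reduces to $0$ mod $\mathfrak{o}$, so it lies in $\mathfrak{o}$, giving defect $\le 1$. Finally $\iota$ is unbounded because the norm is nontrivial: there exist elements of $D(\mathbb{K})$ of arbitrarily large norm (take $\pi^{-k}$ for a uniformizer, or more generally elements $x_k$ with $|x_k|_\mathbb{K} \to \infty$, which exist since $|\mathbb{K}^\times|$ is unbounded in $\mathbb{R}_{>0}$), and $\|\iota(\bar x_k)\|_\mathbb{K} = |x_k|_\mathbb{K} \to \infty$. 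Being unbounded, $\iota$ is non-trivial.

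The main obstacle is really just bookkeeping: being careful that "section" means set-theoretic section of $\mathbb{K} \to \mathbb{K}/\mathfrak{o}$ and that in cases 1--2 one can upgrade an arbitrary section to a homomorphic one by choosing a vector-space complement, while in case 3 \emph{no} section can be a homomorphism. A secondary point to handle cleanly is the divisibility of $\mathfrak{o}$ in case 2 and the divisibility of $D(\mathbb{K})$ in case 3 — both follow from the normalization $|n|_\mathbb{K} = 1$ for $n$ prime to $p$, which is guaranteed by the running conventions set at the end of Subsection~\ref{preli field}. No deep input is needed beyond the Classification of Divisible Groups, already recalled in Subsection~\ref{ss_div}.
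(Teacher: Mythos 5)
Your proof is correct and follows essentially the same route as the paper's: in cases 1 and 2 you observe that the prime field is trivially valued, so $\mathfrak{o}$ is a subspace of $\mathbb{K}$ over it and a vector-space complement gives a homomorphic section, and in case 3 you use divisibility of $\mathbb{K}$, the fact that $D(\mathbb{K})$ is a $p$-group, the classification of divisible groups, torsion-freeness of $\mathbb{K}$ to rule out homomorphisms, and unboundedness of any section (valid since the field is non-trivially valued by the running convention). One minor caveat: your opening sentence asserting that $D(\mathbb{K})$ is itself a normed vector space over the prime field is false in case 3 (there the prime field is $\mathbb{Q}$ while $D(\mathbb{K})$ is torsion), but nothing in your case-by-case argument actually relies on that remark.
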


\begin{proof}
In the first two cases, let $\mathbb{K}_0$ be the prime field, which is isomorphic to either $\mathbb{F}_p$ or $\mathbb{Q}$. The hypothesis implies that in both cases $\mathbb{K}_0$ is trivially valued. It follows that $\mathfrak{o}$ is a $\mathbb{K}_0$-vector subspace of $\mathbb{K}$. In particular, it admits a (not necessarily closed) complement $\mathfrak{o}^\perp$, and so there exists a homomorphic section $\iota : D(\mathbb{K}) \xrightarrow{\cong} \mathfrak{o}^\perp \hookrightarrow \mathbb{K}$. \\

In the third case $\chi(\mathbb{K}) = 0$, and so the additive group of $\mathbb{K}$ is divisible. It follows that $D(\mathbb{K})$ is divisible, being a quotient thereof. Moreover, for any $x \in \mathbb{K}$ and for $k$ large enough, $|p^k x|_\mathbb{K} < 1$, and so $D(\mathbb{K})$ is a divisible $p$-group. We conclude by the Classification Theorem from Subsection \ref{ss_div} that $D(\mathbb{K}) \cong \mathbb{Z}(p^\infty)^I$ for some index set $I$. Since $D(\mathbb{K})$ is periodic and $\mathbb{K}$ is torsion-free, there exists no non-trivial homomorphism $D(\mathbb{K}) \to \mathbb{K}$. So any section, being unbounded, is automatically a non-trivial quasimorphism.
\end{proof}

\begin{example}
Any finite extension of $\mathbb{Q}_p$ has as defect group a finite direct sum of copies of $\mathbb{Z}(p^\infty)$; and $D(\mathbb{C}_p)$ is an infinite direct sum of copies of $\mathbb{Z}(p^\infty)$.
\end{example}

\subsection{$\mathbb{K}$-valued quasimorphisms and $D(\mathbb{K})$-valued homomorphisms}

Let us fix a non-Archimedean valued field $\mathbb{K}$ with ring of integers $\mathfrak{o}$ and defect group $D(\mathbb{K}) = \mathbb{K}/\mathfrak{o}$. Denote by $\pi : \mathbb{K} \to D(\mathbb{K})$ the canonical projection and let $\iota : D(\mathbb{K}) \to \mathbb{K}$ be a section. If $\chi(\mathbb{K}) = \chi(\mathfrak{r})$, we will choose $\iota$ to be a homomorphism: this is possible by Proposition \ref{class def}. Let $G$ be a topological group. \\

The following simple lemma is the key observation at the basis of all results in this section.

\begin{lemma}
\label{qm key}

Let $f : G \to \mathbb{K}$ be a continuous quasimorphism with $D(f) \leq 1$. Then $\pi f : G \to D(\mathbb{K})$ is a continuous homomorphism. Conversely, let $h : G \to D(\mathbb{K})$ be a continuous homomorphism. Then $\iota h : G \to \mathbb{K}$ is a continuous quasimorphism and $D(\iota h) \leq 1$.

Moreover, $\| \iota \pi f - f \|_\infty \leq 1$, and $\pi \iota h = h$.
\end{lemma}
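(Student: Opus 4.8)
\textbf{Proof plan for Lemma \ref{qm key}.}
The plan is to verify each of the four claims by unwinding the definitions of $\pi$, $\iota$ and the coboundary $\delta^1$, and exploiting the ultrametric property of $\mathfrak o$ together with the normalization $|x \bmod \mathfrak o|_{D(\mathbb K)} = |x|_{\mathbb K}$ for $x \notin \mathfrak o$. The one point requiring genuine (though still short) argument is the first claim, that $\pi f$ is a homomorphism; the remaining three are essentially formal once that is in hand.

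First I would treat $\pi f$. Fix $g,h \in G$. Since $D(f) = \|\delta^1 f\|_\infty \le 1$, we have $|f(gh) - f(g) - f(h)|_{\mathbb K} \le 1$, i.e. $f(gh) - f(g) - f(h) \in \mathfrak o$. Applying $\pi$, which is an additive homomorphism with kernel $\mathfrak o$, gives $\pi f(gh) - \pi f(g) - \pi f(h) = 0$ in $D(\mathbb K)$, so $\pi f$ is a homomorphism; continuity is clear since $\pi$ is continuous (indeed $D(\mathbb K)$ is discrete, so $\pi f$ is continuous as soon as $f$ is) and $f$ is continuous by hypothesis. Conversely, given a continuous homomorphism $h : G \to D(\mathbb K)$, I would estimate $\delta^1(\iota h)(g,g') = \iota h(g') - \iota h(gg') + \iota h(g)$ — using the trivial $G$-action and the bar-resolution formula $\delta^1 f(g,g') = g\cdot f(g') - f(gg') + f(g)$ recorded after the definition of the bar resolution. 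Since $h$ is a homomorphism, $\iota h(g) + \iota h(g') $ and $\iota h(gg') = \iota\big(h(g)+h(g')\big)$ differ by an element of $\ker\pi = \mathfrak o$, because $\iota$ is a set-theoretic section of $\pi$ (more precisely, for any $a,b \in D(\mathbb K)$ one has $\pi(\iota(a) + \iota(b) - \iota(a+b)) = a + b - (a+b) = 0$). Hence $\delta^1(\iota h)(g,g') \in \mathfrak o$, so $\iota h$ is a quasimorphism with $D(\iota h) \le 1$; continuity of $\iota h$ follows from continuity of $h$ and the fact that $D(\mathbb K)$ is discrete (so $\iota$ is automatically continuous).

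For the last two assertions: $\pi\iota = \mathrm{id}_{D(\mathbb K)}$ holds by definition of a section, so $\pi\iota h = h$ immediately. For $\|\iota\pi f - f\|_\infty \le 1$, fix $g \in G$ and write $y := f(g) \in \mathbb K$. Then $\iota\pi f(g) = \iota(y \bmod \mathfrak o)$, and since $\iota$ is a section, $\iota(y \bmod \mathfrak o) - y \in \mathfrak o$, i.e. $|\iota\pi f(g) - f(g)|_{\mathbb K} \le 1$; taking the supremum over $g$ gives the bound. I do not expect any real obstacle here: the whole lemma is a matter of carefully transporting the ultrametric inequality through the short exact sequence $0 \to \mathfrak o \to \mathbb K \xrightarrow{\pi} D(\mathbb K) \to 0$. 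The only place to be slightly careful is to make sure that the continuity claims are justified by the discreteness of $D(\mathbb K)$ (and, in the statement as phrased, that when $\chi(\mathbb K) = \chi(\mathfrak r)$ the chosen homomorphic $\iota$ still satisfies the section identity, which it does by construction in Proposition \ref{class def}).
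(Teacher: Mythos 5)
Your proof is correct and follows essentially the same route as the paper: push the ultrametric bound $D(f)\leq 1$ through the quotient $\pi:\mathbb{K}\to\mathbb{K}/\mathfrak{o}$ to get a homomorphism, and use that $\iota$ is a section of $\pi$ (so $\iota\pi(x)-x\in\mathfrak{o}$ and $\iota(a)+\iota(b)-\iota(a+b)\in\mathfrak{o}$) for the remaining claims. The only cosmetic difference is that you verify the defect bound for $\iota h$ directly from the coboundary formula, whereas the paper notes that $\iota$ itself is a quasimorphism of defect at most $1$ and composes with the homomorphism $h$; both are the same computation.
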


\begin{proof}
Let $f : G \to \mathbb{K}$ be a continuous quasimorphism with $D(f) \leq 1$. Then for all $x, y \in G$, we have $f(xy) - f(x) - f(y) \in \mathfrak{o}$. Therefore the map $\pi f : G \to \mathbb{K} \to D(\mathbb{K})$ is a continuous homomorphism. Conversely, let $h : G \to \mathbb{K}$ be a continuous homomorphism. Then $\iota$ is a continuous quasimorphism of defect at most $1$, so $\iota h$ is too.

For all $x \in \mathbb{K}$, we have $\pi (\iota \pi(x)) = \pi(x)$, since $\iota$ is a section of $\pi$. This implies that $\iota \pi (x) - x \in \mathfrak{o}$. So for all $x \in G$ it holds $|\iota \pi f(x) - f(x)|_\mathbb{K} \leq 1$. The fact that $\pi \iota h = h$ follows again from $\iota$ being a section of $\pi$.
\end{proof}

Since any quasimorphism has a multiple of defect at most $1$, this lemma creates a bijective correspondence between $\mathbb{K}$-valued quasimorphisms and $D(\mathbb{K})$-valued homomorphisms (up to scalars and bounded functions). When $\iota$ is a homomorphism, it implies that all quasimorphisms are trivial.

\begin{theorem}
\label{qm split}

Let $\mathbb{K}$ be a non-Archimedean valued field, and suppose that $\chi(\mathbb{K}) = \chi(\mathfrak{r})$. Then for any group $G$, all $\mathbb{K}$-valued continuous quasimorphisms are trivial.
\end{theorem}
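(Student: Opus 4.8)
The plan is to reduce the triviality of an arbitrary $\mathbb{K}$-valued continuous quasimorphism to the existence of a \emph{homomorphic} section $\iota : D(\mathbb{K}) \to \mathbb{K}$, which is guaranteed in the regime $\chi(\mathbb{K}) = \chi(\mathfrak{r})$ by Proposition \ref{class def} (cases 1 and 2). Once $\iota$ is a homomorphism, Lemma \ref{qm key} does all the work: the composite $\iota \pi f$ is a homomorphism whenever $\pi f$ is, and $\pi f$ is a homomorphism as soon as $f$ has defect at most $1$.

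First I would make the standard normalization. Given a continuous quasimorphism $f : G \to \mathbb{K}$, pick a scalar $\lambda \in \mathbb{K}^\times$ with $|\lambda|_\mathbb{K}$ small enough that $D(\lambda f) = |\lambda|_\mathbb{K} D(f) \leq 1$; this is possible precisely because $\mathbb{K}$ is assumed non-trivially valued (as stipulated at the start of Section \ref{s_qm}), so $|\mathbb{K}^\times|$ accumulates at $0$. It suffices to prove $\lambda f$ is trivial, since triviality is evidently preserved under multiplication by a nonzero scalar (a homomorphism at bounded distance from $\lambda f$ yields, after dividing by $\lambda$, a homomorphism at bounded distance from $f$). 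So I may assume $D(f) \leq 1$ from the outset.

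Next, apply Lemma \ref{qm key}: $\pi f : G \to D(\mathbb{K})$ is a continuous homomorphism, and $\| \iota \pi f - f \|_\infty \leq 1$. Since $\iota$ was chosen to be a homomorphism (Proposition \ref{class def}, as $\chi(\mathbb{K}) = \chi(\mathfrak{r})$), the composite $\iota \pi f : G \to \mathbb{K}$ is a continuous homomorphism. Thus $f$ lies within $\ell^\infty$-distance $1$ of the continuous homomorphism $\iota \pi f$, i.e. $d_H(f) \leq 1 < \infty$, which by Definition \ref{def dH} and the remark following it means exactly that $f$ is a trivial quasimorphism.

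There is essentially no hard step here — the content was front-loaded into Proposition \ref{class def} (the existence of a vector-space complement to $\mathfrak{o}$ over the trivially valued prime field) and into Lemma \ref{qm key}. The only point requiring a moment of care is the scalar normalization, which genuinely uses non-trivial valuedness; on a trivially valued field every map is bounded and the statement is vacuous anyway, consistent with the blanket assumption in this section. One might also note for completeness that continuity of $\iota \pi f$ is automatic since $\pi$ is continuous (the quotient map onto the discrete group $D(\mathbb{K})$) and $\iota$, being a map out of a discrete group, is continuous — so no regularity is lost in the composition.
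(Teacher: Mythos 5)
Your proof is correct and follows essentially the same route as the paper's: normalize the defect to at most $1$ by a scalar, apply Lemma \ref{qm key} to get the homomorphism $\pi f$, and use the homomorphic section $\iota$ from Proposition \ref{class def} to produce the continuous homomorphism $\iota\pi f$ within bounded distance of $f$. The extra remarks on why the normalization and the continuity of the composite are legitimate are fine but not points of divergence.
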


\begin{proof}
By Proposition \ref{class def}, there exists a homomorphic section $\iota : D(\mathbb{K}) \to \mathbb{K}$. Let $f : G \to \mathbb{K}$ be a continuous quasimorphism. Up to scalar, suppose that $D(f) \leq 1$. Then by Lemma \ref{qm key}, $\iota \pi f : G \to \mathbb{K} \to D(\mathbb{K}) \to \mathbb{K}$ is a continuous quasimorphism at a bounded distance from $f$. Since $\iota$ and $\pi f$ are homomorphisms, so is $\iota \pi f$.
\end{proof}

\begin{remark}
This result will be generalized to higher degrees in Theorem \ref{qz split}.
\end{remark}

\begin{example}
The previous theorem applies to all fields of positive characteristic, all ordered fields (see the proof of Proposition \ref{ordered}), all fields of rational functions, Laurent series, and all extensions of such fields.
\end{example}

Lemma \ref{qm key} also allows to characterize unbounded $\mathbb{K}$-valued quasimorphisms:

\begin{proposition}
\label{class qm 0}

Let $G$ be a group, and let $f : G \to \mathbb{K}$ be a continuous quasimorphism with $D(f) \leq 1$. Let $U := f^{-1}(\mathfrak{o})$. Then $U$ is normal in $G$ and the following are equivalent:
\begin{enumerate}
\item $f$ is unbounded.
\item $\pi f : G \to D(\mathbb{K})$ is unbounded.
\item $\pi f$ induces an isomorphism between $G/U$ and an unbounded subgroup of $D(\mathbb{K})$.
\end{enumerate}
\end{proposition}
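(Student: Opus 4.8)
The plan is to use Lemma \ref{qm key} as the engine: since $D(f)\le 1$, the composite $\pi f : G \to D(\mathbb{K})$ is a continuous homomorphism, and moreover $\|\iota\pi f - f\|_\infty \le 1$. So $f$ differs from the $\mathbb{K}$-valued map $\iota\pi f$ by a function bounded by $1$, and this immediately gives the equivalence $1.\Leftrightarrow 2.$: $f$ is unbounded if and only if $\iota\pi f$ is unbounded, and since $|\iota(x\bmod\mathfrak{o})|_\mathbb{K} = |x\bmod\mathfrak{o}|_{D(\mathbb{K})}$ for $x\notin\mathfrak{o}$, the image of $\iota\pi f$ is unbounded in $\mathbb{K}$ precisely when the image of $\pi f$ is unbounded in $D(\mathbb{K})$. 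I would phrase this cleanly using that a map into $\mathbb{K}$ is bounded iff it is eventually valued in $\mathfrak{o}$ up to scaling, whereas $D(\mathbb{K})$ carries the quotient norm.

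Next I would identify the kernel. By definition $U = f^{-1}(\mathfrak{o})$, and I claim $U = \ker(\pi f)$: indeed $\pi f(g) = 0$ in $D(\mathbb{K})$ means exactly $f(g)\in\mathfrak{o}$. Since $\pi f$ is a homomorphism, $U$ is a normal subgroup of $G$ (and it is open, hence closed, since $\mathfrak{o}$ is clopen and $f$ is continuous, though normality is all that is claimed). The first isomorphism theorem then gives that $\pi f$ induces an injective homomorphism $\overline{\pi f} : G/U \hookrightarrow D(\mathbb{K})$ onto a subgroup of $D(\mathbb{K})$. This is the content of $3.$: $\pi f$ is unbounded if and only if its image, which equals the image of $\overline{\pi f}$, is an unbounded subgroup of $D(\mathbb{K})$; and $\overline{\pi f}$ is by construction an isomorphism onto that image. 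So $2.\Leftrightarrow 3.$ is essentially bookkeeping once $U = \ker(\pi f)$ is established.

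I do not anticipate a serious obstacle here; the only point requiring a little care is the precise relation between boundedness of a $\mathbb{K}$-valued quasimorphism and boundedness of the associated $D(\mathbb{K})$-valued homomorphism, which must be handled through the ultrametric inequality (so that, for instance, boundedness of $f$ on a generating-type set does not secretly propagate — but in fact here we want the reverse direction, and the estimate $\|\iota\pi f - f\|_\infty\le 1$ makes it trivial). One should also remark that an unbounded quasimorphism is automatically non-trivial in the relevant cases, but that is not needed for the statement as phrased. I would therefore organize the proof as: (i) $U$ normal via $U=\ker(\pi f)$; (ii) $1.\Leftrightarrow 2.$ via $\|\iota\pi f-f\|_\infty\le 1$ and the isometry property of the quotient norm on $D(\mathbb{K})$; (iii) $2.\Leftrightarrow 3.$ via the first isomorphism theorem identifying $\operatorname{im}(\pi f)$ with $G/U$.
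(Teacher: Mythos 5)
Your proposal is correct and follows essentially the same route as the paper: both rest on Lemma \ref{qm key} to see that $\pi f$ is a continuous homomorphism with kernel $U = f^{-1}(\mathfrak{o})$ (whence normality), and both track unboundedness through $\pi$ and $\iota$ using the estimate $\|\iota\pi f - f\|_\infty \le 1$ and the fact that the quotient norm on $D(\mathbb{K})$ agrees with $|\cdot|_\mathbb{K}$ outside $\mathfrak{o}$. The only cosmetic difference is that the paper runs the cycle $1.\Rightarrow 2.\Rightarrow 3.\Rightarrow 1.$ while you prove the two equivalences directly; your parenthetical that $U$ is open is worth keeping, since it is what makes $G/U$ discrete and hence genuinely isomorphic to a subgroup of the discrete group $D(\mathbb{K})$.
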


\begin{proof}
The implication $1. \Rightarrow 2.$ is clear.

The map $\pi f$ is a homomorphism by Lemma \ref{qm key}, and so $U = f^{-1}(\mathfrak{o}) = \ker(\pi f)$ is a normal subgroup of $G$. Moreover $U$ is open, and so $G/U$ is discrete. The implication $2. \Rightarrow 3.$ follows. 

Finally, let $A$ be an unbounded subgroup of $D(\mathbb{K})$ such that $\pi f$ induces an isomorphism $G/U \cong A$. By Lemma \ref{qm key}, the composition $G \to G/U \cong A \hookrightarrow D(\mathbb{K}) \xrightarrow{\iota} \mathbb{K}$, which is an unbounded quasimorphism, is at a bounded distance from $f$. This proves $2. \Rightarrow 3.$
\end{proof}

So when looking at unbounded quasimorphisms, it is enough to restrict to groups which admit unbounded subgroups of $D(\mathbb{K})$ as quotients. For example, if $G$ is normed $\mathbb{K}$-amenable, then by Theorem \ref{luc 0} every left uniformly continuous quasimorphism is trivial; but this observation implies a much stronger statement:

\begin{corollary}
Let $G$ be normed $\mathbb{K}$-amenable. Then any continuous $\mathbb{K}$-valued quasimorphism is bounded.
\end{corollary}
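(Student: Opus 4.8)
The plan is to combine Proposition~\ref{class qm 0} with the algebraic characterization of normed $\mathbb{K}$-amenability. Recall that a continuous $\mathbb{K}$-valued quasimorphism $f$ always has a scalar multiple of defect at most $1$, and $f$ is bounded if and only if that multiple is; so we may assume from the start that $D(f) \leq 1$. By Proposition~\ref{class qm 0}, if such an $f$ is unbounded, then $\pi f$ induces an isomorphism between $G/U$ and an unbounded subgroup $A$ of the defect group $D(\mathbb{K})$, where $U = f^{-1}(\mathfrak{o})$ is an open normal subgroup of $G$. So it suffices to rule out the existence of an open normal subgroup $U \trianglelefteq G$ with $G/U$ isomorphic (as a discrete group) to an unbounded subgroup of $D(\mathbb{K})$.

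The main step is then to show that, for $G$ normed $\mathbb{K}$-amenable, every discrete quotient of $G$ by an open normal subgroup embeds into $D(\mathbb{K})$ only as a \emph{bounded} subgroup. First I would dispose of the easy cases using the trichotomy: if $\chi(\mathbb{K}) = \chi(\mathfrak{r})$, then Theorem~\ref{qm split} already tells us that \emph{all} continuous $\mathbb{K}$-valued quasimorphisms (from any group whatsoever) are trivial, hence bounded since $G$ is periodic by Example~\ref{npa period}; no amenability hypothesis is needed there. The remaining case is $\chi(\mathbb{K}) = 0$, $\chi(\mathfrak{r}) = p > 0$. Here, by Proposition~\ref{class def}, $D(\mathbb{K})$ is a direct sum of copies of $\mathbb{Z}(p^\infty)$, so any subgroup of $D(\mathbb{K})$ is a (possibly infinite) direct sum of finite cyclic $p$-groups; the subgroup is bounded precisely when the orders of these cyclic summands are bounded, equivalently when the subgroup is $p^k$-free for some $k$ in the sense of $p^\mathbb{N}$-freeness. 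Now $G/U$ is a discrete quotient of the normed $\mathbb{K}$-amenable group $G$: by Proposition~\ref{quot} it is normed $\mathbb{K}$-amenable with $\|G/U\|_\mathbb{K} \leq \|G\|_\mathbb{K} < \infty$, and by Proposition~\ref{amen p^N free} it is $p^\mathbb{N}$-free, say $p^k$-free with $p^k > \|G/U\|_\mathbb{K}$. If $G/U$ embeds in $D(\mathbb{K})$, its image is therefore a $p^k$-free subgroup of a direct sum of Pr\"ufer groups, hence bounded. This forces $\pi f$ to be bounded, and then $\|\iota \pi f - f\|_\infty \leq 1$ from Lemma~\ref{qm key} gives that $f$ itself is bounded.

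There is essentially no obstacle here; the only point requiring a moment's care is the translation ``$p^k$-free subgroup of $\bigoplus_I \mathbb{Z}(p^\infty)$ $\Longrightarrow$ bounded,'' which follows because such a subgroup is a direct sum of cyclic $p$-groups (a subgroup of a direct sum of Pr\"ufer groups is again of this form, being an abelian $p$-group all of whose finitely generated subgroups are finite cyclic-by... — more simply, any subgroup $A \leq \bigoplus_I \mathbb{Z}(p^\infty)$ is divisible-by-reduced, and the $p^k$-freeness of $A$ means every finite subgroup has order dividing $p^{k-1}$, so $A$ has exponent dividing $p^{k-1}$ and is thus contained in the $p^{k-1}$-torsion of $D(\mathbb{K})$, which has norm $\leq |p^{-(k-1)}|_\mathbb{K}^{-1} = p^{k-1}$, hence is bounded). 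I would state this observation as a one-line lemma or fold it into the proof. An alternative, cleaner route avoiding the case split entirely: since $G$ is normed $\mathbb{K}$-amenable it is periodic (Example~\ref{npa period} in the spherically complete case, and in general by Theorem~\ref{main nsc}/\ref{main sc}), so any homomorphism $G \to \mathbb{K}$ is trivial; combined with Lemma~\ref{qm key}, boundedness of $f$ reduces to boundedness of the homomorphism $\pi f : G \to D(\mathbb{K})$, and this is exactly where the $p^\mathbb{N}$-freeness coming from Proposition~\ref{amen p^N free} does the work. I would present the proof in roughly three sentences along these lines.

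\begin{proof}
Let $f : G \to \mathbb{K}$ be a continuous quasimorphism. Up to multiplying by a scalar, which does not affect boundedness, we may assume $D(f) \leq 1$, so that $\pi f : G \to D(\mathbb{K})$ is a continuous homomorphism and $\|\iota \pi f - f\|_\infty \leq 1$ by Lemma~\ref{qm key}; it therefore suffices to show that $\pi f$ is bounded. If $\chi(\mathbb{K}) = \chi(\mathfrak{r})$ then $\pi f = 0$ by the argument of Theorem~\ref{qm split} (taking $\iota$ a homomorphism, $\iota\pi f$ is a homomorphism at bounded distance from $f$, but $G$ is periodic so $\iota \pi f = 0$). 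Assume then $\chi(\mathbb{K}) = 0$ and $\chi(\mathfrak{r}) = p > 0$, so that by Proposition~\ref{class def} the group $D(\mathbb{K})$ is a direct sum of copies of $\mathbb{Z}(p^\infty)$. The kernel $U := \ker(\pi f) = f^{-1}(\mathfrak{o})$ is an open normal subgroup of $G$, so $G/U$ is discrete and, by Proposition~\ref{quot}, normed $\mathbb{K}$-amenable with $\|G/U\|_\mathbb{K} \leq \|G\|_\mathbb{K} < \infty$. By Proposition~\ref{amen p^N free}, $G/U$ is $p^k$-free for any $k$ with $p^k > \|G/U\|_\mathbb{K}$. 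The image $A := \pi f(G) \cong G/U$ is thus a $p^k$-free subgroup of a direct sum of Pr\"ufer groups; every finite subgroup of $A$ has order prime to $p^k$, and being a $p$-group has order dividing $p^{k-1}$, so $A$ has exponent dividing $p^{k-1}$ and hence $|a|_{D(\mathbb{K})} \leq |p^{k-1}|_\mathbb{K}^{-1}$ for all $a \in A$. Therefore $\pi f$ is bounded, and so is $f$.
\end{proof}
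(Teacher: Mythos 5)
Your reduction (scale to $D(f)\leq 1$, pass to the homomorphism $\pi f : G \to D(\mathbb{K})$ via Lemma~\ref{qm key}, and bound its image) is the right skeleton, and your treatment of the case $\chi(\mathbb{K})=0$, $\chi(\mathfrak{r})=p$ is correct: it is essentially the paper's argument, only more quantitative (the paper notes that an infinite subgroup of the $p$-group $D(\mathbb{K})$ is never $p^{\mathbb{N}}$-free, hence never normed $\mathbb{K}$-amenable, contradicting Propositions~\ref{class qm 0} and~\ref{quot}; you instead extract from $p^k$-freeness that the image has exponent dividing $p^{k-1}$, giving an explicit bound on $\|\pi f\|_\infty$). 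The case $\chi(\mathbb{K})=\chi(\mathfrak{r})=0$ is also fine, since there $\mathbb{K}$ is torsion-free and periodicity of $G$ kills the homomorphism. (Minor slip: ``order prime to $p^k$'' should read ``order not divisible by $p^k$''.)

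However, the equal-characteristic-$p$ subcase contains a genuine gap. Your justification ``$\iota\pi f$ is a homomorphism at bounded distance from $f$, but $G$ is periodic so $\iota\pi f = 0$'' (and the claim in your plan that no amenability hypothesis is needed when $\chi(\mathbb{K})=\chi(\mathfrak{r})$) uses that homomorphisms from periodic groups to $\mathbb{K}$ vanish, which is only true when the additive group of $\mathbb{K}$ is torsion-free, i.e.\ when $\chi(\mathbb{K})=0$. When $\chi(\mathbb{K})=p$ the additive group of $\mathbb{K}$ has exponent $p$, and periodic --- even locally elliptic --- groups can carry unbounded continuous homomorphisms into $\mathbb{K}$: the identity map on the additive group of $\mathbb{F}_p((X))$ is already an unbounded continuous quasimorphism (indeed homomorphism) on a locally elliptic, hence periodic, group. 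So triviality of the quasimorphism plus periodicity does not yield boundedness in positive characteristic. The fix is one line and uses exactly the hypothesis you left unused there: in characteristic $p$, normed $\mathbb{K}$-amenability of $G$, hence of the discrete quotient $G/U$ by Proposition~\ref{quot}, gives $p$-freeness (Proposition~\ref{amen p^N free}, or Theorems~\ref{main nsc} and~\ref{main sc}), while $D(\mathbb{K})$ is an $\mathbb{F}_p$-vector space by Proposition~\ref{class def}; a $p$-free discrete group has no element of order $p$, so $\pi f(G) \cong G/U$ embeds trivially in $D(\mathbb{K})$ and $f$ is bounded. This is the same mechanism as your mixed-characteristic argument, and it is how the paper handles both cases with $\chi(\mathfrak{r})=p$ at once.
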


\begin{proof}
By Proposition \ref{class qm 0} and Proposition \ref{quot}, it suffices to show that an infinite subgroup of $D(\mathbb{K})$ is not normed $\mathbb{K}$-amenable. If $\chi(\mathfrak{r}) = p$, then Proposition \ref{class def} implies that $D(\mathbb{K})$ is a $p$-group, and so any infinite subgroup of $D(\mathbb{K})$ is not $p^\mathbb{N}$-free, so not normed $\mathbb{K}$-amenable by Theorems \ref{main nsc} and \ref{main sc}. Similarly, if $\chi(\mathfrak{r}) = 0$, then $D(\mathbb{K})$ is torsion-free, and so any infinite subgroup of $D(\mathbb{K})$ is not locally elliptic, so not normed $\mathbb{K}$-amenable.
\end{proof}

Since $D(\mathbb{K})$ is abelian, we may make the following restriction:

\begin{corollary}
\label{qm abelian quot}

Any continuous $\mathbb{K}$-valued quasimorphism of $G$ is at a bounded distance from one that factors through $G/\overline{[G, G]}$, the largest abelian Hausdorff quotient of $G$.
\end{corollary}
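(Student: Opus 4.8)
The plan is to deduce this immediately from Lemma \ref{qm key} together with the fact that the defect group $D(\mathbb{K})$ is abelian. First I would reduce to the case $D(f)\leq 1$. Given an arbitrary continuous quasimorphism $f:G\to\mathbb{K}$, since the norm on $\mathbb{K}$ is non-trivial there is a scalar $\alpha\in\mathbb{K}^\times$ with $|\alpha|_\mathbb{K}$ small enough that $D(\alpha f)=|\alpha|_\mathbb{K}D(f)\leq 1$. A quasimorphism is at bounded distance from one factoring through $G/\overline{[G,G]}$ if and only if any nonzero scalar multiple of it is: indeed if $\|\alpha f-g\|_\infty<\infty$ with $g$ factoring through the quotient map $q:G\to G/\overline{[G,G]}$, then $\|f-\alpha^{-1}g\|_\infty=|\alpha^{-1}|_\mathbb{K}\,\|\alpha f-g\|_\infty<\infty$ and $\alpha^{-1}g$ still factors through $q$. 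So it suffices to treat $\alpha f$, i.e.\ we may assume $D(f)\leq 1$.

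Next, with $D(f)\leq 1$, Lemma \ref{qm key} gives that $\pi f:G\to D(\mathbb{K})$ is a continuous homomorphism, where $\pi:\mathbb{K}\to D(\mathbb{K})$ is the canonical projection. Since $D(\mathbb{K})$ is abelian, $\pi f$ kills $[G,G]$, hence by continuity of $\pi f$ it kills $\overline{[G,G]}$ as well; therefore $\pi f$ factors as $\pi f=\bar h\circ q$ for a homomorphism $\bar h:G/\overline{[G,G]}\to D(\mathbb{K})$, which is automatically continuous for the quotient topology on $G/\overline{[G,G]}$.

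Finally I would compare $f$ with $\iota\pi f$, where $\iota:D(\mathbb{K})\to\mathbb{K}$ is the chosen section. By construction $\iota\pi f=(\iota\bar h)\circ q$, so $\iota\pi f$ factors through $G/\overline{[G,G]}$ via the continuous quasimorphism $\iota\bar h$ (which is a continuous quasimorphism of defect at most $1$ by the converse direction of Lemma \ref{qm key}). The last assertion of Lemma \ref{qm key} yields $\|\iota\pi f-f\|_\infty\leq 1<\infty$, so $f$ lies at bounded distance from a quasimorphism factoring through the largest abelian Hausdorff quotient of $G$, as desired. The argument is essentially a formality once Lemma \ref{qm key} is available; the only point requiring a moment's care is the reduction to defect $\leq 1$ by scaling, which is harmless precisely because all the relevant distances rescale by $|\alpha|_\mathbb{K}$.
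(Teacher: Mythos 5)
Your proposal is correct and follows the paper's intended route: after the harmless rescaling to defect at most $1$, you apply Lemma \ref{qm key} to replace $f$ (up to distance $1$) by $\iota\pi f$, where $\pi f$ is a continuous homomorphism into the abelian, discrete (hence Hausdorff) group $D(\mathbb{K})$ and therefore factors through $G/\overline{[G,G]}$. This is exactly the argument the paper has in mind when it states the corollary immediately after remarking that $D(\mathbb{K})$ is abelian.
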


\begin{example}
Let $G$ be a group such that $G/\overline{[G, G]}$ is compact. Then for any $\mathbb{K}$, all quasimorphisms $G \to \mathbb{K}$ are bounded. This applies in particular to perfect groups or topologically simple groups.
\end{example}


This is already very surprising. For comparison, $EH^2_b(F_2, \mathbb{R})$ is uncountably dimensional, while $EH^2_b(Ab(F_2), \mathbb{R}) = EH^2_b(\mathbb{Z}^2, \mathbb{R}) = 0$ (see \cite[Chapter 2]{Frig}). More generally, the second bounded cohomology with trivial real coefficients vanishes for any abelian group, so the analogue of this result cannot hold for any non-abelian group admitting a non-trivial real-valued quasimorphism.

\begin{example}
\label{EH2b Zli}

Let $G$ be a group that does not admit $\mathbb{Z}(p^\infty)$ as a discrete quotient. Then all $\mathbb{Q}_p$-valued continuous quasimorphisms of $G$ are bounded; in particular $EH^2_{cb}(G, \mathbb{Q}_p) = 0$. This applies to all periodic groups with a bound on the order of their cyclic $p$-subgroups, so for instance $EH^2_b(\mathbb{Z}(\ell^\infty), \mathbb{Q}_p) = 0$ for all primes $\ell \neq p$.
\end{example}

\begin{example}
The previous example applies to $\mathbb{Q}_{\ell}$ for all primes $\ell \neq p$. Indeed, any open normal subgroup of $\mathbb{Q}_{\ell}$ contains $\ell^k \mathbb{Z}_{\ell}$ for some $k$, so any discrete quotient of $\mathbb{Q}_{\ell}$ is also a quotient of $\mathbb{Q}_{\ell} / \ell^k \mathbb{Z}_{\ell} \cong \mathbb{Z}(\ell^\infty)$, which is an $\ell$-group. Since $p \neq \ell$, the Pr\"ufer $p$-group cannot be a discrete quotient of $\mathbb{Q}_{\ell}$.
\end{example}

Corollary \ref{qm abelian quot} is potentially useful, because when reducing to locally compact abelian groups one can use the tools from duality theory. For instance, in \cite{char} it is proven that choosing $\mathbb{Z}(p^\infty)$ as a value group for a theory of characters, yields a duality analogous to Pontryagin's classical one for t.d.l.c. abelian \emph{topological $p$-groups}: those in which all elements $x$ satisfy $p^k x \xrightarrow{k \to \infty} 0$. This is relevant since continuous homomorphisms onto $\mathbb{Z}(p^\infty)$ are closely related to continuous quasimorphisms to $\mathbb{Q}_p$, by Lemma \ref{qm key}. We will not use this approach in what follows: we are able to prove strong results even without using local compactness; but we believe that some more results may be proven this way. \\

Here is a case where reducing to abelian groups one can prove more. The following result is reminiscent of the extension of quasicocycles from hyperbolically embedded subgroups (\cite{Osin} in degree $2$ and \cite{FPS} in higher degrees).

\begin{corollary}
Let $G$ be abelian, and let $U \leq G$ be an open subgroup. Then any non-trivial continuous quasimorphism of $U$ is at a bounded distance from one which extends to a non-trivial continuous quasimorphism of $G$. In particular, if $EH^2_{cb}(G, \mathbb{K}) = 0$, then $EH^2_{cb}(U, \mathbb{K}) = 0$.
\end{corollary}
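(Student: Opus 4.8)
The plan is to transport the problem along Lemma \ref{qm key}, which up to bounded perturbation identifies $\mathbb{K}$-valued continuous quasimorphisms of defect $\le 1$ with $D(\mathbb{K})$-valued continuous homomorphisms, and then to invoke the fact that homomorphisms into divisible groups extend across open subgroups (Corollary \ref{div ext}). First I would dispose of the case $\chi(\mathbb{K}) = \chi(\mathfrak{r})$: there every $\mathbb{K}$-valued continuous quasimorphism of any group is trivial by Theorem \ref{qm split}, so the first assertion is vacuous and the second holds trivially. Hence the only case to treat is $\chi(\mathbb{K}) = 0$, $\chi(\mathfrak{r}) = p > 0$, and here Proposition \ref{class def} tells us that $D(\mathbb{K})$ is a direct sum of copies of $\mathbb{Z}(p^\infty)$, so in particular a divisible abelian group; it is also discrete, its norm taking no nonzero value $\le 1$.

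Next I would take a non-trivial continuous quasimorphism $f : U \to \mathbb{K}$. Such an $f$ is unbounded, since a bounded quasimorphism is at bounded distance from the zero homomorphism. Rescaling $f$ by a suitable nonzero scalar (possible since the norm is non-trivial), which affects neither the hypothesis nor the conclusion, I may assume $D(f) \le 1$. By Lemma \ref{qm key}, $h := \pi f : U \to D(\mathbb{K})$ is then a continuous homomorphism with $\| \iota h - f \|_\infty \le 1$, and $h$ is unbounded by Proposition \ref{class qm 0} (equivalently, because $\iota$ is norm-preserving on nonzero elements, so $f$ unbounded forces $\iota h$, hence $h$, unbounded). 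Because $G$ — hence $U$ — is abelian and $U$ is open in $G$, Corollary \ref{div ext} extends $h$ to a continuous homomorphism $\tilde h : G \to D(\mathbb{K})$. I would then set $\tilde f := \iota \tilde h : G \to \mathbb{K}$: by the second half of Lemma \ref{qm key} it is a continuous quasimorphism, its restriction to $U$ is $\iota h$, which lies at bounded distance from $f$, and $\pi \tilde f = \tilde h$ extends the unbounded homomorphism $h$, so $\tilde f$ is unbounded and therefore non-trivial. This gives the first claim, with $\iota h$ playing the role of the quasimorphism close to $f$ that extends to all of $G$.

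For the final sentence, suppose $EH^2_{cb}(G, \mathbb{K}) = 0$ — equivalently, every continuous quasimorphism $G \to \mathbb{K}$ is trivial — and suppose for contradiction that $U$ carries a non-trivial continuous quasimorphism $f$. The construction above yields a non-trivial continuous quasimorphism $\tilde f$ of $G$ whose restriction to $U$ is at bounded distance from $f$, contradicting the hypothesis; hence $EH^2_{cb}(U, \mathbb{K}) = 0$. I do not anticipate a genuine obstacle: the argument is essentially a splice of Lemma \ref{qm key} with Corollary \ref{div ext}, and the only delicate point is checking that the extended quasimorphism $\tilde f$ remains non-trivial, which is exactly the bookkeeping of unboundedness through $\pi$ and $\iota$ already carried out in Proposition \ref{class qm 0}.
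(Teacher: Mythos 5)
Your construction is the same as the paper's: dispose of the case $\chi(\mathbb{K}) = \chi(\mathfrak{r})$ via Theorem \ref{qm split}, then in the case $\chi(\mathbb{K}) = 0$, $\chi(\mathfrak{r}) = p$ use Lemma \ref{qm key} to replace $f$ (after rescaling to defect $\leq 1$) by $\iota h$ with $h = \pi f : U \to D(\mathbb{K})$ a continuous homomorphism, extend $h$ to $\tilde h : G \to D(\mathbb{K})$ by Corollary \ref{div ext} using divisibility of $D(\mathbb{K})$ (Proposition \ref{class def}), and take $\tilde f = \iota \tilde h$. Up to that point everything is correct.

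The gap is in your final step, precisely the one you single out as the delicate point: you conclude that $\tilde f$ is non-trivial because it is unbounded, and you cite Proposition \ref{class qm 0} for this. But Proposition \ref{class qm 0} only controls \emph{unboundedness}, and unboundedness does not imply non-triviality: a trivial quasimorphism is one at bounded distance from a homomorphism, and homomorphisms (hence trivial quasimorphisms) can perfectly well be unbounded. Concretely, $\iota \pi : \mathbb{Q}_p \to \mathbb{Q}_p$ is unbounded yet strongly trivial (Example \ref{EH2b Qp}), so the inference ``$\tilde f$ unbounded, therefore non-trivial'' fails as a general step, and nothing in your argument rules out that $\tilde h$, once extended to $G$, might factor through a continuous homomorphism $G \to \mathbb{K}$ even though $h$ does not factor through one on $U$. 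The correct (and short) way to close the argument, which is what the paper does, is by restriction: if $\tilde f = \iota \tilde h$ were trivial on $G$, it would be at bounded distance from a continuous homomorphism $g : G \to \mathbb{K}$; restricting to $U$, the quasimorphism $\iota h$ would be at bounded distance from the continuous homomorphism $g|_U$, hence trivial, and so would $f$, contradicting the hypothesis. With that one substitution your proof coincides with the paper's.
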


\begin{proof}
If $\chi(\mathbb{K}) = \chi(\mathfrak{r})$, then the statement is empty by Theorem \ref{qm split}. So we may assume that $\chi(\mathbb{K}) = 0$ and $\chi(\mathfrak{r}) = p$, in which case $D(\mathbb{K})$ is divisible by Proposition \ref{class def}. By Lemma \ref{qm key}, it suffices to consider unbounded quasimorphisms of the form $\iota f$, where $\iota : D(\mathbb{K}) \to \mathbb{K}$ is a section and $f : U \to D(\mathbb{K})$ is a continuous homomorphism. By Corollary \ref{div ext}, we may extend $f$ to a continuous homomorphism $f^\# : G \to D(\mathbb{K})$. Then $\iota f^\#$ is a continuous quasimorphism of $G$ which extends $f$. If $\iota f^\#$ is trivial, then it is at a bounded distance from a continuous homomorphism $g : G \to \mathbb{K}$, so $\iota f$ is trivial too, being at a bounded distance from $g|_U$.
\end{proof}

For comparison, $EH^2_b(F_2, \mathbb{R})$ is uncountably dimensional, but there are many groups containing a non-abelian free group all of whose quasimorphisms are trivial, for instance $\operatorname{SL}_n(\mathbb{Z})$ for $n \geq 3$ \cite[Example 5.33]{scl}. \\

So far we have considered \emph{algebraic} properties of $D(\mathbb{K})$ that obstruct the existence of an unbounded continuous quasimorphism $G \to \mathbb{K}$. We can also consider a \emph{topological} property, namely the discreteness of $D(\mathbb{K})$. The kernel of a continuous homomorphism from $G$ to a discrete group is open. Let $R_0$ be the intersection of all open normal subgroups of $G$. This is called the \emph{discrete residual} of $G$: any continuous homomorphism from $G$ to a discrete group factors through $G/R_0$. The group $G/R_0$ is \emph{residually discrete}: for any non-identity element there exists a continuous homomorphism onto a discrete group which does not have it in the kernel.

\begin{corollary}
Any continuous $\mathbb{K}$-valued quasimorphism of $G$ is at a bounded distance from one that factors through $G/R_0$.
\end{corollary}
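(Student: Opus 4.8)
The plan is to follow the same pattern as the preceding two corollaries, combining Lemma \ref{qm key} with the fact that the defect group $D(\mathbb{K})$ carries the \emph{discrete} topology.

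First I would reduce to the case $D(f) \le 1$. Given a continuous quasimorphism $f : G \to \mathbb{K}$, if $f$ is a homomorphism it already factors through $G/R_0$ (its kernel is open normal, hence contains $R_0$), so assume $D(f) > 0$. Since $\mathbb{K}$ is non-trivially valued, there is $\lambda \in \mathbb{K}^\times$ with $|\lambda|_\mathbb{K} \le D(f)^{-1}$, so that $D(\lambda f) = |\lambda|_\mathbb{K} D(f) \le 1$. It is enough to prove the claim for $\lambda f$: if $g$ is a continuous quasimorphism factoring through $G/R_0$ with $\| \lambda f - g \|_\infty < \infty$, then $\lambda^{-1} g$ is again a continuous quasimorphism, it still factors through $G/R_0$, and $\| f - \lambda^{-1} g \|_\infty = |\lambda|_\mathbb{K}^{-1} \| \lambda f - g \|_\infty < \infty$.

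Now assume $D(f) \le 1$. By Lemma \ref{qm key}, the map $\pi f : G \to D(\mathbb{K})$ is a continuous homomorphism, the map $\iota \pi f : G \to \mathbb{K}$ is a continuous quasimorphism (of defect at most $1$), and $\| \iota \pi f - f \|_\infty \le 1$. Since $D(\mathbb{K})$ is discrete, $\ker(\pi f) = (\pi f)^{-1}(\{0\})$ is an open subgroup of $G$, and it is normal as a kernel of a homomorphism; hence $R_0 \subseteq \ker(\pi f)$ by definition of the discrete residual. Therefore $\pi f$ is constant on cosets of $R_0$, and so is $\iota \pi f = \iota \circ (\pi f)$. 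Taking $g := \iota \pi f$ finishes the proof.

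I do not expect a genuine obstacle here: all the content is already packaged in Lemma \ref{qm key} and in the observation that $D(\mathbb{K}) = \mathbb{K}/\mathfrak{o}$ is discrete, so the kernel of \emph{any} continuous homomorphism into it is open. The only point requiring a moment's care is the rescaling bookkeeping, i.e.\ checking that multiplying by $\lambda$ and $\lambda^{-1}$ preserves both the property of factoring through $G/R_0$ and the finiteness of the sup-distance — both of which are immediate from $\|\lambda x\| = |\lambda|_\mathbb{K}\|x\|$.
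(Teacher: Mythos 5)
Your argument is correct and is exactly the paper's intended one: rescale to defect at most $1$, apply Lemma \ref{qm key}, and use that $D(\mathbb{K})$ is discrete, so $\ker(\pi f)$ is open and normal and hence contains the discrete residual $R_0$. One small slip worth noting: in your preliminary homomorphism case you justify that $f$ itself factors through $G/R_0$ by saying its kernel is open, but a continuous homomorphism into $\mathbb{K}$ (which is not discrete) need not have open kernel; this case is in any event redundant, since the main argument with $D(f) \le 1$ already covers $D(f) = 0$, producing $\iota \pi f$ at distance at most $1$ from $f$.
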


So in the study of $\mathbb{K}$-valued quasimorphisms up to bounded distance, one may reduce to totally disconnected abelian residually discrete groups. Of course one may take this even further by considering the quotient by the intersection of all open subgroups which are kernels of unbounded homomorphisms $G \to D(\mathbb{K})$, but we will stop here.

\subsection{$\mathbb{Q}_p$-valued quasimorphisms}

In light of Theorem \ref{qm split}, the only fields admitting non-trivial quasimorphisms are those satisfying $\chi(\mathbb{K}) = 0$ and $\chi(\mathfrak{r}) = p$. They all admit at least one non-trivial quasimorphism, namely any section $\iota : D(\mathbb{K}) \to \mathbb{K}$, by Proposition \ref{class def}. If $\mathbb{K}$ is complete, this condition is equivalent to $\mathbb{K}_0 \cong \mathbb{Q}_p$, where as usual $\mathbb{K}_0$ denotes the closure of the prime field. Thus we will focus on the case $\mathbb{K} = \mathbb{Q}_p$ in this subsection, for which non-trivial quasimorphisms can be classified in a suitable sense. \\

In this setting we can exploit the special structure of $\mathbb{Z}(p^\infty)$: all proper subgroups are of the form $\mathbb{Z}/p^k \mathbb{Z}$ for some $k \geq 0$, and all non-trivial quotients are isomorphic to $\mathbb{Z}(p^\infty)$ itself. Notice that in this case the defect $D(\iota)$ of the standard section is equal to $1$, and so the defect of a quasimorphism of the form $G \to \mathbb{Z}(p^\infty) \to \mathbb{Q}_p$ is also equal to $1$. Moreover, the induced norm of a non-zero element in $\mathbb{Z}(p^\infty)$ equals its order. Proposition \ref{class qm 0} is rephrased as follows:

\begin{proposition}
\label{class qm 0 Qp}

Let $G$ be a group, and let $f : G \to \mathbb{Q}_p$ be a continuous quasimorphism with $D(f) = 1$. Let $U := f^{-1}(\mathbb{Z}_p)$. Then $U$ is normal in $G$ and the following are equivalent:
\begin{enumerate}
\item $f$ is unbounded.
\item $\pi f : G \to \mathbb{Z}(p^\infty)$ is surjective.
\item $G/U \cong \mathbb{Z}(p^\infty)$.
\end{enumerate}
\end{proposition}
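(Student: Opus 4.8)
The plan is to deduce Proposition \ref{class qm 0 Qp} as a direct specialization of Proposition \ref{class qm 0} to the case $\mathbb{K} = \mathbb{Q}_p$, using the explicit identification $D(\mathbb{Q}_p) \cong \mathbb{Z}(p^\infty)$ (Example \ref{pruf}) and the special subgroup/quotient structure of the Pr\"ufer group (recalled in Example \ref{ex pruf}). First I would note that $\mathfrak{o} = \mathbb{Z}_p$ here, so $U = f^{-1}(\mathbb{Z}_p)$ is exactly the set $f^{-1}(\mathfrak{o})$ appearing in Proposition \ref{class qm 0}; hence $U$ is normal in $G$ and, by Lemma \ref{qm key}, equals $\ker(\pi f)$ where $\pi f : G \to \mathbb{Z}(p^\infty)$ is a continuous homomorphism.

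The three equivalences then follow by matching them with the three conditions of Proposition \ref{class qm 0}. Condition $1$ (that $f$ is unbounded) is identical in both statements. For condition $2$, I would observe that a homomorphism $\pi f : G \to \mathbb{Z}(p^\infty)$ is unbounded if and only if its image is infinite (since finite subgroups of $\mathbb{Z}(p^\infty)$ are the bounded ones), and the only infinite subgroup of $\mathbb{Z}(p^\infty)$ is $\mathbb{Z}(p^\infty)$ itself; hence $\pi f$ unbounded $\iff$ $\pi f$ surjective. For condition $3$, Proposition \ref{class qm 0} gives that $\pi f$ induces an isomorphism between $G/U$ and an unbounded (equivalently, by the same remark, infinite) subgroup of $\mathbb{Z}(p^\infty)$; the only such subgroup being $\mathbb{Z}(p^\infty)$, this is precisely the statement $G/U \cong \mathbb{Z}(p^\infty)$. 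The remark about $D(f) = 1$ (rather than merely $D(f) \le 1$) and about the induced norm on $\mathbb{Z}(p^\infty)$ equaling the order is cosmetic bookkeeping already recorded in the paragraph preceding the statement, so it needs no separate argument.

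I do not expect any genuine obstacle here: the proposition is essentially a dictionary translation, and the only thing requiring a sentence of justification is the elementary group-theoretic fact that every infinite subgroup of $\mathbb{Z}(p^\infty)$ is the whole group (equivalently, the subgroups of $\mathbb{Z}(p^\infty)$ form the chain $0 \subset \mathbb{Z}/p\mathbb{Z} \subset \mathbb{Z}/p^2\mathbb{Z} \subset \cdots \subset \mathbb{Z}(p^\infty)$), which is exactly the content cited in Example \ref{ex pruf}. Thus the write-up is short: invoke Proposition \ref{class qm 0} with $\mathbb{K} = \mathbb{Q}_p$, substitute $D(\mathbb{Q}_p) = \mathbb{Z}(p^\infty)$ and $\mathfrak{o} = \mathbb{Z}_p$, and replace ``unbounded subgroup of $D(\mathbb{K})$'' by ``$\mathbb{Z}(p^\infty)$'' throughout, justified by the subgroup lattice of the Pr\"ufer group.

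\begin{proof}
Apply Proposition \ref{class qm 0} with $\mathbb{K} = \mathbb{Q}_p$, so that $\mathfrak{o} = \mathbb{Z}_p$ and, by Example \ref{pruf}, $D(\mathbb{K}) = \mathbb{Q}_p/\mathbb{Z}_p \cong \mathbb{Z}(p^\infty)$. Then $U = f^{-1}(\mathbb{Z}_p)$ is precisely the subgroup $f^{-1}(\mathfrak{o})$ of that proposition, hence normal in $G$, and $\pi f : G \to \mathbb{Z}(p^\infty)$ is a continuous homomorphism with kernel $U$ by Lemma \ref{qm key}.

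The equivalence $1. \Leftrightarrow 2.$ is immediate from Proposition \ref{class qm 0} once we observe that a homomorphism into $\mathbb{Z}(p^\infty)$ is unbounded if and only if it is surjective: the bounded subgroups of $\mathbb{Z}(p^\infty)$ are exactly the finite ones, namely $0 \subset \mathbb{Z}/p\mathbb{Z} \subset \mathbb{Z}/p^2\mathbb{Z} \subset \cdots$, so the only unbounded (equivalently, infinite) subgroup of $\mathbb{Z}(p^\infty)$ is $\mathbb{Z}(p^\infty)$ itself (see Example \ref{ex pruf}).

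For $2. \Leftrightarrow 3.$, Proposition \ref{class qm 0} states that $f$ is unbounded if and only if $\pi f$ induces an isomorphism between $G/U$ and an unbounded subgroup of $D(\mathbb{K}) = \mathbb{Z}(p^\infty)$. By the previous paragraph the only such subgroup is $\mathbb{Z}(p^\infty)$, so this condition is equivalent to $G/U \cong \mathbb{Z}(p^\infty)$, and under this isomorphism $\pi f : G \to \mathbb{Z}(p^\infty)$ is the quotient map, which is surjective. This establishes all three equivalences.
\end{proof}
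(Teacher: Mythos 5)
Your proof is correct and is essentially the paper's own argument: the paper's proof consists of the single observation that the only unbounded (indeed, the only infinite) subgroup of $\mathbb{Z}(p^\infty)$ is $\mathbb{Z}(p^\infty)$ itself, applied to Proposition \ref{class qm 0}. Your write-up just spells out the same specialization in more detail.
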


\begin{proof}
The only unbounded (and in fact, the only infinite) subgroup of $\mathbb{Z}(p^\infty)$ is $\mathbb{Z}(p^\infty)$ itself.
\end{proof}

We start by studying a stronger property than triviality for quasimorphisms, for which a cleaner characterization is possible. Recall that the function $d_H$ measures the distance of a function from the set of homomorphisms (Definition \ref{def dH}).

\begin{definition}
A continuous quasimorphism $f : G \to \mathbb{K}$ is \emph{strongly trivial} if $d_H(f) \leq D(f)$.
\end{definition}

So a quasimorphism $f : G \to \mathbb{Q}_p$ with defect $1$ is strongly trivial if and only if there exists a homomorphism $h : G \to \mathbb{Q}_p$ such that $(f - h)$ takes values in $\mathbb{Z}_p$.

\begin{proposition}
\label{class qm strong}

Let $G$ be a group and let $f : G \to \mathbb{Q}_p$ be a continuous unbounded quasimorphism with $D(f) = 1$. Let $U := f^{-1}(\mathbb{Z}_p)$. Then the following are equivalent:
\begin{enumerate}
\item $f$ is not strongly trivial.
\item $\pi f : G \to \mathbb{Z}(p^\infty)$ does not factor through a continuous homomorphism $G \to \mathbb{Q}_p$.
\item Any sequence $U = U_0 \geq U_1 \geq \cdots$ of open normal subgroups of $G$ such that $G/U_k \cong \mathbb{Z}(p^\infty)$ eventually stabilizes.
\end{enumerate}
\end{proposition}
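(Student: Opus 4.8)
The plan is to prove $1\Leftrightarrow 2$ directly and $2\Leftrightarrow 3$ by contraposition in both directions, the substantial point being an explicit reconstruction of a continuous lift $G\to\mathbb{Q}_p$ of $\pi f$ out of a chain of subgroups. Throughout, recall from Proposition \ref{class qm 0 Qp} that, since $f$ is unbounded with $D(f)=1$, the map $\pi f\colon G\to\mathbb{Z}(p^\infty)$ is a surjection with kernel $U=f^{-1}(\mathbb{Z}_p)$ and $G/U\cong\mathbb{Z}(p^\infty)$.

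First, $1\Leftrightarrow 2$ is immediate from the description of $\pi\colon\mathbb{Q}_p\to\mathbb{Q}_p/\mathbb{Z}_p=\mathbb{Z}(p^\infty)$ as an additive surjection with kernel $\mathbb{Z}_p$: a continuous homomorphism $h\colon G\to\mathbb{Q}_p$ satisfies $\pi h=\pi f$ exactly when $f-h$ is $\mathbb{Z}_p$-valued, i.e. $\|f-h\|_\infty\leq 1=D(f)$. Since $d_H(f)$ is attained (the remark after Definition \ref{def dH}), this says precisely that $f$ is strongly trivial if and only if $\pi f$ factors through a continuous homomorphism $G\to\mathbb{Q}_p$; negating gives the equivalence. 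I would spell this out and then treat $2\Leftrightarrow 3$.

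For $\neg 2\Rightarrow\neg 3$ (hence $3\Rightarrow 2$): suppose $\pi f=\pi h$ for a continuous homomorphism $h$, and put $U_k:=h^{-1}(p^k\mathbb{Z}_p)$ for $k\geq 0$. Then $U_0=h^{-1}(\mathbb{Z}_p)=f^{-1}(\mathbb{Z}_p)=U$, each $U_k$ is open and normal, and $h$ induces an embedding $G/U_k\hookrightarrow\mathbb{Q}_p/p^k\mathbb{Z}_p\cong\mathbb{Z}(p^\infty)$; since $G/U_k$ surjects onto $G/U_0\cong\mathbb{Z}(p^\infty)$ it is infinite, hence $G/U_k\cong\mathbb{Z}(p^\infty)$. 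This chain does not stabilize: as $\pi f\neq 0$ we have $h\not\equiv 0$, and for any $g$ with $h(g)\neq 0$ the nonzero elements $h(g^{p^N})=p^N h(g)$ have $p$-adic valuations tending to $\infty$, so $g^{p^N}$ lies in $U_k\setminus U_{k+1}$ for infinitely many $k$. For $\neg 3\Rightarrow\neg 2$ (hence $2\Rightarrow 3$): given a non-stabilizing chain $U=U_0\geq U_1\geq\cdots$ of open normal subgroups with $G/U_k\cong\mathbb{Z}(p^\infty)$, pass to a strictly decreasing subsequence; the kernels $U_0/U_k$ are proper subgroups of $\mathbb{Z}(p^\infty)$, hence finite cyclic, so $[U_0:U_k]=p^{d_k}$ with $d_1<d_2<\cdots\to\infty$. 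Using that $\operatorname{End}(\mathbb{Z}(p^\infty))\cong\mathbb{Z}_p$ is commutative and that a surjective endomorphism of $\mathbb{Z}(p^\infty)$ with kernel of order $p^e$ is multiplication by $p^e$ up to an automorphism, one can choose isomorphisms $\phi_k\colon G/U_k\xrightarrow{\cong}\mathbb{Q}_p/\mathbb{Z}_p$ inductively so that each connecting map $G/U_k\to G/U_{k-1}$ becomes multiplication by $p^{d_k-d_{k-1}}$. The resulting tower is then identified with the cofinal subtower $(\mathbb{Q}_p/p^{d_k}\mathbb{Z}_p)_k$ of $\mathbb{Q}_p=\varprojlim_n\mathbb{Q}_p/p^n\mathbb{Z}_p$, which, because $d_k\to\infty$, yields a \emph{topological} isomorphism $\varprojlim_k(G/U_k)\cong\mathbb{Q}_p$. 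Composing the canonical continuous map $G\to\varprojlim_k(G/U_k)$ with this isomorphism gives a continuous homomorphism $h_0\colon G\to\mathbb{Q}_p$ whose composite with $\pi$ is the surjection $G\twoheadrightarrow G/U_0\cong\mathbb{Z}(p^\infty)$. Since $\pi f$ is another surjection $G\to\mathbb{Z}(p^\infty)$ with the same kernel $U_0=U$, it differs from $\pi h_0$ by an automorphism of $\mathbb{Z}(p^\infty)$, i.e. by multiplication by a unit $v\in\mathbb{Z}_p^\times$; lifting, $h:=v\cdot h_0$ is a continuous homomorphism with $\pi h=\pi f$, so statement $2$ fails.

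The main obstacle is the last construction: one must recognize that the tower of quotients $G/U_k$, after a compatible choice of identifications with $\mathbb{Z}(p^\infty)$, reassembles topologically into $\mathbb{Q}_p$ with its standard topology (this is where $d_k\to\infty$ enters, matching the inverse-limit topology with the $p$-adic one), and that the induced map $G\to\mathbb{Q}_p$ lifts $\pi f$ up to a unit. The remaining steps are routine bookkeeping with the totally ordered lattice of subgroups of $\mathbb{Z}(p^\infty)$.
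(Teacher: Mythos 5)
Your argument is correct, and for the substantive implication it takes a genuinely different route from the paper. The easy parts coincide: both treatments get $1\Leftrightarrow 2$ from the observation that $\pi h=\pi f$ means exactly that $f-h$ is $\mathbb{Z}_p$-valued, and both produce the non-stabilizing chain from a lift $h$ via $U_k=h^{-1}(p^k\mathbb{Z}_p)$ (your argument that $G/U_k$ is an infinite subgroup of $\mathbb{Z}(p^\infty)$, hence all of it, and your use of $h(g^{p^N})=p^Nh(g)$ to show non-stabilization, replace the paper's appeal to unboundedness of $h$, with the same effect). The difference is in reconstructing a continuous homomorphism $G\to\mathbb{Q}_p$ from a non-stabilizing chain. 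The paper first interpolates the chain so that consecutive indices are exactly $p$, assembles the finite quotients $U_0/U_k\cong\mathbb{Z}/p^k\mathbb{Z}$ into a continuous homomorphism $U_0\to\mathbb{Z}_p=\varprojlim\mathbb{Z}/p^k\mathbb{Z}$, and then extends over the exhaustion $G=\bigcup U_{-k}$ by compatible maps $U_{-k}\to p^{-k}\mathbb{Z}_p$; you instead normalize the identifications $\phi_k\colon G/U_k\cong\mathbb{Z}(p^\infty)$ so that the connecting maps become multiplication by $p$-powers (using $\operatorname{End}(\mathbb{Z}(p^\infty))\cong\mathbb{Z}_p$), identify the resulting tower with the cofinal tower $(\mathbb{Q}_p/p^{d_k}\mathbb{Z}_p)_k$, and read off a continuous homomorphism $G\to\varprojlim_k G/U_k\cong\mathbb{Q}_p$ in one step, with no interpolation and no gluing over an exhaustion. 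Your version buys a more uniform construction and, importantly, makes explicit the final matching step — the constructed $h_0$ satisfies $\pi h_0=\phi_0\circ\mathrm{proj}$, which agrees with $\pi f$ only after twisting by a unit of $\mathbb{Z}_p^\times$ coming from $\operatorname{Aut}(\mathbb{Z}(p^\infty))$ — a point the paper's proof compresses into ``by construction, $\pi f$ factors through this homomorphism.'' The paper's version buys a slightly more elementary toolkit (only the subgroup lattice of $\mathbb{Z}(p^\infty)$ and the standard description $\mathbb{Z}_p=\varprojlim\mathbb{Z}/p^k\mathbb{Z}$, rather than the endomorphism ring and the topological identification $\varprojlim_k\mathbb{Q}_p/p^{d_k}\mathbb{Z}_p\cong\mathbb{Q}_p$, where your use of $d_k\to\infty$ is exactly the right hypothesis). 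Both are complete proofs of the statement.
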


\begin{proof}
$1. \Rightarrow 2.$ Suppose that $\pi f$ factors through a continuous homomorphism $h : G \to \mathbb{Q}_p$. Then $\pi f = \pi h$, which implies that $(f - h)$ takes values in $\mathbb{Z}_p$. So $f$ is strongly trivial. \\

$2. \Rightarrow 3.$ Assume that there exists a sequence $U = U_0 \geq U_1 \geq \cdots$ as in Item $3.$, which does not stabilize. Without loss of generality $U_k \neq U_{k+1}$ for all $k$. We need to show that $\pi f : G \to \mathbb{Z}(p^\infty)$ factors through $\mathbb{Q}_p$. By hypothesis $G/U_1 \cong \mathbb{Z}(p^\infty)$. Since $U_1 \neq U_0$ and $U_0$ has infinite index in $G$, the group $U_0/U_1$ is non-trivial and has infinite index in $G/U_1 \cong \mathbb{Z}(p^\infty)$, so it must correspond to $\mathbb{Z}/p^k\mathbb{Z} \leq \mathbb{Z}(p^\infty)$ for some $k \geq 1$. This allows to introduce intermediate open normal subgroups subgroups
$$U_1 = V^k \leq V^{k-1} \leq \cdots \leq V^0 = U_0$$
such that $G/V^i \cong \mathbb{Z}(p^\infty)$ and $[V^i : V^{i+1}] = p$. Repeating this process by induction we obtain a modified sequence
$$U = U_0 \geq U_1 \geq \cdots \geq U_k \geq \cdots$$
of open normal subgroups $U_k \leq G$ such that $[U_k : U_{k+1}] = p$ for all $k \geq 0$ and $G/U_k \cong \mathbb{Z}(p^\infty) \geq \mathbb{Z}/p^k\mathbb{Z} \cong U_0 / U_k$.

Let $Q_k := U_0 / U_k \cong \mathbb{Z}/p^k \mathbb{Z}$ as a discrete group, and for all $k \geq l$ let $q^k_l : Q_k \to Q_l$ be the natural quotient map. Then $\{Q_k \mid q^k_l \}$ is an inverse system  of finite groups, which, up to choosing inductively the isomorphisms $Q_k \cong \mathbb{Z}/p^k \mathbb{Z}$ in a compatible way, is isomorphic to the inverse system $\{ \mathbb{Z}/p^k \mathbb{Z} \mid q^k_l = (\text{reduction mod } p^l) \}$ defining $\mathbb{Z}_p$. By the universal property of the inverse limit, we obtain a continuous homomorphism $U_0 \to \mathbb{Z}_p$, such that $U_0 \to \mathbb{Z}_p \to \mathbb{Z}/p^k \mathbb{Z}$ has kernel $U_k$.

We are left to show that we can ``glue'' the continuous homomorphisms $U_0 \to \mathbb{Z}_p$ and $G/U_0 \to \mathbb{Z}(p^\infty)$ so as to obtain a continuous homomorphism $G \to \mathbb{Q}_p$. For all $k \geq 1$, let $U_0 \leq U_{-k} \leq G$ be the open subgroup such that $U_{-k}/U_0 \leq G/U_0$ corresponds to $\mathbb{Z}/ p^k \mathbb{Z} \leq \mathbb{Z}(p^\infty) \cong G/U_0$. Then the same construction we did for $U_0$ works for $U_{-k}$ as well, giving a continuous homomorphism $U_{-k} \to p^{-k}\mathbb{Z}_p$ which can be chosen so that it restricts to $U_{-(k-1)} \to p^{-(k-1)}\mathbb{Z}_p$. All these homomorphisms are compatible and continuous, the $U_{-k}$ are open in $G = \bigcup_{k \geq 0} U_{-k}$, and the $p^{-k} \mathbb{Z}_p$ are open in $\mathbb{Q}_p = \bigcup_{k \geq 0} p^{-k} \mathbb{Z}_p$. Thus we obtain a well-defined continuous homomorphism $G \to \mathbb{Q}_p$ mapping $U = U_0$ to $\mathbb{Z}_p$. By construction, $\pi f$ factors through this homomorphism. \\

$3. \Rightarrow 1.$ Suppose that $f$ is strongly trivial; we need to find a sequence as in Item $3.$ that does not stabilize. By hypothesis there exists a continuous homomorphism $h : G \to \mathbb{Q}_p$ such that $(f - h)$ takes values in $\mathbb{Z}_p$. Therefore $U = f^{-1}(\mathbb{Z}_p) = h^{-1}(\mathbb{Z}_p)$. Then $U_k := h^{-1}(p^k \mathbb{Z}_p)$ is an open normal subgroup of $G$, since $h$ is a continuous homomorphism. The surjective homomorphism
$$G \xrightarrow{h} \mathbb{Q}_p \to \mathbb{Q}_p / p^k \mathbb{Z}_p \cong \mathbb{Z}(p^\infty)$$
has kernel $U_k$. Since $h$ is unbounded, being close to $f$, it follows that $G/U_k \cong \mathbb{Z}(p^\infty)$ and that the $U_k$ are all distinct. Therefore $U = U_0 \geq U_1 \geq \cdots$ is a sequence as in Item $3.$, which does not stabilize.
\end{proof}

Item $3.$ readily implies the following restriction on the subgroup structure of a group admitting an unbounded quasimorphism which is not strongly trivial:

\begin{corollary}
\label{min pruf ker}

The group $G$ admits an unbounded $\mathbb{Q}_p$-valued quasimorphism which is not strongly trivial if and only it admits an open normal subgroup $U \leq G$ such that $G/U \cong \mathbb{Z}(p^\infty)$ and $U$ is minimal for this property.
\end{corollary}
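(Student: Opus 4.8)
The plan is to obtain both directions essentially for free from Proposition \ref{class qm strong}, exploiting the fact that every proper subgroup of $\mathbb{Z}(p^\infty)$ is finite cyclic while every non-trivial quotient is again $\mathbb{Z}(p^\infty)$.

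For the forward implication, suppose $G$ carries an unbounded $\mathbb{Q}_p$-valued quasimorphism $f$ that is not strongly trivial. Being not strongly trivial, $f$ is in particular not a homomorphism, so $D(f)>0$; rescaling $f$ by a suitable scalar of $\mathbb{Q}_p$ (which affects neither unboundedness nor strong triviality) I may assume $D(f)=1$. Then $U_0:=f^{-1}(\mathbb{Z}_p)$ is open and normal in $G$ with $G/U_0\cong\mathbb{Z}(p^\infty)$ by Proposition \ref{class qm 0 Qp}. I would then look at the poset $\mathcal{P}$ of open normal subgroups $V\leq U_0$ with $G/V\cong\mathbb{Z}(p^\infty)$, ordered by inclusion, and argue that it has a minimal element: if it did not, dependent choice would yield an infinite strictly descending chain $U_0=V_0\gneq V_1\gneq\cdots$ in $\mathcal{P}$, contradicting item (3) of Proposition \ref{class qm strong}, which holds because $f$ satisfies item (1). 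Finally, a minimal $U\in\mathcal{P}$ is minimal among \emph{all} open normal subgroups of $G$ with $\mathbb{Z}(p^\infty)$-quotient, since any such subgroup strictly contained in $U$ would be contained in $U_0$ and hence lie in $\mathcal{P}$.

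For the converse, suppose $U\trianglelefteq G$ is open with $G/U\cong\mathbb{Z}(p^\infty)$ and minimal for this property. I would take $f:=\iota\circ q$, where $q:G\to G/U\cong\mathbb{Z}(p^\infty)$ is the quotient map and $\iota:\mathbb{Z}(p^\infty)\to\mathbb{Q}_p$ is the standard section of Example \ref{pruf}. By Lemma \ref{qm key}, $f$ is a continuous quasimorphism with $D(f)=1$ (it is exactly $1$ because $q$ is surjective and $D(\iota)=1$); it is unbounded since $q$ is surjective and $\iota$ is unbounded; and $f^{-1}(\mathbb{Z}_p)=U$ because $\iota^{-1}(\mathbb{Z}_p)=\{0\}$. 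To conclude that $f$ is not strongly trivial, I would check item (3) of Proposition \ref{class qm strong}: any descending sequence $U=U_0\geq U_1\geq\cdots$ of open normal subgroups with $G/U_k\cong\mathbb{Z}(p^\infty)$ must be constant, because each $U_k$ is an open normal subgroup of $G$ contained in $U$ with $\mathbb{Z}(p^\infty)$-quotient and $U$ is minimal for this property; a constant sequence stabilizes, so Proposition \ref{class qm strong} gives that $f$ is not strongly trivial.

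I do not expect a genuine obstacle: the substantive work is already in Proposition \ref{class qm strong}, and what is left is the elementary reformulation "minimality of $U$ $\Longleftrightarrow$ no nonconstant descending chain of such subgroups below $U$" together with the standard (choice-theoretic) step from "no infinite strictly descending chain" to "existence of a minimal element". The only point that deserves a line of care is ensuring that the minimal element produced inside $\mathcal{P}$ is minimal among all relevant subgroups of $G$, which follows at once since lying below $U\leq U_0$ forces membership in $\mathcal{P}$.
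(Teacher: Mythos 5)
Your proof is correct and is exactly the argument the paper intends: the paper states this corollary as an immediate consequence of item (3) of Proposition \ref{class qm strong}, and your write-up simply spells out the standard equivalence between the descending-chain condition and the existence of a minimal element (forward direction, after the harmless rescaling to $D(f)=1$), together with the converse via $f=\iota\circ q$ and the verification that $f^{-1}(\mathbb{Z}_p)=U$. The two points you flag for care (minimality in $\mathcal{P}$ versus global minimality, and constancy of chains below a minimal $U$) are handled correctly, so there is nothing to add.
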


Item $2.$ has the following immediate consequence:

\begin{example}
\label{EH2b Qp}

Any homomorphism $\mathbb{Q}_p \to \mathbb{Z}(p^\infty)$ factors through $\mathbb{Q}_p$, tautologically. Therefore every unbounded continuous quasimorphism $\mathbb{Q}_p \to \mathbb{Q}_p$ is strongly trivial.
\end{example}

This example allows to show that the standard section is the unique non-trivial quasimorphism (up to scalars and bounded functions) for $\mathbb{Z}(p^\infty)$:

\begin{example}
\label{EH2b Zpi}

The space $EH^2_{cb}(\mathbb{Z}(p^\infty), \mathbb{Q}_p)$ is one-dimensional: any quasimorphism is at a bounded distance from a scalar multiple of $\iota$. Indeed, let $f : \mathbb{Z}(p^\infty) \to \mathbb{Q}_p$ be any quasimorphism. Define $f \pi : \mathbb{Q}_p \to \mathbb{Z}(p^\infty) \to \mathbb{Q}_p$. This is a continuous quasimorphism of $\mathbb{Q}_p$, so since $EH^2_{cb}(\mathbb{Q}_p, \mathbb{Q}_p) = 0$, it is at a bounded distance from a continuous homomorphism $\mathbb{Q}_p \to \mathbb{Q}_p$. This must be of the form $x \mapsto \lambda x$ for some $\lambda \in \mathbb{Q}_p$. Therefore $|f \pi(x) - \lambda x|_p \leq C$ for some constant $C \geq 0$. In particular, since $\pi \iota \pi = \pi$, we have:
$$|f \pi(x) - (\lambda \cdot \iota) \pi(x)|_p = |f \pi( \iota \pi (x)) - \lambda \cdot \iota \pi(x)|_p \leq C.$$
Since $\pi$ is surjective, this shows that $f$ is at a bounded distance from $\lambda \cdot \iota$.
\end{example}

Corollary \ref{min pruf ker} imposes strong restrictions on the existence of subgroups of a group $G$ admitting an unbounded quasimorphism which is not strongly trivial. We apply this to subgroups of $\mathbb{Q}$:

\begin{example}
\label{EH2b Q}

Let $G$ be a subgroup of $\mathbb{Q}$ with the discrete topology. We show that all $\mathbb{Q}_p$-valued unbounded quasimorphisms of $G$ are strongly trivial, using Corollary \ref{min pruf ker}. Let $U \leq G$ be a subgroup such that $G/U \cong \mathbb{Z}(p^\infty)$; clearly $U \neq \{ 0 \}$. Up to precomposing by a scalar multiplication, which is an automorphism of $\mathbb{Q}$, we may assume that $1 \in U$, and so $\mathbb{Z} \leq U$. Therefore $G/U$ is a quotient of the torsion group $G/\mathbb{Z}$. If $a/b \in G \cap [0, 1)$ is a reduced fraction, and $p$ does not divide $b$, then $a/b \mod \mathbb{Z}$ has order coprime to $p$ in $G/\mathbb{Z}$; so $a/b$ is in the kernel of any homomorphism $G \to G / \mathbb{Z} \to \mathbb{Z}(p^\infty)$. In particular it is in $U$. Therefore $U$ contains all elements of $G$ which can be written as a fraction with denominator not divisible by $p$. It follows that the inclusion $G \cap \mathbb{Z}[1/p] \to G$ induces an isomorphism $(G \cap \mathbb{Z}[1/p]) / (U \cap \mathbb{Z}[1/p]) \to G/U$.

Therefore we may assume without loss of generality that $\mathbb{Z} \leq U \leq G \leq \mathbb{Z}[1/p]$. This gives injective homomorphisms $\mathbb{Z}(p^\infty) \cong G/U \to \mathbb{Z}[1/p]/U$. The last group is an infinite quotient of $\mathbb{Z}[1/p] / \mathbb{Z} \cong \mathbb{Z}(p^\infty)$, so it is itself isomorphic to $\mathbb{Z}(p^\infty)$. Since all proper subgroups of $\mathbb{Z}(p^\infty)$ are finite, the inclusion $G/U \to \mathbb{Z}[1/p]/U$ is really an isomorphism, so $G = \mathbb{Z}[1/p]$. At this point it is easy to find a proper subgroup of $U$ which also yields a quotient isomorphic to $\mathbb{Z}(p^\infty)$, for instance $p \mathbb{Z}$. We conclude that any such $U$ is not minimal, and thus all unbounded quasimorphisms of $G$ are strongly trivial.
\end{example}

The following theorem generalizes Proposition \ref{class qm strong}, and the classification of non-trivial quasimorphisms is a direct corollary:

\begin{theorem}
\label{class qm}
Let $G$ be a group, and let $f : G \to \mathbb{Q}_p$ be a continuous unbounded quasimorphism with $D(f) = 1$. Let $U := f^{-1}(\mathbb{Z}_p)$ and $k \geq 0$. Then the following are equivalent:
\begin{enumerate}
\item $d_H(f) > p^k$.
\item For any open subgroup $K \leq G$ of index at most $p^k$, the continuous homomorphism $\pi f|_K : K \to \mathbb{Z}(p^\infty)$ does not factor through a continuous homomorphism $K \to \mathbb{Q}_p$.
\item For any open subgroup $K \leq G$ of index at most $p^k$, any sequence $U \cap K =: U_0 \geq U_1 \geq \cdots$ of open normal subgroups of $K$ such that $K/U_k \cong \mathbb{Z}(p^\infty)$, eventually stabilizes.
\end{enumerate}
Moreover, in conditions $2.$ and $3.$, ``at most $p^k$'' may be replaced by ``dividing $p^k$'', and $K$ may be assumed to be normal.
\end{theorem}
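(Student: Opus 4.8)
Write $\pi\colon\mathbb{Q}_p\to\mathbb{Q}_p/\mathbb{Z}_p=\mathbb{Z}(p^\infty)$ for the canonical projection; by Proposition~\ref{class qm 0 Qp}, $\pi f\colon G\to\mathbb{Z}(p^\infty)$ is a surjective continuous homomorphism with kernel $U$. The starting point is to rephrase condition~$1.$ as a lifting statement. Since $D(f)=1$, the coboundary $\delta^1 f$ is $\mathbb{Z}_p$-valued, hence also $p^{-k}\mathbb{Z}_p$-valued, so $\bar f_k:=\bigl(f\bmod p^{-k}\mathbb{Z}_p\bigr)\colon G\to\mathbb{Q}_p/p^{-k}\mathbb{Z}_p\cong\mathbb{Z}(p^\infty)$ is a continuous homomorphism. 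A continuous homomorphism $h\colon G\to\mathbb{Q}_p$ satisfies $\|f-h\|_\infty\le p^k$ exactly when $f-h$ is $p^{-k}\mathbb{Z}_p$-valued, i.e.\ when $h$ reduces to $\bar f_k$ modulo $p^{-k}\mathbb{Z}_p$; since the infimum defining $d_H$ is attained (see the discussion after Definition~\ref{def dH}), $d_H(f)\le p^k$ \emph{if and only if} $\bar f_k$ lifts to a continuous homomorphism $G\to\mathbb{Q}_p$.

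I would then establish $1.\Leftrightarrow 2.$ along with the strengthened form of $2.$. For $\neg 2.\Rightarrow\neg 1.$: suppose $K\le G$ is open of index at most $p^k$ and $h_K\colon K\to\mathbb{Q}_p$ is a continuous homomorphism with $\pi f|_K=\pi\circ h_K$. Extend $h_K$ to a continuous homomorphism $h\colon G\to\mathbb{Q}_p$ by Corollary~\ref{div ext} ($K$ is open and the additive group of $\mathbb{Q}_p$ is divisible). Then $(f-h)|_K$ is $\mathbb{Z}_p$-valued, and since $\delta^1(f-h)=\delta^1 f$ is $\mathbb{Z}_p$-valued, the ultrametric inequality shows that $\phi:=\bigl((f-h)\bmod\mathbb{Z}_p\bigr)\colon G\to\mathbb{Z}(p^\infty)$ is a continuous homomorphism with open kernel $K':=(f-h)^{-1}(\mathbb{Z}_p)\supseteq K$. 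Being a homomorphism into an abelian group that kills $K$, $\phi$ kills the normal closure of $K$, which has index at most $p^k$ in $G$; hence the image of $\phi$ is cyclic of order dividing $p^k$, so $f-h$ is $p^{-k}\mathbb{Z}_p$-valued and $d_H(f)\le p^k$. In the process $K'$ is normal of index dividing $p^k$ and $\pi f|_{K'}=\pi\circ h|_{K'}$, so this argument also yields the strengthened form of $2.$. For $\neg 1.\Rightarrow\neg 2.$: given a continuous homomorphism $h\colon G\to\mathbb{Q}_p$ with $\|f-h\|_\infty\le p^k$, the map $\bigl((f-h)\bmod\mathbb{Z}_p\bigr)$ is a continuous homomorphism $G\to p^{-k}\mathbb{Z}_p/\mathbb{Z}_p\cong\mathbb{Z}/p^k\mathbb{Z}$ whose kernel $K:=(f-h)^{-1}(\mathbb{Z}_p)$ is open, normal, of index dividing $p^k$, and satisfies $\pi f|_K=\pi\circ h|_K$.

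For $2.\Leftrightarrow 3.$ I would argue one subgroup at a time. If $K\le G$ is open of index at most $p^k$ then $KU=G$ because $\mathbb{Z}(p^\infty)$ has no proper finite-index subgroup; consequently $f|_K$ is an unbounded continuous quasimorphism with $D(f|_K)\le 1$ and $\pi f|_K\colon K\to\mathbb{Z}(p^\infty)$ is surjective with kernel $U\cap K$. Proposition~\ref{class qm strong}, applied to $f|_K$, then gives that $\pi f|_K$ fails to factor through a continuous homomorphism $K\to\mathbb{Q}_p$ precisely when every sequence $U\cap K=U_0\ge U_1\ge\cdots$ of open normal subgroups of $K$ with all quotients $\cong\mathbb{Z}(p^\infty)$ eventually stabilizes. (Only the equivalence of items $2.$ and $3.$ of that proposition is used, and it relies solely on $\pi f|_K$ being a surjective homomorphism onto $\mathbb{Z}(p^\infty)$, not on the exact value of $D(f|_K)$.) Quantifying over all such $K$ yields $2.\Leftrightarrow 3.$, and the strengthened form of $3.$ follows by combining the strengthened form of $2.$ with this equivalence, which holds verbatim when the quantifier is restricted to normal $K$ of index dividing $p^k$.

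The main obstacle is the passage, in the $\neg 2.\Rightarrow\neg 1.$ step, from an arbitrary index-$\le p^k$ subgroup to the genuinely controlled normal subgroup $K'=(f-h)^{-1}(\mathbb{Z}_p)$: one must see that after subtracting the divisible extension $h$ of $h_K$, the remaining ``defect obstruction'' $\phi=(f-h)\bmod\mathbb{Z}_p$ is an honest homomorphism taking values in a cyclic group of order dividing $p^k$, which is exactly where the ultrametric inequality (preimages of $\mathbb{Z}_p$ under defect-$\le 1$ quasimorphisms are subgroups, as in Proposition~\ref{class qm 0 Qp}) does the essential work. A secondary point to handle with care is the appeal to Proposition~\ref{class qm strong} for $f|_K$ when $D(f|_K)$ might be $<1$; one can sidestep this by running the whole argument only for normal $K$ of index dividing $p^k$, which by the ``Moreover'' clause is all that is needed.
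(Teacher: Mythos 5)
Your proposal is correct and takes essentially the same route as the paper: you extend the homomorphism from $K$ to $G$ via divisibility (Corollary \ref{div ext}), reduce $f-h$ modulo $\mathbb{Z}_p$ to get a homomorphism into $\mathbb{Z}(p^\infty)$ whose image has order dividing $p^k$, and invoke Proposition \ref{class qm strong} applied to $f|_K$ for the subgroup condition --- the paper merely packages the middle step as ``$d_H(f)\le p^k$ iff $f|_K$ is strongly trivial for some open $K$ of index at most $p^k$'' before citing that proposition. One small remark: your parenthetical justification (that the $2.\Leftrightarrow 3.$ equivalence of Proposition \ref{class qm strong} only uses that $\pi f|_K$ is a surjective continuous homomorphism with kernel $U\cap K$) is the correct way to handle the possibility $D(f|_K)<1$, whereas your proposed ``sidestep'' of restricting to normal $K$ of index dividing $p^k$ does not by itself remove that issue; since the parenthetical argument suffices, this does not affect the validity of the proof.
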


\begin{remark}
Proposition \ref{class qm strong} is the case $k = 0$.
\end{remark}

\begin{proof}
Let $K$ be a finite-index subgroup of $G$. Since $\pi f$ is surjective, $\pi f(K)$ is a finite-index subgroup of $\mathbb{Z}(p^\infty)$, so $\pi f|_K$ is also surjective. Similarly, $UK/U$ is a finite-index subgroup of $G/U$, so $\mathbb{Z}(p^\infty) \cong G/U = UK/U \cong K/(U \cap K)$. Since $D(f) = 1$, it follows that $D(f|_K) = 1$ as well.

We claim that $d_H(f) \leq p^k$ if and only if there exists an open subgroup $K \leq G$ of index at most $p^k$ such that $f|_K$ is strongly trivial; that is, such that $d_H(f|_K) \leq 1$. Since $U \cap K = f|_K^{-1}(\mathbb{Z}_p)$, the theorem now follows from Proposition \ref{class qm strong} and the previous argument. More precisely, we will show that if $d_H(f) \leq p^k$, then there exists an open \emph{normal} subgroup $K \leq G$ of index \emph{dividing} $p^k$ such that $f|_K$ is strongly trivial; and that if there exists an open subgroup $K \leq G$ of index at most $p^k$ such that $f|_K$ is strongly trivial, then $d_H(f) \leq p^k$. This implies the last sentence of the theorem as well. \\

We prove the claim. Suppose that $d_H(f) \leq p^k$. Then there exists a continuous homomorphism $h : G \to \mathbb{Q}_p$ such that $b := (f - h)$ takes values in $p^{-k} \mathbb{Z}_p$. Since $h$ is a homomorphism, $D(b) = D(f - h) = D(f) = 1$. So $\pi b : G \to \mathbb{Z}(p^\infty)$ is a continuous homomorphism taking values in $\mathbb{Z}/p^k \mathbb{Z}$. Let $K := b^{-1}(\mathbb{Z}_p) = \ker(\pi b)$. This is an open normal subgroup of $G$, and $[G : K] = |G/K|$ divides $|\mathbb{Z}/p^k\mathbb{Z}| = p^k$. Since $\pi b|_K = 0$ by definition of $K$, we have $\pi f|_K = \pi h|_K$. So $(f|_K - h|_K)$ takes values in $\mathbb{Z}_p$, which implies that $d_H(f|_K) \leq 1$.

Suppose that there exists $K \leq G$ open of index at most $p^k$ such that $f|_K$ is strongly trivial. Then there exists a continuous homomorphism $h : K \to \mathbb{Q}_p$ such that $b := (f|_K - h)$ takes values in $\mathbb{Z}_p$. Since $\mathbb{Q}_p$ is divisible as a discrete group, Corollary \ref{div ext} provides a continuous homomorphism $h^\# : G \to \mathbb{Q}_p$ extending $h$. Let $b^\# := (f - h^\#)$. This is continuous, and since $h^\#$ is a homomorphism $D(b^\#) = D(f - h^\#) = D(f) = 1$. So $\pi b^\# : G \to \mathbb{Z}(p^\infty)$ is a homomorphism. But $\pi b^\#|_K = \pi f|_K - \pi h = 0$, so $K \leq \ker(\pi b^\#)$. Since $[G : K] \leq p^k$, the image of $\pi b^\#$ has order at most $p^k$, so it must be contained in $\mathbb{Z}/p^k\mathbb{Z} \leq \mathbb{Z}(p^\infty)$. It follows that $(f - h^\#)$ takes values in $p^{-k}\mathbb{Z}_p$, and so $d_H(f) \leq p^k$.
\end{proof}

\begin{corollary}
\label{class qm cor}

Let $G$ be a group, and let $f : G \to \mathbb{Q}_p$ be a continuous unbounded quasimorphism with $D(f) = 1$. Let $U := f^{-1}(\mathbb{Z}_p)$. Then the following are equivalent:
\begin{enumerate}
\item $f$ is non-trivial.
\item For any open finite-index subgroup $K \leq G$, the continuous homomorphism $\pi f|_K : K \to \mathbb{Z}(p^\infty)$ does not factor through a continuous homomorphism $K \to \mathbb{Q}_p$.
\item For any open finite-index subgroup $K \leq G$, any sequence $U \cap K =: U_0 \geq U_1 \geq \cdots$ of open normal subgroups of $K$ such that $K/U_k \cong \mathbb{Z}(p^\infty)$, eventually stabilizes.
\end{enumerate}
Moreover, in conditions $2.$ and $3.$, ``finite-index'' may be replaced by ``index a power of $p$'', and $K$ may be assumed to be normal.
\end{corollary}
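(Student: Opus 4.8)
The plan is to obtain this corollary directly from Theorem \ref{class qm} by letting the integer parameter $k$ range over all integers $\geq 0$ and intersecting the resulting equivalences. First I would recall that, by Definition \ref{def dH} and the notion of triviality, $f$ is non-trivial if and only if $d_H(f)=\infty$. Next I would record the elementary observation that $d_H(f)=\infty$ if and only if $d_H(f)>p^k$ for \emph{every} $k\geq 0$: the ``only if'' direction is immediate, and conversely a finite value $d_H(f)$ satisfies $d_H(f)\leq p^k$ as soon as $p^k\geq d_H(f)$, and since $p^0=1$ and $p^k\to\infty$ such a non-negative $k$ always exists (note $d_H(f)=0$ is impossible, as it would force $f$ to be a homomorphism, contradicting $D(f)=1$). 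Hence ``$f$ is non-trivial'' is equivalent to ``condition $1.$ of Theorem \ref{class qm} holds for every $k\geq 0$''.

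Then I would apply Theorem \ref{class qm} for each fixed $k\geq 0$, which gives the equivalence of its condition $1.$ with its conditions $2.$ and $3.$, including the refinements that ``index at most $p^k$'' may be read as ``index dividing $p^k$'' and that $K$ may be taken normal. Conjoining over all $k\geq 0$, the statement ``for every open subgroup $K\leq G$ of index at most $p^k$, $(\ast_K)$'', quantified over all $k$, becomes precisely ``for every open finite-index subgroup $K\leq G$, $(\ast_K)$'', since any prescribed finite index is $\leq p^k$ for $k$ large; likewise ``index dividing $p^k$'', conjoined over all $k$, becomes ``index a power of $p$'', and the normality clause is unaffected. Carrying this through for both instances of $(\ast)$ — the non-factorization statement of condition $2.$ and the eventual-stabilization statement of condition $3.$ — yields exactly the equivalence of conditions $1.$, $2.$, $3.$ of the corollary together with its final sentence. (The case $k=0$ of Theorem \ref{class qm} is Proposition \ref{class qm strong}, so the corollary is its ``$k\to\infty$'' version.)

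There is no genuine obstacle here: all the substantive content is already contained in Theorem \ref{class qm}, and the only point demanding care is the bookkeeping of the nested quantifiers when passing to the union over $k$ — in particular keeping ``index at most $p^k$'' (which in the limit sweeps out \emph{all} finite-index subgroups) distinct from ``index dividing $p^k$'' (which in the limit sweeps out exactly the subgroups of $p$-power index), and checking that restricting $k$ to be $\geq 0$ loses nothing, which holds since any positive finite value of $d_H(f)$ is dominated by some $p^k$ with $k\geq 0$.
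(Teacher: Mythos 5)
Your proof is correct and is essentially the paper's own argument: the corollary is deduced from Theorem \ref{class qm} by observing that $f$ is non-trivial if and only if $d_H(f) > p^k$ for every $k \geq 0$, and then conjoining the conditions of the theorem over all $k$. The extra quantifier bookkeeping you spell out (``at most $p^k$'' sweeping out all finite indices, ``dividing $p^k$'' sweeping out the $p$-power indices) is exactly what the paper leaves implicit.
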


\begin{proof}
This follows from Theorem \ref{class qm}, noticing that $d_H(f) > p^k$ for all $k \geq 0$ if and only if $f$ is non-trivial.
\end{proof}

\subsection{Additivity}

In this subsection we prove that the functor $EH^2_{cb}$ is additive, in a suitable sense. This will allow to compute $EH^2_{cb}(G, \mathbb{Q}_p)$ for a divisible group $G$, and it will determine the exact bounded cohomology over a finite extension of $\mathbb{Q}_p$ in terms of that over $\mathbb{Q}_p$. We start by looking at the first coordinate.

\begin{proposition}
Let $G_1, G_2$ be groups, $\mathbb{K}$ a (not necessarily non-Archimedean) valued field. Then the restriction of quasimorphisms induces a vector-space isomorphism
$$EH^2_{cb}(G_1 \times G_2, \mathbb{K}) \cong EH^2_{cb}(G_1, \mathbb{K}) \times EH^2_{cb}(G_2, \mathbb{K}).$$
\end{proposition}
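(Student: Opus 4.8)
The plan is to decompose an arbitrary quasimorphism on $G_1 \times G_2$ using the two canonical inclusions $\iota_i : G_i \to G_1 \times G_2$ and the two projections $\pi_i : G_1 \times G_2 \to G_i$. Given a continuous quasimorphism $f : G_1 \times G_2 \to \mathbb{K}$, I would restrict it to the two factors, setting $f_i := f \circ \iota_i$, which are continuous quasimorphisms of $G_i$ (the defect of $f_i$ is bounded by the defect of $f$, since restricting to a subgroup does not increase the defect). This gives the map $EH^2_{cb}(G_1 \times G_2, \mathbb{K}) \to EH^2_{cb}(G_1, \mathbb{K}) \times EH^2_{cb}(G_2, \mathbb{K})$ at the level of cochains; I would need to check it descends to cohomology, i.e.\ that it sends homomorphisms and bounded functions to (pairs of) homomorphisms and bounded functions, which is immediate.

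For surjectivity, given quasimorphisms $g_i : G_i \to \mathbb{K}$, the natural candidate preimage is $g_1 \circ \pi_1 + g_2 \circ \pi_2$; one checks it is a continuous quasimorphism whose restriction to $G_i$ agrees with $g_i$ up to the constant $g_{3-i}(1)$, which is harmless in cohomology (alternatively normalize $g_i(1) = 0$ from the start). For injectivity, the key point is the standard commutator trick: if $f : G_1 \times G_2 \to \mathbb{K}$ is a quasimorphism whose restrictions $f_1, f_2$ are both trivial, I want to show $f$ itself is trivial. After subtracting the homomorphism $(x_1, x_2) \mapsto h_1(x_1) + h_2(x_2)$ built from the homomorphisms approximating $f_1$ and $f_2$, I may assume $f$ is bounded on each factor $G_1 \times \{1\}$ and $\{1\} \times G_2$. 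Then for $(x_1, x_2) \in G_1 \times G_2$ one writes $(x_1, x_2) = (x_1, 1)(1, x_2)$, so $|f(x_1,x_2) - f(x_1,1) - f(1,x_2)|_\mathbb{K} \le D(f)$, and since $f(x_1,1)$ and $f(1,x_2)$ are both bounded, $f$ is bounded, hence trivial. This shows the kernel of the restriction map is trivial.

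The main subtlety — and this is where I would spend the most care — is making sure all the maps involved are genuinely continuous and that the correspondence is well-defined on cohomology classes, not just on cochains: the approximating homomorphisms $h_i$ with $\|f_i - h_i\|_\infty < \infty$ must be continuous (which holds because a homomorphism at bounded distance from a continuous quasimorphism is continuous, as its kernel of "small values" is open), and the composite $h_1 \circ \pi_1 + h_2 \circ \pi_2$ is then a continuous homomorphism on the product. I expect the argument to be short and the only real obstacle is bookkeeping: verifying that "trivial on both factors" combined with the product decomposition forces global triviality, which is exactly the ultrametric version of the classical commutator argument but considerably easier here since we never need homogeneity — the quasimorphism inequality already controls arbitrary products with a uniform bound (as exploited in Proposition \ref{QZ comp gen}). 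I would present the proof as: define the restriction map, exhibit the section via projections for surjectivity, and run the two-line boundedness argument for injectivity.
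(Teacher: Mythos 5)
Your proposal is correct and follows essentially the same route as the paper: restriction gives the map, $g_1\circ\pi_1+g_2\circ\pi_2$ (normalized so $g_i(1)=0$) gives the section, and injectivity comes down to the single inequality $|f(x_1,x_2)-f(x_1,1)-f(1,x_2)|_{\mathbb{K}}\leq D(f)$, which the paper applies by first replacing $f$ with $\tilde f(x_1,x_2):=f(x_1,1)+f(1,x_2)$ and then decomposing, while you subtract the approximating homomorphism first and then bound --- the same argument in a different order. Two cosmetic remarks: there is no commutator involved (only the product decomposition $(x_1,x_2)=(x_1,1)(1,x_2)$), and the continuity of the approximating homomorphisms $h_i$ is already built into the paper's definition of a trivial quasimorphism, so no separate verification is needed.
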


\begin{proof}
Denote $G := G_1 \times G_2$. The map is clearly well-defined and linear at the level of quasimorphisms. Given a trivial quasimorphism $f = (h + b) : G \to \mathbb{K}$, where $h$ is a continuous homomorphism and $b$ is a continuous bounded function, $f|_{G_i} = (h|_{G_i} + b|_{G_i})$ is also a trivial quasimorphism. So the map is well-defined and linear at the level of exact bounded cohomology, too.

Given quasimorphisms $f_i : G_i \to \mathbb{K}$, they represent the same class as $f_i - f_i(1)$, so we may assume that $f_i(1) = 0$. Define
$$f : G \to \mathbb{K} : (g_1, g_2) \mapsto f_1(g_1) + f_2(g_2).$$
Then $f$ is a continuous quasimorphism of defect $D(f) \leq \max\{D(f_1), D(f_2)\}$. Moreover, $f(g_1, 1) = f_1(g_1) + f_2(1) = f_1(g_1)$, since $f_i(1) = 0$. Similarly $f(1, g_2) = f_2(g_2)$ so $f|_{G_i} = f_i$. This proves that the map is surjective.

It remains to show that the map is injective. Let $f : G \to \mathbb{K}$ be a quasimorphism whose class is in the kernel. It represents the same class as $f - f(1, 1)$, so we may assume that $f(1, 1) = 0$. Define $\tilde{f}(g_1, g_2) := f(g_1, 1) + f(1, g_2)$, which is continuous. Then
$$\| f - \tilde{f} \|_\infty = \sup |f(g_1, g_2) - f(g_1, 1) - f(1, g_2)|_\mathbb{K} \leq D(f),$$
so $f$ and $\tilde{f}$ represent the same class. Let $f_i := \tilde{f}|_{G_i}$. Notice that $\tilde{f}(g_1, 1) = f(g_1, 1)$ and $\tilde{f}(1, g_2) = f(1, g_2)$, because $f(1, 1) = 0$; and so $\tilde{f}(g_1, g_2) = f_1(g_1) + f_2(g_2)$. By hypothesis $f|_{G_i}$ is trivial, and so $f_i$ is too: there exist continuous homomorphisms $h_i$ and continuous bounded functions $b_i$ such that $f_i = (h_i + b_i)$. Define the continuous homomorphism $h : G \to \mathbb{K} : (g_1, g_2) \mapsto h_1(g_1) + h_2(g_2)$ and the continuous bounded function $b : G \to \mathbb{K}$ similarly. Then $\tilde{f}(g_1, g_2) = f_1(g_1) + f_2(g_2) = (h + b)(g_1, g_2)$. Therefore $\tilde{f}$ is trivial, and so $f$ is too.
\end{proof}

Assuming that the field is non-Archimedean and complete, we can also consider infinite direct sums of discrete groups. This requires a technical lemma:

\begin{lemma}
\label{qm add}

Let $G$ be a group, $\mathbb{K}$ a non-Archimedean valued field. For any trivial continuous quasimorphism $f : G \to \mathbb{K}$ we have $D(f) \leq d_H(f)$. If moreover $\mathbb{K}$ is complete, then there exists a constant $C \geq 0$ such that any trivial continuous quasimorphism $f : G \to \mathbb{K}$ satisfies $d_H(f) \leq C D(f)$.
\end{lemma}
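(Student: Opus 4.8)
\textbf{Proof plan for Lemma \ref{qm add}.}

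The first inequality $D(f) \le d_H(f)$ is essentially immediate and I would dispatch it first. Write $f = h + b$ with $h$ a continuous homomorphism and $b$ a continuous bounded function realizing $\|b\|_\infty = d_H(f)$ (recall the infimum in the definition of $d_H$ is attained, as noted after Definition \ref{def dH}). Then $\delta^1 f = \delta^1 h + \delta^1 b = \delta^1 b$, since homomorphisms are cocycles. By the ultrametric inequality, $\|\delta^1 b\|_\infty = \sup_{g,g'}|b(g) - b(gg') + b(g')|_\mathbb{K} \le \|b\|_\infty = d_H(f)$, which is exactly $D(f) \le d_H(f)$.

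The content of the lemma is the reverse inequality $d_H(f) \le C\, D(f)$ with a constant $C$ uniform over \emph{all} trivial quasimorphisms of $G$ (when $\mathbb{K}$ is complete). The plan is to combine the structural results already proven in this section with a compactness/closedness argument. Since a trivial quasimorphism $f$ satisfies $d_H(f) < \infty$ by definition, it suffices to show the ratio $d_H(f)/D(f)$ cannot be arbitrarily large; equivalently, after normalizing $D(f) = 1$, that $d_H(f)$ is bounded by a constant depending only on $\mathbb{K}$ (and not on $G$ or $f$). Here the trichotomy of Theorem \ref{qm split} and Proposition \ref{class def} enters: if $\chi(\mathbb{K}) = \chi(\mathfrak{r})$ there is a homomorphic section $\iota$, and in fact every quasimorphism with $D(f) \le 1$ satisfies $d_H(f) \le 1$ by the argument in the proof of Theorem \ref{qm split} (the function $\iota\pi f$ is a homomorphism within distance $1$ of $f$ by Lemma \ref{qm key}), so $C = 1$ works. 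In the remaining case $\chi(\mathbb{K}) = 0$, $\chi(\mathfrak{r}) = p$, one uses Lemma \ref{qm key}: up to scaling assume $D(f) = 1$, so $\pi f : G \to D(\mathbb{K})$ is a continuous homomorphism and $f$ is within distance $1$ of $\iota \pi f$, where $\iota : D(\mathbb{K}) \to \mathbb{K}$ is a fixed section. Triviality of $f$ then means $\iota\pi f$ is trivial. The key point is that $d_H(f) \le \max\{1, d_H(\iota\pi f)\}$, and $d_H(\iota\pi f)$ is controlled purely by how far the fixed section $\iota$ is from being additive on the image subgroup $\pi f(G) \le D(\mathbb{K})$ — but this is bounded by a quantity depending only on $\iota$ (hence only on $\mathbb{K}$): for $\mathbb{K} = \mathbb{Q}_p$ with the standard section, any homomorphism into $\mathbb{Z}(p^\infty)$ that lifts to a homomorphism into $\mathbb{Q}_p$ differs from its composite with $\iota$ by something bounded by $1$, so again $C$ can be taken small.

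The main obstacle I anticipate is making the uniformity of $C$ genuinely independent of $G$: a priori $d_H(f)$ could grow as the image $\pi f(G)$ ranges over larger and larger subgroups of $D(\mathbb{K})$. The way around this is precisely Corollary \ref{min pruf ker} / Proposition \ref{class qm strong} style reasoning — triviality of $f$ forces the relevant descending chain of subgroups to stabilize, which caps how badly $\iota$ fails to be additive along $\pi f(G)$, and completeness of $\mathbb{K}$ is what guarantees the glued homomorphism $G \to \mathbb{K}$ realizing the strong triviality actually exists (cf. the inverse-limit construction in the proof of Proposition \ref{class qm strong}, which produces a continuous homomorphism into $\mathbb{Q}_p = \varprojlim \mathbb{Z}/p^k\mathbb{Z}$ only because the target is complete). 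So the constant $C$ ends up being the defect of the fixed section $\iota$, i.e. $C \le 1$ in all cases, and the lemma follows. I would present the proof by first reducing to $D(f) = 1$ via scaling, then invoking Lemma \ref{qm key} to replace $f$ by $\iota\pi f$ at the cost of a bounded error, and finally citing Proposition \ref{class def} for the homomorphic-section case and the strong-triviality analysis for the $\mathbb{Q}_p$-type case.
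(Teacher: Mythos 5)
Your treatment of the first inequality is correct and matches the paper's, which dismisses it as a straightforward computation. For the second inequality you take a genuinely different route from the paper: the paper's argument is purely functional-analytic --- it regards $\delta^1 : \overline{C}^1_b(G, \mathbb{K}) \to \overline{C}^2_b(G, \mathbb{K})$ as a bounded linear map of Banach spaces (this is the only place completeness of $\mathbb{K}$ enters), invokes the Open Mapping Theorem to make the induced map $\overline{C}^1_b(G,\mathbb{K})/\overline{Z}^1_b(G,\mathbb{K}) \to \overline{B}^2_b(G,\mathbb{K})$ bi-Lipschitz, and reads off a constant $C$ depending on $G$ --- whereas you try to extract a constant from the structure of the defect group and a fixed section $\iota$.

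The structural route has a genuine gap, and your headline conclusion that ``$C \le 1$ in all cases'' is false. Take $\mathbb{K} = \mathbb{Q}_p$, $G = \mathbb{Z}/p^k\mathbb{Z}$, and $f = \iota \circ \phi$ where $\phi : G \hookrightarrow \mathbb{Z}(p^\infty)$ is the inclusion of the $p^k$-torsion and $\iota$ is the standard section. Then $f$ is bounded, hence trivial; $\pi f = \phi$ is a homomorphism, so $D(f) = 1$ (evaluate $\delta^1 f$ at the pair $(1, p^k - 1)$ to see the defect is attained); but the only homomorphism from a torsion group to $\mathbb{Q}_p$ is zero, so $d_H(f) = \| f \|_\infty = |p^{-k}|_p = p^k$. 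Hence no constant depending only on $\mathbb{K}$ (equivalently, only on $\iota$) can work. The precise point of failure is your sentence ``any homomorphism into $\mathbb{Z}(p^\infty)$ that lifts to a homomorphism into $\mathbb{Q}_p$ differs from its composite with $\iota$ by something bounded by $1$'': the statement is true, but it only applies when $\pi f$ actually lifts through $\pi$, i.e.\ when $f$ is \emph{strongly} trivial. A trivial quasimorphism need not be strongly trivial --- that is the entire content of Theorem \ref{class qm}, which says that $d_H(f)$ is governed by the least index of an open subgroup on which $f$ becomes strongly trivial. That index depends on $G$ and on $f$, not on $\iota$, so the ``cap on how badly $\iota$ fails to be additive along $\pi f(G)$'' that your plan relies on does not exist. (Your handling of the split case $\chi(\mathbb{K}) = \chi(\mathfrak{r})$, where $\iota$ may be taken to be a homomorphism, is fine and does give $C = 1$ there.)

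A further caution: the examples above can be assembled into a single group. For $G = \bigoplus_{k \ge 1} \mathbb{Z}/p^k\mathbb{Z}$, the quasimorphisms $f_k$ obtained by composing the $k$-th coordinate projection with $\iota \circ \phi_k$ all satisfy $D(f_k) = 1$ and $d_H(f_k) = p^k$, so the ratio $d_H/D$ is unbounded over the trivial quasimorphisms of this one fixed group. This means no repair of your argument can yield the lemma as stated; it also shows that the paper's own application of the Open Mapping Theorem is delicate, since it tacitly requires $\overline{B}^2_b(G, \mathbb{K})$ to be closed in $\overline{C}^2_b(G, \mathbb{K})$, which fails for this $G$. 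If you want a correct statement to aim for, restrict to strongly trivial quasimorphisms (where your $\iota$-based argument does give $d_H(f) \le D(f)$ after rescaling), or impose a hypothesis guaranteeing closedness of the space of bounded coboundaries.
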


\begin{remark}
This lemma is true in the real case, although the first inequality only holds with a factor of $3$, without the ultrametric inequality.
\end{remark}

\begin{proof}
The first inequality is a straightforward computation.

Since $\mathbb{K}$ is complete, $\overline{C}^1_b(G, \mathbb{K})$ and $\overline{C}^2_b(G, \mathbb{K})$ are Banach spaces. The map $\delta^1 : \overline{C}^1_b(G, \mathbb{K}) \to \overline{C}^2_b(G, \mathbb{K})$ is linear and bounded, so it is open by the Open Mapping Theorem. This map induces a vector space isomorphism $\overline{C}^1_b(G, \mathbb{K})/\overline{Z}^1_b(G, \mathbb{K}) \to \overline{B}^2_b(G, \mathbb{K})$, which is also $C$-bi-Lipschitz for some constant $C > 0$, since $\delta^1$ is open. Now let $f$ be a trivial continuous quasimorphism. Then $f = (h + b)$ for a continuous homomorphism $h$ and a bounded continuous function $b$.
$$ \inf\limits_{g \in \overline{Z}^1_b(G, \mathbb{K})} \| b + g \|_\infty = \| b + \overline{Z}^1_b(G, \mathbb{K}) \|_\infty \leq C \| \delta^1 b \|_\infty = C \| \delta^1(b + h) \|_\infty = C D(f).$$
So there exists a bounded continuous homomorphism $g : G \to \mathbb{K}$ such that $\| b + g \|_\infty \leq CD(f)$. We conclude by rewriting $f$ as $f = (h - g) + (b + g)$.
\end{proof}

Notice that this lemma implies that, in theory, given a quasimorphism $f$ of defect $1$, one would only need to check $d_H(f) > p^k$ for a finite number of integers (depending on the group) in order to conclude that $f$ is non-trivial. This sheds some new light on Theorem \ref{class qm} and Corollary \ref{class qm cor}.

\begin{proposition}
\label{EH add}

Let $(G_i)_{i \in I}$ be discrete groups, and let $\mathbb{K}$ be a non-Archimedean complete valued field. Then the restriction of quasimorphisms induces an injective linear map
$$EH^2_b(\bigoplus_i G_i, \mathbb{K}) \to \prod_i EH^2_b(G_i, \mathbb{K})$$ 
whose image is the subspace of sequences admitting representatives of uniformly bounded defect.
\end{proposition}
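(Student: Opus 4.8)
The plan is to adapt the argument for the two–factor case established just above, using the ultrametric inequality to absorb infinite index sets and Lemma \ref{qm add} to control the passage to an infinite direct sum. Write $G:=\bigoplus_{i\in I}G_i$, view each $G_i$ as a subgroup of $G$, and for $g=(g_i)_i\in G$ let $S(g)$ denote its (finite) support and $g^{(i)}\in G$ the element that equals $g_i$ in the $i$–th coordinate and is trivial elsewhere. Restricting a quasimorphism $f\colon G\to\mathbb K$ to $G_i$ yields a quasimorphism with $D(f|_{G_i})\le D(f)$, and restriction sends continuous homomorphisms to continuous homomorphisms and bounded cochains to bounded cochains; hence $f\mapsto(f|_{G_i})_i$ descends to a linear map $EH^2_b(G,\mathbb K)\to\prod_i EH^2_b(G_i,\mathbb K)$. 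Since any representative $f$ has finite defect and $D(f|_{G_i})\le D(f)$ for all $i$, the image lands in the subspace of sequences admitting representatives of uniformly bounded defect.

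For surjectivity onto that subspace, start from quasimorphisms $f_i\colon G_i\to\mathbb K$ with $\sup_i D(f_i)=:D<\infty$, normalised so that $f_i(1)=0$, and set $f(g):=\sum_{i\in S(g)}f_i(g_i)$. This is a finite sum; moreover $\delta^1 f(g,h)=\sum_{i\in S(g)\cup S(h)}\delta^1 f_i(g_i,h_i)$ is a finite sum of terms of norm $\le D$, so the ultrametric inequality gives $D(f)\le D<\infty$, while $f|_{G_i}=f_i$ by construction. Thus $[f]$ maps to $([f_i])_i$, which establishes the claim about the image.

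Injectivity is the heart of the matter. Suppose $f\colon G\to\mathbb K$ is a quasimorphism with $f|_{G_i}$ trivial for every $i$; we must show $f$ is trivial. Normalise $f(1)=0$ and put $\tilde f(g):=\sum_{i\in S(g)}f(g^{(i)})$. Writing $g$ as the commuting product $\prod_{i\in S(g)}g^{(i)}$ and iterating the identity $f(xy)=f(x)+f(y)-\delta^1 f(x,y)$, the ultrametric inequality yields $\|f-\tilde f\|_\infty\le D(f)$; hence $\tilde f$ is a quasimorphism, $[\tilde f]=[f]$, $D(\tilde f)<\infty$, and $\tilde f|_{G_i}=f|_{G_i}$ for all $i$. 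For each $i$ choose a continuous homomorphism $h_i\colon G_i\to\mathbb K$ realising $d_H(\tilde f|_{G_i})$ — the infimum is attained, the homomorphisms forming a closed subspace of the space of quasimorphisms (see the remarks after Definition \ref{def dH}) — and set $b_i:=\tilde f|_{G_i}-h_i$. Then $h(g):=\sum_{i\in S(g)}h_i(g_i)$ is a continuous homomorphism $G\to\mathbb K$, and $b:=\tilde f-h$ satisfies $b(g)=\sum_{i\in S(g)}b_i(g_i)$, so by the ultrametric inequality $\|b\|_\infty\le\sup_i\|b_i\|_\infty=\sup_i d_H(\tilde f|_{G_i})$. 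Consequently $\tilde f=h+b$, and therefore $f$, is trivial as soon as this supremum is finite.

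The main obstacle is precisely to bound $\sup_i d_H(\tilde f|_{G_i})$ in terms of $D(\tilde f)$ alone, and this is where completeness of $\mathbb K$ is used, via Lemma \ref{qm add}: a trivial continuous quasimorphism $\varphi$ satisfies $d_H(\varphi)\le C\,D(\varphi)$ with $C$ not depending on $\varphi$. Applying this to the trivial quasimorphisms $\tilde f|_{G_i}$, each of defect $\le D(\tilde f)$, gives $d_H(\tilde f|_{G_i})\le C\,D(\tilde f)$ uniformly in $i$, which closes the argument. I expect the genuinely delicate point in writing this up to be exactly the uniformity of this constant across the groups $G_i$: without the ultrametric inequality one could not recombine the bounds on the individual $b_i$ into a bound on $b$, and without a defect–controlled, $i$–independent distance from $\tilde f|_{G_i}$ to the homomorphisms one cannot produce the gluing homomorphism $h$ with $\tilde f-h$ bounded. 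Everything else — well-definedness, linearity, the description of the image, and surjectivity onto it — is a routine ultrametric computation modelled on the two–factor case.
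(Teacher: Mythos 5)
Your proposal is correct and follows essentially the same route as the paper's proof: the same restriction map, the same surjectivity construction $f(\underline{g})=\sum f_i(g_i)$ after normalising $f_i(1)=0$, and for injectivity the same replacement of $f$ by $\tilde f(\underline{g})=\sum f(\underline{g}^i)$ followed by an application of Lemma \ref{qm add} to choose decompositions $\tilde f|_{G_i}=h_i+b_i$ with $\|b_i\|_\infty\le C\,D(\tilde f)$ and an ultrametric gluing of the $h_i$ and $b_i$. The one point you flag as delicate --- whether the constant $C$ of Lemma \ref{qm add} can be taken uniformly over the groups $G_i$ --- is handled in exactly the same (implicit) way in the paper, which likewise invokes a single constant $C$ for all the restrictions $\tilde f|_{G_i}$.
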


\begin{remark}
This weaker version of additivity will appear in a topological context in the next subsection: see the statement of Theorem \ref{main_top}.
\end{remark}

\begin{proof}
Let $G := \bigoplus_i G_i$. We denote elements of $G$ by $\underline{g} = (g_i)_{i \in I}$, and the identity of $G$ by $\underline{1}$. The map is clearly well-defined and linear at the level of quasimorphisms. Given a trivial quasimorphism $f = (h + b) : G \to \mathbb{K}$, where $h$ is a homomorphism and $b$ is a bounded function, $f|_{G_i} = (h|_{G_i} + b|_{G_i})$ is also a trivial quasimorphism. So the map is well-defined and linear at the level of exact bounded cohomology, too.

Let $f : G \to \mathbb{K}$ be a quasimorphism. Since $D(f|_{G_i}) \leq D(f)$ for all $i$, the image of $f$ is a sequence with bounded defect. Given quasimorphisms $f_i : G_i \to \mathbb{K}$ of uniformly bounded defect, they represent the same class as $f_i - f_i(1)$. Since $f_i(1)$ is bounded by $D(f_i)$, this allows to assume that $f_i(1) = 0$ and still that $D(f_i)$ is uniformly bounded. The map
$$f : G \to \mathbb{K} : \underline{g} \mapsto \sum f_i(g_i)$$
is well-defined, because $f_i(g_i) = f_i(1) = 0$ for all but finitely many coordinates. By the ultrametric inequality $f$ is a quasimorphism of defect $D(f) \leq \sup D(f_i) < \infty$. Finally, $f|_{G_i} = f_i$ since for all $j \neq i$ the $j$-th coordinate of an element of $G_i$ is $1$, and $f_j(1) = 0$. This proves that the map is surjective.

It remains to show that the map is injective. Let $f : G \to \mathbb{K}$ be a quasimorphism whose class is in the kernel. Up to replacing it by $f - f(\underline{1})$, which represents the same class, we may assume that $f(\underline{1}) = 0$. Given an element $\underline{g} \in G$, denote by $\underline{g}^i$ the element having the same $i$-th coordinate as $\underline{g}$ and the identity in all other coordinates. Define $\tilde{f}(\underline{g}) := \sum f(\underline{g}^i)$. This is well-defined because $f(\underline{g}^i) = f(\underline{1}) = 0$ for all but finitely many coordinates. Moreover
$$\| f - \tilde{f} \|_\infty = \sup \left|f(\underline{g}) - \sum f(\underline{g}^i) \right| \leq D(f),$$
where we used the ultrametric inequality; so $f$ and $\tilde{f}$ represent the same class. Let $f_i := \tilde{f}|_{G_i} : G_i \to \mathbb{K}$. Notice that $\tilde{f}(\underline{g}^i) = f(\underline{g}^i)$ because $f(\underline{1}) = 0$; and so $\tilde{f}(\underline{g}) = \sum f_i(g_i)$. By hypothesis there exist homomorphisms $h_i$ and bounded functions $b_i$ such that $f_i = (h_i + b_i)$. By Lemma \ref{qm add}, these can be chosen so that $\| b_i \|_\infty \leq C D(f_i) \leq C D(\tilde{f})$, for some constant $C$. We have $h_i(1) = 0$ since $h_i$ is a homomorphism, so $h : \underline{g} \mapsto \sum h_i(g_i)$ is a well-defined homomorphism. Since $f_i(1) = 0$ we also have $b_i(1) = 0$, and so $b : \underline{g} \mapsto \sum b_i(g_i)$ is well-defined and bounded by $C D(\tilde{f}) < \infty$. Finally, $\tilde{f} = (h + b)$; therefore $\tilde{f}$ is trivial, and so $f$ is too.
\end{proof}

Note that the proof relies heavily on the ultrametric inequality, which is used in two passages: to show that an infinite sum of quasimorphisms of uniformly bounded defect is a quasimorphism; and to use the defect inequality with arbitrarily large finite products. Both of these do not happen in the real case. \\

As an application we determine the exact second bounded cohomology of divisible groups. Recall that $EH^2_b(\mathbb{Z}(\ell^\infty), \mathbb{Q}_p) = 0$ for all $\ell \neq p$ (Example \ref{EH2b Zli}), that $EH^2_b(\mathbb{Z}(p^\infty), \mathbb{Q}_p) \cong \mathbb{Q}_p$ (Example \ref{EH2b Zpi}), and that $EH^2_b(\mathbb{Q}, \mathbb{Q}_p) = 0$ (Example \ref{EH2b Q}).

\begin{example}
Let $G$ be a discrete divisible group. By the Classification Theorem from Subsection \ref{ss_div} there exist index sets $I_0$ and $I_p$ for all primes $p$ such that $G \cong \mathbb{Q}^{I_0} \bigoplus \left( \bigoplus\limits_p \mathbb{Z}(p^\infty)^{I_p} \right)$. By the previous examples and by Proposition \ref{EH add}: 
$$EH^2_{cb}(G, \mathbb{Q}_p) = EH^2_{cb}(\mathbb{Z}(p^\infty)^{I_p}, \mathbb{Q}_p) \cong \{ (x_i) \in \prod\limits_{I_p} \mathbb{Q}_p \mid \sup_i |x_i|_p < \infty \}.$$
\end{example}

We now look at additivity in terms of the target space.

\begin{proposition}
\label{qm Kn}

Let $G$ be a group, $\mathbb{K}$ a (not necessarily non-Archimedean) valued field, and equip $\mathbb{K}^n$ with the $\ell^\infty$-norm. Then the projection maps induce a vector space isomorphism
$$EH^2_{cb}(G, \mathbb{K}^n) \xrightarrow{\cong} EH^2_{cb}(G, \mathbb{K})^n.$$
\end{proposition}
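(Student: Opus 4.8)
The plan is to show that a quasimorphism with values in $\mathbb{K}^n$ is, up to bounded distance and up to adding a homomorphism, nothing more than an $n$-tuple of quasimorphisms with values in $\mathbb{K}$, and that this correspondence is compatible with triviality coordinatewise. Write $p_j : \mathbb{K}^n \to \mathbb{K}$ for the $j$-th coordinate projection, which is a bounded linear ($G$-equivariant, for the trivial action) map. First I would check that the induced map is \emph{well-defined and linear at the level of quasimorphisms and of exact bounded cohomology}: if $f : G \to \mathbb{K}^n$ is a continuous quasimorphism then each $p_j f$ is continuous and $D(p_j f) \leq D(f)$ since $p_j$ is norm-nonincreasing for the $\ell^\infty$-norm; and if $f = h + b$ is trivial with $h$ a continuous homomorphism and $b$ bounded, then $p_j f = p_j h + p_j b$ exhibits $p_j f$ as trivial. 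So the map $EH^2_{cb}(G, \mathbb{K}^n) \to EH^2_{cb}(G, \mathbb{K})^n$ is a well-defined linear map.

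For \emph{surjectivity}, given continuous quasimorphisms $f_1, \dots, f_n : G \to \mathbb{K}$, I would simply assemble $f := (f_1, \dots, f_n) : G \to \mathbb{K}^n$. Continuity is clear, and by the definition of the $\ell^\infty$-norm on $\mathbb{K}^n$ we get $D(f) = \max_j D(f_j) < \infty$, so $f$ is a continuous quasimorphism; and $p_j f = f_j$ on the nose, so its class maps to $([f_1], \dots, [f_n])$. For \emph{injectivity}, suppose $f : G \to \mathbb{K}^n$ is a continuous quasimorphism whose image is $0$, i.e.\ each $p_j f$ is trivial: $p_j f = h_j + b_j$ with $h_j : G \to \mathbb{K}$ a continuous homomorphism and $b_j$ bounded continuous. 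Then $h := (h_1, \dots, h_n) : G \to \mathbb{K}^n$ is a continuous homomorphism, $b := (b_1, \dots, b_n) : G \to \mathbb{K}^n$ is bounded continuous (again by the $\ell^\infty$-norm), and $f = h + b$ componentwise; hence $f$ is trivial and $[f] = 0$. This shows the map is a vector space isomorphism, with inverse $([f_1], \dots, [f_n]) \mapsto [(f_1, \dots, f_n)]$.

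This argument is essentially formal and uses only that $\mathbb{K}^n$ with the $\ell^\infty$-norm is the product of $n$ copies of $\mathbb{K}$ in the category of normed vector spaces, that projections and the diagonal-type assembly maps are bounded, and that ``quasimorphism,'' ``homomorphism,'' ``bounded'' are all detected coordinatewise. Consequently the proof works over any valued field, Archimedean or not, which matches the hypothesis of the statement; the non-Archimedean inequality is \emph{not} needed here, in contrast with Proposition \ref{EH add}, precisely because we are taking a finite product of the \emph{target} rather than an infinite sum of the source. I do not anticipate a genuine obstacle; the only point requiring a line of care is the bookkeeping that the $\ell^\infty$-norm makes $\|(x_1,\dots,x_n)\|_{\mathbb{K}^n} = \max_j |x_j|_\mathbb{K}$, so that defects and sup-norms of tuples are the maxima of the defects and sup-norms of the components, which is what makes both the surjectivity construction produce a genuine (finite-defect) quasimorphism and the injectivity construction produce a genuine bounded function.
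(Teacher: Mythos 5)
Your proposal is correct and follows essentially the same route as the paper's proof: project to get the map, assemble tuples for surjectivity, and combine coordinatewise homomorphisms and bounded functions for injectivity, with the $\ell^\infty$-norm ensuring defects and sup-norms are controlled by maxima. Your added remark that the argument is purely formal and does not use the ultrametric inequality (in contrast with the infinite direct sum statement) is accurate and consistent with the paper's setup.
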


\begin{proof}
Given a continuous quasimorphism $f : G \to \mathbb{K}^n$, let $f_i : G \to \mathbb{K}$ denote the composition with the $i$-th projection, which is a continuous quasimorphism of defect $D(f_i) \leq D(f)$. This gives a well-defined linear map as in the statement at the level of quasimorphisms. If $f = (h + b)$, where $h$ is a continuous homomorphism and $b$ is a continuous bounded function, then $f_i = (h_i + b_i)$ is also a trivial quasimorphism for all $i$. So the map is well-defined and linear at the level of exact bounded cohomology, too.

Given a continuous quasimorphisms $f_i : G \to \mathbb{K}$, the map $f : G \to \mathbb{K}^n : g \mapsto (f_i(g))_{i = 1}^n$ is a continuous quasimorphism of defect $D(f) \leq \max D(f_i)$. This proves that the map is surjective. Finally, suppose that $f : G \to \mathbb{K}$ is a quasimorphism such that $f_i = h_i + b_i$ for all $i$, where $h_i$ is a continuous homomorphism and $b_i$ is a continuous bounded function. Then defining the continuous homomorphism $h : G \to \mathbb{K}^n : g \mapsto (h_i(g))_{i = 1}^n$ and the continuous bounded function $b$ similarly, we obtain $f = (h + b)$. Therefore the map is injective.
\end{proof}

As an application, we can determine the exact second bounded cohomology over finite extensions of $\mathbb{Q}_p$ in terms of that over $\mathbb{Q}_p$:

\begin{corollary}
\label{qm ext Qp}

Let $\mathbb{K}$ be a finite extension of $\mathbb{Q}_p$ of degree $n$. Then 
$$EH^2_{cb}(G, \mathbb{K}) \cong EH^2_{cb}(G, \mathbb{Q}_p)^n.$$
In particular, $EH^2_{cb}(G, \mathbb{K}) = 0$ if and only if $EH^2_{cb}(G, \mathbb{Q}_p) = 0$.
\end{corollary}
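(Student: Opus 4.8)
The plan is to reduce the statement to Proposition \ref{qm Kn} via the observation that a degree-$n$ extension $\mathbb{K}$ of $\mathbb{Q}_p$ is, as a normed $\mathbb{Q}_p$-vector space, equivalent to $\mathbb{Q}_p^n$ with the $\ell^\infty$-norm.

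First I would fix a $\mathbb{Q}_p$-basis $e_1, \ldots, e_n$ of $\mathbb{K}$, which provides a $\mathbb{Q}_p$-linear isomorphism $\Phi : \mathbb{Q}_p^n \xrightarrow{\cong} \mathbb{K}$. Since $\mathbb{K}$ is complete, Theorem \ref{norm fin dim} guarantees that the norm $v \mapsto |\Phi(v)|_\mathbb{K}$ on $\mathbb{Q}_p^n$ is equivalent to the $\ell^\infty$-norm; that is, there are constants $0 < c \leq C$ with $c \, \| v \|_\infty \leq |\Phi(v)|_\mathbb{K} \leq C \, \| v \|_\infty$ for all $v \in \mathbb{Q}_p^n$.

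Next I would observe that the space of continuous quasimorphisms $G \to \mathbb{K}$, the subspace of continuous homomorphisms, and the subspace of continuous bounded functions all depend only on the underlying additive topological group of the coefficients together with the \emph{equivalence class} of the norm: a function $f : G \to \mathbb{K}$ has finite defect $\| \delta^1 f \|_\infty < \infty$ with respect to $| \cdot |_\mathbb{K}$ if and only if $\Phi^{-1} \circ f$ does with respect to $\| \cdot \|_\infty$, and likewise $f$ is at bounded distance from a continuous homomorphism if and only if $\Phi^{-1} \circ f$ is (the $G$-action on both coefficient spaces being trivial, there is nothing to check on the module side). Hence $\Phi$ induces an isomorphism $EH^2_{cb}(G, \mathbb{Q}_p^n) \xrightarrow{\cong} EH^2_{cb}(G, \mathbb{K})$ of $\mathbb{Q}_p$-vector spaces, where $\mathbb{Q}_p^n$ carries the $\ell^\infty$-norm and $EH^2_{cb}(G, \mathbb{K})$ is regarded as a $\mathbb{Q}_p$-vector space by restriction of scalars along $\mathbb{Q}_p \hookrightarrow \mathbb{K}$. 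Combining this with Proposition \ref{qm Kn} applied over the field $\mathbb{Q}_p$, which gives $EH^2_{cb}(G, \mathbb{Q}_p^n) \cong EH^2_{cb}(G, \mathbb{Q}_p)^n$, yields the claimed isomorphism. The final ``in particular'' is then immediate, since a finite product of vector spaces is trivial if and only if each factor is.

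I do not expect a genuine obstacle here: the only point requiring a little care is keeping track of which base field the ``$\cong$'' is taken over, and checking that replacing the coefficient norm by an equivalent one changes neither the finiteness of the defect nor the property of being a trivial quasimorphism — both of which follow at once from the two-sided bound $c \, \| v \|_\infty \le |\Phi(v)|_\mathbb{K} \le C \, \| v \|_\infty$.
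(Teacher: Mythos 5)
Your proposal is correct and follows exactly the paper's route: the paper also deduces the corollary from Proposition \ref{qm Kn} by invoking Theorem \ref{norm fin dim} to identify $\mathbb{K}$ with $\mathbb{Q}_p^n$ up to bi-Lipschitz equivalence of norms. The extra care you take in checking that an equivalent norm does not change the defect-finiteness or the triviality of a quasimorphism is exactly what the paper leaves implicit.
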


\begin{proof}
This follows from the previous proposition, since $\mathbb{K}$ is bi-Lipschitz equivalent to $\mathbb{Q}_p^n$ as a normed $\mathbb{Q}_p$-vector space by Theorem \ref{norm fin dim}.
\end{proof}

This corollary, combined with Theorem \ref{class qm}, characterizes groups admitting a non-trivial quasimorphism taking values in a non-Archimedean local field of characteristic $0$. The non-Archimedean local fields of characteristic $p$ are contained in Theorem \ref{qm split}. \\

It would be of interest to know whether $EH^2_{cb}(G, \mathbb{C}_p)$ admits a description in terms of $EH^2_{cb}(G, \mathbb{Q}_p)$. As we did for finite extensions, we may regard any complete valued field $\mathbb{K}$ with $\mathbb{K}_0 \cong \mathbb{Q}_p$ as a $\mathbb{Q}_p$-Banach space. There is a Classificiation Theorem for these: namely for any $\mathbb{Q}_p$-Banach space there exists a set $X$ such that the space is bi-Lipschitz equivalent to
$$c_0(X) := \{ f : X \to \mathbb{Q}_p \mid \# \{x \in X : |f(x)|_p \geq \varepsilon \} < \infty \text{ for all } \varepsilon > 0 \}$$
equipped with the supremum norm (this is a combination of \cite[Theorem 2.1.1]{NFA} and \cite[Theorem 2.5.4]{NFA}). Let us try to go over the proof of Proposition \ref{qm Kn}, with $\mathbb{K}^n$ replaced by $c_0(X)$. The maps $f_i$ are replaced by the maps $f_x = \delta_x \circ f$, where $\delta_x : c_0(X) \to \mathbb{Q}_p$ is the evaluation map at $x \in X$. We obtain a well-defined linear map
$$EH^2_{cb}(G, c_0(X)) \to \prod_X EH^2_{cb}(G, \mathbb{Q}_p)$$
whose image consists of sequences $(f_x)_{x \in X}$ of quasimorphisms satisfying $\# \{ x \in X \mid |f_x(g)|_p \geq \varepsilon \} < \infty$ for all $g \in G$ and all $\varepsilon > 0$. 

The problem is when trying to prove injectivity. Lemma \ref{qm add} allows to approximate each $f_x$ uniformly by a homomorphism $h_x$, which gives a natural candidate for a homomorphism $h$ approximating $f$. However, there is no reason a priori as to why $\{x \in X \mid |h_x(g)|_p \geq \varepsilon \}$ should be finite for all $\varepsilon > 0$, and so $h$ does not necessarily take values in $c_0(X)$. Even if it did, $h$ would be continuous with respect to the weak topology relative to the evaluation maps $\delta_x$, but not necessarily with respect to the norm topology on $c_0(X)$. If $G$ is not discrete, one would need the $h_x$ to be equicontinuous, and so by Arzela-Ascoli for bounded set $\{ b_x := f_x - h_x \mid x \in X \}$ to be precompact in the compact-open topology of $C_b(G, \mathbb{Q}_p)$. All of this suggests that the following question should be much harder than Corollary \ref{qm ext Qp}.

\begin{question}
Does $EH^2_{cb}(G, c_0(X))$ admit a description in terms of $EH^2_{cb}(G, \mathbb{Q}_p)$?
\end{question}

\subsection{Higher-dimensional quasicocycles}

The techniques introduced to study quasimorphisms carry over to higher degrees, even though the results we can prove are not quite as strong. As usual, $\mathbb{K}$ is a non-Archimedean valued field, with defect group $D(\mathbb{K})$. Denote by $\pi : \mathbb{K} \to D(\mathbb{K})$ the canonical projection, and by $\iota : D(\mathbb{K}) \to \mathbb{K}$ a section. We start with a generalization of Lemma \ref{qm key}:

\begin{lemma}
\label{qz key}

Let $n \geq 1$ and let $f : G^n \to \mathbb{K}$ be a continuous $n$-quasicocycle, with $\| \delta^n f \|_\infty \leq 1$. Then $\pi f : G^n \to D(\mathbb{K})$ is a continuous $n$-cocycle. Conversely, let $c : G^n \to D(\mathbb{K})$ be a continuous $n$-cocycle. Then $\iota c : G^n \to \mathbb{K}$ is a continuous $n$-quasicocycle and $\| \delta^n (\iota c) \|_\infty \leq 1$.

Moreover, $\| \iota \pi f - f \|_\infty \leq 1$ and $\pi \iota c = c$.
\end{lemma}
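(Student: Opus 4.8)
\textbf{Proof plan for Lemma \ref{qz key}.}
The plan is to mimic the proof of Lemma \ref{qm key}, replacing the degree-$1$ coboundary by the degree-$n$ coboundary $\delta^n$ of the bar resolution (as recalled in Subsection \ref{preli BC}), and using that $\pi:\mathbb{K}\to D(\mathbb{K})$ is a quotient map of seminormed $\mathbb{K}$-vector spaces, hence in particular a homomorphism of abelian groups which is norm-nonincreasing and which kills exactly $\mathfrak{o}$. The key structural observation is that $\delta^n$ is a $\mathbb{Z}$-linear map defined coordinate-wise by the same universal formula in any coefficient module; thus $\delta^n$ commutes with any coefficient homomorphism that is $G$-equivariant. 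Since we are in the setting of trivial $\mathbb{K}$-coefficients (so $G$ acts trivially on $\mathbb{K}$, on $\mathfrak{o}$, and on $D(\mathbb{K})$), the projection $\pi$ and any section $\iota$ are automatically $G$-equivariant, and the formula for $\delta^n$ in the bar resolution has no module-action term interfering, so $\delta^n$ is just an alternating sum of pullbacks along the face maps $G^{n+1}\to G^n$. In particular $\delta^n(\pi\circ f)=\pi\circ(\delta^n f)$ and $\delta^n(\iota\circ c)$ reduces, modulo $\mathfrak{o}$, to $\iota$ applied to $\delta^n c$, which vanishes.

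First I would record the naturality statement: for any homomorphism of abelian groups $\varphi:A\to B$ and any $g:G^n\to A$, one has $\delta^n(\varphi\circ g)=\varphi\circ(\delta^n g)$, because each term $(-1)^i g(g_1,\dots,g_ig_{i+1},\dots,g_{n+1})$ and the boundary terms are composed with $\varphi$ term by term, and $\varphi$ is additive. (I would remark that in higher degree this uses triviality of the action for the outermost terms $g_1\cdot f(g_2,\dots)$ and $(-1)^{n+1}f(g_1,\dots,g_n)$; since $G$ acts trivially on $\mathbb{K}$ this is immediate.) Applying this with $\varphi=\pi$ and $g=f$ gives $\delta^n(\pi f)=\pi(\delta^n f)$. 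Since $\|\delta^n f\|_\infty\le 1$, i.e.\ $\delta^n f$ takes values in $\mathfrak{o}=\ker\pi$, we conclude $\delta^n(\pi f)=0$, so $\pi f$ is a continuous $n$-cocycle (continuity of $\pi f$ being clear since $\pi$ is continuous and $f$ is continuous; here "continuous into the discrete group $D(\mathbb{K})$" just means locally constant, which is automatic as $\mathfrak{o}$ is clopen in $\mathbb{K}$ and $f$ is continuous — I'd phrase this carefully but it is routine).

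Next, for the converse, let $c:G^n\to D(\mathbb{K})$ be a continuous $n$-cocycle and set $g:=\iota c:G^n\to\mathbb{K}$, which is continuous since $\iota$ is a section into $\mathbb{K}$ and $c$ is locally constant. I would compute $\pi\circ g=\pi\circ\iota\circ c=c$, using $\pi\iota=\mathrm{id}_{D(\mathbb{K})}$; this is the last displayed identity $\pi\iota c=c$. Then applying the naturality statement with $\varphi=\pi$ to $g$ gives $\pi(\delta^n g)=\delta^n(\pi g)=\delta^n c=0$, so $\delta^n g$ takes values in $\ker\pi=\mathfrak{o}$, i.e.\ $\|\delta^n(\iota c)\|_\infty\le 1$, and in particular $\iota c$ is an $n$-quasicocycle. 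Finally, the estimate $\|\iota\pi f-f\|_\infty\le 1$ follows pointwise: for every $\overline{g}\in G^n$, $\pi(\iota\pi f(\overline g))=\pi f(\overline g)$, so $\iota\pi f(\overline g)-f(\overline g)\in\ker\pi=\mathfrak{o}$, hence has norm $\le 1$.

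The only genuinely delicate point is the bookkeeping in the naturality identity for $\delta^n$ in the bar resolution: one must check that the "outer" face terms — the one carrying the $G$-action $g_1\cdot(-)$ and the last term $(-1)^{n+1}(-)$ — also commute with $\pi$, and this is exactly where I rely on the coefficients being \emph{trivial} $G$-modules (so that $g_1\cdot x=x$ for all $x\in\mathbb{K}$, and $\pi$ trivially intertwines the trivial actions). With that in hand everything else is a one-line consequence of $\pi\iota=\mathrm{id}$ and $\ker\pi=\mathfrak{o}$, exactly as in the proof of Lemma \ref{qm key}; no further obstacle arises.
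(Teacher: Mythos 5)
Your proposal is correct and takes essentially the same route as the paper: the paper's proof simply states that the argument of Lemma \ref{qm key} carries over to higher degrees with the same bound of $1$ thanks to the ultrametric inequality (i.e.\ $\mathfrak{o}=\ker\pi$ being an additive subgroup), and your naturality identity $\delta^n(\pi\circ g)=\pi\circ(\delta^n g)$ for trivial coefficients, combined with $\pi\iota=\mathrm{id}_{D(\mathbb{K})}$, is exactly a spelled-out version of that. No gap.
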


\begin{proof}
The proof is the same as in Lemma \ref{qm key}; it carries over to higher dimensions with the same bound of $1$ because of the ultrametric inequality.
\end{proof}

This allows to generalize Theorem \ref{qm split}:

\begin{theorem}
\label{qz split}

Let $\mathbb{K}$ be a non-Archimedean valued field, and suppose that $\chi(\mathbb{K}) = \chi(\mathfrak{r})$. Then for any group $G$ and all $n \geq 1$, the comparison map $c^n : H^n_{cb}(G, \mathbb{K}) \to H^n_c(G, \mathbb{K})$ is injective.
\end{theorem}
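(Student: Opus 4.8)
The plan is to show that $c^n$ is injective by proving that its kernel $EH^n_{cb}(G,\mathbb{K})$ vanishes, which by the discussion at the end of Subsection~\ref{preli BC} amounts to showing that every continuous $(n-1)$-quasicocycle $f\colon G^{n-1}\to\mathbb{K}$ is \emph{trivial}, i.e.\ lies in $\overline{Z}^{n-1}(G,\mathbb{K})+\overline{C}^{n-1}_b(G,\mathbb{K})$. When the norm on $\mathbb{K}$ is trivial there is nothing to prove, since then $\overline{C}^\bullet_b(G,\mathbb{K})=\overline{C}^\bullet(G,\mathbb{K})$ and $c^\bullet$ is the identity; so, in accordance with the standing assumption of this section, I assume the norm is non-trivial. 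The case $n=1$ is immediate: a $0$-quasicocycle is an element of $\overline{C}^0(G,\mathbb{K})=\mathbb{K}$, and since the action is trivial $\delta^0$ vanishes on it, so every $0$-cochain is a cocycle and $EH^1_{cb}(G,\mathbb{K})=0$. So fix $n\ge 2$ and a continuous $(n-1)$-quasicocycle $f$.

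First I would normalize. If $\delta^{n-1}f=0$ then $f$ is already a cocycle and we are done; otherwise, using that the valuation is non-trivial, pick $\lambda\in\mathbb{K}^\times$ with $|\lambda|_\mathbb{K}\cdot\|\delta^{n-1}f\|_\infty\le 1$. Scaling by the invertible scalar $\lambda$ preserves continuity and transforms quasicocycles to quasicocycles and trivial quasicocycles to trivial ones (and back), so I may replace $f$ by $\lambda f$ and assume $\|\delta^{n-1}f\|_\infty\le 1$. Now apply Lemma~\ref{qz key}: the composite $\pi f\colon G^{n-1}\to D(\mathbb{K})$ is a continuous $(n-1)$-cocycle valued in the defect group, and $\|\iota\pi f-f\|_\infty\le 1$, so that $f-\iota\pi f\in \overline{C}^{n-1}_b(G,\mathbb{K})$.

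The key point is the choice of the section $\iota$. Since $\chi(\mathbb{K})=\chi(\mathfrak{r})$, Proposition~\ref{class def} (cases 1 and 2) applies: the additive group of $D(\mathbb{K})$ is that of an $\mathbb{F}_p$- or $\mathbb{Q}$-vector space and $\iota\colon D(\mathbb{K})\to\mathbb{K}$ can be taken to be a \emph{homomorphism} — a linear splitting of the projection $\mathbb{K}\to D(\mathbb{K})$. Because the $G$-action on $\mathbb{K}$ is trivial, postcomposition with an additive map commutes with the bar coboundary: for any cochain $c$ one has $\delta^{n-1}(\iota\circ c)=\iota\circ(\delta^{n-1}c)$, the point being that $\iota$ pulls out of every alternating sum term, including the leading one $g_1\cdot(\iota c)(g_2,\dots)=(\iota c)(g_2,\dots)$. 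Applying this to $c=\pi f$, which is a cocycle, gives $\delta^{n-1}(\iota\pi f)=\iota(\delta^{n-1}(\pi f))=\iota(0)=0$; moreover $\iota\pi f$ is continuous, since $\pi f$ is locally constant (the cosets of $\mathfrak{o}$ in $\mathbb{K}$ are clopen) and $\iota$ is a map out of a discrete group. Hence $\iota\pi f\in\overline{Z}^{n-1}(G,\mathbb{K})$, and
$$f=\iota\pi f+\bigl(f-\iota\pi f\bigr)$$
exhibits $f$ as the sum of a continuous cocycle and a bounded cochain. Thus $f$ is trivial, $EH^n_{cb}(G,\mathbb{K})=0$, and $c^n$ is injective, as claimed.

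\textbf{Main obstacle.} There is no serious computational hurdle here — the work has already been done in Lemma~\ref{qz key} and Proposition~\ref{class def}. The one genuinely essential ingredient, and the place where the hypothesis $\chi(\mathbb{K})=\chi(\mathfrak{r})$ is used in an irreplaceable way, is that the section $\iota$ may be chosen additive; without this the best available $\iota$ is itself a non-trivial quasimorphism (Proposition~\ref{class def}, case 3), and indeed over $\mathbb{Q}_p$-type fields $c^2$ fails to be injective. The remaining care is purely bookkeeping: the scalar normalization of the defect, and the verification that precomposition of a cocycle with an additive section again yields a cocycle, which relies on the triviality of the coefficient action.
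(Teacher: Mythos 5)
Your proposal is correct and follows essentially the same route as the paper: normalize so that $\|\delta^{n-1}f\|_\infty \leq 1$, use Lemma~\ref{qz key} to pass to the cocycle $\pi f$ with values in $D(\mathbb{K})$, and then use the homomorphic section $\iota$ from Proposition~\ref{class def} (which exists precisely because $\chi(\mathbb{K})=\chi(\mathfrak{r})$) to produce the genuine cocycle $\iota\pi f$ at bounded distance from $f$. The extra details you supply (the $n=1$ case, the scalar choice via non-trivial valuation, the commutation of $\iota$ with $\delta^{n-1}$, and continuity of $\iota\pi f$) are all consistent with the paper's argument.
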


\begin{proof}
For $n = 1$ this is true in any case, so let $n \geq 2$. Recall that $c^n$ is injective if and only if any $(n-1)$-quasicocycle is at a bounded distance from a true $(n-1)$-cocycle. So let $f : G^{n-1} \to \mathbb{K}$ be a quasicocycle, and suppose up to scalar that $\| \delta^{n-1} f \|_\infty \leq 1$. By Lemma \ref{qz key}, the map $\pi f : G^{n-1} \to D(\mathbb{K})$ is a cocycle, meaning that $\delta^{n-1} (\pi f) = 0$. By Proposition \ref{class def}, there exists a homomorphic section $\iota : D(\mathbb{K}) \to \mathbb{K}$, and so $\delta^{n-1}(\iota \pi f) = \iota \delta^{n-1}(\pi f) = 0$. So $\iota \pi f$ is a cocycle, and again by Lemma \ref{qz key} it is at distance at most $1$ from $f$.
\end{proof}

For general non-Archimedean valued fields, we can still generalize a consequence of Proposition \ref{class qm 0} in the following statement:

\begin{theorem}
\label{surj inj}

Let $\mathbb{K}$ be a non-Archimedean valued field. Let $n \geq 2$ and suppose that $G$ is such that the comparison map $c^{n-1} : H^{n-1}_{cb}(G, D(\mathbb{K})) \to H^{n-1}_c(G, D(\mathbb{K}))$ is surjective. Then $c^n : H^n_{cb}(G, \mathbb{K}) \to H^n_c(G, \mathbb{K})$ is injective.
\end{theorem}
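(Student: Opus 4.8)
The plan is to run the same lifting argument used for Theorem \ref{qz split}, only with the weaker input provided by the hypothesis: instead of "every continuous $(n-1)$-cocycle over $D(\mathbb{K})$ is a coboundary'' (which would follow from a homomorphic section $\iota$), we now only know that every such cocycle is continuously cohomologous to a \emph{bounded} one, which is precisely the surjectivity of $c^{n-1}$. Recall first that $c^n$ is injective exactly when $EH^n_{cb}(G,\mathbb{K}) = 0$, i.e. when every continuous $(n-1)$-quasicocycle $\beta : G^{n-1} \to \mathbb{K}$ is trivial: $\beta = z + b$ with $z$ a continuous $(n-1)$-cocycle and $b$ a bounded continuous $(n-1)$-cochain (Subsection \ref{preli BC}). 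For $n = 1$ there is nothing to prove, so I would assume $n \geq 2$, fix such a $\beta$, and (since $\mathbb{K}$ is non-trivially valued) rescale by a scalar so that $\| \delta^{n-1} \beta \|_\infty \leq 1$, i.e. $\delta^{n-1}\beta$ takes values in $\mathfrak{o} = \ker(\pi)$.

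Then the steps are: (i) Since $\pi : \mathbb{K} \to D(\mathbb{K})$ is an additive homomorphism it commutes with the coboundary maps, so $\delta^{n-1}(\pi\beta) = \pi(\delta^{n-1}\beta) = 0$; thus $\pi\beta : G^{n-1} \to D(\mathbb{K})$ is a continuous $(n-1)$-cocycle (this is the relevant half of Lemma \ref{qz key}). (ii) By surjectivity of $c^{n-1}$, the class $[\pi\beta] \in H^{n-1}_c(G, D(\mathbb{K}))$ is in the image of the comparison map, so there is a \emph{bounded} continuous cocycle $c : G^{n-1} \to D(\mathbb{K})$ and a continuous cochain $\gamma : G^{n-2} \to D(\mathbb{K})$ with $\pi\beta = c + \delta^{n-2}\gamma$. (iii) Since $\mathfrak{o}$ is clopen in $\mathbb{K}$, the quotient $D(\mathbb{K}) = \mathbb{K}/\mathfrak{o}$ is discrete, so any set-theoretic section $\iota : D(\mathbb{K}) \to \mathbb{K}$ is continuous; moreover every nonzero coset of $\mathfrak{o}$ consists of elements of one and the same $\mathbb{K}$-norm, equal to its $D(\mathbb{K})$-norm (by Lemma \ref{ultrametric cor}), so $\iota$ sends bounded sets to bounded sets. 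Hence $\iota c$ is a bounded continuous cochain, and by Lemma \ref{qz key} it satisfies $\|\delta^{n-1}(\iota c)\|_\infty \leq 1$ and $\pi(\iota c) = c$. (iv) Put $b := \beta - \iota c - \delta^{n-2}(\iota\gamma)$, a continuous cochain; applying $\pi$ and using $\pi\iota = \mathrm{id}$ and that $\pi$ commutes with $\delta$ gives $\pi b = \pi\beta - c - \delta^{n-2}\gamma = 0$, so $b$ takes values in $\mathfrak{o}$, i.e. $\|b\|_\infty \leq 1$. Then
$$\beta = \delta^{n-2}(\iota\gamma) + (\iota c + b),$$
where $\delta^{n-2}(\iota\gamma)$ is a continuous $(n-1)$-cocycle (every coboundary is a cocycle) and $\iota c + b$ is a bounded continuous cochain. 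So $\beta$ is trivial; undoing the rescaling finishes the proof.

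I expect this to be essentially routine once Lemma \ref{qz key} is available, so there is no serious obstacle; the only points requiring a line of care are that the chosen lift $\iota c$ of the bounded representative $c$ is still bounded and continuous (this is exactly where the discreteness of $D(\mathbb{K})$ and the constancy of the norm on cosets of $\mathfrak{o}$ are used), and that $\pi$ — being additive, not $\mathbb{K}$-linear — nonetheless commutes with the coboundary operators for trivial coefficients. As a sanity check I would note that the case $n = 2$ recovers the already-established fact that boundedness of all continuous homomorphisms $G \to D(\mathbb{K})$ forces every continuous $\mathbb{K}$-valued quasimorphism of $G$ to be trivial, consistent with Proposition \ref{class qm 0}.
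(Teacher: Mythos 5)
Your proof is correct and follows essentially the same route as the paper's: reduce modulo $\mathfrak{o}$ to get a $D(\mathbb{K})$-valued cocycle, use surjectivity of $c^{n-1}$ to write it as a bounded cocycle plus a coboundary, and lift back along a (continuous, boundedness-preserving) section $\iota$, with the key estimates supplied by Lemma \ref{qz key}. The only cosmetic difference is that you track the decomposition $\beta = \delta^{n-2}(\iota\gamma) + (\iota c + b)$ explicitly via $\pi b = 0$, whereas the paper chains three bounded-distance comparisons $f \sim \iota c \sim \iota\delta^{n-2}b \sim \delta^{n-2}\iota b$; these are the same argument.
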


\begin{remark}
The continuous (bounded) cohomology with coefficients in $D(\mathbb{K})$, and the corresponding comparison map, are defined as usual: here $D(\mathbb{K})$ is equipped with the discrete topology and the norm induced by $\mathbb{K}$. The comparison map $c^{n-1} : H^{n-1}_{cb}(G, D(\mathbb{K})) \to H^{n-1}_c(G, D(\mathbb{K}))$ is then surjective if and only if every $(n - 1)$-cocycle $G^{n-1} \to D(\mathbb{K})$ is at a bounded distance from a coboundary.
\end{remark}

\begin{proof}
We need to show that any $(n - 1)$-quasicocycle $G^{n-1} \to \mathbb{K}$ is at a bounded distance from a true $(n - 1)$-cocycle. By Lemma \ref{qz key}, it suffices to consider quasicocycles of the form $\iota c$, where $c : G^{n-1} \to D(\mathbb{K})$ is a cocycle. Since the comparison map $c^{n-1} : H^{n-1}_{cb}(G, D(\mathbb{K})) \to H^{n-1}_c(G, D(\mathbb{K}))$ is surjective, there exists a cochain $b : G^{n-2} \to D(\mathbb{K})$ such that $c$ is at a bounded distance from $\delta^{n-2}b$. Then $\iota c$ is at a bounded distance from $\iota \delta^{n-2}b$, which in turn is at a bounded distance from the true $(n - 1)$-cocycle $\delta^{n-2} \iota b : G^{n-2} \to \mathbb{K}$.
\end{proof}

For $n = 2$, we recover the comment after Proposition \ref{class qm 0}; namely, if $G$ does not admit unbounded subgroups of $D(\mathbb{K})$ as discrete quotients, then $c^2$ is injective. Indeed, if all continuous homomorphisms $G \to D(\mathbb{K})$ are bounded, then the comparison map $c^1 : H^1_{cb}(G, D(\mathbb{K})) \to H^1_c(G, D(\mathbb{K}))$ is surjective, and so the statement follows from Theorem \ref{surj inj}. \\

For discrete groups, Theorem \ref{surj inj} may be combined with Proposition \ref{surj_comp 2} to obtain the following Corollary:

\begin{corollary}
\label{inj Fn}
Let $\mathbb{K}$ be a non-Archimedean valued field. Let $G$ be a discrete group such that $H_{n-1}(G, \mathbb{Z})$ is a finitely generated abelian group. Then the comparison map $c^n : H^n_b(G, \mathbb{K}) \to H^n(G, \mathbb{K})$ is injective.
\end{corollary}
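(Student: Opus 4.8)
The plan is to derive Corollary \ref{inj Fn} by combining the injectivity criterion of Theorem \ref{surj inj} with the surjectivity result Proposition \ref{surj_comp 2}, applied to the coefficient module $M = D(\mathbb{K})$. The argument splits according to the residual characteristic, along the trichotomy of Proposition \ref{surj_comp}. First I would dispose of the case $\chi(\mathbb{K}) = \chi(\mathfrak{r})$: there Theorem \ref{qz split} already asserts that $c^n : H^n_b(G,\mathbb{K}) \to H^n(G,\mathbb{K})$ is injective for \emph{every} discrete group $G$ and every $n \geq 1$, so nothing needs to be proved and the homological hypothesis is not even used. The case $n = 1$ is likewise trivial, since $c^1$ is simply the inclusion of bounded homomorphisms into homomorphisms. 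Thus I may assume $n \geq 2$, $\chi(\mathbb{K}) = 0$ and $\chi(\mathfrak{r}) = p > 0$, which is precisely the case in which the ring $R$ of Proposition \ref{surj_comp} equals $\mathbb{Z}$.

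In this remaining case, by Proposition \ref{class def} the defect group $D(\mathbb{K})$ is a direct sum of copies of the Pr\"ufer group $\mathbb{Z}(p^\infty)$; in particular it is divisible, hence by Baer's Theorem \ref{div inj} an injective $\mathbb{Z}$-module, and it carries the ultrametric norm induced from $\mathbb{K}$, which is compatible with the restriction of $|\cdot|_\mathbb{K}$ to $\mathbb{Z}$ because $\mathbb{Z} \subseteq \mathfrak{o}$. Since $H_{n-1}(G, \mathbb{Z})$ is a finitely generated abelian group, Proposition \ref{surj_comp 2} (with $R = \mathbb{Z}$ and $M = D(\mathbb{K})$, applied in degree $n-1$) shows that the comparison map $c^{n-1} : H^{n-1}_b(G, D(\mathbb{K})) \to H^{n-1}(G, D(\mathbb{K}))$ is surjective. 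Feeding this into Theorem \ref{surj inj} (using the discrete topology on $G$, so that continuous and continuous bounded cohomology coincide with ordinary and bounded cohomology) yields that $c^n : H^n_b(G, \mathbb{K}) \to H^n(G, \mathbb{K})$ is injective, as desired.

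There is no genuinely hard step here: the statement is an assembly of previously established results, and the main thing to get right is the bookkeeping. The only points requiring a little care are the reduction to the single non-trivial residual-characteristic case (so that Theorem \ref{qz split} handles the rest), and checking that $D(\mathbb{K})$ meets the hypotheses of Proposition \ref{surj_comp 2} --- namely that it is an injective $\mathbb{Z}$-module (Baer, via its description as a sum of Pr\"ufer groups) and that its induced norm interacts correctly with the $\mathbb{Z}$-action (automatic, since integers have norm at most $1$ in $\mathbb{K}$, so multiplication by an integer does not increase the norm and the image of any $\mathbb{Z}$-linear map out of a finitely generated group is bounded). It is also worth remarking that the finite-generation hypothesis on $H_{n-1}(G,\mathbb{Z})$ is exactly the one demanded by Proposition \ref{surj_comp} in this case, which is why no stronger homological assumption is needed.
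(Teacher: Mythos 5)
Your proposal is correct and follows essentially the same route as the paper's proof: reduce via Theorem \ref{qz split} to the case $\chi(\mathbb{K}) = 0$, $\chi(\mathfrak{r}) = p > 0$, apply Proposition \ref{surj_comp 2} with $R = \mathbb{Z}$ and $M = D(\mathbb{K})$ (divisible by Proposition \ref{class def}) to get surjectivity of $c^{n-1}$ with $D(\mathbb{K})$-coefficients, and conclude by Theorem \ref{surj inj}. The extra remarks about $n = 1$ and the norm compatibility of $D(\mathbb{K})$ are fine but not needed beyond what the paper records.
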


\begin{proof}
By Theorem \ref{qz split} we may assume that $\chi(\mathbb{K}) = 0$ and $\chi(\mathfrak{r}) = p > 0$. Proposition \ref{surj_comp 2} applies with $R = \mathbb{Z}$ and $M = D(\mathbb{K})$, since $D(\mathbb{K})$ is divisible by Proposition \ref{class def}. Therefore the comparison map $c^{n-1} : H^{n-1}_b(G, D(\mathbb{K})) \to H^{n-1}(G, D(\mathbb{K}))$ is surjective. We conclude by Theorem \ref{surj inj}.
\end{proof}

This applies in particular to groups of type $F_n$:

\begin{corollary}
\label{inj Fn 2}

Let $G$ be a group of type $F_n$. Then the comparison map $c^{n+1} : H^{n+1}_b(G, \mathbb{K}) \to H^{n+1}(G, \mathbb{K})$ is injective.
\end{corollary}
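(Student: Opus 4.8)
The plan is to deduce this immediately from Corollary \ref{inj Fn}, after recalling the standard homological consequence of the finiteness condition $F_n$. First I would recall that if $G$ admits a classifying space $K(G,1)$ with compact $n$-skeleton, then $G$ is in particular of type $FP_n$ over $\mathbb{Z}$; equivalently, $\mathbb{Z}$ admits a resolution by finitely generated free (or projective) $\mathbb{Z}[G]$-modules in degrees $\leq n$. A routine consequence is that $H_k(G,\mathbb{Z})$ is a finitely generated abelian group for every $k \leq n$, since it is computed as a subquotient of finitely generated abelian groups. In particular $H_n(G,\mathbb{Z})$ is finitely generated.

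Next I would apply Corollary \ref{inj Fn} with its index parameter set to $n+1$: its hypothesis is that $H_{(n+1)-1}(G,\mathbb{Z}) = H_n(G,\mathbb{Z})$ be a finitely generated abelian group, which is exactly what the previous paragraph provides. The conclusion of that corollary is that $c^{n+1} : H^{n+1}_b(G,\mathbb{K}) \to H^{n+1}(G,\mathbb{K})$ is injective, which is the desired statement. No separate treatment of the characteristic of $\mathbb{K}$ is needed here, since Corollary \ref{inj Fn} already handles all non-Archimedean valued fields (the case $\chi(\mathbb{K}) = \chi(\mathfrak{r})$ via Theorem \ref{qz split}, and the remaining case via Proposition \ref{surj_comp 2} applied to $M = D(\mathbb{K})$ together with Theorem \ref{surj inj}).

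There is essentially no obstacle: the only point that deserves a line of justification is the passage from the topological finiteness condition $F_n$ to finite generation of $H_n(G,\mathbb{Z})$, which is classical (see e.g. \cite{Brown}, where the relation between $F_n$, $FP_n$ and finite generation of homology in low degrees is discussed). If one wants to be slightly more self-contained, one can note that the cellular chain complex of the universal cover of such a $K(G,1)$ is a complex of free $\mathbb{Z}[G]$-modules that are finitely generated in degrees $\leq n$, so tensoring down to $\mathbb{Z}$ and taking homology in degree $n$ yields a subquotient of a finitely generated abelian group.

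\begin{proof}
If $G$ is of type $F_n$, then $H_n(G,\mathbb{Z})$ is a finitely generated abelian group: indeed, $G$ admits a classifying space with compact $n$-skeleton, so the cellular chain complex of the universal cover is a complex of $\mathbb{Z}[G]$-modules which are free and finitely generated in degrees $\leq n$; taking $G$-coinvariants and then homology in degree $n$ exhibits $H_n(G,\mathbb{Z})$ as a subquotient of a finitely generated abelian group. Now apply Corollary \ref{inj Fn} with $n$ replaced by $n+1$: since $H_{(n+1)-1}(G,\mathbb{Z}) = H_n(G,\mathbb{Z})$ is finitely generated, the comparison map $c^{n+1} : H^{n+1}_b(G,\mathbb{K}) \to H^{n+1}(G,\mathbb{K})$ is injective.
\end{proof}
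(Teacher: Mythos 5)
Your proposal is correct and matches the paper's (implicit) argument exactly: the paper deduces this corollary from Corollary \ref{inj Fn} using the same fact — already invoked in the proof of Corollary \ref{Fn surj} — that type $F_n$ forces $H_n(G,\mathbb{Z})$ to be a finitely generated abelian group. Nothing is missing.
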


Combined with Corollary \ref{Fn surj}, this implies:

\begin{corollary}
\label{iso Fn}

Let $X$ be an aspherical CW-complex of type $F_\infty$. Then $c^n : H^n_b(\pi_1(X), \mathbb{K}) \to H^n(\pi_1(X), \mathbb{K})$ is an isomorphism for all $n \geq 1$.
\end{corollary}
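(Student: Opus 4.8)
\textbf{Proof proposal for Corollary \ref{iso Fn}.} The plan is to combine two results already established in the excerpt, and to bridge between bounded/ordinary cohomology of $\pi_1(X)$ and of the space $X$ using the standard isomorphism in the aspherical case. First I would invoke Corollary \ref{inj Fn 2}: since $X$ is of type $F_\infty$, its fundamental group $G := \pi_1(X)$ is of type $F_n$ for every $n \geq 1$, so $c^{n+1} : H^{n+1}_b(G, \mathbb{K}) \to H^{n+1}(G, \mathbb{K})$ is injective for all $n \geq 0$; in particular $c^n$ is injective in every degree $n \geq 1$. Next I would invoke Corollary \ref{Fn surj}: again because $G$ is of type $F_n$ for every $n$, the map $c^n : H^n_b(G, \mathbb{K}) \to H^n(G, \mathbb{K})$ is surjective in every degree $n \geq 1$. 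Combining the two gives that $c^n : H^n_b(G, \mathbb{K}) \to H^n(G, \mathbb{K})$ is an isomorphism for all $n \geq 1$, which is precisely the statement.

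The only subtlety is that the statement is phrased for $\pi_1(X)$, and one should make sure the finiteness hypothesis transfers correctly from $X$ to $G$. Here I would note that an aspherical CW-complex $X$ of type $F_\infty$ is, by definition, a classifying space $K(G,1)$ with finitely many cells in each dimension, so $G$ is of type $F_\infty$ in the group-theoretic sense; equivalently $H_n(G, R) = H_n(X; R)$ is a finitely generated $R$-module for every $n$ and every $R \in \{\mathbb{Z}, \mathbb{Q}, \mathbb{F}_p\}$, which is exactly what Proposition \ref{surj_comp} (hence Corollary \ref{Fn surj}) and Corollary \ref{inj Fn} need. So no genuine work is required beyond citing the two corollaries and observing that both apply to $G = \pi_1(X)$ simultaneously in every degree.

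I do not expect a serious obstacle here: this corollary is essentially a bookkeeping consequence of the surjectivity result of Subsection \ref{ss_surj} and the injectivity result \ref{inj Fn 2}, both of which have already done the real work (the Universal Coefficient Theorem over $\mathbb{K}$ together with the ultrametric inequality for surjectivity, and Theorem \ref{surj inj} together with Proposition \ref{surj_comp 2} applied to the divisible module $D(\mathbb{K})$ for injectivity). If anything, the one point to state carefully is that the injectivity in degree $n$ comes from Corollary \ref{inj Fn 2} applied with $F_{n-1}$ (since type $F_\infty$ includes type $F_{n-1}$), so that \emph{both} injectivity and surjectivity hold in \emph{every} degree $n \geq 1$ with no exceptional low-degree cases; after that the conclusion is immediate.
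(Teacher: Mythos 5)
Your proposal is correct and follows essentially the same route as the paper: the paper's proof likewise uses asphericity to identify $H_n(\pi_1(X), R)$ with $H_n(X; R)$ (finitely generated by the cell-finiteness), then cites Corollary \ref{Fn surj} for surjectivity and Corollary \ref{inj Fn 2} for injectivity. Your remark about applying the injectivity statement with type $F_{n-1}$ to cover degree $n$ is the right bookkeeping and matches the intended reading.
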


\begin{remark}
In the next section we will prove a topological version of this result: see Corollary \ref{cell}.
\end{remark}

\begin{proof}
The asphericity condition implies that $H_n(\pi_1(X), \mathbb{Z}) \cong H_n(X, \mathbb{Z})$, which is finitely generated by the finiteness condition on cells. The same holds with $\mathbb{F}_p$-coefficients. Then Corollary \ref{Fn surj} implies that $c^n$ is surjective, and Corollary \ref{inj Fn 2} implies that $c^{n+1}$ is injective.
\end{proof}

These last corollaries show once again a big contrast with the real setting. We focus on two specific group-theoretic aspects, but there are of course many more. We will look at topological aspects in the next section. \\

By a Theorem of Huber \cite[Theorem 2.14]{huber}, given discrete groups $G, H$ and a surjective homomorphism $\varphi : G \to H$, the induced map $H^2_b(\varphi) : H^2_b(H, \mathbb{R}) \to H^2_b(G, \mathbb{R})$ is an injective homomorphism (in fact, an isometric embedding with respect to the Gromov norm). In particular, if $H$ is any finitely generated group, then $H^2_b(H, \mathbb{R})$ embeds into $H^2_b(G, \mathbb{R})$, where $G$ is a free group of large enough rank. This fails when $\mathbb{R}$ is replaced by a non-Archimedean valued field $\mathbb{K}$. For instance, we can let $H$ be the fundamental group of a closed surface of genus $g \geq 1$, and then Corollary \ref{iso Fn} implies that $H^2_b(H, \mathbb{K}) \neq 0$ while $H^2_b(G, \mathbb{K}) = 0$ for any free group $G$ of finite rank. \\

A very mysterious quantity related to bounded cohomology of discrete groups with real coefficients is the \emph{(real) bounded cohomological dimension}. This is defined as $bcd_\mathbb{R}(G) := \inf \{ n \geq 0 \mid H^n_b(G, \mathbb{R}) \neq 0 \} \in [0, \infty]$. This quantity is explored in the paper \cite{bcd}, where the author points out that there are no known examples of groups whose real bounded cohomological dimension is positive and finite. On the other hand, if we similarly define $bcd_\mathbb{K}$ for a non-Archimedean valued field $\mathbb{K}$, then there are groups that attain all possible bounded cohomological dimensions. By Corollary \ref{iso Fn}, if $G$ is the fundamental group of a compact oriented aspherical $n$-manifold, then $bcd_\mathbb{K}(G) = n$. An example attaining infinite bounded cohomological dimension is Thompson's group $F$. Indeed by \cite[Theorem 7.1]{thompson}, for all $n \geq 1$ we have $H_n(F, \mathbb{Z}) \cong \mathbb{Z}^2$, which is finitely generated. By the Universal Coefficient Theorem in Homology $H_n(F, \mathbb{Q})$ and $H_n(F, \mathbb{F}_p)$ are finite-dimensional (we are using that $\mathbb{Z}^2$ is free for the case of $\mathbb{F}_p$). So by Proposition \ref{surj_comp} and Corollary \ref{inj Fn} the comparison map is always an isomorphism, which implies that $bcd_\mathbb{K}(F) = \infty$.

\pagebreak

\section{Bounded cohomology of topological spaces}
\label{s_top}

We have seen in Corollary \ref{iso Fn} that the comparison map is an isomorphism for fundamental groups of aspherical CW-complexes of type $F_\infty$. This section justifies this result by taking a topological approach. Our goal is to define bounded cohomology for pairs of topological spaces and prove the following:

\begin{theorem}
\label{main_top}

Bounded cohomology with coefficients in a non-Archimedean valued field $\mathbb{K}$ satisfies the Eilenberg--Steenrod axioms for (unreduced) cohomology, except additivity. This is replaced by an injective homomorphism
$$\prod\limits_{i \in I} H^n_b(j_i) : H^n_b \left( \bigsqcup\limits_{i \in I} X_i , \bigsqcup\limits_{i \in I} A_i; \mathbb{K} \right) \to \prod\limits_{i \in I} H^n_b(X_i, A_i; \mathbb{K})$$
whose image is the subspace of sequences admitting representatives with uniformly bounded norm.
\end{theorem}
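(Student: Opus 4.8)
The strategy is the classical one for establishing that singular cohomology is an Eilenberg--Steenrod cohomology theory, carried out at the level of bounded singular cochains, with the crucial observation that the ultrametric inequality on $\mathbb{K}$ makes the barycentric-subdivision argument for excision compatible with boundedness. First I would set up the definitions carefully: for a space $X$ let $\overline{C}_n(X; \mathbb{K})$ be the free $\mathbb{K}$-vector space on the set of singular $n$-simplices, equipped with the $\ell^\infty$-norm (the analogue of the $\ell^1$-norm in the real case, as in Subsection~\ref{preli BC}), and let $C^n_b(X; \mathbb{K})$ be its topological dual, i.e.\ the bounded $\mathbb{K}$-valued functions on singular simplices with the supremum norm; then $H^n_b(X; \mathbb{K})$ is the cohomology of $C^\bullet_b(X; \mathbb{K})$, and for a pair $(X, A)$ one takes $C^n_b(X, A; \mathbb{K}) := \ker(C^n_b(X;\mathbb{K}) \to C^n_b(A;\mathbb{K}))$, which is the dual of $\overline{C}_n(X;\mathbb{K}) / \overline{C}_n(A;\mathbb{K})$. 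The functoriality, the long exact sequence of a pair, the homotopy axiom, and the dimension axiom then follow by dualizing the corresponding chain-level statements, exactly as in the real case --- the only thing to check is that all the relevant chain maps and chain homotopies (the prism operator for homotopy invariance, the connecting maps) are bounded operators, which they are because they send a simplex to a $\mathbb{K}$-linear combination of a \emph{uniformly bounded number} of simplices with coefficients $\pm 1$, so the ultrametric inequality gives norm $\le 1$.

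The heart of the matter is excision. Here I would follow the standard proof via iterated barycentric subdivision: for a cover $\mathcal{U} = \{U_\alpha\}$ of $X$ whose interiors cover $X$, one has the subcomplex $\overline{C}^{\mathcal{U}}_\bullet(X;\mathbb{K})$ of $\mathcal{U}$-small chains, the subdivision operator $\mathrm{Sd}$, and a chain homotopy $T$ with $\partial T + T \partial = \mathrm{Id} - \mathrm{Sd}$, together with the fact that for each chain there is $m$ with $\mathrm{Sd}^m$ landing in the small subcomplex. The key point --- and this is exactly where the real theory fails --- is that although $\mathrm{Sd}^m$ of a single simplex is a sum of $(n+1)!^m$ simplices, which is unbounded in $m$, the ultrametric inequality means that the operator norm of $\mathrm{Sd}^m$ (and of $T$, and of the operator $D_m := \sum_{k=0}^{m-1} \mathrm{Sd}^k T$) is still $\le 1$ as a map of normed $\mathbb{K}$-vector spaces, because all coefficients are $\pm 1$. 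Consequently the inclusion $\overline{C}^{\mathcal{U}}_\bullet(X;\mathbb{K}) \hookrightarrow \overline{C}_\bullet(X;\mathbb{K})$ is a chain homotopy equivalence through \emph{bounded} operators, and dualizing gives that restriction $C^\bullet_b(X;\mathbb{K}) \to \mathrm{Hom}$ of small chains is a bounded chain homotopy equivalence. From there excision is formal: $C^\bullet_b(X, A; \mathbb{K}) \to C^\bullet_b(X \setminus Z, A \setminus Z; \mathbb{K})$ is an isomorphism on cohomology for $\overline{Z} \subseteq \mathrm{int}(A)$, by the usual five-lemma argument applied to the pairs of chain complexes, now knowing all maps involved are bounded. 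I expect the bookkeeping in this step --- making sure every auxiliary operator is explicitly bounded and that the homotopies assemble correctly on the quotient complexes of a pair --- to be the main technical obstacle, though no new idea beyond ``ultrametric kills the factorial'' is needed.

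Finally, for the weak additivity statement: given pairs $(X_i, A_i)_{i \in I}$ with inclusions $j_i \colon (X_i, A_i) \hookrightarrow (\bigsqcup_i X_i, \bigsqcup_i A_i)$, a singular simplex in the disjoint union has connected image, hence lies in exactly one $X_i$, so $\overline{C}_n\big(\bigsqcup X_i, \bigsqcup A_i; \mathbb{K}\big) = \bigoplus_i \big(\overline{C}_n(X_i)/\overline{C}_n(A_i)\big)$ as a $\mathbb{K}$-vector space, with the $\ell^\infty$-norm being the supremum of the norms on the summands. Dualizing, a bounded cochain on the disjoint union is a tuple $(\varphi_i)_{i\in I}$ of cochains with $\sup_i \|\varphi_i\|_\infty < \infty$; so $C^\bullet_b(\bigsqcup X_i, \bigsqcup A_i;\mathbb{K})$ is the $\ell^\infty$-direct product $\prod^{\mathrm{b}}_i C^\bullet_b(X_i, A_i;\mathbb{K})$ of the individual complexes. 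The map $\prod_i H^n_b(j_i)$ is then induced by the inclusion of this bounded product into the full product $\prod_i C^\bullet_b(X_i, A_i;\mathbb{K})$ of complexes. Injectivity: if a uniformly-bounded cocycle $(\varphi_i)_i$ becomes a coboundary in each coordinate, say $\varphi_i = \delta \psi_i$, then by the bounded version of the Open Mapping Theorem (Subsection~\ref{preli Banach}) applied degreewise --- or more elementarily by splitting $\overline{B}_{n-1} \subseteq \overline{C}_{n-1}$ as in the proof of Lemma~\ref{UCTb Z} --- the $\psi_i$ can be chosen with $\|\psi_i\|_\infty \le C\|\varphi_i\|_\infty$ for a constant $C$ independent of $i$, so $(\psi_i)_i$ is uniformly bounded and $(\varphi_i)_i$ is a coboundary in the bounded product; the description of the image as the uniformly-bounded sequences is then immediate from this same splitting. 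This is the exact topological analogue of Proposition~\ref{EH add}, and I would present it by pointing to that proof; the one subtlety is ensuring the uniform bound $C$ in the cobounding, which is where completeness of $\mathbb{K}$ (and hence the Open Mapping Theorem) is used.
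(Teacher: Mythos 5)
Your treatment of the dimension, homotopy, exactness and excision axioms is essentially the paper's proof: the paper isolates your ``ultrametric kills the factorial'' observation as a single lemma (Lemma \ref{key_top}: the dual of any operator induced by a map of \emph{integral} chains has operator norm at most $1$, with no hypothesis at all on how many simplices appear), and then dualizes the prism operator, the short exact sequence of a pair, and Hatcher's barycentric subdivision operators $D_n, \rho_n$ exactly as you propose; the only cosmetic difference is that the paper passes to relative cochains by observing that these operators preserve chains supported in $A$, rather than via a five-lemma argument, and it never needs your remark that the prism operator involves a ``uniformly bounded number'' of simplices. Your identification of $C^n_b$ of a disjoint union with the $\ell^\infty$-product of the $C^n_b(X_i, A_i)$ is also the paper's additivity argument --- but the paper stops there: the statement is read off from this cochain-level isomorphism, and no uniform cobounding constants are produced anywhere.

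The genuine gap is in your injectivity argument for additivity, where you claim the primitives can be chosen with $\|\psi_i\|_\infty \leq C \|\varphi_i\|_\infty$ for a constant $C$ \emph{independent of $i$}. Neither of your two justifications gives this. The Open Mapping Theorem (besides requiring $\mathbb{K}$ complete, which Theorem \ref{main_top} does not assume, and requiring the image of $\delta$ to be closed, which is not automatic) only yields a constant depending on the individual space $X_i$, and the $X_i$ are completely unrelated; and the splitting used in Lemma \ref{UCTb Z} is a splitting of $Z_n(X;\mathbb{Z})$ inside $C_n(X;\mathbb{Z})$ (possible because the quotient $B_{n-1}(X;\mathbb{Z})$ is free), not of $B_{n-1}(X;\mathbb{Z})$ inside $C_{n-1}(X;\mathbb{Z})$, which in general does not split --- the obstruction is precisely torsion in $H_{n-1}$. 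In fact no uniform constant exists: take $\mathbb{K} = \mathbb{Q}_p$, $n \geq 2$, and $X_i = S^{n-1} \cup_{p^i} e^n$ the mod $p^i$ Moore space. An integral cocycle $\varphi_i$ generating $H^n(X_i;\mathbb{Z}) \cong \mathbb{Z}/p^i\mathbb{Z}$ has $p$-adic sup-norm at most $1$ and is a coboundary over $\mathbb{Q}_p$ (a primitive of norm $p^i$ is $p^{-i}$ times an integral cochain), but every primitive $\psi$ satisfies $\|\psi\|_\infty \geq p^i$: otherwise $p^{i-1}\psi$ would be $\mathbb{Z}_p$-valued and $p^{i-1}\varphi_i$ would cobound over $\mathbb{Z}_p$, contradicting the fact that $\varphi_i$ generates $H^n(X_i;\mathbb{Z}_p) \cong \mathbb{Z}/p^i\mathbb{Z}$. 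So the family $(\varphi_i)_i$ is a uniformly bounded cocycle, each component of which cobounds, admitting no uniformly bounded primitive --- exactly the configuration your argument was meant to exclude. Consequently the cohomology-level injectivity cannot be obtained along these lines (the same comment applies to your appeal to the constant of Lemma \ref{qm add} in the proof of Proposition \ref{EH add}, which likewise depends on the group), and this portion of your write-up should be replaced by, or reduced to, the cochain-level identification with the $\ell^\infty$-product, which is what the paper's proof actually establishes.
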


The axioms of dimension, homotopy invariance and exactness are all satisfied by bounded cohomology with real coefficients, but excision is not. This is what makes real bounded cohomology of topological spaces so hard to compute, since algorithmic methods to compute cohomology of spaces generally rely on the Mayer--Vietoris sequence. However, with the presence of an ultrametric inequality, the technical difficulties that make excision fail in the real case disappear. This readily implies the existence of a Mayer--Vietoris sequence \cite[p. 203]{Hat}:

\begin{corollary}
\label{MVseq}

Let $X$ be the union of the interiors of the subspaces $A, B$. Then there is a Mayer--Vietoris sequence:
$$\cdots \to H^n_b(X; \mathbb{K}) \to H^n_b(A; \mathbb{K}) \oplus H^n_b(B; \mathbb{K}) \to H^n_b(A \cap B; \mathbb{K}) \to H^{n+1}_b(X; \mathbb{K}) \to \cdots.$$
\end{corollary}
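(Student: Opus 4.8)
The plan is to derive the sequence purely formally from the Eilenberg--Steenrod framework established in Theorem \ref{main_top}, treating the one non-trivial input, excision, as a black box supplied by that theorem. Since the Mayer--Vietoris sequence involves only the two pieces $A, B$ and no infinite disjoint unions, the failure of the additivity axiom is irrelevant here, so I may use the exactness axiom (long exact sequence of a pair) and the excision axiom exactly as in the classical derivation. Concretely, the hypothesis that $X$ is the union of the interiors of $A$ and $B$ is precisely the condition under which excision applies to the inclusion of pairs $(A, A\cap B) \hookrightarrow (X, B)$: taking the excised subspace to be $X \setminus A$, one has $\overline{X \setminus A} \subseteq X \setminus \operatorname{int}(A) \subseteq \operatorname{int}(B)$, so Theorem \ref{main_top} yields a restriction isomorphism $H^n_b(X, B; \mathbb{K}) \xrightarrow{\;\cong\;} H^n_b(A, A\cap B; \mathbb{K})$ for all $n$.

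First I would write down the two long exact sequences of pairs furnished by the exactness axiom, for $(X, B)$ and for $(A, A\cap B)$, and observe that the inclusion of pairs above induces a commutative ladder between them. On the absolute terms the vertical maps are the restriction homomorphisms $j_A^\ast \colon H^n_b(X;\mathbb{K}) \to H^n_b(A;\mathbb{K})$ and $i_B^\ast \colon H^n_b(B;\mathbb{K}) \to H^n_b(A\cap B;\mathbb{K})$, while on the relative terms $H^\bullet_b(-,-;\mathbb{K})$ they are the excision isomorphisms. Because the vertical maps on the relative terms are isomorphisms, the Barratt--Whitehead splicing lemma (pure homological algebra, valid in the abelian category of $\mathbb{K}$-vector spaces) produces a single long exact sequence whose successive terms are $H^n_b(X;\mathbb{K})$, then $H^n_b(A;\mathbb{K}) \oplus H^n_b(B;\mathbb{K})$, then $H^n_b(A\cap B;\mathbb{K})$. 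Reading off the induced maps gives $(j_A^\ast, j_B^\ast)$ for the first, the difference $i_A^\ast - i_B^\ast$ of the two restrictions for the second (the remaining restriction maps being denoted analogously), and the connecting homomorphism
$$\delta \colon H^n_b(A\cap B;\mathbb{K}) \xrightarrow{\;\delta'\;} H^{n+1}_b(A, A\cap B;\mathbb{K}) \xrightarrow{\;\cong\;} H^{n+1}_b(X, B;\mathbb{K}) \longrightarrow H^{n+1}_b(X;\mathbb{K}),$$
where $\delta'$ is the connecting map of the pair $(A, A\cap B)$, the middle arrow is the inverse of excision, and the last is the canonical map from the long exact sequence of $(X, B)$.

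I expect the only real work to be bookkeeping rather than mathematics: verifying that the ladder commutes (which reduces to naturality of the connecting homomorphism and of the excision isomorphism, both built into the axioms as codified in Theorem \ref{main_top}) and keeping the contravariant indexing straight so that the degree shift in $\delta$ lands correctly. The main conceptual point, and the place where the non-Archimedean hypothesis is essential, is hidden entirely inside excision: it is only because Theorem \ref{main_top} repairs excision---which genuinely fails for real bounded cohomology---that the relative terms can be identified and eliminated at all. Once that isomorphism is granted no estimate involving the bound or the norm on $\mathbb{K}$ enters, and the remainder of the argument is identical to the standard derivation of Mayer--Vietoris from the Eilenberg--Steenrod axioms \cite[p.\ 203]{Hat}.
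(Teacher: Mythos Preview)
Your proposal is correct and matches the paper's approach exactly: the paper does not give an explicit proof but simply notes that Theorem \ref{main_top} ``readily implies the existence of a Mayer--Vietoris sequence \cite[p.\ 203]{Hat}'', and what you have written is precisely the standard Eilenberg--Steenrod derivation encoded by that citation. Your observation that the modified additivity axiom plays no role here, so that only exactness and excision are needed, is the one point worth making and you make it.
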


The modification of the additivity axiom is significant only when the union is infinite, so for most practical purposes bounded cohomology with non-Archimedean coefficients behaves like a standard cohomology theory. \\

Here is a far-reaching example of this. When $X$ is a CW-complex, any cohomology theory satisfying the Eilenberg--Steenrod axioms with a given coefficient group is naturally isomorphic to cellular cohomology with the same coefficient group. Given the modified additivity axiom, bounded cohomology with coefficients in $\mathbb{K}$ is not a bona fide cohomology theory. However, since the additivity axiom holds with respect to finite disjoint unions, the proof carries over whenever $X$ is of type $F_\infty$. Indeed, going over the proof of the aforementioned natural isomorphism (see \cite[Section 10]{Bred} for the proof of the corresponding statement in homology, which can be completely dualized), we see that additivity comes into play only when expressing the $n$-skeleton of $X$ as a quotient of the disjoint union of its $n$-cells. We thus have:

\begin{corollary}
\label{cell}

Let $X$ be a CW-complex of type $F_\infty$. Then $H^n_b(X; \mathbb{K})$ is naturally isomorphic to the cellular cohomology of $X$ with coefficient group $\mathbb{K}$.
\end{corollary}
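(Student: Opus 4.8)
The plan is to deduce Corollary \ref{cell} from Theorem \ref{main_top} by mimicking the classical proof of the isomorphism between cellular and singular cohomology, being careful that the only place the full additivity axiom is used is harmless when $X$ has finitely many cells in each dimension. Concretely, I would first recall the setup: let $X$ be a CW-complex with skeleta $X^{(n)}$, and for each $n$ fix the characteristic maps of the $n$-cells, so that there is a homeomorphism of pairs $\left( X^{(n)}, X^{(n-1)} \right) \cong \bigsqcup_{\alpha \in I_n} \left( D^n_\alpha, \partial D^n_\alpha \right) / {\sim}$, or more precisely that the quotient $X^{(n)}/X^{(n-1)}$ is a wedge of $n$-spheres indexed by $I_n$. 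Since $X$ is of type $F_\infty$, each $I_n$ is finite, so every disjoint union appearing in the argument is finite, and by Theorem \ref{main_top} the additivity axiom is available for such unions (the modified additivity map is an isomorphism when $I$ is finite, because then the uniform-boundedness condition on sequences is vacuous).

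Next I would run the standard machine. Using homotopy invariance, exactness, and the dimension axiom exactly as in \cite[Section 10]{Bred} (dualized to cohomology), one shows: $H^k_b\left(X^{(n)}, X^{(n-1)}; \mathbb{K}\right) = 0$ for $k \neq n$, and for $k = n$ it is, via (finite) additivity and the computation of $H^\bullet_b$ of a sphere (which follows from the axioms), a product $\prod_{\alpha \in I_n} \mathbb{K}$ — i.e. the group of cellular $n$-cochains with coefficients in $\mathbb{K}$. Then the long exact sequences of the pairs $\left(X^{(n)}, X^{(n-1)}\right)$ splice together in the usual way to produce a cochain complex whose $n$-th term is $H^n_b\left(X^{(n)}, X^{(n-1)}; \mathbb{K}\right)$, with differential induced by the connecting maps, and whose cohomology is $H^n_b\left(X^{(n)}; \mathbb{K}\right)$ in the appropriate range; one identifies this cochain complex with the cellular cochain complex $C^\bullet_{\mathrm{cell}}(X; \mathbb{K})$, the cellular boundary maps matching by the same diagram chase as in the singular case. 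Finally one must pass from the skeleta to $X$ itself: since any singular simplex in $X$ has compact image, it lands in some finite skeleton, so $C^\bullet_b(X; \mathbb{K}) = \varprojlim_n C^\bullet_b\left(X^{(n)}; \mathbb{K}\right)$ and, because in degree $n$ the restriction maps $H^n_b\left(X^{(n+1)}; \mathbb{K}\right) \to H^n_b\left(X^{(n)}; \mathbb{K}\right)$ and the injections stabilize (cells of dimension $> n+1$ do not affect $H^n$), one gets $H^n_b(X; \mathbb{K}) \cong H^n_b\left(X^{(n+1)}; \mathbb{K}\right) \cong H^n_{\mathrm{cell}}(X; \mathbb{K})$. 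Naturality in $X$ follows because every step — the long exact sequences, the additivity isomorphisms, the identification of relative groups with cellular cochains — is natural in the pair.

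The step I expect to be the main obstacle is the passage to the limit over skeleta, i.e. showing $H^n_b(X;\mathbb{K})$ is computed correctly from the finite skeleta when $X$ is infinite-dimensional. In ordinary singular cohomology this is handled by the compact-support argument plus a $\varprojlim^1$ term, and one needs the cohomology groups to behave well under the inverse limit; here I would instead use the concrete fact that $H^n_{cb}$ is a subquotient of $\overline{C}^n_b$ and that a bounded singular $n$-cochain on $X$ is the same as a compatible family of bounded $n$-cochains on the $X^{(m)}$ of uniformly bounded norm (the norm being the global sup norm), so the relevant inverse limit is taken in the category of normed spaces and the argument is essentially the same diagram chase as in the finite-dimensional case applied skeleton by skeleton. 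The one genuinely new point compared to the real case is that the norm is preserved under all these identifications — but this is exactly what the ultrametric-flavoured statements of Section \ref{s_top} are designed to give, so no extra work beyond bookkeeping should be required. I would therefore present the proof as: reduce to finite skeleta by compactness of simplices; apply the axioms of Theorem \ref{main_top} (now a genuine cohomology theory on the finite-type subcomplexes) exactly as in the classical proof; conclude.
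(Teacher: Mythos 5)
Your proposal is correct and follows essentially the same route as the paper: the paper's own justification is precisely that the classical proof (Bredon's argument, dualized to cohomology) carries over verbatim once one observes that additivity is only invoked when expressing the $n$-skeleton as a quotient of the disjoint union of its $n$-cells, which is a finite union for a complex of type $F_\infty$. Your additional care about the passage from finite skeleta to an infinite-dimensional $X$ (via compactness of singular simplices and the sup-norm description of $C^\bullet_b(X;\mathbb{K})$ as a limit over skeleta) is a legitimate filling-in of a step the paper leaves implicit in its appeal to the classical argument.
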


This implies that the comparison map is an isomorphism for such spaces, and thus this result can be seen as a topological version of Corollary \ref{iso Fn}. \\

One may want to apply such strong results on bounded cohomology of topological spaces to a suitable non-Archimedean notion of simplcial volume. We will discuss this in the last subsection, showing that the most natural definition that is compatible with bounded cohomology in terms of duality - different from the one studied in \cite{pSV} - is identically $1$.

\subsection{Definitions}

We fix a non-Archimedean valued field $\mathbb{K}$ for the rest of this section and omit it from the notation. \\

Let $X$ be a topological space, and denote by $S_n(X)$ the collection of singular $n$-simplices in $X$, that is, continuous maps $\sigma : \Delta^n \to X$ where $\Delta^n \subset \mathbb{R}^{n+1}$ is the standard $n$-simplex. Define the complex of \emph{bounded singular cochains} by:
$$C^n_b(X) := \{ \varphi : S_n(X) \to \mathbb{K} : \| \varphi \|_\infty < \infty \}.$$
The supremum $\| \varphi \|_\infty$ is called the \emph{norm} of $\varphi$. This is a subcomplex of the singular cochain complex $C^n(X)$ with the usual coboundary map $\delta^n : C^n(X) \to C^{n+1}(X)$. We denote bounded cocycles by $Z^\bullet_b(X)$, bounded coboundaries by $B^\bullet_b(X)$, and \emph{bounded cohomology} by $H^\bullet_b(X)$. \\

Given a pair $(X, A)$, where $A$ is a subspace of $X$, the inclusion $S_n(A) \subset S_n(X)$ defines the subcomplex of relative bounded cochains
$$C^n_b(X, A) := \{ \varphi \in C^n_b(X) : \varphi|_{S_n(A)} \equiv 0 \}.$$
In other words $C^n_b(X, A)$ is the kernel of the restriction map $C^n_b(X) \to C^n_b(A)$. We denote relative bounded cocycles by $Z^\bullet_b(X, A)$, relative bounded coboundaries by $B^\bullet_b(X, A)$, and \emph{relative bounded cohomology} by $H^\bullet_b(X, A)$. We will always tacitly identify a space $X$ with the pair $(X, \emptyset)$. \\

Bounded cohomology of pairs is functorial: given $f : (X, A) \to (Y, B)$, the induced homomorphism at the level of cochain complexes preserves bounded cochains, and so it descends to a homomorphism $H^\bullet_b(f) : H^\bullet_b(Y, B) \to H^\bullet_b(X, A)$ at the level of bounded cohomology, which enjoys the usual functorial properties. In order to prove Theorem \ref{main_top} we need to prove that the following axioms are satisfied:

\begin{enumerate}
\item \textbf{Dimension:} If $\{ * \}$ denotes the one-point space, then $H^n_b(\{ * \}) = 0$ for all $n \geq 1$.
\item \textbf{Homotopy:} If $f, g : (X, A) \to (Y, B)$ are homotopic, then $H^n_b(f) = H^n_b(g)$ for all $n \geq 0$.
\item \textbf{Exactness:} There exists a natural transformation $d^n : H^n_b(A) \to H^{n+1}_b(X, A)$, such that the inclusions $i : A \to X$ and $j : (X, \emptyset) \to (X, A)$ induce a long exact sequence
$$\cdots \to H^n_b(X, A) \xrightarrow{H^n_b(j)} H^n_b(X) \xrightarrow{H^n_b(i)} H^n_b(A) \xrightarrow{d^n} H^{n+1}_b(X, A) \to \cdots$$
\item \textbf{Excision:} Given a space $X$ with subspaces $A, B$ whose interiors cover $X$, the inclusion $i : (B, A \cap B) \to (X, A)$ induces isomorphisms
$$H^n_b(i) : H^n_b(X, A) \to H^n_b(B, A \cap B)$$
for all $n \geq 0$.
\item \textbf{Additivity (modified):} If $(X, A)$ is the disjoint union of $(X_i, A_i)_{i \in I}$, then the inclusions $j_i : (X_i, A_i) \to (X, A)$ induce an injective homomorphism
$$\prod\limits_{i \in I} H^n_b(j_i) : H^n_b(X, A) \to \prod\limits_{i \in I} H^n_b(X_i, A_i)$$
for all $n \geq 0$, whose image is the subspace of sequences admitting representatives with uniformly bounded norm.
\end{enumerate}

Note how this modified version of additivity appeared in the statement of Proposition \ref{EH add}, which concerned additivity of exact bounded cohomology in degree $2$ for discrete groups.

\subsection{Proof of Theorem \ref{main_top}}

We will prove that the axioms are satisfied by making reference to the classical proofs in singular cohomology, and explaining how all the passages carry over to this setting. We will be referencing \cite{Hat} throughout. 

Given a continuous map $f : (X, A) \to (Y, B)$, the induced map at the level of singular chains will be denoted by $f_n : C_n(X, A) \to C_n(Y, B)$, the one at the level of singular cochains by $f^n : C^n(Y, B) \to C^n(X, A)$, and the restriction to bounded cochains by $f^n_b : C^n_b(Y, B) \to C^n_b(X, A)$. \\

The \textbf{dimension} axiom is easy to prove directly, but it also follows immediately from the one in singular cohomology since all singular cochains on a point are trivially bounded. \\

For the next axioms, the proofs basically reduce to the following simple observation:

\begin{lemma}
\label{key_top}

Let $(X, A)$ and $(Y, B)$ be pairs of spaces, and let $h_n : C_n(X, A; \mathbb{Z}) \to C_{n+1}(Y, B; \mathbb{Z})$ be homomorphisms, which induce $\mathbb{K}$-linear maps $h_n : C_n(X, A) \to C_{n+1}(Y, B)$. Then the duals $h^n : C^n(Y, B) \to C^{n-1}(X, A)$ restrict to $h^n_b : C^n_b(Y, B) \to C^{n-1}_b(X, A)$, and $\| h^n_b \|_{op} \leq 1$.
\end{lemma}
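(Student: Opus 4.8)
\textbf{Proof plan for Lemma \ref{key_top}.}

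The plan is to trace through the standard chain-homotopy-type computation, observing that the only non-formal input is the behaviour of the $\ell^\infty$-norm under duality, and that here the ultrametric inequality makes the bound uniform. First I would set up notation carefully: the homomorphisms $h_n : C_n(X,A;\mathbb{Z}) \to C_{n+1}(Y,B;\mathbb{Z})$ are defined on the free abelian groups generated by (relative) singular simplices, hence extend uniquely to $\mathbb{K}$-linear maps on $C_n(X,A) = C_n(X,A;\mathbb{Z}) \otimes_{\mathbb{Z}} \mathbb{K}$ (recall from Subsection \ref{preli BC} that we equip the chain groups with the natural $\ell^\infty$-norm, with the simplices as an orthonormal-type basis). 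Each $h_n$ sends a basis simplex $\sigma \in S_n(X)$ to a \emph{finite} integral combination $h_n(\sigma) = \sum_j a_j^\sigma \tau_j^\sigma$ of $(n+1)$-simplices in $Y$ (with the convention that simplices in $A$, resp. $B$, are identified with $0$); this finiteness is automatic since $h_n$ is a homomorphism out of a free group, so $h_n(\sigma)$ is a single element.

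Next I would write down the dual map explicitly. For $\varphi \in C^n(Y,B)$, the dual $h^n(\varphi) \in C^{n-1}(X,A)$ — here I reindex so that $h^n$ is dual to $h_{n-1}$, matching the statement $h^n : C^n(Y,B) \to C^{n-1}(X,A)$ — is given on a simplex $\sigma \in S_{n-1}(X)$ by
$$
(h^n \varphi)(\sigma) \;=\; \varphi\bigl(h_{n-1}(\sigma)\bigr) \;=\; \sum_j a_j^{\sigma}\, \varphi(\tau_j^{\sigma}).
$$
If $\varphi$ is bounded with $\|\varphi\|_\infty \le 1$ (after rescaling, which is harmless since the statement is about the operator norm), then since the coefficients $a_j^\sigma$ are integers, hence lie in the ring of integers $\mathfrak{o} \subseteq \mathbb{K}$ with $|a_j^\sigma|_{\mathbb{K}} \le 1$, the ultrametric inequality gives
$$
|(h^n\varphi)(\sigma)|_{\mathbb{K}} \;\le\; \max_j\, |a_j^\sigma|_{\mathbb{K}}\,|\varphi(\tau_j^\sigma)|_{\mathbb{K}} \;\le\; \max_j |\varphi(\tau_j^\sigma)|_{\mathbb{K}} \;\le\; \|\varphi\|_\infty.
$$
This bound is uniform in $\sigma$, so $h^n\varphi$ is bounded with $\|h^n\varphi\|_\infty \le \|\varphi\|_\infty$; in particular $h^n$ restricts to a map $h^n_b : C^n_b(Y,B) \to C^{n-1}_b(X,A)$ with $\|h^n_b\|_{op} \le 1$. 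I would also remark that $h^n$ indeed lands in the \emph{relative} cochains $C^{n-1}(X,A)$ (i.e. vanishes on simplices in $A$): this is forced by the hypothesis that $h_{n-1}$ takes $C_{n-1}(A;\mathbb{Z})$ into $C_n(B;\mathbb{Z})$ together with $\varphi$ vanishing on $S_n(B)$ — though strictly this is part of what ``induces $\mathbb{K}$-linear maps $C_n(X,A) \to C_{n+1}(Y,B)$'' already packages, so a one-line remark suffices.

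The computation itself is entirely routine; there is essentially \emph{no} main obstacle here, which is precisely the point of isolating this lemma — the content is that in the ultrametric setting the operator norm of a dual of an integral chain map is automatically at most $1$, with no dependence on the number of simplices appearing, in stark contrast to the real case where barycentric-subdivision-type operators blow up the $\ell^1$-norm. The only thing requiring a modicum of care is the bookkeeping of the degree shift and the relative-versus-absolute cochain distinction; once notation is fixed, the display above is the whole proof. This lemma will then be applied repeatedly in the proofs of the homotopy, exactness, and excision axioms (for excision via the barycentric subdivision operator $S$ and the chain homotopy $T$ with $\partial T + T \partial = 1 - S$, whose duals are all norm-non-increasing by this lemma, whereas over $\mathbb{R}$ the iterates $S^m$ and the associated $D_m$ have norms growing without control).
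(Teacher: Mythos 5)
Your proposal is correct and follows essentially the same argument as the paper: write $h_{n-1}(\sigma)$ as a finite integral combination of simplices, note that integers land in the closed unit ball of $\mathbb{K}$, and apply the ultrametric inequality to get $|(h^n\varphi)(\sigma)|_\mathbb{K} \leq \|\varphi\|_\infty$ uniformly in $\sigma$, independently of the number of simplices involved. The rescaling remark is superfluous (your displayed inequality already gives the bound directly), but this is harmless.
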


\begin{remark}
In the following proofs the $h_n$ will be chain homotopies: the prism operator for the proof of homotopy equivalence, and the barycentric subdivision operator for the proof of excision.
\end{remark}

\begin{proof}
Let $\sigma \in S_n(X)$, and let $h_n(\sigma) \in C_n(Y, B)$ be represented by a sum $\sum \alpha_i \sigma_i$, where $\alpha_i \in \mathbb{Z}$ and $\sigma_i \in S_n(Y)$. Then, given $\varphi \in C^n_b(Y, B)$, we have
$$| h^n(\varphi)(\sigma) |_\mathbb{K} = |\varphi(h_n(\sigma))|_\mathbb{K} \leq \max \{ |\alpha_i|_\mathbb{K} \cdot |\varphi(\sigma_i)|_\mathbb{K} \} \leq \| \varphi \|_\infty,$$
where we used that the map $\mathbb{Z} \to \mathbb{K}$ (be it injective or with kernel $p \mathbb{Z}$) has image in the $1$-ball of $\mathbb{K}$. It follows that $\| h^n(\varphi) \|_\infty \leq \| \varphi \|_\infty$, so $h^n_b$ is well-defined and has operator norm $1$.
\end{proof}

The key point in this lemma is that there is no assumption on how many simplices are needed to represent $h_n(\sigma)$. This relies heavily on the ultrametric inequality, and it is what makes the proof of excision work. \\

We can now prove the \textbf{homotopy} axiom, where we spell out the details a little more than in the next proofs.

\begin{proof}
Let $f, g : (X, A) \to (Y, B)$ be homotopic. The proof of homotopy invariance of singular cohomology goes as follows \cite[Theorem 2.10]{Hat}. There exist maps $P_n : C_n(X, A) \to C_{n+1}(Y, B)$ providing a chain homotopy between $f_n$ and $g_n$, namely $g_n - f_n = P_{n-1} \partial_n + \partial_{n+1} P_n$. Denoting by $P^n : C^n(Y, B) \to C^{n-1}(X, A)$ the dual, the relation above dualizes to $g^n - f^n = \delta^{n-1} P^n + P^{n+1} \delta^n$, thus providing a chain homotopy between $f^n$ and $g^n$ and proving that $H^n(f) = H^n(g)$.

In order to prove homotopy invariance of bounded cohomology, we need to show that $P^n$ provides a chain homotopy between the restrictions of $f^n$ and $g^n$ to the bounded singular cochains. Now $P_n$ and $\partial_n$ are induced by corresponding maps on integral chains, and so by Lemma \ref{key_top} both $P^n$ and $\delta^n$ send bounded cochains to bounded cochains. We obtain $g^n_b - f^n_b = \delta^{n-1} P^n_b + P^{n+1}_b \delta^n$, which implies that $H^n_b(f) = H^n_b(g)$ and concludes the proof.
\end{proof}

Now let us move to \textbf{exactness}.

\begin{proof}
Let $(X, A)$ be a pair and denote $i : A \to X$ and $j : (X, \emptyset) \to (X, A)$. This gives the short exact sequence \cite[p. 199]{Hat}: $0 \to C^n(X, A) \xrightarrow{j^n} C^n(X) \xrightarrow{i^n} C^n(A) \to 0$.
When restricting to bounded cochains we have a sequence:
$$0 \to C^n_b(X, A) \xrightarrow{j^n_b} C^n_b(X) \xrightarrow{i^n_b} C^n_b(A) \to 0.$$
We claim that this sequence is also exact. First, $j^n_b$ is injective being the restriction of $j^n$. Secondly, any bounded cochain $\varphi \in C^n_b(A)$ may be extended to a bounded cochain $\hat{\varphi} \in C^n_b(X)$ by setting it to be equal to zero on all simplices not supported in $A$, and so $i^n_b$ is surjective. Finally
$$\ker(i^n_b) = \ker(i^n) \cap C^n_b(X) = \im(j^n) \cap C^n_b(X) = \im(j^n_b).$$
The last equality is implied by the fact that $\| j^n(\varphi) \|_\infty = \| \varphi \|_\infty$, and so $\varphi \in C^n(X, A)$ has bounded image only if it is bounded in the first place.

This short exact sequence now gives rise to the desired long exact sequence in bounded cohomology, by the standard homological algebraic machinery.
\end{proof}

By reconstructing explicitly all the arguments, the reader can check that the ultrametric inequality was not strictly needed in the previous proofs. This makes it apparent that these axioms are satisfied by bounded cohomology with real coefficients as well. It is with \textbf{excision} that the ultrametric inequality used in Lemma \ref{key_top} becomes really crucial.

\begin{proof}
Let $A, B \subset X$ be such that the interiors of $A$ and $B$ cover $X$. Denote by $C_n(A + B)$ the span of $S_n(A) \cup S_n(B) \subset C_n(X)$, and let $\iota_n : C_n(A + B) \to C_n(X)$ denote the inclusion. The proof of excision for singular cohomology relies on the fact that this inclusion is a chain homotopy equivalence, namely that there exists a chain map $\rho_n : C_n(X) \to C_n(A + B)$ such that $\iota_n \rho_n$ and $\rho_n \iota_n$ are chain homotopic to the identity. More precisely we have $\rho_n \iota_n = id_n$ and there exist maps $D_n : C_n(X) \to C_{n+1}(X)$ such that $id_n - \iota_n \rho_n = \partial_{n+1} D_n + D_{n-1} \partial_n$. The map $D$ is constructed via barycentric subdivision, and the map $\rho$ is defined in terms of $D$ so that it satisfies the above relation \cite[Proposition 2.21]{Hat}.

Denote by $C^n(A + B) := \{ \varphi : S_n(A) \cup S_n(B) \to \mathbb{K}\}$ and by $C^n_b(A + B)$ the subcomplex of bounded cochains, and the corresponding relative cochain complexes by $C^n(A + B, A), C^n_b(A + B, A)$. Dualizing the above relations we obtain $\iota^n \rho^n = id^n$ and $id^n - \rho^n \iota^n = D^{n+1} \delta^n + \delta^{n-1} D^n$. Now $D_n$ is induced by the corresponding map on integral chains, and so by Lemma \ref{key_top} the dual $D^n$ sends bounded cochains to bounded cochains. The same is true of $\rho^n$, since $\rho_n$ is defined in terms of $D_n$. Therefore $D^n_b$ and $\rho^n_b$ are well defined, and the $D^n_b$ provide a chain homotopy proving that $\iota^n_b$ and $\rho^n_b$ are homotopy inverse to each other. It follows that $\iota^n_b : C^n_b(X) \to C^n_b(A + B)$ induces an isomorphism at the level of bounded cohomology.

Now the maps $\iota, \rho, D$ all send chains supported in $A$ to chains supported in $A$ (as noted in \cite[p. 124]{Hat}), therefore the duals send cochains vanishing on $A$ to cochains vanishing on $A$. Thus there is a well-defined map $\iota^n_b : C^n_b(X, A) \to C^n_b(A + B, A)$ with a homotopy inverse $\rho^n_b : C^n_b(A + B, A) \to C^n_b(X, A)$, which implies that $H^n_b(\iota) : H^n_b(X, A) \to H^n_b(A + B, A)$ is also an isomorphism. Finally we notice that the map $C^n_b(A + B, A) \to C^n_b(B, A \cap B)$ given by inclusion is an isomorphism, and conclude that $H^n_b(i) : H^n_b(X, A) \to H^n_b(B, A \cap B)$ is an isomorphism, too.
\end{proof}

Recovering the explicit definition of $D_n$, it is easy to see that the number of simplices in the expression of $D_n(\sigma)$ can be arbitrarily large. Therefore the above proof does not adapt to the real setting. This cannot be solved by means of another proof: letting $A, B$ be two circles and $X$ their wedge sum, we have that $H^n_b(A; \mathbb{R}) = H^n_b(\mathbb{Z}; \mathbb{R}) = 0$, since $\mathbb{Z}$ is amenable, while $H^n_b(X; \mathbb{R}) = H^n_b(F_2; \mathbb{R})$ is infinite-dimensional, for instance by \cite{Rolli}. It is clear that the corresponding Mayer--Vietoris sequence (see Corollary \ref{MVseq}) fails to be exact in this case. \\

We are left to prove the modified \textbf{additivity} axiom.

\begin{proof}
Let $(X, A)$ be the disjoint union of $(X_i, A_i)$. Since the simplex $\Delta^n$ is connected, any singular simplex $\sigma : \Delta^n \to X$ is contained in a single $X_i$. It follows that $C_n(X)$ is the direct sum of the $C_n(X_i)$, and this decomposition is preserved by the boundary map. Dualizing shows that the inclusion maps induce an isomorphism $C^n(X) \to \prod C^n(X_i)$, which restricts to an isomorphism $C^n(X, A) \to \prod C^n(X_i, A_i)$. Restricting to bounded cochains, we obtain an injective map
$$\prod\limits_{i \in I} (j_i)^n_b : C^n_b(X, A) \to \prod\limits_{i \in I} C^n_b(X_i, A_i).$$
The relation $\| \varphi \|_\infty = \sup\limits_{i \in I} \| \varphi|_{X_i} \|_\infty$ implies that the image is precisely the subspace of sequences of cochains $\varphi_i \in C^n_b(X_i, A_i)$ with bounded supremum. Since this isomorphism is compatible with the coboundary map $\delta^n$, it descends to the level of bounded cohomology.
\end{proof}

\subsection{Non-Archimedean simplicial volume}

Corollary \ref{cell} implies in particular that if $M$ is an oriented closed connected $n$-manifold, then the comparison map $c^n : H^n_b(M, \mathbb{K}) \to H^n(M, \mathbb{K})$ is surjective. This could also be shown directly with the same approach as in Subsection \ref{ss_surj}. In the real setting, the surjectivity of this comparison map is equivalent to the non-vanishing of simplicial volume. This seems to suggest that a suitable notion of $\mathbb{K}$-simplicial volume should never vanish. This subsection is devoted to making this intuition rigorous. \\

The (real) \emph{simplicial volume} of an oriented closed connected manifold $M$ is defined as
$$\| M \| := \inf \{ \sum |\alpha_i| : \sum \alpha_i \sigma_i \text{ is a real fundamental cycle of } M \},$$
where by real fundamental cycle we mean a representative of the real fundamental class $[M] \in H_n(M, \mathbb{R}) \cong \mathbb{R}$.

Simplicial volume was introduced by Gromov in \cite{Grom} as a tool to estimate the volume of a Riemannian manifold, and it lies at the origins of bounded cohomology (see \cite[Chapter 7]{Frig} for a detailed account). As we just mentioned, it turns out that $\| M \| > 0$ if and only if the comparison map $c^n : H^n_b(M, \mathbb{R}) \to H^n(M, \mathbb{R})$ is surjective \cite[Corollary 7.11]{Frig}. \\

When defining a $\mathbb{K}$-simplicial volume, one has two options. The first is to take the same definition, letting the infimum run over all $\mathbb{K}$-fundamental cycles of $M$. This is the approach taken in \cite{pSV}, leading to an interesting invariant, about which little is known. The difficulty in this approach is that it combines the non-Archimedean norm $| \cdot |_\mathbb{K}$ with the Archimedean nature of the $\ell^1$-norm of a fundamental cocycle.

The other option is to consider instead a non-Archimedean norm on the chain complex of $M$, namely the $\ell^\infty$ norm, as we did at the end of Subsection \ref{preli BC}. This is the correct approach to take if one wants to prove results via duality, since the $\ell^\infty$ norm on the chain complex is the pre-dual of the supremum norm on the cochain complex. This leads to defining
$$\| M \|_\mathbb{K}^{NA} := \inf \{ \max |\alpha_i|_\mathbb{K} : \sum \alpha_i \sigma_i \text{ is a } \mathbb{K}\text{-fundamental cycle} \},$$
where NA stands for non-Archimedean. We could then use the same approach as in the real case to deduce the non-vanishing of this invariant from the surjectivity of the comparison map. However, this approach employs the Hahn--Banach Theorem (see \cite[Lemma 6.1]{Frig}), which only holds when $\mathbb{K}$ is spherically complete. Therefore we will take a direct approach, which even allows to pin down the precise value of $\| M \|_\mathbb{K}^{NA}$. We wish to thank Clara L\"oh for providing this argument.

\begin{proposition}
\label{NASV}

For any oriented closed connected manifold $M$ we have $\| M \|_\mathbb{K}^{NA} = 1$.
\end{proposition}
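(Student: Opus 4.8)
The plan is to prove the two inequalities $\| M \|_\mathbb{K}^{NA} \geq 1$ and $\| M \|_\mathbb{K}^{NA} \leq 1$ separately, and each is short.

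For the lower bound $\| M \|_\mathbb{K}^{NA} \geq 1$, I would argue by duality. Recall that $H^n(M; \mathbb{K}) \cong \mathbb{K}$ (since $M$ is an oriented closed connected $n$-manifold, its top cohomology with any field coefficients is one-dimensional), and that the Kronecker pairing identifies the fundamental class $[M] \in H_n(M; \mathbb{K})$ with an element on which a suitable cohomology generator $\varphi$ evaluates to $1$. By Corollary \ref{cell}, or directly by the surjectivity of the comparison map for $M$ (which also follows from the argument in Subsection \ref{ss_surj}), there is a \emph{bounded} cocycle $\varphi$ representing such a generator, which we may rescale so that $\langle \varphi, [M]\rangle = 1$ and $\| \varphi \|_\infty \leq 1$ after multiplying by an appropriate scalar — more precisely, since $\varphi$ is bounded, pick a scalar $\lambda \in \mathbb{K}$ with $|\lambda|_\mathbb{K} \leq 1$ so that $\| \lambda \varphi \|_\infty \leq 1$, then note the Kronecker product of $\lambda\varphi$ with $[M]$ has norm $|\langle \lambda\varphi, [M]\rangle|_\mathbb{K}$. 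Choosing $\varphi$ in the first place to satisfy $\| \varphi \|_\infty \leq 1$ and $\langle \varphi, [M]\rangle$ a unit is possible because the comparison map is surjective onto $H^n(M;\mathbb{K}) \cong \mathbb{K}$, so some bounded cocycle pairs to a nonzero element, and rescaling makes it pair to $1$ while keeping the norm $\leq 1$ (the image values of bounded cochains lie in the $1$-ball up to scalar, by the ultrametric inequality). Then for any $\mathbb{K}$-fundamental cycle $z = \sum \alpha_i \sigma_i$ we have $1 = |\langle \varphi, z\rangle|_\mathbb{K} = |\sum \alpha_i \varphi(\sigma_i)|_\mathbb{K} \leq \max_i |\alpha_i|_\mathbb{K} \cdot \|\varphi\|_\infty \leq \max_i |\alpha_i|_\mathbb{K}$, using the ultrametric inequality crucially. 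Taking the infimum over fundamental cycles gives $\| M \|_\mathbb{K}^{NA} \geq 1$.

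For the upper bound $\| M \|_\mathbb{K}^{NA} \leq 1$, I would start from \emph{any} fixed integral fundamental cycle $\sum n_i \sigma_i$ of $M$, which exists by the classical theory (e.g. from a triangulation or a CW structure), and observe that reducing the integer coefficients $n_i$ into $\mathbb{K}$ via the canonical ring map $\mathbb{Z} \to \mathbb{K}$ (injective in characteristic $0$, with kernel $p\mathbb{Z}$ in characteristic $p$) yields a $\mathbb{K}$-fundamental cycle whose coefficients all lie in $\mathfrak{o}$, hence have norm $\leq 1$. One subtlety: in positive characteristic the image cycle could a priori be zero if all $n_i$ were divisible by $p$, but this cannot happen because the fundamental class over $\mathbb{F}_p \subseteq \mathbb{K}$ is nonzero (top homology of an oriented closed connected manifold is $\mathbb{Z}$, so reduction mod $p$ is the nonzero generator of $\mathbb{Z}/p\mathbb{Z}$). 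Therefore $\max_i |n_i|_\mathbb{K} \leq 1$ and this cycle witnesses $\| M \|_\mathbb{K}^{NA} \leq 1$. Combining the two inequalities gives $\| M \|_\mathbb{K}^{NA} = 1$.

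I do not expect a serious obstacle here; the only point requiring a little care is making sure the bounded cocycle in the lower-bound argument can be chosen with norm $\leq 1$ and unit Kronecker pairing simultaneously — this is where surjectivity of the comparison map (Corollary \ref{cell}) does the real work, together with the fact that bounded cochains take values in a bounded subset of $\mathbb{K}$, so that after scaling by an element of $\mathbb{K}^\times$ both conditions hold. The ultrametric inequality is what makes $\max_i |\alpha_i|_\mathbb{K}$ appear on the right-hand side rather than a sum, and this is exactly why the answer is the clean value $1$ rather than something manifold-dependent as in the real case.
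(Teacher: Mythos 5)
Your upper bound is fine (and is essentially what the paper leaves implicit): pushing an integral fundamental cycle into $\mathbb{K}$ gives a fundamental cycle with coefficients in $\mathfrak{o}$, hence of norm at most $1$. The problem is in the lower bound: the key step --- producing a bounded cocycle $\varphi$ with $\|\varphi\|_\infty \le 1$ \emph{and} $\langle \varphi, [M]\rangle = 1$ simultaneously --- is not justified by your rescaling argument, and as written it is circular. If a bounded cocycle $\varphi_0$ has $\|\varphi_0\|_\infty = C$ and $\langle \varphi_0, [M]\rangle = c \ne 0$, rescaling by $c^{-1}$ fixes the pairing but produces norm $C/|c|_\mathbb{K}$; to know this is $\le 1$ you would need $|c|_\mathbb{K} \ge C$, and since $|c|_\mathbb{K} = |\langle \varphi_0, z\rangle|_\mathbb{K} \le C \cdot \max_i |\alpha_i|_\mathbb{K}$ for every fundamental cycle $z = \sum \alpha_i \sigma_i$, that is exactly the inequality $\| M \|_\mathbb{K}^{NA} \ge 1$ you are trying to prove. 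Surjectivity of the comparison map (Corollary \ref{cell}) controls neither the norm of a bounded representative nor its ratio to the Kronecker pairing, so it cannot ``do the real work'' here by itself.

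The gap is fixable, and the fix makes your duality route go through without any appeal to bounded cohomology or to Hahn--Banach (which is why the paper avoids the duality argument in the first place): since $H_n(M;\mathbb{Z}) \cong \mathbb{Z}$ is generated by the integral fundamental class, the Universal Coefficient Theorem provides an integral cocycle $\psi$ with $\langle \psi, [M]_\mathbb{Z}\rangle = 1$; its image under $\mathbb{Z} \to \mathfrak{o} \subset \mathbb{K}$ is a cocycle of norm at most $1$ (integer coefficients land in the unit ball, by the ultrametric inequality) which pairs to $1_\mathbb{K} \ne 0$ with the $\mathbb{K}$-fundamental class, and then your estimate $1 = |\langle \psi, z\rangle|_\mathbb{K} \le \max_i |\alpha_i|_\mathbb{K}$ applies to every $\mathbb{K}$-fundamental cycle $z$. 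For comparison, the paper's own proof takes a different, homological route: a fundamental cycle all of whose coefficients have norm $< 1$ lies in $\alpha \cdot C_n(M, \mathfrak{o})$ for some $|\alpha|_\mathbb{K} < 1$, so its class lies in the proper ideal $\alpha \, \mathfrak{o}$ of $H_n(M, \mathfrak{o}) \cong \mathfrak{o}$ and cannot be the fundamental class, which generates; then one observes that minimizing cycles must have $\mathfrak{o}$-coefficients. Either repair works; what does not work is deducing the normalized cocycle from surjectivity of $c^n$ plus rescaling.
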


\begin{proof}
We first prove that the infimum is $1$ when considering $\mathfrak{o}$-fundamental cycles, where $\mathfrak{o}$ is the ring of integers of $\mathbb{K}$. Indeed, given a cycle $\sum \alpha_i \sigma_i \in C_n(M, \mathfrak{o})$ such that $\max |\alpha_i|_\mathbb{K} =: |\alpha|_\mathbb{K} < 1$, it belongs to the subcomplex $C_n(M, \mathfrak{o} \cdot \alpha)$, and so the corresponding homology class will generate a proper ideal of $H_n(M, \mathfrak{o}) \cong \mathfrak{o}$. Since the fundamental class generates the whole of $H_n(M, \mathfrak{o})$, it follows that any fundamental cycle must have elements of norm $1$ in its support.

Now given a $\mathbb{K}$-fundamental cycle $\sum \alpha_i \sigma_i$, if there exists $\alpha_i \notin \mathfrak{o}$, then $\sup |\alpha_i| > 1$. It follows that any $\mathbb{K}$-fundamental cycle minimizing $\| M \|_\mathbb{K}^{NA}$ is actually an $\mathfrak{o}$-fundamental cycle, and so we conclude by the previous paragraph.
\end{proof}

\subsection{Relation to bounded cohomology of groups}
\label{ss_bcXG}

Corollary \ref{iso Fn} and Corollary \ref{cell}, together with the corresponding statement for ordinary cohomology, imply that if $X$ is an aspherical CW-complex of type $F_\infty$, then the bounded cohomology of $\pi_1(X)$ is naturally isomorphic to that of $X$. This result extends to the case of general aspherical spaces admitting a universal covering, by the same arguments as in the real case (see \cite[Theorem 5.5]{Frig}). Indeed, this proof only uses the functorial characterization of bounded cohomology in the case of discrete groups (see \cite[Corollary 4.15]{Frig}), which unlike the corresponding characterization in the continuous setting (see \cite[Chapter 7]{Monod}) does not use any tools from harmonic analysis. It is easy to check that indeed all of the proofs carry over verbatim to the non-Archimedean setting. \\

Our results show that the asphericity condition is necessary, even in the case of compact CW-complexes. For example, if $M$ is an oriented closed connected $n$-manifold, then by Corollary \ref{cell} we have $H^n_b(M, \mathbb{K}) = H^n(M, \mathbb{K}) \neq 0$. If now $M$ is simply connected, this implies that $H^n_b(M, \mathbb{K}) \neq H^n_b(\pi_1(M), \mathbb{K}) = 0$. On the other hand, by the Gromov Mapping Theorem \cite{Grom}, for any space $X$ admitting a universal covering we have a natural isomorphism $H^n_b(\pi_1(X), \mathbb{R}) \cong H^n_b(X, \mathbb{R})$ (see \cite{Ivanov, Frig, multicomplexes} for proofs). \\

We wish to conclude this paper with this as the last of many instances in which the non-Archimedean world has proven strikingly different from the real one.

\pagebreak

\bibliographystyle{alpha}
\bibliography{References}

\end{document}